\documentclass[11pt]{amsart}
\usepackage{amsopn}
\usepackage{amssymb, amscd}
\usepackage{multirow}
\usepackage{graphicx, graphics, epsfig}
\usepackage{faktor} 
\usepackage{enumerate}
\usepackage{tikz}
\usepackage{pgfplots}
\usepackage{hyperref}
\usepackage{color}
\usepackage{dynkin-diagrams}
\usepackage{epigraph}
\numberwithin{equation}{section}
\newcommand{\nc}{\newcommand}

\nc{\fg}{\mathfrak{f} } \nc{\vg}{\mathfrak{v} } \nc{\wg}{\mathfrak{w} }
\nc{\zg}{\mathfrak{z} } \nc{\ngo}{\mathfrak{n} } \nc{\kg}{\mathfrak{k} }
\nc{\mg}{\mathfrak{m} } \nc{\bg}{\mathfrak{b} } \nc{\ggo}{\mathfrak{g} } \nc{\eg}{\mathfrak{e} }
\nc{\ggob}{\overline{\mathfrak{g}} } \nc{\sog}{\mathfrak{so} }
\nc{\sug}{\mathfrak{su} } \nc{\spg}{\mathfrak{sp} } \nc{\slg}{\mathfrak{sl} }
\nc{\glg}{\mathfrak{gl} } \nc{\cg}{\mathfrak{c} } \nc{\rg}{\mathfrak{r} }
\nc{\hg}{\mathfrak{h} } \nc{\tg}{\mathfrak{t} } \nc{\ug}{\mathfrak{u} }
\nc{\dg}{\mathfrak{d} } \nc{\ag}{\mathfrak{a} } \nc{\pg}{\mathfrak{p} }
\nc{\sg}{\mathfrak{s} } \nc{\affg}{\mathfrak{aff} } \nc{\qg}{\mathfrak{q} } \nc{\lgo}{\mathfrak{l} } \nc{\holg}{\mathfrak{hol} } \nc{\stabg}{\mathfrak{stab} } \nc{\iautg}{\mathfrak{iaut} } \nc{\bautg}{\mathfrak{baut} } \nc{\hiautg}{\mathfrak{hiaut} }

\nc{\pca}{\mathcal{P}} \nc{\nca}{\mathcal{N}} \nc{\lca}{\mathcal{L}}
\nc{\oca}{\mathcal{O}} \nc{\mca}{\mathcal{M}} \nc{\tca}{\mathcal{T}}
\nc{\aca}{\mathcal{A}} \nc{\cca}{\mathcal{C}} \nc{\gca}{\mathcal{G}}
\nc{\sca}{\mathcal{S}} \nc{\hca}{\mathcal{H}} \nc{\bca}{\mathcal{B}}
\nc{\dca}{\mathcal{D}} \nc{\eca}{\mathcal{E}} \nc{\wca}{\mathcal{W}} \nc{\ica}{\mathcal{I}} \nc{\kca}{\mathcal{K}} \nc{\acca}{\mathcal{AC}}

\nc{\vp}{\varphi} \nc{\ddt}{\tfrac{d}{dt}} \nc{\dsdt}{\tfrac{d^2}{dt^2}} \nc{\dds}{\frac{d}{ds}}
\nc{\dpar}{\frac{\partial}{\partial t}} \nc{\im}{\mathrm{i}} \nc{\dpars}{\tfrac{\partial}{\partial t}} 

\nc{\SO}{\mathrm{SO}} \nc{\Spe}{\mathrm{Sp}} \nc{\Sl}{\mathrm{SL}}
\nc{\SU}{\mathrm{SU}} \nc{\Or}{\mathrm{O}} \nc{\U}{\mathrm{U}} \nc{\Gl}{\mathrm{GL}}
\nc{\Se}{\mathrm{S}} \nc{\Cl}{\mathrm{Cl}} \nc{\Spin}{\mathrm{Spin}}
\nc{\Pin}{\mathrm{Pin}} \nc{\G}{\mathrm{GL}_n(\RR)} \nc{\g}{\mathfrak{gl}_n(\RR)}
\nc{\Eg}{\mathrm{E}} \nc{\Fg}{\mathrm{F}} \nc{\Gg}{\mathrm{G}} \nc{\Gr}{\mathrm{Grass}}

\nc{\RR}{{\Bbb R}} \nc{\HH}{{\Bbb H}} \nc{\CC}{{\Bbb C}} \nc{\ZZ}{{\Bbb Z}}
\nc{\FF}{{\Bbb F}} \nc{\NN}{{\Bbb N}} \nc{\QQ}{{\Bbb Q}} \nc{\PP}{{\Bbb P}} \nc{\OO}{{\Bbb O}}

\nc{\vs}{\vspace{.3cm}} \nc{\vsp}{\vspace{1cm}} \nc{\ip}{\langle\cdot,\cdot\rangle}
\nc{\ipp}{(\cdot,\cdot)} \nc{\la}{\langle} \nc{\ra}{\rangle} \nc{\unm}{\tfrac{1}{2}}
\nc{\unc}{\tfrac{1}{4}} \nc{\und}{\frac{1}{16}} \nc{\no}{\vs\noindent}
\nc{\lam}{\rho^2(\RR^n)^*\otimes\RR^n} \nc{\tangz}{{\rm T}^{\rm Zar}}
\nc{\nor}{{\sf n}}  \nc{\mum}{/\!\!/} \nc{\kir}{/\!\!/\!\!/}
\nc{\Ri}{\tfrac{4\Ric_{\mu}}{||\mu||^2}} \nc{\ds}{\displaystyle}
\nc{\ben}{\begin{enumerate}} \nc{\een}{\end{enumerate}} \nc{\f}{\frac}
\nc{\lb}{[\cdot,\cdot]} \nc{\isn}{\tfrac{1}{||v||^2}}
\nc{\gkp}{(\ggo=\kg\oplus\pg,\ip)} \nc{\ukh}{(\ug=\kg\oplus\hg,\ip)}
\nc{\tgkp}{(\tilde{\ggo}=\kg\oplus\pg,\ip)}
\nc{\wt}{\widetilde}
\nc{\iop}{\mathtt{i}} \nc{\jop}{\mathtt{j}} 
\nc{\Hk}{H_{\kil}} \nc{\gk}{g_{\kil}}

\nc{\Hess}{\operatorname{Hess}} \nc{\ad}{\operatorname{ad}}
\nc{\Ad}{\operatorname{Ad}} \nc{\rank}{\operatorname{rk}}
\nc{\Irr}{\operatorname{Irr}} \nc{\End}{\operatorname{End}}
\nc{\Aut}{\operatorname{Aut}} \nc{\Inn}{\operatorname{Inn}}
\nc{\Der}{\operatorname{Der}} \nc{\Ker}{\operatorname{Ker}}
\nc{\Iso}{\operatorname{Isom}} \nc{\Diff}{\operatorname{Diff}}
\nc{\Lie}{\operatorname{L}} \nc{\tr}{\operatorname{tr}} \nc{\dif}{\operatorname{d}}
\nc{\sen}{\operatorname{sen}} \nc{\modu}{\operatorname{mod}}
\nc{\CRic}{\operatorname{PP}} \nc{\Cric}{\operatorname{P}} \nc{\Ricci}{\operatorname{Ric}}
\nc{\sym}{\operatorname{sym}} \nc{\herm}{\operatorname{herm}} \nc{\symac}{\operatorname{sym^{ac}}}
\nc{\symc}{\operatorname{sym^{c}}} \nc{\scalar}{\operatorname{Sc}}
\nc{\grad}{\operatorname{grad}} \nc{\ricci}{\operatorname{Rc}} \nc{\kil}{\operatorname{B}} \nc{\cas}{\operatorname{C}} \nc{\lic}{\operatorname{L}}
\nc{\Nor}{\operatorname{Norm}}  \nc{\ricc}{\operatorname{Rc^{c}}}
\nc{\Ricc}{\operatorname{Ric^{c}}} \nc{\ricac}{\operatorname{Rc^{ac}}}
\nc{\Ricac}{\operatorname{Ric^{ac}}} \nc{\Riem}{\operatorname{Rm}} \nc{\Sec}{\operatorname{Sec}}
\nc{\riccig}{\operatorname{ric^{\gamma}}} \nc{\mm}{\operatorname{m}} \nc{\Mm}{\operatorname{M}}
\nc{\Le}{\operatorname{L}} \nc{\tang}{\operatorname{T}}
\nc{\level}{\operatorname{level}} \nc{\rad}{\operatorname{r}}
\nc{\abel}{\operatorname{ab}} \nc{\CH}{\operatorname{CH}} \nc{\Cone}{{\mathcal C}} \nc{\CCone}{\operatorname{CC}} \nc{\CP}{{\mathcal P}}
\nc{\mcc}{\operatorname{mcc}} \nc{\Adj}{\operatorname{Adj}}
\nc{\Order}{\operatorname{O}}  \nc{\inj}{\operatorname{inj}} \nc{\proy}{\operatorname{pr}}
\nc{\vol}{\operatorname{vol}} \nc{\Diag}{\operatorname{Dg}} \nc{\Diagg}{\operatorname{Diag}}
\nc{\Spec}{\operatorname{Spec}} \nc{\Ima}{\operatorname{Im}} \nc{\Rea}{\operatorname{Re}}
\nc{\spann}{\operatorname{span}} \nc{\Aff}{\operatorname{Aff}} \nc{\E}{\operatorname{E}} \nc{\id}{\operatorname{id}} \nc{\dete}{\operatorname{det}} \nc{\Crit}{\operatorname{Crit}} \nc{\val}{\operatorname{val}} \nc{\Bihol}{\operatorname{Bihol}} 

\theoremstyle{plain}
\newtheorem{theorem}{Theorem}[section]
\newtheorem{proposition}[theorem]{Proposition}
\newtheorem{corollary}[theorem]{Corollary}
\newtheorem{lemma}[theorem]{Lemma}

\theoremstyle{definition}
\newtheorem{definition}[theorem]{Definition}

\newtheorem{conjecture}[theorem]{Conjecture}

\theoremstyle{remark}
\newtheorem{remark}[theorem]{Remark}
\newtheorem{example}[theorem]{Example}

\title{Compact homogeneous complex manifolds}

\author{Jorge Lauret}  

\address{FaMAF, Universidad Nacional de C\'ordoba and CIEM, CONICET (Argentina)}
\email{jorgelauret@unc.edu.ar} 

\thanks{This research was partially supported by a grant from Universidad Nacional de C\'ordoba (Argentina).  We would also like to acknowledge support from the ICTP through the Associates Programme and from the Simons Foundation through grant number 284558FY19}

\date{\today}

\begin{document}

\maketitle

\begin{abstract}
We study both the complex and Hermitian geometry of C-spaces, i.e., compact homogeneous spaces $M=G/K$ admitting a $G$-invariant complex structure.  Some results on Hodge, Bott-Chern and Aeppli cohomologies are given.  We also prove classification results in the pluriclosed, parallel Bismut torsion and locally conformally K\"ahler (or Vaisman) cases, respectively, as well as existence results for balanced and Calabi-Yau with torsion metrics.   
\end{abstract}

\tableofcontents

% 11/9/2026

\section{Introduction}\label{intro-sec}  

\epigraph{{\it Depu\'es del agua, m\'as agua, porque este charco no tiene borde.}}{Carlito. {\sc Caballeros de la Quema.}}

We study compact complex manifolds $M$ which are {\it homogeneous} in the following sense: there exists a compact Lie group of biholomorphisms acting transitively on $M$, which is not unique in general.  Each of such groups $G$ provides a presentation $M=G/K$ as a homogeneous space that admits a $G$-invariant complex structure, called a {\it C-space}.  The following characterization of a C-space $M=G/K$ was obtained by Wang \cite{Wng}, Samelson \cite{Sml} and Tits \cite{Tts} in the 50s (see also \cite{NiWll} for a recent approach): 
\begin{quote}
There exists a Lie subalgebra $\hg\subset\ggo$ such that $\hg$ is the centralizer of some abelian subalgebra of $\ggo$, $\kg\subset\hg$ and $[\hg,\hg]=[\kg,\kg]$,   
\end{quote}
where $\ggo$ and $\kg$ are the Lie algebras of $G$ and $K$, respectively.  Each of these subalgebras $\hg\subset\ggo$ determines a flag manifold $F=G/H$ and the following torus fibration over $F$, called a {\it Tits fibration}:  
\begin{equation}\label{fib-intro}
A=H/K\longrightarrow M=G/K \longrightarrow F=G/H, 
\end{equation}
with fiber the torus $A=H/K$.  For instance, any compact semisimple Lie group $M=G$ is a C-space which fibers over the full flag $F=G/T$ with fiber $T$, for any maximal torus $T\subset G$.   

The aim of this paper is to undertake a study of both the complex and Hermitian geometry of C-spaces.  The main original results obtained include: 
\begin{enumerate}[{\small $\bullet$}] 
\item formulas for many Betti, Hodge, Bott-Chern and Aeppli numbers, 

\item formulas for all the canonical connections and their Ricci forms, 

\item characterizations of some of the main distinguished Hermitian structures in the subject: balanced, pluriclosed, parallel Bismut torsion, Calabi-Yau with torsion and locally conformally K\"ahler,  

\item the classification of pluriclosed complex C-spaces and pluriclosed metrics, as well as the global stability of pluriclosed flow on compact Lie groups,  

\item a partial classification of Hermitian metrics with parallel Bismut torsion, 

\item an alternative proof of the classification of locally conformally K\"ahler (or Vaisman) C-spaces given in \cite{HsgKms1}.   
\end{enumerate}
All the formulas and characterizations are in Lie theoretical terms, being of particular relevance the set of complementary roots of the flag $F$ and its combinatorics.  A number of other applications to the existence problem for the above concepts and their interplay are also obtained, including plenty of examples.

\subsection{Structure}
Given a C-space $M=G/K$ and one of its Tits fibrations $A\rightarrow M\rightarrow F$ as in \eqref{fib-intro}, we consider the following key $Q$-orthogonal decompositions relative to a background bi-invariant inner product $Q$ on $\ggo$: 
\begin{equation}\label{reddec-intro}
\ggo= \kg\oplus\pg = 
\underbrace{[\hg,\hg]\oplus\rlap{$\overbrace{\phantom{\zg(\kg) \oplus\ag}}^{\zg(\hg)}$}\zg(\kg)}_\kg \oplus \underbrace{\ag\oplus\qg}_\pg,  \qquad \pg=\ag\oplus\qg\equiv T_oM,  
\end{equation}
where $[\ggo,\ggo]= (\hg\cap[\ggo,\ggo])\oplus\qg$ is the reductive decomposition of the flag $F$.  Recall that in the homogeneous setting, the tangent space at the origin $o\in M$, $T_oM\equiv\pg$, is in a sense all the tangent spaces of $M$ at the same time.  Note that $T_oA\equiv\ag$ and $T_oF\equiv\qg$.     

Summarizing, any C-space $M=G/K$ is determined by only the flag $F=G/H$ and a compactly embedded subspace 
\begin{equation}\label{slope-intro}
\zg(\kg)\subset\zg(\hg), \qquad\mbox{giving rise to}\qquad \zg(\hg)=\zg(\kg)\oplus\ag.  
\end{equation}

\begin{remark}\label{lh-rem-intro}
By including locally homogeneous spaces, the set of all C-spaces $M=G/K$ of a given dimension which fibers over a fixed flag manifold $F=G/H$ can therefore be endowed with the Grassmannian topology, giving rise to a natural notion of {\it generic} for a given condition on these homogeneous fibrations (see \cite{KrsWlfZar} and Remark \ref{lh-rem}).  
\end{remark}

The root structure of the flag $F=G_f/H_f$, where $G_f:=[G,G]$ and $H_f:=H\cap[G,G]$, will be essential in the study of the geometry of the C-space.  For any maximal torus $T\subset H_f\subset G_f$, we consider $\Delta\subset(\tg^c)^*$, the root system defined by the Cartan subalgebra $\tg^c:=\tg\otimes\CC$ of the complex semisimple Lie algebra $\ggo_f^c$, where $\ggo_f=[\ggo,\ggo]$ (see \S\ref{roots} and \S\ref{flag-sec} for overviews on roots and flag manifolds, respectively).  The $\kil_{\ggo_f}$-orthogonal reductive decomposition 
$\ggo_f=\hg_f\oplus\qg$ (i.e., $T_oF\equiv \qg$) of $F=G_f/H_f$, where $\kil_{\ggo_f}$ is the Killing form of $\ggo_f$, gives the decompositions  
\begin{equation}\label{hq-intro}
\hg_f^c=\tg^c\oplus\bigoplus_{\alpha\in\Delta_{\hg_f}}\ggo_\alpha, \qquad 
\qg^c=\bigoplus_{\alpha\in\Delta_\qg}\ggo_\alpha, \qquad \Delta=\Delta_{\hg_f}\sqcup\Delta_\qg,
\end{equation}
where $\ggo_\alpha:=\{ E\in\ggo_f^c:[H,E]=\alpha(H)E,\; H\in\tg^c\}$, $\Delta_{\hg_f}:=\{\alpha\in\Delta:\alpha|_{\zg(\hg_f)}=0\}$ is the root system of $[\hg_f,\hg_f]$ and $\Delta_\qg$ is the set of {\it complementary roots}.  There exist vectors $E_\alpha$, $\alpha\in\Delta$ such that 
$$
\ggo_\alpha=\CC E_\alpha, \quad \kil_{\ggo^c}(E_\alpha,E_{-\alpha})=1, \quad H_\alpha=[E_\alpha,E_{-\alpha}], 
\quad 
[E_\alpha,E_\beta]=N_{\alpha,\beta}E_{\alpha+\beta}, 
$$
$N_{\alpha,\beta}\in\RR$,  where $H_\alpha\in\tg^c$ is defined by $\alpha=\kil_{\ggo_f^c}(\cdot,H_\alpha)$.

\begin{remark}
For simplicity, we will assume in most of the paper that $\zg(\ggo)\subset\ag$.  This can always be obtained by considering a new presentation of the C-space for a smaller transitive group as follows:
\begin{equation}\label{decintro1}
M=G/K=G_f/K\times T^z, \qquad z:=\dim{\zg(\ggo)}, \qquad  K\subset H_f\subset G_f=[G,G], 
\end{equation}
where $G=G_f\times T^z$, $G_f/K$ is also a C-space (possibly odd dimensional, by using the subalgebra $\hg\cap[\ggo,\ggo]$) and $\ag=(\ag\cap[\ggo,\ggo])\oplus\zg(\ggo)$. 
\end{remark}

In addition to \eqref{decintro1}, there is a finer decomposition of a C-space as a manifold, given by
\begin{equation}\label{decintro2}
M=G/K=\widetilde{G}/\widetilde{H}\times \underline{G}/\underline{K}\times\overline{G}, 
\end{equation}
where $\widetilde{G}/\widetilde{H}$ is a flag manifold, $\underline{G}/\underline{K}= [\underline{G},\underline{G}]/\underline{K}\times T^z$ is a {\it strict} C-space (see Definition \ref{strict-def}) and $\overline{G}$ is a compact semisimple Lie group (see \S\ref{chcm-sec} for a more detailed treatment).   

We refer to \cite{Grn, HsgKms1, Pds, FinGrnVzz, ChrSkn, Brb, PdsZhn, BdlMrc, BrbPdc} for previous studies of C-spaces (beyond Lie groups) in the literature, most of them carried out in the finite fundamental group case (i.e., $G$ semisimple).

\subsection{Complex structures and cohomology}
It is not hard to show that any even dimensional C-space $M=G/K$ indeed admits $G$-invariant complex structures.  Given one of the Tits fibrations $A\rightarrow M\rightarrow F$ as in \eqref{fib-intro}, say, attached to a subalgebra $\hg\subset\ggo$, recall from \eqref{reddec-intro} and \eqref{hq-intro} the $Q$-orthogonal decompositions $\zg(\hg)=\zg(\kg)\oplus\ag$, 
$$
\ggo= \kg\oplus\pg, \qquad \pg= \ag\oplus\qg,   \qquad 
\qg^c=\sum_{\alpha\in\Delta_\qg}\CC E_\alpha.     
$$
We consider the $\Ad(K)$-invariant complex structures on $\pg\equiv T_0M$ (see \S\ref{homsp-sec} for an overview of invariant geometric structures on homogeneous spaces) given by 
\begin{equation}\label{J-intro}
J=(J_\ag,J_\qg)=\left[\begin{matrix} J_\ag&0\\0&J_\qg\end{matrix}\right], 
\end{equation}
where $J_\ag:\ag\rightarrow\ag$ is {\it any} linear map such that $J_\ag^2=-I$ (in particular, $\dim{\ag}$ is even) and $J_\qg:\qg\rightarrow\qg$ is the complex structure on the flag $F$ attached to some invariant ordering $\Delta_\qg^+$ of $\Delta_\qg$, i.e., $J_\qg E_\alpha = \pm\im E_\alpha$ for all $\alpha\in\pm\Delta_\qg^+$.  Since 
$$
\pg^{1,0}=\ag^{1,0}\oplus \bigoplus_{\alpha\in\Delta_\qg^+}\CC E_\alpha, 
$$
the integrability of $J$ follows from the fact that $(\Delta_\qg^++\Delta_\qg^+)\cap\Delta_\qg\subset\Delta_\qg^+$.  

The Tits fibration therefore becomes a holomorphic fibration: 
\begin{equation}\label{holfib-intro}
(A,J_\ag)\longrightarrow (M=G/K,J) \longrightarrow (F,J_\qg),   
\end{equation}
and according to \cite[Theorem 1.3]{NiWll}, every $G$-invariant complex structure on a C-space $M=G/K$ is of the form $J=(J_\ag,J_\qg)$ for some of its Tits fibrations.  It is easy to see that, up to biholomorphism, it is enough to consider only one of the Tits fibrations.  Note that this can not be assumed if one wishes to study hypercomplex structures (see Remark \ref{hyper-rem}).   

The well-known vector from Lie theory given by
\begin{equation}\label{koszul2-intro}
Z_{J_\qg}:= \sum_{\alpha\in\Delta_\qg^+} \im H_\alpha\in\zg(\hg)=\zg(\kg)\oplus\ag, 
\end{equation}
plays a very important role in the geometry of a complex C-space $(M=G/K,J)$.  We call $Z_{J_\qg}$ the {\it Koszul vector} of $J$.  For instance, it follows from \cite{Ksz} (see also \cite{AlkPrl,Grn}) that the first Chern class $c_1(M,J)= [\sigma]\in H^2(M)$ vanishes if and only if 
\begin{equation}\label{c10-intro} 
Z_{J_\qg}\in\ag, 
\end{equation}
where the closed $2$-form $\sigma\in\Omega^2(M)^G \equiv(\Lambda^2\pg^*)^K$ is defined by 
$$
\sigma(X,Y) := 2Q([X,Y],Z_{J_\qg}), \qquad\forall X,Y\in\pg. 
$$  
In particular, if $K$ is either semisimple or trivial, then $c_1(M,J)=0$ for any $G$-invariant complex structure $J$ on $M=G/K$.  However, the first Chern class is generically (in the sense of Remark \ref{lh-rem-intro}) nonzero for any invariant complex structure.    

In \S\ref{cohom-sec}, we study the Dolbeault, Bott-Chern and Aeppli cohomologies of a C-space $M=G/K=G_f/K\times T^z$ as in \eqref{decintro1} endowed with a complex structure $J=(J_\ag,J_\qg)$ as in \eqref{J-intro}.  We use the Tanr\'e model \cite[Proposition 8]{Tnr} for the cohomology of holomorphic fibrations: if $\pg=\ag\oplus\qg$ and $\ggo_f=\hg_f\oplus\qg$ are respectively the reductive decompositions of the total space $M=G/K$ and the base $F=G_f/H_f$, then there is an isomorphism of double complexes,    
$$
\left(\Lambda^{\cdot,\cdot},\partial,\overline{\partial}\right) \simeq 
\left(\Lambda^{\cdot,\cdot}_T:=\Lambda^{\cdot,\cdot}_\ag\otimes\Lambda^{\cdot,\cdot}_\qg,\partial,\overline{\partial}\right), 
$$
where $\ag=\ag^{1,0}\oplus\ag^{0,1}$ is determined by $J_\ag$, 
$$
\Lambda^{p,q}_\ag:=\Lambda^p (\ag^{1,0})^*\otimes\Lambda^q (\ag^{0,1})^*,  \qquad  \Lambda^{\cdot,\cdot}_\qg \simeq \left((\Omega^{\cdot,\cdot} F)^{G_f},\partial_\qg,\overline{\partial_\qg}\right), 
$$  
and 
$$
\left\{\begin{array}{l}
\partial|_{\Lambda^{1,0}_\ag}=0, \\ 
\partial|_{\Lambda^{0,1}_\ag}=d:\Lambda^{0,1}_\ag\rightarrow\Lambda^{1,1}_\qg,  
\end{array}\right. \qquad 
\left\{\begin{array}{l}
\overline{\partial}|_{\Lambda^{1,0}_\ag}=d:\Lambda^{1,0}_\ag\rightarrow\Lambda^{1,1}_\qg, \\  
\overline{\partial}|_{\Lambda^{0,1}_\ag}=0.  
\end{array}\right.
$$
We obtain explicit formulas for many low Hodge, Bott-Chern and Aeppli numbers, including $h^{p,q}$, $p+q\leq 3$ in \S\ref{D-sec}, $h^{p,q}_{BC}$, $p,q\leq2$ in \S\ref{BC-sec} and $h^{p,q}_A$, $p,q\leq1$ in \S\ref{A-sec}.  Excepting $h^{1,1}_A$, the only quantities involved in the above formulas which are sensitive to the complex structure are 
$$
\dim{\zg(\ggo)^c\cap\ag^{1,0}}, \qquad \dim{\ag_f^{1,0}}, \qquad \dim{\ag_f^c\cap\ag^{0,1}}, 
$$
where $\ag=\zg(\ggo)\oplus\ag_f$ and $\ag_f^{1,0}:=\{ A^{1,0}:=\unm(A-\im J_\ag A):A\in\ag_f\}$.  Note that these three numbers depend on $J_\ag$ only if $\zg(\ggo)\ne 0$.  The following applications follow:
\begin{enumerate}[{\small $\bullet$}] 
\item Using that $h^{1,0}=\dim{\zg(\ggo)^c\cap\ag^{1,0}}$ and $h^{0,1}=\unm\dim{\ag}$, we obtain that a non-K\"ahler complex C-space never satisfies the $\partial\overline{\partial}$-Lemma, as this would imply that $\zg(\ggo)=\ag$, i.e., $M=F\times T^z$ is K\"ahler.  This was proved in \cite[Theorem 1]{Pds} for $G$ semisimple.   

\item The first Bott-Chern class (see \cite{Brb,Ist}), defined by
$$
c_1^{BC}(M,J):=[\sigma]\in H_{BC}^{1,1}(M), \qquad\mbox{where} \quad c_1(M,J)=[\sigma]\in H^2(M),  
$$ 
is non-zero if and only if $Z_{J_\qg}\ne 0$ (see \eqref{koszul2-intro}).  This implies that a complex C-space can never have holomorphically trivial canonical bundle, unless it is a torus, and can never admit a Chern-Ricci flat metric, not even non-invariant ones.  
\end{enumerate}
Our main result on cohomology is a full description of $\Ker\partial\overline{\partial}|_{\Lambda_T^{1,1}}$, where $\Lambda_T^{1,1}=\Lambda_\ag^{1,1}\oplus\Lambda_\qg^{1,1}$.  If $\ggo_f=\ggo_1\oplus\dots\oplus\ggo_s$ and $\hg_f=\hg_1\oplus\dots\oplus\hg_s$ are the decompositions in simple ideals and $\ggo_i=\hg_i\oplus\qg_i$ the corresponding reductive decompositions of the flags $G_i/H_i$ (i.e., $F=G_1/H_1\times\dots\times G_s/H_s$), then 
\begin{equation}\label{decp-intro}
\pg=\ag\oplus\qg_1\oplus\dots\oplus\qg_s, \qquad \qg= \qg_1\oplus\dots\oplus\qg_s,  
\end{equation}
and $\Delta_\qg^+=\Delta_{\qg_1}^+\sqcup\dots\sqcup\Delta_{\qg_s}^+$.  The decomposition \eqref{decintro2} naturally determines the following: after reordering, we can assume that for $1\leq t\leq u\leq s$, 
$$
\widetilde{\ggo}=\ggo_1\oplus\dots\oplus\ggo_t, \quad \underline{\ggo}=\ggo_{t+1}\oplus\dots\oplus\ggo_u, \quad 
\overline{\ggo}=\ggo_{u+1}\oplus\dots\oplus\ggo_s,  \quad \mbox{so}\quad  
\ag=\underline{\ag}_f\oplus\zg(\ggo)\oplus\overline{\ag}, 
$$ 
where $\overline{\ag}=\tg_{u+1}\oplus\dots\oplus\tg_s$ is a maximal torus of $\overline{\ggo}$.

We identify any $\sigma_\ag\in\Lambda_\ag^{1,1}$ with the symmetric bilinear form $h=h(\sigma_\ag):\ag\times\ag\rightarrow\RR$ given by $h(\cdot,\cdot):=\sigma(\cdot,J_\ag \cdot)$, which is compatible with $J_\ag$.  On the other hand, note that the only possibly nonzero components of a form $\sigma_\qg\in\Lambda_\qg^{1,1}$ are $\sigma_\qg(E_\alpha,E_{-\alpha})=-\im x_\alpha$, $x_\alpha\in\RR$, for all $\alpha\in\Delta_\qg^+$.  

\begin{theorem}\label{h11A-intro}
$\sigma=\sigma_\ag+\sigma_\qg\in\Ker\partial\overline{\partial}|_{\Lambda_T^{1,1}}$ if and only if 
\begin{enumerate}[{\rm (i)}] 
\item $h|_{\underline{\ag}_f\times\underline{\ag}_f}=0$, where $h=h(\sigma_\ag)$.  

\item $h|_{\overline{\ag}\times\overline{\ag}}=-(z_{u+1}\kil_{\ggo_{u+1}}+\dots+z_s\kil_{\ggo_s})|_{\overline{\ag}\times\overline{\ag}}$, for some $z_{u+1},\dots,z_s\in\RR$. 

\item $x_{\alpha+\beta}=x_\alpha+x_\beta-z_i$, for all $\alpha,\beta,\alpha+\beta\in\Delta_{\qg_i}^+$ and $i=1,\dots,s$, where we set $z_1=\dots=z_u=0$.  
\end{enumerate}
\end{theorem}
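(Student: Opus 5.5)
Since $\partial\overline{\partial}=-\overline{\partial}\partial$ and both operators act on the Tanr\'e model $\Lambda_T=\Lambda_\ag\otimes\Lambda_\qg$ by the derivation rules recalled above, I will compute $\partial\overline{\partial}\sigma_\ag$ and $\partial\overline{\partial}\sigma_\qg$ separately as elements of $\Lambda^{2,2}_\qg$ and impose that their sum vanishes. The only mixed terms (with an $\ag$-factor surviving) cancel automatically, since $d$ sends $\Lambda^{1,0}_\ag$ and $\Lambda^{0,1}_\ag$ into $\Lambda^{1,1}_\qg$ and $\partial_\qg,\overline{\partial_\qg}$ annihilate $\Lambda^{1,1}_\qg$-images of closed forms. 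Concretely, writing $\sigma_\ag=\sum h_{ij}\,\theta^i\wedge\overline{\theta}^j$ in a basis of $(\ag^{1,0})^*$, we get $\partial\overline{\partial}\sigma_\ag=-\sum h_{ij}\, d\theta^i\wedge d\overline{\theta}^j$. Each $d\theta$ for $\theta\in(\ag^{1,0})^*$ is the invariant form $(X,Y)\mapsto -\theta([X,Y]_\ag)$ restricted to $\qg$, i.e. $d\theta(E_\alpha,E_{-\alpha})=-\theta(\im H_\alpha)$ up to normalisation. Hence $\partial\overline{\partial}\sigma_\ag$ is the $(2,2)$-form on $\qg$ whose only nonzero components are
$$
(E_\alpha,E_{-\alpha},E_\beta,E_{-\beta})\longmapsto c\,\bigl(h(\im H_\alpha,\im H_\beta)\bigr)_{\alpha,\beta\in\Delta_\qg^+},
$$
where $h$ is extended to $\zg(\hg)$ by projecting onto $\ag$. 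On the other hand, $\sigma_\qg$ is the $G_f$-invariant $(1,1)$-form on the flag with coefficients $x_\alpha$. Its $d$ is computed from $d\sigma_\qg(X,Y,Z)=-\sigma_\qg([X,Y]_\qg,Z)+\dots$, which gives components on triples $E_\alpha,E_\beta,E_{-\alpha-\beta}$ proportional to $N_{\alpha,\beta}(x_{\alpha+\beta}-x_\alpha-x_\beta)$. Applying $\partial_\qg$ once more yields a $(2,2)$-form whose components on $(E_\alpha,E_{-\alpha},E_\beta,E_{-\beta})$ are quadratic in the $N$'s times linear combinations of the defects $x_{\alpha+\beta}-x_\alpha-x_\beta$.

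The comparison then splits according to the ideals. For $\alpha\in\Delta_{\qg_i}$ and $\beta\in\Delta_{\qg_j}$ with $i\ne j$, no brackets connect them, so the $\sigma_\qg$-part vanishes on these components. We therefore need $h(\im H_\alpha,\im H_\beta)=0$. Since $\{H_\alpha:\alpha\in\Delta_{\qg_i}\}$ spans the projection of $\zg(\hg)$ into $\ggo_i$ (the full $\tg_i$ for $i>u$), this forces $h$ to be block diagonal across the ideals. For the strict part this should yield (i), using that $\underline{\ag}_f$ is not a sum of $\ag\cap\ggo_i$'s. Within a single ideal $\ggo_i$, I would use the fact that the $\sigma_\qg$-contribution equals $\partial\overline{\partial}$ of the invariant form with coefficients $x_\alpha$. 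The key identity is that when $x_\alpha=Q_i(\im H_\alpha,\im H_\alpha)$-type linear data, namely $x_\alpha=\alpha(\im Z)$ for $Z\in\zg(\hg_i)$, the form is $d$-exact (Kähler–Einstein-type forms), and more generally $x_\alpha$ additive modulo a constant. I would therefore show that $\partial\overline{\partial}\sigma_\qg|_{\qg_i}$ equals $-z_i$ times the $(2,2)$-form $\bigl(\kil_{\ggo_i}(H_\alpha,H_\beta)\bigr)$ whenever $x_{\alpha+\beta}-x_\alpha-x_\beta\equiv -z_i$. This identity is verified by comparing with $\partial\overline{\partial}$ of $\omega_{\kil}$ for $x_\alpha=1$, which is the standard computation $\partial\overline{\partial}\omega=\sum N_{\alpha,\beta}^2(\cdots)$ together with the Chevalley relation $\sum N^2$ versus $\kil(H_\alpha,H_\beta)$. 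Matching with the $\sigma_\ag$-part, which is $h$ evaluated on $H$'s, gives (ii) for $i>u$ (there $\ag\supset\tg_i$, so $h|_{\tg_i}$ is forced to be a multiple of the Killing form). For $i\le u$ it gives $z_i=0$, because $\ag$ does not contain the needed directions, i.e. $h$ vanishes by (i) or the flag factor has no $\ag$-part.

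The main obstacle is the converse direction within one ideal: showing that vanishing of the $\qg_i$-component of $\partial\overline{\partial}\sigma$ forces the defects $x_{\alpha+\beta}-x_\alpha-x_\beta$ to be a single constant $-z_i$. Only the quadratic combinations of defects appear, not the defects themselves. I would first test the components $(E_\alpha,E_{-\alpha},E_\beta,E_{-\beta})$ for $\alpha,\beta$ simple complementary roots (relative to the $\Delta_\qg^+$-ordering), solving for these defects, and then proceed by induction on the height of $\alpha+\beta$, using the $\alpha$-strings through $\beta$ and the nonvanishing of $N_{\alpha,\beta}$ to propagate the equality of defects to all pairs inside the irreducible $\Delta_{\qg_i}$.
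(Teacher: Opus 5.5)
Your set-up agrees with the paper's: you evaluate $\partial\overline{\partial}\sigma$ on $(E_\alpha,E_{-\alpha},E_\beta,E_{-\beta})$, the mixed term vanishes since $d\theta$ is closed, and $h$ is block-diagonal across the simple ideals. The gap is that the step yielding (i) and $z_i=0$ for $i\le u$ is missing.

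\textbf{Block-diagonality does not give (i).} For $S^5\times S^1=\SU(3)/\SU(2)\times S^1$, the space $\underline{\ag}_f=\zg(\hg_1)$ lies in a single ideal. For $S^5\times S^5$ one has $\underline{\ag}_f=(\ag\cap\ggo_1)\oplus(\ag\cap\ggo_2)$, contrary to the reason you give. Your later derivation of $z_i=0$ for $i\le u$ then invokes (i), so it is circular.

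\textbf{The identity for constant defects only holds when $\hg_i$ is abelian.} You claim that constant defect $-z_i$ makes the $\qg_i$-part of $\partial\overline{\partial}\sigma_\qg$ equal to $z_i$ times the Killing pairing. This rests on $N_{\alpha,-\beta}^2-N_{\alpha,\beta}^2=\la\alpha,\beta\ra$. When $\alpha-\beta\in\Delta_{\hg_i}$, the $N_{\alpha,-\beta}^2$-term is absent because $[E_\alpha,E_{-\beta}]\in\kg^c$. For example, on $\CC P^2=\SU(3)/\U(2)$ with $\Pi_{\hg}=\{\alpha_2\}$, every $\sigma_\qg$ is closed and the defect condition is vacuous, yet $\la\alpha_1,\alpha_1+\alpha_2\ra\neq0$.

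\textbf{What the paper does instead.} The absent term is exactly what it exploits. Take $\alpha\in\Pi_{\qg_i}$ adjacent to some $\beta$ with $A_\beta=0$ (e.g.\ $\beta\in\Pi_{\hg_i}$), so that $A_{k\alpha+\beta}=kA_\alpha$. Evaluating \eqref{h11A-1} on the pairs $(k\alpha+\beta,\alpha)$ along the $\alpha$-string forces $h(\im A_\alpha,\im A_\alpha)=0$. Before that, one needs $h(\im A_\gamma,\im A_\delta)=z\la\gamma,\delta\ra$ on each connected component of $\Pi_{\qg_i}\setminus\Pi_0$. This comes from bilinearity of $h$ together with $A_{\gamma+\delta}=A_\gamma+A_\delta$ (the argument of \cite[Proposition 3.6]{SKT-LG}). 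Your plan to propagate equality of defects by induction lacks this input: with $h$ unknown, the equations at simple roots merely express the defects in terms of $h$.

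\textbf{Your block-diagonality is needed but not encoded in (i)--(iii).} The computation gives $h(\underline{\ag}_f,\ag_f)=0$, not just $h|_{\underline{\ag}_f\times\underline{\ag}_f}=0$. The paper's proof also derives $h(\ag_i,\ag_j)=0$ for $i\ne j$, but the statement drops it. Take $M=\SU(4)/\SU(2)\times\SU(3)$ as in Example~\ref{h21}, with $J_\ag$ exchanging $\underline{\ag}_f=\zg(\hg_1)$ and $\overline{\ag}=\tg_2$. A compatible $h$ with zero diagonal blocks and nonzero antisymmetric off-diagonal block, together with $\sigma_\qg=0$, satisfies (i)--(iii) with all $z_i=0$. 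Yet $\partial\overline{\partial}\sigma(E_\alpha,E_{-\alpha},E_\beta,E_{-\beta})$ is a nonzero multiple of $h(A_\alpha,A_\beta)$ for suitable $\alpha\in\Delta_{\qg_1}$, $\beta\in\Delta_{\qg_2}$. So the converse needs this condition added. Proving it also requires the components on four distinct roots (cf.\ \eqref{eqA4}), which your outline does not treat.

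\textbf{Where block-diagonality does the work.} It is what handles factors with $\hg_i$ abelian and no simple root killed by the projection to $\ag$, as in Example~\ref{h21-cex}; the paper's ``i.e.'' overlooks this case. For $W\in\zg(\kg)$ with $W_i\neq0$, the relation $0=W_\ag=\sum_j(W_j)_\ag$ gives $z_iQ(W_i,W_i)=h((W_i)_\ag,(W_i)_\ag)=0$, hence $z_i=0$.
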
   

As a first application, we obtain that $h^{1,1}_A= \dim{\zg(\kg)} + \dim{S}$, where $S$ is the subspace of all symmetric bilinear forms $h:\ag\times\ag\rightarrow\RR$ such that $h$ is compatible with $J_\ag$ and satisfies conditions (i) and (ii).  Since $\dim{S}$ is precisely the number of irreducible factors of $(G,J)$ as a complex manifold if $M=G=\overline{G}$, this generalizes the formula for $h^{1,1}_A$ given in \cite{Brb} for semisimple Lie groups.  Three other important applications of Theorem \ref{h11A-intro} to the study of pluriclosed manifolds will be given below in \S\ref{pluri-intro-sec}.

\subsection{Hermitian geometry} 
We start in \S\ref{chhm-sec} the study of the geometry of $G$-invariant Hermitian structures on a C-space $M=G/K=G_f/K\times T^z$ as in \eqref{decintro1}.  We consider $\Ad(K)$-invariant Hermitian structures on $\pg\equiv T_oM$ given by 
\begin{equation}\label{Jg-intro} 
(J,g), \qquad J=(J_\ag,J_\qg), \qquad 
g=g_\ag+g_\qg, \qquad g_\qg=(x_\alpha)_{\alpha\in\Delta_\qg^+}, 
\end{equation}
where $J$ is as in \eqref{J-intro}, $g_\ag$ is any inner product on $\ag$ such that $g_\ag(J_\ag\cdot,J_\ag\cdot)=g_\ag(\cdot,\cdot)$ and 
$$
g_\qg=\sum_{\alpha\in\Delta_\qg^+} x_\alpha (-\kil_{\ggo_f})|_{\qg_\alpha}, \qquad x_\alpha>0, \qquad \qg_\alpha^c:=\CC E_\alpha\oplus\CC E_{-\alpha}.   
$$

\begin{remark}
Generically (see Remark \ref{lh-rem-intro}), these are all $G$-invariant Hermitian structures up to holomorphic isometry, but for some C-spaces there may exist $G$-invariant Hermitian metrics which do not descend to the base of any Tits fibration.  In any case, a usual symmetrization process produces metrics of the form $g=g_\ag+g_\qg$ by averaging over $H_f$ any $G$-invariant metric and some conditions such as pluriclosed and CYT are invariant under symmetrization (see \cite{FinGrn1} and Remark \ref{aver}). 
\end{remark}

The K\"ahler form $\omega=g(J\cdot,\cdot)$ is therefore given by $\omega=\omega_\ag+\omega_\qg$,
where $\omega_\ag=g_\ag(J_\ag\cdot,\cdot)$ and the only nonzero components of $\omega_\qg$ are $\omega_\qg(E_\alpha,E_{-\alpha})=-\im x_\alpha$ for all $\alpha\in\Delta_\qg^+$.  The vector 
\begin{equation}\label{koszul-g-intro}
Z_{J_\qg,g_\qg}:=\sum_{\alpha\in\Delta_\qg^+}\tfrac{1}{x_\alpha}  \im H_\alpha   
\in \zg(\hg_f)=\zg(\kg)\oplus\ag_f,  
\end{equation}
called {\it metric Koszul vector}, plays a crucial role in the geometry of the Hermitian C-space $(M=G/K,J,g)$ (see \cite[Section 3]{Pds}).  

We give in \S\ref{cancon-sec} a formula for all the Hermitian connections $\nabla^t=\tfrac{t+1}{2}\nabla^c+\tfrac{1-t}{2}\nabla^b$, $t\in\RR$, introduced by Gauduchon in \cite{Gdc}, where $\nabla^c:=\nabla^1$ is the {\it Chern} connection and $\nabla^b:=\nabla^{-1}$ is the {\it Bismut} (or {\it Strominger}) connection.  After computing some components of the Chern curvature $R^c$ (see Proposition \ref{Ccurv}), we prove that the Chern Ricci form is given by  
$$
\rho^c(X,Y) = Q([X,Y],Z_{J_\qg}) = - \kil_{\ggo_f}([X,Y],Z_{J_\qg}), \qquad\forall X,Y\in\pg. 
$$
In particular, $\rho^c=0$ if and only if $\zg(\hg_f)=0$, i.e., $M$ is a torus.  Remarkably, $\rho^c$ does not depend on the metric $g$, so the solutions to the Chern Ricci flow $\dpar\omega(t)=-\rho^c(\omega(t))$ on a C-space are all simply given by $\omega(t)=\omega(0)-t\rho^c(\omega(0))$.  

Formulas for all the remaining Ricci forms $\rho^t$ are also obtained (see \S\ref{CC-sec}), from which follows that if a complex C-space admits a metric such that $\rho^{t_0}=0$, then $t_0<1$ and for every $t<1$ there exists a metric such that $\rho^t=0$ (cf.\ \cite{BrdStn}).       

The formulas for the Bismut connection of a Hermitian C-space and its curvature are studied in \S\ref{BiC-sec} and the Bismut holonomy in \S\ref{BiH-sec}, where the following applications are obtained. 

\begin{theorem}\label{Bapp-intro}      
Let $(M=G/K,J,g)$ be a Hermitian C-space.   
\begin{enumerate}[{\rm (i)}] 
\item $(J,g)$ is Bismut flat if and only if $M=G$ and $g$ is {\it almost-bi-invariant}, i.e., $g|_{[\ggo,\ggo]}=g_b$ for some biinvariant metric $g_b$ on $[G,G]$ (cf.\ \cite{WngYngZhn} and see Remark \ref{Bflat-rem}).  

\item The decomposition $\pg=\ag\oplus\qg_1\oplus\dots\oplus\qg_s$ given in \eqref{decp-intro} is $\holg(\nabla^b)$-invariant and $\holg(\nabla^b)|_\ag\equiv 0$ (see \cite{Stc} for the case when $M=G$ semisimple).  Moreover, generically, each $\qg_i$ is $\holg(\nabla^b)$-irreducible if $M=G/K$ is of fibration type (see Definition \ref{fibtype-def}).  
\end{enumerate}
\end{theorem}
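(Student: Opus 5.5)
Both parts rest on an explicit formula for the Bismut connection at the origin. Using the standard description of invariant connections on a reductive homogeneous space, I will write $\nabla^b_X = \Lambda^b(X)\in\End(\pg)$ for $X\in\pg$. The defining conditions are $\nabla^b g=0$, $\nabla^b J=0$, and totally skew torsion $H=-d^c\omega$.

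\textbf{Computing $\Lambda^b$.} I start from the Levi-Civita map $\Lambda^g(X)Y=\unm[X,Y]_\pg+U(X,Y)$ and add $\unm g^{-1}H(X,\cdot,\cdot)$. Then I evaluate on the root vectors $E_\alpha$ and on $\ag$, using two facts. First, $\ag\subset\zg(\hg)$ commutes with $\hg$. Second, $[\ag,E_\alpha]=\alpha(\cdot)E_\alpha$, so $[\ag,\qg_\alpha]\subset\qg_\alpha$ and $[\ag,\ag]=0$.

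The expected outcome is the following:
\begin{itemize}
\item[(a)] $\Lambda^b(A)$ for $A\in\ag$ acts as zero on $\ag$, and as $\ad A$ (a skew, $J$-commuting map) on each $\qg_\alpha$, up to a metric rescaling.
\item[(b)] $\Lambda^b(E_\alpha)$ maps $\ag$ into $\qg$ and $\qg$ into $\ag\oplus\qg$. Its components between different simple factors $\qg_i,\qg_j$ vanish, because $[\qg_i,\qg_j]=0$ and the torsion has no mixed terms.
\end{itemize}
The curvature then follows from $R(X,Y)=[\Lambda(X),\Lambda(Y)]-\Lambda([X,Y]_\pg)-\ad([X,Y]_\kg)$.

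\textbf{Part (ii).} The holonomy algebra is generated by the curvature endomorphisms together with their iterated $\Lambda^b$-covariant derivatives (the Kostant/Nomizu description). So I need to show that every such generator preserves each $\qg_i$ and kills $\ag$. I will check this on the generating curvature values $R^b(X,Y)$, for $X,Y$ running over $\ag$, $E_\alpha$ and $E_{-\beta}$.

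The key point is that the $\ag$-components of $R^b(X,Y)$ vanish. This should follow from the identity $R^b(X,Y)A=0$ for $A\in\ag$. I will try to derive that identity from $\nabla^b J=0$ combined with a Bismut-parallel-type property of $\ag$: the fields generated by $\ag$ are invariant under the right $H$-action and behave like left-invariant fields on a torus. Invariance under the derivations $\Lambda^b(X)$ is then needed to close the algebra.

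This is the main obstacle. $\Lambda^b(E_\alpha)$ mixes $\ag$ and $\qg$, so I must show these mixing terms cancel in the commutators. I will first verify the simplest case $R^b(E_\alpha,E_{-\alpha})$ by direct use of the brackets $[E_\alpha,E_{-\alpha}]=H_\alpha$ and the relations $N_{\alpha,\beta}$.

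\textbf{Genericity in (ii).} Irreducibility of $\qg_i$ under $\holg(\nabla^b)$ is claimed generically, in the sense of Remark \ref{lh-rem-intro}. I will argue that the curvature endomorphisms contain $\ad$ of $\hg_i$ plus terms from the $\ag$-directions. For generic $\zg(\kg)$ and fibration type, these generate an algebra acting on $\qg_i$ like $\ad(\ggo_i)$-brackets projected, hence irreducibly.

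\textbf{Part (i).} For ``if'': when $M=G$, $\pg=\ggo$, and if $g$ is bi-invariant on $[\ggo,\ggo]$, then the connection $\nabla_XY=0$ on left-invariant fields is metric. It is also $J$-parallel (every left-invariant endomorphism field is parallel), and its torsion $-[X,Y]$ is totally skew. By uniqueness it is Bismut, and it is flat.

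For ``only if'': flatness means the holonomy is trivial. Using the explicit formula for $R^b(E_\alpha,E_{-\alpha})$ restricted to $\qg$, I expect a term proportional to $\ad(\text{projection of } H_\alpha \text{ onto }\kg)$. Flatness then forces $\kg$ to act trivially, so $K$ is trivial and $M=G$. The remaining terms involve the differences $x_\alpha-x_\beta$ across roots in the same simple factor, and their vanishing forces $g_\qg$ to be a multiple of $-\kil$ on each $\ggo_i$. The $\ag$-part must then match that multiple on each maximal torus $\tg_i$, which is exactly almost-bi-invariance.
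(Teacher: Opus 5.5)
Your proposal has genuine gaps in both parts.

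\textbf{Part (ii).} The computation you defer is exactly where the proof lives, and your predicted outcome (b) is wrong in the direction that matters. By Corollary \ref{LC} and Proposition \ref{Btor}, $\Lambda^g(E_\alpha)A=\tfrac{g_\ag(A,A_\alpha)}{2x_\alpha}E_\alpha$ and $\Lambda^g(E_\alpha)E_{-\alpha}=\unm A_\alpha$. On the other hand, $\unm T^b(E_\alpha,A)=-\tfrac{g_\ag(A,A_\alpha)}{2x_\alpha}E_\alpha$ and $\unm T^b(E_\alpha,E_{-\alpha})=-\unm A_\alpha$. Hence $\Lambda^b(E_\alpha)A=0$ and $\Lambda^b(E_\alpha)E_{-\alpha}=0$ (Corollary \ref{Bcon}): the torsion term cancels precisely the $\ag$--$\qg$ mixing of the Levi-Civita connection. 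So every $\Lambda^b(X)$ kills $\ag$ and preserves each $\qg_i$, since $\Lambda^b(E_\alpha)E_\beta\in\CC E_{\alpha+\beta}$ and $\Lambda^b(A)$ is diagonal on root vectors; $\ad\kg|_\pg$ has the same two properties. Endomorphisms with these properties form a Lie subalgebra, which therefore contains all $R^b(X,Y)$ and all their iterated brackets with $\Lambda^b(\pg)$. This gives the first half of (ii) at once; your curvature-based description of $\holg(\nabla^b)$ is fine, and this is how the paper argues.

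Your ``main obstacle'' does not exist, and the workaround you sketch is not an argument. Indeed, if $\Lambda^b(E_\alpha)$ really mapped $\ag$ into $\qg$, then $[\Lambda^b(E_\alpha),R^b(X,Y)]A=-R^b(X,Y)\Lambda^b(E_\alpha)A$ would have no reason to vanish, and your closure step would fail.

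For irreducibility, the genericity is in the metric $g_\qg$, not in $\zg(\kg)$. If $g_\qg=g_b|_\qg$, then $\Lambda^b(\qg)=0$ by \eqref{Lb}, while $\Lambda^b(\ag)$ and $\ad\kg|_\pg$ preserve every isotropy summand $\mg_j$. So $\qg_i$ is reducible as soon as it contains two summands, however generic $\zg(\kg)$ is (Remark \ref{gQhol}). Contributions from $\hg_i$ and from the $\ag$-directions never leave a summand. What connects different summands is $\Lambda^b(E_{-\gamma})E_\delta=\tfrac{N_{-\gamma,\delta}(x_\delta-x_\gamma)}{x_{\delta-\gamma}}E_{\delta-\gamma}$; in your curvature description, the analogue is components like \eqref{Bcurv4}, with factor $(x_{\alpha+\beta}-x_\alpha)(x_{\alpha+\beta}-x_\beta)$. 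Proposition \ref{holirr} uses this to link $E_{\alpha_{max}}$ to every $E_\beta$, $\beta\in\Delta_{\qg_i}^+$, by subtracting simple roots, under the open condition $x_\delta\ne x_\gamma$.

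\textbf{Part (i), ``if''.} Your argument covers only bi-invariant metrics. The torsion $-[X,Y]$ of the connection with parallel left-invariant fields is totally skew iff $g$ is $\ad$-invariant on all of $\ggo$. Taking one entry in $\zg(\ggo)$, this forces $g(\zg(\ggo),[\ggo,\ggo])=0$. Almost-bi-invariant metrics need not satisfy this. In that case $T^b(A,E_\alpha)=\tfrac{g_\ag(A,A_\alpha)}{x_\alpha}E_\alpha\neq 0$ for suitable $A\in\zg(\ggo)$, although $[A,E_\alpha]=0$. So the Bismut connection is a different (still flat) connection, and uniqueness gives nothing. You need either the direct check of Remark \ref{Bflat-rem}, or to replace the bracket by $-T^b$, for which $g$ becomes bi-invariant. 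The direct check uses $\Lambda^b(E_\alpha)=0$, $\Lambda^b(\overline{\tg})=0$ and $\Lambda^b(\zg(\ggo))$ diagonal, so every component of $R^b$ vanishes.

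\textbf{Part (i), ``only if''.} There is no isolated term $\ad((H_\alpha)_\kg)$ to set to zero. In $R^b(E_\alpha,E_{-\alpha})E_\beta$ it combines with $-\Lambda^b(A_\alpha)E_\beta$ into $-\bigl(\la\alpha,\beta\ra+\tfrac{g_\ag(A_\alpha,A_\beta)}{x_\beta}\bigr)E_\beta$ (see \eqref{Bcurv2}, \eqref{Bcurv3}). So flatness does not visibly force $\kg$ to act trivially. The paper obtains $\kg=0$ in two steps.
\begin{enumerate}
\item[(1)] A root-string analysis at adjacent $\alpha\in\Pi_\qg$, $\beta\in\Pi_{\hg_f}$ (cases $2\alpha+\beta$, $3\alpha+\beta$), using \eqref{Bcurv1}--\eqref{Bcurv4}, yields a contradiction. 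Hence $\Pi_{\hg_f}=\emptyset$ and $[\kg,\kg]=0$.
\item[(2)] For $\alpha,\beta\in\Pi_{\qg_i}$, flatness gives $x_\alpha=x_\beta=z_i$ and $g_\ag(\im A_\alpha,\im A_\beta)=z_iQ(\im H_\alpha,\im H_\beta)$. So the $A_\alpha$, $\alpha\in\Pi_\qg$, are linearly independent, whence $\dim\ag_f=\dim\zg(\hg_f)$ and $\zg(\kg)=0$.
\end{enumerate}
Only after this does your final step (induction on height giving $x_\alpha=z_i$ on each $\Delta_i$) match the paper.
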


\subsection{Special metrics}
On compact complex manifolds which, due to topological obstructions, do not admit K\"ahler metrics, it is unlikely to find canonical Hermitian metrics.  It is not hard in general to define notions weakening the K\"ahler condition, but then the existence and abundance questions arise, as well as the need of examples, without mention the possible interplays between the different concepts.  

We now list the notions studied in this paper and give, for each of them, a few general references where a lot more of information, mathematical physics motivation and literature can be found.  All the references we are aware of for each condition in the compact homogeneous case are also given.   

\begin{enumerate}[{\small $\bullet$}] 
\item {\it Balanced}: $d_g^*\omega=0$ (or $d\omega^{n-1}$=0).  See \cite{Mch, Fin, Brb2}; and \cite{Pds, FinGrnVzz, Kwn} for the compact homogeneous case.  

\item {\it LCB}: $d\theta_L=0$, where $\theta_L=-d_g^*\omega(J\cdot)$ is the Lee form, i.e., {\it locally conformally balanced}.  See \cite{FinTms, Prd}.  

\item {\it Pluriclosed} (or {\it SKT}): $\partial\overline{\partial}\omega=0$.  See \cite{Bsm, Str, Brb2, Fin, FinGrn2, Brn}; and for C-spaces, \cite{Pds, FinGrnVzz, Brb, FinGrn1, SKT-LG, Kwn}.  

\item {\it BTP}: $\nabla^bT^b=0$, i.e., {\it parallel Bismut torsion}.  See \cite{ZhaZhn2} and \cite{PdsZhn, Fr}. 

\item {\it BAS}: $\nabla^bT^b=0$ and $\nabla^bR^b=0$, i.e., {\it Bismut Ambrose-Singer}.  See \cite{NiZhn2} and \cite{BrbPdc}.  

\item {\it CYT}: $\rho^b=0$, i.e., {\it Calabi-Yau with torsion}.  See \cite{GrnGrnPn, Brb2, FinGrn2, Brn}; and in the compact homogeneous context, \cite{Grn, Pds}.  

\item {\it LCK}: $d\omega=\theta\wedge\omega$ and $d\theta=0$, i.e., {\it locally conformally K\"ahler}.  See the recent book \cite{OrnVrb}; and \cite{HsgKms1,HsgKms2, AlkCrtHsgKms, Gn, AlkHsgKms} for C-spaces.  

\item {\it Vaisman}: LCK together with $\nabla^g\theta=0$.  See \cite{OrnVrb, Ist}.  
\end{enumerate}

In what follows, we consider a C-space $M=G/K=G_f/K\times T^z$ as in \eqref{decintro1} endowed with a $G$-invariant Hermitian structure $(J,g)$ as in \eqref{Jg-intro} and describe the main results obtained on each of the above kinds of special metrics.   

\subsubsection{Balanced (see \S\ref{bal-sec})} 
We first prove that $(J,g)$ is balanced if and only if 
\begin{equation}\label{bal-intro} 
Z_{J_\qg,g_\qg}\in\zg(\kg). 
\end{equation} 
In particular, $c_1(M,J)\ne 0$ and $\zg(\kg)\ne 0$ if $(M,J)$ admits a balanced metric.  Conversely, on any C-space $M=G/K$ such that $\zg(\kg)\ne 0$ and $G_f$ is simple of rank $\geq 2$, there exists a $2d^2$-parametric space of $G$-invariant complex structures admitting a balanced metric, where $\dim{\ag}=2d$ (see Theorem \ref{bal2}).

\subsubsection{Pluriclosed (see \S\ref{SKT-sec})}\label{pluri-intro-sec} 
It is worth first noting that if $(J,g)$ is pluriclosed, then $[\omega]\in H^{1,1}_A(M)$ and $[\partial\omega]\in H^{2,1}_{\overline{\partial}}(M)$; moreover, they are both nonzero classes in our setting.  As a second application of Theorem \ref{h11A-intro}, we obtain the following classification result.  

\begin{theorem}\label{pluri-clasif-intro}
A C-space $M=G/K$ admits a pluriclosed $G$-invariant Hermitian structure if and only if it is the product of a flag manifold and a compact Lie group (i.e., the strict factor in \eqref{decintro1} is a torus).  Moreover, the restriction of the structure on the flag is K\"ahler and the restriction on the compact Lie group is also pluriclosed. 
\end{theorem}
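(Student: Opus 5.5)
The approach is to apply Theorem \ref{h11A-intro} to the Kähler form $\omega=\omega_\ag+\omega_\qg$ of a $G$-invariant Hermitian structure $(J,g)$ of the form \eqref{Jg-intro}. By the symmetrization remark (Remark \ref{aver}), pluriclosedness survives averaging over $H_f$, so it is enough to treat metrics of this form. Pluriclosed then means exactly $\omega\in\Ker\partial\overline{\partial}|_{\Lambda_T^{1,1}}$. For $\sigma=\omega$ we have $h=h(\omega_\ag)=\omega_\ag(\cdot,J_\ag\cdot)=-g_\ag$, which is negative definite, and the coefficients $x_\alpha>0$ are those of $g_\qg$.

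The key step is condition (i) of Theorem \ref{h11A-intro}. It forces $g_\ag|_{\underline{\ag}_f\times\underline{\ag}_f}=0$, and since $g_\ag$ is positive definite this gives $\underline{\ag}_f=0$. Note that $\underline{\ag}_f$ is the $\ag$-part coming from the strict factor $\underline{G}/\underline{K}=[\underline{G},\underline{G}]/\underline{K}\times T^z$. The thing to check is that $\underline{\ag}_f=0$ really forces $[\underline{G},\underline{G}]/\underline{K}$ to be trivial, so that the strict factor is just the torus $T^z$. I expect this to be the main obstacle, and I would handle it with Definition \ref{strict-def} and the structure of \S\ref{chcm-sec}. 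By strictness, every simple factor $\ggo_i$ with $t<i\le u$ carries a nonzero contribution to $\ag$ not contained in $\zg(\ggo)$, so $\underline{\ag}_f\neq 0$ whenever $u>t$. Hence $\underline{\ag}_f=0$ forces $u=t$, and $M=\widetilde{G}/\widetilde{H}\times T^z\times\overline{G}$ is a flag manifold times the compact Lie group $T^z\times\overline{G}$.

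Next I read off the restrictions using (iii) with $z_1=\dots=z_t=0$. On the flag factors this gives $x_{\alpha+\beta}=x_\alpha+x_\beta$ for $\alpha,\beta,\alpha+\beta\in\Delta^+_{\qg_i}$, $i\le t$. This is the classical characterization of invariant Kähler metrics on flag manifolds: it is equivalent to $d\omega_{\qg_i}=0$, since the $(\alpha,\beta,-\alpha-\beta)$ components of $d\omega$ are proportional to $x_{\alpha+\beta}-x_\alpha-x_\beta$. So the structure is Kähler on $\widetilde{G}/\widetilde{H}$. On $\overline{G}$, conditions (ii) and (iii) for $i>u$ are exactly the conditions of Theorem \ref{h11A-intro} applied to the Lie group $\overline{G}\times T^z$ with the restricted structure, so that restriction is pluriclosed. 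One also has to check that $(J,g)$ splits as a product. This holds because $J$ and $g$ preserve the decomposition \eqref{decp-intro}, and because the torus part $\zg(\ggo)$ only enters through $g_\ag$, on which (i) and (ii) impose no conditions mixing $\zg(\ggo)$ with the other factors.

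For the converse, suppose $M=F\times L$ with $F$ a flag manifold and $L$ a compact Lie group. Take a Kähler–Einstein (or any invariant Kähler) metric on $F$, and on $L=L_f\times T^z$ take $J$ as in \eqref{J-intro} with an almost-bi-invariant metric, i.e.\ $g=-\kil$ on $[L,L]$ up to scaling on each simple factor. Its Bismut connection is flat by Theorem \ref{Bapp-intro}(i), so the torsion 3-form is closed and the metric is pluriclosed. Equivalently, one verifies (ii) and (iii) directly: take $x_\alpha=z_i$ constant, and $h=-z_i\kil_{\ggo_i}$ on the maximal torus. The product metric is then pluriclosed.
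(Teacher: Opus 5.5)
Your proposal is correct and follows the paper's proof. You symmetrize to $g=g_\ag+g_\qg$, use Lemma \ref{h11A-lem}(i) with the definite form $h(\omega_\ag)$ to force $\underline{\ag}_f=0$ and hence the product decomposition of Proposition \ref{decC}, and then read off Kählerness on the flag from (iii) and pluriclosedness on $T^z\times\overline{G}$ from (ii)--(iii); your converse, choosing $J_\ag$ compatible with an almost-bi-invariant (hence Bismut-flat) metric, supplies the easy direction that the paper leaves implicit. One small slip: $h(\omega_\ag)=\omega_\ag(\cdot,J_\ag\cdot)=g_\ag$, not $-g_\ag$. This is harmless for the definiteness argument, and it is the sign that makes your direct check $h=-z_i\kil_{\ggo_i}$ with $z_i=x_\alpha>0$ consistent.
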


\begin{remark}
The above classification was obtained in \cite{FinGrnVzz} in the case when $G$ is semisimple by different methods.  However, we found a counterexample to \cite[Lemma 6.4]{FinGrnVzz} (see Remark \ref{h21-rem} and Example \ref{h21}), which is strongly used in the proof of \cite[Theorem 6.1]{FinGrnVzz}.      
\end{remark}

It follows from \eqref{bal-intro} and Theorem \ref{pluri-clasif-intro} that the Fino-Vezzoni conjecture holds in the compact homogeneous case, i.e., a non-K\"ahler C-space can never admit an invariant balanced metric and an invariant pluriclosed metric at the same time (see \cite{Kwn} for a recent proof of this in a broader homogeneous context).   

If the C-space is a compact Lie group, then 
$$
M=G=T^z\times\overline{G}, \qquad \overline{\ggo}=\ggo_1\oplus\dots\oplus\ggo_s,  \qquad \ag=\zg(\ggo)\oplus\overline{\ag},
$$ 
where $\overline{\ag}$ is a maximal torus of $\overline{\ggo}$.  For any left-invariant complex structure $J=J_\ag+J_\qg$, often called {\it Samelson} structures, we obtain, as a third application of Theorem \ref{h11A-intro}, that a left-invariant metric compatible with $J$ of the form $g=g_\ag+g_\qg$, $g_\qg=(x_\alpha)_{\alpha\in\Delta^+}$ (note that $g$ is in addition $\Ad(T)$-invariant, where $T$ is the maximal torus of $G$ with Lie algebra $\ag$) is pluriclosed if and only if 
\begin{enumerate}[{\rm (a)}] 
\item $g_\ag|_{\overline{\ag}\times\overline{\ag}}=-(z_{1}\kil_{\ggo_{1}}+\dots+z_s\kil_{\ggo_s})|_{\overline{\ag}\times\overline{\ag}}$, for some $z_{1},\dots,z_s>0$, 

\item and for each $i=1,\dots,s$, if $\Pi_{\qg_i}=\{\alpha_{i,1},\dots,\alpha_{i,m_i}\}\subset\Delta_{\qg_i}^+$ are the simple roots, $m_i:=\dim{\zg(\hg_i)}$, then
$$
x_\alpha=z_i+\sum_{j=1}^{m_i} n_{i,j}(x_{\alpha_{i,j}}-1), \qquad \forall \alpha=\sum_{j=1}^{m_i} n_{i,j}\alpha_{i,j} +\sum_{\beta\in\Pi_{\hg_f}}k_{i,\beta}\beta\in\Delta_{\qg_i}^+,   
$$  
$n_{i,j},k_{i,\beta}\in\NN_0$.
\end{enumerate}
This was proved in \cite{SKT-LG} in the semisimple case.  It follows from Theorem \ref{Bapp-intro}, (i) that a pluriclosed metric $g$ as above is Bismut flat if and only if $x_\alpha=z_i$ for all $\alpha\in\Delta_{\qg_i}^+$ and $i=1,\dots,s$.  The space of all pluriclosed metrics of the form $g=g_\ag+g_\qg$ on $M=G$ is therefore either empty (i.e., $S=\{ 0\}$) or it depends on $\rank(\overline{\ggo})+\dim{S}$ parameters.  Note that $g_\ag$ is not necessarily the restriction of a biinvariant metric of $G$ on the maximal torus $\ag$ of $\ggo$.     

As fourth application of Theorem \ref{h11A-intro} gives that any positive definite class in $H^{1,1}_A(M)$ contains a unique Bismut-flat left-invariant metric (see Theorem \ref{Bapp-intro}, (i)).  This implies global stability for the pluriclosed flow, by using the strong general convergence result \cite[Theorem 1.2]{GrcJrdStr} (see   \cite[Theorem 4.3]{Brb} for the case when $G$ is semisimple).  

\begin{theorem}\label{stab-intro}
Let $M=G$ be a compact Lie group endowed with a left-invariant complex structure.  Then for any pluriclosed metric $\omega_0$ (not necessarily left-invariant), the solution $\omega(t)$ to pluriclosed flow with initial data $\omega_0$ exists on $[0,\infty)$ and converges to a Bismut-flat metric $\omega_\infty$.
\end{theorem}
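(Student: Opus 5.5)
The plan is to invoke the general convergence theorem \cite[Theorem 1.2]{GrcJrdStr}. It says roughly the following. Suppose the pluriclosed flow starts at a pluriclosed metric on a compact complex manifold, and the Aeppli class $[\omega_0]\in H^{1,1}_A(M)$ (or rather the ray of classes $[\omega_0]-t[\rho^b]$ determined by the flow) contains a Bismut-flat metric. Then the flow exists for all time and converges to a Bismut-flat metric. So the proof reduces to two tasks. First, identify the Aeppli classes of pluriclosed metrics on $M=G$ with the classes of invariant forms. Second, show that each positive definite such class contains a Bismut-flat left-invariant metric, unique by the fourth application of Theorem \ref{h11A-intro}.

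\emph{Step 1 (averaging).} Since $G$ is compact, averaging $\omega_0$ under left translations gives a left-invariant $(1,1)$-form $\overline{\omega}_0$, which is still positive and pluriclosed. Left translations are biholomorphisms homotopic to the identity, and $G$ is connected. By the standard argument that invariant forms compute the Aeppli cohomology of a compact Lie group with invariant complex structure (equivalently, the Tanr\'e model $\Lambda_T$), we get $[\overline{\omega}_0]=[\omega_0]$ in $H^{1,1}_A(M)$. We further average over the maximal torus $T$ with Lie algebra $\ag$ acting by conjugation, i.e.\ by $\Ad(T)$, which acts by biholomorphisms. This produces a representative of the form $\sigma_\ag+\sigma_\qg\in\Ker\partial\overline{\partial}|_{\Lambda_T^{1,1}}$ that is positive: a left-invariant pluriclosed metric $g=g_\ag+g_\qg$. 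Theorem \ref{h11A-intro} then applies. Here $\underline{\ggo}=0$ and there is no flag factor, so conditions (a), (b) hold with $z_i>0$.

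\emph{Step 2 (Bismut-flat representative).} In the class of $g$ we take the metric $g_\infty$ with $g_{\infty,\ag}=g_\ag$ and $x_\alpha=z_i$ for all $\alpha\in\Delta^+_{\qg_i}$. By Theorem \ref{h11A-intro}(iii) this metric is pluriclosed, and by Theorem \ref{Bapp-intro}(i) it is Bismut flat. We must check that $g-g_\infty$ is $\partial\overline{\partial}$-exact (up to the ambiguity in the Aeppli quotient). By (b), the difference of the $\qg$-components is $x_\alpha-z_i=\sum_j n_{i,j}(x_{\alpha_{i,j}}-1)$. We show this is additive in $\alpha$ and hence lies in the image of $\partial\Lambda^{0,1}+\overline{\partial}\Lambda^{1,0}$. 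In the Tanr\'e model, $d:\Lambda^{1,0}_\ag\to\Lambda^{1,1}_\qg$ sends $A^*$ to the form with $x_\alpha$ linear in $\alpha(A)$. Linear functionals on $\ag$ produce exactly the additive families, and their real parts give the required exact term.

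\emph{Step 3 (flow).} The Bismut Ricci form $\rho^b$ of a pluriclosed metric is $\partial\overline{\partial}$-closed, and its Aeppli class is $c_1^{BC}$-like. On $G$ it vanishes in $H^{1,1}_A$ because $Z_{J_\qg}\in\ag$ (here $\zg(\kg)=0$). So the class is constant along the flow, and \cite[Theorem 1.2]{GrcJrdStr} gives long-time existence and convergence to a Bismut-flat $\omega_\infty$.

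The main obstacle is Step 1, namely justifying that the Aeppli class of an arbitrary, non-invariant pluriclosed metric equals that of its average. This needs invariance of Aeppli cohomology under the connected group action, which holds because both $H^{1,1}_A$ and the invariant model compute the same finite-dimensional space via Hodge theory. A second point to check is the exact form of the hypothesis in \cite{GrcJrdStr}, i.e.\ whether it requires $[\partial\omega]$ or the Aeppli class to admit a Bismut-flat representative.
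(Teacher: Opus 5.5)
Your proposal is correct and follows essentially the paper's route: the Aeppli class of $\omega_0$ contains a left-invariant Bismut-flat metric (Proposition \ref{Agen}, via the isomorphism $S\simeq H^{1,1}_A(M)$), and then \cite[Theorem 1.2]{GrcJrdStr} is applied. Your two averagings make explicit why the $h\in S$ attached to the class of a non-invariant $\omega_0$ is positive definite, which the paper takes for granted, while your Step 3 on the Aeppli class along the flow is unnecessary. On your last concern, the hypothesis of that theorem is $[\partial\omega_0]=[\partial\omega_{bf}]\in H^{2,1}_{\overline{\partial}}(M)$, and this follows at once from $\omega_0-\omega_{bf}=\partial a+\overline{\partial}b$, since then $\partial\omega_0-\partial\omega_{bf}=\overline{\partial}(-\partial b)$.
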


\subsubsection{BTP (see \S\ref{BTP-sec})}
The BTP condition turns out to be quite strong in the compact homogeneous context (see \cite{PdsZhn, Fr}).  However, it follows from part (i) of the theorem below that any complex C-space admits a BTP invariant metric.  After computing many components of $\nabla^b T^b$, we obtain the following partial classification.  

\begin{theorem}\label{BTP-thm-intro}
Let $(M=G/K,J)$ be a complex C-space fibering over a flag $F=G_1/H_1\times\dots\times G_s/H_s$ and assume that $G_i\ne \Eg_6, \Eg_7, \Eg_8$ for all $i=1,\dots,s$.  Then a metric $g=g_\ag+g_\qg$, $g_\qg=(x_\alpha)_{\alpha\in\Delta_\qg^+}$ is BTP if and only if 
\begin{enumerate}[{\rm (i)}]
\item either $g_\qg=g_b|_\qg$ for some bi-invariant metric $g_b$ on $[\ggo,\ggo]$ (i.e., $x_\alpha=x_\beta$ for all $\alpha,\beta\in\Delta_{\qg_i}$, $i=1,\dots,s$, see \eqref{decp-intro}),   

\item or $g_\qg$ is K\"ahler on the flag $F=G_f/H_f$ (i.e., $x_{\alpha+\beta}=x_\alpha+x_\beta$ for all $\alpha,\beta,\alpha+\beta\in\Delta_\qg^+$) and $\tfrac{1}{x_\alpha}A_\alpha= \tfrac{1}{x_\beta}A_\beta$ for all $\alpha,\beta,\alpha+\beta\in\Delta_\qg^+$, where $A_\alpha$ is the $Q$-orthogonal projection of $H_\alpha$ on $\ag^c$.      
\end{enumerate}
\end{theorem}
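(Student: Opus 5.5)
Since $\nabla^b$, $T^b$ and $g$ are all $G$-invariant, the condition $\nabla^bT^b=0$ reduces to an algebraic identity on $\pg\equiv T_oM$: for $X\in\pg$ we have $(\nabla^b_XT^b)(Y,Z)=\Lambda^b_X T^b(Y,Z)-T^b(\Lambda^b_XY,Z)-T^b(Y,\Lambda^b_XZ)$, where $\Lambda^b_X$ is the Nomizu operator of the Bismut connection. I would take the explicit formula for $\Lambda^b$ from \S\ref{BiC-sec} and the torsion $T^b$, which is the $3$-form $Jd\omega$ read off from $[\cdot,\cdot]_\pg$ together with the decomposition $\pg=\ag\oplus\qg$. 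I would then evaluate $\nabla^bT^b$ on complexified basis vectors $A\in\ag^c$ and $E_\alpha$, $\alpha\in\Delta_\qg$. The $\Ad(T)$-weights reduce the non-trivial components to a short list of types:
\begin{enumerate}[(a)]
\item $(\nabla^b_{A}T^b)(E_\alpha,E_{-\alpha})$ and $(\nabla^b_{E_\alpha}T^b)(E_{-\alpha},A)$, with all arguments in $\ag$ or in a single root pair;
\item $(\nabla^b_{E_\gamma}T^b)(E_\alpha,E_\beta)$ with $\gamma+\alpha+\beta=0$ or $\gamma+\alpha+\beta\in\Delta\cup\{0\}$.
\end{enumerate}
Each of these is an expression in the $x_\alpha$, the structure constants $N_{\alpha,\beta}$ and the projections $A_\alpha$ of $H_\alpha$ onto $\ag^c$.

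For sufficiency I would check the two families directly. In case (i), $g_\qg$ is the restriction of a bi-invariant metric on each $\qg_i$. The $\qg\times\qg\to\qg$ part of $T^b$ is then essentially $-[\cdot,\cdot]_\qg$, and $\Lambda^b$ acts like $\ad$, so the Jacobi identity and the $\ad$-invariance of $g_b$ kill all components. This is the same mechanism that makes almost-bi-invariant metrics Bismut flat in Theorem \ref{Bapp-intro}(i). In case (ii), $g_\qg$ is K\"ahler on $F$, so $d\omega_\qg$ has no purely-$\qg$ part. The torsion is then generated by $d\omega_\ag$, i.e.\ by the terms $\tfrac{1}{x_\alpha}A_\alpha$. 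The condition $\tfrac{1}{x_\alpha}A_\alpha=\tfrac{1}{x_\beta}A_\beta$ for $\alpha,\beta,\alpha+\beta\in\Delta_\qg^+$ is exactly what makes the type (b) components vanish; these are the only remaining ones, since the type (a) components vanish because $\holg(\nabla^b)|_\ag=0$ by Theorem \ref{Bapp-intro}(ii).

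For necessity I would first extract from the type (b) components, for every triple $\alpha,\beta,\alpha+\beta\in\Delta_{\qg}^+$, a polynomial relation in $x_\alpha,x_\beta,x_{\alpha+\beta}$ involving $N_{\alpha,\beta}^2$ and $Q(A_\alpha,A_\beta)$. I expect this relation to factor roughly as
$$(x_{\alpha+\beta}-x_\alpha-x_\beta)\cdot(\text{something vanishing when } x_\alpha=x_\beta=x_{\alpha+\beta})=0.$$
This gives, for each such triple, a local dichotomy: the triple is either ``K\"ahler'' or ``bi-invariant''.

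The main obstacle is propagating this local dichotomy to a global one on each simple factor $\qg_i$. The argument must exclude mixtures, where some triples satisfy $x_{\alpha+\beta}=x_\alpha+x_\beta$ and others satisfy $x_\alpha=x_\beta=x_{\alpha+\beta}$. It must also rule out the degenerate intermediate solutions of the relation. My plan is to use components involving four roots, i.e.\ $\nabla_{E_\gamma}T(E_\alpha,E_\beta)$ with $\alpha+\beta+\gamma\in\Delta$, which link overlapping triples. These components force consistency along $\alpha$-strings, and I would run the argument on the root strings of each irreducible root system. Here I would use the classification of the complementary-root combinatorics $\Delta_{\qg_i}^+$ (the $\tg$-roots of the flag) for the classical types and $\Gg_2$, $\Fg_4$, checking case by case that any mixed assignment produces a contradiction. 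The exceptional $\Eg_6,\Eg_7,\Eg_8$ are excluded precisely because this case analysis becomes unmanageable there.

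Once each factor is shown to be purely of type (i) or of type (ii), the residual type (a) and type (b) equations give the relation $\tfrac{1}{x_\alpha}A_\alpha=\tfrac{1}{x_\beta}A_\beta$ in case (ii). Finally, a factor-mixing argument across different $i$, via the shared $\ag$-components, shows that the alternative (i) or (ii) is the same for the whole flag, as the statement asserts.
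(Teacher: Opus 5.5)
Your overall route is the paper's: sufficiency of (i) and (ii) is Propositions \ref{BTP-exi} and \ref{BTP-K}, and necessity is the local dichotomy followed by the case-by-case globalisation of \cite{Fr}. You do not need to redo that case analysis. Conditions (iii), (iv), (vi) of Proposition \ref{BTP} involve only the $x_\alpha$ and the $N_{\alpha,\beta}$: no $g_\ag$ and no $Q(A_\alpha,A_\beta)$, contrary to what you expect. That is exactly why the flag-manifold result of \cite{Fr} transfers verbatim to each factor $\qg_i$.

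\textbf{The genuine gap is your final ``factor-mixing argument via the shared $\ag$-components''; it cannot exist.} The BTP equations decouple completely across the simple factors:
\begin{itemize}
\item roots of different simple ideals never add up to a root;
\item $\Lambda^b(E_\gamma)$ annihilates $\ag^c$ (in particular $T^b(E_\alpha,E_{-\alpha})=-A_\alpha$) and every $E_\beta$ from another factor;
\item $g_\ag$ drops out of \eqref{BTP1} and \eqref{BTP2}, which only compare the vectors $\tfrac{1}{x_\alpha}A_\alpha$ for roots of a single $\Delta_{\qg_i}$.
\end{itemize}
Hence a metric of type (i) on $\qg_1$ and of type (ii) on $\qg_2$ is BTP. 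Concretely, take the generalized Calabi--Eckmann space of Example \ref{b21-comp} with $F_1=F_2=\Gg_2/\U(2)$ (the flag of Example \ref{b21-BTP-2}). Then $g_\qg=(z,z)+(y,2y)$ is BTP for any compatible $g_\ag$. Yet it is not bi-invariant on $\qg_2$, since $y\neq 2y$. It is not K\"ahler on $\qg_1$ either, since the triple $\gamma_1,\gamma_1+3\delta,2\gamma_1+3\delta$ would force $z=2z$. So the global `either/or' you aim for cannot be proved when $s\geq 2$. What is true is the dichotomy factor by factor, and this is all that the paper's combination of \cite{Fr} (applied to each $G_i$) with Propositions \ref{BTP-exi} and \ref{BTP-K} actually yields. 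The statement should be read in that sense.

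\textbf{A smaller, repairable point concerns your list (a).} It omits the components $(\nabla^b_{E_\alpha}T^b)(A,E_\beta)$ and $(\nabla^b_AT^b)(E_\alpha,E_\beta)$ with $\alpha+\beta\in\Delta_\qg$, computed in \eqref{BTP2} and \eqref{BTP1}. The components you do list vanish identically for every metric, but these do not, and the holonomy reduction on $\ag$ says nothing about them.
\begin{itemize}
\item They are precisely where the condition $\tfrac1{x_\alpha}A_\alpha=\tfrac1{x_\beta}A_\beta$ comes from, via Proposition \ref{BTP}(i): K\"ahler forces $x_{\alpha+\beta}\neq x_\alpha$, hence $c_{\alpha,\beta}\neq 0$.
\item In case (i) they vanish because $c_{\alpha,\beta}=0$ and $A_{\alpha+\beta}=A_\alpha+A_\beta$.
\end{itemize}

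\textbf{Finally, the three-root relations do not factor as you predict.} When $\alpha-\beta\in\Delta_\qg$, the relations from $(\nabla^b_{E_{-\alpha}}T^b)(E_\alpha,E_\beta)$ only give that $x_{\alpha+\beta}=x_\alpha+x_\beta$ or $x_{\alpha+\beta}$ equals one of $x_\alpha,x_\beta$ (Proposition \ref{BTP}(iii),(iv)). It is the four-root relation (vi) that upgrades this to the local dichotomy of \cite{Fr}.
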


The hypothesis on the simple factors $G_i$ comes from the partial classification of BTP metrics on flag manifolds obtained in \cite{Fr}.  Using part (ii) of the above theorem and the computation of certain component of $\nabla^bR^b$, we have found counterexamples to the following conjecture proposed in \cite[Conjecture 1.6]{PdsZhn}: on any C-space $M=G/K$ with $G$ semisimple, BTP implies BAS.

\begin{remark}\label{BP-intro}
Recently, it was proved in \cite[Theorem B]{BrbPdc} that a metric on a complex C-space is BAS if and only if it is as in Theorem \ref{BTP-thm-intro}, (i).  
\end{remark}

\subsubsection{CYT (see \S\ref{CYT-sec})}
We first show that $(J,g)$ is CYT if and only if the following two conditions hold:
\begin{enumerate}[{\rm (a)}]
\item $Z_{J_\qg}\in\ag\;$  (i.e., $c_1(M,J)=0$).  

\item $g_\ag\left((Z_{J_\qg,g_\qg})_\ag,A\right)=Q(Z_{J_\qg},A)\;$ for all $A\in\ag_f$, where $\ag=\zg(\ggo)\oplus
\ag_f$.    
\end{enumerate}
In particular, if $c_1(M,J)=0$, then any compatible {\it almost-normal} metric $g=g_\ag+g_\qg$ (i.e., $g|_{\ag_f\oplus\qg}=g_b|_{\ag_f\oplus\qg}$ for some bi-invariant metric $g_b$ on $[G,G]$) is CYT.  

On the other hand, given a C-space $M=G/K$ such that $\dim{\ag_f}\geq 2$ and the set $\cca_0$ of $G$-invariant complex structures with vanishing first Chern class is nonempty, we prove that there is always a $2$-parametric closed subset $\cca_{\nexists} \subset\cca_0$ such that $(M,J)$ does not admit a CYT $G$-invariant metric for any $J\in\cca_{\nexists}$ (see Figure \ref{CYT-fig} in \S\ref{CYT-sec}).

\subsubsection{LCK (see \S\ref{LCK-sec})}\label{LCK-intro-sec}
We provide an alternative proof of the following structure result, which was essentially obtained in \cite[Theorem 1, pp.\ 692]{HsgKms1}  (see also \cite{HsgKms2, AlkCrtHsgKms, Gn, AlkHsgKms}), though stated in a different way.      

\begin{theorem}\label{LCK-intro}
Any Hermitian C-space which is LCK can be constructed according to the following recipe, whose only ingredients are a flag manifold $F=G_f/H_f$, a regular element $W\in\zg(\hg_f)$ (i.e., $\alpha(\im W)\ne 0$ for any $\alpha\in\Delta_\qg$) and real numbers $a,b,c,t$ such that $b\ne 0$, $c,t>0$: 
\begin{enumerate}[{\rm (i)}]
\item The C-space is $M=G/K=G_f/K\times S^1$, where $K$ is defined by  
$$
\zg(\hg_f)=\zg(\kg)\oplus\RR W, \quad \kg=[\hg_f,\hg_f]\oplus\zg(\kg), \quad \zg(\ggo)=\RR Z_0, \quad \ag=\RR Z_0\oplus\RR W, \quad \ag_f=\RR W,
$$
provided that $K\subset H_f$ is closed, and $\left\{Z_0,W\right\}$ is a $Q$-orthonormal basis of $\ag$. 

\item The complex structure is given by $J=(J_\ag,J_\qg)$, where $J_\qg$ is attached to the invariant ordering $\Delta_\qg^+:=\{\alpha\in\Delta_\qg:\alpha(\im W)>0\}$ and $J_\ag$ is defined by   
$$
[J_\ag]_{\left\{Z_0,W\right\}} = 
\left[\begin{matrix} a&-\tfrac{a^2+1}{b} \\ b& -a\end{matrix}\right].  
$$ 
\item As a compatible metric $g=g_\ag+g_\qg$, consider 
$$
g_\qg:=t g_W, \quad g_W=(x_\alpha)_{\alpha\in\Delta_\qg^+}, \quad x_\alpha:=\alpha(\im W)>0; \quad     
[g_{\ag}]_{\{ Z_0,W\}} = c\left[\begin{matrix} b^2&-ab \\ -ab& a^2+1\end{matrix}\right].
$$
\end{enumerate}
\end{theorem}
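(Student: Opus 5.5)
Given a $G$-invariant Hermitian structure $(J,g)$ with $J=(J_\ag,J_\qg)$ and $g=g_\ag+g_\qg$ as in \eqref{Jg-intro}, I will compute $d\omega$ and express the LCK conditions $d\omega=\theta\wedge\omega$, $d\theta=0$ in Lie-theoretic terms, with $\theta\in(\pg^*)^K$. By $\Ad(K)$-invariance, $\theta$ vanishes on $\qg$, since $\qg$ has no trivial $K$-submodule (the $T$-weights on $\qg$ are the complementary roots, all nonzero). Hence $\theta=g_\ag(V,\cdot)$ for some $V\in\ag$. On $\ag$ the bracket terms only land in $\kg$ or vanish, so the Chevalley–Eilenberg formula for invariant forms gives $d\theta(X,Y)=-\theta([X,Y]_\pg)$. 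Therefore $d\theta=0$ forces $V$ to be orthogonal to $[\pg,\pg]_\ag$, which is the projection of $\zg(\hg_f)$ onto $\ag$, that is $\ag_f$. We conclude that $\theta$ is determined by a vector $V$ lying in the $g_\ag$-orthogonal complement of $\ag_f$.

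Next I evaluate $d\omega=\theta\wedge\omega$ on the three types of triples:
\begin{enumerate}[(1)]
\item Triples in $\ag\times\ag\times\ag$ give $0=\theta\wedge\omega_\ag$. Since $\theta\ne0$ in the non-Kähler case and $\omega_\ag$ is nondegenerate, this forces $\dim\ag=2$.
\item Triples $(A,E_\alpha,E_{-\alpha})$ give $\omega([E_\alpha,E_{-\alpha}]_\ag,A)$-type terms. Concretely, this yields $x_\alpha\,\theta(A)=\pm\,\omega_\ag((H_\alpha)_\ag,A)$ up to the normalisation $\im$.
\item Triples of roots $\alpha,\beta,\alpha+\beta\in\Delta_\qg^+$ produce $x_{\alpha+\beta}=x_\alpha+x_\beta$, because $\theta$ vanishes on $\qg$. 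So $g_\qg$ is Kähler on $F$, and by the standard description of invariant Kähler metrics on flags, $x_\alpha=\alpha(\im W')$ for some $W'\in\zg(\hg_f)$ that is regular and positive on $\Delta_\qg^+$.
\end{enumerate}
Plugging $x_\alpha=\alpha(\im W')=-\kil(\im H_\alpha,W')$ into (2) and using that $\{\im H_\alpha\}$ spans $\zg(\hg_f)$ gives a linear identity on $\zg(\hg_f)$. Its content is that the projection of $\zg(\hg_f)$ to $\ag$ is one-dimensional, i.e.\ $\ag_f=\RR W$ with $W$ proportional to $W'$, and $\zg(\kg)=W^\perp\cap\zg(\hg_f)$. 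Together with $\dim\ag=2$, this gives $\dim\zg(\ggo)=1$, hence (i), with $g_\qg=tg_W$. The rescaling $t>0$ comes from the proportionality constant.

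Finally, identity (2) pins down the relation between $J_\ag$, $g_\ag$ and $V$. Write $J_\ag$ in the basis $\{Z_0,W\}$; any complex structure on $\RR^2$ has the form in (ii) with $b\ne0$, and $b\ne0$ is equivalent to $J_\ag W\notin\RR W$. The ordering is forced by positivity of $x_\alpha$. Compatibility of $g_\ag$ with $J_\ag$, together with the requirement that $\theta=g_\ag(V,\cdot)$ kill $W$ while (2) holds with $\omega_\ag(W,\cdot)\propto\theta$, should leave exactly the one-parameter family $c\left[\begin{smallmatrix} b^2&-ab\\-ab&a^2+1\end{smallmatrix}\right]$. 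To see this, note that $J_\ag W$ must be $g_\ag$-orthogonal to $W$ and that compatibility fixes the conformal class; solving these gives the stated matrix.

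The converse is a direct check that the recipe satisfies (1)–(3). I expect the main obstacle to be step (2): pinning down signs and normalisations ($\kil$ versus $Q$, factors of $\im$), and verifying that the closedness of $K$ and the regularity of $W$ are exactly what is needed.
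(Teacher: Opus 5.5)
Your outline is the paper's proof: show that $\theta=Q(\cdot,Z)$ with $Z\in\zg(\ggo)$; read off from the $\qg$-triples that $g_\qg=g_{W'}$ is K\"ahler; get $\dim\ag=2$ from the $\ag$-triples; get $W'\in\ag_f$ from the mixed triples $(A,e_\alpha,f_\alpha)$. However, the first step rests on a false claim, and (1)--(3) all use it. $\Ad(K)$-invariance does \emph{not} force $\theta|_\qg=0$. The torus whose weights on $\qg$ are the complementary roots is a maximal torus of $H_f$, not of $K$. The maximal torus of $K$ only has Lie algebra $\tg_{[\hg_f,\hg_f]}\oplus\zg(\kg)$, so it misses $\ag_f$, and complementary roots can vanish on it. By Proposition \ref{isotdec}, (iii), the $K$-fixed part of $\pg$ is $\pg_0=\ag\oplus\bigoplus_{j\in I_0}\qg_{\alpha_j}$, which in general meets $\qg$. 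Already for the Hopf surface $\SU(2)\times S^1$, one of the LCK spaces being classified, $K$ is trivial and every vector of $\qg$ is $K$-fixed.

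The repair is to use closedness first, as the paper does via \S\ref{dR1}. Write $\theta=Q(\cdot,X)$ with $X\in\pg_0$. Then $d\theta(Y_1,Y_2)=-Q([Y_1,Y_2],X)$ for $Y_1,Y_2\in\pg$. Moreover $Q([\kg,\ggo],X)=0$ because $[\kg,X]=0$. Hence $d\theta=0$ gives $X\perp[\ggo,\ggo]$, i.e.\ $X\in\zg(\ggo)\subset\ag$, so $\theta$ kills $\qg\oplus\ag_f$ at once. Alternatively, for the vanishing on $\qg$ alone, evaluate $d\omega=\theta\wedge\omega$ on $(A,B,Y)$ with $A,B\in\ag$ and $Y\in\qg$. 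The left side is $0$ by Lemma \ref{dco}, and the right side is $\theta(Y)\,\omega_\ag(A,B)$. With $\theta|_\qg=0$ established, your steps (1)--(3) are fine.

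Two smaller points remain. First, the step you only assert should be written out. Evaluate (2) at some $A$ with $\theta(A)\ne0$ (the paper takes $A=Z$). It says that $Y\mapsto Q(Y,W')$ coincides, on the spanning set $\{Z_\alpha\}$ of $\zg(\hg_f)$, with a functional depending only on $Y_{\ag_f}$. Note that it is these projections, not the $\im H_\alpha$, that lie in and span $\zg(\hg_f)$. Hence $Q(\cdot,W')$ kills $\zg(\kg)$, and $W'\in\ag_f$. Second, the $g_\ag$ part needs no solving. On the $2$-dimensional $(\ag,J_\ag)$, every compatible inner product is a positive multiple of the displayed matrix, and (2) then simply determines $\theta$ as a multiple of $\omega_\ag(W,\cdot)$.
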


The following properties all easily follow from Theorem \ref{LCK-intro}: 
\begin{enumerate}[{\small $\bullet$}] 
\item $d\omega=\theta\wedge\omega$, where  $\theta(X):=-Q\left(X,bcZ_0\right)$ for all $X\in\pg$.   

\item The space of all LCK metrics on a given $(M=G/K,J)$ depends on $\dim{\zg(\hg_f)}+2$ parameters.  

\item $g_W$ is a K\"ahler metric on the complex flag manifold $(F,J_\qg)$, that is, the base of the holomorphic Tits fibration $T^2 \longrightarrow M \longrightarrow F$. 

\item The topology of the LCK manifold may be very sensitive to the choice of $W$, e.g., if $G_f/K$ is an Aloff-Wallach space (see Example \ref{AW}).  

\item Any LCK complex C-space has $b_1(M)=1$ (see \cite[Claim 42.11]{OrnVrb} for a geometric alternative proof) and $b_2(M)=\dim{\zg(\kg)}=\dim{\zg(\hg_f)}-1$.  

\item Any LCK Hermitian C-space is Vaisman (see \cite{GdcMrnOrn} and \cite[Theorem 42.7]{OrnVrb} for geometric alternative proofs). 

\item An LCK complex C-space can never be balanced, and it is LCB if and only if $a=0$.   

\item $c_1(M,J)=0$ if and only if the K\"ahler metric $g_W$ on $F$ is Einstein (i.e., $g_W$ is the unique K\"ahler-Einstein metric on $(F,J_\qg)$ up to scaling, or equivalently, $W=-tZ_{J_\qg}$ for some $t>0$).  
 
\item If $c_1(M,J)=0$, then the metric $g$ is CYT if and only if  
$
\tfrac{t}{c} = (a^2+1)|\Delta_\qg^+| |Z_{J_\qg}|.   
$        
\end{enumerate}

It follows from the formulas given in \S\ref{cohom-sec} that for any LCK C-space, 
$$
h^{1,0}=0, \quad h^{0,1}=1, \qquad b_1=1, 
$$
$$
h^{2,0}=0, \quad h^{1,1}=\dim{\zg(\kg)}, \quad h^{0,2}=0, \qquad b_2=\dim{\zg(\kg)}, 
$$
$$
h^{3,0}=0, \quad h^{2,1}\geq 1, 
\quad
h^{1,2}=\dim{\zg(\kg)}+1,  \quad h^{0,3}=0, \qquad b_3=b_3(G_f/K)+\dim{\zg(\kg)},
$$
$$
h^{1,1}_{BC}= \dim{\zg(\kg)}+1,
\qquad h^{2,1}_{BC}=0,\qquad 
h^{1,1}_A=\dim{\zg(\kg)}. 
$$

\subsection{Landscape}
Given a C-space $M=G/K$ and a Tits fibration $M\rightarrow F$ over a flag manifold $F=G_f/H_f$, let $\cca$ denote the space of all $G$-invariant complex structures making the fibration holomorphic (see \eqref{J-intro} and \eqref{holfib-intro}).  Any $G$-invariant complex structure on $M=G/K$ is biholomorphic to one in $\cca$.  We denote by $\cca_0\subset\cca$ the subspace of those structures having zero first Chern class, and by $\cca_{bal}$, $\cca_{cyt}$, etc those admitting a balanced metric, a CYT metric, etc, respectively.  

According to \eqref{c10-intro} and \eqref{bal-intro}, 
$$
\cca=\cca_0\sqcup\cca_{bal}\sqcup\cca_{?}, 
$$
and if $\{J_1,\dots,J_k\}$ is the set of all $G_f$-invariant complex structures on the flag manifold $F=G_f/H_f$, then each of the three above components is the disjoint union of subsets of the form 
$$
\cca_i:=\left\{ J=(J_\ag,J_i): J_\ag^2=-I\right\}\equiv \Gl_{2d}(\RR)/\Gl_d(\CC),  \qquad \dim{\cca_i}=2d^2, \quad\dim{\ag}=d.  
$$ 
If $\zg(\kg)\ne 0$, then $\cca_0=\emptyset$ generically among the set of all C-spaces fibering over a fixed flag $F$ (see Remark \ref{lh-rem-intro}) and the region $\cca_?$ is nonempty in general.  Note that $\cca_0=\cca$ if $\zg(\kg)=0$.  On the other hand, $\cca_{bal}=\emptyset$ if $\zg(\kg)=0$ and $\cca_{bal}\ne\emptyset$ if $\zg(\kg)\ne 0$ and $\rank(\ggo_i)\geq 2$ for all $i=1,\dots,s$.  

The subspace $\cca_{cyt}\subset\cca_0$ is much harder to unravel.  It is always nonempty if $\cca_0\ne\emptyset$ and it intersects all the components $\cca_i$ of $\cca_0$.  If $\dim{\ag_f}\geq 2$ then $\cca_{cyt}\subsetneq\cca_0$.  We can give a full description of $\cca_{cyt}$ when $G$ is semisimple and $\dim{\ag}=2$ (see Figure \ref{CYT-fig} in \S\ref{CYT-sec}).  

On an LCK C-space as in \S\ref{LCK-intro-sec},  
$$
\cca=\cca_{lck}\sqcup\cca_{bal}\sqcup\cca_{?}, 
$$
where $\cca_{lck}=\cca_i$ if the regular element $W$ determines $J_i$.  Moreover, $\cca_0=\cca_{lck}$ if in addition $W\in\RR_{<0}Z_{J_i}$ and $\cca_0=\emptyset$ otherwise.  

For a Lie group $M^{2n}=G$, the space $\cca=\cca_0$ consists of all left-invariant complex structures which are also $T$-right-invariant for some fixed maximal torus $T$ of $G$.  If $n\geq 3$, then  
$$
\cca=\cca_{cyt}\sqcup\cca_{??}, \qquad \cca_{skt}=\cca_{bf}\subsetneq\cca_{cyt} \subsetneq\cca,   
$$
where $\cca_{bf}\subset\cca$ is the subspace of those complex structures admitting a Bismut flat metric.

\vspace{1cm}\noindent 
{\it Acknowledgements}.  I thank Daniele Angella, Beatrice Brienza, Gueo Grantcharov, Ramiro Lafuente, Emilio Lauret, Facundo Montedoro, Fabio Podest\`a, Tommaso Sferruzza and Alejandro Tolcachier for very fruitful conversations.  I am also indebted to the participants of a course I gave in C\'ordoba during the first semester of 2026 on this paper for their support and several invaluable comments and corrections: Martiniano Faure, Gerson Guti\'errez Quispe, Facundo Montedoro, Giorgia Petracci, Agustina Ruiz Linares, Edwin Rodr\'\i guez Valencia, Bruno Stassi, Alejandro Tolcachier and Cynthia Will.

% 15/7/2026

\section{Flag manifolds}\label{flag-sec}

There is a vast literature on flag manifolds, we refer to \cite{BrlHrz, Tkc, Nsh, AlkPrl, BrdFrgRmr, PnlPds} for more detailed treatments on the material of this section.  

A {\it flag manifold} is a homogeneous space $F=G/H$, where $G$ is a compact and connected semisimple Lie group and $H$ is the centralizer of some torus in $G$.  Equivalently, $F=G/H$ is an $\Ad(G)$-orbit in the Lie algebra $\ggo$ of $G$ (in particular, $\dim{F}$ is always even, see \eqref{hq}).  A flag manifold $F=G/H$ is called {\it full flag} whenever $H$ is abelian, i.e., $H$ is a maximal torus of $G$.  See \S\ref{homsp-sec} for some preliminaries on homogeneous spaces.  

\begin{proposition} 
A compact homogeneous space $G/H$ with finite fundamental group admits a $G$-invariant symplectic structure if and only if it is a flag manifold.  
\end{proposition}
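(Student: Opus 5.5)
Both directions reduce to the algebra of $\Ad(H)$-invariant 2-forms on $\ggo$. This is the classical Borel/Chevalley–Eilenberg argument, and the plan is to follow it.

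\emph{Flag manifolds are symplectic.} Suppose $F=G/H$ with $H$ the centralizer of a torus. Then $H=G_Z$ is the stabilizer of some $Z\in\ggo$, namely a generic element of the Lie algebra of that torus, so $F\simeq\Ad(G)Z$ is an adjoint orbit. We take the Kirillov--Kostant--Souriau form
$$\omega_Z(X,Y):=-\kil_\ggo(Z,[X,Y]),\qquad X,Y\in\qg,$$
where $\ggo=\hg\oplus\qg$ is the $\kil_\ggo$-orthogonal reductive decomposition. This form is $\Ad(H)$-invariant because $\Ad(H)$ fixes $Z$. It is nondegenerate on $\qg$: if $\omega_Z(X,\cdot)=0$ on $\qg$, then also on $\ggo$, since $\kil_\ggo(Z,[X,\hg])=\kil_\ggo([\hg,Z],X)=0$. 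Hence $[Z,X]=0$, so $X\in\hg\cap\qg=0$. Closedness follows because $\omega_Z$ extends to the $\ad$-invariant Chevalley--Eilenberg 2-cochain $-\kil(Z,[\cdot,\cdot])=d(\kil(Z,\cdot))$ on $\ggo$, which is exact; its pullback to $G$ is therefore closed. So $\omega_Z$ defines a closed, nondegenerate, $G$-invariant form on $G/H$.

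\emph{Symplectic implies flag.} Assume $\pi_1(G/H)$ is finite.

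1. By the standard structure theory of compact homogeneous spaces (Montgomery), $G/H$ is also a homogeneous space of a compact semisimple group. Finiteness of $\pi_1$ forces the center of $G$ to act through $H$, so we may replace $G$ by its semisimple part and assume $G$ semisimple and $H$ connected.

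2. The invariant form is $\omega\in(\Lambda^2\pg^*)^H$, and closedness is equivalent to the pullback $\tilde\omega$ to $\ggo$, which vanishes on $\hg$, being a Chevalley--Eilenberg 2-cocycle.

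3. Since $\ggo$ is semisimple, $H^2(\ggo)=0$ (Whitehead). Hence $\tilde\omega=d\xi$ for some $\xi\in\ggo^*$, i.e. $\tilde\omega(X,Y)=-\xi([X,Y])$. Averaging $\xi$ over the compact connected $H$ keeps $d\xi$ unchanged, since $\tilde\omega$ is $\Ad(H)$-invariant. So we may take $\xi=\kil(Z,\cdot)$ with $Z$ fixed by $\Ad(H)$, that is, $\hg\subset\ggo_Z$.

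4. The condition $\tilde\omega(\hg,\cdot)=0$ is automatic. Nondegeneracy of $\omega$ on $\ggo/\hg$ says that the radical of $\tilde\omega$, which is $\{X:[X,Z]=0\}=\ggo_Z$, equals $\hg$. Hence $\hg=\ggo_Z$.

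5. Thus $H$ is the identity component of $G_Z$. Since $G_Z$ is the centralizer of the torus $\overline{\exp\RR Z}$, it is connected, so $H=G_Z$ and $F$ is a flag manifold.

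The main obstacle is the reduction in step 1: moving from an arbitrary compact transitive group to a semisimple one with connected $H$. I would argue it directly. Write $G=G_{ss}\cdot Z(G)_0$. Since $\pi_1(G/H)$ is finite, $b_1=0$, and from the long exact homotopy sequence the torus $Z(G)_0$ has finite image modulo $H$, so $G_{ss}$ acts transitively. Connectedness of $H$ can be arranged by passing to the finite cover $G/H_0$, where the symplectic form pulls back. The conclusion then identifies $H_0$ as a centralizer, hence $H$ is too, since the centralizer of a torus is connected and self-normalizing in the relevant sense; this is the point I would check carefully.
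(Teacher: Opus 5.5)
Your overall strategy matches the paper's proof. Once $G$ is semisimple, the closed invariant $2$-forms on $G/H$ are exactly $\sigma_Z=\kil_\ggo([\cdot,\cdot],Z)$ with $Z\in C_\ggo(\hg)=\zg(\hg)\oplus\qg_0$; the paper quotes its own $H^2$ computation, while you use Whitehead's lemma plus averaging. In both, nondegeneracy then forces $C_\ggo(Z)=\hg$. Your Kirillov--Kostant--Souriau argument for the converse is what the paper leaves implicit in its ``if and only if''.

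\textbf{The gap: the final reduction to connected $H$.} The justification you give is false. Centralizers of tori are not self-normalizing; for example $N_G(T)\supsetneq T$ for a maximal torus $T$ of a nonabelian $G$. Knowing that $H_0$ is a centralizer does not make $H$ one: $\SU(2)/N_{\SU(2)}(T)=\RR P^2$ has finite fundamental group and $H_0=T$, yet it is not a flag manifold. The worry is real in your setting, because after Step 1 the new isotropy group $H\cap G_{ss}$ can be disconnected even when $H$ is connected. For instance, take $G=\U(2)$ and $H$ its center: then $H\cap\SU(2)=\{\pm I\}$.

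\textbf{The repair is already in your Step 3,} provided you stay on $G/H$ instead of passing to $G/H_0$.
\begin{enumerate}
\item For semisimple $\ggo$ the map $\xi\mapsto d\xi=-\xi([\cdot,\cdot])$ on $\ggo^*$ is injective, since $[\ggo,\ggo]=\ggo$. So the primitive $\xi$ of $\tilde\omega$ is unique.
\item By uniqueness, $\xi$ inherits the $\Ad(H)$-invariance of $\tilde\omega$ for the whole, possibly disconnected, group $H$. Equivalently, average over the compact group $H$; connectedness plays no role.
\item Hence $\Ad(H)Z=Z$, i.e.\ $H\subset G_Z$.
\item Combined with $\hg=\ggo_Z$ and the connectedness of $G_Z$, this gives $G_Z=H_0\subset H\subset G_Z$, so $H=G_Z$.
\end{enumerate}
The paper itself simply asserts that $H$ may be assumed connected, so this argument also supplies the missing justification there.
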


\begin{remark}
In particular, products of flag manifolds and even-dimensional tori are the only compact manifolds that can admit a homogeneous K\"ahler metric; indeed, they all do (see \S\ref{K} below).
\end{remark}

\begin{proof}
We can assume that $G$ is compact semisimple and $H$ is connected.  The space of all closed $G$-invariant $2$-forms on the homogeneous space $G/H$ can therefore be described as follows (see \S\ref{dR-sec}): if $\ggo=\hg\oplus\qg$ is the $\kil_\ggo$-orthogonal decomposition and $\qg_0:=\{ X\in\qg:[\hg,X]=0\}$, then 
$$
\Omega^2_c(G/H)^G=\{ \sigma_Z:Z\in\zg(\hg)\oplus\qg_0\}, \quad \mbox{where}\quad \sigma_Z:=\kil_\ggo([\cdot,\cdot],Z)\in(\Lambda^2\qg^*)^H.  
$$ 
Thus $G/H$ admits a $G$-invariant symplectic form if and only if there exists $Z\in\zg(\hg)\oplus\qg_0$ such that $\sigma_Z$ is non-degenerate, which implies that $Z\in\zg(\hg)$ and $\qg_0=0$ using the invertibility of the map $\ad{Z}|_\qg$.  In that case, if $Y$ is in the centralizer of $Z$ in $\ggo$, say $Y=Y_\hg+Y_\qg$, then $[Z,Y_\qg]=0$ and so $Y\in\hg$.  This implies that $H=C_G(Z)$, i.e., $G/H$ is the adjoint orbit of $Z$ in $\ggo$, and $H$ is also the centralizer of the torus in $G$ given by the closure of $\exp(\RR Z)$.     
\end{proof}

Let $F=G/H$ be a flag manifold.  We fix from now on a maximal torus $T\subset H\subset G$ of both $H$ and $G$, and consider $\Delta\subset(\tg^c)^*$, the root system defined by the Cartan subalgebra $\tg^c$ of the complex semisimple Lie algebra $\ggo^c:=\ggo\otimes\CC$ (see \S\ref{roots} for a review on roots).  If 
$$
\ggo=\hg\oplus\qg, \qquad T_oF\equiv \qg, 
$$ 
is the $\kil_\ggo$-orthogonal reductive decomposition of $F=G/H$, then  
\begin{equation}\label{hq}
\hg^c=\tg^c\oplus\bigoplus_{\alpha\in\Delta_\hg}\ggo_\alpha, \qquad 
\qg^c=\bigoplus_{\alpha\in\Delta_\qg}\ggo_\alpha, 
\end{equation}
where $\Delta=\Delta_\hg\sqcup\Delta_\qg$ (disjoint union),  
$$
\Delta_\hg:=\left\{\alpha\in\Delta:\alpha|_{\zg(\hg)}=0\right\}, 
$$
$\zg(\hg)\subset\tg$ denotes the center of $\hg$ and $\Delta_\qg$ is the set of {\it complementary roots}.  Note that $\tg=\zg(\hg)\oplus\tg_{[\hg,\hg]}$, where $\tg_{[\hg,\hg]}$ is the maximal torus of $[\hg,\hg]$, so $\Delta_\hg$ is the root system of the semisimple Lie algebra $[\hg,\hg]$.  In particular, $\dim{F}=\dim{\qg}=|\Delta_\qg|$ is always even.   

For $G$ simple, any flag $F=G/H$ can be represented as a painted Dynkin diagram, on which the black dots are the simple roots in $\Delta_\hg$ and the number of white dots is $\dim{\zg(\hg)}$.  

\begin{example}\label{su5}
Consider the flag 
$$
F^{16}=\SU(5)/\Se(\U(1)\times\U(2)\times\U(2)), \qquad \hg=\sug(2)\oplus\sug(2)\oplus\RR^2,
$$ 
with painted Dynkin diagram 
$$
\dynkin[scale=2] A{o*o*} 
$$
and as usual, $\im\tg=\{ (a_1,\dots,a_5)\in\RR^5:\sum a_i=0\}$.  Thus $\Delta=\{\alpha_{ij}:1\leq i,j\leq 5\}$, $\alpha_{ij}\equiv\im H_{\alpha_{ij}}=e_i-e_j$,  
$$
\zg(\hg)=\{(a,b,b,c,c):a+2b+2c=0\}, \quad \Delta_\hg=\{\pm\alpha_{23},\pm\alpha_{45}\}, \quad \Delta_\qg=\Delta\setminus\Delta_\hg. 
$$  
\end{example}

\subsection{Isotropy representation}\label{isot}
For any subset $P\subset\Delta_\qg$ such that $\Delta_\qg=P\sqcup -P$, we have that the isotropy representation $\qg$ of the flag $F=G/H$ decomposes as
$$
\qg=\bigoplus_{\alpha\in P}\qg_\alpha,  \qquad \mbox{where}\quad 
\qg_\alpha:=\RR e_\alpha+\RR f_\alpha = (\ggo_\alpha\oplus\ggo_{-\alpha})\cap\ggo, 
$$
that is, $\qg_\alpha^c=\ggo_\alpha\oplus\ggo_{-\alpha}$ (see \S\ref{roots}).  Note that $\qg_\alpha=\qg_{-\alpha}$ and 
\begin{equation}\label{adH}
\ad{H}|_{\qg_\alpha}=-\alpha(\im H)\left[\begin{matrix} 
0&-1\\ 1&0 
\end{matrix}\right], \qquad\forall H\in\tg.
\end{equation}
If $\alpha,\beta\in P$, $\alpha\ne\beta$, then $[\qg_\alpha,\qg_\alpha]=\RR \im H_\alpha$ and 
$$
[\qg_\alpha,\qg_\beta] = 
\left\{\begin{array}{ll}
\qg_{\alpha+\beta}\oplus\qg_{\alpha-\beta}, &\qquad \alpha+\beta\in\Delta, \quad \alpha-\beta\in\Delta, \\
\qg_{\alpha+\beta}, &\qquad \alpha+\beta\in\Delta, \quad \alpha-\beta\notin\Delta, \\
\qg_{\alpha-\beta}, &\qquad \alpha+\beta\notin\Delta, \quad \alpha-\beta\in\Delta, \\
0, &\qquad \alpha+\beta\notin\Delta, \quad \alpha-\beta\notin\Delta.  
\end{array}\right.
$$
We consider a set of {\it central roots} $\alpha_i|_{\zg(\hg)}$ (called $T$-roots in \cite{AlkPrl,AlkChr}) of the flag $F=G/H$, defined by 
\begin{equation}\label{cenrts}
\left\{\alpha|_{\zg(\hg)}:\alpha\in P\right\}=\left\{\alpha_1|_{\zg(\hg)},\dots,\alpha_r|_{\zg(\hg)}\right\}, \qquad\alpha_i\in P.  
\end{equation}
It follows that 
$$
\qg=\mg_1\oplus\dots\oplus\mg_r, \qquad 
\mg_i:=\bigoplus_{\alpha\in P:\alpha|_{\zg(\hg)}=\alpha_i|_{\zg(\hg)}} \qg_{\alpha},
$$
is a decomposition in inequivalent irreducible $\Ad(H)$-invariant subspaces, as it also is    
$$
\qg^c=\mg_1^c\oplus\dots\oplus\mg_r^c, \qquad 
\mg_i^c=\bigoplus_{\alpha\in\Delta_\qg:\alpha|_{\zg(\hg)}=\pm\alpha_i|_{\zg(\hg)}} \ggo_{\alpha}.
$$
Alternatively, if $\Pi$ is any basis of $\Delta$ and we consider $P=\Delta_\qg^+:=\Delta_\qg\cap\Delta^+$, where $\Delta^+$ are the corresponding positive roots, then each $\alpha_i\in \Delta_\qg^+$ can be written as 
$$
\alpha_i=\sum_{\alpha\in\Pi_\hg} k_\alpha\alpha + \sum_{\alpha\in\Pi_\qg} n_\alpha\alpha\in P, \qquad k_\alpha, n_\alpha\in\ZZ_{\geq 0}, \qquad  \Pi_\hg:=\Pi\cap\Delta_\hg, \quad \Pi_\qg:=\Pi\cap\Delta_\qg. 
$$ 
In this way, the irreducible summand $\qg_i$ is determined by
$$
\left\{\beta\in \Delta_\qg^+:\beta|_{\zg(\hg)}=\alpha_i|_{\zg(\hg)}\right\} 
= \left\{\beta=\sum_{\alpha\in\Pi} m_\alpha\alpha: m_\alpha=n_\alpha\;\forall \alpha\in\Pi_\qg\right\}.
$$ 
If we set 
\begin{equation}\label{Zalf}
Z_\alpha:=(\im H_\alpha)_{\zg(\hg)},  
\end{equation}
where the subscript $\zg(\hg)$ denotes orthogonal projection on $\zg(\hg)$ with respect to the inner product $-\kil_{\ggo}$, then 
$$
\{ Z_\alpha:\alpha\in\Delta^+\} = \{ Z_\alpha:\alpha\in\Delta_\qg^+\} = \{ Z_{\alpha_1},\dots,Z_{\alpha_r}\},
$$
and the smaller subset $\{ Z_\alpha:\alpha\in\Pi_\qg\}$ is known to be a basis of $\zg(\hg)$.  

%\begin{figure}\label{su5-fig}
%....
%\end{figure} 

\begin{example}\label{su5-2}
If for the flag in Example \ref{su5} we consider $\Delta^+:=\{\alpha_{ij}:i<j\}$, then $\Pi=\{\alpha_{12},\alpha_{23},\alpha_{34},\alpha_{45}\}$, $\Delta_\hg^+=\Pi_\hg=\{\alpha_{23},\alpha_{45}\}$, $\Delta_\qg^+=\Delta^+\setminus\Delta_\hg^+$ and $\Pi_\qg=\{\alpha_{12},\alpha_{34}\}$.  As central roots we can take
$$
\alpha_1=\alpha_{12}, \qquad \alpha_2=\alpha_{34}, \qquad \alpha_3=\alpha_{14}=\alpha_{12}+\alpha_{23}+\alpha_{34},
$$
and the sets $\left\{\beta\in \Delta_\qg^+:\beta|_{\zg(\hg)}=\alpha_i|_{\zg(\hg)}\right\} $ can easily be drawn in, so $\dim{\mg_1}=4$, $\dim{\mg_2}=8$, $\dim{\mg_3}=4$. 
\end{example}

\subsection{Riemannian metrics}\label{riem}
$G$-invariant metrics on the flag $F=G/H$ are parameterized by tuples of the form 
$$
(x_\alpha)_{\alpha\in\Delta_\qg}=(x_1,\dots,x_r), \qquad x_i:=x_{\alpha_i}, 
$$ 
such that $x_\alpha>0$, $x_{-\alpha}=x_\alpha$ and $x_\alpha=x_\beta$ for all $\alpha,\beta$ in the same irreducible component of $\qg$ (i.e., $\alpha|_{\zg(\hg)}=\pm\beta|_{\zg(\hg)}$).  Note that $x_{\alpha+\beta}=x_\alpha$ for all $\alpha\in\Delta_\qg$, $\beta\in\Delta_\hg$, $\alpha+\beta\in\Delta$.  

Given any subset $P\subset\Delta_\qg$ as in \S\ref{isot}, the metric $g$ associated with $(x_\alpha)_{\alpha\in\Delta_\qg}$ is defined by 
$$
g=\sum_{\alpha\in P} x_\alpha (-\kil_\ggo)|_{\qg_\alpha} 
= x_1(-\kil_\ggo)|_{\mg_1}+\dots+x_r(-\kil_\ggo)|_{\mg_r}, \qquad x_i:=x_{\alpha_i},
$$
where $\alpha_1,\dots,\alpha_r$ are the central roots, so the corresponding symmetric $\CC$-bilinear form on $\ggo^c$, also denoted by $g$, satisfies that 
$$
g(E_\alpha,E_{-\alpha})=-x_\alpha, \qquad\forall \alpha\in\Delta_\qg, 
$$ 
and $g(E_\alpha,E_\beta)=0$ whenever $\alpha+\beta\ne 0$ (see \S\ref{roots}).  Equivalently,    
$$
\left\{\tfrac{1}{\sqrt{x_\alpha}}e_\alpha, \; \tfrac{1}{\sqrt{x_\alpha}}f_\alpha : \alpha\in P\right\} 
$$
is a $g$-orthonormal basis of $\qg$.   
This metric will be denoted by $(x_\alpha)_{\alpha\in\Delta_\qg}$ or $(x_1,\dots,x_r)$.  

The standard metric $\gk$ (i.e., defined by $-\kil_\ggo|_\qg$) corresponds to $x_\alpha=1$ for any $\alpha\in\Delta_\qg$.  In particular, $\left\{e_\alpha, \; f_\alpha : \alpha\in P\right\}$ is a $\gk$-orthonormal basis of $\qg$.

\subsection{Almost-complex structures}\label{alm-comp}
$G$-invariant almost-complex structures on the flag $F=G/H$ are parameterized by tuples of the form 
$$
(\epsilon_\alpha)_{\alpha\in\Delta_\qg}=(\epsilon_1,\dots,\epsilon_r), \qquad \epsilon_i:=\epsilon_{\alpha_i},
$$ 
such that $\epsilon_\alpha=\pm 1$, $\epsilon_{-\alpha}=-\epsilon_\alpha$ and $\epsilon_\alpha=\epsilon_\beta$ for all $\alpha,\beta$ with $\alpha|_{\zg(\hg)}=\beta|_{\zg(\hg)}$ (see \cite{SnmCss})).  The corresponding map $J$ is defined by     
$$
Je_\alpha=\epsilon_\alpha f_\alpha, \qquad 
Jf_\alpha=-\epsilon_\alpha e_\alpha, \qquad\forall\alpha\in\Delta_\qg,
$$ 
so $\{ e_\alpha,Je_\alpha\}$ is a $\gk$-orthonormal basis of $\qg_\alpha$.  Equivalently, the corresponding $\CC$-linear map $J:\qg^c\rightarrow\qg^c$ is given by 
$$
JE_\alpha=\im\epsilon_\alpha E_\alpha, \qquad\forall \alpha\in\Delta_\qg.
$$   
The $\Ad(K)$-invariance of $J$, i.e., $\epsilon_\alpha=\epsilon_\beta$ whenever $\alpha|_{\zg(\hg)}=\beta|_{\zg(\hg)}$, can alternatively be written as 
$$
\left(\Delta_\hg + \{\alpha\in\Delta_\qg:\epsilon_\alpha=1\}\right)\cap\Delta\subset \{\alpha\in\Delta_\qg:\epsilon_\alpha=1\}.
$$ 
In this way, the matrix of $J|_{\mg_i}$ in terms of an ordered basis $\{ e_\alpha,Je_\alpha:\alpha|_{\zg(\hg)}=\alpha_i\}$ of $\mg_i$ is given by 
$$
[J|_{\mg_i}]=\epsilon_i
\left[\begin{smallmatrix} 
0&-1&&&\\ 
1&0&&&\\ 
&&\ddots&&\\ 
&&&0&-1\\ 
&&&1&0
\end{smallmatrix}\right].
$$
We note that any $G$-invariant metric $g=(x_\alpha)_{\alpha\in\Delta_\qg}$ is almost-hermitian with respect to any $G$-invariant almost-complex structure $J=(\epsilon_\alpha)_{\alpha\in\Delta_\qg}$.  The corresponding K\"ahler form $\omega=g(J\cdot,\cdot)$ is given by 
$$
\omega(e_\alpha,f_\alpha)=\epsilon_\alpha x_\alpha,  \qquad\forall\alpha\in P,
$$
and zero otherwise, or equivalently, the corresponding skew-symmetric $\CC$-bilinear form on $\qg^c$ is given by
$$
\omega(E_\alpha,E_{-\alpha})=-\im\epsilon_\alpha x_\alpha, \qquad\forall\alpha\in\Delta_\qg, 
$$
and zero otherwise.

\subsection{Complex structures}\label{comp}
It follows e.g.\ from \cite[12.4]{BrlHrz} (see also \cite{AlkPrl}) that an almost-complex structure $J=(\epsilon_\alpha)_{\alpha\in\Delta_\qg}$ on the flag $F=G/H$ is integrable if and only if the set 
$$
P_J:=\{\alpha\in\Delta_\qg:\epsilon_\alpha=1\} 
%=\{\alpha\in\Delta_\qg:\alpha|_{\zg(\hg)}=\alpha_i, \quad \epsilon_i=1\} 
$$ 
defines an {\it invariant ordering} in $\Delta_\qg$, i.e., 
$$
\Delta_\qg=P_J\sqcup -P_J, \qquad (\Delta_\hg+P_J)\cap\Delta\subset P_J, \qquad  (P_J+P_J)\cap\Delta\subset P_J. 
$$
The following conditions are all equivalent to $P_J$ being an invariant ordering:
\begin{enumerate}[a)]
\item \cite[13.7]{BrlHrz} There is an ordering $\Delta^+$ (see \S\ref{roots}) such that $P_J=\Delta_\qg\cap\Delta^+=:\Delta_\qg^+$.  

\item \cite[(3.41)]{BrdFrgRmr} $P_J=\{\alpha\in\Delta_\qg:\alpha|_C>0\}$, where $C$ is a chamber (or connected component) of 
\begin{equation}\label{ch}
\im\zg(\hg)\setminus\bigcup_{\alpha\in\Delta_\qg}\Ker\alpha\cap\im\zg(\hg) 
=\im\zg(\hg)\setminus\bigcup_{j=1}^r\Ker\alpha_j\cap\im\zg(\hg),  
\end{equation}
where $\alpha_1,\dots,\alpha_r$ are the central roots (see \eqref{cenrts}). 

\item \cite[Theorem 1]{Nsh} There is a basis $\Pi\subset\Delta$ such that $\Pi\cap\Delta_\hg$ is a basis of $\Delta_\hg$ and $P_J=\la\Pi\ra_{\NN_0}\setminus\la\Pi\cap\Delta_\hg\ra_{\NN_0}$, where $\la\cdot\ra_{\NN_0}$ denotes the set of all linear combinations with nonnegative integer coefficients.    

\item \cite[Theorem 2]{Nsh} Given any fixed basis $\Pi_\hg$ in $\Delta_\hg$, there exists a basis $\Pi\subset\Delta$ such that $\Pi\cap\Delta_\hg=\Pi_\hg$ and $P_J=\la\Pi\ra_{\NN_0}\setminus\la\Pi_\hg\ra_{\NN_0}$.  
\end{enumerate}

Let $\cca^G$ denote the set of all $G$-invariant complex structures on the flag manifold $F=G/H$, thus 
$$
\cca^G=\{ J_1,\dots,J_k\},
$$ 
where $k$ is the number of chambers in \eqref{ch}.  It is well known that if $m:=\dim{\zg(\hg)}$, then 
\begin{equation}\label{numberk}  
k \leq k(m,r) := \left\{\begin{array}{l} 
2^r, \qquad r\leq m, \\ 
2\left(\binom{r-1}{0}+\dots+\binom{r-1}{m-1}\right), \qquad r>m,  
\end{array}\right.
\end{equation} 
where equality holds if and only if the intersection of any tuple of $m$ different subspaces $\Ker\alpha_j\cap\im\zg(\hg)$ in \eqref{ch} is always zero.  In particular, $k(1,r)=2$ and $k(2,r)=2r$ for any $r$.  For instance, the full flag $F=\SU(4)/T^3$ has $k<k(3,6)=32$, since the general position condition does not hold.  

Two structures $J_1$ and $J_2$ in $\cca^G$ are said to be {\it biholomorphic} ($J_1\simeq J_2$ for short) if there exists a biholomorphism between the complex manifolds $(F,J_1)$ and $(F,J_2)$.  The following conditions are equivalent (see also \cite{AlkPrl}): 
\begin{enumerate}[(i)] 
\item $J_1$ and $J_2$ are biholomorphic.  

\item \cite[Theorem 4]{Nsh} There exists $\psi\in\Aut(G/H)\subset\Diff(F)$ (see \S\ref{gauge}) such that $\psi^*J_1=J_2$.  Note that if $\psi$ is in addition inner, i.e., $\psi=I_a$ for some $a\in N_G(H)$, then $J_1=\Ad(a)J_2\Ad(a)^{-1}$ as linear operators of $\qg$.  

\item \cite[13.8]{BrlHrz} There exists $\psi\in\Aut(\Delta)$ such that $\psi\Delta_\hg=\Delta_\hg$ and $\psi P_{J_1}=P_{J_2}$.  

\item There exists $\psi\in\Aut(\Delta)$ such that $\psi\Delta_\hg=\Delta_\hg$ and $\psi C_1=C_2$, where $C_i$ is the chamber of \eqref{ch} attached to $J_i$.  

\item \cite[Theorems 2,4]{Nsh} There exists $\psi\in\Aut(\Delta)$ such that $\psi(\Pi_1)=\Pi_2$ and $\psi(\Pi_1\cap\Delta_\hg)=\Pi_2\cap\Delta_\hg$, where $\Pi_i$ is the simple root system defined by $J_i$ in c).  

\item \cite[Theorem 4]{Nsh} There exists $\psi\in\Aut(\Delta)$ such that such that $\psi(\Pi_1)=\Pi_2$ and $\psi(\Pi_\hg)=\Pi_\hg$, where $\Pi_i$ is the simple root system defined by $J_i$ in d).  
\end{enumerate}

It follows from (iii) that $J\simeq -J$ for any $G$-invariant complex structure $J$; indeed, we can take $\psi=-I\in\Aut(\Delta)$.  

According to (vi), given any fixed basis $\Pi_\hg$ in $\Delta_\hg$, the moduli space $\cca^G/\simeq$ of all $G$-invariant complex structures on the flag $F=G/H$ up to biholomorphism is in bijection with the quotient
$$
\{\mbox{bases}\;\Pi\subset\Delta:\Pi\cap\Delta_\hg=\Pi_\hg\}/\Aut(\Delta,\Pi_\hg), 
$$
where $\Aut(\Delta,\Pi_\hg):=\{\psi\in\Aut(\Delta):\psi\Pi_\hg=\Pi_\hg\}$. 

\begin{example}\label{su5-3}
For the flag in Examples \ref{su5} and \ref{su5-2}, in terms of the orthonormal basis of $\zg(\hg)$ given by
$$
\left\{e_1:=\unm(0,1,1,-1,-1),\; e_2:=\tfrac{1}{2\sqrt{5}}(4,-1,-1,-1,-1)\right\},
$$
$\Ker\alpha_1=\RR(\sqrt{5},1)$, $\Ker\alpha_2=\RR (0,1)$ and $\Ker\alpha_3=\RR(\sqrt{5},-1)$.  Thus $F$ admits $6$ complex structures, but since the permutation $w:=(1,4,5,2,3)\in W$ satisfies that $w\cdot\Delta_\hg=\Delta_\hg$, $w\cdot e_1=-e_1$, $w\cdot e_2=e_2$, there are exactly $2$ up to biholomorphism.  
\end{example}

The following is a very useful result to count the number of $G$-invariant complex structures on a flag $F=G/H$ up to biholomorphism.  Consider a fixed basis $\Pi$ of $\Delta$ and the set $\sca$ of all subsets $\phi\Pi_\hg\subset\Pi$, where $\phi:\Pi_\hg\rightarrow\Pi$ is an injective map such that $\la\phi\alpha,\phi\beta\ra=\la\alpha,\beta\ra$ for all $\alpha,\beta\in\Pi_\hg$.  

\begin{theorem}\label{eq-N}\cite[Lemma 2, Theorems 7,8,9]{Nsh} 
On any flag $F=G/H$ such that $\ggo$ is simple and different from $\sog(2n)$, $n\geq 4$, the moduli space $\cca^G/\simeq$ is in bijection with $\sca/O$, where $O$ is the automorphism group of the Dynkin diagram of $\ggo$.   
\end{theorem}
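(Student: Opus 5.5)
By item (vi) of \S\ref{comp}, with $\Pi_\hg$ a fixed basis of $\Delta_\hg$, the moduli space $\cca^G/\simeq$ is already identified with $\mathcal{B}/\Aut(\Delta,\Pi_\hg)$, where $\mathcal{B}:=\{\mbox{bases }\Pi'\subset\Delta:\Pi'\cap\Delta_\hg=\Pi_\hg\}$. The plan is to build an explicit map $\Phi:\mathcal{B}\rightarrow\sca/O$ and show it is a bijection that is constant on $\Aut(\Delta,\Pi_\hg)$-orbits. Two standard facts will be used throughout, both valid since $\ggo$ is simple. First, $\Aut(\Delta)=W\rtimes O$, where $O$ is identified with the stabilizer of the fixed basis $\Pi$ in $\Aut(\Delta)$. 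Second, $W$ acts simply transitively on the bases of $\Delta$.

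\emph{Construction of $\Phi$.} Given $\Pi'\in\mathcal{B}$, let $w\in W$ be the unique element with $w\Pi'=\Pi$. Then $\phi:=w|_{\Pi_\hg}:\Pi_\hg\rightarrow\Pi$ is injective and preserves $\la\cdot,\cdot\ra$, so $w\Pi_\hg\in\sca$. Define $\Phi(\Pi')$ to be the class of $w\Pi_\hg$ in $\sca/O$.

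\emph{Invariance and injectivity.} Both reduce to the identity $\mathrm{Stab}_{\Aut(\Delta)}(\Pi)=O$. Let $\Pi_1,\Pi_2\in\mathcal{B}$ with $w_i\Pi_i=\Pi$.
\begin{enumerate}[(a)]
\item Suppose $\psi\in\Aut(\Delta,\Pi_\hg)$ and $\psi\Pi_1=\Pi_2$. Then $w_2\psi w_1^{-1}$ fixes $\Pi$, so $\sigma:=w_2\psi w_1^{-1}\in O$. Using $\psi\Pi_\hg=\Pi_\hg$ we get $w_2\Pi_\hg=\sigma w_1\Pi_\hg$. Hence $\Phi(\Pi_1)=\Phi(\Pi_2)$, so $\Phi$ descends to the quotient.
\item Conversely, suppose $\sigma w_1\Pi_\hg=w_2\Pi_\hg$ for some $\sigma\in O$. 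Put $\psi:=w_2^{-1}\sigma w_1\in\Aut(\Delta)$. Then $\psi\Pi_1=w_2^{-1}\sigma\Pi=w_2^{-1}\Pi=\Pi_2$ and $\psi\Pi_\hg=\Pi_\hg$. So $\Pi_1$ and $\Pi_2$ lie in the same $\Aut(\Delta,\Pi_\hg)$-orbit, which gives injectivity.
\end{enumerate}

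\emph{Surjectivity.} Take $\phi\Pi_\hg\in\sca$. It suffices to find $\tau\in\Aut(\Delta)$ with $\tau\Pi_\hg=\phi\Pi_\hg$ as sets; precomposing with an element of $O$ if necessary, we may then take $\tau\in W$. Set $\Pi':=\tau^{-1}\Pi$, a basis containing $\Pi_\hg$.
\begin{enumerate}[(a)]
\item To see $\Pi'\cap\Delta_\hg=\Pi_\hg$, note that any root of $\Delta_\hg$ lies in the span of $\Pi_\hg$. An element of the basis $\Pi'$ lying in the span of the subset $\Pi_\hg\subset\Pi'$ must belong to $\Pi_\hg$, by linear independence of $\Pi'$. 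Hence $\Pi'\in\mathcal{B}$.
\item We have $\tau\Pi'=\Pi$ and $\tau\Pi_\hg=\phi\Pi_\hg$, so $\Phi(\Pi')=[\phi\Pi_\hg]$.
\end{enumerate}

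\emph{Main obstacle.} Everything therefore reduces to an extension statement. If $\phi:\Pi_\hg\rightarrow\Pi$ is an isometric embedding of two sub-Dynkin diagrams, there should exist $\tau\in\Aut(\Delta)$ with $\tau\Pi_\hg=\phi\Pi_\hg$. In other words, isometric subsets of simple roots should be $\Aut(\Delta)$-conjugate.

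This is exactly where $\sog(2n)$, $n\geq4$, must be excluded. In $D_n$ there are isometric subsets, for instance two non-conjugate $A_{n-1}$'s or $A_1$-type pieces attached at the fork, that are not conjugate even under $W\rtimes O$.

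For the remaining types I would argue case by case. For $A_n$, $B_n$, $C_n$, use the classical description of $W$ as (signed) permutations: an isometric subdiagram is determined up to $W$ by the multiset of block sizes, together with the long/short distinction in types $B$ and $C$. For $G_2$, $F_4$, $E_6$, $E_7$, $E_8$, verify the claim from the known $W$-conjugacy classes of parabolic subsystems, following Dynkin's tables. I expect the exceptional types, especially $E_7$ and $E_8$, to require the most bookkeeping, and I would rely on Nishiyama's lemma \cite[Lemma 2]{Nsh} for them.
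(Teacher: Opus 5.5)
\textbf{Gap: your extension property fails for $\Eg_7$, so the case-by-case step cannot be completed there.}

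The paper gives no argument for this statement; it only cites Nishiyama. Your formal reduction is correct and self-contained. $\Phi$ is well defined and injective because $\mathrm{Stab}_{\Aut(\Delta)}(\Pi)=O$, and surjectivity reduces exactly to your extension property. Since $\Phi$ is an injection between finite sets, the claimed bijection holds \emph{if and only if} every element of $\sca$ is $\Aut(\Delta)$-conjugate to $\Pi_\hg$.

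In Bourbaki's numbering, take $S=\{\alpha_1,\alpha_3,\alpha_4,\alpha_5,\alpha_6\}$ and $S'=\{\alpha_2,\alpha_4,\alpha_5,\alpha_6,\alpha_7\}$. Both are of type $A_5$, hence isometric. The roots orthogonal to $S'$ include $\alpha_1$ and the highest root $\theta$, and these span an $A_2$. The only roots orthogonal to $S$ are $\pm\tfrac12(\varepsilon_1+\dots+\varepsilon_5-\varepsilon_6-\varepsilon_7+\varepsilon_8)$ in Bourbaki's model of $\Eg_7\subset\RR^8$. Since $O$ is trivial for $\Eg_7$, we have $\Aut(\Delta)=W$, and no $\tau\in W$ maps $S'$ onto $S$. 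These are the Levi classes $(A_5)'$ and $(A_5)''$; the same phenomenon occurs for types $3A_1$ and $A_3+A_1$.

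Now take the flag with $\Pi_\hg=S'$. The only other $A_5$ subset, $\{\alpha_3,\dots,\alpha_7\}$, is $W$-conjugate to $S$ inside the $A_6$ spanned by $\{\alpha_1,\alpha_3,\dots,\alpha_7\}$. So your injective $\Phi$ gives $|\cca^G/{\simeq}|=1$, while $|\sca/O|=3$.

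Hence the statement as quoted cannot be proved for $\Eg_7$. It needs one of two fixes:
\begin{itemize}
\item exclude these $\Eg_7$ cases, or
\item replace $\sca$ by $\{\tau\Pi_\hg\subset\Pi:\tau\in\Aut(\Delta)\}$.
\end{itemize}
With the second fix, your argument proves the bijection verbatim for every simple $\ggo$, $D_n$ included, with no case analysis at all.

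Your $D_n$ discussion also needs correcting:
\begin{itemize}
\item The $A_{n-1}$'s at the fork are permuted by diagram automorphisms, so they are conjugate under $W\rtimes O$ and are not a counterexample.
\item The example $\{\alpha_{n-1},\alpha_n\}$ versus $\{\alpha_1,\alpha_3\}$ works, but only for $n\geq5$.
\item In $D_4$, triality makes any two isometric subsets of $\Pi$ conjugate. So the extension property does not explain why $\sog(8)$ is excluded.
\end{itemize}
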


For instance, there are two on the flag in Example \ref{su5-2} (cf.\ Example \ref{su5-3}):
\begin{center}
\dynkin[scale=2] A{*oo*} \quad , \quad
\dynkin[scale=2] A{o*o*}\quad$\simeq$\quad\dynkin[scale=2] A{*o*o} , 
\end{center}
five on $\Eg_6/\SU(3)\times\SU(2)\times T^3$:
\begin{center}
\dynkin[scale=2] E{***ooo} \quad$\simeq$\quad \dynkin[scale=2] E{o*oo**} \quad , \quad
\dynkin[scale=2] E{oo*o**} \quad , \quad 
\dynkin[scale=2] E{*ooo**} \quad , \quad 
\dynkin[scale=2] E{*oo**o} \quad , \quad
\dynkin[scale=2] E{**o*oo} ,
\end{center}
and only one on $\Fg_4/\Spe(3)\times T^1$:
\begin{center}
\dynkin[scale=2] F{o***} .
\end{center}

\subsection{Koszul form}\label{K-form} 
The {\it Koszul form} of a complex flag manifold $(F=G/H,J)$, where $J$ is associated to the invariant ordering $\Delta_\qg^+$, is defined by: 
\begin{equation}\label{koszul}
\rho_J:= \sum_{\alpha\in\Delta_\qg^+} \alpha \equiv \sum_{\alpha\in\Delta_\qg^+} H_\alpha =:-\im Z_J\in\im\zg(\hg)\subset\im\tg,
\end{equation}
where we use the inner product $\ip:=\kil_{\ggo^c}|_{\im\tg}$ for the identification (see \S\ref{roots}).  Note that $\rho_J=2(\rho_\ggo-\rho_{[\hg,\hg]})$, where $\rho_\ggo$ and $\rho_{[\hg,\hg]}$ are the half of the sum of positive roots for the semisimple Lie algebras $\ggo$ and $[\hg,\hg]$ with respect to $\Delta^+$ and $\Delta_\hg^+$, respectively (see \eqref{weyl}).  It follows from \S\ref{isot} that 
\begin{equation}\label{rhoi}
\rho_J=\rho_1+\dots+\rho_r, \qquad\mbox{where}\quad \rho_i:=\sum_{\alpha\in\Delta_\qg^+:\alpha|_{\zg(\hg)}=\alpha_i|_{\zg(\hg)}} \alpha,
\end{equation}
and $\rho_i\in\im\zg(\hg)$ for all $i$ (see \cite[Lemma 8]{Pds}).  Moreover, $\rho_J\equiv -\im Z_J$ belongs to the chamber $C$ attached to $J$ (see \eqref{ch}), that is, 
\begin{equation}\label{Jal} 
\beta(-\im Z_J) =\la\rho_J,\beta\ra=\sum_{\alpha\in\Delta_\qg^+}\la\beta,\alpha\ra>0, \qquad\forall \beta\in\Delta_\qg^+. 
\end{equation}

\subsection{K\"ahler metrics}\label{K} 
Any $G$-invariant metric $g$ on a flag $F=G/H$ is Hermitian with respect to any $G$-invariant complex structure $J$ (i.e., $g(J\cdot,J\cdot)=g(\cdot,\cdot)$).  

Given a complex structure $J$ (associated with an invariant ordering $\Delta_\qg^+$, or a chamber $C$ in \eqref{ch}), a metric 
$g=(x_\alpha)_{\alpha\in\Delta_\qg^+}=g(x_1,\dots,x_r)$ is K\"ahler on $(F,J)$ if and only if any of the following conditions holds (see e.g.\ \cite[pp.3]{PnlPds} or \cite{Tkc}):
\begin{enumerate}[a)]
\item $x_{\alpha+\beta}=x_\alpha+x_\beta$ whenever $\alpha+\beta\in\Delta_\qg^+$.  

\item $x_k=x_i+x_j$ whenever $\alpha_k|_{\zg(\hg)}=\alpha_i|_{\zg(\hg)}+\alpha_j|_{\zg(\hg)}$.  

\item There exists $Z\in C$ such that $x_\alpha=\alpha(Z)$ for all $\alpha\in\Delta_\qg^+$ (set $g=g_Z$).  
\end{enumerate}
In that case, the K\"ahler form $\omega=g(J\cdot,\cdot)$ is given by $\omega=\kil_{\ggo^c}([\cdot,\cdot],Z)$, or equivalently, the corresponding skew-symmetric $\CC$-bilinear form on $\qg^c$ is given by
$$
\omega(E_\alpha,E_{-\alpha})=-\im\alpha(Z)=-\im x_\alpha, \qquad\forall\alpha\in\Delta_\qg^+, 
$$
and zero otherwise.  

There is a unique $G$-invariant K\"ahler-Einstein metric (up to scaling) on each complex flag manifold $(F=G/H,J)$ given by 
$$
g_{-\im Z_J} = (\la\alpha,\rho_J\ra)_{\alpha\in\Delta_\qg^+},
$$
where $-\im Z_J\in C$ (see \eqref{Jal}) is the Koszul form defined in \eqref{koszul}.  Indeed, the Ricci tensor is given by $\ricci(g_Z)=g_{-\im Z_J}$ for any $Z\in C$. 

\begin{example}\label{su4}
The flag manifold 
$$
F^{10}=\SU(4)/\Se(\U(1)\times\U(1)\times\U(2)), \qquad \dynkin[scale=2] A{oo*} ,
$$ 
has $\im\tg=\{ (a_1,\dots,a_4)\in\RR^5:\sum a_i=0\}$, $\Delta=\{\alpha_{ij}:1\leq i,j\leq 4\}$, $2\rho=(3,1,-1,-3)$ and  
$$
\zg(\hg)=\{(a,b,c,c):a+b+2c=0\}, \qquad \Delta_\hg=\{\pm\alpha_{34}\}, \qquad \Delta_\qg=\Delta\setminus\Delta_\hg. 
$$  
For the usual choice of positive roots $\Delta^+=\{\alpha_{ij}:i<j\}$, we have that $\Pi=\{\alpha_{12},\alpha_{23},\alpha_{34}\}$, $\Delta_\hg^+=\{\alpha_{34}\}$ and 
$$
\Delta_\qg^+=\{\alpha_{12},\; \alpha_{23},\; \alpha_{13},\; \alpha_{24},\; \alpha_{14}\}, 
$$
which determines a complex structure $J_1$ with $\rho_{J_1}=(3,1,-2,-2)$ and K\"ahler-Einstein metric $g_1=(2,3,5)$, relative to the central roots
$$
\alpha_1=\alpha_{12}, \quad \alpha_2=\alpha_{23},\; \alpha_{24}, \quad\alpha_3= \alpha_{13},\;  \alpha_{14}.  
$$  
On the other hand, the invariant ordering given by  
$$
\{-\alpha_{12},\; \alpha_{14},\; \alpha_{24},\; \alpha_{13},\; \alpha_{23}\},
$$
determines another complex structure $J_2$ with $\rho_{J_2}=(1,3,-2,-2)$ and K\"ahler-Einstein metric $g_2=(2,5,3)$.  
\end{example}

\begin{example}\label{b21}  
Consider a flag manifold $F=G/H$ with second Betti number $b_2(F)=1$, i.e., $\dim{\zg(\hg)}=1$.  This is equivalent to have $\Pi_\hg=\Pi\setminus\{\gamma\}$ for some fixed $\gamma\in\Pi$.  Thus $G$ is simple and  
$$
\Delta_\qg^+|_{\zg(\hg)} = \{\alpha_1=\gamma, \alpha_2=2\gamma +\beta_2, \dots, \alpha_r=r\gamma +\beta_r\}, \qquad\beta_i\in\Delta_\hg, 
$$
giving rise to the decomposition $\qg=\mg_1\oplus\dots\oplus\mg_r$ of the isotropy representation in irreducible components, where $\alpha_r$ is the maximal root.  Note that $Z_{\alpha_j}=jZ_\gamma$ for all $j=1,\dots,r$ (see \eqref{Zalf}), where $Z_\gamma=(\im H_\gamma)_{\zg(\hg)}$.  

According to \cite[Table 1]{ChrSkn1}, $1\leq r\leq 6$, and $r=1$ if and only if $F$ is a Hermitian irreducible symmetric space ($6$ infinite families and $2$ sporadic spaces).  There are $3$ infinite families and $10$ sporadic spaces with $r=2$ and $13$ sporadic examples with $3\leq r\leq 6$.  

Since $\dim{\im\zg(\hg)}=1$, there are only two chambers in \eqref{ch}, so $F$ admits a unique $G$-invariant complex structure $J$ up to biholomorphism, which is attached to the ordering $\Delta_\qg^+$ given above.  Note that the vector $Z_J:=\im\rho_J\in\zg(\hg)$ is given by 
$$
Z_J=\unm(\dim{\mg_1}+2\dim{\mg_2}+\dots+r\dim{\mg_r})Z_\gamma, \qquad Z_\gamma=(\im H_\gamma)_{\zg(\hg)}.  
$$
The metric $g=(1,2,\dots,r)$ is the unique $G$-invariant K\"ahler metric on $F$ up to scaling, which is automatically Einstein.   
\end{example}

% 14/8/2026

\section{Homogeneous complex manifolds}\label{chcm-sec} 

Let $M$ be a compact complex manifold that admits a transitive action of a compact Lie group of biholomorphisms, or equivalently, a compact homogeneous space $M=G/K$ admitting a $G$-invariant complex structure.  Back in 1954, the following characterization of such homogeneous spaces was obtained by Hsien-Chung Wang \cite{Wng} in the finite fundamental group case and by Tits \cite{Tts} in 1962 in the general case (see also \cite{NiWll} for a recent alternative proof via a purely Lie theoretical approach).  

We consider an almost-effective homogeneous space $M=G/K$, where $G$ is compact and connected and $K$ is a connected closed subgroup of $G$, and denote by $\ggo$ and $\kg$ the corresponding Lie algebras (see \S\ref{homsp-sec}).    

\begin{definition}\label{Csp-def}
A homogeneous space $M=G/K$ as above is called a {\it C-space} if there exists a Lie subalgebra $\hg\subset\ggo$ such that 
\begin{enumerate}[{\rm (i)}] 
\item $\hg$ is the centralizer of some abelian subalgebra of $\ggo$ (equivalently, $\hg=C_\ggo(\zg(\hg))$, where $\zg(\hg)$ is the center of $\hg$), 
 
\item $\kg\subset\hg$, 

\item $[\hg,\hg]=[\kg,\kg]$.     
\end{enumerate}
\end{definition} 

In particular, the semisimple part of $K$ must coincide with the semisimple part of the corresponding Lie group $H$, which is the centralizer of some torus in $G$.  Note that $\ggo=[\ggo,\ggo]\oplus\zg(\ggo)$, where $\zg(\ggo)$ is the center of $\ggo$, and by almost-effectiveness, $\kg\cap\zg(\ggo)=0$.  

\begin{theorem}\label{C}\cite[Theorem 4.1]{NiWll}
A compact homogeneous space $M=G/K$, where $G$ and $K$ are connected, admits a $G$-invariant complex structure if and only if it is a C-space of even dimension.     
\end{theorem}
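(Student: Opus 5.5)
We prove the two implications separately. The ``if'' direction is constructive; the ``only if'' direction is the Wang--Tits theorem, which we recover by Lie-theoretic means.

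\emph{If.} Let $M=G/K$ be an even-dimensional C-space with subalgebra $\hg$ as in Definition \ref{Csp-def}. We follow the construction sketched in the introduction.
\begin{enumerate}[(1)]
\item Fix a bi-invariant inner product $Q$ and a maximal torus $T\subset H$. Since $[\hg,\hg]=[\kg,\kg]\subset\kg\subset\hg$, we get $\kg=[\hg,\hg]\oplus(\kg\cap\zg(\hg))$. Setting $\zg(\kg)=\kg\cap\zg(\hg)$, we obtain the $Q$-orthogonal decompositions $\zg(\hg)=\zg(\kg)\oplus\ag$ and $\pg=\ag\oplus\qg$, where $\qg$ is the tangent space of the flag $F=G_f/H_f$.
\item $\dim\qg=|\Delta_\qg|$ is even, so $\dim\ag$ is even too. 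Choose any $J_\ag$ on $\ag$ with $J_\ag^2=-I$.
\item Pick any invariant ordering $\Delta_\qg^+$ (a chamber in \eqref{ch}) and let $J_\qg$ be the integrable structure on $F$ from \S\ref{comp}.
\item $\Ad(K)$-invariance of $J=(J_\ag,J_\qg)$: $K$ acts trivially on $\ag\subset\zg(\hg)$, and $J_\qg$ is $\Ad(H)$-invariant, hence $\Ad(K)$-invariant since $K\subset H$.
\item Integrability: the $(1,0)$-space is $\pg^{1,0}=\ag^{1,0}\oplus\bigoplus_{\alpha\in\Delta_\qg^+}\ggo_\alpha$. Put $\mathfrak{l}=\kg^c\oplus\pg^{1,0}$; the condition is that $\mathfrak{l}$ is a subalgebra. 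We check brackets pairwise:
\begin{itemize}
\item $[\ag^{1,0},\ag^{1,0}]=0$, since $\ag$ is abelian.
\item $[\ag^{1,0},\ggo_\alpha]\subset\ggo_\alpha$.
\item $[\kg^c,\ggo_\alpha]\subset\ggo_\alpha+\sum_{\beta\in\Delta_\hg}\ggo_{\alpha+\beta}$, which stays in $\pg^{1,0}$ by invariance of the ordering.
\item $[\ggo_\alpha,\ggo_\beta]\subset\ggo_{\alpha+\beta}$. Here $\alpha+\beta\in\Delta_\qg^+$, or $\alpha+\beta\in\Delta_\hg$, or $\alpha+\beta$ is not a root. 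The case $\alpha+\beta\in\Delta_\hg$ is impossible: $\alpha+\beta$ restricts to zero on $\zg(\hg)$, yet it is positive on the chamber. Also $\alpha+\beta\neq 0$.
\end{itemize}
\end{enumerate}
So $J$ is a $G$-invariant complex structure.

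\emph{Only if.} Let $J$ be a $G$-invariant complex structure, and let $\mathfrak{l}=\kg^c\oplus\pg^{1,0}\subset\ggo^c$ be the associated complex subalgebra. It satisfies $\mathfrak{l}+\bar{\mathfrak{l}}=\ggo^c$ and $\mathfrak{l}\cap\bar{\mathfrak{l}}=\kg^c$.
\begin{enumerate}[(1)]
\item Let $\mathfrak{n}$ be the nilradical of $\mathfrak{l}$, and show that its normalizer $\mathfrak{p}=N_{\ggo^c}(\mathfrak{n})$ is a parabolic subalgebra. The plan is to prove that $\mathfrak{l}$ is contained in a parabolic $\mathfrak{p}$ whose nilradical lies in $\mathfrak{l}$. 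For this, use a maximal torus $T\subset K$ extended to a Cartan subalgebra of $\ggo^c$, together with the condition $\mathfrak{l}+\bar{\mathfrak{l}}=\ggo^c$.
\item Set $\hg=\mathfrak{p}\cap\bar{\mathfrak{p}}\cap\ggo$, the compact real form of the Levi factor. It is the centralizer of a torus, which gives condition (i).
\item $\kg\subset\hg$ is immediate, which gives condition (ii).
\item Condition (iii), $[\hg,\hg]=[\kg,\kg]$: we argue that $\mathfrak{l}$ is the nilradical plus a subalgebra of the Levi factor $\hg^c$ whose intersection with its conjugate is $\kg^c$ and whose sum with its conjugate is all of $\hg^c$. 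Such a subalgebra must contain the semisimple part $[\hg,\hg]^c$.
\item Even dimensionality is clear.
\end{enumerate}

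The main obstacle is this ``only if'' structure theory: locating the parabolic, and showing that the semisimple part of the Levi factor lies in $\kg$. This is the content of Wang--Tits. If the direct argument stalls, we would simply invoke \cite{Tts,NiWll} for this direction.
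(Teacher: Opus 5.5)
Your ``if'' direction is correct and is exactly the construction of \S\ref{comp-sec}. The paper proves only that direction and takes the converse from \cite{NiWll} (Theorem~\ref{C2}(i), with $\hg=\bautg(J)$), so falling back on the citation, as you suggest, is what the paper does. As an argument, however, your ``only if'' sketch has a genuine gap in steps (1) and (4).

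\emph{Step (1).} The target you set, that $\mathfrak{l}$ lie in a parabolic $\mathfrak{p}$ whose nilradical lies in $\mathfrak{l}$, is vacuous: $\mathfrak{p}=\ggo^c$, with nilradical $0$, satisfies it. You give no argument that $N_{\ggo^c}(\mathfrak{n})$ is parabolic, which fails for general subalgebras. You also do not say how $\mathfrak{l}+\bar{\mathfrak{l}}=\ggo^c$ is supposed to enter.

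\emph{Step (4).} The claim is false as stated. Take $\hg=\ggo$ compact semisimple, $\kg=0$, and the Samelson subalgebra $\mathfrak{m}=\tg^{1,0}\oplus\bigoplus_{\alpha\in\Delta^+}\ggo_\alpha$. Then $\mathfrak{m}+\bar{\mathfrak{m}}=\ggo^c$ and $\mathfrak{m}\cap\bar{\mathfrak{m}}=0=\kg^c$, yet $\mathfrak{m}\not\supset[\ggo,\ggo]^c$. This is exactly what your argument yields with $\mathfrak{p}=\ggo^c$. So (4) needs a property of $\mathfrak{m}=\mathfrak{l}\cap\hg^c$ coming from the right choice of $\mathfrak{p}$, and you never identify one.

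\emph{The missing idea} is to normalize $\mathfrak{l}$ itself rather than its nilradical. Set $\mathfrak{p}:=N_{\ggo^c}(\mathfrak{l})$; its real points are precisely $\bautg(J)$ (compare \eqref{JaJq}).
\begin{enumerate}
\item Since $\ggo+\mathfrak{l}=\ggo^c$ (write $\bar{Y}=(Y+\bar{Y})-Y$), the compact $\Ad(G)$-orbit of $\mathfrak{l}$ in the Grassmannian is open and closed in the connected $\Ad(G^c)$-orbit, hence equal to it.
\item That orbit is therefore a closed orbit in a projective variety, and Borel's fixed point theorem makes $\mathfrak{p}$ parabolic. Thus $\mathfrak{p}=\hg^c\oplus\mathfrak{n}_{\mathfrak{p}}$, where $\hg:=\mathfrak{p}\cap\ggo$ is the centralizer of a torus and $\hg^c=\mathfrak{p}\cap\bar{\mathfrak{p}}$.
\item $\mathfrak{l}$ is an \emph{ideal} of $\mathfrak{p}$, hence $\zg(\hg)^c$-stable, so $\mathfrak{l}=(\mathfrak{l}\cap\hg^c)\oplus(\mathfrak{l}\cap\mathfrak{n}_{\mathfrak{p}})$.
\item Projecting $\ggo^c=\mathfrak{l}+\bar{\mathfrak{l}}$ onto $\mathfrak{n}_{\mathfrak{p}}$ along $\bar{\mathfrak{p}}$ gives $\mathfrak{n}_{\mathfrak{p}}\subset\mathfrak{l}$. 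Projecting onto $\hg^c$ shows that $\mathfrak{m}=\mathfrak{l}\cap\hg^c$ is an ideal of $\hg^c$ with $\mathfrak{m}+\bar{\mathfrak{m}}=\hg^c$.
\item Every ideal of $\hg^c$ is a sum of simple ideals of $[\hg,\hg]^c$ and a subspace of $\zg(\hg)^c$. Each simple ideal is conjugation-invariant, being the complexification of a compact simple ideal. Hence $[\hg,\hg]^c\subset\mathfrak{m}$.
\item Consequently $[\hg,\hg]^c\subset\mathfrak{l}\cap\bar{\mathfrak{l}}=\kg^c$, which gives $[\hg,\hg]=[\kg,\kg]$. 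Also $\kg^c\subset\mathfrak{p}\cap\bar{\mathfrak{p}}=\hg^c$, so $\kg\subset\hg$.
\end{enumerate}
It is the ideal property, not available a priori for $N_{\ggo^c}(\mathfrak{n})$, that rescues step (4).
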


In spite `C-space' is actually the name given by Wang only in the finite fundamental group case, for simplicity, we decided to use the same name in the general compact case.  

Given a C-space $M=G/K$, each subalgebra $\hg\subset\ggo$ as in Definition \ref{Csp-def} determines a flag manifold $F$ and the following torus fibration over $F$, called a {\it Tits fibration}:  
\begin{equation}\label{fib}
A=H/K\longrightarrow M=G/K \longrightarrow F=G/H=G_f/H_f, 
\end{equation}
where the Lie algebras of $G_f$ and $H_f$ are respectively given by 
$$
\ggo_f:=[\ggo,\ggo], \qquad \hg_f:=\hg\cap[\ggo,\ggo];
$$
in particular, $\hg=\hg_f\oplus\zg(\ggo)$ and $\zg(\hg)=\zg(\hg_f)\oplus\zg(\ggo)$.  Thus $F=G_f/H_f$ is a flag manifold (indeed, $\hg_f=C_{\ggo_f}(\zg(\hg_f))$) and since $G=G_f\times T^z$, $K\subset H=H_f\times T^z$ and 
$$
[K,K]=[H,H]=[H_f,H_f],
$$ 
we obtain that $A=H/K$ is a torus with Lie algebra $\ag\simeq\zg(\hg)/\zg(\kg)$.  The extremal case when $M$ is a torus and $F$ is a point is also included.  Note that any compact semisimple Lie group $M=G$ is a C-space which fibers over the full flag $F=G/T$ with fiber $T$, for any maximal torus $T\subset G$.   

\begin{example}\label{su5}
Given the flag 
$$
F^{18}=G/H=\SU(5)/\Se(\U(1)\times\U(1)\times\U(1)\times\U(2)), \qquad \dynkin[scale=2] A{ooo*} ,
$$
each vector $Z\in\zg(\hg)$ defines a C-space $M^{20}=G/K=\SU(5)/K$, where $K$ is the connected Lie subgroup with Lie algebra $\kg:=[\hg,\hg]\oplus\RR Z$, provided $K$ is closed in $G$.  Note that $K\simeq \SU(2)\times S^1$ up to finite cover.  The fiber of the Tits fibration $M\rightarrow F$ is therefore the $2$-torus $H/K$ with Lie algebra $\ag$, where $\zg(\hg)=\RR Z\oplus\ag$.   
\end{example} 

We fix from now, as a background metric, any bi-invariant inner product $Q$ on $\ggo$ such that 
$$
Q|_{[\ggo,\ggo]}=-\kil_{[\ggo,\ggo]}, 
$$ 
where $\kil_{[\ggo,\ggo]}$ is the Killing form of the semisimple Lie algebra $[\ggo,\ggo]$.  Given two subspaces $\bg,\cg\subset\ggo$, let $\bg_\cg$ denote the $Q$-orthogonal projection of $\bg$ on $\cg$.  

The following $Q$-orthogonal decompositions play an important role in the study of the geometry of a C-space M=G/K: for any subalgebra $\hg$ as in Definition \ref{Csp-def}, we consider
\begin{equation}\label{reddec}
\ggo= \kg\oplus\pg = 
\underbrace{[\hg_f,\hg_f]\oplus\rlap{$\overbrace{\phantom{\zg(\kg) \oplus\ag}}^{\zg(\hg)}$}\zg(\kg)}_\kg \oplus \underbrace{\ag\oplus\qg}_\pg,  \qquad T_oM\equiv\pg=\ag\oplus\qg, \quad T_oA\equiv\ag, \quad T_oF\equiv\qg,
\end{equation}
where $\ggo_f= \hg_f\oplus\qg$ is the reductive decomposition of the flag $F=G_f/H_f$ and $\ag$ is the $Q$-orthogonal complement of $\qg$ in $\pg$, or equivalently, the $Q$-orthogonal complement of $\zg(\kg)$ in $\zg(\hg)$.  Note that $[\hg_f,\hg_f]=[\kg,\kg]$.  

Summarizing, any C-space $M=G/K$ is actually determined by the flag $F=G_f/H_f$ and a subspace 
\begin{equation}\label{slope}
\zg(\kg)\subset\zg(\hg)=\zg(\ggo)\oplus\zg(\hg_f), \qquad\mbox{giving rise to}\qquad \zg(\hg)=\zg(\kg)\oplus\ag,
\end{equation}
such that $\zg(\kg)\cap\zg(\ggo)=0$ and the corresponding connected Lie subgroup $Z(K)$ with Lie algebra $\zg(\kg)$ is closed in $Z(H)$ (or equivalently, $Z(K)$ is compact, i.e., the subspace $\zg(\kg)$ is compactly embedded).  

The closedness is equivalent to the existence of a basis of $\zg(\kg)$ contained in the lattice $\tfrac{1}{2\pi\im}\Ker(\exp)\subset\im\tg$ defined by the Lie exponential map $\exp:\tg\rightarrow T$, which is in between the dual root and dual weight lattices (see \S\ref{roots}).  In this way, $Z(K)$ is closed in $Z(H)$ and so $K$ is closed in $H$ and $G$.  Note that the choice of the subspace $\zg(\kg)$ determining the homogeneous space $M=G/K$ is equivalent to the choice of its $Q$-orthogonal complement $\ag$, and that 
$$
\dim{\ag}=\dim{\zg(\hg)}-\dim{\zg(\kg)}
$$ 
must be even in order to $M$ admit a $G$-invariant complex structure.  

We now give a refinement of the presentation of a C-space as a homogeneous space, providing a simpler setting.  

\begin{proposition}\label{refi} 
For any C-space $M$ fibering on a flag manifold $F=G_f/H_f$, there exists a torus $T$ such that, up to a finite cover, $M=G/K$, where 
$$
G:=G_f\times T, \qquad K\subset H_f, \qquad [\kg,\kg]=[\hg_f,\hg_f].
$$   
\end{proposition}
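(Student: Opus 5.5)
The idea is to build, from the given presentation $M=G'/K'$ with Tits fibration over $F=G_f/H_f$, a new transitive group of the form $G_f\times T$ whose isotropy lies inside $H_f$. We work at the Lie algebra level and pass to groups only at the end, up to finite covers. By Definition \ref{Csp-def} and the discussion after \eqref{fib}, we have $\ggo'=\ggo_f\oplus\zg(\ggo')$, $\hg=\hg_f\oplus\zg(\ggo')$ and $\kg'\subset\hg$ with $[\kg',\kg']=[\hg_f,\hg_f]$. Moreover $\kg'\cap\zg(\ggo')=0$ by almost-effectiveness. Write $\pi_f:\ggo'\to\ggo_f$ and $\pi_z:\ggo'\to\zg(\ggo')$ for the projections along this direct sum.

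\emph{Step 1.} Since $\kg'\cap\zg(\ggo')=0$, the restriction $\pi_f|_{\kg'}$ is injective, and it is a Lie algebra homomorphism. Set $\kg:=\pi_f(\kg')\subset\hg_f$. Then $\kg\cong\kg'$ and $[\kg,\kg]=\pi_f[\kg',\kg']=[\hg_f,\hg_f]$, because $\pi_f$ is the identity on $[\ggo',\ggo']$. The map $\pi_z|_{\kg'}$ factors through a linear map $\lambda:\kg\to\zg(\ggo')$ that vanishes on $[\kg,\kg]$. So $\kg'$ is the graph $\{X+\lambda(X):X\in\kg\}$, and $\lambda$ is determined by its restriction to $\zg(\kg)$.

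\emph{Step 2.} We now untwist the graph. Let $K$ be the connected subgroup of $G_f$ with Lie algebra $\kg$; it is closed, because $Z(K)$ is the projection of the compact torus $Z(K')$, and $[K,K]=[H_f,H_f]$ is closed. The key observation is that $K$ acts on $G_f$ by right multiplication and $Z(K)$ acts on $Z(G')$ through the homomorphism $\lambda$. Consider the map $G_f\times Z(G')\to G'$ (a finite cover, since $G'=G_f\cdot Z(G')$). Under it, the quotient $G'/K'$ becomes $(G_f\times Z(G'))/\widetilde K'$, where $\widetilde K'$ is the graph subgroup. We then want to rewrite this as $(G_f\times T)/K$ with $K\subset G_f$. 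The natural map is
\[
[g,z]\mapsto (gK',\,\cdot)\;:\;G'/K'\longrightarrow G_f/K\times(\text{torus}),
\]
but that only gives a fibration. Instead, we look for a larger torus $T$ that absorbs the twist. Take $T:=Z(G')\times Z(K)$, or more precisely the torus $Z(G')\times Z(K)/\ker$, and let $G:=G_f\times T$ act on $M$ by
\[
(g,(z,s))\cdot xK' := g\,x\,\chi(s)\,z\,K',
\]
for a suitable homomorphism $\chi$. This requires $Z(K)$ to act on $G'/K'$ from the right, which it does because it normalizes $K'$.

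A cleaner choice, which I would commit to: keep $T=Z(G')$ and simply change the embedding. The transitive $G'$-action has isotropy $K'$, and the automorphism $\Phi$ of $G_f\times Z(G')$ defined by $(g,z)\mapsto(g,z\,\tilde\lambda(g)^{-1})$ would send the graph $K'$ to $K\times\{1\}$. However, $\tilde\lambda$ is only defined on $Z(K)$, not on all of $G_f$, so this fails, and we must instead use the right action of the centralizer.

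\emph{Step 3 (main obstacle).} The real content is to realize $M$ as $(G_f\times T)/K$ with $K$ embedded in the first factor. I would use that $N(K')\supset Z(H)$, so that the torus $Z(H)/Z(K')$ acts on $M$ from the right by holomorphisms. This is the torus $A$ of the Tits fibration. Define $T:=Z(G')\times A$ acting on $M$ by left multiplication on the $Z(G')$ part and right multiplication on the $A$ part, and let $G:=G_f\times T$. The isotropy of $o=eK'$ in $G$ consists of the pairs $(g,t)$ with $g\,t\in K'\cdot Z(H)$. From this we need to extract that, after quotienting by the ineffective kernel and passing to a finite cover, the isotropy is a subgroup of $H_f$ whose commutator is $[H_f,H_f]$. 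The hard part is checking that the isotropy projects injectively to $G_f$ and that the resulting presentation is almost-effective and transitive. This amounts to the lattice and closedness bookkeeping, and there the finite covers enter.
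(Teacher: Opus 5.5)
There is a genuine gap. None of the constructions you try produces the correct isotropy group, and the step you defer in Step~3 as bookkeeping is exactly where the argument breaks.

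\emph{Step~2 targets a false identification.} You aim for $M\simeq (G_f\times Z(G'))/K$ with $K$ the connected subgroup of $G_f$ with Lie algebra $\pi_f(\kg')$. This is false whenever $\lambda\neq 0$, even though the dimensions agree. Take $G'=\SU(2)\times T^2$ and $\kg'=\RR(X+v)$, with $X$ spanning a maximal torus of $\sug(2)$ and $0\neq v\in\zg(\ggo')$ rational. This is a C-space with $\hg=\RR X\oplus\zg(\ggo')$, and it is finitely covered by the Hopf surface $S^3\times S^1$ (see below). Your target is $\SU(2)/S^1\times T^2=S^2\times T^2$, and $\pi_2$, which is unchanged under coverings, tells the two apart.

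\emph{Step~3 does not rescue this.} In $G_f\times Z(G')\times A$ the factor $Z(G')$ acts ineffectively: left multiplication by $z$ coincides with right multiplication by its image in $A$. Once that kernel is removed, the isotropy of $o$ is the graph of the natural map $H_f\to H/K'=A$, which sits diagonally in $G_f\times A$. The difficulty is not whether it projects injectively onto $H_f$. Forcing it into $H_f\times\{1\}$ would give $M\simeq F\times A$, which in the example is again $S^2\times T^2$.

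\emph{The missing idea is to intersect rather than project.} The correct isotropy algebra is $\ker\lambda=\kg'\cap\ggo_f$, not $\pi_f(\kg')$, and the torus must shrink to a complement of $\Ima\lambda=\pi_z(\kg')$.
\begin{enumerate}
\item Set $\ggo_1:=\ggo_f\oplus\pi_z(\kg')$ and let $G_1$ be the corresponding connected subgroup. Let $T$ be the torus with Lie algebra $\zg(\ggo')\cap\pg'$, the $Q$-orthogonal complement of $\pi_z(\kg')$ in $\zg(\ggo')$.
\item Since $K'$ is connected and $G_f$ is normal, $G_fK'$ is a compact connected subgroup of $G_1$ with Lie algebra $\ggo_f+\kg'=\ggo_1$. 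Hence $G_fK'=G_1$.
\item Therefore $G_f$ acts transitively on $G_1/K'$ with isotropy $K:=K'\cap G_f$; replace it by its identity component at the cost of a finite cover. Its Lie algebra $[\kg',\kg']\oplus(\zg(\kg')\cap\ggo_f)$ lies in $\hg_f$ and has derived algebra $[\hg_f,\hg_f]$.
\item Since $T$ is central and $G_1\cap T$ is finite, the map $(g,t)K'\mapsto gtK'$ is a finite covering $G_1/K'\times T\to G'/K'$. So $G_f/K\times T$ finitely covers $M$.
\end{enumerate}

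This is exactly Lemma~\ref{gprime2}, which the paper's proof applies; the paper cites Onishchik for the transitivity, but step~2 above gives it directly. Note that $\dim T=\dim\zg(\ggo')-\dim\pi_z(\kg')$. The tilted central directions of $\kg'$ are absorbed by $G_f$ rather than surviving in the torus, and that is why keeping $T=Z(G')$ cannot work.
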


\begin{remark}
In particular, $G_f/K$ is also a C-space since $\hg_f$ satisfies all the conditions in Definition \ref{Csp-def}, $[\ggo,\ggo]$ is the Lie algebra of $G_f$, $\hg=\hg_f\oplus\zg(\ggo)$, $\zg(\ggo)$ is the Lie algebra of $T$ and, as a manifold,  
\begin{equation}\label{gprime}
M=G/K=G_f/K\times T^z, \qquad z:=\dim{\zg(\ggo)}.
\end{equation}  
However, we will see in \S\ref{comp-sec} that this is not in general a product of complex manifolds, even in the case when $\dim{G_f/K}$ and $\dim{\zg(\ggo)}$ are both even.   
\end{remark}

\begin{proof}
By Lemma \ref{gprime2}, if $M=G'/K'$, then, up to a finite cover, 
$$
M=G'_{ss}/K'\cap G'_{ss} \times T^{\dim{\zg(\ggo')\cap\pg}} = G_f/K'\cap G_f \times T^{\dim{\zg(\ggo')\cap\ag}},
$$
which gives the proposition if we set $K:=K'\cap G_f$ and consider the torus $T\subset G'$ with Lie algebra $\zg(\ggo')\cap\ag$.  
\end{proof}

For any C-space as in \eqref{gprime}, the Tits fibration is therefore given by   
\begin{equation}\label{fib2}
A=H_f/K\times T^z\longrightarrow M=G/K \longrightarrow F=G_f/H_f, 
\end{equation}
and the $Q$-orthogonal decompositions given in \eqref{reddec} can be rewritten as follows: 
\begin{equation}\label{reddec2}
\ggo= \kg\oplus\pg = 
\underbrace{[\hg_f,\hg_f]\oplus\rlap{$\overbrace{\phantom{\zg(\kg) \oplus\ag}}^{\zg(\hg_f)}$}\zg(\kg)}_\kg \oplus \underbrace{\ag_f\oplus\zg(\ggo)\oplus\qg}_\pg,  \qquad \pg=\ag\oplus\qg, \quad \ag=\ag_f\oplus\zg(\ggo), 
\end{equation}
where $\ggo_f=[\ggo,\ggo]= \hg_f\oplus\qg$ is the reductive decomposition of the flag $F=G_f/H_f$ and $\ag_f$ is the $Q$-orthogonal complement of $\zg(\kg)$ in $\zg(\hg_f)$.  Note that $\ag_f=\ag\cap\ggo_f$.    

Any C-space $M=G/K=G_f/K\times T^z$ is therefore determined by the flag $F=G_f/H_f$ (or by $\hg$ as in Definition \ref{Csp-def}), the number $\dim{\zg(\ggo)}$ and the choice of a compactly embedded subspace 
\begin{equation}\label{slope}
\zg(\kg)\subset\zg(\hg_f), \qquad\mbox{giving rise to}\qquad \zg(\hg_f)=\zg(\kg)\oplus\ag_f.  
\end{equation}
We note that it is equivalent to choose the $Q$-orthogonal complement $\ag_f$ of $\zg(\kg)$ in $\zg(\hg_f)$, and that 
$$
\dim{\ag}=\dim{\ag_f}+\dim{\zg(\ggo)} = \dim{\zg(\hg_f)}-\dim{\zg(\kg)}+\dim{\zg(\ggo)}
$$ 
must be even in order to $M$ admit a $G$-invariant complex structure.  

\begin{remark}\label{lh-rem}
If we also include locally homogeneous spaces, i.e., any subspace $\zg(\kg)\subset\zg(\hg_f)$ (not necessarily compactly embedded), then the set of all C-spaces $M=G_f/K$ of a given dimension which fibers over a fixed flag manifold $F=G_f/H_f$ is identified with the Grassmannian 
$$
\Gr_l(\RR^m) = \SO(m)/\Se(\Or(l)\times\Or(m-l)), \qquad m=\dim{\zg(\hg_f)}, \qquad l=\dim{\zg(\kg)},  
$$
which provides a natural topology on this subset of homogeneous torus bundles (see \cite{KrsWlfZar} for further information).  In particular, a rigorous definition of when a property of these fibrations is {\it generic} follows.  
\end{remark}

For the maximal abelian subalgebra $\tg\subset\ggo$, we have that
\begin{equation}\label{tdec}
\tg=\tg_f\oplus\zg(\ggo) 
=\rlap{$\overbrace{\phantom{\tg_{[\hg_f,\hg_f]}\oplus\zg(\kg)}}^{\tg_\kg}$} \tg_{[\hg_f,\hg_f]}\oplus \underbrace{\zg(\kg)\oplus\ag_f}_{\zg(\hg_f)} \oplus\zg(\ggo)
=\rlap{$\overbrace{\phantom{\tg_{[\hg_f,\hg_f]}\oplus\zg(\kg)}}^{\tg_f}$} \tg_{[\hg_f,\hg_f]}\oplus \zg(\hg_f)\oplus\zg(\ggo),
\end{equation}
where $\tg_f$, $\tg_{[\hg_f,\hg_f]}$ and $\tg_\kg$ are the maximal abelian subalgebras of $\hg_f$ (and $\ggo_f$), $[\hg_f,\hg_f]$ and $\kg$, respectively.   

\begin{remark}\label{ext}
A C-space $M=G/K=G_f/K\times T^z$ as in \eqref{gprime}, 
\begin{enumerate}[{\rm (i)}] 
\item is a flag manifold if and only if $\zg(\kg)=\zg(\hg_f)$ and $\zg(\ggo)=0$, so $\kg\ne 0$, $\ag=0$ and $M=G_f/H_f$ (indeed, if $G_f/K$ is flag, say $\kg=C_{\ggo_f}(Z)$ for some $Z\in\zg(\kg)$, then $\zg(\hg_f)\subset\kg$ and so $K=H_f$),

\item it is a Lie group if and only if $\kg=0$, i.e., $M=G=G_f\times T^z$, where $G_f$ is a semisimple Lie group, so $H_f$ is abelian ($[\hg_f,\hg_f]=[\kg,\kg]=0$) and $\ag_f=\zg(\hg_f)=\hg_f=\tg_f$ (see \eqref{tdec}), i.e., $F=G_f/T_f$ is full flag,

\item and the following conditions are equivalent:
\begin{enumerate}[{\rm (a)}] 
\item $M=G/K$ admits a $G$-invariant K\"ahler metric.  

\item $\ag_f=0$ (i.e., $\ag=\zg(\ggo)$, or $\zg(\kg)=\zg(\hg_f)$, or $\kg=\hg_f$).  

\item $M=G_f/H_f\times T^z$ and $\dim{\zg(\ggo)}$ is even.
\end{enumerate}
\end{enumerate}
\end{remark}

The flag manifold $G_f/H_f$ decomposes as 
\begin{equation}\label{decflags}
G_f/H_f=G_1/H_1\times\dots\times G_s/H_s, 
\end{equation} 
where each $G_i/H_i$ is a flag manifold, the $G_i$ are the simple factors of $G_f$ and $H_f=H_1\times\dots\times H_s$.  Thus $\Delta=\Delta_1\sqcup\dots\sqcup\Delta_s$ and $\Pi=\Pi_1\sqcup\dots\sqcup\Pi_s$, where $\Delta_i=\Delta_{\qg_i}\sqcup\Delta_{\hg_i}$ is the root system of $\ggo_i=\hg_i\oplus\qg_i$ relative to a maximal torus $\tg_i\subset\hg_i$, and so 
$$
\Delta_\qg=\Delta_{\qg_1}\sqcup\dots\sqcup\Delta_{\qg_s}, \qquad 
\Pi_\qg=\Pi_{\qg_1}\sqcup\dots\sqcup\Pi_{\qg_s}, \qquad \Delta_\hg=\Delta_{\hg_1}\sqcup\dots\sqcup\Delta_{\hg_s}.  
$$  
It follows from \eqref{slope} that  
$$
\zg(\kg)\subset\zg(\hg_f)=\zg(\hg_1)\oplus\dots\oplus\zg(\hg_s)=\zg(\kg)\oplus\ag_f, 
$$ 
and the possible embeddings of $\zg(\kg)$ on the direct sum can have diverse and complicated slopes.  

\begin{definition}\label{strict-def}
A C-space $M=G/K$ is called {\it strict} if either $M$ is a torus or $\kg_{\hg_i}\ne 0$ and $\hg_i$ is not contained in $\kg$ for all $i$, where $\hg_f=\hg_1\oplus\dots\oplus\hg_s$ as in \eqref{decflags}.  
\end{definition}

In other words, $M=G/K$ is strict when it is not the product of a flag manifold and a semisimple Lie group (see the proof below). 
   
\begin{proposition}\label{decC} 
As a manifold, any C-space as in \eqref{gprime} admits the following decomposition: 
\begin{equation*}
M=G/K=\widetilde{G}/\widetilde{H}\times \underline{G}/\underline{K}\times\overline{G}, 
\end{equation*}
where $\widetilde{G}/\widetilde{H}$ is a flag manifold, $\underline{G}/\underline{K}=\underline{G}_f/\underline{K}\times T^z$ is a strict C-space and $\overline{G}$ is a compact semisimple Lie group.  
\end{proposition}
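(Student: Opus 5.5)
We split the simple factors of $G_f$ according to how the Lie algebra $\kg$ meets the corresponding summand $\hg_i$ of $\hg_f=\hg_1\oplus\dots\oplus\hg_s$. Write $\pi_i:\hg_f\rightarrow\hg_i$ for the projection, so that $\kg_{\hg_i}=\pi_i(\kg)$. Since $[\kg,\kg]=[\hg_f,\hg_f]=\bigoplus_i[\hg_i,\hg_i]$, we have $\kg=\bigoplus_i[\hg_i,\hg_i]\oplus\zg(\kg)$ with $\zg(\kg)\subset\bigoplus_i\zg(\hg_i)$. We sort the indices into three disjoint sets:
\[
I_1:=\{i:\hg_i\subset\kg\},\qquad
I_3:=\{i:\hg_i\not\subset\kg,\ \kg_{\hg_i}=0\},\qquad
I_2:=\{1,\dots,s\}\setminus(I_1\cup I_3).
\]
If $\kg_{\hg_i}=0$, then $\hg_i$ cannot be contained in $\kg$, since $\hg_i\ne0$ always holds ($\hg_i$ contains a maximal torus of $\ggo_i$). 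Hence $I_1\cap I_3=\emptyset$, and every $i\in I_2$ satisfies both $\kg_{\hg_i}\ne0$ and $\hg_i\not\subset\kg$. We set
\[
\widetilde{\ggo}:=\bigoplus_{i\in I_1}\ggo_i,\qquad
\underline{\ggo}_f:=\bigoplus_{i\in I_2}\ggo_i,\qquad
\overline{\ggo}:=\bigoplus_{i\in I_3}\ggo_i,
\]
and correspondingly $\widetilde{\hg}=\bigoplus_{I_1}\hg_i$, $\underline{\hg}=\bigoplus_{I_2}\hg_i$ and $\underline{\kg}:=\kg\cap\underline{\hg}$. We then aim for the Lie algebra splitting
\[
\kg=\widetilde{\hg}\oplus\underline{\kg}.
\]

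\emph{Step 1: $\kg=\widetilde{\hg}\oplus\underline{\kg}$.} For $i\in I_3$ we have $\pi_i(\kg)=0$, so $\kg\subset\bigoplus_{i\notin I_3}\hg_i$. For $i\in I_1$ we have $\hg_i\subset\kg$, so $\widetilde{\hg}\subset\kg$. Given $X\in\kg$, subtract its $\widetilde{\hg}$-component, which itself lies in $\kg$. The remainder lies in $\kg\cap\underline{\hg}=\underline{\kg}$. This gives the splitting, and in particular $\kg_{\hg_i}=\pi_i(\underline{\kg})$ for $i\in I_2$. The subgroup $\underline{K}$ is the connected subgroup with Lie algebra $\underline{\kg}$. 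It is closed, being the identity component of $K\cap\underline{G}_f$ once $K$ is shown to split.

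\emph{Step 2: global splitting.} We pass to the finite cover $G_f=\widetilde G\times\underline G_f\times\overline G$ by simple factors. Connectedness and Step 1 give $K=\widetilde H\times\underline K$, up to finite covers, with $\widetilde H\subset\widetilde G$ and $\underline K\subset\underline G_f$. Combining this with \eqref{gprime}, we obtain
\[
M=\widetilde G/\widetilde H\times\underline G_f/\underline K\times T^z\times\overline G .
\]
Here $\widetilde G/\widetilde H$ is a flag manifold, since $\widetilde H$ is a product of the centralizers $H_i$. The factor $\overline G$ is compact semisimple.

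\emph{Step 3: the middle factor is a strict C-space.} We check Definition \ref{Csp-def} for $\underline G_f/\underline K\times T^z$ using the subalgebra $\underline{\hg}\oplus\zg(\ggo)$:
\begin{enumerate}[(i)]
\item it is a centralizer, being a sum of centralizers;
\item it contains $\underline{\kg}$;
\item $[\underline{\kg},\underline{\kg}]=[\underline{\hg},\underline{\hg}]$ follows by projecting $[\kg,\kg]=[\hg_f,\hg_f]$ onto $\underline{\hg}$.
\end{enumerate}
Strictness is then exactly the defining property of $I_2$: for each $i\in I_2$, $\underline{\kg}_{\hg_i}=\kg_{\hg_i}\ne0$ and $\hg_i\not\subset\underline{\kg}$. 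The degenerate case $I_2=\emptyset$ gives a torus, which is also strict.

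\emph{Expected obstacle.} The main point to handle carefully is the finite-cover and connectedness bookkeeping in Step 2. Here we follow the same conventions as in the proof of Proposition \ref{refi}, using Lemma \ref{gprime2}.
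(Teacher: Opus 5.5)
Your proof is correct and takes essentially the same route as the paper. It uses the same three-way sorting of the simple factors: $\hg_i\subset\kg$ for the flag factor, $\kg_{\hg_i}\ne 0$ with $\hg_i\not\subset\kg$ for the strict factor, and $\kg_{\hg_i}=0$ for the group factor. From this you get the same splitting $\kg=\widetilde{\hg}\oplus\underline{\kg}$ and hence the product decomposition.

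You are more explicit than the paper about the closedness of $\underline{K}$ and about checking the C-space axioms for the middle factor. The paper, for its part, also records that $\hg_i$ is abelian for the group-factor indices, a fact the statement does not need.
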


\begin{remark}\label{decC-rem}  
$\Pi_1(M)=\Pi_1(\underline{G}/\underline{K})$ and $b_1(M)=b_1(\underline{G}/\underline{K})$, since $\widetilde{G}/\widetilde{H}$ and $\overline{G}$ have both finite fundamental group and zero first Betti number.  On the other hand, $b_2(M)=\dim{\zg(\widetilde{\hg}})+b_2(\underline{G}/\underline{K})$, as $b_2(\overline{G})=0$ (see \S\ref{dRapp-sec}).  
\end{remark}

\begin{proof}
After reordering, we can assume that for some $1\leq t\leq u\leq s$, 
\begin{align*}
&\kg\subset \hg_1\oplus\dots\oplus\hg_u,  \qquad \kg_{\hg_i}\ne 0, \quad\forall i=1,\dots,u,\\ 
&\hg_1\oplus\dots\oplus\hg_t=\kg\cap(\hg_1\oplus\dots\oplus\hg_s),  
\end{align*}
which implies that 
$$
\kg=\hg_1\oplus\dots\oplus\hg_t\oplus\underline{\kg}, \qquad\mbox{where}\quad\zg(\underline{\kg})\subset\zg(\hg_{t+1})\oplus\dots\oplus\zg(\hg_u), 
$$
$\zg(\underline{\kg})_{\zg(\hg_i)}\ne 0$ and none $\zg(\hg_i)$ is contained in $\zg(\underline{\kg})$ for all $i=t+1,\dots,u$.  Note that if we set 
$$
\underline{\hg}:=\hg_{t+1}\oplus\dots\oplus\hg_u
$$ 
then $\zg(\underline{\kg})\ne\zg(\underline{\hg})$ whenever $\underline{\hg}\ne 0$.  

In this way, $\hg_i$ is necessarily abelian (i.e., $G_i/H_i$ is full flag) for all $u+1\leq i\leq s$ and as a manifold, $M=G/K$ decomposes as stated in the proposition, where $\widetilde{G}/\widetilde{H}:=G_1/H_1\times\dots\times G_t/H_t$ is a flag manifold, $\overline{G}:=G_{u+1}\times\dots\times G_s$ is a semisimple Lie group and $\underline{G}/\underline{K}:=G_{t+1}\times\dots\times G_u/\underline{K}\times T^z$ is a strict C-space which fibers over the flag $G_{t+1}\times\dots\times G_u/\underline{H}$, where $\underline{\hg}$ is as defined above.   
\end{proof}

\begin{remark}\label{decC-rem2}  
It follows from the above proof that the Lie algebras of $\widetilde{G}$, $\underline{G}_f$ and $\overline{G}$ are respectively given by 
$$
\widetilde{\ggo}=\bigoplus_{\zg(\hg_i)\subset\zg(\kg)}\ggo_i,  \qquad 
\underline{\ggo}_f=\bigoplus\limits_{\substack{\zg(\hg_j)\not\subset\zg(\kg),\\ \kg_{\hg_j}\ne 0}}\ggo_j, \qquad  
\overline{\ggo}=\bigoplus\limits_{\kg_{\hg_k}=0}\ggo_k,  
$$
where $\ggo_f=\ggo_1\oplus\dots\oplus\ggo_s$ is the decomposition in simple ideals.  Note that $\zg(\hg_i)\subset\zg(\kg)$ if and only if $\hg_i\subset\kg$.  After reordering, we can assume that for $1\leq t\leq u\leq s$, 
$$
\widetilde{\ggo}=\ggo_1\oplus\dots\oplus\ggo_t, \qquad \underline{\ggo}=\ggo_{t+1}\oplus\dots\oplus\ggo_u, \qquad 
\overline{\ggo}=\ggo_{u+1}\oplus\dots\oplus\ggo_s.   
$$
In particular, $\ag\subset \zg(\hg_{t+1})\oplus\dots\oplus\zg(\hg_s)$ and for any $i=u+1,\dots,s$, the Lie algebra $\hg_i$ is necessarily abelian, i.e., $\zg(\hg_i)=\hg_i=\tg_i$ is a maximal abelian subalgebra of $\ggo_i$ and $G_i/H_i$ is full flag.  
\end{remark}

\subsection{Complex structures}\label{comp-sec} 
We now show that any even dimensional C-space indeed admits invariant complex structures (see Theorem \ref{C}).  Given a C-space $M=G/K$ and $\hg$ as in Definition \ref{Csp-def}, we recall from \eqref{reddec} the $Q$-orthogonal decompositions $\zg(\hg)=\zg(\kg)\oplus\ag$, 
$$
\ggo= \kg\oplus\pg, \qquad \pg= \ag\oplus\qg,   \qquad 
\qg^c=\sum_{\alpha\in\Delta_\qg}\CC E_\alpha,   
$$
where $\Delta_\qg$ is the set of complementary roots of the flag $F=G_f/H_f$ defined by $\hg$, i.e., $\Delta=\Delta_\qg\sqcup\Delta_{[\kg,\kg]}$, where $\Delta$ is the root system of the semisimple Lie algebra $\ggo_f$ relative to the maximal torus $\tg_f=\tg_{[\kg,\kg]}\oplus\zg(\hg_f)$ of $\ggo_f$ (see \S\ref{flag-sec}).  Note that $[\ag,\ag]=0$ and $[\ag,\qg]\subset\qg$.  We consider the $G$-invariant complex structures on $M=G/K$ given by 
\begin{equation}\label{J}
J=(J_\ag,J_\qg)=\left[\begin{matrix} J_\ag&0\\0&J_\qg\end{matrix}\right], 
\end{equation}
where $J_\ag:\ag\rightarrow\ag$ is any linear map such that $J_\ag^2=-I_\ag$ (in particular, $\dim{\ag}$ is even) and $J_\qg:\qg\rightarrow\qg$ is the complex structure on the flag $F=G_f/H_f$ attached to some invariant ordering $\Delta_\qg^+$ of $\Delta_\qg$, i.e., $J_\qg E_\alpha = \pm\im E_\alpha$ for all $\alpha\in\pm\Delta_\qg^+$ (see \S\ref{comp}).  Since 
$$
\pg^{1,0}=\ag^{1,0}\oplus \bigoplus_{\alpha\in\Delta_\qg^+}\CC E_\alpha, 
$$
the integrability of $J$ follows from the fact that $\Delta_\qg^+$ is an invariant ordering (see \S\ref{comp} and \S\ref{igs}).  

Note that the definition of $J=(J_\ag,J_\qg)$ strongly depends on the chosen $\hg$ and that the Tits fibration over the flag $F$ defined by $\hg$ given in \eqref{fib} becomes a holomorphic fibration: 
\begin{equation}\label{holfib}
(A,J_\ag)\longrightarrow (M=G/K,J) \longrightarrow (F,J_\qg),   
\end{equation}
that is, $J=(J_\ag,J_\qg)$ is projectable along some of the Tits fibrations attached to the C-space.  Conversely, as shown in \cite{NiWll}, $J=(J_\ag,J_\qg)$ determines $\hg$ as follows,
\begin{equation}\label{JaJq} 
\bautg(J)=\hg=\kg\oplus\ag, \qquad\forall J=(J_\ag,J_\qg),
\end{equation}
where $\bautg(J)$ is the Lie algebra of the group $\Aut(G/K)\cap\Bihol(M,J)$ of all biholomorphic automorphisms (see  \eqref{baut}).  

\begin{theorem}\label{C2}\cite{NiWll} Let $M=G/K$ be a compact homogeneous space, where $G$ and $K$ are connected, with $Q$-orthogonal reductive decomposition $\ggo=\kg\oplus\pg$.  
\begin{enumerate}[{\rm (i)}] 
\item \cite[Theorem 1.1]{NiWll} If $J:\pg\rightarrow\pg$ is a $G$-invariant complex structure on $M$, then the subalgebra $\hg=\bautg(J)$ satisfies all the conditions in Definition \ref{Csp-def}.  In particular, $M=G/K$ is a C-space of even dimension.  

\item \cite[Theorem 1.3]{NiWll} Every $G$-invariant complex structure on a C-space $M=G/K$ is of the form $J=(J_\ag,J_\qg)$ (see \eqref{J}) for some subalgebra $\hg$ as in Definition \ref{Csp-def}.  
\end{enumerate}
\end{theorem}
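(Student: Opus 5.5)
The plan is to pass to the complexified, Lie-algebraic description of invariant complex structures. By \S\ref{igs}, a $G$-invariant complex structure $J$ on $\pg$ corresponds to the complex subspace $\lgo:=\kg^c\oplus\pg^{0,1}\subset\ggo^c$. Integrability of $J$ is equivalent to $\lgo$ being a subalgebra, and in addition $\lgo+\overline{\lgo}=\ggo^c$ and $\lgo\cap\overline{\lgo}=\kg^c$. The object to study is then the normalizer $\ngo:=N_{\ggo^c}(\lgo)$, together with its real part. I expect $\hg:=\bautg(J)$ to be the real form $\ngo\cap\ggo$, or more precisely $\hg^c=\ngo\cap\overline{\ngo}$. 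This matches the identification \eqref{JaJq}: a vector $X\in\ggo$ induces a biholomorphic automorphism exactly when $\ad X$ preserves $\lgo$.

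For (i), I would carry out the following steps in order.

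\begin{enumerate}
\item Choose a maximal torus $\tg_\kg$ of $\kg$ and extend it to a maximal torus $\tg$ of $\ggo$. I then want to show that $\tg^c$ normalizes $\lgo$, possibly after replacing $\tg$ by a maximal torus of the compact subalgebra $\ngo\cap\ggo$, which contains $\kg$. Once this holds, $\lgo$ is $\tg^c$-stable, so
\[
\lgo=(\lgo\cap\tg^c)\oplus\bigoplus_{\alpha\in S}\ggo_\alpha
\]
for some set of roots $S$. This set is closed under addition within $\Delta$, and the condition $\lgo+\overline{\lgo}=\ggo^c$ gives $S\cup -S=\Delta$.
\item Using the standard theory of closed root subsets with $S\cup-S=\Delta$, conclude that $S$ is the root set of a parabolic subalgebra $\pg_S$ of $\ggo_f^c$. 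Its Levi factor $\lgo_S$ has roots $S\cap -S$, and $\lgo_S$ is the complexified centralizer of the torus $\zg(\lgo_S)\cap\tg$. Set $\hg:=(\lgo_S\cap\ggo_f)\oplus\zg(\ggo)$; by construction this is the centralizer of an abelian subalgebra of $\ggo$.
\item Show that $\kg\subset\hg$ and $[\hg,\hg]=[\kg,\kg]$. The roots of $\kg$ lie in $S\cap -S$ because $\lgo\cap\overline{\lgo}=\kg^c$. Conversely, every $\alpha\in S\cap-S$ has both $\ggo_{\pm\alpha}\subset\lgo\cap\overline{\lgo}=\kg^c$, so the semisimple parts agree. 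This gives condition (iii) of Definition \ref{Csp-def}, and the even dimension of $M$ is automatic.
\end{enumerate}

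For (ii), I would start from the subalgebra $\hg$ produced in (i). Then $\pg=\ag\oplus\qg$ with $\ag$ the complement of $\zg(\kg)$ in $\zg(\hg)$, and $\pg^{0,1}=(\lgo\cap\ag^c)\oplus\bigoplus_{\alpha\in S\setminus -S}\ggo_\alpha$. The subset $-(S\setminus -S)$ is then an invariant ordering of $\Delta_\qg$ in the sense of \S\ref{comp}. So $J|_\qg=J_\qg$, $J$ preserves $\ag$, and $J|_\ag=:J_\ag$ is arbitrary with square $-I$, which is exactly the form \eqref{J}.

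The main obstacle is the first step of (i): producing a Cartan subalgebra of $\ggo^c$ that lies in $\ngo$, equivalently showing that $\ngo$ has maximal rank and that its reductive part is defined over $\ggo$. I would first try to prove that $\ngo$ is itself a parabolic subalgebra. The route is to observe that $\ngo\supset\lgo$ and $\ngo+\overline{\ngo}=\ggo^c$. One then checks that the nilradical of $\ngo$ is $\ad$-nilpotent, which holds because $\ngo\cap\overline{\ngo}$ is the complexification of a compact algebra. From this, Borel--Tits-type arguments give that $\ngo$ contains a Borel subalgebra. Any Cartan subalgebra of the compact real form $(\ngo\cap\overline{\ngo})\cap\ggo$ then serves as $\tg$. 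If this fails, the fallback is the purely Lie-theoretic approach of \cite{NiWll}: pick $\tg$ inside $\kg+(\text{the }J\text{-stable part of }\zg_\pg(\kg))$ and verify $\tg$-stability of $\lgo$ directly.
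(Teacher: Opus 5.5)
Your reduction is sound, and everything after Step 1 works. With $\lgo=\kg^c\oplus\pg^{0,1}$ one has $\bautg(J)=N_{\ggo}(\lgo)=\ngo\cap\ggo$. Once some maximal torus $\tg$ of $\ggo$ satisfies $\tg^c\subset\ngo$, the root set $S$ of $\lgo$ is closed with $S\cup(-S)=\Delta$, hence parabolic, and $\ggo_{\pm\alpha}\subset\lgo\cap\overline{\lgo}=\kg^c$ for $\alpha\in S\cap(-S)$. Moreover $S\setminus(-S)$ is an invariant ordering, which gives (ii). You should also record that $\ngo=\tg^c\oplus\bigoplus_{\alpha\in S}\ggo_\alpha$: a root $\alpha\notin S$ with $\ggo_\alpha\subset\ngo$ would give $H_\alpha=[E_\alpha,E_{-\alpha}]\in\lgo\cap\tg^c$, while $\alpha$ must vanish on $\lgo\cap\tg^c$. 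This is what identifies your $\hg$ with $\bautg(J)$.

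The gap is Step 1 itself, which is the entire content of the Wang--Tits theorem (the paper does not prove it, it only quotes it), and your argument for it fails. The properties you invoke are: $\ngo\supset\lgo$, $\ngo+\overline{\ngo}=\ggo^c$, $\ngo\cap\overline{\ngo}$ is the complexification of a compact algebra, and an $\ad$-nilpotent nilradical. All of these already hold for $\lgo$ when $\kg=0$. For a Samelson structure on $\SU(2)\times\SU(2)$, take $\lgo=\CC(H_{\alpha_1}+\tau H_{\alpha_2})\oplus\CC E_{-\alpha_1}\oplus\CC E_{-\alpha_2}$ with $\tau\notin\RR$. It satisfies $\lgo+\overline{\lgo}=\ggo^c$ and $\lgo\cap\overline{\lgo}=0$, and its nilradical is $\ad$-nilpotent, yet it contains no Cartan subalgebra.

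So no argument using only those properties can put a Borel subalgebra inside $\ngo$. In addition, the reason you give for $\ad$-nilpotency (compactness of $\ngo\cap\overline{\ngo}$) has nothing to do with that claim. A Borel--Tits argument would also need $\ngo$ to be the normalizer of its unipotent radical, which you never establish.

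The fallback is circular. The trivial isotypic part $\pg_0$ is automatically $J$-stable, and a maximal torus of $\kg\oplus\pg_0$ chosen without reference to $J$ need not normalize $\lgo$; for $\kg=0$ only the torus defining the Samelson structure does. Finding the right torus is exactly what has to be proved.

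The missing idea is to use that $\ngo$ is a stabilizer. $N_{G^c}(\lgo)$ is the isotropy group of the point $\lgo$ for the algebraic action of $G^c$ on the Grassmannian of $(\dim\lgo)$-planes in $\ggo^c$. Writing $X=a+\overline{b}=(a-b)+(b+\overline{b})$ with $a,b\in\lgo$ shows $\ggo+\lgo=\ggo^c$, hence $\ggo+\ngo=\ggo^c$. So the compact orbit $G\cdot\lgo$ is open and closed in the connected orbit $G^c\cdot\lgo$, and therefore equal to it. A compact orbit in a projective variety is Zariski closed, so $G^c/N_{G^c}(\lgo)$ is projective. Borel's fixed point theorem then puts a Borel subgroup of $G^c$ inside $N_{G^c}(\lgo)$.

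Thus $\ngo$, and likewise $\overline{\ngo}$, is parabolic. Two parabolic subalgebras always share a Cartan subalgebra, so the reductive algebra $\ngo\cap\overline{\ngo}=(\ngo\cap\ggo)^c$ has rank $\rank\ggo^c$. Hence a maximal torus of the compact algebra $\ngo\cap\ggo$ containing a maximal torus of $\kg$ is a maximal torus $\tg$ of $\ggo$ with $\tg^c\subset\ngo$. This is Tits' argument. The Lie-theoretic proof the paper cites replaces it, but some input of this kind, beyond the formal properties you list, is indispensable.
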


Let $\hca$ denote the set of all subalgebras $\hg\subset\ggo$ for which the three conditions in Definition \ref{Csp-def} hold.  According to Theorem \ref{C2}, (ii), the space $\cca^G$ of all $G$-invariant complex structures on a C-space $M=G/K$ is given by 
$$
\cca^G=\bigcup_{\hg\in\hca} \cca^G_{\hg}, \qquad 
$$
where $\cca^G_{\hg}$ denotes the space all complex structures defined as in \eqref{J} by using $\hg$.  

For a fixed $\hg\in\hca$, the set of all possible $J_\qg$ is finite, say given by $\{J_1,\dots,J_k\}$, where $k$ is the number of chambers in \eqref{ch} (see \eqref{numberk}), and if $\dim{\ag}=2d$, then the space of all $J_\ag$ is parametrized by the $2d^2$-dimensional manifold 
$$
\Gl_{2d}(\RR)/\Gl_d(\CC),
$$
which has two connected components.  This implies that     
\begin{equation}\label{Ccal}
\cca^G_\hg=\cca^G_1\sqcup\dots\sqcup\cca^G_k, \qquad \dim{\cca^G_i}=2d^2,  
\end{equation}
and each $\cca^G_i$ is diffeomorphic to $\Gl_{2d}(\RR)/\Gl_d(\CC)$, so $\cca^G_i$ consists of two connected components.  

Let $M=G/K$ be a C-space with $Q$-orthogonal reductive decomposition $\ggo=\kg\oplus\pg$.  We set $\pg_0:=C_\ggo(\kg)\cap\pg$, the trivial part of the isotropy representation, hence $N_\ggo(\kg)=\kg\oplus\pg_0$ and $\dim{\pg_0}=\dim{N_G(K)/K}$, and consider the set 
$$
\aca:=\left\{ \ag\subset\pg_0:[\ag,\ag]=0, \; \dim{\ag}=\rank(\ggo)-\rank(\kg)\right\}.  
$$   
Note that $\ag\in\aca$ if and only if $\ag$ is maximal abelian in $\pg_0$, if and only if $\tg_\kg\oplus\ag$ is a maximal torus of $\ggo$, where $\tg_\kg$ is any maximal torus of $\kg$.  We have seen that there is an injective map $\hca\rightarrow\aca$, $\hg\mapsto\ag$, where $\hg=\kg\oplus\ag$, or $\zg(\hg)=\zg(\kg)\oplus\ag$, as in \eqref{reddec} (see also \eqref{tdec}).  This map is also surjective, defining a bijection between $\hca$ and $\aca$.  Indeed, for any $\ag\in\aca$ we set $\hg:=\kg\oplus\ag$, which contains $\kg$, $[\hg,\hg]=[\kg,\kg]$ and since it also contains the maximal torus $\tg_\kg\oplus\ag$ of $\ggo$, we obtain that $\hg$ is the centralizer of a torus, concluding that $\hg\in\hca$.      

In the case when 
\begin{equation}\label{kreg2}
\dim{\pg_0}=\rank(\ggo)-\rank(\kg), 
\end{equation}
we obtain that $\aca$ is a singleton, so $\hca=\{\hg\}$, where $\hg=N_\ggo(\kg)$, and $\cca^G=\cca^G_\hg$, i.e., any $G$-invariant complex structure is one of $J=(J_\ag,J_\qg)$ as in \eqref{J} for this single $\hg$.   

\begin{proposition}\label{bihol-h}
For any $\hg,\hg'\in\hca$, there exists a $G$-equivariant diffeomorphism of $M=G/K$ such that $\cca^G_{\hg'}=f^*\cca^G_\hg$.     
\end{proposition}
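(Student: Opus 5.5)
The plan is to use the bijection $\hca\leftrightarrow\aca$ just established, and to realize a change of $\hg$ by an element of $N_G(K)$ acting on $M=G/K$ from the right. Given $\hg,\hg'\in\hca$ with $\hg=\kg\oplus\ag$ and $\hg'=\kg\oplus\ag'$, both $\ag$ and $\ag'$ are maximal abelian subalgebras of $\pg_0=C_\ggo(\kg)\cap\pg$. Equivalently, $\tg_\kg\oplus\ag$ and $\tg_\kg\oplus\ag'$ are maximal tori of $\ggo$ containing $\tg_\kg$. Let $L$ be the connected subgroup of $G$ with Lie algebra $\lgo:=C_\ggo(\tg_\kg)$, or better $C_\ggo(\kg)\oplus\zg(\kg)$; this is compact. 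The key step is to find $a\in N_G(K)$ with $\Ad(a)\ag=\ag'$. I would take $a$ in the identity component of $C_G(K)$, with Lie algebra $C_\ggo(\kg)=\zg(\kg)\oplus\pg_0$ (as $\kg\cap C_\ggo(\kg)=\zg(\kg)$). This is a compact connected group $C$. The subspaces $\zg(\kg)\oplus\ag$ and $\zg(\kg)\oplus\ag'$ are maximal abelian in $\cg:=C_\ggo(\kg)$: a larger abelian subalgebra would enlarge the maximal torus $\tg_\kg\oplus\ag$ of $\ggo$. By conjugacy of maximal tori in $C$, there is $a\in C$ with $\Ad(a)(\zg(\kg)\oplus\ag)=\zg(\kg)\oplus\ag'$. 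Since $\Ad(a)$ fixes $\kg$ pointwise and preserves the $Q$-orthogonal complement $\pg$, it maps $\ag=(\zg(\kg)\oplus\ag)\cap\pg$ onto $\ag'$. Hence $\Ad(a)\hg=\hg'$, and also $\Ad(a)\qg=\qg'$, because $\qg$ is the $Q$-orthogonal complement of $\hg$ in $\ggo_f$ (or of $\ag$ in $\pg$).

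Next I define $f:G/K\to G/K$ by $f(gK)=ga^{-1}K$, or $ga K$ depending on the convention I settle on. This is well defined because $a$ normalizes (indeed centralizes) $K$, and it commutes with left translations, so it is $G$-equivariant. Its differential at $o$, under $T_oM\equiv\pg$, is $\Ad(a)^{\pm1}|_\pg$. The standard fact from \S\ref{igs} is that for an invariant $J$ given by an $\Ad(K)$-invariant $J_o$ on $\pg$, the pullback $f^*J$ is the invariant structure determined by $\Ad(a)^{-1}J_o\Ad(a)$. So for $J=(J_\ag,J_\qg)\in\cca^G_\hg$ I set $J'=\Ad(a)J\Ad(a)^{-1}$, with the appropriate sign of the exponent. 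This $J'$ preserves $\ag'$ and $\qg'$, and $J'|_{\ag'}$ squares to $-I$.

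It remains to check that $J'|_{\qg'}$ comes from an invariant ordering of $\Delta_{\qg'}$. Conjugating by $a$ carries the Cartan subalgebra $\tg=\tg_\kg\oplus\ag$ to $\tg'=\tg_\kg\oplus\ag'$ (together with $\zg(\ggo)$). It carries root spaces $\ggo_\alpha$ to $\ggo_{\alpha\circ\Ad(a)^{-1}}$, and it carries $\Delta_\hg$ and $\Delta_\qg$ to the corresponding sets for $\hg'$. Therefore $J'E'_{\alpha'}=\pm\im E'_{\alpha'}$ according to the transported ordering, which is again an invariant ordering since the transport is a root system isomorphism. This gives $f^*\cca^G_\hg\subset\cca^G_{\hg'}$, and the symmetric argument with $a^{-1}$ gives equality. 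The main obstacle is the conjugacy step. I need to confirm that $\zg(\kg)\oplus\ag$ is a maximal abelian subalgebra of $C_\ggo(\kg)$ and that $C$ is compact and connected, so that the maximal torus conjugacy theorem applies. Closedness can be handled by taking the closure of the connected subgroup, which stays inside the centralizer. I also need care with sign conventions in $f^*$.
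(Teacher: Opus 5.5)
Your proposal is correct and follows essentially the paper's route: the paper conjugates the maximal torus $\tg_\kg\oplus\ag$ to $\tg_\kg\oplus\ag'$ by some $x\in N_G(K)$ and takes $f$ to be the induced $G$-equivariant diffeomorphism, while you conjugate $\zg(\kg)\oplus\ag$ to $\zg(\kg)\oplus\ag'$ inside $C_G(K)_0$. Since $N_\ggo(\kg)=\kg+C_\ggo(\kg)$ this amounts to the same thing, with the small convenience that your $\Ad(a)$ fixes $\kg$ pointwise; your extra checks (maximality of $\zg(\kg)\oplus\ag$ in $C_\ggo(\kg)$, transport of the invariant ordering, and the two inclusions giving equality) fill in what the paper dismisses as ``easy to see''.
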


\begin{remark}\label{bihol-rem}
In particular, any complex structure $J\in\cca^G_\hg$ is biholomorphic to some $J'\in\cca^G_{\hg'}$ and viceversa.  On the other hand, the classification up to biholomorphism among a single $\cca^G_\hg$ is only completely understood in the Lie group case (see \cite{Ptt}) and for flag manifolds (see \S\ref{comp}).  Partial results in the case of a C-space are given in \S\ref{ms} below.  
\end{remark}

\begin{proof}
Since $\tg_\kg\oplus\ag$ and $\tg_\kg\oplus\ag'$ are both maximal torus of $N_\ggo(\kg)$, there exists $x\in N_G(K)$ such that $\Ad(x)(\tg_\kg\oplus\ag)=\tg_\kg\oplus\ag'$.  If $f\in\Diff(M)$ is the $G$-equivariant diffeomorphism of the C-space $M=G/K$ defined by $\Ad(x)$, then it is easy to see that the $G$-invariant complex structure $J':=f^*J$ on $M$ is of the form \eqref{J} for $\ag'$ and the root system of $\ggo$ attached to $\tg_\kg\oplus\ag'$, that is, $J'\in\cca^G_{\hg'}$.  
\end{proof}

\begin{remark}\label{hyper-rem}
If one is only interested in the complex geometry of a C-space, then it is enough to fix a Tits fibration over a flag manifold to obtain all the invariant complex structures up to biholomorphism (see Remark \ref{bihol-rem}).  However, since all the possible $J_\qg$ commute for a fixed base (i.e., a fixed $\hg$, see \eqref{J}), in order to obtain invariant hypercomplex structures, it is necessary to let the base vary.  See \cite{DmtTsn, BdlMrc} for the classification of C-spaces admitting an invariant hypercomplex structure.    
\end{remark}

The mere object of the following terminology is to simplify the presentation.  

\begin{definition}\label{compC-def} 
A C-space $M=G/K$ endowed with a $G$-invariant complex structure is called a {\it complex C-space}.  
\end{definition}

We recall from \eqref{reddec2} the reductive decomposition of a C-space 
$$
M=G/K=G_f/K\times T^z
$$ 
as in \eqref{gprime}, where $\ag=\zg(\ggo)\oplus\ag_f$.  If, after reordering, $\overline{\ggo}=\ggo_{u+1}\oplus\dots\oplus\ggo_s$ as in Remark \ref{decC-rem2}, then 
\begin{equation}\label{deca}
\ag=\zg(\ggo)\oplus\underline{\ag}_f\oplus\tg_{u+1}\oplus\dots\oplus\tg_s, \quad 
\underline{\ag}=\zg(\ggo)\oplus\underline{\ag}_f, \quad
\overline{\ag}=\tg_{u+1}\oplus\dots\oplus\tg_s, \quad 
\ag_f=\underline{\ag}_f\oplus\overline{\ag},
\end{equation}
where $\tg_i$ is the maximal torus of $\ggo_i$, and $\qg=\widetilde{\qg}\oplus\underline{\qg}\oplus\overline{\qg}$, where
$$
\pg=\widetilde{\pg}\oplus\underline{\pg}\oplus\overline{\pg}, \qquad 
\widetilde{\pg}=\widetilde{\qg}=\qg_1\oplus\dots\oplus\qg_t, 
$$
$$
\underline{\pg}=\underline{\ag}\oplus\underline{\qg}, \quad 
\underline{\qg}=\qg_{t+1}\oplus\dots\oplus\qg_u, \qquad 
\overline{\pg}=\overline{\ag}\oplus\overline{\qg}, \quad 
\overline{\qg}=\qg_{u+1}\oplus\dots\oplus\qg_s.
$$
Note that the first product from the left in the decomposition as in Proposition \ref{decC} given by 
$$
M=\underbrace{\widetilde{G}/\widetilde{H}}_{{\rm flag}} \times \underbrace{\underline{G}_f/\underline{K}\times T^z}_{{\rm strict}}\times \underbrace{G_{u+1}\times\dots\times G_s}_{{\rm semisimple\; group}},
$$
is always a product of complex manifolds on any complex C-space.  The remaining products are not of complex manifolds in general. 

\begin{definition}\label{Cirr} 
A complex C-space $(M=G_f/K\times T^z,J)$ without flag factor is called {\it reducible} if there exists a $J_\ag$-invariant decomposition $\ag=\ag_1\oplus\ag_2$ such that 
$$
\ag_1=\zg(\ggo)_1\oplus\underline{\ag}_f\oplus\tg_{i_1}\oplus\dots\oplus\tg_{i_{s_1}}, \qquad 
\ag_2=\zg(\ggo)_2\oplus\tg_{j_1}\oplus\dots\oplus\tg_{j_{s_2}},
$$
$\zg(\ggo)=\zg(\ggo)_1\oplus\zg(\ggo)_2$ and $\{ u+1,\dots,s\}=\{i_1,\dots,i_{s_1}\}\sqcup \{j_1,\dots,j_{s_2}\}$.  Otherwise, we say that $(M=G/K,J)$ is {\it irreducible}.  
\end{definition}

In particular, any reducible C-space is a product of complex C-spaces as a complex manifold.  Moreover, 
any complex C-space without flag factor is the product, as a complex manifold, of irreducible complex C-spaces which are all compact Lie groups endowed with left-invariant complex structures, excepting at most one of them (i.e., the one containing $\underline{\ag}$ if nonzero).     

\begin{example}\label{LG} ({\it Lie groups}).  
We assume here that the C-space is a Lie group, i.e., 
$$
M=G=\overline{G}\times T^z, 
$$ 
where $\overline{G}$ is a semisimple Lie group (cf.\ Remark \ref{ext}, (ii) and Proposition \ref{decC}). Thus   
$$
\ggo_f=\overline{\ggo},  \qquad \ag_f=\overline{\tg}, \qquad \ag=\overline{\tg}\oplus\zg(\ggo), \qquad \qg=\overline{\qg}, 
$$
where $\overline{\tg}=\hg_f=\zg(\hg_f)$ is the maximal torus of $\overline{\ggo}$ and $\overline{\ggo}=\overline{\tg}\oplus\overline{\qg}$.  Note that the base of the fibration is the full flag $F=\overline{G}/\overline{T}$ and the fiber is the maximal torus $T=\overline{T}\times T^z$ of $G$.  It was proved by Pittie \cite{Ptt} that complex structures of the form $J=(J_\ag,J_\qg)$ as in \eqref{J} for some maximal torus $\ag$ of $\ggo$, which were previously found independently by Samelson \cite{Sml} and Wang \cite{Wng}, exhausts the set of left-invariant complex structures on $G$ (cf.\ Theorem \ref{C2}, (ii)).  Since the base is full flag, the biholomorphism class of the complex manifold $(G,J)$ does not depend on the choice of the maximal abelian subalgebra $\ag$ nor of $\Delta^+$, so one can fix $J_\qg$.  Two different $J_\ag$'s produce biholomorphic complex structures if and only if they belong to the same orbit of certain finite subgroup of $\Aut(G)$ (see \cite{Ptt} for further details).  This implies that the moduli space $\cca^G/\simeq$ of all left-invariant complex structures on $G$ up to biholomorphism also depends on $2d^2$ parameters (recall that $\dim{\ag}=2d$).  
\end{example}

\subsection{Moduli space}\label{ms}
On a C-space with $G$ semisimple, two complex structures $J,I\in\cca^G$ are said to be {\it equivalent} ($J\simeq I$ for short) if there exists $\psi\in\Aut(G/K)\subset\Diff(M)$ (see \S\ref{gauge}) such that $\psi^*J=I$ (if $\psi$ is in addition inner, i.e., $\psi=I_a$ for some $a\in N_G(K)$, then $J=\Ad(a)I\Ad(a)^{-1}$ as linear operators of $\pg$).  If $J\simeq I$, then the complex manifolds $(M,J)$ and $(M,I)$ are clearly biholomorphic, although the converse may fail to hold.  

We consider $\Aut(\Delta)\subset\glg(\tg)$, the automorphism group of the root system of $\ggo$, and the finite groups 
$$
A(M):=\{\psi\in\Aut(\Delta): \psi\zg(\hg)=\zg(\hg), \; \psi\ag=\ag\}, 
$$
$$ 
A(M)_i:=\{\psi\in A(M): \psi Ch(J_i)=Ch(J_i)\},  \qquad i=1,\dots,k,
$$
where $Ch(J_i)$ is the corresponding chamber in \eqref{ch} (see \eqref{Ccal}).  Note that $A(M)_i$ acts on $\cca^G_i$ for all $i=1,\dots,k$ (see \S\ref{comp}).  

Given two complex structures $J=(J_\ag,J_\qg)$ and $I=(I_\ag,I_\qg)$, if there exists $\psi\in A(M)$ such that $\psi Ch(J_\qg)=Ch(I_\qg)$ and $\psi^*J_\ag=I_\ag$, then $J\simeq I$.  In this way, it follows from (iv) in \S\ref{comp} and Proposition \ref{bihol-h} that the moduli space of $G$-invariant complex structures on a C-space $M=G/K$ up to equivalence is given by   
$$
\cca^G/\simeq \; =\cca^G_\hg/\simeq \; = \cca^G_{i_1}/A(M)_{i_1} \sqcup\dots\sqcup\cca^G_{i_l}/A(M)_{i_l}, 
$$
where
$$ 
\{ Ch(J_{i_1}),\dots,Ch(J_{i_l})\}= \{ Ch(J_1),\dots,Ch(J_k)\}/A(M).  
$$
Note that the moduli space $\cca^G/\simeq$ still depends on $2d^2$ parameters.

\subsection{First Chern class}\label{fCc-sec}
It follows from \cite{Ksz} (see also \cite[Corollary 4.13)]{AlkPrl} and \cite[Section 2]{Grn}) that the first Chern class of a complex C-space is given by 
$$
c_1(M,J)= [\sigma]\in H^2(M), 
$$ 
where the closed $2$-form $\sigma\in\Omega^2(M)^G \equiv(\Lambda^2\pg^*)^K$ is defined by  
$$
\sigma(X,Y) := \tr{J\ad{[X,Y]}|_\pg}-\tr{\ad{J[X,Y]_\pg}|_\pg}, \qquad\forall X,Y\in\pg.    
$$
We therefore obtain that $\sigma=2\im\rho_{J_\qg}([\cdot,\cdot])$, where $J=(J_\ag,J_\qg)$ and $\rho_{J_\qg}=\sum_{\alpha\in\Delta_\qg^+}\alpha$ is the Koszul form of the flag $(G_f/H_f,J_\qg)$ as in \eqref{koszul}.  Indeed, it follows from \eqref{adH} that $\tr{\ad{J[X,Y]_\pg}|_\pg}=0$, and 
$$
\tr{J\ad{[X,Y]}|_\pg} = 2\im\sum_{\alpha\in\Delta_\qg^+} \alpha([X,Y]) = 2\im\rho_{J_\qg}([X,Y]_{\zg(\hg_f)}).  
$$
In particular, $c_1(M,J)=0$ if and only if $\rho_{J_\qg}|_{\zg(\kg)}=0$.  

We define the {\it Koszul vector} of $(M=G/K,J)$ by 
\begin{equation}\label{koszul2}
Z_{J_\qg}:= \sum_{\alpha\in\Delta_\qg^+} \im H_\alpha\in\zg(\hg_f)=\zg(\kg)\oplus\ag_f, 
\end{equation}
that is, the real version of the Koszul form.  Note that 
$$
\im\rho_{J_\qg}(Z)=Q(Z,Z_{J_\qg}),  \qquad\forall Z\in\zg(\hg_f)\oplus\zg(\ggo),
$$
from which follows that the first Chern class of a complex C-space is always positive semi-definite, and it is positive definite only for flag manifolds.  Furthermore, the following characterization follows from \eqref{dR2} below.  

\begin{lemma}\label{c10}\cite[Theorem 2]{Grn}
A complex C-space has $c_1(M,J)=0$ if and only if 
$$
Z_{J_\qg}\in\ag_f. 
$$ 
\end{lemma}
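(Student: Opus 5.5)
We need $c_1(M,J)=[\sigma]$ with $\sigma=2\im\rho_{J_\qg}([\cdot,\cdot])$, i.e. $\sigma(X,Y)=2Q([X,Y],Z_{J_\qg})$. The plan is to decide exactly when this invariant closed $2$-form is exact, using the description of invariant de Rham cohomology in low degree (the forthcoming \eqref{dR2}). The relevant fact is that, for $G$ compact, $H^2(M)\cong H^2$ of the complex $(\Lambda\pg^*)^K$ of $K$-invariant forms. So $\sigma$ is exact if and only if $\sigma=d\eta$ for some $\eta\in(\pg^*)^K$.

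First identify $(\pg^*)^K$. It consists of the $Q$-duals of vectors $W\in\pg$ with $[\kg,W]=0$. In $\pg=\ag\oplus\qg$ these are $\ag$ plus the $\kg$-invariant part of $\qg$. Any $\kg$-fixed vector of $\qg$ commutes with $[\hg_f,\hg_f]\oplus\zg(\kg)$. By \eqref{adH}, $\ad\zg(\kg)$ acts on $\qg_\alpha$ by $\alpha(\im\cdot)$, and $\Ad(K)$-invariant data on $\qg$ are governed by the restrictions $\alpha|_{\zg(\kg)}$. I would handle the possible $K$-trivial part of $\qg$ by noting that, when it is nonzero, it contributes only forms $d\eta_W=-Q([\cdot,\cdot],W)$ with $W\in\qg$. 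These have zero component on $\Lambda^2\qg$ along the $(E_\alpha,E_{-\alpha})$ pairs, so they cannot cancel $\sigma$, whose only nonzero components are $\sigma(E_\alpha,E_{-\alpha})$.

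For $W\in\pg$ with $\eta_W=Q(W,\cdot)$, we have $d\eta_W(X,Y)=-Q(W,[X,Y]_\pg)=-Q([X,Y],W)$. Hence $\sigma$ is exact if and only if $Q([X,Y],Z_{J_\qg}+\tfrac12 W)=0$ on $\pg$ for some such $W$. By the remark above we may take $W\in\ag$.

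Write $Z_{J_\qg}=Z_\kg+Z_\ag$ according to $\zg(\hg_f)=\zg(\kg)\oplus\ag_f$. If $Z_\kg=0$, choose $W=-2Z_\ag$ and the form vanishes, so $\sigma$ is exact. Conversely, suppose $Z_\kg+Z_\ag+\tfrac12W=:V\in\zg(\hg)$ satisfies $Q([X,Y],V)=0$ for all $X,Y\in\pg$. Taking $X,Y\in\qg_\alpha$ gives $[X,Y]\in\RR\im H_\alpha$, and so $\alpha(V)=0$ for all $\alpha\in\Delta_\qg$. Together with $\Delta_\hg$ vanishing on $\zg(\hg_f)$, this forces the $\ggo_f$-component of $V$ to lie in $\zg(\ggo_f)=0$, so $V\in\zg(\ggo)$. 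Projecting $Q$-orthogonally onto $\zg(\kg)$, which is orthogonal to $\ag\supset\zg(\ggo)$, gives $Z_\kg=0$. Therefore $c_1=0$ if and only if $Z_{J_\qg}\in\ag_f$, equivalently $\rho_{J_\qg}|_{\zg(\kg)}=0$.

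The main obstacle is the first step: justifying that exactness can be tested within invariant forms, via averaging over compact $G$ (Chevalley–Eilenberg for the relative complex), and controlling the $K$-trivial part of $\qg$ when $\pg_0\ne\ag$. I would first try to show this part is $\Ad(H)$-orthogonal to all $\sigma$-type components, as sketched above.
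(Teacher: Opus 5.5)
Your proposal is correct and follows essentially the paper's route: the paper deduces the lemma from the description $H^2(M)\simeq\zg(\kg)\oplus\Lambda^2\zg(\ggo)^*$ of \S\ref{dR2}. There $\sigma_Z$ is exact for $Z\in\pg_0\supset\ag_f$ and $Z\mapsto[\sigma_Z]$ is injective on $\zg(\kg)$, and you verify that injectivity by hand by evaluating on $\qg_\alpha$. Your worry about the $K$-trivial part of $\qg$ is harmless: $Q(\im H_\alpha,\qg)=0$, so any $W\in\qg\cap\pg_0$ drops out of exactly those evaluations.
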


It follows from \eqref{Ccal} that the space $\cca^G_0\subset\cca^G_\hg$ of all $G$-invariant complex structures on a C-space $M=G/K$ which are projectable on the flag $F=G_f/H_f$ defined by a given $\hg$ (see \eqref{holfib}) and have zero first Chern class is given by     
\begin{equation}\label{Cc10}
\cca^G_0=\cca^G_{i_1}\sqcup\dots\sqcup\cca^G_{i_l} = \bigsqcup_{Z_{J_i}\in\ag_f}\cca^G_i, \qquad \dim{\cca^G_{i_j}}=2d^2.  
\end{equation} 
Note that $\cca^G_0=\cca^G_\hg$ if $K$ is either semisimple or trivial (i.e., $\ag_f=\zg(\hg_f)$) and that $\cca^G_0=\emptyset$ in the K\"ahler case (i.e., $\ag_f=0$, see Remark \ref{ext}, (iii)).  In the case when $0\ne\zg(\kg),\ag_f$, one generically (see Remark \ref{lh-rem}) has that $\cca^G_0=\emptyset$ among the set of all C-spaces fibering over a fixed flag $F$, as this holds if and only if $Z_{J_i}\notin\ag_f$ for all $i=1,\dots,k$.      

\begin{example}\label{su5-5}
Consider a C-space $M^{20}=\SU(5)/K$ as in Example \ref{su5}.  We have that 
$$
\zg(\hg_f)=\{(a,b,c,d,d):a+b+c+2d=0\}, \qquad \zg(\hg_f)=\RR Z\oplus\ag, 
$$ 
$\dim{\ag}=2$, $\Delta_{\hg_f}=\{\pm\alpha_{45}\}$ and $\pg=\ag\oplus\qg$, where $\qg=\qg_1\oplus\dots\oplus\qg_6$.  The usual choice $\Delta_\qg^+=\Delta^+\setminus\{\alpha_{45}\}$ therefore determines the complex structure $J_\qg$ on $F=G_f/H_f$ with $Z_{J_\qg}=(4,2,0,-3,-3)$, and
thus $c_1(M,J)=0$, where $J=(J_\ag,J_\qg)$, if and only if $(4,2,0,-3,-3)\in\ag$.  On the other hand, the invariant ordering  
$$
\{\alpha_{23}, \alpha_{12}, -\alpha_{15}, \alpha_{13}, -\alpha_{25}, -\alpha_{14}, -\alpha_{35}, -\alpha_{24}, -\alpha_{34}\}
$$
determines another complex structure $J_2$ on $F=G_f/H_f$ with $Z_{J_2}=(0,-2,-4,3,3)$.  Note that the number $k$ of complex structures on the flag $F$ (see \eqref{numberk}) is not easy to compute, we only know that $k<32=k(3,6)$ since the general position condition does not hold.  
\end{example}

\begin{example}\label{b21-comp} ({\it Generalized Calabi-Eckmann manifolds})
Given two flag manifolds $F_1=G_1/H_1$ and $F_2=G_2/H_2$ with $b_2(F_1)=b_2(F_2)=1$ as in Example \ref{b21} ($F_2=S^1$ is also allowed), we consider the C-space $M=G/K$, where $G=G_f=G_1\times G_2$ and $\kg=[\hg_1,\hg_1]\oplus[\hg_2,\hg_2]$, that is, $\zg(\kg)=0$ and $\ag=\ag_f=\zg(\hg_f)=\zg(\hg_1)\oplus\zg(\hg_2)$, so 
$$
\pg=\ag\oplus\qg_1\oplus\qg_2, \qquad \dim{\ag}=2. 
$$
The complex structures $J=(J_\ag,J_\qg)$ all have $J_\qg=J_{\qg_1}+J_{\qg_2}$, where $J_{\qg_i}$ is the unique invariant complex structure on $F_i$, and 
$$
J_\ag = \left[\begin{matrix} 
a&-\tfrac{a^2+1}{b}\\ b&-a 
\end{matrix}\right], \qquad a,b\in\RR, \quad b\ne 0,  
$$
in terms of the basis $\{Z_{\gamma_1},Z_{\gamma_2}\}$ of $\ag$, $Z_{\gamma_i}=(\im H_{\gamma_i})_{\zg(\hg_i)}$ (see Example \ref{b21}).  We note that $M=G_1/[H_1,H_1]\times G_2/[H_2,H_2]$ as a manifold but this is never a product as a complex manifold.  Since $\ag_f=\zg(\hg_f)$ (i.e., $K$ is semisimple), we have that $c_1(M,J)=0$ for all $a,b$ by Lemma \ref{c10}, i.e., $\cca_0=\cca$.  The Koszul vector is given by 
\begin{align*}
Z_J=Z_{J_{\qg_1}}+Z_{J_{\qg_2}} =& \unm(\dim{\mg_{1,1}}+\dots+r_1\dim{\mg_{1,r_1}})Z_{\gamma_1} \\ 
& +\unm(\dim{\mg_{2,1}}+\dots+r_2\dim{\mg_{2,r_2}})Z_{\gamma_2},   
\end{align*}
where $\qg_i=\mg_{i,1}\oplus\dots\oplus\mg_{i,r_i}$ is the decomposition in $\Ad(H_i)$-irreducible components.  Note that if $F_2=S^1$, then $\ggo_2=\hg_2=\zg(\hg_2)=\RR$, $\qg_2=0$ and $Z_{\gamma_2}=0$. 
\end{example}

\begin{remark}
In the above example, when the two flags are 
$$
F_i=G_i/H_i=\SU(n_i+1)/\U(n_i)=\CC P^{n_i}, \qquad n_i\in\ZZ_{\geq 0}, \quad i=1,2, 
$$
the C-space $M=G/K=S^{2n_1+1}\times S^{2n_2+1}$ is the well-known {\it Calabi-Eckmann} manifold, which is called {\it Hopf} manifold if $n_2=0$ (i.e., $F_2=S^1$).  The case when $F_1$ and $F_2$ are both irreducible Hermitian symmetric spaces (i.e., $r_1=r_2=1$) was studied in \cite{Pds}.   
\end{remark}

More general products of $S^1$-bundles than the given in Example \ref{b21-comp} have been studied in \cite{Crr}.  We note that the name {\it generalized Calabi-Eckmann maniold} is also often used in the much more general context given in \cite[Theorem 2.2]{FinGrn2}.

We consider for future use the full open cone in $\zg(\hg_f)$ attached to a $G$-invariant complex structure defined by 
\begin{equation}\label{cone}
C(J_\qg):=\la Z_\alpha:\alpha\in\Delta_\qg^+\ra_{\RR_{>0}} 
=\la Z_{\alpha_1},\dots,Z_{\alpha_r}\ra_{\RR_{>0}} 
=\la Z_\alpha:\alpha\in\Pi_\qg\ra_{\RR_{>0}},
\end{equation}
where 
$$
Z_\alpha:=(\im H_\alpha)_{\zg(\hg_f)}, \qquad \alpha\in\Delta_\qg.
$$
The description on the right is usually the easiest way to compute $C(J_\qg)$ since $\{ Z_\alpha:\alpha\in\Pi_\qg\}$ is a basis of $\zg(\hg_f)$, i.e., $|\Pi_\qg|=\dim{\zg(\hg_f)}$.  Note that 
$$
Z_{J_\qg}= \sum_{\alpha\in\Delta_\qg^+} Z_\alpha\in C(J_\qg).  
$$  

\begin{remark}
We will show in \S\ref{bal-sec} that $(M=G/K,J)$ admits a $G$-invariant balanced Hermitian metric if and only if the cone $C(J_\qg)$ meets $\zg(\kg)$.  
\end{remark}

\subsection{Isotropy representation}\label{isot-sec} 
It is natural to ask for a description of the set of all invariant almost-complex structures on a given C-space.  To this aim, a deep understanding of both the isotypical and irreducible components of the isotropy representation is crucial.  

The isotropy representation $\pg$ of a C-space $M=G/K=G_f/K\times T^z$ as in \eqref{gprime} fibering over a flag manifold $F=G_f/H_f$ decomposes in $\Ad(K)$-invariant subspaces as
\begin{equation}\label{isotrep}
\pg=\ag\oplus\mg_1\oplus\dots\oplus\mg_r, \qquad \ag=\zg(\ggo)\oplus\ag_f, \qquad \qg=\mg_1\oplus\dots\oplus\mg_r, 
\end{equation}
where the subspaces $\mg_j$ are defined as in \S\ref{isot} for $F=G_f/H_f$, that is, $\alpha_1,\dots,\alpha_r$ is a set of central roots and 
$$
\mg_j=\bigoplus_{\alpha\in P_j} \qg_{\alpha}, \qquad P_j:=\left\{\alpha\in P:\alpha|_{\zg(\hg_f)}=\alpha_j|_{\zg(\hg_f)}\right\}, \qquad \Delta_\qg=P\sqcup -P.
$$ 
Thus $\dim{\mg_j}$ is even and $\dim{\mg_j}= 2$ if and only if $\mg_j=\qg_{\alpha_j}$ and $P_j=\{\alpha_j\}$.  Recall that the subspaces $\mg_j$ are actually $\Ad(H_f)$-invariant, $\Ad(H_f)$-irreducible and pairwise $\Ad(H_f)$-inequivalent.  Note that the action of $K$ on $\ag$ is trivial.  

\begin{proposition}\label{isotdec} 
\hspace{1cm} 
\begin{enumerate}[{\rm (i)}] 
\item If $\dim{\mg_j}\geq 4$, then $\mg_j$ is $\Ad(K)$-irreducible.  

\item Two subspaces $\mg_j,\mg_k$, $j\ne k$ are equivalent as $\Ad(K)$-representations if and only if $\mg_j=\qg_{\alpha_j}$, $\mg_k=\qg_{\alpha_k}$ and $\alpha_j|_{\zg(\kg)}=\alpha_k|_{\zg(\kg)}$.  

\item The trivial $\Ad(K)$-subrepresentation of $\pg$ is given by 
$$
\pg_0=\ag\oplus \bigoplus_{j\in I_0} \qg_{\alpha_j}, \qquad I_0:=\{1\leq j\leq r :  \mg_j=\qg_{\alpha_j}, \;\alpha_j|_{\zg(\kg)}=0\}.  
$$
\item The subspaces $\mg_1,\dots,\mg_r$ in \eqref{isotrep} are all $\Ad(K)$-irreducible (in particular, nontrivial) and pairwise inequivalent as $\Ad(K)$-representations if and only if the subspace $\zg(\kg)\subset\zg(\hg_f)$ and all the central roots $\alpha_j$ such that $\mg_j=\qg_{\alpha_j}$ satisfy the following:  
\begin{equation}\label{kreg}
\alpha_j|_{\zg(\kg)}\ne 0, \quad\mbox{and} \quad \alpha_j|_{\zg(\kg)}=\alpha_k|_{\zg(\kg)} \quad\mbox{if and only if}\quad \alpha_j=\alpha_k. 
\end{equation}
\end{enumerate}
\end{proposition}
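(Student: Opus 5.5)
The plan is to work with the connected group $K$, so $\Ad(K)$-invariance and equivalence can be checked at the Lie algebra level. Since $\kg=[\hg_f,\hg_f]\oplus\zg(\kg)$, I will use weights of the maximal torus $\tg_\kg=\tg_{[\hg_f,\hg_f]}\oplus\zg(\kg)$ of $\kg$, see \eqref{tdec}. On $\mg_j^c$, the $\tg_\kg$-weights are the restrictions $\alpha|_{\tg_\kg}$ with $\alpha|_{\zg(\hg_f)}=\pm\alpha_j|_{\zg(\hg_f)}$. As an $[\hg_f,\hg_f]$-module, $\mg_j^c=\mg_j^+\oplus\mg_j^-$, where $\mg_j^\pm=\bigoplus_{\alpha|_{\zg(\hg_f)}=\pm\alpha_j|_{\zg(\hg_f)}}\ggo_\alpha$. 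Each $\mg_j^\pm$ is an irreducible $\hg_f^c$-module, since $\mg_j$ is $\Ad(H_f)$-irreducible and the center acts on it by the scalar $\pm\alpha_j$. Moreover, $\mg_j^-$ is dual to $\mg_j^+$.

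For (i), the first step is to show that if $\dim\mg_j\geq4$, then $[\hg_f,\hg_f]$ acts nontrivially on $\mg_j^+$. If the action were trivial, then $\mg_j^+$ would be a trivial $[\hg_f,\hg_f]$-module on which the center acts by a scalar, so $\hg_f$-irreducibility would force $\dim\mg_j^+=1$, a contradiction. Irreducibility of $\mg_j^+$ under $\hg_f$ then gives irreducibility under $[\hg_f,\hg_f]$, since the center acts by scalars. The next step is to rule out a real $\Ad(K)$-invariant proper subspace $W\subset\mg_j$. Then $W^c$ would be a sum of complex $[\hg_f,\hg_f]$-submodules, so $W^c\in\{\mg_j^+,\mg_j^-\}$. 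But neither of these is conjugation invariant, because conjugation maps $\ggo_\alpha$ to $\ggo_{-\alpha}$. Hence $W$ is not real, which is a contradiction.

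For (ii), if $\dim\mg_j\geq4$ or $\dim\mg_k\geq4$, I will compare the $[\hg_f,\hg_f]$-modules together with the $\zg(\kg)$-weights. Since $\mg_j^+$ and $\mg_k^+$ are determined as $\hg_f$-modules by their highest weights, an $\Ad(K)$-equivalence would yield either $\mg_j^+\simeq\mg_k^{\pm}$ as $[\hg_f,\hg_f]$-modules, or an isomorphism with a sign flip. I expect the main obstacle here: showing that the highest weights, whose components lie in $\tg_{[\hg_f,\hg_f]}^*$ together with $\zg(\kg)$, cannot match once the components on $\zg(\hg_f)$ differ. I would first try to use the fact that the $\tg_{[\hg_f,\hg_f]}$-part of a root in $\Delta_\qg$ together with the $\zg(\hg_f)$-part determines the root. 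The idea is that two different $\hg_f$-irreducible root strings with the same nontrivial $[\hg_f,\hg_f]$-highest weight must differ by a multiple determined by $\Pi_\qg$ coefficients. If this fails, I would instead argue via $\Ad(H_f)$-inequivalence combined with the $\zg(\kg)$-weights.

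In the two-dimensional case, where $[\hg_f,\hg_f]$ acts trivially, $\qg_{\alpha_j}$ is a rotation by $\alpha_j(\im Z)$ for $Z\in\zg(\kg)$, by \eqref{adH}. Two such rotation representations are equivalent exactly when the weights agree up to sign, and since the $\alpha_j$ are chosen in $P$, this gives the criterion. Part (iii) then follows immediately: the summands $\mg_j$ of dimension at least $4$ are nontrivial, and $\qg_{\alpha_j}$ is trivial exactly when $\alpha_j|_{\zg(\kg)}=0$. Part (iv) combines (i)–(iii): a two-dimensional $\qg_{\alpha_j}$ is irreducible if and only if it is nontrivial, and \eqref{kreg} encodes nontriviality together with pairwise inequivalence.
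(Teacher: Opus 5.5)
Your proof of (i) breaks at the step where you conclude $W^c\in\{\mg_j^+,\mg_j^-\}$, and the failure cannot be repaired because the claim itself is false in general. Since $\mg_j^\pm$ are irreducible, a $\kg$-submodule of $\mg_j^c=\mg_j^+\oplus\mg_j^-$ must equal one of the two summands only when $\mg_j^+\not\simeq\mg_j^-$ as $\kg$-modules. Otherwise the graphs of isomorphisms $\mg_j^+\to\mg_j^-$ are also submodules, and when $\mg_j^+$ is of real type some of these graphs are stable under conjugation. If $\alpha_j|_{\zg(\kg)}\ne0$, then $\zg(\kg)$ acts on $\mg_j^\pm$ by the distinct characters $\pm\alpha_j|_{\zg(\kg)}$, and your argument is complete. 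If $\alpha_j|_{\zg(\kg)}=0$, (i) can fail. Take the C-space $M=\SO(5)/\SO(3)\times S^1$ fibering over the quadric $\SO(5)/\SO(2)\times\SO(3)$. Here $\zg(\kg)=0$, $r=1$, and $\mg_1=\qg\simeq\RR^2\otimes\RR^3$, with $\mg_1^+$ the adjoint representation of $\sog(3)$. Under $K=\SO(3)$ this splits as $\RR^3\oplus\RR^3$. The paper's short proof rests on the same unjustified claim, namely that $\mg_j$ stays irreducible under $[\hg_f,\hg_f]$.

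In (ii), the obstacle you anticipated is genuine and cannot be overcome. Beyond $[\hg_f,\hg_f]$, the algebra $\kg$ only sees $\alpha|_{\zg(\kg)}$, not $\alpha|_{\zg(\hg_f)}$. So two summands carrying isomorphic $[\hg_f,\hg_f]$-modules that $\zg(\kg)$ does not separate are $\Ad(K)$-equivalent. This already happens in Example \ref{su5-isot}, $M=\SU(4)/\SU(2)$. There $\mg_2^+$ and $\mg_3^+$ are both the standard representation of $\sug(2)$, so $\mg_2\simeq\mg_3\simeq\HH$, although neither is two-dimensional.

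In the two-dimensional case you correctly note that rotation representations are equivalent exactly when the weights agree up to sign. However, choosing the $\alpha_j$ in $P$ does not remove the sign. Take $M=\SU(3)/K\times S^1$ with $\kg=\RR\,\im\,\mathrm{diag}(0,1,-1)$ over the full flag, and $P=\Delta^+$. The restrictions of $\alpha_{12},\alpha_{13},\alpha_{23}$ to $\zg(\kg)$ are proportional to $-1,1,2$, so \eqref{kreg} holds, yet $\qg_{\alpha_{12}}\simeq\qg_{\alpha_{13}}$.

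Hence neither (ii) nor the ``if'' half of (iv) can be proved as stated. The paper's derivation of (ii) from (i) does not address pairs of summands of dimension $\geq 4$, nor the sign. What your weight argument does prove is a corrected version. Suppose $\alpha_j|_{\zg(\kg)}\neq0$ for every $j$, not only for the two-dimensional $\mg_j$, and $\alpha_j|_{\zg(\kg)}\neq\pm\alpha_k|_{\zg(\kg)}$ for $j\neq k$. Then all $\mg_j$ are $\Ad(K)$-irreducible and pairwise inequivalent, because the $\zg(\kg)$-characters separate $\mg_j^+$ from $\mg_j^-$ and $\mg_j$ from $\mg_k$. Your argument for (iii) is fine, since $\mg_j$ has no $\kg$-fixed vectors when $\dim\mg_j\geq4$.
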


\begin{remark}\label{ft-gen}
Condition \eqref{kreg} is generic, in the sense that it holds if the subspace $\zg(\kg)$ is not orthogonal to any of the finitely many vectors of $\zg(\hg_f)$ given by $Z_{\alpha_j}$, $Z_{\alpha_j}-Z_{\alpha_k}$.  We are considering here the Grassmannian topology given in Remark \ref{lh-rem}.  This was proved in \cite[Theorem B]{KrsWlfZar} in a more general context.  
\end{remark}

\begin{remark}
In particular, condition \eqref{kreg} holds if $\dim{\mg_j}\geq 4$ for all $j=1,\dots,r$.  Moreover, in the case when $\zg(\kg)=0$, condition \eqref{kreg} holds if and only if $\dim{\mg_j}\geq 4$ for all $j=1,\dots,r$.   
\end{remark}

\begin{proof}
We first prove part (i).  If $\dim{\mg_j}\geq 4$, then $\hg_f$ is not abelian and restricted to $[\hg_f,\hg_f]$, the representation $\mg_j$ is also irreducible, as any partial sum of $\qg_\alpha$ inside $\mg_j$ is always $\zg(\hg_f)$-invariant since $\alpha|_{\zg(\hg_f)}=\alpha_j$ for any such $\alpha$.  The $\Ad(K)$-irreducibility of $\mg_j$ therefore follows from $[\hg_f,\hg_f]=[\kg,\kg]\subset\kg$.  

Parts (ii) and (iii) follow from (i) and the fact that the action of $[\kg,\kg]$ is trivial on $\mg_j$ if $\dim{\mg_j}=2$, i.e., $\mg_j=\qg_{\alpha_j}$.  Finally, parts (ii) and (iii) implies (iv), concluding the proof.   
\end{proof} 

\begin{definition}\label{fibtype-def}
A C-space $M=G/K$ is called of {\it fibration type} if condition \eqref{kreg} holds.  
\end{definition} 

According to Proposition \ref{bihol-h}, the fibration type property is independent of the fibration $M\rightarrow F$ chosen, i.e., it is independent of the subalgebra $\hg$ in Definition \ref{Csp-def}, and it is generic by Remark \ref{ft-gen}.  

On a C-space $M=G/K$ of fibration type, any $G$-invariant almost-complex structure on $M$ leaves the decompositions of $\pg$ and $\qg$ in \eqref{isotrep} invariant (not the one for $\ag$ in general).  On the other hand, any $G$-invariant metric on $M=G/K$ is given by 
\begin{equation}\label{g}
g=g_\ag+g_\qg, 
\end{equation}
where $g_\ag$ is any inner product on $\ag$ and 
$$
g_\qg=(x_\alpha)_{\alpha\in\Delta_\qg}=(x_1,\dots,x_r) =x_1Q|_{\mg_1}+\dots+x_rQ|_{\mg_r},
$$ 
the inner product on $\qg$ given in \S\ref{riem} for the flag $F=G_f/H_f$.  In other words, any $G$-invariant metric is also projectable along some of the Tits fibrations in the fibration type case.  These metrics are actually $\Ad(H_f)$-invariant and are called {\it adapted} in \cite{Pds}.  Note that $(J,g)$ is compatible if and only if $(J_\ag,g_\ag)$ is compatible.  Furthermore, any $G$-invariant $2$-form on a C-space of fibration type is given by  
\begin{equation}\label{s}
\sigma=\sigma_\ag+\sigma_\qg,   
\end{equation} 
where $\sigma_\ag$ is any $2$-form on $\ag$ and 
$$
\sigma_\qg=(y_\alpha)_{\alpha\in\Delta_\qg}=(y_1,\dots,y_r),
$$ 
is the $2$-form on $\qg$ given by $\sigma_\qg(E_\alpha,E_{-\alpha})=y_\alpha\in\im\RR$ for any $\alpha\in\Delta_\qg$ and zero otherwise, for some numbers $y_\alpha$ such that $y_{-\alpha}=-y_\alpha$ and $y_\alpha=y_\beta$ whenever $\alpha|_{\zg(\hg)}=\beta|_{\zg(\hg)}$.  Equivalently, $\sigma_\qg(e_\alpha,f_\alpha)=\im y_\alpha$ for all $\alpha\in\Delta_\qg$ and zero otherwise.

\subsection{Extra $G$-invariant structures}\label{extra-sec}
We now consider the case when condition \eqref{kreg} does not hold.  After reordering, we can assume that for some $1\leq t\leq u\leq r$, 
\begin{equation}\label{ptilde}
\pg=\ag\oplus\mg_1\oplus\dots\oplus\mg_t \oplus\mg_{t+1}\oplus\dots\oplus\mg_u \oplus\mg_{u+1}\oplus\dots\oplus\mg_r,
\end{equation}  
where $\alpha_j|_{\zg(\kg)}=0$ if and only if $j\leq t$ and $\mg_j=\qg_{\alpha_j}$ if and only if $j\leq u$.  In particular, $\dim{\mg_j}\geq 4$ for all $j>u$, so if $\hg_f$ is abelian then $u=r$, and $t=u$ if $\zg(\kg)=0$.  According to Proposition \ref{isotdec}, 
$$
\pg_0=\ag\oplus\mg_1\oplus\dots\oplus\mg_t,
$$
and for $j,k>t$, $\mg_j$ is equivalent to $\mg_k$ if and only if $j,k\leq u$ and $\alpha_j|_{\zg(\kg)}=\alpha_k|_{\zg(\kg)}$.  We can therefore decompose in isotypical components as follows, 
\begin{equation}\label{mtilde}
\mg_{t+1}\oplus\dots\oplus\mg_u = \widetilde{\mg}_1\oplus\dots\oplus\widetilde{\mg}_v,
\end{equation}
where the $\widetilde{\mg}_i$ are the partial sums of $\mg_j$ with identical restriction $\alpha_j|_{\zg(\kg)}$.  

In this way, it follows from \eqref{ptilde} and \eqref{mtilde} that 
$$
\pg=\pg_0\oplus\pg_1\oplus\pg_2, \qquad  
\pg_1:=\widetilde{\mg}_1\oplus\dots\oplus\widetilde{\mg}_v, \qquad \pg_2:=\mg_{u+1}\oplus\dots\oplus\mg_r,
$$
hence the $G$-invariant almost complex structures are all of the following form: 
$$
\widetilde{J}=(J_0,J_{\pg_1},J_{\pg_2}), \qquad J_{\pg_1}=(J_1,\dots,J_v), 
$$
where 
\begin{enumerate}[{\small $\bullet$}] 
\item $J_0$ is any linear operator of $\pg_0$ such that $J_0^2=-I$, 

\item $J_i:\widetilde{\mg}_i\rightarrow\widetilde{\mg}_i$ satisfies that $J_i^2=-I$ and commutes with $I_i:\widetilde{\mg}_i\rightarrow\widetilde{\mg}_i$, defined by $I_ie_\alpha=f_\alpha$, $I_if_\alpha=-e_\alpha$ for any $\alpha\in P$ such that $\qg_\alpha\subset\widetilde{\mg}_i$, 

\item $J_{\pg_2}= (\epsilon_{u+1},\dots,\epsilon_{r})$ is as in \S\ref{alm-comp}, i.e., $J_{\pg_2}e_\alpha=\epsilon_j f_\alpha$ if $\qg_\alpha\subset\mg_j$, $j=u+1,\dots,r$.  
\end{enumerate}

Any $G$-invariant metric is given by:  
$$
g=g_{\pg_0}+g_1+\dots+g_v + g_{\pg_2}, \qquad g_{\pg_2}=(x_{u+1},\dots,x_r), 
$$
where $g_{\pg_0}$ is any inner product on $\pg_0$, $g_i$ is any inner product on $\widetilde{\mg}_i$ compatible with $I_i$ and $g_{\pg_2}$ is as in \S\ref{riem}.  A pair $(J,g)$ is therefore compatible if and only if the pairs $(J_{\pg_0},g_{\pg_0})$ and $(J_i,g_i)$, $i=1,\dots,v$ are so. We note that there are additional $G$-invariant metrics to the $\Ad(H_f)$-invariant ones given in \eqref{g} if and only if $t>1$, or if $u>t$ and at least one $\widetilde{\mg}_i$ has dimension $\geq 4$ (i.e., $\widetilde{\mg}_i$ is not $\Ad(K)$-irreducible).  

\begin{example}\label{su5-full}
A C-space $M^{22}=\SU(5)/K$, where $K$ is abelian and $\dim{K}=2$, fibers over the full flag 
$$
F^{20}=G_f/H_f=\SU(5)/\Se(\U(1)^5), \qquad  K\subset H_f=T^4=\Se(\U(1)^5), \qquad \dynkin[scale=2] A{oooo} ,
$$
and $\hg_f=\zg(\hg_f)=\{(a,b,c,d,e):a+b+c+d+e=0\}$.  If we consider 
$$
\kg:=\left\la (1,0,-\tfrac{1}{3},-\tfrac{1}{3},-\tfrac{1}{3}),(0,1,-\tfrac{1}{3},-\tfrac{1}{3},-\tfrac{1}{3})\right\ra_\RR, 
$$
then $\ag=\ag_f=\la Z_{\alpha_{34}}, Z_{\alpha_{45}}\ra_\RR$ and $\pg$ decomposes in isotypic components as
$$
\pg=\underbrace{\ag\oplus\qg_{34}\oplus\qg_{45}\oplus\qg_{35}}_{\pg_0} \oplus  
\underbrace{\qg_{12}}_{\widetilde{\mg}_1} \oplus 
\underbrace{\qg_{13}\oplus\qg_{14}\oplus\qg_{15}}_{\widetilde{\mg}_2}\oplus 
\underbrace{\qg_{23}\oplus\qg_{24}\oplus\qg_{25}}_{\widetilde{\mg}_3}.
$$
Note that $t=3$, $u=r=10$ and $v=3$.  The space of all $G$-invariant metrics therefore depends on $36+1+9+9=55$ parameters, while the space of metrics of the form $g=g_\ag+g_\qg$ as in \eqref{g} only depends on $3+10=13$ parameters.  
\end{example}

\begin{example}\label{su5-isot}
From the flag
$$
F^{10}=G_f/H_f=\SU(4)/\Se(\U(1)\times\U(2)\times\U(1)), \qquad \dynkin[scale=2] A{o*o} ,
$$
where $\zg(\hg_f)=\{(a,b,b,c):a+2b+c=0\}$ and $\Delta_{\hg_f}=\{\pm\alpha_{23}\}$, we can define the C-space $M^{12}=\SU(4)/\SU(2)$, i.e., $\zg(\kg)=0$.  Thus $\ag=\zg(\hg_f)=\la Z_{\alpha_{12}},Z_{\alpha_{34}}\ra_\RR$ and the decomposition in isotypical components is given by 
$$
\pg=\underbrace{\ag\oplus\qg_{14}}_{\pg_0} \oplus \underbrace{\qg_{12}\oplus\qg_{13}}_{\mg_2} \oplus \underbrace{\qg_{34}\oplus\qg_{24}}_{\mg_3}, 
$$
that is, $t=u=1$ and $r=3$.  In this case, the space of all $G$-invariant metrics depends on $10+1+1=12$ parameters and the space of projectable metrics on $3+3=6$ parameters. 
\end{example}

\begin{example}\label{su4-full-2}
Consider the C-space $M^{14}=\SU(4)/K$, $\dim{K}=1$, $\kg:=\RR(2,1,-1,-2)$, fibering over the full flag 
$$
F^{12}=G_f/H_f=\SU(4)/\Se(\U(1)^4), \qquad  H_f=T^3=\Se(\U(1)^4), \qquad \dynkin[scale=2] A{ooo} ,
$$
and set $\alpha:=\alpha_{12}$, $\beta:=\alpha_{23}$, $\gamma:=\alpha_{13}$.  We therefore have that
$$
\pg=\ag\oplus\underbrace{\qg_\alpha\oplus\qg_\gamma}_{\widetilde{\mg}_1} \oplus 
\underbrace{\qg_{\alpha+\beta}\oplus\qg_{\beta+\gamma}}_{\widetilde{\mg}_2}
\oplus\underbrace{\qg_{\beta}}_{\widetilde{\mg}_3}\oplus\underbrace{\qg_{\alpha+\beta+\gamma}}_{\widetilde{\mg}_4}, 
$$
where $\ag=\pg_0$ is the orthogonal complement of $\kg$ in $\tg$.  Any $G$-invariant almost-complex structure on $M$ is therefore given by $\widetilde{J}=(J_0,J_1,J_2,J_3,J_4)$, where $J_0:\ag\rightarrow\ag$, $J_0^2=-I$ and $J_i:\widetilde{\mg}_i\rightarrow\widetilde{\mg}_i$, $J_i^2=-I$, $[J_i,I_i]=0$ for all $i=1,\dots,4$.  Since $N_\ggo(\kg)=\tg$,  $\cca^G=\cca^G_{\tg}$.  

If we instead consider $\kg:=\RR(1,1,-1,-1)$, then $\ag=\ag_f=\la Z_\alpha, Z_\gamma\ra_\RR$ and 
$$
\pg=\underbrace{\ag\oplus\qg_\alpha\oplus\qg_\gamma}_{\pg_0} \oplus \underbrace{\qg_\beta\oplus\qg_{\alpha+\beta}\oplus\qg_{\beta+\gamma}\oplus\qg_{\alpha+\beta+\gamma}}_{\widetilde{\mg}_1}.  
$$
Thus any $G$-invariant almost-complex structure on $M$ is given by $\widetilde{J}=(J_0,J_1)$, where $J_0:\pg_0\rightarrow\pg_0$, $J_0^2=-I$, $J_1:\widetilde{\mg}_1\rightarrow\widetilde{\mg}_1$, $J_1^2=-I$ and $[J_1,I_1]=0$.  

If we define $J_1:=I_1$, then 
$$
\widetilde{\mg}_1^{1,0}=\ggo_\beta\oplus\ggo_{\alpha+\beta}\oplus\ggo_{\beta+\gamma}\oplus\ggo_{\alpha+\beta+\gamma}, 
$$ 
and hence $[\pg_0,\widetilde{\mg}_1^{1,0}]\subset\widetilde{\mg}_1^{1,0}$ by \eqref{eE} and $[\widetilde{\mg}_1^{1,0},\widetilde{\mg}_1^{1,0}]\subset\widetilde{\mg}_1^{1,0}$.  The integrability of $\widetilde{J}$ is therefore equivalent to $[\pg_0^{1,0}, \pg_0^{1,0}]_\pg\subset\pg_0^{1,0}$, which holds if and only if $J_0$ is a complex structure on the Lie algebra $\pg_0\simeq\sug(2)\oplus\sug(2)$.  
\end{example}

\begin{example}\label{su6-isot}
We consider the flag
$$
F^{28}=G_f/H_f=\SU(6)/\Se(\U(1)^2\times\U(2)\times\U(1)^2), \qquad \dynkin[scale=2] A{oo*oo} ,
$$
which has $\zg(\hg_f)=\{(a,b,c,c,d,e):a+b+2c+d+e=0\}$ and $\Delta_{\hg_f}=\{\pm\alpha_{34}\}$.  For the C-space $M^{30}=\SU(6)/K$, $\dim{K}=5$, where
$$
\zg(\kg):=\{(a,a,c,c,d,d):a+c+d=0\}, 
$$
we have that $\ag=\ag_f=\la Z_{\alpha_{12}}, Z_{\alpha_{56}}\ra_\RR$ and the decomposition in isotypical components is given by 
$$
\pg=\underbrace{\ag\oplus\qg_{12}\oplus\qg_{56}}_{\pg_0} \oplus 
\underbrace{\qg_{23}\oplus\qg_{24} \oplus\qg_{13}\oplus\qg_{14}}_{\widetilde{\mg}_1} \oplus 
\underbrace{\qg_{45}\oplus\qg_{35}\oplus\qg_{46}\oplus\qg_{36}}_{\widetilde{\mg}_2} \oplus  
\underbrace{\qg_{25}\oplus\qg_{15}\oplus\qg_{26}\oplus\qg_{16}}_{\widetilde{\mg}_3}. 
$$
Thus the $G$-invariant almost-complex structures on $M$ are all of the form $\widetilde{J}=(J_0,J_1,J_2,J_3)$, where $J_0:\pg_0\rightarrow\pg_0$, $J_0^2=-I$, $J_i:\widetilde{\mg}_i\rightarrow\widetilde{\mg}_i$, $J_i^2=-I$, $[J_i,I_i]=0$ for all $i=1,2,3$.  Concerning the integrability of $\widetilde{J}$, we note that if we set $J_i=I_i$, $i=1,2,3$, then, in much the same way as in Example \ref{su4-full-2}, we obtain that 
$\widetilde{J}$ is integrable if and only if $[\pg_0^{1,0}, \pg_0^{1,0}]_\pg\subset\pg_0^{1,0}$, which holds if and only if $J_0$ is a complex structure on the Lie algebra $\pg_0\simeq\sug(2)\oplus\sug(2)$.  
\end{example}

\begin{remark}
In the above two examples, one obtains $G$-invariant complex structures $\widetilde{J}$ which are not of the form $J=(J_\ag,J_\qg)$ for the flag chosen as base of the Tits fibration.  Nevertheless, they have this form for the flag defined by $\widetilde{\hg}=\bautg(\widetilde{J})$ (see Theorem \ref{C2}).  
\end{remark}

\subsection{de Rham cohomology}\label{dR-sec}
We use in this section \cite{H3}, see \S\ref{dRapp-sec} for an overview.  Recall that 
$$
(\Omega^pM)^G\equiv (\Lambda^p\pg^*)^K, 
$$ 
i.e., the space of all $G$-invariant $p$-forms on $M=G/K$ is naturally identified with the space of all $\Ad(K)$-invariant $p$-forms on the vector space $\pg\equiv T_oM$.  

Let $M=G/K=G_f/K\times T^z$ be a C-space as in \eqref{gprime} fibering over the flag $F=G_f/H_f$.

\subsubsection{First cohomology (see \cite[Section 2.1]{H3} and \S\ref{dRapp-sec})}\label{dR1} 
The space of $G$-invariant $1$-forms is given by  
$$
(\Omega^1M)^G=\{\theta_Z:Z\in\pg_0\}, \qquad \pg_0\supset\ag=\zg(\ggo)\oplus\ag_f, 
$$ 
where $\theta_Z:=Q(\cdot,Z)$ and $\pg_0$ is described in Proposition \ref{isotdec}, (iii).  The closed $G$-invariant $1$-forms are given by  
$$
(\Omega^1_cM)^G=\{\theta_Z:Z\in\zg(\ggo)\}\simeq\zg(\ggo), \qquad\mbox{so}\quad b_1(M)=\dim{\zg(\ggo)},
$$ 
that is, $b_1(G_f/K)=0$.

\subsubsection{Second cohomology (see \cite[Section 2.2]{H3} and \S\ref{dRapp-sec})}\label{dR2} 
If we define $\sigma_Z:=Q([\cdot,\cdot],Z)$, then the space of all closed $2$-forms is given by   
\begin{equation}\label{c2f}
(\Omega^2_cM)^G = \left\{ \sigma_Z+\gamma:Z\in\zg(\kg)\oplus\pg_0, \; \gamma\in(\Lambda^2\pg^*)^K, \; \gamma(\cdot, ([\ggo,\ggo]+\kg)_\pg)=0\right\},    
\end{equation}
so we can view 
$$
\gamma\in\Lambda^2\zg(\ggo)^*\simeq \Lambda^2\left(\ggo/([\ggo,\ggo]+\kg)\right)^*.
$$  
Since $\sigma_Z=-d\theta_Z$ for any $Z\in\pg_0$, we obtain that 
$$
H^2(M) \simeq \zg(\kg)\oplus \Lambda^2\zg(\ggo)^*, \qquad b_2(M)=\dim{\zg(\kg)} + 
\tbinom{\dim{\zg(\ggo)}}{2}.
$$
In particular, $b_2(G_f/K)=\dim{\zg(\kg)}$.

\subsubsection{Third cohomology (see \cite[Section 4]{H3} and \S\ref{dRapp-sec})}\label{dR3} 
By the K\"unneth formula, the third Betti number of a C-space $M=G/K=G_f/K\times T^z$ is given by 
\begin{align}
b_3(M) =& b_3(G_f/K) +b_2(G_f/K)b_1(T^z) + b_1(G_f/K)b_2(T^z) +b_3(T^z) \notag\\ 
=& b_3(G_f/K) + \dim{\zg(\kg)}\dim{\zg(\ggo)} + \tbinom{\dim{\zg(\ggo)}}{3}, \label{Kb3} 
\end{align}
so only $b_3(G_f/K)$ needs to be computed.  

We consider $\ggo_f=\ggo_1\oplus\dots\oplus\ggo_s$, the decomposition in simple factors, and $\Delta_\qg=\Delta_{\qg_1}\sqcup\dots\sqcup\Delta_{\qg_s}$ as in \eqref{decflags}.  Let $\vp_R$ be the closed $G_f$-invariant $3$-form on $G_f/K$ defined by any bi-invariant symmetric bilinear form $R$ on $\ggo_f$ such that $R|_{\kg\times\kg}=0$ and recall that $H^3(G_f/K)=\{[\vp_R]:R|_{\kg\times\kg}=0\}$ (see \eqref{phiR-def}).  On the C-space $G_f/K$, we have that $R(E_\alpha,E_\beta)=0$ whenever $\alpha+\beta\ne 0$.  We also denote by $R$ and $\vp_R$ the corresponding $\CC$-linear forms on $\pg^c$.  

\begin{lemma}\label{phiR}
If $R=z_1\kil_{\ggo_1}+\dots+z_s\kil_{\ggo_s}$, $R|_{\kg\times\kg}=0$, then the only possibly nonzero components of $\vp_R$ are given by 
$$
\vp_R(E_\alpha,E_\beta,E_\gamma)=N_{\alpha,\beta}z_i, \qquad\forall \alpha,\beta,\gamma\in\Delta_{\qg_i}, \quad \alpha+\beta+\gamma=0,  
$$
and
$$
\vp_R(A, E_\alpha,E_{-\alpha})
=-2z_iQ(A,A_\alpha)-R(A,A_\alpha), \qquad \forall A\in\ag_f^c, \quad \alpha\in\Delta_{\qg_i}, 
$$ 
where $A_\alpha:=(H_\alpha)_{\ag^c}$ and $i=1,\dots,s$.  
\end{lemma}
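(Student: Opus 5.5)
The plan is to expand $\vp_R$ directly from its definition \eqref{phiR-def} on the $\CC$-basis of $\pg^c=\ag^c\oplus\bigoplus_{\alpha\in\Delta_\qg}\CC E_\alpha$, and to sort the possible triples of arguments by root weight.

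Recall that on $G_f/K$ the form $\vp_R$ is built out of $R$ and the bracket of $\ggo_f$ (projected to $\pg$ where needed), and that it is trilinear and alternating. So it suffices to evaluate it on triples of basis vectors of the following types:
\begin{enumerate}[(a)]
\item $(A,A',A'')$ with $A,A',A''\in\ag_f^c$;
\item $(A,A',E_\alpha)$;
\item $(A,E_\alpha,E_\beta)$;
\item $(E_\alpha,E_\beta,E_\gamma)$.
\end{enumerate}

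\emph{Vanishing step.} I would first show that most of these components vanish.

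\begin{enumerate}[(1)]
\item Every bracket involved is homogeneous for the $\tg^c$-grading: $[\ag,\ag]=0$, $[A,E_\alpha]=\alpha(A)E_\alpha$ and $[E_\alpha,E_\beta]\in\ggo_{\alpha+\beta}$. Moreover $R$ pairs $\ggo_\gamma$ only with $\ggo_{-\gamma}$, and pairs $\tg^c$ only with $\tg^c$. Hence a component is nonzero only if the total weight is zero. This kills types (a) and (b), and type (c) unless $\beta=-\alpha$.
\item Type (d) survives only when $\alpha+\beta+\gamma=0$.
\item $R$ and the bracket both respect the ideal decomposition $\ggo_f=\ggo_1\oplus\dots\oplus\ggo_s$. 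For $\alpha\in\Delta_{\qg_i}$ and $\beta\in\Delta_{\qg_j}$ with $i\ne j$ we have $[E_\alpha,E_\beta]=0$ and $R(E_\alpha,E_\beta)=0$. Therefore type (d) forces $\alpha,\beta,\gamma$ into the same $\Delta_{\qg_i}$.
\end{enumerate}

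\emph{Root-type components.} With $\alpha,\beta,\gamma\in\Delta_{\qg_i}$ and $\alpha+\beta+\gamma=0$, all brackets land in $\ggo_{-\gamma}\subset\qg^c$, so the projection to $\pg$ is harmless. Using $R=z_i\kil_{\ggo_i}$ on $\ggo_i$ and the normalization $\kil(E_\gamma,E_{-\gamma})=1$, each term equals $z_iN_{\alpha,\beta}$. The cyclic relations $N_{\alpha,\beta}=N_{\beta,\gamma}=N_{\gamma,\alpha}$ for $\alpha+\beta+\gamma=0$ show that all terms agree. With the normalization built into \eqref{phiR-def}, this gives $\vp_R(E_\alpha,E_\beta,E_\gamma)=N_{\alpha,\beta}z_i$.

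\emph{Mixed components $(A,E_\alpha,E_{-\alpha})$.} Here the ingredients are the following.
\begin{itemize}
\item $[A,E_\alpha]=\alpha(A)E_\alpha$ and $R(E_\alpha,E_{-\alpha})=z_i$.
\item $\alpha(A)=\kil(A,H_\alpha)=\kil(A,A_\alpha)=-Q(A,A_\alpha)$, because $A\in\ag^c$ sees only the $\ag^c$-part of $H_\alpha$.
\item $[E_\alpha,E_{-\alpha}]=H_\alpha$, whose $\pg$-component is exactly $A_\alpha=(H_\alpha)_{\ag^c}$. The $\zg(\kg)$- and $\tg_{[\hg_f,\hg_f]}$-parts lie in $\kg$, and this is where the hypothesis $R|_{\kg\times\kg}=0$ and the projection to $\pg$ enter.
\end{itemize}
The two terms in which $A$ is bracketed with a root vector contribute $-z_iQ(A,A_\alpha)$ each. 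The term with $[E_\alpha,E_{-\alpha}]$ contributes $\pm R(A,A_\alpha)$. Summing gives $-2z_iQ(A,A_\alpha)-R(A,A_\alpha)$.

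The main obstacle is the bookkeeping of signs and projections in this mixed term. I must use the precise form of \eqref{phiR-def}, in particular which brackets are projected to $\pg$ and the overall sign convention, so that the $R(A,A_\alpha)$ term appears with a minus sign while the two $z_i$ terms coincide. I would first check this against the single case $G_f=\SU(2)$, $K$ trivial, to fix the conventions, and then run the general computation.
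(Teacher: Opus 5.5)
Your route is the paper's: evaluate \eqref{phiR-def} directly on root vectors. The paper uses the second line of \eqref{phiR-def}, $\vp_R(X,Y,Z)=R([X,Y],Z)+R([X,Y]_\kg,Z)-R([X,Z]_\kg,Y)+R([Y,Z]_\kg,X)$, computes only the two surviving types, and leaves the vanishing of the rest implicit. Your weight and ideal bookkeeping supplies that vanishing, with one slip: triples from $\ag_f^c$ have total weight zero, so they vanish because $[\ag,\ag]=0$, not by the weight argument. For $(E_\alpha,E_\beta,E_\gamma)$ the second line makes things immediate. No bracket has a $\kg$-component, so only $R([E_\alpha,E_\beta],E_\gamma)=N_{\alpha,\beta}z_i$ survives, and no cyclic-relation argument is needed.

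The real gap is the mixed component, which is the substantive content of the lemma, and you leave its sign open. The structure you describe (two brackets of $A$ with a root vector, each worth $z_i\alpha(A)$, plus one $[E_\alpha,E_{-\alpha}]$-term) is that of the cyclic sum $R([X,Y]_\pg,Z)+R([Y,Z]_\pg,X)+R([Z,X]_\pg,Y)$. That sum gives $2z_i\alpha(A)+R(A_\alpha,A)$, which has the wrong sign. Use the second line instead. Since $[A,E_{\pm\alpha}]=\pm\alpha(A)E_{\pm\alpha}$ have no $\kg$-part,
\[
\vp_R(A,E_\alpha,E_{-\alpha})=\alpha(A)R(E_\alpha,E_{-\alpha})+R((H_\alpha)_\kg,A)=z_i\alpha(A)+R(H_\alpha,A)-R(A_\alpha,A)=2z_i\alpha(A)-R(A_\alpha,A).
\]
Here $(H_\alpha)_\kg=H_\alpha-A_\alpha$, and $R(H_\alpha,A)=z_i\kil_{\ggo_f^c}(H_\alpha,A)=z_i\alpha(A)$ because $H_\alpha\in\ggo_i^c$.

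So the second $z_i\alpha(A)$ comes from the $\kg$-part of $[E_\alpha,E_{-\alpha}]$, not from a second bracket of $A$ with a root vector. In the first line of \eqref{phiR-def}, the three terms containing $[A,E_{\pm\alpha}]$ have coefficients $4,-1,+1$ and combine to $2z_i\alpha(A)$, while the last term is $-R(A_\alpha,A)$. With $\alpha(A)=-Q(A,A_\alpha)$ this is the claimed formula.

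Finally, $R|_{\kg\times\kg}=0$ is never used in this computation, since only $\kg\times\pg$ pairings occur. It is the standing condition under which $\vp_R$ is closed on $G_f/K$, not an ingredient of the evaluation.
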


\begin{remark}
Every $3$-form $\vp_R$ is harmonic with respect to an $s$-parametric space of $G_f$-invariant metrics on $G_f/K$ including the standard metric $\gk$ (see \cite[Corollary 7.12]{H3}).  
\end{remark}

\begin{proof}
According to \eqref{phiR-def}, 
$$
\vp_R(E_\alpha,E_\beta,E_\gamma)=  R([E_\alpha,E_\beta],E_\gamma) = N_{\alpha,\beta}R(E_{\alpha+\beta},E_\gamma) = N_{\alpha,\beta}z_i \kil_{\ggo^c_i}(E_{\alpha+\beta},E_\gamma),  
$$
and 
$$
\vp_R(A, E_\alpha,E_{-\alpha})= 2\alpha(A)R(E_\alpha,E_{-\alpha}) -R(A_\alpha,A) =2z_i\kil_{\ggo^c}(A,A_\alpha)-R(A_\alpha,A),
$$ 
concluding the proof.  
\end{proof}

We now show that the positivity of the third Betti number is quite a strong condition on a C-space.    

\begin{proposition}\label{b3dR} 
If $\hg_i$ is abelian precisely for $i=1,\dots,t$ for some $t\leq s$ (see \eqref{decflags}), then $0\leq b_3(G_f/K)\leq t$, depending on the slope of $\zg(\kg)\subset\zg(\hg_1)\oplus\dots\oplus\zg(\hg_s)$.  More precisely, given any basis
$\{ Z^1,\dots,Z^m\}$ of $\zg(\kg)$, 
$$
b_3(G_f/K)=t-\dim{S_{\zg(\kg)}},
$$ 
where 
\begin{equation}\label{defS}
S_{\zg(\kg)}:=\spann_\RR\left\{ \left(\kil_{\ggo_1}(Z^j_1,Z^k_1),\dots,\kil_{\ggo_t}(Z^j_t,Z^k_t)\right):1\leq j\leq k\leq m\right\} \subset\RR^t.
\end{equation}
\end{proposition}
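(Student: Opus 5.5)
The plan is to use the description of the third cohomology recalled just before Lemma \ref{phiR}, namely $H^3(G_f/K)=\{[\vp_R]:R|_{\kg\times\kg}=0\}$, where $R$ runs over the bi-invariant symmetric bilinear forms on $\ggo_f$ (see \eqref{phiR-def} and \S\ref{dRapp-sec}). I will take from \cite[Section 4]{H3} the fact that the assignment $R\mapsto[\vp_R]$ is injective on this space, so that it is a linear isomorphism onto $H^3(G_f/K)$. I expect to have to check precisely how this is stated in \S\ref{dRapp-sec}. Granting it, $b_3(G_f/K)$ equals the dimension of the space $\mathcal{R}$ of bi-invariant forms on $\ggo_f$ vanishing on $\kg\times\kg$. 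The proof then reduces to a linear-algebra computation of $\dim\mathcal{R}$.

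Since $\ggo_f=\ggo_1\oplus\dots\oplus\ggo_s$ with each $\ggo_i$ simple, every bi-invariant symmetric form is $R=z_1\kil_{\ggo_1}+\dots+z_s\kil_{\ggo_s}$, with $(z_1,\dots,z_s)\in\RR^s$ arbitrary. I then impose $R|_{\kg\times\kg}=0$ using the decomposition $\kg=[\hg_f,\hg_f]\oplus\zg(\kg)$, where $[\hg_f,\hg_f]=\bigoplus_i[\hg_i,\hg_i]$ and $\zg(\kg)\subset\zg(\hg_1)\oplus\dots\oplus\zg(\hg_s)$.

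First take $X\in[\hg_i,\hg_i]$ nonzero, which exists exactly when $\hg_i$ is nonabelian, i.e. $i>t$. Then $R(X,X)=z_i\kil_{\ggo_i}(X,X)$, and $\kil_{\ggo_i}(X,X)<0$, so $z_i=0$ for all $i>t$.

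Next, the mixed terms $R([\hg_f,\hg_f],\zg(\kg))$ vanish automatically. Indeed $\zg(\hg_i)$ and $[\hg_i,\hg_i]$ are $\kil_{\ggo_i}$-orthogonal, since $\kil([X,Y],Z)=\kil(X,[Y,Z])=0$ for $Z$ central in $\hg_i$, and distinct factors $\ggo_i,\ggo_j$ are orthogonal for each $\kil_{\ggo_i}$.

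Finally, on $\zg(\kg)\times\zg(\kg)$, write $Z^j=\sum_i Z^j_i$ with $Z^j_i\in\zg(\hg_i)$. Using $z_i=0$ for $i>t$, the condition becomes
$$
R(Z^j,Z^k)=\sum_{i=1}^t z_i\,\kil_{\ggo_i}(Z^j_i,Z^k_i)=0, \qquad 1\leq j\leq k\leq m .
$$
By bilinearity it suffices to check this on the basis $\{Z^1,\dots,Z^m\}$.

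Hence $\mathcal{R}$ is identified with $\{(z_1,\dots,z_t)\in\RR^t: z\perp S_{\zg(\kg)}\}$, where $S_{\zg(\kg)}$ is as in \eqref{defS} and $\perp$ is taken for the standard dot product on $\RR^t$. Therefore $\dim\mathcal{R}=t-\dim S_{\zg(\kg)}$. The bound $0\leq b_3(G_f/K)\leq t$ follows at once, and the dependence on the slope of $\zg(\kg)$ is visible through $S_{\zg(\kg)}$.

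The main obstacle is the injectivity of $R\mapsto[\vp_R]$, i.e. that no nonzero admissible $\vp_R$ is exact. I would take this from \cite{H3}. If it is not stated there in this form, my fallback is a Cartan–Chevalley–Eilenberg argument: the component $\vp_R(E_\alpha,E_\beta,E_\gamma)=N_{\alpha,\beta}z_i$ from Lemma \ref{phiR} shows that $\vp_R\ne0$ whenever some $z_i\ne0$. Non-exactness would then come from the identification of $H^3$ of the relative complex $(\Lambda\pg^*)^K$ with the transgression of degree-$4$ invariants, which are exactly the bi-invariant forms $R$.
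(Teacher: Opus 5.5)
Your proposal is correct and follows essentially the paper's argument. The paper also takes from \cite{H3} that $b_3(G_f/K)=\dim\{R\in\sym^2(\ggo_f)^{G_f}:R|_{\kg\times\kg}=0\}$, which is recorded in \S\ref{dRapp-sec}, so the injectivity you worry about is part of the cited input. It then shows, exactly as you do, that $R=\sum_i z_i\kil_{\ggo_i}$ vanishes on $\kg\times\kg$ if and only if $z_{t+1}=\dots=z_s=0$ and $(z_1,\dots,z_t)\perp S_{\zg(\kg)}$. The only cosmetic difference is in the final count: the paper uses formula \eqref{b3-app}, $b_3=s-\dim S_\kg$ with $\dim S_\kg=\dim S_{\zg(\kg)}+s-t$, whereas you compute the orthogonal complement directly.
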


\begin{remark}
In particular, it is easy to check that $b_3(G_f/K)=s$ if and only if $\kg=0$.  Beyond Lie groups, in the maximal case, i.e., $b_3(G_f/K)=s-1$, the homogeneous spaces were called {\it aligned} in \cite{H3} and satisfy many nice structural conditions, which paved the way to the study of generalized Einstein metrics in \cite{BRF} and Einstein metrics in \cite{HHK,Es2,Rical}. 
\end{remark}

\begin{remark}
It follows that $b_3(G_f/K)=0$ if all the Lie algebras $\hg_i$ are non-abelian, i.e., no flag $G_i/H_i$ is full.  
\end{remark}

\begin{proof}
We use formula \eqref{b3-app}.  Since 
$$
\kg=\zg(\kg)\oplus\overline{\kg}_{t+1}\dots\oplus\overline{\kg}_s, \qquad \mbox{where}\quad 0\ne\overline{\kg}_i:=[\hg_i,\hg_i]\subset\ggo_i, 
$$
if $\kil_{\pi_i(\kg_j)}=a_{ij}\kil_{\ggo_i}|_{\pi_i(\kg_j)}$, where $\kg_j$ are the simple factors of $[\kg,\kg]$ (each one contained in some $\overline{\kg}_i$), then $a_{ij}\ne 0$ if and only if  $\kg_j\subset\overline{\kg}_i$.  For $R=z_1\kil_{\ggo_1}+\dots+z_s\kil_{\ggo_s}$, we therefore obtain that $R|_{\kg\times\kg}=0$ if and only if $z_{t+1}=\dots=z_s=0$ and $(z_1,\dots,z_t)$ is orthogonal to $\S_{\zg(\kg)}$, concluding the proof since $\dim{S_\kg}=\dim{S_{\zg(\kg)}}+s-t$, where $S_\kg$ is the subspace defined in \eqref{b3-app}.   
\end{proof}

\subsubsection{Fourth cohomology (see \cite{BswChtMty})}\label{dR4} 
The K\"unneth formula gives that 
\begin{align}
b_4(M) =& b_4(G_f/K) +b_3(G_f/K)b_1(T^z) + b_2(G_f/K)b_2(T^z) \notag\\ 
&+b_1(G_f/K)b_3(T^z) +b_4(T^z) \notag\\ 
=& b_4(G_f/K) + b_3(G_f/K)\dim{\zg(\ggo)} + \dim{\zg(\kg)}\tbinom{\dim{\zg(\ggo)}}{2} + \tbinom{\dim{\zg(\ggo)}}{4}, \label{Kb4} 
\end{align}
and it follows from \cite[Theorem 1.3]{BswChtMty} that 
$$
H^4(G_f/K)\simeq\{\mbox{bi-inv.\ sym.\ bil.\ forms on}\; \kg\} / \{ 
R|_{\kg\times\kg}: R \;\mbox{bi-inv.\ sym.\ bil.\ form on}\; \ggo_f\}.
$$
Thus
$$
b_4(G_f/K) = \tbinom{\dim{\zg(\kg)+1}}{2} +b_3(G_f/K)+v-s,
$$
where $[\kg,\kg]=\kg_1\oplus\dots\oplus\kg_v$ in simple factors.  

\begin{example}\label{BRF-bal}
From any two full flag manifolds $F_1=G_1/H$, $F_2=G_2/H$ with the same rank, we construct the C-space $M=G/K=G_1\times G_2/K$, where 
$$
\kg:=\Delta_{p,q}(\hg)= \{ (pZ,qZ):Z\in\hg_f\}\subset\hg_f\oplus\hg_f, \qquad  p,q\in\ZZ,
$$ 
hence 
$$
\ag=\{ (-qZ,pZ):Z\in\hg_f\}\simeq\hg_f,  \qquad \qg=\qg_1\oplus\qg_2.  
$$ 
According to \cite{H3}, this is an aligned homogeneous space with $c_1:=\tfrac{p^2+q^2}{p^2}$.  It is easy to check that $\dim{S_{\zg(\kg)}}=1$, so $b_3(M)=1$.  Indeed, if $R=\kil_{\ggo_1}-\tfrac{1}{c_1-1}\kil_{\ggo_2}$, then $\vp_R$ is a closed $3$-form on $M=G/K$ such that $H^3(M)=\RR[\vp_R]$.  
\end{example}

\subsection{Differential of $2$-forms}\label{form2-sec}
In this section, we give a formula for the differential of $G$-invariant $2$-forms.  Recall that $\Omega^2(M)^G\equiv(\Lambda^2\pg^*)^K$ (see \S\ref{igs}).  

We will work on the complexification $\pg^c=\ag^c\oplus\qg^c$.  Any tensor on $\pg$ is, as usual, $\CC$-linearly extended to a tensor on $\pg^c$.  It is easy to see that 
\begin{equation}\label{a1g}
\ag_f^c=\la A_\alpha:\alpha\in\Delta_\qg^+\ra_\CC =\la A_\alpha:\alpha\in\Pi_\qg\ra_\CC, \qquad  
\ag=\ag_f\oplus\zg(\ggo),  
\end{equation}
where
$$
A_\alpha:=(H_\alpha)_{\ag^c} \in\im \ag, \qquad\forall\alpha\in\Delta_\qg,
$$ 
is the $Q$-orthogonal projection of $H_\alpha$ on $\ag^c$.  We also have that $\{ E_\alpha:\alpha\in\Delta_\qg\}$ is a basis of $\qg^c$, and recall from \S\ref{flag-sec} and \S\ref{roots} that  
$$
[A,E_\alpha]=\alpha(A)E_\alpha, \quad [E_\alpha,E_\beta]=N_{\alpha,\beta}E_{\alpha+\beta}, \quad[E_\alpha,E_{-\alpha}]_{\pg^c}=A_\alpha, \quad\forall A\in\ag^c, \;\alpha,\beta\in\Delta_\qg,
$$
and $[E_\alpha,E_\beta]=0$ whenever $0\ne\alpha+\beta\notin\Delta_\qg$.  Note that we view $\alpha$ as defined in the whole vector space $\ag^c$ by extending $\alpha$ linearly from $\ag_f^c\subset\zg(\hg_f)^c\subset\tg_f^c$ and setting $\alpha|_{\zg(\ggo)^c}\equiv 0$.    

\begin{lemma}\label{do}
For any $G$-invariant $2$-form $\sigma=\sigma_\ag+\sigma_\qg$, $\sigma_\qg=(y_\alpha)_{\alpha\in\Delta_\qg}$ as in \eqref{s} (i.e., $\sigma_\qg(E_\alpha,E_{-\alpha})=y_\alpha$), the only possibly nonzero components of $d\sigma$ are given by 
$$
d\sigma(E_\alpha,E_\beta,E_\gamma)= N_{\alpha,\beta}(y_\alpha+y_\beta+y_\gamma), \qquad \alpha+\beta+\gamma=0, 
$$
$$
d\sigma(A,E_\alpha,E_{-\alpha}) =\sigma_\ag(A,A_\alpha), \qquad \forall A\in\ag^c, \;\alpha\in\Delta_\qg.    
$$ 
Furthermore, as a real form on $\pg$,  
$$
d\sigma(A,e_\alpha,f_\alpha) = \sigma_\ag(A,(\im H_\alpha)_\ag) 
\qquad\forall A\in\ag,\; \alpha\in\Delta_\qg^+, 
$$
where the subscript $\ag$ denotes $Q$-orthogonal projection on $\ag$ (i.e., $(\im H_\alpha)_\ag=\im A_\alpha$).  
\end{lemma}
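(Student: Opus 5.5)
We compute $d\sigma$ directly from the standard formula for invariant forms on a reductive homogeneous space. Since $\sigma$ is $G$-invariant, viewed as an $\Ad(K)$-invariant form on $\pg$, its differential is
$$
d\sigma(X,Y,Z)=-\sigma([X,Y]_\pg,Z)-\sigma([Y,Z]_\pg,X)-\sigma([Z,X]_\pg,Y),\qquad X,Y,Z\in\pg,
$$
up to the sign convention fixed in \S\ref{igs}; the sign must match the one behind $\sigma_Z=-d\theta_Z$ used in \S\ref{dR2}. We extend everything $\CC$-linearly to $\pg^c=\ag^c\oplus\qg^c$. We evaluate on basis triples from $\{A\}\cup\{E_\alpha:\alpha\in\Delta_\qg\}$, using the bracket relations listed just before the lemma together with $[\ag,\ag]=0$. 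Because $\sigma_\qg$ pairs only $E_\alpha$ with $E_{-\alpha}$, and because $[\ag,E_\alpha]\subset\CC E_\alpha$, most triples vanish by weight considerations. Concretely, $d\sigma$ is $\Ad(T)$-invariant, so it is nonzero only on triples whose total $\tg$-weight is zero.

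We split into the possible triples.

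\emph{(a) Triples $(A,A',\cdot)$ with two elements of $\ag$.} Here $[A,A']=0$, and $[A,E_\alpha]=\alpha(A)E_\alpha$ cannot pair with $A'$ under $\sigma$, since $\sigma(\qg,\ag)=0$. Likewise a triple $(A,A',A'')$ gives zero.

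\emph{(b) Triples $(A,E_\alpha,E_\beta)$.} Weight zero forces $\beta=-\alpha$. The three terms are:
\begin{enumerate}[{\small $\bullet$}]
\item $[A,E_\alpha]=\alpha(A)E_\alpha$, contributing $\mp\alpha(A)\sigma(E_\alpha,E_{-\alpha})$;
\item $[E_{-\alpha},A]=\alpha(A)E_{-\alpha}$, contributing a term of the opposite sign, so the two cancel;
\item $[E_\alpha,E_{-\alpha}]_{\pg^c}=A_\alpha$, contributing $\sigma_\ag(A_\alpha,A)$.
\end{enumerate}
With the correct overall sign this gives $d\sigma(A,E_\alpha,E_{-\alpha})=\sigma_\ag(A,A_\alpha)$.

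\emph{(c) Triples $(E_\alpha,E_\beta,E_\gamma)$.} Weight zero forces $\alpha+\beta+\gamma=0$. Each bracket lands in a root space, e.g.\ $[E_\alpha,E_\beta]=N_{\alpha,\beta}E_{-\gamma}$, which lies in $\qg^c$ because $-\gamma\in\Delta_\qg$. Collecting the three terms gives $N_{\alpha,\beta}y_{-\gamma}$, $N_{\beta,\gamma}y_{-\alpha}$ and $N_{\gamma,\alpha}y_{-\beta}$, up to sign. We then use the Chevalley identity $N_{\alpha,\beta}=N_{\beta,\gamma}=N_{\gamma,\alpha}$ for $\alpha+\beta+\gamma=0$ (from \S\ref{roots}) together with $y_{-\alpha}=-y_\alpha$. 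This yields $N_{\alpha,\beta}(y_\alpha+y_\beta+y_\gamma)$ after fixing signs.

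\emph{Real version.} We write $e_\alpha,f_\alpha$ as real combinations of $E_{\pm\alpha}$ via the normalization in \S\ref{roots}. Then $d\sigma(A,e_\alpha,f_\alpha)$ is a multiple of $d\sigma(A,E_\alpha,E_{-\alpha})=\sigma_\ag(A,A_\alpha)$. Using $A_\alpha\in\im\ag$, i.e.\ $\im A_\alpha=(\im H_\alpha)_\ag$ real, this becomes $\sigma_\ag(A,(\im H_\alpha)_\ag)$.

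The only genuine obstacle is sign and normalization bookkeeping. This covers three points: the convention in the invariant-form formula, the Chevalley cyclic identity for $N$, and the constants relating $e_\alpha,f_\alpha$ to $E_{\pm\alpha}$. I would pin these down by checking consistency against the already stated identity $\sigma_Z=-d\theta_Z$ and Lemma \ref{phiR}.
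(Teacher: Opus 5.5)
Your proposal is correct and is essentially the paper's proof: a direct evaluation of $d\sigma(X,Y,Z)=-\sigma([X,Y]_\pg,Z)+\sigma([X,Z]_\pg,Y)-\sigma([Y,Z]_\pg,X)$ on triples built from $A\in\ag^c$ and the $E_\alpha$, followed by the cyclic identity $N_{\alpha,\beta}=N_{\beta,\gamma}=N_{\gamma,\alpha}$ and then the expression of $e_\alpha,f_\alpha$ in terms of $E_{\pm\alpha}$, which produces the factor $\im$. The formula you wrote is exactly the convention of \S\ref{dRapp-sec}, so no sign ambiguity remains. The one difference is that you discard most triples by a weight argument; this is legitimate because $\sigma$ as in \eqref{s} is $\Ad(H_f)$-invariant and $H_f$ normalizes $K$. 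The paper instead computes $d\sigma(A,E_\alpha,E_\beta)$ and $d\sigma(E_\alpha,E_\beta,E_\gamma)$ for arbitrary roots and observes directly that they vanish unless the roots sum to zero.
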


\begin{proof}
For any $\alpha,\beta,\gamma\in\Delta_\qg$, 
\begin{align*}
d\sigma(E_\alpha,E_\beta,E_\gamma) &= 
-N_{\alpha,\beta}\sigma_\qg(E_{\alpha+\beta},E_\gamma) 
+ N_{\alpha,\gamma}\sigma_\qg(E_{\alpha+\gamma},E_\beta) 
- N_{\beta,\gamma}\sigma_\qg(E_{\beta+\gamma},E_\alpha) \\ 
&= N_{\alpha,\beta}y_\gamma 
- N_{\alpha,\gamma}y_\beta 
+ N_{\beta,\gamma}y_\alpha 
= N_{\alpha,\beta}(y_\alpha+y_\beta+y_\gamma), 
\end{align*}
if $\alpha+\beta+\gamma=0$ and zero otherwise.  

For any $A\in\ag^c$ and $\alpha\in\Delta_\qg$, 
\begin{align*}
d\sigma(A,E_\alpha,E_\beta) &= 
-\alpha(A)\sigma_\qg(E_\alpha,E_\beta) 
+ \beta(A)\sigma_\qg(E_\beta,E_\alpha) 
- \sigma([E_\alpha,E_\beta]_\pg,A) \\ 
&= \left\{\begin{array}{l} 
0, \qquad \alpha+\beta\ne 0, \\ 
\sigma_\ag(A,A_\alpha), \qquad \alpha+\beta= 0,
\end{array}\right.
\end{align*}
and for any $A\in\ag$ and $\alpha\in\Delta_\qg^+$, 
\begin{align*}
d\sigma(A,e_\alpha,f_\alpha) &= 
\unm d\sigma(A,E_\alpha-E_{-\alpha},\im(E_\alpha+E_{-\alpha})) 
=\im d\sigma(A,E_\alpha,E_{-\alpha}) 
= \sigma_\ag(A,(\im H_\alpha)_\ag), 
\end{align*}
concluding the proof.
\end{proof}

In particular, $d\sigma=0$ if and only if $\sigma_\ag=0$ and $d_F\sigma_\qg=0$, the former being equivalent to $y_\alpha+y_\beta+y_\gamma=0$ whenever $\alpha+\beta+\gamma=0$.  Note that, in that case, $(F,J_\qg,\sigma_\qg)$ is K\"ahler if $\sigma_\qg$ is positive definite, i.e., $\im y_\alpha>0$ for all $\alpha\in\Delta_\qg^+$ (see \S\ref{K}).  

For any $Z\in\zg(\hg_f)$, the closed $2$-form $\sigma_Z=Q([\cdot,\cdot],Z)$ has $\sigma_\ag=0$ and $\sigma_\qg=(\alpha(Z))_{\alpha\in\Delta_\qg}$, i.e., $y_\alpha=\alpha(Z)$.  If $\im Z$ belongs to some chamber of \eqref{ch}, then $g_Z=(\im\alpha(Z))_{\alpha\in\Delta_\qg}$ is a K\"ahler metric  on the flag $F=G_f/H_f$ (see \S\ref{K}) with Ricci form $\rho=\sigma_{\im\rho_{J_\qg}}$.

% 14/8/2026 

\section{Complex cohomologies}\label{cohom-sec}

We study in this section the Dolbeault, Bott-Chern and Aeppli cohomologies of a complex C-space $M=G/K$.  We refer to \cite{Ang,AngTrd} for more detailed treatments on these cohomologies.  The de Rham cohomology of the manifold $M$ was studied in \S\ref{dR-sec}.  Since $G$ is compact, it follows from \cite[Theorem 3.3]{Klm} that all these cohomologies can be computed within $G$-invariant forms (see also \cite[Theorem II]{Btt}, \cite{RmnSnk}, \cite{Grf}).  Recall from \S\ref{igs} that $(\Omega^kM)^G\equiv(\Lambda^k\pg^*)^K$.  

If $\Lambda^k:=(\Lambda^k(\pg^c)^*)^K\equiv(\Omega^kM)^G\otimes\CC$, then the decomposition 
$$
\pg^c=\pg^{1,0}\oplus\pg^{0,1}, 
$$ 
where $\pg^{1,0}$ and $\pg^{0,1}$ are the $\im$-eigenspace and the $-\im$-eigenspace of $J$, respectively, determines the spaces of $(p,q)$-forms on $\pg^c$, 
$$
\Lambda^{p,q}(\pg^c)^*:=\Lambda^p(\pg^{1,0})^*\otimes\Lambda^q(\pg^{0,1})^*,\qquad 1\leq p,q\leq n,
$$
which satisfy,   
$$
\Lambda^k = \bigoplus_{p+q=k}\Lambda^{p,q}, \qquad \mbox{where}\quad \Lambda^{p,q}:=(\Lambda^{p,q}(\pg^c)^*)^K.  
$$
Note that $\dim{\pg^c}=2n$ and $\dim{\pg^{1,0}}=\dim{\pg^{0,1}}=n$.  In this section, $\dim$ always means complex dimension if the vector space is complex.  In this way, $\Lambda^{p,q}$ is identified with the space $(\Omega^{p,q}M)^G$ of all $G$-invariant $(p,q)$-forms on $M$.  

The differential of forms is given by
$$
d:\Lambda^{p,q}\rightarrow \Lambda^{p+1,q}\oplus\Lambda^{p,q+1}, \quad d=\partial+\overline{\partial}, \quad
\partial:\Lambda^{p,q}\rightarrow \Lambda^{p+1,q}, \quad \overline{\partial}:\Lambda^{p,q}\rightarrow\Lambda^{p,q+1}, 
$$
and since $d^2=0$, one has that $\partial^2=0$, $\overline{\partial}^2=0$ and $\partial\overline{\partial}+\overline{\partial}\partial=0$.  This defines the double complex $\left(\Lambda^{\cdot,\cdot},\partial,\overline{\partial}\right)$ and the following types of cohomologies for a complex manifold $M$:
\begin{enumerate}[{\small $\bullet$}] 
\item {\it Dolbeault}: $\quad H_{\overline{\partial}}^{p,q}(M) :=\Ker\overline{\partial}/\Ima \overline{\partial}$, 
$\quad\Lambda^{p,q-1}\xlongrightarrow{\overline{\partial}}\Lambda^{p,q} \xlongrightarrow{\overline{\partial}}\Lambda^{p,q+1}$.  The dimensions 
$$
h^{p,q}:=\dim{H_{\overline{\partial}}^{p,q}(M)}
$$ 
are called {\it Hodge numbers}.   

\item {\it Conjugate Dolbeault}: $\quad H_{\partial}^{p,q}(M) :=\Ker\partial/\Ima \partial$, $\quad\Lambda^{p-1,q}\xlongrightarrow{\partial}\Lambda^{p,q} \xlongrightarrow{\partial}\Lambda^{p+1,q}$.  Note that $\dim{H_{\partial}^{p,q}}=h^{q,p}$ for all $p,q$.  

\item {\it Bott-Chern}: $\quad H_{BC}^{p,q}(M):=\Ker d/\Ima \partial\overline{\partial}$, $\quad\Lambda^{p-1,q-1}\xlongrightarrow{\partial\overline{\partial}}\Lambda^{p,q} \xlongrightarrow{d}\Lambda^{p+q+1}$.  The {\it Bott-Chern numbers} are defined by $h^{p,q}_{BC}:=\dim{H_{BC}^{p,q}(M)}$, $p,q\geq 1$ and since conjugation defines an isomorphism, $h^{p,q}_{BC}=h^{q,p}_{BC}$ for all $p,q$. 

\item {\it Aeppli}: $\quad H_{A}^{p,q}(M):=\Ker\partial\overline{\partial}/(\Ima \partial+\Ima \overline{\partial})$, 
$$
\left.\begin{array}{l}
\Lambda^{p-1,q}\xlongrightarrow{\partial} \\ 
\Lambda^{p,q-1}\xlongrightarrow{\overline{\partial}}
\end{array}\right\}\Lambda^{p,q}  \xlongrightarrow{\partial\overline{\partial}}\Lambda^{p+1,q+1}, \qquad p,q\geq 1.
$$   
The dimensions $h^{p,q}_{A}:=\dim{H_{A}^{p,q}(M)}$ are called {\it Aeppli numbers}. 
\end{enumerate}
A compact complex manifold $M$ is said to satisfy the $\partial\overline{\partial}$-{\it Lemma}
when 
$$
\Ker\partial\cap\Ker\overline{\partial}\cap\Ima d = \Ima\partial\overline{\partial},
$$
that is, every $d$-closed $(p,q)$-form is $d$-exact if and only if it is $\partial\overline{\partial}$-exact, or equivalently, the natural map $H_{BC}^{p,q}(M)\rightarrow H_{A}^{p,q}(M)$ induced by the identity is injective.  

The following interplay between Hodge and Betti numbers is known as the {\it Fr\"olicher inequality}: 
\begin{equation}\label{Fi}
\sum_{p+q=k}h^{p,q}\geq b_k = \dim{H^k(M,\RR)} = \dim{H^k(M,\CC)},   
\end{equation}
where equality holds if $M$ satisfies the $\partial\overline{\partial}$-Lemma.  A finer inequality (see \cite{AngTms} or \cite[Theorem 3]{AngTrd}) is given by 
\begin{equation}\label{Fi2}
\sum_{p+q=k}h^{p,q}_{BC}+h^{p,q}_{A}\geq 2b_k,   \qquad k\geq 2,  
\end{equation}
where equality holds for all $k$ if and only if  
\begin{equation}\label{Fi3}
h^{p,q}=h^{p,q}_{BC}=h^{p,q}_{A},   \qquad \forall p,q\in\NN_0,  
\end{equation}
if and only if $M$ satisfies the $\partial\overline{\partial}$-Lemma.   

For the Hodge numbers of a product $M=M_1\times M_2$ of compact complex manifolds we have {\it K\"unneth formula}: 
\begin{equation}\label{Kunn}
h^{p,q}(M)=\sum_{r+t=p,\, s+u=q} h^{r,s}(M_1) h^{t,u}(M_2).
\end{equation}
This formula does not hold in general for Bott-Chern or Aeppli numbers, although there are K\"unneth type formulas known in these cases (see \cite{Stl}).  

Back on a the case of a complex C-space, we consider a C-space as in \eqref{gprime}, 
$$
M^{2n}=G/K=G_f/K\times T^z, 
$$ 
and the Tits fibration over the flag $F=G_f/H_f$, which is holomorphic for any $G$-invariant complex structure, that is, for any $J=(J_\ag,J_\qg)$ as in \eqref{J}.  We will use the Tanr\'e model \cite[Proposition 8]{Tnr} for the Dolbeault cohomology of holomorphic fibrations, which also works for Bott-Chern and Aeppli cohomologies (see \cite[Theorem C]{Stl} and \cite[Section 3]{Brb}): if $\pg=\ag\oplus\qg$ and $\ggo_f=\hg_f\oplus\qg$ are respectively the reductive decompositions of the total space $M=G/K$ and the base $F=G_f/H_f$, then there is an isomorphism of double complexes,    
$$
\left(\Lambda^{\cdot,\cdot},\partial,\overline{\partial}\right) \simeq 
\left(\Lambda^{\cdot,\cdot}_T=\Lambda^{\cdot,\cdot}_\ag\otimes\Lambda^{\cdot,\cdot}_\qg,\partial,\overline{\partial}\right), 
$$
where 
$$
\Lambda^{p,q}_\ag:=\Lambda^p (\ag^{1,0})^*\otimes\Lambda^q (\ag^{0,1})^*, \qquad 
\Lambda^{\cdot,\cdot}_\qg \simeq \left((\Omega^{\cdot,\cdot} F)^{G_f},\partial_\qg,\overline{\partial_\qg}\right), \quad d_F=\partial_\qg+\overline{\partial_\qg},
$$
$\ag^c=\ag^{1,0}\oplus\ag^{0,1}$ is defined by $J_\ag$ and the corresponding operators, also denoted by $\partial,\overline{\partial}$, satisfy: 
$$
\left\{\begin{array}{l}
\partial|_{\Lambda^{1,0}_\ag}=0, \\ 
\partial|_{\Lambda^{0,1}_\ag}=d:\Lambda^{0,1}_\ag\rightarrow\Lambda^{1,1}_\qg, \\ 
\partial|_{\Lambda^{p,q}_\qg}=\partial_\qg:\Lambda^{p,q}_\qg\rightarrow\Lambda^{p+1,q}_\qg, 
\end{array}\right. \qquad 
\left\{\begin{array}{l}
\overline{\partial}|_{\Lambda^{1,0}_\ag}=d:\Lambda^{1,0}_\ag\rightarrow\Lambda^{1,1}_\qg, \\  
\overline{\partial}|_{\Lambda^{0,1}_\ag}=0, \\ 
\overline{\partial}|_{\Lambda^{p,q}_\qg}=\overline{\partial}_\qg:\Lambda^{p,q}_\qg\rightarrow\Lambda^{p,q+1}_\qg.
\end{array}\right.
$$
We have that 
$$
\Lambda_\ag^{1,0}=(\ag^{1,0})^*=\left\{\theta_A: A\in\ag^{1,0}\right\}, \qquad \Lambda_\ag^{0,1}=(\ag^{0,1})^*=\left\{\theta_A: A\in\ag^{0,1}\right\}, 
$$
where 
$$
\theta_A=Q(\cdot,A)\in\Lambda^1,
$$ 
is the $\CC$-linear version of the form introduced in \S\ref{dR-sec}.  On the other hand, since any $2$-form $\sigma_\qg = (y_\alpha)_{\alpha\in\Delta_\qg}$ as in \eqref{s} satisfies that $\sigma_\qg\in\Lambda_\qg^{1,1}$, we obtain that $d\Lambda^1\subset\Lambda^{1,1}_\qg$.  More precisely, 
$$
d\theta_A=-\sigma_A\in\Lambda_\qg^{1,1}, \qquad \forall A\in\ag^c=\zg(\ggo)^c\oplus\ag_f^c, 
$$ 
where
$$
\sigma_A:=Q([\cdot,\cdot],A) = (y_\alpha)_{\alpha\in\Delta_\qg}, \qquad y_\alpha:=Q(A_\alpha,A), \quad\forall\alpha\in\Delta_\qg. 
$$  
Note that $\theta_A=0$ if and only if $A=0$, and $\sigma_A=0$ if and only if $A\in\zg(\ggo)^c$.  
  
We also have that 
$$
\dim{\Lambda^{p,q}_\ag}=\tbinom{d}{p}\tbinom{d}{q}, \qquad \mbox{where} \quad \dim{\ag}=2d,  
$$
and since 
$$
\Lambda_\qg^1=0, \qquad \Lambda_\qg^2=\Lambda_\qg^{1,1} = \left\{\sigma_\qg = (y_\alpha)_{\alpha\in\Delta_\qg}\right\},  
$$
$\Lambda^{p,0}_\qg=0$ and $\Lambda^{0,p}_\qg=0$ for $p=1,2$, and so  
$$
\Lambda_T^{1,0}= \Lambda^{1,0}_\ag, \qquad \Lambda_T^{0,1}=\Lambda^{0,1}_\ag, 
$$
$$
\Lambda_T^{2,0}= \Lambda^{2,0}_\ag, \qquad \Lambda_T^{0,2}=\Lambda^{0,2}_\ag,  \qquad \Lambda_T^{1,1}=\Lambda^{1,1}_\ag\oplus\Lambda^{1,1}_\qg,
$$ 
$$
\Lambda_T^{2,1}= \Lambda^{2,1}_\ag\oplus (\Lambda^{1,0}_\ag\otimes\Lambda^{1,1}_\qg)\oplus \Lambda^{2,1}_\qg, 
\qquad 
\Lambda_T^{1,2}= \Lambda^{1,2}_\ag\oplus (\Lambda^{0,1}_\ag\otimes\Lambda^{1,1}_\qg)\oplus \Lambda^{1,2}_\qg.
$$ 

\begin{remark}
If $\ag\subsetneq\pg_0$ (see Proposition \ref{isotdec}), then 
$$
\Lambda^{1,0}_T\oplus\Lambda^{0,1}_T= \Lambda^{1,0}_\ag\oplus\Lambda^{0,1}_\ag \subsetneq\Lambda^1 =(\pg_0^c)^*,
$$
which represents one of the great advantages of using the Tanr\'e model.  
\end{remark}

Given any subspace $\bg\subset\ag$, we denote by $\bg^{1,0}$ the projection of $\bg$ on $\ag^{1,0}$, i.e., 
$$
\bg^{1,0}:=\{ A^{1,0}=\unm(A-\im J_\ag A):A\in\bg\}, \qquad \bg^{0,1}:=\{ A^{0,1}=\unm(A+\im J_\ag A):A\in\bg\}.  
$$ 
Note that $\overline{\bg^{0,1}}=\bg^{1,0}$ and $\bg^c\subset\bg^{1,0}\oplus\bg^{0,1}$, where equality holds if and only if $J_\ag\bg=\bg$, if and only if $\dim{\bg}=2\dim{\bg^{1,0}}$.  We also have that  
\begin{equation}\label{dimb}
\dim{\bg^c\cap\ag^{1,0}}\leq \dim{\bg^{1,0}}\leq \dim{\bg}\leq 2\dim{\bg^{1,0}}.  
\end{equation}
and analogously for $\bg^{0,1}$.  

Recall the $Q$-orthogonal decompositions 
$$
\pg=\ag_f\oplus\zg(\ggo)\oplus\qg, \qquad \ag=\ag_f\oplus\zg(\ggo), \qquad \zg(\hg_f)=\zg(\kg)\oplus\ag_f. 
$$
The following observations will be very useful in the computation of many low cohomology groups:    
\begin{enumerate}[{\rm (a)}] 
\item According to \S\ref{dR1}, $\Ker \overline{\partial}|_{\Lambda^{1,0}_\ag} = \{\theta_A:A\in\zg(\ggo)^c\cap\ag^{1,0}\}$, so 
$$
\dim{\zg(\ggo)}-d\leq \dim{\zg(\ggo)^c\cap\ag^{1,0}} =\dim{\Ker \overline{\partial}|_{\Lambda^{1,0}_\ag}}\leq d,
$$ 
depending on $J_\ag$, where $\dim{\ag}=2d$.  Indeed,  
$$
\dim{\zg(\ggo)^c} + \dim{\ag^{1,0}} - \dim{\zg(\ggo)^c\cap\ag^{1,0}} \leq \dim{\ag^c},  
$$ 
where equality holds if and only if $\zg(\ggo)^c +\ag^{1,0}=\ag^c$.  All the same holds for $\Ker\partial|_{\Lambda^{0,1}_\ag}$ and $\ag^{0,1}$.  

\item 
Since for any $\psi\in\Lambda^{p,q}_\ag$, 
$$
\overline{\partial}\psi(E_\alpha,E_{-\alpha},A_1,\dots,A_{p-1},B_1,\dots,B_q) = -\psi(A_\alpha^{1,0},A_1,\dots,A_{p-1},B_1,\dots,B_q), 
$$
for all $A_i\in\ag^{1,0}$, $B_j\in\ag^{0,1}$, $\alpha\in\Delta_\qg$, we obtain that $\overline{\partial}\psi=0$ if and only if $\iota_{\ag_f^{1,0}}\psi=0$ by \eqref{a1g} if $p\geq 1$, and analogously for $\partial\psi$ and $\ag_f^{0,1}$ if $q\geq 1$.  Thus  
$$
\dim{\Ker\overline{\partial}|_{\Lambda^{p,q}_\ag}} = \tbinom{d-\dim{\ag_f^{1,0}}}{p}\tbinom{d}{q}, \quad \forall p\geq 1, \qquad 
\dim{\Ker\partial|_{\Lambda^{p,q}_\ag}} = \tbinom{d}{p}\tbinom{d-\dim{\ag_f^{0,1}}}{q}, \quad\forall q\geq 1, 
$$ 
where $d-\dim{\ag_f^{1,0}} \geq \dim{\zg(\ggo)}-d$ by \eqref{dimb}.  Note that $\dim{\ag_f^{1,0}}=d$ if and only if $G$ is semisimple (i.e., $\ag_f=\ag$).  On the other hand, 
$$
\overline{\partial}|_{\Lambda^{0,p}_\ag} = 0, \qquad 
 \partial|_{\Lambda^{p,0}_\ag} = 0, \qquad\forall p\geq 1.  
$$  
\item Since  
$$
\Lambda^{1,1}_\qg = \{\sigma_\qg = (y_\alpha)_{\alpha\in\Delta_\qg}:y_\alpha\in\im\RR\}, \qquad \overline{\partial}\sigma_\qg=\overline{\partial_\qg}\sigma_\qg =(d\sigma_\qg)^{1,2}\in\Lambda^{1,2}_\qg, 
$$ 
we obtain that $\dim{\Lambda^{1,1}_\qg}=r$, where $r$ is the number of $\Ad(H_f)$-irreducible summands of $\qg$ (see \S\ref{isot} and \eqref{s}), and $\overline{\partial}\sigma_\qg=0$ if and only if $d\sigma_\qg=0$, if and only if $y_{\alpha+\beta}= y_\alpha+y_\beta$ for all $\alpha,\beta,\alpha+\beta\in\Delta_\qg^+$ by Lemma \ref{do} (analogously for $\partial$).  Using induction in the height of the roots and setting $y_\alpha=0$ for all $\alpha\in\Delta_{\hg_f}$, it can be shown that this is equivalent to 
$$
y_\gamma=\sum\limits_{\alpha\in\Pi_\qg}n_\alpha y_\alpha, \qquad \mbox{for any}\quad  
\gamma=\sum\limits_{\alpha\in\Pi_\qg}n_\alpha\alpha + \sum\limits_{\beta\in\Pi_{\hg_f}}k_\beta\beta\in\Delta_\qg^+, \quad n_\alpha,k_\beta\in\NN_0. 
$$
Thus 
$$
\Ker\overline{\partial}|_{\Lambda^{1,1}_\qg} = \Ker\partial|_{\Lambda^{1,1}_\qg} =\Ker d_F|_{\Lambda^{1,1}_\qg} =\Ker d|_{\Lambda^{1,1}_\qg},
$$
and so
$$
\dim{\Ker\overline{\partial}|_{\Lambda^{1,1}_\qg}} =\dim{\zg(\hg_f)} =|\Pi_\qg| = b_2(F).  
$$
Note that $|\Pi_\qg|\leq r\leq|\Delta_\qg^+|$. 

\item The Hodge numbers of the flag $F=G_f/H_f$ satisfy that $h^{p,q}(F)=0$ for all $p\ne q$, and since $(F,J_\qg)$ is K\"ahler, $h^{p,p}(F)=b_{2p}(F)$ for all $p$  (see \cite[14.10]{BrlHrz}).    
\end{enumerate}

\subsection{Dolbeault}\label{D-sec} 
Using the above observations (a)-(d), it is straightforward to check that the following formulas hold:
\begin{equation}\label{hn}
\begin{array}{c}
h^{1,0}=\dim{\zg(\ggo)^c\cap\ag^{1,0}}, \qquad h^{0,1}=d, \\ 
h^{1,1}= (d-\dim{\ag_f^{1,0}})d+ \dim{\zg(\hg_f)} -d +\dim{\zg(\ggo)^c\cap\ag^{1,0}}, \\
h^{2,0}=\tbinom{d-\dim{\ag_f^{1,0}}}{2}, \qquad h^{0,2}=\tbinom{d}{2},   
\end{array}
\end{equation}
where $\dim{\ag}=2d$.  In particular, if $G$ is semisimple (i.e., $\zg(\ggo)=0$, or $\ag=\ag_f$, or $b_1=0$), then 
$$
h^{1,0}=0, \quad h^{0,1}=d, \quad
h^{1,1}= \dim{\zg(\hg_f)}-d =\dim{\zg(\kg)}+d, \quad h^{2,0}=0, \quad h^{0,2}=\tbinom{d}{2}.    
$$
In accordance with \eqref{Fi}, it follows from (a) that 
$$
h^{1,0}+h^{0,1} = \dim{\zg(\ggo)^c\cap\ag^{1,0}} +d\geq \dim{\zg(\ggo)} =b_1, 
$$
where equality holds if and only if $\zg(\ggo)^c+\ag^{1,0}=\ag^c$.  

\begin{proposition}\label{eqs}
For any complex C-space $(M=G/K,J)$, the following conditions are equivalent:
\begin{enumerate}[{\rm (i)}] 
\item $h^{1,0}+h^{0,1}=b_1$.  

\item $\dim{\zg(\ggo)^c\cap\ag^{1,0}} = \dim{\zg(\ggo)} -d$.

\item $\zg(\ggo)+J_\ag\zg(\ggo)=\ag$.
\end{enumerate}
\end{proposition}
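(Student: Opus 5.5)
The plan is to prove (i)$\Leftrightarrow$(ii) from the Hodge number formulas \eqref{hn}, and (ii)$\Leftrightarrow$(iii) by linear algebra on $\ag^c$ via observation (a). Throughout, $\dim{\ag}=2d$, $\zg(\ggo)\subset\ag$, and $b_1=\dim{\zg(\ggo)}$ by \S\ref{dR1}.

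For (i)$\Leftrightarrow$(ii), \eqref{hn} gives $h^{1,0}=\dim{\zg(\ggo)^c\cap\ag^{1,0}}$ and $h^{0,1}=d$. Hence
$$
h^{1,0}+h^{0,1}=b_1 \quad\Longleftrightarrow\quad \dim{\zg(\ggo)^c\cap\ag^{1,0}}=\dim{\zg(\ggo)}-d,
$$
which is exactly (ii).

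For (ii)$\Leftrightarrow$(iii), the dimension formula for a sum of subspaces gives
$$
\dim{\zg(\ggo)^c\cap\ag^{1,0}}=\dim{\zg(\ggo)}+d-\dim{(\zg(\ggo)^c+\ag^{1,0})}.
$$
So (ii) holds if and only if $\dim{(\zg(\ggo)^c+\ag^{1,0})}=2d$, that is, $\zg(\ggo)^c+\ag^{1,0}=\ag^c$. This is the equality case already noted in (a). It remains to show that this is equivalent to $\zg(\ggo)+J_\ag\zg(\ggo)=\ag$.

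Quotient by $\ag^{1,0}$. The projection $\ag^c\to\ag^c/\ag^{1,0}\simeq\ag^{0,1}$ is $A\mapsto A^{0,1}=\unm(A+\im J_\ag A)$. So $\zg(\ggo)^c+\ag^{1,0}=\ag^c$ if and only if $\zg(\ggo)^{0,1}=\ag^{0,1}$, where $\zg(\ggo)^{0,1}$ is the complex span of the vectors $Z^{0,1}$, $Z\in\zg(\ggo)$.

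Identify $(\ag,J_\ag)$ with $\ag^{0,1}$ as complex vector spaces via $A\mapsto A^{0,1}$. This map is a real isomorphism and is $\CC$-linear up to conjugation, since $(J_\ag A)^{0,1}=-\im A^{0,1}$. Under it, the complex span of $\zg(\ggo)^{0,1}$ corresponds to the $J_\ag$-complex span of $\zg(\ggo)$ in $\ag$, which is $\zg(\ggo)+J_\ag\zg(\ggo)$ because $J_\ag^2=-I$. Therefore $\zg(\ggo)^{0,1}=\ag^{0,1}$ if and only if $\zg(\ggo)+J_\ag\zg(\ggo)=\ag$, which is (iii).

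The only step needing care is this last identification, in particular the sign convention for the conjugate-linearity. It is routine, so no real obstacle is expected.
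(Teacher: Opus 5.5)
Your proof is correct. Its skeleton matches the paper's: (i)$\Leftrightarrow$(ii) is read off from $h^{1,0}=\dim{\zg(\ggo)^c\cap\ag^{1,0}}$, $h^{0,1}=d$ and $b_1=\dim{\zg(\ggo)}$, and (ii) is rewritten as $\zg(\ggo)^c+\ag^{1,0}=\ag^c$ via the dimension formula.

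You differ in the final linear-algebra step. The paper treats the two directions separately:
\begin{itemize}
\item For (ii)$\Rightarrow$(iii), conjugation turns $\zg(\ggo)^c+\ag^{1,0}=\ag^c$ into $\zg(\ggo)^c+\ag^{0,1}=\ag^c$. The $J_\ag$-invariant subspace $\zg(\ggo)^c+J_\ag\zg(\ggo)^c$ is then seen to contain both eigenspaces.
\item For (iii)$\Rightarrow$(ii), it uses $\zg(\ggo)^c\cap\ag^{1,0}=(\zg(\ggo)\cap J_\ag\zg(\ggo))^{1,0}$ and counts $\dim{(\zg(\ggo)\cap J_\ag\zg(\ggo))}=2\dim{\zg(\ggo)}-2d$.
\end{itemize}
Your conjugate-linear isomorphism $A\mapsto A^{0,1}$ from $(\ag,J_\ag)$ onto $\ag^{0,1}$ carries $\zg(\ggo)+J_\ag\zg(\ggo)$ onto the complex span of $\zg(\ggo)^{0,1}$. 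This gives both directions at once, and it makes explicit the step the paper leaves brief, namely that a $J_\ag$-invariant subspace surjecting onto one eigenspace must contain it. So yours is the tidier argument.

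The paper's route has a by-product that yours does not show. Since $\zg(\ggo)\cap J_\ag\zg(\ggo)$ is the largest $J_\ag$-invariant subspace of $\zg(\ggo)$, the identity above gives $h^{1,0}=\unm\dim{(\zg(\ggo)\cap J_\ag\zg(\ggo))}$ for every $J_\ag$. That is an intrinsic description of $h^{1,0}$, valid whether or not (iii) holds.
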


\begin{remark}
In particular, a non-K\"ahler complex C-space (i.e., $\ag_f\ne 0$, or $\zg(\ggo)\ne\ag$) never satisfies the $\partial\overline{\partial}$-Lemma (see \eqref{Fi}; this was proved in \cite[Theorem 1]{Pds} in the case when $G$ is semisimple).  Indeed, in addition to condition (ii) we would have that $\dim{\zg(\ggo)^c\cap\ag^{1,0}} = d$ by $h^{1,0}=h^{0,1}=h^{1,0}_{BC}$ (see \eqref{Fi3}), so $\dim{\zg(\ggo)}=2d$, i.e. $\zg(\ggo)=\ag$.
\end{remark}

\begin{proof}
We only need to prove the equivalence between parts (i) and (iii).  If $\bg:=\zg(\ggo)$ and $\bg^c+\ag^{1,0}=\ag^c$, then $\bg^c+\ag^{0,1}=\ag^c$ since $\overline{\bg^c}=\bg^c$, which implies that $\bg^c+J_\ag\bg^c=\ag^c$ as it contains both $\ag^{1,0}$ and $\ag^{0,1}$.  Conversely, if $\bg^c+J_\ag\bg^c=\ag^c$, then 
$$
\dim{\bg^c\cap\ag^{1,0}} = \dim{(\bg\cap J_\ag\bg)^{1,0}} = \unm\dim{\bg^c\cap J_\ag\bg^c} 
=\dim{\bg^c}-\dim{\ag^{1,0}},
$$
and hence $\bg^c+\ag^{1,0}=\ag^c$, concluding the proof.    
\end{proof}

\begin{proposition}\label{eqs2}
For any complex C-space $(M=G/K,J)$, the following conditions are equivalent:
\begin{enumerate}[{\rm (i)}] 
\item $h^{2,0}+h^{1,1}+h^{0,2} =b_2$.  

\item $\dim{\zg(\ggo)^c\cap\ag^{1,0}} = \dim{\zg(\ggo)} -d$ (i.e., $h^{1,0}+h^{0,1}=b_1$ by Proposition \ref{eqs}) and 
$$
\dim{\ag_f^{1,0}} = 2d-\dim{\zg(\ggo)}. 
$$  
\end{enumerate}
\end{proposition}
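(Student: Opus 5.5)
The plan is to compare the explicit Hodge numbers in \eqref{hn} with the de Rham formula $b_2=\dim{\zg(\kg)}+\binom{\dim{\zg(\ggo)}}{2}$ from \S\ref{dR2}, and to reduce the equality to two independent inequalities, each of which can be an equality or not separately. Write $z:=\dim{\zg(\ggo)}$, $e:=\dim{\zg(\ggo)^c\cap\ag^{1,0}}$ and $f:=\dim{\ag_f^{1,0}}$. Using $\dim{\zg(\hg_f)}=\dim{\zg(\kg)}+\dim{\ag_f}=\dim{\zg(\kg)}+2d-z$, the sum $h^{2,0}+h^{1,1}+h^{0,2}$ becomes
$$
\tbinom{d-f}{2}+(d-f)d+\dim{\zg(\kg)}+d-z+e+\tbinom{d}{2}.
$$
Subtracting $b_2$ leaves a quantity $D$ depending only on $d,z,e,f$, and the statement is that $D=0$ if and only if $e=z-d$ and $f=2d-z$.

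Two a priori bounds are available. Observation (a) gives $e\geq z-d$. From \eqref{dimb} applied to $\bg=\ag_f$ we get $f\geq \unm\dim{\ag_f}=d-\unm z$; the sharper bound $d-f\leq z-d$ seems to be what is needed. Its meaning is that $\ag^{1,0}/\ag_f^{1,0}$ is the image of $\zg(\ggo)^c$ in $\ag^c/(\ag_f^c+\ag^{0,1})$. I would verify this by writing $\ag^c=\ag_f^c\oplus\zg(\ggo)^c$ and projecting onto $\ag^{1,0}$: the projection of $\ag^c$ is all of $\ag^{1,0}$, so $\ag_f^{1,0}+\zg(\ggo)^{1,0}=\ag^{1,0}$. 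Hence $d\leq f+\dim{\zg(\ggo)^{1,0}}$. Moreover, $\zg(\ggo)^{1,0}$ has dimension $z-\dim{\zg(\ggo)^c\cap\ag^{0,1}}=z-e$, using conjugation symmetry for the last equality. This yields the refined bound $d-f\leq z-e$.

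Set $u:=d-f$ and $w:=z-e$, so that $0\leq u\leq w\leq d$. I then expect $D$ to take the form
$$
\tbinom{u}{2}+ud-\tbinom{z}{2}+\tbinom{d}{2}+d-w.
$$
The condition $e=z-d$ means $w=d$, and the condition $f=2d-z$ means $u=z-d$. The remaining work is to express $D$ as a sum of nonnegative terms that vanish exactly when both of these hold. Eliminating $z=e+w$ is awkward, so I would instead use the inequality $z\leq \dim{\ag}=2d$ together with the two bounds above. I would try to show that $D\geq0$ is a consequence of the Fr\"olicher inequality \eqref{Fi}, so that only the equality case needs analysis. In that analysis I would first use $w\leq d$ and $u\leq w$ to bound $D$ from below, for instance as a combination of $(d-w)$ and of $(u-(z-d))$ with positive coefficients. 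This algebraic step is where I expect the main obstacle, because the terms $\binom{z}{2}$ and $\binom{u}{2}$ mix both parameters. As a fallback, I would first fix $w=d$, i.e.\ condition (ii) of Proposition \ref{eqs}, check that $D$ then reduces to a function of $u$ and $z$ that vanishes only at $u=z-d$, and treat the case $w<d$ separately by showing that $D>0$ there.

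The converse direction is direct substitution: setting $e=z-d$ and $f=2d-z$ in the expression for $D$ gives $0$.
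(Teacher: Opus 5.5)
Your rewriting $D=\binom{u}{2}+ud+\binom{d}{2}-\binom{z}{2}+d-w$ is correct, and so is the converse direction. The direction (i)$\Rightarrow$(ii), however, has a genuine gap, and the inequality you single out as ``what is needed'' points the wrong way.

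By \eqref{dimb}, $f=\dim\ag_f^{1,0}\le\dim\ag_f=2d-z$, i.e.\ $u\ge z-d$. Equality here is exactly the second condition in (ii), so the reverse bound $d-f\le z-d$ cannot hold a priori. For instance, for $G$ semisimple and $d\ge 1$ one has $u=0>-d=z-d$.

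The bound you actually prove, $u\le w$, is correct, but it is again an upper bound on $u$. The constraints $0\le u\le w\le d$ and $z\le 2d$ do not control the sign of $D$: the values $u=0$, $w=d$, $z=2d$ satisfy them and give $D=\binom{d}{2}-\binom{2d}{2}<0$ for $d\ge 1$. Your fallback fails for the same reason. Proving $D>0$ when $w<d$ needs precisely the missing lower bound $u+d\ge z$, and the Fr\"olicher inequality only yields $D\ge 0$, not strict positivity.

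The missing algebraic step is the identity $\binom{u}{2}+ud+\binom{d}{2}=\binom{u+d}{2}$, which gives
\[
D=\Bigl[\tbinom{u+d}{2}-\tbinom{z}{2}\Bigr]+(d-w), \qquad u+d=2d-f .
\]
Both summands are nonnegative: $u+d\ge z$ follows from $f\le\dim\ag_f$, and $w\le d$ is observation (a). So $D=0$ forces two things:
\begin{enumerate}[(1)]
\item $w=d$, i.e.\ $e=z-d$;
\item $\binom{u+d}{2}=\binom{z}{2}$, hence $u+d=z$, i.e.\ $f=2d-z$.
\end{enumerate}
The only degenerate possibility in (2), namely $z=0$ and $u+d=1$, is excluded. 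Indeed, $z=0$ gives $w=0$, so $w=d$ forces $d=0$ and hence $u+d=0$.

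This is exactly the paper's argument, and your refined bound $u\le w$ plays no role in it.
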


\begin{proof}
Using \S\ref{dR2} and that
$$
\dim{\zg(\hg_f)}+\dim{\zg(\ggo)}=\dim{\zg(\kg)}+2d, 
$$ 
we obtain from \eqref{hn} that the Fr\"olicher inequality $h^{2,0}+h^{1,1}+h^{0,2} \geq b_2$ is equivalent to 
\begin{align*}
&\tbinom{d-\dim{\ag_f^{1,0}}}{2} +(d-\dim{\ag_f^{1,0}})d +\dim{\zg(\ggo)^c\cap\ag^{1,0}}+ \tbinom{d}{2} \\ 
&\qquad \geq \dim{\zg(\ggo)} - d +\tbinom{\dim{\zg(\ggo)}}{2},
\end{align*}
or equivalently, 
\begin{align*}
\tbinom{2d-\dim{\ag_f^{1,0}}}{2}+\dim{\zg(\ggo)^c\cap\ag^{1,0}}
\geq  \dim{\zg(\ggo)} -d + \tbinom{\dim{\zg(\ggo)}}{2}.   
\end{align*}
Since $2d-\dim{\ag_f^{1,0}}\geq\dim{\zg(\ggo)}$ (see (b)), equality holds if and only if condition (ii) holds, concluding the proof.  
\end{proof}

\begin{proposition}\label{h3D}
For any complex C-space $(M=G/K,J)$, 
$$
h^{1,2} = d \left(\dim{\zg(\hg_f)} + \tbinom{d-\dim{\ag_f^{1,0}}}{2} - \dim{\ag_f^{1,0}}\right),
 \qquad h^{3,0}=\tbinom{d-\dim{\ag_f^{1,0}}}{3}, \qquad h^{0,3}=\tbinom{d}{3}, 
$$ 
and
$$
h^{2,1} =D - \tbinom{d}{2} +(d+1)\tbinom{d-\dim{\ag_f^{1,0}}}{2},
$$
where $D:= \dim{\Ker \overline{\partial}|_{(\Lambda^{1,0}_\ag\otimes\Lambda^{1,1}_\qg)\oplus\Lambda^{2,1}_\qg}}$ and $\dim{\ag}=2d$.  In particular, $h^{2,1}=D - \tbinom{d}{2}$, or $h^{1,2}=d(\dim{\zg(\hg_f)}-d)$, if and only if $G$ is semisimple.  
\end{proposition}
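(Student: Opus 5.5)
Throughout, write $a:=\dim{\ag_f^{1,0}}$ and $e:=d-a$. The plan is to compute each Hodge number in degree $3$ from the Tanr\'e model $(\Lambda_T^{\cdot,\cdot},\partial,\overline{\partial})$, as $h^{p,q}=\dim\Ker\overline{\partial}|_{\Lambda_T^{p,q}}-\operatorname{rank}\overline{\partial}|_{\Lambda_T^{p,q-1}}$, using observations (a)--(d). The relevant decompositions are $\Lambda_T^{3,0}=\Lambda^{3,0}_\ag$, $\Lambda_T^{0,3}=\Lambda^{0,3}_\ag$ (since $\Lambda^1_\qg=0$ and $\Lambda_\qg^{p,0}=\Lambda_\qg^{0,p}=0$ for $p\le 2$), together with the decompositions of $\Lambda_T^{2,1}$ and $\Lambda_T^{1,2}$ listed before the statement. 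Each rank in the lower degree is recovered from the kernel dimensions already used for \eqref{hn}.

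\emph{The cases $(3,0)$ and $(0,3)$.} By (b), $\overline{\partial}|_{\Lambda^{0,3}_\ag}=0$, and nothing of type $(0,2)$ maps onto it except $\overline{\partial}\Lambda^{0,2}_\ag=0$. This gives $h^{0,3}=\binom d3$. For $(3,0)$ we have no incoming image, and by (b) the kernel of $\overline{\partial}$ on $\Lambda^{3,0}_\ag$ has dimension $\binom{e}{3}$.

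\emph{The case $(1,2)$.} On $\Lambda^{1,2}_\ag$ the kernel has dimension $e\binom d2$ by (b). Next take a general element $\theta\otimes\sigma+\tau$ of $(\Lambda^{0,1}_\ag\otimes\Lambda^{1,1}_\qg)\oplus\Lambda^{1,2}_\qg$. Since $\overline{\partial}\theta=0$ for $\theta\in\Lambda^{0,1}_\ag$, we get $\overline{\partial}(\theta\wedge\sigma)=-\theta\wedge\overline{\partial}_\qg\sigma$. This part of the kernel is therefore $\Lambda^{0,1}_\ag\otimes\Ker\overline{\partial}_\qg|_{\Lambda^{1,1}_\qg}$, of dimension $d\dim\zg(\hg_f)$ by (c), plus $\Ker\overline{\partial}_\qg|_{\Lambda^{1,2}_\qg}$. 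For the latter, (d) gives $h^{1,2}(F)=0$, so it equals $\overline{\partial}_\qg\Lambda^{1,1}_\qg$, of dimension $r-\dim\zg(\hg_f)$. Here I must check that the $\Lambda_\ag^{1,2}$-component's $\overline{\partial}$ (landing in $\Lambda^{0,2}_\ag\otimes\Lambda^{1,1}_\qg$) does not mix with the other pieces; by (b) it lands in a separate summand.

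The image $\overline{\partial}\Lambda_T^{1,1}$ comes from two sources. The first is $\overline{\partial}\Lambda^{1,1}_\ag\subset\Lambda^{0,1}_\ag\otimes\Lambda^{1,1}_\qg$, of rank $d^2-ed=ad$ by (b). Its image consists of the elements $\theta_B\wedge\sigma_{A}$ with $A\in\ag_f^{1,0}$, and these are $d_F$-exact on $F$ and hence $\overline{\partial}_\qg$-closed, so they lie in the kernel; the rank is $ad$. The second is $\overline{\partial}\Lambda^{1,1}_\qg$, of rank $r-\dim\zg(\hg_f)$. Collecting terms,
$$h^{1,2}=e\tbinom d2+d\dim\zg(\hg_f)-ad.$$
This should be compared with the claimed $d\bigl(\dim\zg(\hg_f)+\binom e2-a\bigr)$. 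The two agree only if $e\binom d2=d\binom e2$, which fails in general, so I expect the exact form of (b) on mixed degrees to need rechecking. The likely correction is that $\overline{\partial}$ on $\Lambda^{1,q}_\ag$ only contracts in the $(1,0)$ slot, giving kernel $\binom{e}{1}\binom{d}{q}$. I would recompute this carefully, since this bookkeeping is the main risk.

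\emph{The case $(2,1)$.} The kernel on $\Lambda^{2,1}_\ag$ has dimension $\binom e2 d$. The kernel on the remaining summands is $D$ by definition, provided $\overline{\partial}$ of the $\Lambda^{2,1}_\ag$-part lands in $\Lambda^{1,1}_\ag\otimes\Lambda^{1,1}_\qg$ independently of the other pieces, which I would verify by types. The image comes from $\Lambda_T^{2,0}=\Lambda^{2,0}_\ag$ and has rank $\binom d2-\binom e2$. Subtracting yields $h^{2,1}=D-\binom d2+(d+1)\binom e2$. Finally, $G$ is semisimple if and only if $a=d$ by (b), and in that case $e=0$, which gives the last assertion.
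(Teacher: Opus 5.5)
Your route is the paper's: compute each $h^{p,q}$ in the Tanr\'e model summand by summand, using that $\overline{\partial}$ sends the summands of $\Lambda_T^{2,1}$ and $\Lambda_T^{1,2}$ into distinct summands of the next degree, and read off kernels and ranks from observations (a)--(d). Your treatment of $h^{3,0}$, $h^{0,3}$ and $h^{2,1}$ is correct. One small addition: $\Lambda_T^{3,0}=\Lambda^{3,0}_\ag$ also needs $\Lambda^{3,0}_\qg=0$, and likewise $\Lambda^{0,3}_\qg=0$ for $\Lambda_T^{0,3}$. This holds because a sum of three positive complementary roots is never zero, so no nonzero such form is $\Ad(T)$-invariant.

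The gap is the end of the $(1,2)$ case, where you stop at an unresolved contradiction. Your count
\[
h^{1,2}=(d-\dim{\ag_f^{1,0}})\tbinom{d}{2}+d\dim{\zg(\hg_f)}-d\dim{\ag_f^{1,0}}
\]
is right, and (b) is fine. Your count is exactly what the ingredients listed in the paper's proof produce, since (b) gives $\dim{\Ker\overline{\partial}|_{\Lambda^{1,2}_\ag}}=\binom{e}{1}\binom{d}{2}$. Your proposed ``correction'' is just the count you already made, so it removes nothing. The displayed formula for $h^{1,2}$ is misstated: $d\binom{e}{2}$ should be $e\binom{d}{2}$, and the two agree only when $e\in\{0,d\}$. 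You should conclude this rather than leave the proof hanging. Two checks confirm it:

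\begin{enumerate}
\item The paper's own Example \ref{LG-c} records $h^{1,2}=d(\dim{\overline{\tg}}-d)+\binom{d+1}{2}(d-\dim{\overline{\tg}^{1,0}})$, which equals your expression.
\item In Example \ref{su3t3} we have $d=2$, $\dim{\ag_f^{1,0}}=1$, $\dim{\zg(\hg_f)}=1$ and $F=\CC P^2$, so $\Lambda^{1,2}_\qg=0$. The kernel of $\overline{\partial}$ on $\Lambda_T^{1,2}$ has dimension $1+2$, and $\overline{\partial}\Lambda^{1,1}_\ag$ has rank $2$, so $h^{1,2}=1$. The stated formula gives $0$. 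The value $1$ agrees with K\"unneth, since for a suitable $J_\ag$ this manifold is a Hopf threefold times an elliptic curve.
\end{enumerate}

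Your last sentence proves only one direction of the ``in particular'': $G$ semisimple gives $e=0$, hence both equalities. The converse, which you take from the remark in (b), is false.
\begin{enumerate}
\item $h^{2,1}=D-\binom{d}{2}$ holds as soon as $e\le 1$, e.g.\ Example \ref{su3t3}, where $e=1$ and $G$ is not semisimple.
\item $e=0$ only means $\ag_f+J_\ag\ag_f=\ag$. This already happens for the Hopf manifold $S^5\times S^1=(\SU(3)\times S^1)/\SU(2)$, where $d=1$, $\dim{\ag_f^{1,0}}=1$ and $G$ is not semisimple.
\end{enumerate}
So only the implication ``$G$ semisimple $\Rightarrow$ both equalities'' can be proved.
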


\begin{remark}
Since $\Ker \overline{\partial}|_{\Lambda^{1,0}_\ag}\otimes\Ker \overline{\partial}|_{\Lambda^{1,1}_\qg} \subset\Ker \overline{\partial}|_{\Lambda^{1,0}_\ag\otimes\Lambda^{1,1}_\qg}$, 
we obtain that 
\begin{equation}\label{Din}
D\geq  (\dim{\zg(\ggo)^c\cap\ag^{1,0}})\dim{\zg(\hg_f)},
\end{equation} 
by (a) and (c).  Although we know that $\Ker \overline{\partial}|_{\Lambda^{2,1}_\qg}=0$ due to the fact that  $h^{2,1}(F)=0$, the number $D$ is difficult to compute.  Indeed, both $\overline{\partial}(\Lambda^{1,0}_\ag\otimes\Lambda^{1,1}_\qg)$ and $\overline{\partial}(\Lambda^{2,1}_\qg)$ may contain nonzero subspaces of $\Lambda^{2,2}_\qg$.  Anyway, we note that $D>0$ as soon as $\dim{\zg(\ggo)}\geq d+1$ by (a) and \eqref{Din}, as in Example \ref{su3t3} below.  
\end{remark}

\begin{remark}\label{h21-rem}
According to \cite[Lemma 6.4]{FinGrnVzz}, $h^{2,1}=0$, i.e., $D=\tbinom{d}{2}$, if $G$ semisimple, $\kg\ne 0$ and $(M=G/K,J)$ is irreducible as a complex manifold.  We will give a counterexample to this assertion in \S\ref{h21-h11A} below.    
\end{remark}

\begin{proof}
Since
$$
\Lambda_T^{2,0}=\Lambda^{2,0}_\ag 
\xlongrightarrow{\overline{\partial}}
\Lambda_T^{2,1}=\Lambda^{2,1}_\ag\oplus(\Lambda^{1,0}_\ag\otimes\Lambda^{1,1}_\qg) \oplus\Lambda^{2,1}_\qg \xlongrightarrow{\overline{\partial}} \Lambda_T^{2,2}, 
$$
$\overline{\partial}\Lambda^{2,1}_\ag\subset\Lambda^{1,1}_\ag\otimes\Lambda^{1,1}_\qg$ and 
$$
\overline{\partial}\left((\Lambda^{1,0}_\ag\otimes\Lambda^{1,1}_\qg) \oplus\Lambda^{2,1}_\qg\right)\subset (\Lambda^{1,0}_\ag\otimes\Lambda^{1,2}_\qg)\oplus \Lambda^{2,2}_\qg, 
$$ 
the formula for $h^{2,1}$ follows from observation (b).  

On the other hand, we have that
$$
\Lambda_T^{1,1}=\Lambda^{1,1}_\ag \oplus\Lambda^{1,1}_\qg
\xlongrightarrow{\overline{\partial}}
\Lambda_T^{1,2}=\Lambda^{1,2}_\ag\oplus(\Lambda^{0,1}_\ag\otimes\Lambda^{1,1}_\qg) \oplus\Lambda^{1,2}_\qg \xlongrightarrow{\overline{\partial}} 
\Lambda_T^{1,3}, 
$$
and since $\overline{\partial}\Lambda^{1,2}_\ag\subset \Lambda^{0,2}_\ag\otimes\Lambda^{1,1}_\qg$, $\overline{\partial}(\Lambda^{0,1}_\ag\otimes\Lambda^{1,1}_\qg)\subset \Lambda^{0,1}_\ag\otimes\Lambda^{1,2}_\qg$ and $\overline{\partial}\Lambda^{1,2}_\qg \subset \Lambda^{1,3}_\qg$, we obtain the formula for $h^{1,2}$ from the following consequences of observations (a)-(d) and the fact that $\Ker \overline{\partial}|_{\Lambda^{0,1}_\ag\otimes\Lambda^{1,1}_\qg}  
=\Lambda^{0,1}_\ag\otimes\Ker \overline{\partial}|_{\Lambda^{1,1}_\qg}$: 
$$
\dim{\overline{\partial}\Lambda^{1,1}_\ag}=d\dim{\ag_f^{1,0}}, \quad \dim{\Ker \overline{\partial}|_{\Lambda^{0,1}_\ag\otimes\Lambda^{1,1}_\qg}}  
=d\dim{\zg(\hg_f)}, \quad
\overline{\partial}\Lambda^{1,1}_\qg=\Ker \overline{\partial}|_{\Lambda^{1,2}_\qg},
$$
concluding the proof.  
\end{proof}

\subsection{Bott-Chern}\label{BC-sec} 
Let $\Lambda_c^{p,q}$ denote the space of all closed $G$-invariant $(p,q)$-forms.  

\begin{proposition}\label{hBC}
Let $(M=G/K,J)$ be a complex C-space. 
\begin{enumerate}[{\rm (i)}] 
\item $H_{BC}^{1,1}(M)=\Lambda^{1,1}_c$ and 
$$
h^{1,1}_{BC}=\dim{\zg(\hg_f)}+ (d-\dim{\ag_f^{1,0}})^2.  
$$  
\item $h^{p,0}_{BC}=\binom{d}{p}$ for all $p\geq 1$. 

\item $H_{BC}^{2,1}(M)=\Lambda^{2,1}_c$. 

\item  If $b_3(M)=0$ (see Proposition \ref{b3dR}), then
\begin{align*}
h_{BC}^{2,1} = \tbinom{d}{2} + \tbinom{d-\dim{\ag_f^c\cap\ag^{0,1}}}{2} + d\left(2d-\dim{(\ag_f^c+\ag^{0,1})}\right).
\end{align*}
In particular, $h_{BC}^{2,1}\geq\tbinom{d}{2}$, where equality holds if and only if $G$ is semisimple.  

\item If $G$ is semisimple, then 
$$
h_{BC}^{2,1} = \tbinom{d}{2} + t - \dim{S_{\zg(\kg)\oplus\ag^{0,1}}},
$$
where $t$ is as in Proposition \ref{b3dR} and $S_{\zg(\kg)\oplus\ag^{0,1}}\subset\RR^t$ is defined as in \eqref{defS} but for a basis $\{ Z^1,\dots,Z^m\}$ of $\zg(\kg)\oplus\ag^{0,1}$.  
In particular, $h_{BC}^{2,1}\leq\tbinom{d}{2}+b_3(M)$.    
\end{enumerate}
\end{proposition}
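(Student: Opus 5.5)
The plan is to work throughout in the Tanr\'e model $\Lambda_T^{\cdot,\cdot}=\Lambda_\ag^{\cdot,\cdot}\otimes\Lambda_\qg^{\cdot,\cdot}$, combining observations (a)--(d) with Lemma \ref{do}, and to use the fact that $H^{p,q}_{BC}=\Lambda^{p,q}_c/\partial\overline{\partial}\Lambda^{p-1,q-1}$.

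For (i), note that $\Lambda^{0,0}$ consists of constants, so $\partial\overline{\partial}\Lambda^{0,0}=0$ and hence $H^{1,1}_{BC}=\Lambda^{1,1}_c$. Write $\sigma=\sigma_\ag+\sigma_\qg\in\Lambda^{1,1}_\ag\oplus\Lambda^{1,1}_\qg$. By Lemma \ref{do}, $d\sigma=0$ splits into two independent conditions:
\begin{itemize}
\item $\sigma_\ag(A,A_\alpha)=0$ for all $A\in\ag^c$ and $\alpha\in\Delta_\qg$, i.e.\ $\iota_{\ag_f^c}\sigma_\ag=0$ by \eqref{a1g};
\item $d_F\sigma_\qg=0$.
\end{itemize}
The second condition gives a space of dimension $\dim{\zg(\hg_f)}$ by (c). For the first, view $\sigma_\ag\in(\ag^{1,0})^*\otimes(\ag^{0,1})^*$. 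The condition says its first slot annihilates $\ag_f^{1,0}$ and its second annihilates $\ag_f^{0,1}$. This gives $(d-\dim{\ag_f^{1,0}})(d-\dim{\ag_f^{0,1}})=(d-\dim{\ag_f^{1,0}})^2$, since the two subspaces are conjugate.

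For (ii), $\partial\overline{\partial}\Lambda^{p-1,-1}=0$, so $H^{p,0}_{BC}=\Lambda^{p,0}_c$. I would show that $\Lambda^{p,0}_T$ reduces to the $\ag$-part and count $\partial$- and $\overline{\partial}$-closed forms there using (b). The count has to be reconciled carefully with the stated value $\binom{d}{p}$: (b) naively gives $\binom{d-\dim{\ag_f^{1,0}}}{p}$, so the precise identification of the complex used must be checked at this step.

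For (iii), it suffices to show $\partial\overline{\partial}\Lambda^{1,0}=0$. For $\theta_A\in\Lambda^{1,0}_\ag$ we have $\overline{\partial}\theta_A=-\sigma_A\in\Lambda^{1,1}_\qg$. Since $\sigma_A=-d\theta_A$ is closed, (c) gives $\partial\sigma_A=\partial_\qg\sigma_A=0$.

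For (iv), the hypothesis $b_3(M)=0$ means every closed invariant $3$-form is exact. So $\Lambda^{2,1}_c=\Lambda^{2,1}\cap d\Lambda^2$, and I would describe $d\Lambda^2$ explicitly. The pieces are $d(\Lambda^2_\ag)$, $d(\Lambda^1_\ag\wedge\Lambda^1_\ag)$ (computed via $d\theta_A=-\sigma_A$ and the Leibniz rule), and $d\Lambda^{1,1}_\qg$ from Lemma \ref{do}. I would then compute the $(2,1)$-type part of the image, separately in $\Lambda^{1,0}_\ag\otimes\Lambda^{1,1}_\qg$ and $\Lambda^{2,1}_\qg$, and count its dimension in terms of $\ag_f^c\cap\ag^{0,1}$ and $\ag_f^c+\ag^{0,1}$. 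The three summands should arise as follows:
\begin{itemize}
\item $\binom{d}{2}$ from $d(\Lambda^{2,0}_\ag)$;
\item $\binom{d-\dim{\ag_f^c\cap\ag^{0,1}}}{2}$ from products $\theta\wedge\theta'$ whose differential has pure type;
\item $d(2d-\dim{(\ag_f^c+\ag^{0,1})})$ from mixed terms $\theta_A\wedge\overline{\theta_B}$.
\end{itemize}
The equality case follows because, when $\ag_f=\ag$, the second and third summands vanish.

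For (v), with $G$ semisimple, $H^3=\{[\vp_R]\}$ by Proposition \ref{b3dR}. The closed $(2,1)$-forms are then exact $(2,1)$-forms, counted as in (iv) (this gives $\binom{d}{2}$), plus those classes $[\vp_R]$ admitting a $(2,1)$ representative. Using Lemma \ref{phiR}, the $(1,2)$- and $(0,3)$-components of $\vp_R$ plus exact corrections must vanish. This amounts to requiring $R$ to vanish on $(\zg(\kg)\oplus\ag^{0,1})\times(\zg(\kg)\oplus\ag^{0,1})$, and the argument of Proposition \ref{b3dR} then gives $t-\dim{S_{\zg(\kg)\oplus\ag^{0,1}}}$.

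The main obstacle is the bookkeeping in (iv)--(v): determining exactly which exact $3$-forms, and which corrections of $\vp_R$, have pure type $(2,1)$.
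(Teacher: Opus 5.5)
Parts (i) and (iii) are correct and coincide with the paper's argument. For (iii) the paper writes $\partial\overline{\partial}=-\overline{\partial}\partial$ and uses $\partial|_{\Lambda^{1,0}_\ag}=0$ from (b); your remark that $\overline{\partial}\theta_A=d\theta_A$ is closed is the same thing.

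In (ii), do not try to reconcile anything: your count is the correct one. Since $\Lambda^{p,0}_T=\Lambda^{p,0}_\ag$ and $\partial\overline{\partial}\Lambda^{p-1,-1}=0$, observation (b) gives $h^{p,0}_{BC}=\dim\Lambda^{p,0}_c=\tbinom{d-\dim\ag_f^{1,0}}{p}$. So the stated value $\tbinom{d}{p}$ is false in general. For example, if $G$ is semisimple and $M$ is not a flag manifold, then $(\Omega^1_cM)^G\simeq\zg(\ggo)=0$ gives $h^{1,0}_{BC}=0\neq d$. The number $\tbinom{d}{p}$ is the Aeppli number $h^{p,0}_A$: every invariant $(p,0)$-form is $\partial\overline{\partial}$-closed and $\partial\Lambda^{p-1,0}_\ag=0$. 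That is all the paper's justification via $\partial\overline{\partial}|_{\Lambda^{p,0}}=0$ actually proves; compare $h^{1,0}_A=d$ in \S\ref{A-sec}.

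The genuine gaps are in (iv) and (v).

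In (iv) your attribution of the three summands is guesswork, and its first item already fails for non-semisimple $G$. The differential vanishes on the $\tbinom{d-\dim\ag_f^{1,0}}{2}$-dimensional subspace of $\Lambda^{2,0}_\ag$ annihilated by $\ag_f^{1,0}$. Also, $\Lambda^1_\ag\wedge\Lambda^1_\ag$ is not an extra piece, since $\Lambda^2_T=\Lambda^2_\ag\oplus\Lambda^{1,1}_\qg$. The missing reduction is the paper's:
\begin{itemize}
\item $d\sigma_\ag\in\Lambda^1_\ag\otimes\Lambda^{1,1}_\qg$ and $d\sigma_\qg\in\Lambda^3_\qg$ lie in complementary spaces, so the type conditions separate.
\item By (c), $d\sigma_\qg$ has type $(2,1)$ iff $d\sigma_\qg=0$.
\item By Lemma \ref{do}, $(d\sigma_\ag)^{1,2}=0$ iff $\sigma_\ag(\ag^{0,1},\ag_f^c)=0$.
\end{itemize}
Hence $\Lambda^{2,1}_c=d\{\sigma_\ag:\sigma_\ag(\ag^{0,1},\ag_f^c)=0\}$. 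The three displayed summands are exactly the dimension of that space of $2$-forms, counted in a basis adapted to $\ag^{0,1}\cap\ag_f^c\subset\ag^{0,1}+\ag_f^c$.

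The image, however, is smaller by $\dim\Ker d|_{\Lambda^2_\ag}=\tbinom{\dim\zg(\ggo)}{2}$ (the forms vanishing on $\ag_f^c$), which the paper does not subtract. Since $b_3(M)=0$ forces $\dim\zg(\ggo)\leq 2$, the formula is off by one exactly when $\dim\zg(\ggo)=2$. For $M=T^2$ it gives $1$, whereas $\Lambda^{2,1}=0$.

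Likewise, only the ``if'' half of the equality case can be proved. Suppose $\dim\zg(\ggo)=1$. The conjugation-invariant hyperplane $\ag_f^c$ cannot contain $\ag^{0,1}$, since it would then contain $\ag^{1,0}$ as well. Hence $\ag_f^c+\ag^{0,1}=\ag^c$ and $\dim(\ag_f^c\cap\ag^{0,1})=d-1$, so the formula returns $\tbinom{d}{2}$ with $G$ non-semisimple. An example is $S^5\times S^1=(\SU(3)\times S^1)/\SU(2)$, which has $b_3=0$ and $h^{2,1}_{BC}=0$.

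In (v) your splitting matches the paper's: an exact part of dimension $\tbinom{d}{2}$ (the differential is injective on $\Lambda^2_\ag$ when $\ag_f=\ag$), plus the classes $[\vp_R]$ admitting a pure $(2,1)$ representative. But the decisive claim, that this means $R=0$ on $(\zg(\kg)\oplus\ag^{0,1})^2$, is only asserted. Here is what must be shown, writing $R=\sum_iz_i\kil_{\ggo_i}$.
\begin{itemize}
\item By Lemmas \ref{phiR} and \ref{do}, $(\vp_R+d\sigma)^{1,2}=0$ amounts to two conditions: an always solvable inhomogeneous version of (c) for the $y_\alpha$, and $\sigma_\ag(A,B)=2z_iQ(A,B)+R(A,B)$ for $A\in\ag^{0,1}$, $B\in\zg(\hg_i)^c_{\ag^c}$.
\item Write $B=Z_{\ag^c}$ with $Z=\sum_iZ_i$, $Z_i\in\zg(\hg_i)^c$, and use $z_iQ(A,Z_i)=-R(A,Z_i)$. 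The prescriptions for the different $i$ are then compatible iff $R(\ag^{0,1},\zg(\kg)^c)=0$.
\item In that case $\sigma_\ag=-R$ on $\ag^{0,1}\times\ag^c$. Skew-symmetry of $\sigma_\ag$ against symmetry of $R$ forces $R(\ag^{0,1},\ag^{0,1})=0$.
\item Conversely, each such $R$ admits a solution, which gives the count $t-\dim S_{\zg(\kg)\oplus\ag^{0,1}}$.
\end{itemize}
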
 

\begin{remark}\label{ddL2} 
It follows from \eqref{hn} that
$$
h^{1,1}=(d-\dim{\ag_f^{1,0}})d+ \dim{\zg(\hg_f)} -d +\dim{\zg(\ggo)^c\cap\ag^{1,0}} 
\geq h^{1,1}_{BC}, 
$$ 
where equality holds if and only if $\ag_f^{1,0}=0$
\end{remark}

\begin{remark}\label{fBCclass}
The first Bott-Chern class (see \cite{Brb,Ist})
$$
c_1^{BC}(M):=[\sigma_{Z_{J_\qg}}]\in H_{BC}^{1,1}(M)
$$ 
is therefore always non-zero (see \S\ref{fCc-sec}), which implies that a complex C-space has holomorphically trivial canonical bundle if and only if it is a torus.  Note that $c_1^{BC}(M)\geq 0$, and $c_1^{BC}(M)> 0$ only in the flag case.  
\end{remark}

\begin{remark}
In part (iv), $\dim{\ag} = 2d$, $\dim{\ag_f} = 2d-\dim{\zg(\ggo)}$ and  
$$
\dim{(\ag_f^c+\ag^{0,1})} = 3d-\dim{\zg(\ggo)}- \dim{\ag_f^c\cap\ag^{0,1}}. 
$$
\end{remark}

\begin{proof}
According to \eqref{c2f}, $\gamma\in\Lambda^2(\ag^c)^*$ is closed if and only if $\gamma(\ag_f^c,\cdot)=0$ (recall that $\ag=\ag_f\oplus\zg(\ggo)$), hence 
$$
\Ker d|_{\Lambda^{1,1}_\ag} =\{\gamma\in\Lambda^{1,1}_\ag:\gamma(\ag_f^{1,0},\ag^{0,1}) =\gamma(\ag^{1,0},\ag_f^{0,1}) =0\},
$$ 
and part (i) therefore follows from $\overline{\partial}\Lambda^0=0$ and (c).  Since $\partial\overline{\partial}|_{\Lambda^{p,0}}=0$ and $\partial\overline{\partial}|_{\Lambda^{0,p}}=0$ for all $p\geq 1$ by (b) (recall that $\partial\overline{\partial}=-\overline{\partial}\partial$), parts (ii) and (iii) follow. 

We now prove part (iv).  If $b_3(M)=0$, then $\Lambda^{2,1}_c=d\Lambda^2\cap\Lambda^{2,1}$, and for $\sigma=\sigma_\ag+\sigma_\qg\in\Lambda^2$, it follows from Lemma \ref{do} that $d\sigma\in\Lambda^{2,1}$ if and only if $d\sigma_\qg=0$ and 
$$
0=d\sigma(A,E_\alpha,E_{-\alpha})=\sigma_\ag(A,A_\alpha), \qquad\forall A\in\ag^{0,1},\; \alpha\in\Delta_\qg,
$$
i.e., $\sigma_\ag(\ag^{0,1},\ag_f^c)=0$. Thus  
$$
h^{2,1}_{BC}=\dim\{\sigma\in\Lambda^2(\ag^c)^*:\sigma(\ag^{0,1},\ag_f^c)=0\},
$$
and so part (iv) follows by using the decomposition 
$$
\ag^{0,1}+\ag_f^c=(\ag^{0,1}\ominus(\ag^{0,1}\cap\ag_f^c)) \oplus (\ag_f^c\ominus(\ag^{0,1}\cap\ag_f^c))\oplus (\ag^{0,1}\cap\ag_f^c).
$$
The last sentence follows from the fact that $\overline{\partial}\Lambda_\ag^{2,0}=d\Lambda_\ag^{2,0}\subset\Lambda_c^{2,1}$.  

In order to prove part (v), we recall from \S\ref{dR3} that 
$$
\Lambda^3_c=\left\{\vp_R+d\sigma: R|_{\kg\times\kg}=0, \; \sigma=\sigma_\ag+\sigma_\qg, \; \sigma_\qg=(y_\alpha)_{\alpha\in\Delta_\qg}\right\}, 
$$
where $R=z_1\kil_{\ggo_1}+\dots+z_s\kil_{\ggo_s}=Q(\widetilde{R}\cdot,\cdot)$ and $\widetilde{R}|_{\ggo_i}=-z_iI$ for all $i$.  We need to compute the dimension of $\Lambda^{2,1}_c=\Lambda^3_c\cap\Lambda^{2,1}$ (see (iii)).  It follows from Lemmas \ref{phiR} and \ref{do} that $(\vp_R+d\sigma)^{1,2}=0$ if and only if for all $i=1,\dots,s$, 
\begin{align} 
&y_{\alpha+\beta}-y_\alpha-y_\beta=z_i, \qquad \forall \alpha,\beta,\alpha+\beta\in\Delta_{\qg_i}^+, \label{BC1}\\ 
&\sigma_\ag(A,B)=2z_iQ(A,B)+R(A,B), \qquad \forall A\in\ag^{0,1}, \quad B\in\zg(\hg_i)^c_{\ag^c}, \label{BC2}  
\end{align}
where $\zg(\hg_f)=\zg(\hg_1)\oplus\dots\oplus\zg(\hg_s)=\zg(\kg)\oplus\ag$ as in \eqref{decflags} (note that $\ag=\ag_f$ since $G$ is semisimple).  According to \eqref{BC2}, for any 
$$
B=B_1+\dots+B_s\in\ag^c =\zg(\hg_1)^c_{\ag^c}+\dots+\zg(\hg_s)^c_{\ag^c}, \qquad B_i=(Z_i)_{\ag^c}, \quad Z_i\in\zg(\hg_i)^c, 
$$
and $A\in\ag^{0,1}$, 
\begin{align*}
\sigma_\ag(A,B)=& R(A,B) + 2\sum_{i=1}^s z_iQ(A,B_i) =R(A,B) + 2\sum_{i=1}^s Q(A,z_iZ_i) \\ 
=& R(A,B) - 2\sum_{i=1}^s Q(A,\widetilde{R}Z_i) =R(A,B) - 2\sum_{i=1}^s R(A,Z_i) \\ 
=& R(A,B) - 2R(A,Z) = - R(A,B) -2R(A,Z_{\zg(\kg)^c}), 
\end{align*}
where $Z:=Z_1+\dots+Z_s$ satisfies that $Z_{\ag^c}=B$.  This implies that $R(\ag^{0,1},\zg(\kg)^c)=0$ and so $\sigma_\ag(A,B)=-R(A,B)$ for all $A\in\ag^{0,1}$ and $B\in\ag^c$.  In particular, $R(\ag^{0,1},\ag^{0,1})=0$, that is, $R|_{(\zg(\kg)^c\oplus\ag^{0,1})\times(\zg(\kg)^c\oplus\ag^{0,1})}=0$, which is equivalent to $(z_1,\dots,z_t)$ being $Q$-orthogonal to the subspace $S_{\zg(\kg)^c\oplus\ag^{0,1}}$.  If we also consider \eqref{BC1}, then       
$$
h_{BC}^{2,1} = \dim{\Lambda^3_c\cap\Lambda^{2,1}} = \tbinom{d}{2} + t - \dim{S_{\zg(\kg)\oplus\ag^{0,1}}} + \dim{\zg(\hg_f)} - \dim{\zg(\hg_f)},
$$
since $(\vp_R+d\sigma)^{2,1}=0$ if and only if $y_{\alpha+\beta}-y_\alpha-y_\beta=-z_i$ for all $\alpha,\beta,\alpha+\beta\in\Delta_{\qg_i}^+$, if and only if $R=0$, $\sigma_\ag=0$ and $d\sigma_\qg=0$, concluding the proof. 
\end{proof}

\subsection{Aeppli}\label{A-sec}  
Let $(M=G/K,J)$ be a complex C-space.  It is easy to see that 
$$
h^{1,0}_A=h^{0,1}_A=d. 
$$  
In this section, we will give a formula for $h^{1,1}_A$.   

For any $\sigma=\sigma_\ag+\sigma_\qg\in\Lambda_T^{1,1}$, $\sigma_\qg =(y_\alpha)_{\alpha\in\Delta_\qg}$, we set $x_\alpha:=\im\epsilon_\alpha y_\alpha$, i.e., 
$$
\sigma_\qg(E_\alpha,E_{-\alpha})=-\im x_\alpha, \qquad\forall \alpha\in\Delta_\qg^+, 
$$ 
and define the symmetric bilinear form $h=h(\sigma_\ag):\ag\times\ag\rightarrow\RR$ by 
\begin{equation}\label{h11A-h} 
h(A,B):=-\im\left(\sigma_\ag(A^{1,0},B^{0,1}) + \sigma_\ag(B^{1,0},A^{0,1})\right),  \qquad\forall A,B\in\ag.
\end{equation}
Note that $h$ is compatible with $J_\ag$ (i.e., $h(J_\ag\cdot,J_\ag\cdot)=h(\cdot,\cdot)$) and $\sigma_\ag(A,B)=h(JA,B)$ for all $A,B\in\ag$.  Moreover, the map $\sigma_\ag\rightarrow h(\sigma_\ag)$ is an isomorhpism between $\Lambda^{1,1}_\ag$ and the vector space of all $J_\ag$-compatible symmetric bilinear forms on $\ag$.  

Recall from \eqref{deca} the decompositions 
$$
\ag=\underline{\ag}_f\oplus\zg(\ggo)\oplus\overline{\ag}, \qquad 
\overline{\ag}=\tg_{u+1}\oplus\dots\oplus\tg_s,
$$
and from \eqref{decflags},
$$
\Pi_\qg=\Pi_{\qg_1}\sqcup\dots\sqcup\Pi_{\qg_s}, \qquad \Delta_\qg=\Delta_{\qg_1}\sqcup\dots\sqcup\Delta_{\qg_s}.
$$ 
Note that $\Delta_{\qg_i}=\Delta_i$ and $\Pi_{\qg_i}=\Pi_i$ for all $i=u+1,\dots,s$.  

\begin{lemma}\label{h11A-lem}
$\sigma=\sigma_\ag+\sigma_\qg\in\Ker\partial\overline{\partial}|_{\Lambda_T^{1,1}}$ if and only if 
\begin{enumerate}[{\rm (i)}] 
\item $h|_{\underline{\ag}_f\times\underline{\ag}_f}=0$.  

\item $h|_{\overline{\ag}\times\overline{\ag}}=-(z_{u+1}\kil_{\ggo_{u+1}}+\dots+z_s\kil_{\ggo_s})|_{\overline{\ag}\times\overline{\ag}}$, for some $z_{u+1},\dots,z_s\in\RR$. 

\item $x_{\alpha+\beta}=x_\alpha+x_\beta-z_i$, for all $\alpha,\beta,\alpha+\beta\in\Delta_{\qg_i}^+$ and $i=1,\dots,s$, where we set $z_1=\dots=z_u=0$. 
\end{enumerate}
In particular, 
$$
\dim{\Ker\partial\overline{\partial}|_{\Lambda_T^{1,1}}}=\dim{\zg(\hg_f)}+\dim{S}, 
$$ 
where $S$ is the subspace of all symmetric bilinear forms $h:\ag\times\ag\rightarrow\RR$ such that $h$ is compatible with $J_\ag$ and satisfies conditions (i) and (ii).  
\end{lemma}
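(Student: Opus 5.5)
The key observation is that $\partial\overline{\partial}\sigma=\tfrac12(\partial\overline{\partial}-\overline{\partial}\partial)\sigma$ is, up to the factor $\im$, the $(2,2)$-component of $d(J\text{-twisted})$, but it is cleaner to use $\partial\overline{\partial}\sigma=0$ if and only if $d\,d^c\sigma=0$. Since $d$ is real, I would reduce $\partial\overline{\partial}\sigma=0$ to closedness of a real $3$-form. Concretely, $\overline{\partial}\sigma$ is a $(1,2)$-form and $\partial\overline{\partial}\sigma=d(\overline{\partial}\sigma)$ up to the $(1,3)$-part, which vanishes because $\overline{\partial}^2=0$. So the condition is that $\overline{\partial}\sigma\in\Lambda^{1,2}_c$, a closed invariant $3$-form of pure type $(1,2)$.

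To use this, I would invoke \S\ref{dR3}: every closed invariant $3$-form is $\vp_R+d\tau$ with $R=z_1\kil_{\ggo_1}+\dots+z_s\kil_{\ggo_s}$, $R|_{\kg\times\kg}=0$, and $\tau=\tau_\ag+\tau_\qg$. Writing $\overline{\partial}\sigma=\vp_R+d\tau$, the components of $\overline{\partial}\sigma$ are computed from the Tanr\'e rules and Lemma \ref{do}. The $\Lambda^{1,1}_\ag$ part contributes $\overline{\partial}\sigma_\ag(E_\alpha,E_{-\alpha},B)=-\sigma_\ag(A^{1,0}_\alpha,B)$ for $B\in\ag^{0,1}$. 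The $\Lambda^{1,1}_\qg$ part contributes $(d\sigma_\qg)^{1,2}$, given by $N_{\alpha,\beta}(y_\alpha+y_\beta+y_\gamma)$.

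Comparing with Lemma \ref{phiR} and Lemma \ref{do} componentwise gives two families of equations.
\begin{itemize}
\item On $(E_\alpha,E_\beta,E_\gamma)$ with $\alpha+\beta+\gamma=0$, after absorbing $d\tau_\qg$, the remaining freedom gives exactly $x_{\alpha+\beta}=x_\alpha+x_\beta-z_i$. This is condition (iii).
\item On $(A,E_\alpha,E_{-\alpha})$ with $A\in\ag^{0,1}$, we get $h(A,A_\alpha)$-type identities $\sigma_\ag(\cdot,A_\alpha)=\tau_\ag(\cdot,A_\alpha)-2z_iQ(\cdot,A_\alpha)-R(\cdot,A_\alpha)$, in the spirit of \eqref{BC2}. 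The $(2,1)$-part must vanish, which forces $\tau_\ag$ to be of type compatible with it.
\end{itemize}

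The hard part is eliminating $\tau_\ag$ and the $z_i$ to reach (i) and (ii). Repeating the manipulation in the proof of Proposition \ref{hBC}(v) with $B_i=(Z_i)_{\ag^c}$ should give $R(\cdot,\zg(\kg))$-vanishing and $h|_{\ag_f\times\ag_f}=-R|_{\ag_f\times\ag_f}$ after symmetrizing via $h(A,B)=\sigma_\ag(JA,B)$. I would then split $\ag_f=\underline{\ag}_f\oplus\overline{\ag}$ and treat three kinds of factors.
\begin{itemize}
\item For a factor $i\le u$, $\kg_{\hg_i}\ne0$. Either $[\hg_i,\hg_i]\ne0$ lies in $\kg$, or $\zg(\kg)$ projects nontrivially. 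In both cases $R|_{\kg\times\kg}=0$ together with the derived relation forces $z_i=0$, so $h$ vanishes on $\underline{\ag}_f$, giving (i).
\item For $i>u$, $\hg_i=\tg_i$ and $\kg$ has no component, so $z_i$ is free, giving (ii).
\item Since $\zg(\ggo)$ never meets any $A_\alpha$, $h$ is unconstrained on the $\zg(\ggo)$ and mixed blocks.
\end{itemize}
The converse is obtained by taking $R$ with those $z_i$ and checking that the resulting form is closed of type $(1,2)$.

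For the dimension count, conditions (i) and (ii) define $S$. Given $z$, condition (iii) is an affine system on $(x_\alpha)$ whose homogeneous solutions have dimension $|\Pi_\qg|=\dim\zg(\hg_f)$ by observation (c). A particular solution $x_\alpha=\mathrm{ht}_{\qg_i}(\alpha)\cdot(\cdot)+z_i$ always exists, for example $x_\alpha=z_i$. This yields $\dim\zg(\hg_f)+\dim S$.

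The step I expect to be delicate is the bookkeeping showing $z_i=0$ exactly for $i\le u$. This includes the strict factors, where the argument must use that $\zg(\kg)_{\zg(\hg_i)}\ne0$ couples $h$ on $\underline{\ag}_f$ to $R$ on $\zg(\kg)$.
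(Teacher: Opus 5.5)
Your route differs from the paper's, and it essentially works. The paper does not use $H^3(M)$ at this point. It computes $\partial\overline{\partial}\sigma$ componentwise in the Tanr\'e model, \eqref{eqA1}--\eqref{eqA4}, and extracts the identity \eqref{h11A-1} linking $h(\im A_\alpha,\im A_\beta)$ to the $x$'s. A connectivity argument on the pieces of $\Pi_{\qg_i}\setminus\Pi_0$ then gives $h=z_j\la\cdot,\cdot\ra$ on each piece. Finally it kills $z_j$ for $i\le u$ by a root-string case analysis at a simple root adjacent to $\Pi_{\hg_i}\cup\Pi_0$, \eqref{h11A-11}--\eqref{h11A-14}.

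You instead use $\partial\overline{\partial}\sigma=d(\overline{\partial}\sigma)$ together with the $\vp_R$-description of closed $3$-forms, as in Proposition \ref{hBC}(v). The elimination you anticipate does go through, at least for $\zg(\ggo)=0$. Extend the $(A,E_\alpha,E_{-\alpha})$-equations linearly in $Z\in\zg(\hg_i)^c$ and evaluate at $Z\in\zg(\kg)^c$. This removes the $\sigma_\ag$- and $\tau_\ag$-terms both in the $(2,1)$-vanishing equation ($A\in\ag^{1,0}$) and in the $(1,2)$-matching equation ($A\in\ag^{0,1}$), so $R(\ag_f,\zg(\kg))=0$. With $R|_{\kg\times\kg}=0$ this gives $R(\zg(\kg),\zg(\hg_f))=0$. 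Testing $W\in\zg(\kg)$ against its component $W_i\neq0$ then forces $z_i=0$; the case $0\ne[\hg_i,\hg_i]\subset\kg$ is immediate.

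Combining the two equations also gives $h(C,D)=2\im R(C,D)$ for $C,D\in\ag_f$. The $(E,E,E)$-components give $x_{\alpha+\beta}-x_\alpha-x_\beta=2\im z_i$. So $R$ and $\tau$ must be allowed to be complex, and the lemma's $z_i$ equals $-2\im$ times yours. Your approach makes the vanishing of $z_i$ on factors meeting $\kg$ a transparent consequence of closedness of $\vp_R$, replacing both the connectivity argument and the root-string combinatorics. The paper's approach is elementary, needs no knowledge of $H^3(M)$, and produces explicit formulas that are reused later.

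Two points need fixing.

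\textbf{Closed $3$-forms when $\zg(\ggo)\ne0$.} The claim ``every closed invariant $3$-form is $\vp_R+d\tau$'' holds only when $\zg(\ggo)=0$. By \eqref{Kb3} there are also classes $\theta_{Z_0}\wedge\sigma_W$ with $Z_0\in\zg(\ggo)$, $W\in\zg(\kg)$, and classes from $\Lambda^3\zg(\ggo)^*$. Both drop out, but you must say so.
\begin{itemize}
\item The $\Lambda^3\zg(\ggo)^*$ part vanishes because neither $\overline{\partial}\sigma$ nor $d\tau$ has a $\Lambda^3_\ag$-component.
\item The $\theta_{Z_0}\wedge\sigma_W$ terms add $\beta(A_{\zg(\ggo)},Z_{\zg(\kg)})$ to the $(A,E_\alpha,E_{-\alpha})$-equations. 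The same evaluation at $Z\in\zg(\kg)^c$, now over all of $\ag^c=\ag_f^c\oplus\zg(\ggo)^c$, gives $2R(A_{\ag_f},Z)+\beta(A_{\zg(\ggo)},Z)=0$, hence $\beta=0$ and $R(\ag_f,\zg(\kg))=0$.
\end{itemize}

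\textbf{The converse as stated.} Your forward argument proves more than (i)--(ii). Since $h|_{\ag_f\times\ag_f}$ is proportional to the block-diagonal form $R$, you also get $h(\underline{\ag}_f,\overline{\ag})=0$. This condition is genuinely necessary. For $\alpha\in\Delta_{\qg_i}$, $\beta\in\Delta_{\qg_j}$ with $i\ne j$, \eqref{eqA1} makes $\partial\overline{\partial}\sigma(E_\alpha,E_{-\alpha},E_\beta,E_{-\beta})$ a nonzero multiple of $h(A_\alpha,A_\beta)$, while the $\sigma_\qg$-part vanishes. That is why the paper's proof records $h(\ag_i,\ag_j)=0$. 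Yet it is not implied by (i)--(ii) in general. For example, with $J_\ag$ as in \eqref{JH} and $\zg(\ggo)=0$, the off-diagonal block of $h$ is exactly $h|_{\underline{\ag}_f\times\overline{\ag}}$.

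So your converse step (``take $R$ with those $z_i$'') closes up only under this extra condition: it is precisely what makes the prescribed $\tau_\ag$ skew on $\ag_f$. Likewise, the count $\dim\zg(\hg_f)+\dim S$ is correct only with $S$ cut down accordingly.
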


\begin{remark}\label{h11A-strict}
For any C-space $M=G/K$ such that $G$ is semisimple and there is no group factor (i.e., $\zg(\ggo)=0$ and $\overline{\ag}=0$), we have that $S=0$, so $\Ker\partial\overline{\partial}|_{\Lambda_T^{1,1}} =\Ker\overline{\partial}|_{\Lambda_T^{1,1}}$ (see (c)).  
\end{remark}

\begin{remark}\label{xalfa}
Part (iii) is equivalent to the following: for each fixed $i=1,\dots,s$, 
$$
x_\alpha=z_i+\sum_{j=1}^m n_j(x_j-1), \qquad \forall \alpha=\sum_{j=1}^m n_j\alpha_j +\sum_{\beta\in\Pi_{\hg_f}}k_\beta\beta\in\Delta_{\qg_i}^+, \quad n_j,k_\beta\in\NN_0,  
$$
where $m:=\dim{\zg(\hg_i)}$, $\Pi_{\qg_i}=\{\alpha_1,\dots,\alpha_m\}\subset\Delta_{\qg_i}^+$ and $x_j:=x_{\alpha_j}$ for $j=1,\dots,m$.  
\end{remark}

\begin{remark}
It follows from part (i) that if a complex C-space $(M=G/K,J)$ admits a $G$-invariant pluriclosed metric (see \S\ref{SKT-sec}) of the form $g=g_\ag+g_\qg$, then $M$ is the product of a flag manifold and a Lie group (see \eqref{g} for more details and cf.\ Remark \ref{h21-ce} below), i.e., its strict factor is a torus (see Proposition \ref{decC}).    
\end{remark}

\begin{proof}
Since
$$
\Lambda_T^{1,1}=\Lambda^{1,1}_\ag \oplus\Lambda^{1,1}_\qg
\xlongrightarrow{\overline{\partial}}
(\Lambda^{0,1}_\ag\otimes\Lambda^{1,1}_\qg) \oplus\Lambda^{1,2}_\qg 
\xlongrightarrow{\partial} 
(\Lambda^{0,1}_\ag\otimes\Lambda^{2,1}_\qg) \oplus\Lambda^{2,2}_\qg, 
$$
if $\sigma=\sigma_\ag+\sigma_\qg\in\Lambda^{1,1}_\ag\oplus\Lambda^{1,1}_\qg$, then 
$$
\partial\overline{\partial}\sigma_\ag= \partial(d\sigma_\ag)^{0,1+1,1} =(d(d\sigma_\ag)^{0,1+1,1})^{0,1+2,1} +(d(d\sigma_\ag)^{0,1+1,1})^{0,0+2,2}, 
$$
$$
\partial\overline{\partial}\sigma_\qg= \partial(d\sigma_\qg)^{0,0+1,2} =(d(d\sigma_\qg)^{0,0+1,2})^{0,0+2,2},  
$$
and it follows from Lemma \ref{do} that $(d(d\sigma_\ag)^{0,1+1,1})^{0,1+2,1}=0$.  Indeed, its only possibly nonzero component is, for any $A\in\ag^{0,1}$, $\alpha,\beta\in\Delta_\qg^+$: 
\begin{align*}
d(d\sigma_\ag)^{0,1+1,1}(A, E_\alpha,E_\beta,E_{-(\alpha+\beta)}) 
=& -N_{\alpha,\beta}d\sigma_\ag(E_{\alpha+\beta},A,E_{-(\alpha+\beta)}) \\
&+N_{\alpha,-(\alpha+\beta)}d\sigma_\ag(E_{-\beta},A,E_\beta) \\
&-N_{\beta,-(\alpha+\beta)}d\sigma_\ag(E_{-\alpha},A,E_\alpha) \\ 
=&N_{\alpha,\beta}\left(\sigma_\ag(A,A_{\alpha+\beta}^{1,0})-\sigma_\ag(A,A_{\beta}^{1,0}) -\sigma_\ag(A,A_{\alpha}^{1,0})\right) =0, 
\end{align*}
where we have used that $N_{\alpha,-(\alpha+\beta)}=-N_{\alpha,\beta}$ and $N_{\beta,-(\alpha+\beta)}=N_{\alpha,\beta}$ (see \S\ref{roots}).  

For $(d(d\sigma_\ag)^{0,1+1,1})^{0,0+2,2}$ and $\alpha,\beta\in\Delta_\qg^+$, $\alpha\ne\beta$, we have that 
\begin{align}
d(d\sigma_\ag)^{0,1+1,1}(E_\alpha,E_{-\alpha},E_\beta,E_{-\beta}) 
=& -d\sigma_\ag(A_\alpha^{0,1},E_\beta,E_{-\beta}) -d\sigma_\ag(A_\beta^{0,1},E_\alpha,E_{-\alpha}) \label{eqA1}\\ 
=&-\sigma_\ag(A_\alpha^{0,1},A_\beta^{1,0}) -\sigma_\ag(A_\beta^{0,1},A_\alpha^{1,0}). \notag
\end{align} 
Concerning $(d(d\sigma_\qg)^{0,0+1,2})^{0,0+2,2}$, if $\sigma_\qg=(y_\alpha)_{\alpha\in\Delta_\qg}$, then by Lemma \ref{do},  
\begin{enumerate}[{\small $\bullet$}] 
\item for $\alpha,\beta\in\Delta_\qg^+$, $\alpha-\beta\in\Delta_\qg^-$, 
\begin{align}
&d(d\sigma_\qg)^{0,0+1,2}(E_\alpha,E_{-\alpha},E_\beta,E_{-\beta}) \notag \\
=& d\sigma_\qg(N_{\alpha,\beta}E_{\alpha+\beta},E_{-\alpha},E_{-\beta}) 
 -d\sigma_\qg(N_{\alpha,-\beta}E_{\alpha-\beta},E_{-\alpha},E_\beta) \notag \\ 
=& N_{\alpha+\beta,-\alpha}N_{\alpha,\beta}(y_{\alpha+\beta}-y_{\alpha}-y_{\beta}) 
- N_{\alpha-\beta,-\alpha}N_{\alpha,-\beta}(y_{\alpha-\beta}-y_{\alpha}+y_{\beta}) \notag \\ 
=& -N_{\alpha,\beta}^2(y_{\alpha+\beta}-y_{\alpha}-y_{\beta}) 
+ N_{\alpha,-\beta}^2(y_{\alpha-\beta}-y_{\alpha}+y_{\beta}), \label{eqA2}
\end{align}
where $N_{\alpha-\beta,-\alpha}=-N_{\alpha,-\beta}$ (see \S\ref{roots}), 

\item for $\alpha,\beta\in\Delta_\qg^+$, $\alpha-\beta\in\Delta_\qg^+$, 
\begin{align}
&d(d\sigma_\qg)^{0,0+1,2}(E_\alpha,E_{-\alpha},E_\beta,E_{-\beta}) \notag \\
=& d\sigma_\qg(N_{\alpha,\beta}E_{\alpha+\beta},E_{-\alpha},E_{-\beta}) 
 -d\sigma_\qg(N_{-\alpha,\beta}E_{-\alpha+\beta},E_{\alpha},E_{-\beta}) \notag \\ 
=& N_{\alpha+\beta,-\alpha}N_{\alpha,\beta}(y_{\alpha+\beta}-y_{\alpha}-y_{\beta}) 
+ N_{-\alpha+\beta,\alpha}N_{\alpha,-\beta}(y_{-\alpha+\beta}+y_{\alpha}-y_{\beta})  \notag \\ 
=& -N_{\alpha,\beta}^2(y_{\alpha+\beta}-y_{\alpha}-y_{\beta}) 
- N_{\alpha,-\beta}^2(y_{\alpha-\beta}-y_{\alpha}+y_{\beta}), \label{eqA3} 
\end{align}
where $N_{-\alpha+\beta,\alpha}=N_{\alpha,-\beta}$ (see \S\ref{roots}), 

\item for $\alpha+\beta+\gamma+\delta=0$, where all pairs add nonzero and $\alpha,\beta\in\Delta^+$, $\gamma,\delta\in\Delta^-$, 
\begin{align}
& d(d\sigma_\qg)^{0,0+1,2}(E_\alpha,E_\beta,E_\gamma,E_\delta) \notag \\
=& -N_{\alpha,\beta}d\sigma_\qg(E_{\alpha+\beta},E_\gamma,E_\delta) \notag \\ 
&+ N_{\alpha,\gamma}d\sigma_\qg(E_{\alpha+\gamma},E_\beta,E_\delta) 
+ N_{\beta,\delta}d\sigma_\qg(E_{\beta+\delta},E_\alpha,E_\gamma) \notag \\ 
&- N_{\alpha,\delta}d\sigma_\qg(E_{\alpha+\delta},E_\beta,E_\gamma) 
- N_{\beta,\gamma}d\sigma_\qg(E_{\beta+\gamma},E_\alpha,E_\delta), \notag \\
=& -N_{\alpha,\beta} N_{\gamma,\delta} (y_{\alpha+\beta}-y_\gamma-y_\delta) \notag \\ 
&+N_{\alpha,\gamma} N_{\beta,\delta}(y_{\alpha+\gamma}+y_\beta-y_\delta)  
+N_{\beta,\delta}  N_{\alpha,\gamma} (y_{\beta+\delta}+y_\alpha-y_\gamma) \label{eqA4} \\
&-N_{\alpha,\delta}  N_{\beta,\gamma} (y_{\alpha+\delta}+y_\beta-y_\gamma) 
-N_{\beta,\gamma}  N_{\alpha,\delta}(y_{\beta+\gamma}+y_\alpha-y_\delta),   \notag
\end{align}
where at most one term on each line is nonzero.  
\end{enumerate}

It follows from \eqref{eqA1}, \eqref{eqA2} and \eqref{eqA3} that if $\partial\overline{\partial}\sigma=0$, then for all $\alpha,\beta\in\Delta_\qg^+$, $\alpha\ne\beta$,
\begin{equation}\label{h11A-1}
h(\im A_\alpha,\im A_\beta) = N_{\alpha,\beta}^2(x_{\alpha+\beta}-x_{\alpha}-x_{\beta}) 
+\epsilon_{\alpha-\beta}N_{\alpha,-\beta}^2(\epsilon_{\alpha-\beta}x_{\alpha-\beta}-x_{\alpha}+x_{\beta}), 
\end{equation}
where $x_\alpha:=\im\epsilon_\alpha y_\alpha$ and $h=h(\sigma_\ag)$ is as in \eqref{h11A-h}.  

Note that 
$$
\ag_f=\ag_1+\dots+\ag_s, \qquad\mbox{where}\quad \ag_i:=\la\im A_\alpha:\alpha\in\Pi_{\qg_i}\ra_\RR, 
$$ 
so $h(\ag_i,\ag_j)=0$ for all $i\ne j$ by \eqref{h11A-1}.  We define $\Pi_0:=\{\alpha\in\Pi_{\qg_i}:A_\alpha=0\}$ and consider, for a fixed $i\in\{1,\dots,s\}$, the decomposition 
$$
\Pi_{\qg_i}\setminus\Pi_0=\Phi_1\sqcup\dots\sqcup\Phi_v,
$$
in connected components as a graph (in particular, $v=1$ for any $i=u+1,\dots,s$ since $A_\alpha=H_\alpha$ and so $\Pi_0=\emptyset$).  It follows from \eqref{h11A-1} that  
\begin{equation}\label{h11A-2} 
h(\bg_j,\bg_k)=0, \quad\forall j\ne k, \qquad\mbox{where} \quad \bg_j:=\la \im A_\alpha:\alpha\in\Phi_j\ra_\RR, \quad j=1,\dots,v.    
\end{equation} 
Using that each $\Phi_j$ is a connected graph, in much the same way as in the proof of \cite[Proposition 3.6]{SKT-LG}, we obtain from \eqref{h11A-1} that there exist $z_j\in\RR$ such that   
\begin{equation}\label{h11A-3}
h(\im A_\alpha,\im A_\beta)=z_j\la\alpha,\beta\ra =-z_j\kil_{\ggo_i}(\im H_\alpha,\im H_\beta), 
\qquad\forall \alpha,\beta\in \la\Phi_j\ra_{\NN_0},  \quad j=1,\dots,v,  
\end{equation}
including the case when $\alpha=\beta$, the proof is actually identical.  

If $i\in\{ 1,\dots,u\}$, i.e., $\Pi_{\hg_i}\cup\Pi_0\ne\emptyset$, then for any $j=1,\dots,v$ there exist $\alpha\in\Phi_j$ and $\beta\in\Pi_{\hg_i}\cup\Pi_0$ such that $\alpha$ and $\beta$ are adjacent, i.e., $\la\alpha,\beta\ra< 0$, or equivalently, $\tfrac{4\la\alpha,\beta\ra^2}{\la\alpha,\alpha\ra\la\beta,\beta\ra}\in\{ 1,2,3\}$.  According to \eqref{h11A-1}, we have that 
\begin{align}
h(\im A_\alpha,\im A_\beta) =& N_{\alpha,\beta}^2(x_{\alpha+\beta}-x_{\alpha}-x_{\beta}), \label{h11A-11}\\
h(\im A_\alpha,\im A_\alpha) = h(\im A_{\alpha+\beta},\im A_\alpha) =& N_{\alpha+\beta,\alpha}^2(x_{2\alpha+\beta}-x_{\alpha+\beta}-x_{\alpha})  \label{h11A-12}  \\
&+N_{\alpha+\beta,-\alpha}^2(x_{\beta}-x_{\alpha+\beta}+x_{\alpha}), \notag  \\
2h(\im A_\alpha,\im A_\alpha) = h(\im A_{2\alpha+\beta},\im A_\alpha) =& N_{2\alpha+\beta,\alpha}^2(x_{3\alpha+\beta}-x_{2\alpha+\beta}-x_{\alpha}) \label{h11A-13} \\
&+N_{2\alpha+\beta,-\alpha}^2(x_{\alpha+\beta}-x_{2\alpha+\beta}+x_{\alpha}), \notag \\ 
3h(\im A_\alpha,\im A_\alpha) = h(\im A_{3\alpha+\beta},\im A_\alpha) =& N_{3\alpha+\beta,-\alpha}^2(x_{2\alpha+\beta}-x_{3\alpha+\beta}+x_{\alpha}),  \label{h11A-14}
\end{align}
where the terms containing $x_\beta$ in \eqref{h11A-11} and \eqref{h11A-12} must be replaced with $0$ if $\beta\in\Pi_{\hg_f}$.  It is easy to check that both instances $h(\im A_{\alpha},\im A_\alpha)>0$ and $h(\im A_{\alpha},\im A_\alpha)<0$ give rise to a contradiction in the three cases.  This implies that $z_j=0$ for all $j=1,\dots,v$ by \eqref{h11A-3}, that is, $h|_{\ag_i\times\ag_i}=0$ for all $i\in\{ 1,\dots,u\}$.  

Since
$$
\underline{\ag}_f=\ag_1+\dots+\ag_u, \qquad 
\overline{\ag}=\tg_{u+1}\oplus\dots\oplus\tg_s, 
$$   
we obtain that $h|_{\underline{\ag}_f\times\underline{\ag}_f}=0$ and by \eqref{h11A-3}, 
$$
h|_{\overline{\ag}\times\overline{\ag}}=-(z_{u+1}\kil_{\ggo_{u+1}}+\dots+z_s\kil_{\ggo_s})|_{\overline{\ag}\times\overline{\ag}}, \qquad\mbox{for some}\quad z_{u+1},\dots,z_s\in\RR.
$$
Thus conditions (i) and (ii) hold.  Using \eqref{h11A-1}, the proof of (iii) follows by induction on the height of the positive roots.  

Conversely, if these conditions hold for $h$ and the $x_\alpha$, then, by using \eqref{eqA1}, \eqref{eqA2}, \eqref{eqA3} and \eqref{eqA4}, it is easy to check that $\sigma=\sigma_\ag+\sigma_\qg\in\Ker\partial\overline{\partial}|_{\Lambda^{1,1}}$, where $\sigma_\ag$ is defined by $h$ and $\sigma_\qg:=(y_\alpha)_{\alpha\in\Delta_\qg}$, $y_\alpha:=-\im\epsilon_\alpha x_\alpha$.  

Finally, the formula $\dim{\Ker\partial\overline{\partial}|_{\Lambda_T^{1,1}}} = \dim{S}+\dim{\zg(\hg_f)}$ follows from the fact that the $x_\alpha$, $\alpha\in\Delta_\qg^+$ are determined by $\{ x_\beta:\beta\in\Pi_\qg\}$ as in observation (c) or Remark \ref{xalfa}, concluding the proof.  
\end{proof}

\begin{theorem}\label{h11A} \hspace{1cm}
\begin{enumerate}[{\rm (i)}]
\item For any complex C-space $(M=G/K,J)$, 
$$
h^{1,1}_A= \dim{\zg(\kg)} + \dim{S}, 
$$
where $S$ is the subspace defined in Lemma \ref{h11A-lem}.  

\item If $M=G$ is a semisimple Lie group and $J$ is irreducible (see Definition \ref{Cirr}), then 
$$
h^{1,1}_A =
\left\{\begin{array}{lcl}
1, &\quad& \mbox{if $J$ is compatible with a non-zero bi-invariant} \\ 
&\quad& \mbox{symmetric bilinear form on $\ggo$}, \\ 
0, &\quad& \mbox{otherwise}.
\end{array}\right.
$$
\end{enumerate}
\end{theorem}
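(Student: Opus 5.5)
For part (i) I will compute $H^{1,1}_A$ in the Tanr\'e model: $h^{1,1}_A=\dim\Ker\partial\overline{\partial}|_{\Lambda_T^{1,1}}-\dim(\partial\Lambda_T^{0,1}+\overline{\partial}\Lambda_T^{1,0})$. Lemma \ref{h11A-lem} gives the first term as $\dim\zg(\hg_f)+\dim S$, so it remains to show that the image has dimension $\dim\ag_f=\dim\zg(\hg_f)-\dim\zg(\kg)$. By the description of the Tanr\'e operators, $\overline{\partial}\Lambda^{1,0}_\ag=\{d\theta_A=-\sigma_A:A\in\ag^{1,0}\}$ and $\partial\Lambda^{0,1}_\ag=\{-\sigma_A:A\in\ag^{0,1}\}$, both contained in $\Lambda^{1,1}_\qg$. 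Their sum is therefore $\{\sigma_A:A\in\ag^c\}$, since $\ag^c=\ag^{1,0}\oplus\ag^{0,1}$. The map $A\mapsto\sigma_A$ has kernel $\zg(\ggo)^c$, so the image has complex dimension $\dim\ag-\dim\zg(\ggo)=\dim\ag_f$. Subtracting gives $h^{1,1}_A=\dim\zg(\hg_f)-\dim\ag_f+\dim S=\dim\zg(\kg)+\dim S$. The one point to verify is that $\ker(\sigma_A)=\zg(\ggo)^c$ exactly. This holds because $\sigma_A$ has components $y_\alpha=Q(A_\alpha,A)$, and the $A_\alpha$ span $\ag_f^c$ by \eqref{a1g}.

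For part (ii), take $M=G$ semisimple. Then $\kg=0$, so $\zg(\kg)=0$, and also $\zg(\ggo)=0$, $\underline{\ag}_f=0$ and $\ag=\overline{\ag}=\tg_1\oplus\dots\oplus\tg_s$. Hence $h^{1,1}_A=\dim S$, where $S$ is the space of $J_\ag$-compatible $h$ of the form $h=-(z_1\kil_{\ggo_1}+\dots+z_s\kil_{\ggo_s})|_{\ag\times\ag}$. Each such $h$ is the restriction of the bi-invariant form $R_z=-\sum z_i\kil_{\ggo_i}$. Because $J_\qg$ is automatically compatible with any bi-invariant form (the $E_\alpha$ pair only with $E_{-\alpha}$), compatibility of $h$ with $J_\ag$ is the same as $J$ being compatible with $R_z$. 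So $S\cong\{z\in\RR^s:R_z \text{ is } J\text{-compatible}\}$, with the identification injective since each $\kil_{\ggo_i}|_{\tg_i}$ is nondegenerate.

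It remains to show, using irreducibility, that this space has dimension at most one. This is the main step. Suppose $z$ is a nonzero solution. I would split the index set by the distinct values of $z_i$, giving $\{1,\dots,s\}=I_1\sqcup\dots\sqcup I_p$. The form $R_z$ is a nondegenerate-on-pieces combination that acts on $\tg_{I_k}:=\bigoplus_{i\in I_k}\tg_i$ by the scalar $-z$ value relative to $Q$. Compatibility means $J_\ag$ commutes with the $Q$-self-adjoint operator $\widetilde R$, whose eigenspaces are exactly the $\tg_{I_k}$. So $J_\ag$ preserves each eigenspace. If two solutions $z,z'$ are linearly independent, some combination $z-cz'$ is a solution that is nonzero but not constant, so it has at least two eigenvalue classes. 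Its eigenspaces then give a $J_\ag$-invariant splitting of $\ag$ into sums of $\tg_i$'s, contradicting Definition \ref{Cirr}. This uses that $\zg(\ggo)=0$ and $\underline{\ag}_f=0$, so both summands are admissible.

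Therefore every nonzero solution is a constant vector, and $\dim S\le1$. Equality holds exactly when $z=(1,\dots,1)$ works, that is, when $J$ is compatible with a nonzero bi-invariant form. Any such form is $R_z$ for some $z$, and by the argument above $z$ must be constant.
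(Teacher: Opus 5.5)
Your part (i) is the paper's argument: $\Ima\partial+\Ima\overline{\partial}=d\Lambda^1=\{\sigma_A:A\in\ag^c\}$ has dimension $\dim\ag_f$, and you subtract it from Lemma \ref{h11A-lem}. Your reduction in (ii) to $S\cong\{z\in\RR^s: R_z|_{\ag\times\ag}\mbox{ is }J_\ag\mbox{-compatible}\}$ is also correct.

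The gap is the key step of (ii). Write $R_z|_{\ag\times\ag}=Q(\widetilde R\cdot,\cdot)$. Compatibility means $J_\ag^t\widetilde RJ_\ag=\widetilde R$, with transpose taken with respect to $Q$. This coincides with $[J_\ag,\widetilde R]=0$ only when $J_\ag^t=-J_\ag$, i.e.\ only when $Q|_{\ag\times\ag}$ is itself $J_\ag$-compatible. But $J_\ag$ is an arbitrary linear map with $J_\ag^2=-I$, not a $Q$-orthogonal one. Your resulting claim that every nonzero solution is constant is in fact false. Take $\ggo=\sug(3)\oplus\sug(3)$ and $Q$-orthonormal bases $\{u_i,v_i\}$ of $\tg_i$, and set $J_\ag u_1=v_1+u_2$, $J_\ag v_1=-u_1-v_2$, $J_\ag u_2=v_2$, $J_\ag v_2=-u_2$. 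Then $J_\ag^2=-I$, and $\tg_1$ is not $J_\ag$-invariant, so $J$ is irreducible. Since $h(J_\ag u_1,J_\ag u_1)=z_1+z_2$ must equal $z_1$, compatibility forces $z_2=0$, and one checks that $S=\RR(1,0)$. So a non-constant solution exists, and your criterion ``$z=(1,\dots,1)$ works'' would give $h^{1,1}_A=0$ instead of $1$.

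The paper uses a compatible form rather than $Q$ as reference. Suppose $h,h'\in S$, $h'$ is nondegenerate, and $h=h'(H\cdot,\cdot)$. Then $h'(HJ_\ag X,Y)=h(J_\ag X,Y)=-h(X,J_\ag Y)=-h'(HX,J_\ag Y)=h'(J_\ag HX,Y)$, so $H$ commutes with $J_\ag$. Since $H|_{\tg_i}=\tfrac{z_i}{z_i'}I$, its eigenspaces are $J_\ag$-invariant sums of $\tg_i$'s, and irreducibility forces $H$ to be scalar.

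This route needs a nondegenerate element of $S$. The paper asserts nondegeneracy follows from irreducibility, and the example above shows it does not. The kernel $\bigoplus_{z_i=0}\tg_i$ of an element of $S$ is $J_\ag$-invariant, but its complement need not be. Worse, take $\ggo=\sug(3)\oplus\sug(3)\oplus\sug(3)$ with $J_\ag u_i=v_i+u_3$ and $J_\ag v_i=-u_i-v_3$ for $i=1,2$, and $J_\ag u_3=v_3$, $J_\ag v_3=-u_3$. None of the three splittings allowed in Definition \ref{Cirr} is $J_\ag$-invariant, yet $S=\{z_3=0\}$ is two-dimensional. So a correct proof of $\dim S\le 1$ needs the stronger hypothesis that no proper nonzero sum of the $\tg_i$ is $J_\ag$-invariant. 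Under that hypothesis every nonzero $h\in S$ is nondegenerate, and the argument above goes through.
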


\begin{remark}\label{h11A-strict2}
According to Remark \ref{h11A-strict}, $h^{1,1}_A=\dim{\zg(\kg)}$ for any complex C-space $M=G/K$ with $G$ semisimple and without group factor (i.e., $\zg(\ggo)=0$ and $\overline{\ag}=0$).  
\end{remark}

\begin{remark}\label{h11A-red} 
In part (ii), $h^{1,1}_A=1$ in the particular case when $J$ is compatible with a bi-invariant metric, a result that was obtained in \cite{Brb}.  The biinvariant form in (ii) is unique up to scaling and necessarily non-degenerate by irreducibility, i.e., $z_i\ne 0$ for all $i=u+1,\dots,s$ in Lemma \ref{h11A-lem}, (ii) ($u=0$ in this case).  We note that if $J$ is reducible, then $h^{1,1}_A$ is precisely the number of irreducible complex factors of $(G,J)$ (see Definition \ref{Cirr}), or equivalently, the dimension of the vector space of all $J$-compatible symmetric bilinear forms on $\ggo$.      
\end{remark}

\begin{remark} 
It is common to have $h^{1,1}_A\geq 2$ for irreducible complex C-spaces beyond the semisimple Lie group case $M=G$, even with $b_1=0$, see Examples \ref{A1} and \ref{A2}.    
\end{remark}

\begin{proof}
Since $\overline{\partial}\Lambda^{1,0}_\ag=d\Lambda^{1,0}_\ag$ and $\partial\Lambda^{0,1}_\ag=d\Lambda^{0,1}_\ag$, we have that 
$$
\Ima \partial+\Ima \overline{\partial}=d\Lambda^1 =\{\sigma_A:A\in\ag^c\}, 
\qquad
\dim{d\Lambda^1}= \dim{\ag_f}=2d-\dim{\zg(\ggo)}.    
$$
We therefore obtain from Lemma \ref{h11A-lem} that 
\begin{align*}
h^{1,1}_A=& \dim{\Ker\partial\overline{\partial}|_{\Lambda^{1,1}}} - \dim{(\Ima\partial+\Ima\overline{\partial})} \\ 
=& \dim{S}+\dim{\zg(\hg_f)} - \dim{\ag_f} =\dim{S}+\dim{\zg(\kg)},
\end{align*}
so part (i) follows.  

If $M=G$ and $G$ is semisimple as in part (ii), then $\ag=\overline{\ag}=\tg_1\oplus\dots\oplus\tg_s$, where $\ggo=\ggo_1\oplus\dots\oplus\ggo_s$ in simple factors.  Any $h\in S$ is therefore a biinvariant form as in Lemma \ref{h11A-lem}, (ii), which are necessarily non-degenerate if nonzero by irreducibility.  Moreover, given $h,h'\in S$, the operator $H:\overline{\ag}\rightarrow\overline{\ag}$ defined by $h=h'(H\cdot,\cdot)$ leaves the above decomposition invariant and satisfies that $H|_{\tg_i}=\tfrac{z_i}{z_i'}I$ for all $i=1,\dots,s$.  Since $J_\ag$ commutes with $H$, we obtain that $H$ is a multiple of the identity map by irreducibility.  Thus $\dim{S}\leq 1$, where equality holds if and only if $M=G$ admits a non-zero biinvariant symmetric form compatible with $J_\ag$.  
\end{proof}

If the $2d\times 2d$-matrices of $J_\ag$ and of a symmetric bilinear form $h$ in terms of the same basis are respectively given by 
\begin{equation}\label{JH}
J_\ag=\left[\begin{matrix} 0&-I\\ I&0 \end{matrix}\right], \qquad 
H=\left[\begin{matrix} A&B\\ -B&A \end{matrix}\right], \quad A^t=A, \quad B^t=-B,
\end{equation}
then $h$ is compatible with $J_\ag$, giving rise to a subspace of $(1,1)$-forms of dimension $d^2$.  Note that $B=0$ for $d=1$.   

\begin{example}\label{A1}
We consider a complex C-space $(M=G/K,J)$ without flag factor such that 
$$
\dim{\underline{\ag}_f}=\dim{\zg(\ggo)}+\dim{\overline{\ag}} = d,
$$ 
and $J_\ag$ is as in \eqref{JH} relative to the decomposition $\ag=\underline{\ag}_f\oplus(\zg(\ggo)\oplus\overline{\ag})$ and some $Q$-orthonormal basis.  In particular, $J$ is always irreducible. Since $h\in S$ if and only if $A=0$, we obtain that $\dim{S}=\binom{d}{2}$ and hence 
$$
h^{1,1}_A=\dim{\zg(\kg)}+\tbinom{d}{2}, 
$$
by Theorem \ref{h11A}, (i).  Note that $M$ is a strict C-space with $b_1=d$ if $\overline{\ag}=0$, and that $M$ has finite fundamental group (or $G$ semisimple, or $b_1=0$) if $\zg(\ggo)=0$.   
\end{example}

\begin{example}\label{A2}
If $(M=G=T^z\times\overline{G},J)$ is a Lie group such that 
$$
\dim{\zg(\ggo)}=\dim{\overline{\ag}} = d,
$$ 
and $J_\ag$ is as in \eqref{JH} relative to the decomposition $\ag=\zg(\ggo)\oplus\overline{\ag}$ and some $Q$-orthonormal basis, then $J$ is irreducible and $h\in S$ if and only if $A$ is the diagonal matrix with entries $z_i$ of multiplicity $\dim{\tg_i}$, $i=1,\dots,s$, where $\overline{\ggo}=\ggo_1\oplus\dots\oplus\ggo_s$ in simple factors (see Lemma \ref{h11A-lem}).  Thus $\dim{S}=s+\binom{d}{2}$ and by Theorem \ref{h11A}, (i),
$$
h^{1,1}_A=s+\tbinom{d}{2}. 
$$  
This provides a large family of pluriclosed left-invariant metrics on $M=G$ which, on the maximal torus $\ag$ of $\ggo$, are not the restriction of a biinvariant metric (see \S\ref{SKT-sec}).  
\end{example}

We now consider instead the following $2d\times 2d$-matrices of $J_\ag$ and of a compatible symmetric bilinear form $h$ in terms of the same basis, where $d$ is even: 
\begin{equation}\label{JH2}
J_\ag=\left[\begin{matrix} 
J_1&0\\ 
0&J_1 
\end{matrix}\right], \qquad 
H=\left[\begin{matrix} A&B\\ B^t&C \end{matrix}\right], \quad A^t=A, \quad C^t=C, 
\end{equation}
where $J_1$ is any skew-symmetric $d\times d$-matrix such that $J_1^2=-I$ and $AJ_1=J_1A$, $BJ_1=J_1B$ and $CJ_1=J_1C$.  This also gives a subspace of $(1,1)$-forms of dimension $d^2=\tfrac{d^2}{4}+\tfrac{d^2}{2}+\tfrac{d^2}{4}$.     

\begin{example}\label{A3}
Here we take $J_\ag$ as in \eqref{JH2} relative to the corresponding decompositions. Under the assumption in Example \ref{A1}, we obtain that $J$ is always reducible, $A=0$ and by Theorem \ref{h11A}, (i),
$$
h^{1,1}_A=\dim{\zg(\kg)}+\tfrac{d^2}{2} +t +\tbinom{\dim{\zg(\ggo)}+1}{2} + \dim{\zg(\ggo)}\dim{\overline{\ag}},
$$
where $t=h^{1,1}_A(\overline{G})$ is the number of irreducible complex factors of $(\overline{G},(J_1,J_{\overline{\qg}}))$ (see Remark \ref{h11A-red}).  Note that $\dim{\zg(\kg)}=h^{1,1}_A(\underline{G}/K)$, where the strict C-space $\underline{G}/K$ is endowed with the complex structure $(J_1,J_{\underline{\qg}}))$.  

On the other hand, in the Lie group case as in Example \ref{A2},  
$
h^{1,1}_A=\tfrac{3d^2}{4} +s.  
$
\end{example}

\begin{remark}\label{c1A}
If the {\it first Aeppli-Chern class} $c_1^{AC}(M):=[\sigma]\in H^{1,1}_A(M)$ of a complex C-space $M$ (see \cite{BrdStn}) vanishes, then $c_1(M)=0$ (see \S\ref{fCc-sec}).  Indeed, if $\sigma=\partial\theta_A+\overline{\partial}\theta_B$ for some $A\in\ag^{0,1}$ and $B\in\ag^{1,0}$, then $\sigma=d\theta_{A+B}$ and so $[\sigma]=0\in H^2_{dR}(M)$.  
\end{remark}

\subsection{Dolbeault and Aeppli interplay}\label{h21-h11A}
In this section, we provide a lower bound for the Hodge number $h^{2,1}$ of a complex C-space $M=G/K$ with $G$ semisimple, which according to Proposition \ref{h3D} is given by  
$$
h^{2,1}=D-\tbinom{d}{2}, \qquad 
D=\dim{\Ker{\overline{\partial}|_{(\Lambda^{1,0}_\ag\otimes\Lambda^{1,1}_\qg)\oplus \Lambda^{2,1}_\qg}}}. 
$$ 
The bound is obtained by using the map $\partial:\Lambda^{1,1}_T\rightarrow\Lambda^{2,1}_T$, which, as well known, defines a map in cohomology given by $H^{1,1}_A\rightarrow H^{2,1}_{\overline{\partial}}$, $[\sigma]\mapsto[\partial\sigma]$.  Indeed, if $[\sigma]=[\sigma']$ in $H^{1,1}_A$, then $\sigma-\sigma'=\partial(\theta_1)+\overline{\partial}(\theta_2)$ and so $\partial\sigma-\partial\sigma'=\partial \overline{\partial}(\theta_2)$, that is, $[\partial\sigma]=[\partial\sigma']$ in $H^{2,1}_{\overline{\partial}}$.  

\begin{proposition}\label{h21bound}
For any complex C-space $M=G/K$ with $G$ semisimple, 
$$
h^{2,1}\geq\dim{S}, 
$$
where $S$ is the subspace of all symmetric bilinear forms $h:\ag\times\ag\rightarrow\RR$ such that $h$ is compatible with $J_\ag$ and satisfies Lemma \ref{h11A-lem}, (i) and (ii). 
\end{proposition}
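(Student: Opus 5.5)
The plan is to use the map $H^{1,1}_A\rightarrow H^{2,1}_{\overline{\partial}}$, $[\sigma]\mapsto[\partial\sigma]$, just described, restricted to the classes coming from $S$. For each $h\in S$, Lemma \ref{h11A-lem} produces a form $\sigma^h=\sigma_\ag+\sigma_\qg\in\Ker\partial\overline{\partial}|_{\Lambda^{1,1}_T}$ with $h(\sigma_\ag)=h$; the $x_\alpha$ are fixed by condition (iii) once we set $x_\beta:=0$ for $\beta\in\Pi_\qg$, using Remark \ref{xalfa}. This defines a linear map $S\rightarrow H^{2,1}_{\overline{\partial}}$, $h\mapsto[\partial\sigma^h]$, and $\partial\sigma^h$ is $\overline{\partial}$-closed because $\overline{\partial}\partial\sigma^h=-\partial\overline{\partial}\sigma^h=0$. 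It suffices to show that this map is injective.

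Suppose $\partial\sigma^h=\overline{\partial}\psi$ with $\psi\in\Lambda^{2,0}_T=\Lambda^{2,0}_\ag$. Since $G$ is semisimple, $\ag_f=\ag$, and by observation (b), $\overline{\partial}\psi=0$ holds only if $\iota_{\ag^{1,0}}\psi=0$, i.e.\ only for $\psi=0$. More usefully, $\overline{\partial}\Lambda^{2,0}_\ag\subset\Lambda^{1,0}_\ag\otimes\Lambda^{1,1}_\qg$. I will therefore compare components. The component of $\partial\sigma^h$ in $\Lambda^{1,0}_\ag\otimes\Lambda^{1,1}_\qg$ is $\partial\sigma_\ag$, given by Lemma \ref{do} as $E_\alpha,E_{-\alpha},A\mapsto\sigma_\ag(A,A_\alpha)$ with $A\in\ag^{1,0}$. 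The component of $\overline{\partial}\psi$ is $(A,E_\alpha,E_{-\alpha})\mapsto-\psi(A_\alpha^{1,0},A)$ by (b).

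Equating the two, for all $A\in\ag^{1,0}$ and $\alpha\in\Delta_\qg$ we get
$$
\sigma_\ag(A,A_\alpha^{0,1})=\sigma_\ag(A,A_\alpha)=-\psi(A_\alpha^{1,0},A).
$$
The $A_\alpha$ span $\ag^c$. Here I will use that $A_{-\alpha}=-A_\alpha$ and that $A_\alpha\in\im\ag$, so conjugation relates $A_\alpha^{1,0}$ and $A_\alpha^{0,1}$. The left-hand side is the pairing of a $(1,1)$-form, symmetric-type in the Hermitian sense; the right-hand side is antisymmetric in its $(1,0)$ arguments. Choosing $\alpha$ with $A_\alpha$ ranging over a basis $\{A_\beta:\beta\in\Pi_\qg\}$, I expect to derive that $h(\im A_\alpha,\im A_\beta)$ is both symmetric and antisymmetric, hence zero, which forces $h=0$.

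I expect this last linear-algebra step to be the main obstacle: converting the identity into the vanishing of $h$ by exploiting the symmetric versus skew-symmetric nature, and handling the $\Lambda^{2,1}_\qg$ components as a consistency check. If it fails directly, I would also include the $\Lambda^{2,1}_\qg$ component equation, which shows that $\partial\sigma_\qg$ has no contribution from $\psi$ and so constrains the $x_\alpha$. Injectivity then gives $h^{2,1}\geq\dim{S}$.
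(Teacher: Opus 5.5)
Your argument is correct and is essentially the paper's. Both hinge on comparing the $\Lambda^{1,0}_\ag\otimes\Lambda^{1,1}_\qg$-components of $\partial\sigma$ and $\overline{\partial}\psi$ via Lemma \ref{do} and observation (b); the only bookkeeping difference is that you use a linear section $h\mapsto\sigma^h$, whereas the paper computes $\dim{\partial\Ker{\partial\overline{\partial}|_{\Lambda^{1,1}_T}}}=\dim{S}$ and then shows that $\partial\sigma\in\Ima\overline{\partial}$ forces $\partial\sigma=0$.

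The step you flag as the obstacle closes at once, so the $\Lambda^{2,1}_\qg$ fallback is unnecessary. Take $A=A_\beta^{1,0}$ in your identity and symmetrize in $\alpha,\beta$: this gives $h(\im A_\alpha,\im A_\beta)=-\im\bigl(\psi(A_\alpha^{1,0},A_\beta^{1,0})+\psi(A_\beta^{1,0},A_\alpha^{1,0})\bigr)=0$, and since the $\im A_\alpha$ span $\ag=\ag_f$, $h=0$. The paper instead extends the identity $\CC$-linearly to all $B\in\ag^c$ in place of $A_\alpha$, then evaluates on $B\in\ag^{1,0}$ and $B\in\ag^{0,1}$ separately.
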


\begin{remark}\label{h21-LG}
In particular, for the Lie group $M=T^d\times\overline{G}$, where $\rank(\overline{G})=d$, as in Example \ref{A2}, we obtain that $h^{2,1}\geq s+\tbinom{d}{2}$.   
\end{remark}

\begin{proof}
Since
$$
\partial:\Lambda^{1,1}_T=\Lambda^{1,1}_\ag\oplus\Lambda^{1,1}_\qg \rightarrow 
(\Lambda^{1,0}_\ag\otimes\Lambda^{1,1}_\qg)\oplus \Lambda^{2,1}_\qg \subset \Lambda^{2,1}_T,
$$
$\partial\Ker{\partial\overline{\partial}|_{\Lambda^{1,1}_T}}\subset \Ker{\overline{\partial}|_{(\Lambda^{1,0}_\ag\otimes\Lambda^{1,1}_\qg)\oplus \Lambda^{2,1}_\qg}}$ and we obtain from Lemma \ref{h11A-lem} that 
$$
\dim{\partial\Ker{\partial\overline{\partial}|_{\Lambda^{1,1}_T}}} 
= \dim{\zg(\hg_f)}+\dim{S} - \dim{\Ker{\partial|_{\Ker{\partial\overline{\partial}|_{\Lambda^{1,1}_T}}}}}.   
$$ 
If $\sigma=\sigma_\ag+\sigma_\qg\in \Ker{\partial\overline{\partial}|_{\Lambda^{1,1}_T}}$ as described in Lemma \ref{h11A-lem}, then $\partial\sigma=\partial\sigma_\ag+\partial\sigma_\qg=0$ if and only if $\partial\sigma_\ag=0$ and $\partial\sigma_\qg=0$, if and only if $\sigma_\ag=0$ since $\ag=\ag_f$ (in particular, $z_{u+1}=\dots=z_s=0$) and $x_{\alpha+\beta}=x_\alpha+x_\beta$ for all $\alpha,\beta,\alpha+\beta\in\Delta_\qg^+$.  This implies that 
$
\dim{\Ker{\partial|_{\Ker{\partial\overline{\partial}|_{\Lambda^{1,1}_T}}}}} = \dim{\zg(\hg_f)},
$  
so $\dim{\partial\Ker{\partial\overline{\partial}|_{\Lambda^{1,1}_T}}} =\dim{S}$.  

On the other hand, if $[\partial\sigma]=0\in H^{2,1}_{\overline{\partial}}$, then there exists $\sigma'\in\Lambda_T^{2,0}=\Lambda_\ag^{2,0}$ such that $\partial\sigma=\overline{\partial}\sigma'$, that is, $\partial\sigma_\ag=\overline{\partial}\sigma'$ and $\partial\sigma_\qg=0$.  It follows from Lemma \ref{do} that
$$
\sigma_\ag(A,A_\alpha^{0,1}) = d\sigma_\ag(A,E_\alpha,E_{-\alpha}) 
= d\sigma'(A,E_\alpha,E_{-\alpha}) =\sigma'(A,A_\alpha^{1,0}), \qquad\forall A\in\ag^{1,0}, \alpha\in\Delta_\qg, 
$$
which implies that $\sigma_\ag(A,B^{0,1}) = \sigma'(A,B^{1,0})$ for all $B\in\ag^c$ since the $A_\alpha$ generates $\ag$.  Thus $\sigma_\ag=\sigma'=0$ and hence $\partial\sigma=0$.  

We therefore obtain that 
$$
h^{2,1}=\dim{H^{2,1}_{\overline{\partial}}}\geq \dim{\partial\Ker{\partial\overline{\partial}|_{\Lambda^{1,1}_T}}} =\dim{S},
$$
as was to be shown.  
\end{proof}

\begin{example}\label{h21}
For any complex C-space $(M=G/K,J)$ without flag factor such that $G$ is semisimple as in Example \ref{A1}, i.e., $\dim{\underline{\ag}}=\dim{\overline{\ag}} = d$ and $J_\ag$ is as in \eqref{JH}, we obtain from Proposition \ref{h21bound} that $h^{1,1}_A=\dim{\zg(\kg)}+\tbinom{d}{2}$ and 
$$
h^{2,1}\geq\tbinom{d}{2}.   
$$   
The simplest examples of this kind having $h^{2,1}>0$ are $M^{20}=\SU(4)/\SU(2)\times\SU(3)$, which has $d=2$ and $h^{1,1}_A=1$, and actually any C-space of the form $M=\underline{G}/\SU(2)\times\overline{G}$, where $\underline{G}$ and $\overline{G}$ are simple Lie groups of rank $3$ and $2$, respectively.  One can also take $M=\underline{G}/S^1\times\overline{G}$, for which $h^{1,1}_A=2$ and $h^{2,1}>0$.  
\end{example}

\begin{remark}\label{h21-ce}
Since all the above examples are irreducible as complex manifolds, they provide counterexamples to \cite[Lemma 6.4]{FinGrnVzz}.  This lemma was strongly used in the proof of \cite[Theorem 6.1]{FinGrnVzz}, that is, the classification of pluriclosed complex C-spaces.  In \S\ref{SKT-sec}, we give an alternative proof for such classification using the full description of $\Ker{\partial\overline{\partial}|_{\Lambda^{1,1}_T}}$ given in Lemma \ref{h11A-lem}.  
\end{remark}

\subsection{Examples}\label{Exc-sec} 
As an application of \S\ref{dR-sec}, formulas \eqref{hn}, Propositions \ref{eqs}, \ref{h3D}, \ref{hBC} and Theorem \ref{h11A}, we compute many low Betti, Hodge, Bott-Chern and Aeppli numbers among some classes of C-spaces.   

\begin{example}\label{su3t3}
For any complex C-space $(M=G/K,J)$ such that 
$$
\dim{\zg(\ggo)}=d+1, \quad\dim{\ag_f}=\dim{\ag_f^{1,0}}=d-1, \quad \dim{\zg(\ggo)^c\cap\ag^{1,0}}=1,  
$$ 
which is not K\"ahler for any $d\geq 2$, we obtain that 
$$
h^{1,0}=1, \quad h^{0,1}=d, \qquad b_1=d+1, 
$$
$$
h^{2,0}=0, \quad h^{1,1}=h^{1,1}_{BC}=\dim{\zg(\hg_f)}+1, \quad h^{0,2}=\tbinom{d}{2}, \qquad b_2=\dim{\zg(\kg)}+\tbinom{d+1}{2}.   
$$
In particular, $h^{1,0}+h^{0,1}=b_1$ and $h^{2,0}+h^{1,1}+h^{0,2}=b_2$, i.e., the Fr\"olicher inequality is sharp for $k=1,2$.  As an explicit example, we consider   
any invariant complex structure on the C-space $M^{8}=\SU(3)/\SU(2)\times T^3$, where $F^{4}=\SU(3)/\Se(\U(1)\times\U(2))$, $\dynkin[scale=2] A{o*}$, $\pg=\ag_f\oplus\zg(\ggo)\oplus\qg$, 
$$
\dim{\zg(\ggo)}=3, \quad \dim{\ag_f}=\dim{\zg(\hg_f)}=1, \quad \dim{\qg}=4, \quad d=2. 
$$
Thus
$$
h^{1,0}=1, \quad h^{0,1}=2, \qquad b_1=3, 
$$
$$
h^{2,0}=0, \quad h^{1,1}=h^{1,1}_{BC}=2, \quad h^{0,2}=1, \qquad b_2=3, 
$$
It is easy to check that $h^{1,1}_A=3$.  
\end{example}

\begin{example}\label{LG-c}
Consider the Lie group case given in Example \ref{LG}, i.e., $M=G=\overline{G}\times T^z$, where $\ggo=\overline{\ggo}\oplus\zg(\ggo)$, $\ag=\overline{\tg}\oplus\zg(\ggo)$ and $\ag_f=\overline{\tg}=\zg(\hg_f)$, the maximal torus of the semisimple Lie algebra $\overline{\ggo}$.  We obtain that  
$$
b_1=\dim{\zg(\ggo)}, \quad b_2=\tbinom{\dim{\zg(\ggo)}}{2}, \quad b_3=\tbinom{\dim{\zg(\ggo)}}{3}+s,
$$
where $s$ is the number of simple factors of $\overline{\ggo}$, 
$$
h^{1,0}=\dim{\zg(\ggo)^c\cap\ag^{1,0}}, \quad h^{0,1}=d, 
$$
$$
h^{2,0}=\tbinom{d-\dim{\overline{\tg}^{1,0}}}{2}, \quad h^{1,1}=(d-\dim{\overline{\tg}^{1,0}})d+\dim{\overline{\tg}}-d+\dim{\zg(\ggo)^c\cap\ag^{1,0}}, \quad h^{0,2}=\tbinom{d}{2}, 
$$
$$
h^{3,0}=\tbinom{d-\dim{\overline{\tg}^{1,0}}}{3}, \qquad h^{2,1}=D-\tbinom{d}{2} 
+(d+1)\tbinom{d-\dim{\overline{\tg}^{1,0}}}{2}, 
$$
$$
h^{1,2}= d (\dim{\overline{\tg}}-d) +\tbinom{d+1}{2}(d-\dim{\overline{\tg}^{1,0}}),  \qquad h^{0,3}=\tbinom{d}{3}, 
$$
$$
h^{1,1}_{BC}= \dim{\overline{\tg}}+ (d-\dim{\overline{\tg}^{1,0}})^2, \qquad h^{2,1}_{BC}=\dim{\Lambda^{2,1}_c}. 
$$
In particular, $h^{1,0}+h^{0,1}=b_1$ if and only if $h^{2,0}+h^{1,1}+h^{0,2}=b_2$, if and only if $\zg(\ggo)+J_\ag\zg(\ggo)=\zg(\ggo)\oplus\overline{\tg}$.  See Theorem \ref{h11A} and Example \ref{A2} for $h_A^{1,1}$ and Remark \ref{h21-LG} for $h^{2,1}$.  Note that the above formulas greatly simplify in the case when $M=G$ is a semisimple Lie group, i.e., $\zg(\ggo)=0$ and $\dim{\overline{\tg}^{1,0}}=d$.   
\end{example}

\begin{example}\label{b21-c}
For the generalized Calabi-Eckmann manifolds given in Example \ref{b21-comp}, with $F_2\ne S^1$, $d=1$, $\zg(\ggo)=\zg(\kg)=0$, $s=2$ and $\dim{\zg(\hg_f)}=2$, so we have that  
$$
h^{1,0}=0, \quad h^{0,1}=1, \qquad b_1=0, 
$$
$$
h^{2,0}=0, \quad h^{1,1}=1, \quad h^{0,2}=0, \qquad b_2=0, 
$$
$$
h^{3,0}=0, \quad h^{2,1}=D, 
\quad
h^{1,2}=1,  \quad h^{0,3}=0, \qquad b_3=0,
$$
$$
h^{1,1}_{BC}= 2, \quad h^{2,1}_{BC}=0; \qquad h^{1,1}_A=0.
$$
In particular, $h^{2,0}+h^{1,1}+h^{0,2}-b_2=1$ and $h^{3,0}+h^{2,1}+h^{1,2}+h^{0,3}-b_3=D+1$.  
\end{example}

\begin{example}\label{b21-c2}
If in the above example we consider $F_2=S^1$, then $d=1$, $\dim{\zg(\ggo)}=1$, $\zg(\kg)=0$ and $\dim{\zg(\hg_f)}=1$, so we have that  
$$
h^{1,0}=0, \quad h^{0,1}=1, \qquad b_1=1, 
$$
$$
h^{2,0}=0, \quad h^{1,1}=0, \quad h^{0,2}=0, \qquad b_2=0, 
$$
$$
h^{3,0}=0, \quad h^{2,1}=D, 
\quad
h^{1,2}=1,  \quad h^{0,3}=0, \qquad b_3=0,
$$
$$
h^{1,1}_{BC}= 1, \quad h^{2,1}_{BC}=0; \qquad h^{1,1}_A=0.
$$
\end{example}

\begin{example}\label{h21-cex}
Starting from the full flag manifold $F=\SU(3)/\Se(\U(1)^3)$, for each pair $p,q\in\NN$, we consider the C-space $M^{14}=G/K=\SU(3)\times\SU(3)/K$ endowed with any $G$-invariant complex structure $J=(J_\ag,J_\qg)$ as in \eqref{J}, where, if $\hg_1$ denotes the Lie algebra of the $2$-torus $\Se(\U(1)^3)$, then 
$$
\kg:= \{ (pZ,qZ):Z\in\hg_1\}\subset\hg_1\oplus\hg_1=\hg, \qquad 
\ag:=\{ (-qZ,pZ):Z\in\hg_1\}\simeq\hg_1.    
$$ 
According to Example \ref{BRF-bal}, $H^3(M)=\RR[\vp_R]$, where $R=\kil_{\ggo_1}-\tfrac{p^2}{q^2}\kil_{\ggo_2}$.  
Here $d=1$, $\zg(\ggo)=0$, $\dim{\zg(\kg)}=2$, $\dim{\zg(\hg_f)}=4$ and $s=2$.  We have that
$$
h^{1,0}=0, \quad h^{0,1}=1, \qquad b_1=0, 
$$
$$
h^{2,0}= 0, \quad h^{1,1}=3, \quad h^{0,2}= 0, \qquad b_2=2, 
$$
$$
h^{3,0}= 0, \quad h^{2,1}=D, 
\quad
h^{1,2}=3,  \quad h^{0,3}= 0, \qquad b_3=1,
$$
$$
h^{1,1}_{BC}=4; \qquad h^{1,1}_A=2. 
$$
\end{example}

% 28/8/2026

\section{Homogeneous Hermitian manifolds}\label{chhm-sec} 

In this section, we study the geometry of $G$-invariant Hermitian structures on a C-space.  

\subsection{Hermitian metrics}\label{met-sec}
Given a C-space as in \eqref{gprime}, 
\begin{quote}
$M=G/K=G_f/K\times T^z$, which fibers on the flag $F=G_f/H_f$ with fiber the torus $A=H_f/K\times T^z$,  
\end{quote}
we consider the reductive decomposition $\ggo=\kg\oplus\pg$, $\pg=\ag\oplus\qg$ as in \eqref{reddec2} and the $G$-invariant Hermitian structures given by 
\begin{equation}\label{Jg} 
(J,g), \quad J=(J_\ag,J_\qg), \quad J_\qg=(\epsilon_\alpha)_{\alpha\in\Delta_\qg} \leftrightarrow \Delta_\qg^+, \qquad 
g=g_\ag+g_\qg, \quad g_\qg=(x_\alpha)_{\alpha\in\Delta_\qg}, 
\end{equation}
where $J$ and $g$ are respectively defined in \eqref{J} and \eqref{g} and  
$\Delta_\qg^+$ is the invariant ordering defining $J_\qg$, i.e., $\Delta_\qg^+=\{\alpha\in\Delta_\qg:\epsilon_\alpha=1\}$ (see \S\ref{comp}).  The compatibility condition is simply $g_\ag(J_\ag\cdot,J_\ag\cdot)=g_\ag(\cdot,\cdot)$, giving rise to a large class of $G$-invariant Hermitian structures $(J,g)$ on a given $M=G/K$.  We note that the metric $g$ is projectable on the base $F$ and makes of the Tits fibration $A\rightarrow M\rightarrow F$ a Riemannian fibration.  

In particular, for any $G$-invariant metric $g=g_\ag+g_\qg$, there always exist infinitely many $G$-invariant complex structure $J=(J_\ag,J_\qg)$ with respect to which $g$ is Hermitian, unless $\ag=0$ (i.e., $M$ is a flag manifold).  

The following definition will make the presentation simpler.  

\begin{definition}\label{HC-def}
The Hermitian manifold $(M=G/K,J,g)$, where $(J,g)$ is as in \eqref{Jg}, is called a {\it Hermitian C-space}.  
\end{definition}

\begin{remark}\label{aver}
According to \S\ref{extra-sec}, there may exist $G$-invariant Hermitian metrics on a complex C-space $(M=G/K,J)$ which are not of the form $g=g_\ag+g_\qg$ as in \eqref{g} for any flag, i.e., which do not descend to the base of any Tits fibration.  Nevertheless, since $H_f\subset N_G(K)$, for any $G$-invariant metric $\hat{g}$ on a C-space $M=G/K=G_f/K\times T^z$, we can consider the {\it symmetrization} with respect to the right $H_f$-action given by 
$$
g(X,Y) := \int_{H_f} R_h^*\hat{g}(X,Y)\; d\nu(h), 
$$
where $R_h(aK):=ahK$ for all $h\in H_f$, $a\in G$ (see \ref{gauge}).  Thus $g$ is $G$-invariant and in addition $\Ad(H_f)$-invariant, which implies that it is of the form $g=g_\ag+g_\qg$ as in \eqref{g}.  Some properties are invariant under symmetrization, e.g., pluriclosed and CYT (see \cite{FinGrn1}). 
\end{remark}

The K\"ahler form $\omega=g(J\cdot,\cdot)\in\Lambda^{1,1}$ (see \S\ref{cohom-sec}) is given by 
\begin{equation}\label{o}
\omega=\omega_\ag+\omega_\qg, 
\end{equation}
where $\omega_\ag=g_\ag(J_\ag\cdot,\cdot)$, and the only nonzero components of $\omega_\qg$ are   
$$
\omega_\qg(E_\alpha,E_{-\alpha})=-\im\epsilon_\alpha x_\alpha,  \qquad\forall \alpha\in\Delta_\qg,   
$$
or equivalenlty, $\omega_\qg(e_\alpha,f_{\alpha})= x_\alpha$ for all $\alpha\in\Delta_\qg^+$.  

The following vector, called {\it metric Koszul vector}, plays a crucial role in the geometry of a Hermitian C-space $(M=G/K,J,g)$ (see \cite[Section 3]{Pds}): 
\begin{equation}\label{koszul-g}
Z_{J_\qg,g_\qg}:=\sum_{\alpha\in\Delta_\qg^+}\tfrac{1}{x_\alpha}  \im H_\alpha 
=\sum_{\alpha\in\Delta_\qg^+}\tfrac{1}{x_\alpha} Z_\alpha  \in \zg(\hg_f)=\zg(\kg)\oplus\ag_f.  
\end{equation}
Note that $Z_{J_\qg,g_\qg}\in C(J_\qg)$, the cone defined in \eqref{cone}; indeed, it follows from \eqref{rhoi} that 
$$
Z_{J_\qg,g_\qg}:= \tfrac{\dim{\mg_1}}{2x_1}Z_{\alpha_1}+\dots+\tfrac{\dim{\mg_r}}{2x_r}Z_{\alpha_r}, 
$$
where $\alpha_1,\dots,\alpha_r$ are the central roots.  Moreover, 
$$
C(J_\qg)=\{Z_{J_\qg,g_\qg}:g_\qg\;\mbox{is a $G_f$-invariant metric on $F=G_f/H_f$}\}.
$$
We have that $Z_{J_\qg,g_\qg}=Z_{J_\qg}$ (i.e., the Koszul and metric Koszul vectors coincide, see \eqref{koszul2}) for any metric of the form $g=g_\ag+\gk$, where $\gk=-\kil_{\ggo_f}|_{\qg}$ is the standard metric on the flag $F=G_f/H_f$ (i.e., $x_\alpha=1$ for all $\alpha$).  

\begin{example}\label{b21-herm}
We consider the generalized Calabi-Eckmann manifolds given in Example \ref{b21-comp}, where $\zg(\kg)=0$, $\ag=\ag_f=\zg(\hg_f)=\zg(\hg_1)\oplus\zg(\hg_2)$ is $2$-dimensional and 
$$
J_\ag = \left[\begin{matrix} 
a&-\tfrac{a^2+1}{b}\\ b&-a 
\end{matrix}\right], \qquad a,b\in\RR, \quad b\ne 0,  
$$
in terms of the basis $\{Z_{\gamma_1},Z_{\gamma_2}\}$ of $\ag$, $Z_{\gamma_i}=(\im H_{\gamma_i})_{\zg(\hg_i)}$ (see also Example \ref{b21}).  The $G$-invariant Hermitian metrics of the form $g=g_\ag+g_\qg$ are given by 
$$
g_\ag=c\left[\begin{matrix} 
b^2&-ab\\ -ab&a^2+1 
\end{matrix}\right], \quad c>0, \qquad g_\qg=(x_1,\dots,x_{r_1})+(y_1,\dots,y_{r_2}), \quad x_i,y_i>0.
$$
Here $g_\ag$ is written in terms of the basis $\{Z_{\gamma_1},Z_{\gamma_2}\}$ of $\ag$.  Their metric Koszul vectors are therefore given by  
$$
Z_{J_\qg,g_\qg}=\unm\left(\sum_{j=1}^{r_1}\tfrac{j\dim{\mg_{1,j}}}{x_j}\right)Z_{\gamma_1}+ \unm\left(\sum_{j=1}^{r_2}\tfrac{j\dim{\mg_{2,j}}}{y_j}\right)Z_{\gamma_2},   
$$
and hence $C(J_\qg)=\la Z_{\gamma_1},Z_{\gamma_2}\ra_{\RR_{>0}}\subset\zg(\hg_f)=\ag_f=\ag$.  
\end{example}

\subsection{K\"ahler form}\label{form-sec}
We compute in this section some useful formulas related to the K\"ahler form $\omega$ of a Hermitian C-space.  We will use the notation of \S\ref{form2-sec}.  

The following result is a direct consequence of Lemma \ref{do} and the fact that $y_\alpha=-\im\epsilon_\alpha x_\alpha$ for the K\"ahler form $\omega$.  

\begin{lemma}\label{dco}
For any Hermitian C-space, the only possibly nonzero components of $d\omega$ are given by 
$$
d\omega(E_\alpha,E_\beta,E_\gamma)= -\im N_{\alpha,\beta}(\epsilon_\alpha x_\alpha+\epsilon_\beta x_\beta+\epsilon_\gamma x_\gamma),  \qquad \alpha+\beta+\gamma=0, 
$$
$$
d\omega(A,E_\alpha,E_{-\alpha}) =\omega_\ag(A,A_\alpha)=g_\ag(J_\ag A,A_\alpha), \qquad \forall A\in\ag^c, \;\alpha\in\Delta_\qg,  
$$
and for the $3$-form $d^c\omega=-d\omega(J\cdot,J\cdot,J\cdot)$,  
$$
d^c\omega(E_\alpha,E_\beta,E_\gamma)= \epsilon_\alpha\epsilon_\beta\epsilon_\gamma N_{\alpha,\beta}(\epsilon_\alpha x_\alpha+\epsilon_\beta x_\beta+\epsilon_\gamma x_\gamma), \qquad \alpha+\beta+\gamma=0, 
$$
$$
d^c\omega(A,E_\alpha,E_{-\alpha}) =g_\ag(A,A_\alpha),  \qquad \forall A\in\ag^c,\;\alpha\in\Delta_\qg.  
$$
On the other hand, 
$$
d\omega(A,e_\alpha,f_\alpha) = \omega_\ag(A,\im A_\alpha) 
= g_\ag(J_\ag A, \im A_\alpha), \qquad\forall A\in\ag, \;\alpha\in\Delta_\qg^+.  
$$
\end{lemma}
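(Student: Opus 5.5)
The plan is to read off every formula from Lemma \ref{do}, applied to the Kähler form $\sigma=\omega$, and then to obtain the $d^c\omega$ formulas by evaluating $J$ on the relevant vectors. By \eqref{o}, $\omega=\omega_\ag+\omega_\qg$ with $\omega_\qg(E_\alpha,E_{-\alpha})=-\im\epsilon_\alpha x_\alpha$. Hence $\omega$ is a $2$-form of the type considered in \eqref{s}, with $\sigma_\ag=\omega_\ag$ and $y_\alpha=-\im\epsilon_\alpha x_\alpha$.

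\emph{Step 1 (components of $d\omega$).} Substituting into the first formula of Lemma \ref{do} gives
$$
d\omega(E_\alpha,E_\beta,E_\gamma)=N_{\alpha,\beta}(y_\alpha+y_\beta+y_\gamma)=-\im N_{\alpha,\beta}(\epsilon_\alpha x_\alpha+\epsilon_\beta x_\beta+\epsilon_\gamma x_\gamma)
$$
whenever $\alpha+\beta+\gamma=0$. The second formula of Lemma \ref{do} gives $d\omega(A,E_\alpha,E_{-\alpha})=\omega_\ag(A,A_\alpha)=g_\ag(J_\ag A,A_\alpha)$. The real formula is the last statement of Lemma \ref{do}, again with $\sigma_\ag=\omega_\ag$. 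Lemma \ref{do} also says these are the only possibly nonzero components, so $d\omega$ is completely described.

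\emph{Step 2 (components of $d^c\omega$).} We use $d^c\omega(X,Y,Z)=-d\omega(JX,JY,JZ)$ together with $JE_\alpha=\im\epsilon_\alpha E_\alpha$. This gives
$$
d^c\omega(E_\alpha,E_\beta,E_\gamma)=-(\im)^3\epsilon_\alpha\epsilon_\beta\epsilon_\gamma\, d\omega(E_\alpha,E_\beta,E_\gamma)=\im\,\epsilon_\alpha\epsilon_\beta\epsilon_\gamma\, d\omega(E_\alpha,E_\beta,E_\gamma),
$$
and inserting Step 1 yields the stated $\epsilon_\alpha\epsilon_\beta\epsilon_\gamma N_{\alpha,\beta}(\cdots)$. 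For the mixed components, $JE_\alpha=\im\epsilon_\alpha E_\alpha$ and $JE_{-\alpha}=-\im\epsilon_\alpha E_{-\alpha}$, and their product of scalars is $1$. Therefore
$$
d^c\omega(A,E_\alpha,E_{-\alpha})=-d\omega(J_\ag A,E_\alpha,E_{-\alpha})=-g_\ag(J_\ag^2A,A_\alpha)=g_\ag(A,A_\alpha).
$$
Here we use that $J$ preserves $\ag^c$ and $\qg^c$ and acts on $\ag^c$ by the $\CC$-linear extension of $J_\ag$.

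\emph{Step 3 (no other components).} It remains to check that $d^c\omega$ has no further components. Since $J$ preserves each line $\CC E_\alpha$ and also preserves $\ag^c$, it maps each basis triple to a multiple of a triple of the same type. So the vanishing pattern of $d\omega$ carries over directly to $d^c\omega$.

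The only delicate point is sign bookkeeping: the powers of $\im$ and the product $\epsilon_\alpha\epsilon_{-\alpha}=-1$. Beyond that, the lemma is purely a substitution into Lemma \ref{do}.
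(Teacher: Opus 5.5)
Your proposal is correct and takes the paper's route: the paper obtains the lemma directly from Lemma~\ref{do} by substituting $y_\alpha=-\im\epsilon_\alpha x_\alpha$. Your evaluation of $d^c\omega=-d\omega(J\cdot,J\cdot,J\cdot)$ using $JE_\alpha=\im\epsilon_\alpha E_\alpha$ and $J|_{\ag^c}=J_\ag$ spells out the step the paper leaves implicit, and your sign checks $-(\im)^3=\im$ and $(\im\epsilon_\alpha)(-\im\epsilon_\alpha)=1$ are right.
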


In particular, $d^c\omega=0$ if and only if $d\omega=0$, if and only if $\ag_f=0$ (i.e., $(J,g)$ is K\"ahler and $(J_\qg,g_\qg)$ is K\"ahler on $F$, see Remark \ref{ext}, (iii)).  

Let $P_\ag:\ag\rightarrow\ag$ denote the positive definite $Q$-symmetric linear operator such that 
$$
g_\ag(X,Y)=Q(P_\ag X,Y), \qquad\forall X,Y\in\ag,
$$ 
and define
$$
\theta_Z:=g(\cdot,Z)\in(\Omega^1M)^G, \quad\forall Z\in\pg_0, \quad\mbox{which gives}\quad (\Omega^1M)^G = \{\theta_Z:Z\in\pg_0\}\simeq\pg_0. 
$$
Note that $\ag\subset\pg_0$ and we are using $g$ instead of $Q$ to define $\theta_Z$ (cf.\ \S\ref{dR-sec}).  If $Z\in\ag$, then $\theta_Z=g(\cdot,Z)=Q(\cdot,P_\ag Z)$ and so $d\theta_Z=0$ if and only if $P_\ag Z\in\zg(\ggo)$.  

Recall from \S\ref{Ht} the definition of $d_g^*$, the adjoint of $d$ with respect to $g$ (our convention is that $g(e_1\wedge e_2,e_1\wedge e_2)=1$ as in \cite{Gdc}).  

\begin{lemma}\label{dto} 
For any Hermitian C-space, 
$$
d_g^*\omega = -\theta_{(Z_{J_\qg,g_\qg})_\ag} =-g(\cdot,(Z_{J_\qg,g_\qg})_\ag), \qquad 
dd_g^*\omega= -\sigma_{P_\ag(Z_{J_\qg,g_\qg})_\ag} =g([\cdot,\cdot]_\pg,(Z_{J_\qg,g_\qg})_\ag), 
$$
where $g_\ag=Q(P_\ag\cdot,\cdot)$ and $(Z_{J_\qg,g_\qg})_\ag$ denotes the $Q$-orthogonal projection on $\ag$ of the metric Koszul vector $Z_{J_\qg,g_\qg}$.   
\end{lemma}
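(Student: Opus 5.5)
The plan is to compute $d_g^*\omega$ directly from the standard formula for the codifferential of a $G$-invariant $2$-form on a reductive homogeneous space, and then obtain $dd_g^*\omega$ from the formula $d\theta_A=-\sigma_A$ of \S\ref{cohom-sec}.

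Since $d_g^*\omega$ is a $G$-invariant $1$-form, it equals $\theta_Z$ for some $Z\in\pg_0$, and it is enough to evaluate it on $\pg$. We use the $g$-adjoint identity
$$
d_g^*\omega(X)=-\sum_i (\nabla_{e_i}\omega)(e_i,X),
$$
where $\{e_i\}$ is a $g$-orthonormal basis of $\pg$. Equivalently, and more conveniently, we compute $g(d_g^*\omega,\theta)=g(\omega,d\theta)$ for $\theta=\theta_Y$ with $Y\in\pg_0$; this is legitimate because the $L^2$ pairing of invariant forms on compact $M$ is the pointwise pairing times the volume. By \S\ref{dR1}, $d\theta_Y=-\sigma_{P Y}$ with $\sigma_Z=Q([\cdot,\cdot],Z)$. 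The components of $\omega$ pair only with the $(E_\alpha,E_{-\alpha})$ components of $\sigma_{PY}$ (and with an $\ag\times\ag$ part, which vanishes because $[\ag,\ag]=0$). Hence
$$
g(\omega,d\theta_Y)=-\sum_{\alpha\in\Delta_\qg^+}\omega(e_\alpha,f_\alpha)\,\sigma_{PY}(e_\alpha,f_\alpha)/x_\alpha^2,
$$
after normalizing with the orthonormal basis $\{e_\alpha/\sqrt{x_\alpha},f_\alpha/\sqrt{x_\alpha}\}$.

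Using $\omega(e_\alpha,f_\alpha)=x_\alpha$ and $[e_\alpha,f_\alpha]$ proportional to $\im H_\alpha$, we obtain $\sigma(e_\alpha,f_\alpha)=Q(\im H_\alpha,\cdot)$ up to sign. The sum therefore becomes
$$
\pm\, Q\Big(\sum_\alpha \tfrac{1}{x_\alpha}\im H_\alpha,\ PY\Big)=\pm\, Q(Z_{J_\qg,g_\qg},PY).
$$
Only the $\ag$-component survives, because $PY\in\ag$ when $Y\in\ag$. For $Y$ in the extra part $\pg_0\cap\qg$, the analogous pairing vanishes by the root bookkeeping: $\omega$ has no mixed $\ag$–$\qg$ terms and $\sigma_Y$ for $Y\in\qg$ has no $(E_\alpha,E_{-\alpha})$ components. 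So
$$
g(d_g^*\omega,\theta_Y)=-g\big((Z_{J_\qg,g_\qg})_\ag,Y\big),
$$
which gives $d_g^*\omega=-\theta_{(Z_{J_\qg,g_\qg})_\ag}$. The main obstacle is tracking the signs and normalizations: $e_\alpha,f_\alpha$ versus $E_{\pm\alpha}$, the convention $g(e_1\wedge e_2,e_1\wedge e_2)=1$, and the factor coming from $[e_\alpha,f_\alpha]$. I would fix these by checking against the known Kähler case, where $\ag_f=0$ and both sides vanish, and against Lemma~\ref{dco}.

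For the second formula, write $\theta_{Z}=g(\cdot,Z)=Q(\cdot,P_\ag Z)$ with $Z=(Z_{J_\qg,g_\qg})_\ag$. By \S\ref{dR2} and \S\ref{cohom-sec}, $d\theta_{W}^Q=-\sigma_W$ with $W=P_\ag Z$. Hence
$$
dd_g^*\omega=\sigma_{P_\ag Z}=-Q([\cdot,\cdot],P_\ag Z)\cdot(-1),
$$
where the signs are to be fixed consistently with the statement. Finally, $Q([X,Y],P_\ag Z)=Q([X,Y]_\ag,P_\ag Z)=g([X,Y]_\pg,Z)$, since $P_\ag Z\in\ag$ and $g|_\ag=Q(P_\ag\cdot,\cdot)$ with $\ag\perp\qg$. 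This yields the stated expression $g([\cdot,\cdot]_\pg,(Z_{J_\qg,g_\qg})_\ag)$.
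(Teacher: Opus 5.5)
Your argument is the paper's own: pair $d_g^*\omega$ with $\theta_X=g(\cdot,X)$, move $d$ across, and evaluate on the $g$-orthonormal basis $\{e_\alpha/\sqrt{x_\alpha},f_\alpha/\sqrt{x_\alpha}\}$ using $[e_\alpha,f_\alpha]=\im H_\alpha$. Your extra check that the pairing vanishes for $Y\in\pg_0\cap\qg$ (because $Q(\im H_\alpha,Y)=0$ and $[\ag,\ag]=0$) covers a point the paper passes over silently.

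What is missing is that you never commit to the signs, yet nothing is ambiguous. The identity $[e_\alpha,f_\alpha]=\im H_\alpha$ holds exactly, not just up to sign, and $d\theta(U,V)=-\theta([U,V]_\pg)$. Hence for $X\in\ag$,
\begin{align*}
d_g^*\omega(X)&=g(\omega,d\theta_X)=\sum_{\alpha\in\Delta_\qg^+}\tfrac{1}{x_\alpha^2}\,\omega(e_\alpha,f_\alpha)\,d\theta_X(e_\alpha,f_\alpha)\\
&=-\sum_{\alpha\in\Delta_\qg^+}\tfrac{1}{x_\alpha}\,g\big((\im H_\alpha)_\ag,X\big)=-g\big((Z_{J_\qg,g_\qg})_\ag,X\big).
\end{align*}
Your proposed sanity check in the K\"ahler case cannot fix a sign, since both sides vanish there.

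For the second formula, your own computation gives
$dd_g^*\omega=-d\big(Q(\cdot,P_\ag(Z_{J_\qg,g_\qg})_\ag)\big)=\sigma_{P_\ag(Z_{J_\qg,g_\qg})_\ag}=g([\cdot,\cdot]_\pg,(Z_{J_\qg,g_\qg})_\ag)$, using $d\,Q(\cdot,W)=-\sigma_W$ with $\sigma_W=Q([\cdot,\cdot],W)$. This agrees with the right-hand expression in the statement. It has the opposite sign to the middle expression $-\sigma_{P_\ag(Z_{J_\qg,g_\qg})_\ag}$, so the two expressions in the statement cannot both hold. The middle one is a sign slip; the paper itself uses the $+$ version when it substitutes into \eqref{rhot2}. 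Rather than adjusting signs ``consistently with the statement'', record this discrepancy and keep $+\sigma_{P_\ag(Z_{J_\qg,g_\qg})_\ag}$.
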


\begin{remark}
The {\it Lee form} $\theta_L$ of $(J,g)$, defined by 
\begin{equation}\label{lee}
\theta_L(X):=-d_g^*\omega(JX), \qquad\forall X\in\pg, 
\end{equation} 
or equivalently, as the unique $1$-form such that $d\omega^{n-1}=\theta_L\wedge\omega^{n-1}$ ($\dim{M}=2n$), is therefore given by $\theta_L=-\theta_{J_\ag(Z_{J_\qg,g_\qg})_\ag}=-g(\cdot,J_\ag(Z_{J_\qg,g_\qg})_\ag)$.  Since $(Z_{J_\qg,g_\qg})_\ag\in\ag_f$, the following conditions are equivalent: 
\begin{equation}\label{lee2}
\theta_L=0; \qquad d_g^*\omega=0; \qquad dd_g^*\omega=0; \qquad Z_{J_\qg,g_\qg}\in\zg(\kg).   
\end{equation} 
\end{remark}

\begin{proof}
Consider the $g$-orthonormal basis of $\qg$ given by 
$$
\{ X_i\}=\{\tfrac{1}{\sqrt{x_\alpha}}e_\alpha, \tfrac{1}{\sqrt{x_\alpha}}J_\qg e_\alpha:\alpha\in\Delta_\qg^+\}. 
$$  
For any $X\in\ag$, we have that 
\begin{align*}
d_g^*\omega(X) &= g(d_g^*\omega,\theta_X) = g(\omega,d\theta_X) 
=\unm\sum\omega(X_i,X_j)d\theta_X(X_i,X_j) \\ 
&= -\unm\sum\omega(X_i,X_j)\theta_X([X_i,X_j]_\pg) 
= -\sum_{\alpha\in\Delta_\qg^+} \tfrac{1}{x_\alpha^2}\omega(e_\alpha,J_\qg e_\alpha)\theta_X([e_\alpha,J_\qg e_\alpha]_\pg) \\ 
&= -\sum_{\alpha\in\Delta_\qg^+} \tfrac{1}{x_\alpha}\theta_X((\im H_\alpha)_\ag), 
= -\theta_X((Z_{J_\qg,g_\qg})_\ag) = -g_\ag((Z_{J_\qg,g_\qg})_\ag,X) \\ 
&= -Q(P_\ag(Z_{J_\qg,g_\qg})_\ag, X),
\end{align*}
concluding the proof.
\end{proof}

\subsection{Dimensions}\label{dim-sec} 
In order to give an idea of the richness of Hermitian geometry on a given C-space $M=G/K$, we now compute the dimensions of different spaces of $G$-invariant complex structures and Hermitian metrics.  We fix a Tits fibration $M\rightarrow F$.   

It follows from \S\ref{isot-sec} that if $\dim{\ag}=2d$ and $r$ is the number of irreducible factors for the isotropy representation of the corresponding flag $F=G_f/H_f$ (see \S\ref{isot}), then the space $\mca^G$ of all $G$-invariant metrics on a C-space $M=G/K$ descending on $F$ has 
$$
\dim{\mca^G}=r+d(2d+1),
$$ 
since $\dim{\Gl_{2d}(\RR)/\Or(2d)}=d(2d+1)$.  Let $\cca^G$ and $\hca^G$ denote, respectively, the space of all $G$-invariant complex and Hermitian structures on $M=G/K$ descending on $F$.  They are differentiable manifolds and we already saw in \eqref{Ccal} that 
$$
\dim{\cca^G}=2d^2,   
$$
and $\cca^G$ has twice as many connected components as the number $k$ of $G$-invariant complex structures on the flag $F=G_f/H_f$ (see \eqref{numberk}).   

For a fixed $J\in\cca^G$, the manifold $\hca^G_J$ of all Hermitian $G$-invariant metrics descending on $F$ on the complex manifold $(M=G/K,J)$ has  
$$
\dim{\hca^G_J}=r+d^2, 
$$
since $\dim{\Gl_{d}(\CC)/\U(d)}=d^2$.  Note that $\hca^G_J$ has codimension $d(d+1)$ in $\mca^G$.  It follows that 
$$
\dim{\hca^G}=r+3d^2. 
$$
On the other hand, given a metric $g\in\mca^G$, the manifold $\cca^G_g\subset\cca^G$ those which are compatible with $g$ has 
$$
\dim{\cca^G_g}=d(d-1), 
$$
since $\dim{\Or_{2d}(\RR)/\U(d)}=d(d-1)$, and so $\cca^G_g$ has codimension $d(d+1)$ in $\cca^G$.  

As seen just below Lemma \ref{c10}, the subset $\cca^G_0\subset\cca^G$ of complex structures with zero first Chern class (i.e., $Z_{J_\qg}\in\ag$) is either empty or consists of some of the connected components of $\cca^G$, so    
$$
\dim{\cca^G_0}=2d^2 \quad\mbox{if nonempty}.  
$$

\begin{example}\label{LG-2}
Concerning the Lie group case $M=G$ studied in Example \ref{LG}, we know that any left-invariant complex structure is one of $J=(J_\ag,J_\qg)$ as in \eqref{J}.  On the other hand, the metrics of the form $g=g_\ag+g_\qg$ as in \eqref{g} are far from exhaust all left-invariant metrics, they are actually those which are also $\Ad(T)$-invariant for some maximal torus $T\subset G$.  Indeed, they depend only on $d(2d+1)+|\Delta^+|$ parameters if $\dim{\ag}=2d$, much less than the $n(n+1)/2$ parameters one needs to run over all left-invariant metrics, where $n=2(d+|\Delta^+|)$.  We note that in the semisimple case, if $(M=G,J)$ is irreducible (see Definition \ref{Cirr}),  
then there exists at most one bi-invariant metric compatible with $J$ (up to scaling), which is not necessarily the Killing metric.  Indeed,  if $G=G_1\times G_2$, $G_1,G_2$ simple Lie groups, $\rank(G_1)=\rank(G_2)$ and relative to the decomposition $\tg=\tg_1\oplus\tg_2$,   
$$
J_\ag=\left[\begin{matrix} 0&-\tfrac{1}{b}I\\ bI&0 \end{matrix}\right], \qquad b\ne 0,
$$
then $J$ is irreducible and a bi-invariant metric $g_b=z_1(-\kil_{\ggo_1})+z_2(-\kil_{\ggo_2})$ is compatible with $J$ if and only if $\tfrac{z_1}{z_2}=b$.  On the other hand, if 
$$
J_\ag=\left[\begin{matrix} aI&-\tfrac{a^2+1}{b}I\\ bI&-aI \end{matrix}\right], \qquad a, b\ne 0,
$$
then there are no bi-invariant metrics compatible with $J$.  
\end{example}

% 28/8/26

\section{Canonical connections}\label{cancon-sec}

On a given Hermitian manifold $(M^{2n},J,g)$, there is a family $\nabla^t$, $t\in\RR$ of Hermitian connections (i.e., $g$ and $J$ parallel) introduced by Gauduchon in \cite{Gdc} and called {\it canonical connections}, which are defined by: 
\begin{equation}\label{cc-def}
g(\nabla^t_XY,Z)=g(\nabla^g_XY,Z)+\tfrac{t-1}{4}d^c\omega(X,Y,Z)+\tfrac{t+1}{4}d^c\omega(X,JY,JZ),
\end{equation}
for all vector fields $X,Y,Z$ on $M$, where $\nabla^g$ is the Levi-Civita connection of $(M,g)$, $\omega=g(J\cdot,\cdot)$ is the K\"ahler form and $d^c\omega:=-d\omega(J\cdot,J\cdot,J\cdot)$.  The torsion 
\begin{equation}\label{t-def}
T^t(X,Y):= \nabla^t_XY-\nabla^t_YX-[X,Y],
\end{equation}
vanishes precisely whenever the structure is K\"ahler, in which case $\nabla^t=\nabla^g$ for all $t\in\RR$.  Distinguished points in this line include the {\it Chern} connection $\nabla^c:=\nabla^1$ and the {\it Bismut} (or {\it Strominger}) connection $\nabla^b:=\nabla^{-1}$.  Note that $\nabla^t=\tfrac{t+1}{2}\nabla^c+\tfrac{1-t}{2}\nabla^b$ for all $t$.  

The (first) {\it Ricci} form associated with the connection $\nabla^t$ is defined by 
\begin{equation}\label{rf-def1}
\rho^t(X,Y):=-\unm\sum_{i=1}^{2n} g(R^t(X,Y)e_i,Je_i), 
\end{equation}
where $\{ e_i\}_{i=1}^{2n}$ is any $g$-orthonormal frame and 
\begin{equation}\label{ct-def}
R^t(X,Y):=\nabla^t_X\nabla^t_Y-\nabla^t_Y\nabla^t_X-\nabla^t_{[X,Y]}
\end{equation}
is the {\it curvature} of $\nabla^t$.  Equivalently, in terms of the $g$-unitary frame 
$$
\left\{E_i:=\tfrac{1}{\sqrt{2}}(e_i-\im Je_i)\right\}_{i=1}^n, \qquad JE_i=\im E_i, \qquad g(E_i,\overline{E_j})=\delta_{ij}, 
$$
one has that
\begin{equation}\label{rf-def2}
\rho^t(X,Y):=\im\sum_{i=1}^{n} g(R^t(X,Y)E_i,\overline{E_i}). 
\end{equation}
For any $t$, $\rho^t$ is a closed $2$-form representing the first Chern class $c_1(M,J)$ (up to scaling).  

It follows from \cite[(2.10)]{IvnPpd} that   
\begin{equation}\label{rhoBC}
\rho^b=\rho^c-dd_g^*\omega, 
\end{equation}
hence 
\begin{equation}\label{rhot}
\rho^t=\rho^c+\tfrac{t-1}{2}dd_g^*\omega, \qquad \rho^t-\rho^u=\tfrac{t-u}{2}dd_g^*\omega, \qquad\forall t,u\in\RR. 
\end{equation}
In particular, since we already have a formula for $dd_g^*\omega$ in the context of C-spaces (see Lemma \ref{dto}), it will be enough to compute the Chern Ricci form $\rho^c=\rho^1$.

\subsection{Levi-Civita connection}\label{LC-sec}
According to \S\ref{IC-sec}, the Nomizu operator $\Lambda^g$ of the Levi-Civita connection $\nabla^g$ of a $G$-invariant metric $g$ on a homogeneous space $M=G/K$ is given by
$$
\Lambda^g(X)Y = \unm[X,Y]_\pg+U(X,Y), \qquad \forall X,Y\in\pg, 
$$
where $U:\pg\times\pg\rightarrow\pg$ is the symmetric bilinear map defined by 
$$
2g(U(X,Y),Z):=g([Z,X]_\pg,Y)+g(X,[Z,Y]_\pg), \qquad \forall X,Y,Z\in\pg.
$$
Recall the notation given in \S\ref{form2-sec}.  For a C-space $M=G/K$, we consider the $\CC$-linear extensions of all tensors on $\pg^c=\ag^c\oplus\qg^c$ without further mention.    

\begin{lemma}\label{U} 
On a C-space $M=G/K$, if $g=g_\ag+g_\qg$ is as in \eqref{g}, then 
$$
U(A,B)=0, \qquad U(A,E_\alpha)=\left(\tfrac{\alpha(A)}{2}+\tfrac{g_\ag(A,A_\alpha)}{2x_\alpha}\right) E_\alpha, \qquad 
U(E_\alpha,E_{-\alpha})=0, 
$$
$$
U(E_\alpha,E_\beta)=\left\{\begin{array}{cl} 
\tfrac{N_{\alpha,\beta}(x_\beta-x_\alpha)}{2x_{\alpha+\beta}} E_{\alpha+\beta}, & \quad \alpha+\beta\in\Delta_\qg, \\  0, & \quad\alpha+\beta\notin\Delta_\qg,
\end{array}\right.
$$
for any $A,B\in\ag^c$ and $\alpha,\beta\in\Delta_\qg$, $\alpha+\beta\ne 0$.  
\end{lemma}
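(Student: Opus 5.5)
We will compute $U$ directly from its defining identity $2g(U(X,Y),Z)=g([Z,X]_\pg,Y)+g(X,[Z,Y]_\pg)$, testing against the basis $\{A'\in\ag^c\}\cup\{E_\gamma:\gamma\in\Delta_\qg\}$ of $\pg^c$. We will use the facts recalled in \S\ref{form2-sec}: $[A,E_\alpha]=\alpha(A)E_\alpha$, $[E_\alpha,E_{-\alpha}]_{\pg^c}=A_\alpha$, $[E_\alpha,E_\beta]=N_{\alpha,\beta}E_{\alpha+\beta}$ (which vanishes when $0\ne\alpha+\beta\notin\Delta_\qg$), and $[\ag,\ag]=0$. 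On the metric side we use $g(\ag^c,\qg^c)=0$, $g(E_\alpha,E_\beta)=0$ unless $\beta=-\alpha$, and $g(E_\alpha,E_{-\alpha})=-x_\alpha$. Because $g$ is nondegenerate, $U(X,Y)$ is fixed once all pairings $g(U(X,Y),Z)$ are known. Weight considerations under $\ad\tg$ tell us in advance which $Z$ can pair nontrivially: $U(X,Y)$ can only have components of total weight equal to the sum of the weights of $X$ and $Y$.

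For $U(A,B)$, the pairing with $Z=A'$ vanishes since $[\ag,\ag]=0$. The pairing with $Z=E_\gamma$ vanishes because $[E_\gamma,A]\in\qg^c$ is $g$-orthogonal to $B\in\ag^c$. Hence $U(A,B)=0$.

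For $U(A,E_\alpha)$, the pairing with $Z=A'$ gives $g([A',A],E_\alpha)+g(A,[A',E_\alpha])=0+\alpha(A')g(A,E_\alpha)=0$. The pairing with $Z=E_\gamma$ survives only when $\gamma=-\alpha$, and then equals
\[
g([E_{-\alpha},A]_\pg,E_\alpha)+g(A,[E_{-\alpha},E_\alpha]_\pg)=\alpha(A)g(E_{-\alpha},E_\alpha)-g(A,A_\alpha)=-\alpha(A)x_\alpha-g_\ag(A,A_\alpha),
\]
where we use $[E_{-\alpha},E_\alpha]_\pg=A_{-\alpha}=-A_\alpha$. Writing $U(A,E_\alpha)=cE_\alpha$, we have $2g(U,E_{-\alpha})=-2cx_\alpha$, which gives $c=\tfrac{\alpha(A)}{2}+\tfrac{g_\ag(A,A_\alpha)}{2x_\alpha}$.

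For $U(E_\alpha,E_{-\alpha})$, the pairing with $Z=A'$ is $-\alpha(A')x_\alpha+\alpha(A')x_\alpha=0$. By weight, this pairing with $\ag^c$ is the only one that could be nonzero, so $U(E_\alpha,E_{-\alpha})=0$.

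For $U(E_\alpha,E_\beta)$ with $\alpha+\beta\ne0$, the only possibly nonzero pairing is with $Z=E_{-(\alpha+\beta)}$, and it exists only when $\alpha+\beta\in\Delta_\qg$. It equals
\[
N_{-(\alpha+\beta),\alpha}g(E_{-\beta},E_\beta)+N_{-(\alpha+\beta),\beta}g(E_\alpha,E_{-\alpha})=-N_{-(\alpha+\beta),\alpha}x_\beta-N_{-(\alpha+\beta),\beta}x_\alpha.
\]

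The main obstacle is the sign bookkeeping for the structure constants. Using the identities from \S\ref{roots}, as already invoked in the proof of Lemma \ref{h11A-lem}, namely $N_{-(\alpha+\beta),\alpha}=N_{\alpha,\beta}$ and $N_{-(\alpha+\beta),\beta}=-N_{\alpha,\beta}$, the pairing becomes $N_{\alpha,\beta}(x_\alpha-x_\beta)$. Setting $U(E_\alpha,E_\beta)=cE_{\alpha+\beta}$, we have $2g(U,E_{-(\alpha+\beta)})=-2cx_{\alpha+\beta}$, so $c=\tfrac{N_{\alpha,\beta}(x_\beta-x_\alpha)}{2x_{\alpha+\beta}}$. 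We will double-check the convention for $N$ against the antisymmetry $U(E_\alpha,E_\beta)=U(E_\beta,E_\alpha)$, which holds since $N_{\beta,\alpha}=-N_{\alpha,\beta}$. Finally, if $\alpha+\beta\notin\Delta_\qg$ (including the case $\alpha+\beta\in\Delta_{\hg_f}$, where the bracket projects to zero in $\pg$), every pairing vanishes and $U(E_\alpha,E_\beta)=0$.
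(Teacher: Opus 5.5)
Your proposal is correct and is essentially the paper's proof: both evaluate $2g(U(X,Y),Z)=g([Z,X]_\pg,Y)+g(X,[Z,Y]_\pg)$ against the single weight-compatible test vector ($E_{-\alpha}$, $A'$, or $E_{-(\alpha+\beta)}$). The last case closes with the same identities $N_{-(\alpha+\beta),\alpha}=N_{\alpha,\beta}$, $N_{-(\alpha+\beta),\beta}=-N_{\alpha,\beta}$ to give $N_{\alpha,\beta}(x_\alpha-x_\beta)$; your final consistency check is really a check of the \emph{symmetry} of $U$, not antisymmetry.
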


\begin{proof}
We have that
$$
2g(U(A,E_\alpha),E_{-\alpha}) = g(\alpha(A)E_{-\alpha},E_\alpha)+g(A,-A_\alpha) = -\alpha(A)x_\alpha-g(A,A_\alpha),
$$
$2g(U(E_\alpha,E_{-\alpha}),E_\beta)=0$, 
$$
2g(U(E_\alpha,E_{-\alpha}),A) = g(\alpha(A)E_{\alpha},E_{-\alpha})+g(E_{\alpha},-\alpha(A)E_{-\alpha})=0, 
$$
and
$$
2g(U(E_\alpha,E_\beta),E_{-(\alpha+\beta)}) = g(N_{-(\alpha+\beta),\alpha}E_{-\beta},E_\beta) + 
g(E_\alpha, N_{-(\alpha+\beta),\beta}E_{-\alpha}) = N_{\alpha,\beta}(x_\alpha-x_\beta),
$$
concluding the proof.  
\end{proof}

\begin{corollary}\label{LC}
On a C-space $M=G/K$, for any $g=g_\ag+g_\qg$ as in \eqref{g}, 
$$
\Lambda^g(A)B=0, \qquad \Lambda^g(A)E_\alpha=\left(\alpha(A)+\tfrac{g_\ag(A,A_\alpha)}{2x_\alpha}\right) E_\alpha, \qquad 
\Lambda^g(E_\alpha)A=\tfrac{g_\ag(A,A_\alpha)}{2x_\alpha} E_\alpha, 
$$
$$
\Lambda^g(E_\alpha)E_{-\alpha}=\unm A_\alpha, \qquad 
\Lambda^g(E_\alpha)E_\beta=
\left\{\begin{array}{cl} 
\tfrac{N_{\alpha,\beta}(x_{\alpha+\beta}+x_\beta-x_\alpha)}{2x_{\alpha+\beta}} E_{\alpha+\beta},  & \quad \alpha+\beta\in\Delta_\qg, \\  0, & \quad\alpha+\beta\notin\Delta_\qg,
\end{array}\right.
$$
for any $A,B\in\ag^c$ and $\alpha,\beta\in\Delta_\qg$, $\alpha+\beta\ne 0$.    
\end{corollary}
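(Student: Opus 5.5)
The Nomizu operator is determined by the formula $\Lambda^g(X)Y=\unm[X,Y]_\pg+U(X,Y)$ recalled in \S\ref{LC-sec}. So the plan is to take Lemma \ref{U} for $U$ and compute the brackets $[X,Y]_\pg$ on the basis $\ag^c\cup\{E_\alpha:\alpha\in\Delta_\qg\}$ of $\pg^c$, using the relations from \S\ref{form2-sec}. These relations are
$$
[A,E_\alpha]=\alpha(A)E_\alpha,\qquad [E_\alpha,E_{-\alpha}]_{\pg^c}=A_\alpha,\qquad [E_\alpha,E_\beta]=N_{\alpha,\beta}E_{\alpha+\beta}.
$$
The last one lies in $\qg^c$ when $\alpha+\beta\in\Delta_\qg$. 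When $0\ne\alpha+\beta\notin\Delta_\qg$, the bracket lies in $\hg_f^c$ and so has zero $\pg$-component. We also have $[\ag,\ag]=0$. All tensors are extended $\CC$-linearly.

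The case analysis is then a matter of adding the two pieces.
\begin{enumerate}[(1)]
\item For $(A,B)$: both the bracket and $U(A,B)$ vanish, so $\Lambda^g(A)B=0$.
\item For $(A,E_\alpha)$: the bracket term contributes $\unm\alpha(A)E_\alpha$, and $U(A,E_\alpha)$ contributes $\unm\alpha(A)E_\alpha+\tfrac{g_\ag(A,A_\alpha)}{2x_\alpha}E_\alpha$. The sum is the stated coefficient $\alpha(A)+\tfrac{g_\ag(A,A_\alpha)}{2x_\alpha}$.
\item For $(E_\alpha,A)$: $U$ is symmetric, and $[E_\alpha,A]=-\alpha(A)E_\alpha$. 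The bracket term $-\unm\alpha(A)E_\alpha$ cancels the $\unm\alpha(A)E_\alpha$ part of $U(A,E_\alpha)$, leaving $\tfrac{g_\ag(A,A_\alpha)}{2x_\alpha}E_\alpha$.
\item For $(E_\alpha,E_{-\alpha})$: $U$ vanishes, so only $\unm A_\alpha$ remains.
\item For $(E_\alpha,E_\beta)$ with $\alpha+\beta\ne0$: if $\alpha+\beta\in\Delta_\qg$, we add $\unm N_{\alpha,\beta}E_{\alpha+\beta}$ to $\tfrac{N_{\alpha,\beta}(x_\beta-x_\alpha)}{2x_{\alpha+\beta}}E_{\alpha+\beta}$. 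Over the common denominator this gives $\tfrac{N_{\alpha,\beta}(x_{\alpha+\beta}+x_\beta-x_\alpha)}{2x_{\alpha+\beta}}E_{\alpha+\beta}$. Otherwise both terms vanish.
\end{enumerate}

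The only point needing care is bookkeeping: confirming that $[E_\alpha,E_{-\alpha}]$ projects onto $\pg^c$ exactly as $A_\alpha$, with its $\zg(\kg)$ and $[\hg_f,\hg_f]$ parts discarded. This holds since $H_\alpha\in\tg_f^c$ and $\pg\cap\tg=\ag$. No real obstacle is expected.
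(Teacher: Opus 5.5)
Your proposal is correct and matches how the paper obtains this corollary. The paper gives no separate proof: the corollary follows immediately from $\Lambda^g(X)Y=\unm[X,Y]_\pg+U(X,Y)$ together with Lemma \ref{U}, and your case-by-case additions are exactly the required bookkeeping. This includes the cancellation of the $\alpha(A)$ terms in $\Lambda^g(E_\alpha)A$, the identification $[E_\alpha,E_{-\alpha}]_{\pg^c}=A_\alpha$, and the vanishing of the $\pg$-part of $[E_\alpha,E_\beta]$ when $0\ne\alpha+\beta\notin\Delta_\qg$.
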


\subsection{Chern connection and curvature}\label{CC-sec} 
We consider a $G$-invariant Hermitian structure $(J,g)$ as in \eqref{Jg}, i.e., a Hermitian C-space (see Definition \ref{HC-def}).  It follows from \eqref{cc-def} that the Nomizu operator $\Lambda^c$ of the Chern connection $\nabla^c$ is given by 
$$
g(\Lambda^c(X)Y,Z)=g(\Lambda^g(X)Y,Z)-\unm d\omega(JX,Y,Z), \qquad\forall X,Y,Z\in\pg. 
$$
Recall the following useful facts about basic complex linear algebra: the closure is given by $\overline{X+\im Y}:=X-\im Y$ for all $X,Y\in\pg$, and one defines 
$$
X^{1,0}:=\unm(X-\im JX), \qquad X^{0,1}:=\unm(X+\im JX), \qquad\forall X\in\pg^c,
$$
from which follows that $\pg^{1,0}:=\{ X^{1,0}:X\in\pg\}$ is precisely the $\im$-eigenspace of $J$.  Note that $X^{0,1}=\overline{X^{1,0}}$ if and only if $X\in\pg$ and that
$$
g(\overline{X},\overline{Y})=\overline{g(X,Y)}, \quad\forall X,Y\in\pg^c, \qquad 
g(X^{1,0},X^{0,1})=\unm g(X,X), \quad\forall X\in\pg. 
$$

\begin{proposition}\label{NoCc} 
For any $A,B\in\ag^c$ and $\alpha\in\Delta_\qg$,
$$
\Lambda^c(A)B=0, \qquad \Lambda^c(A)E_\alpha=\alpha(A)E_\alpha, \qquad i.e., \quad \Lambda^c(A)=\ad{A}|_\pg,
$$
$$
\Lambda^c(E_\alpha)A=\tfrac{1}{x_\alpha} g_\ag\left(\unm(A-\im\epsilon_\alpha J_\ag A),A_\alpha\right)E_\alpha
=\left\{\begin{array}{ll} 
\tfrac{1}{x_\alpha} g_\ag(A^{1,0},A_\alpha)E_\alpha, &\quad \alpha\in\Delta_\qg^+,\\ 
\tfrac{1}{x_\alpha} g_\ag(A^{0,1},A_\alpha)E_\alpha, &\quad \alpha\in\Delta_\qg^-,
\end{array}\right.
$$
$$
\Lambda^c(E_\alpha)E_{-\alpha}=\unm(A_\alpha+\im\epsilon_\alpha J_\ag A_\alpha)
=\left\{\begin{array}{ll} 
A_\alpha^{0,1}, &\quad \alpha\in\Delta_\qg^+,\\ 
A_\alpha^{1,0}, &\quad \alpha\in\Delta_\qg^-.
\end{array}\right.
$$
If $\alpha+\beta\in\Delta_\qg$, then
$$
\Lambda^c(E_\alpha)E_\beta = 
\left\{\begin{array}{cl} 
\tfrac{N_{\alpha,\beta} 
x_\beta}{x_{\alpha+\beta}} E_{{\alpha+\beta}}, &\quad \alpha,\beta\in\Delta_\qg^\pm,\\ 
N_{\alpha,\beta}(E_{\alpha+\beta})^{0,1}, &\quad \alpha\in\Delta_\qg^+,\beta\in\Delta_\qg^-, \\
N_{\alpha,\beta}(E_{\alpha+\beta})^{1,0}, &\quad \alpha\in\Delta_\qg^-,\beta\in\Delta_\qg^+, 
\end{array}\right.
$$
and $\Lambda^c(E_\alpha)E_\beta=0$ if $\alpha+\beta\notin\Delta_\qg$.  
\end{proposition}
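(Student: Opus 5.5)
The plan is to compute directly from the defining formula $g(\Lambda^c(X)Y,Z)=g(\Lambda^g(X)Y,Z)-\unm d\omega(JX,Y,Z)$, plugging in Corollary \ref{LC} for $\Lambda^g$ and Lemma \ref{dco} for $d\omega$, working throughout on $\pg^c$ with the basis $\ag^c\cup\{E_\alpha:\alpha\in\Delta_\qg\}$. To identify $\Lambda^c(X)Y$, we pair it with each test vector $Z$ in this basis. The pairing rules are $g(E_\alpha,E_\beta)=0$ unless $\beta=-\alpha$, $g(E_\alpha,E_{-\alpha})=-x_\alpha$, and $g(\ag^c,\qg^c)=0$. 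Thus the $E_\gamma$-coefficient of a vector $V$ is $-g(V,E_{-\gamma})/x_\gamma$, and its $\ag$-part is read off through $g_\ag$. We use $JA=J_\ag A$ and $JE_\alpha=\im\epsilon_\alpha E_\alpha$.

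\emph{Case $X=A\in\ag^c$.} By Lemma \ref{dco}, the only nonzero terms of $d\omega$ containing an $\ag$-entry are $d\omega(JA,E_\alpha,E_{-\alpha})=g_\ag(J_\ag J_\ag A,A_\alpha)=-g_\ag(A,A_\alpha)$. Hence $-\unm d\omega(JA,E_\alpha,E_{-\alpha})=\unm g_\ag(A,A_\alpha)$. Dividing by $-x_\alpha$ gives a correction of $-\tfrac{g_\ag(A,A_\alpha)}{2x_\alpha}E_\alpha$, which cancels the extra term in $\Lambda^g(A)E_\alpha$ and leaves $\alpha(A)E_\alpha$. Since $\Lambda^g(A)B=0$ and $d\omega$ vanishes when two entries lie in $\ag$, we get $\Lambda^c(A)B=0$. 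Together these give $\Lambda^c(A)=\ad{A}|_\pg$.

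\emph{Case $X=E_\alpha$, $Y=A$.} Here $JX=\im\epsilon_\alpha E_\alpha$, so the only relevant $Z$ is $E_{-\alpha}$. By skew-symmetry, $d\omega(E_\alpha,A,E_{-\alpha})=-g_\ag(J_\ag A,A_\alpha)$. Adding this correction to $\tfrac{g_\ag(A,A_\alpha)}{2x_\alpha}E_\alpha$ from Corollary \ref{LC} produces the combination $\tfrac{1}{x_\alpha}g_\ag(\unm(A-\im\epsilon_\alpha J_\ag A),A_\alpha)$. For $Y=E_{-\alpha}$, pair with $B\in\ag^c$: the Levi-Civita part gives $\unm A_\alpha$, and the correction $-\unm\im\epsilon_\alpha d\omega(E_\alpha,E_{-\alpha},B)=-\unm\im\epsilon_\alpha g_\ag(J_\ag B,A_\alpha)$. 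We rewrite this using $g_\ag(J_\ag B,A_\alpha)=-g_\ag(B,J_\ag A_\alpha)$ and check that the resulting vector is $\unm(A_\alpha+\im\epsilon_\alpha J_\ag A_\alpha)$. We also need the $\qg$-components to vanish, i.e.\ $d\omega(E_\alpha,E_{-\alpha},E_\gamma)=0$, which holds by Lemma \ref{dco} because $\gamma\neq 0$. The identification with $A_\alpha^{0,1}$ or $A_\alpha^{1,0}$ is then immediate.

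\emph{Case $X=E_\alpha$, $Y=E_\beta$ with $\alpha+\beta\in\Delta_\qg$.} This is the main bookkeeping step. Pair with $E_{-(\alpha+\beta)}$ and use $d\omega(E_\alpha,E_\beta,E_{-\alpha-\beta})=-\im N_{\alpha,\beta}(\epsilon_\alpha x_\alpha+\epsilon_\beta x_\beta-\epsilon_{\alpha+\beta}x_{\alpha+\beta})$, multiplied by $-\unm\im\epsilon_\alpha$. The resulting coefficient of $E_{\alpha+\beta}$ is
\[
\tfrac{N_{\alpha,\beta}}{2x_{\alpha+\beta}}\Bigl(x_{\alpha+\beta}+x_\beta-x_\alpha+\epsilon_\alpha(\epsilon_\alpha x_\alpha+\epsilon_\beta x_\beta-\epsilon_{\alpha+\beta}x_{\alpha+\beta})\Bigr).
\]
We then split into sign cases. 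If $\epsilon_\alpha=\epsilon_\beta$, integrability forces $\epsilon_{\alpha+\beta}=\epsilon_\alpha$, and the bracket collapses to $2x_\beta$. If $\epsilon_\alpha=1$ and $\epsilon_\beta=-1$, the bracket depends on $\epsilon_{\alpha+\beta}$: it equals $2x_{\alpha+\beta}$ when $\epsilon_{\alpha+\beta}=-1$ and $0$ when $\epsilon_{\alpha+\beta}=1$. This is exactly $N_{\alpha,\beta}(E_{\alpha+\beta})^{0,1}$, since $E_\gamma^{0,1}$ equals $E_\gamma$ or $0$ according to whether $\epsilon_\gamma=-1$ or $1$. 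The case $\epsilon_\alpha=-1$, $\epsilon_\beta=1$ is symmetric and gives $(E_{\alpha+\beta})^{1,0}$.

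The remaining pairings are checked against Lemma \ref{dco}: $d\omega(E_\alpha,E_\beta,\cdot)$ vanishes on $\ag^c$ and on $E_\gamma$ with $\alpha+\beta+\gamma\ne0$. This gives the vanishing statement when $\alpha+\beta\notin\Delta_\qg$, $\alpha+\beta\neq0$. The only real obstacle is keeping the sign conventions for $J$ on $E_{\pm\alpha}$ and for $N_{\alpha,\beta}$ consistent throughout.
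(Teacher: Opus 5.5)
Your proposal is correct and follows essentially the same route as the paper: you compute the pairings $g(\Lambda^c(X)Y,Z)=g(\Lambda^g(X)Y,Z)-\tfrac{1}{2}d\omega(JX,Y,Z)$ using Corollary \ref{LC} and Lemma \ref{dco}. This leads to the same coefficient $\tfrac{N_{\alpha,\beta}((1-\epsilon_\alpha\epsilon_{\alpha+\beta})x_{\alpha+\beta}+(1+\epsilon_\alpha\epsilon_\beta)x_\beta)}{2x_{\alpha+\beta}}$ of $E_{\alpha+\beta}$, and the only difference is that you spell out the sign cases (using that $\Delta_\qg^+$ is an invariant ordering), which the paper leaves implicit.
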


\begin{proof}
The only possibly nonzero components are:
\begin{align*}
g(\Lambda^c(A)E_\alpha,E_{-\alpha})=& g\left(\left(\alpha(A)+\tfrac{g(A,A_\alpha)}{2x_\alpha}\right)E_\alpha, E_{-\alpha}\right) 
-\unm d\omega(J_\ag A,E_\alpha,E_{-\alpha}) \\ 
=& \left(\alpha(A)+\tfrac{g(A,A_\alpha)}{2x_\alpha}\right)(-x_\alpha)
-\unm g_\ag(-A,A_\alpha) = \alpha(A)(-x_\alpha),   
\end{align*}
\begin{align*}
g(\Lambda^c(E_\alpha)A,E_{-\alpha})=& g\left(\tfrac{g(A,A_\alpha)}{2x_\alpha}E_\alpha, E_{-\alpha}\right) 
-\unm d\omega(J_\qg E_\alpha,A,E_{-\alpha}) \\ 
=& \tfrac{g(A,A_\alpha)}{2x_\alpha}(-x_\alpha)
-\unm \im\epsilon_\alpha (-g(J_\ag A,A_\alpha)) \\ 
=& \left(\tfrac{1}{x_\alpha} g\left(\unm(A-\im\epsilon_\alpha J_\ag A),A_\alpha\right)\right)(-x_\alpha)E_\alpha,   
\end{align*} 
\begin{align*}
g(\Lambda^c(E_\alpha)E_{-\alpha},A)=& g\left(\unm A_\alpha, A\right) 
-\unm d\omega(J_\qg E_\alpha,E_{-\alpha},A) 
= g\left(\unm A_\alpha, A\right)
-\unm \im\epsilon_\alpha g(J_\ag A,A_\alpha) \\ 
=&g\left(\unm(A_\alpha+\im\epsilon_\alpha J_\ag A_\alpha),A\right),   
\end{align*}
and
\begin{align*}
g(\Lambda^c(E_\alpha)E_\beta,E_{-(\alpha+\beta)}) 
=& g\left(\tfrac{N_{\alpha,\beta}(x_{\alpha+\beta}+x_\beta-x_\alpha)}{2x_{\alpha+\beta}} E_{\alpha+\beta},E_{-(\alpha+\beta)}\right) 
-\unm d\omega(J_\qg E_\alpha,E_\beta,E_{-(\alpha+\beta)})\\ 
=& \tfrac{N_{\alpha,\beta}(x_{\alpha+\beta}+x_\beta-x_\alpha)}{2x_{\alpha+\beta}} (-x_{\alpha+\beta}) 
-\unm \im\epsilon_\alpha d\omega(E_\alpha,E_\beta,E_{-(\alpha+\beta)})\\
=& \tfrac{N_{\alpha,\beta}(x_{\alpha+\beta}+x_\beta-x_\alpha)}{2x_{\alpha+\beta}} (-x_{\alpha+\beta}) 
-\unm \epsilon_\alpha N_{\alpha,\beta}(\epsilon_\alpha x_\alpha+\epsilon_\beta x_\beta+\epsilon_\gamma x_\gamma)\\ 
=&\tfrac{N_{\alpha,\beta} 
\left(x_{\alpha+\beta}+x_\beta-x_\alpha+\epsilon_\alpha (\epsilon_\alpha x_\alpha+\epsilon_\beta x_\beta-\epsilon_{\alpha+\beta} x_{\alpha+\beta})\right)}{2x_{\alpha+\beta}} (-x_{\alpha+\beta}) \\ 
=&\tfrac{N_{\alpha,\beta} 
\left((1-\epsilon_\alpha\epsilon_{\alpha+\beta})x_{\alpha+\beta}+(1+\epsilon_\alpha\epsilon_\beta)x_\beta\right)}{2x_{\alpha+\beta}} (-x_{\alpha+\beta}),    
\end{align*}
concluding the proof.
\end{proof}

According to \S\ref{IC-sec}, the Chern curvature is given by 
$$
R^c(X,Y)=\Lambda^c(X)\Lambda^c(Y)-\Lambda^c(Y)\Lambda^c(X)-\Lambda^c([X,Y]_\pg)-\ad{[X,Y]_\kg}|_\pg, \qquad\forall X,Y\in\pg.
$$

\begin{proposition}\label{Ccurv}
For any $A,B\in\ag^c$,
$$
R^c(E_\alpha,E_{-\alpha})A = -\tfrac{g_\ag(A^{0,1},A_\alpha)}{x_\alpha}A_\alpha^{0,1} + \tfrac{g_\ag(A^{1,0},A_\alpha)}{x_\alpha}A_\alpha^{1,0},  \qquad\forall\alpha\in\Delta_\qg^+,   
$$
$$
R^c(E_\alpha,E_{-\alpha})E_\alpha = \left(-\tfrac{g_\ag(A_\alpha^{1,0},A_\alpha)}{x_\alpha} - \la\alpha,\alpha\ra\right)E_\alpha,  \qquad\forall\alpha\in\Delta_\qg^+,
$$
$$
R^c(E_\alpha,E_{-\alpha})E_\beta = \left(\tfrac{N_{\alpha,-\beta}^2x_{\alpha-\beta}}{x_\beta} 
- \tfrac{N_{\alpha,\beta}^2x_\beta}{x_{\alpha+\beta}} 
- \la\alpha,\beta\ra\right) E_\beta,  \qquad\forall\alpha,\beta\in\Delta_\qg^+,
$$
where in the last line, the term containing $N_{\alpha,-\beta}^2$ appears only if $\alpha-\beta\in\Delta_\qg^-$ and the term containing $N_{\alpha,\beta}^2$ appears only if $\alpha+\beta\in\Delta_\qg$. 
\end{proposition}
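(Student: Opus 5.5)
The plan is to evaluate the curvature formula
$$
R^c(X,Y)=\Lambda^c(X)\Lambda^c(Y)-\Lambda^c(Y)\Lambda^c(X)-\Lambda^c([X,Y]_\pg)-\ad{[X,Y]_\kg}|_\pg
$$
at $X=E_\alpha$, $Y=E_{-\alpha}$ with $\alpha\in\Delta_\qg^+$, using only the table of Proposition \ref{NoCc}. First note that $[E_\alpha,E_{-\alpha}]=H_\alpha$ splits as $[E_\alpha,E_{-\alpha}]_\pg=A_\alpha$ and $[E_\alpha,E_{-\alpha}]_\kg=H_\alpha-A_\alpha\in\tg_\kg^c$. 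Since $\Lambda^c(A_\alpha)=\ad{A_\alpha}|_\pg$ by Proposition \ref{NoCc}, the last two terms combine to $-\ad{H_\alpha}|_\pg$. This operator kills $\ag^c$ and acts on $E_\beta$ by $-\beta(H_\alpha)=-\la\alpha,\beta\ra$. This already produces the $-\la\alpha,\alpha\ra$ and $-\la\alpha,\beta\ra$ terms, and contributes nothing on $A\in\ag^c$.

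Next I would compute the commutator term $[\Lambda^c(E_\alpha),\Lambda^c(E_{-\alpha})]$ on each type of vector.

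\emph{On $A\in\ag^c$.} Proposition \ref{NoCc} (with $-\alpha\in\Delta_\qg^-$) gives
$$
\Lambda^c(E_{-\alpha})A=\tfrac{1}{x_\alpha}g_\ag(A^{0,1},A_{-\alpha})E_{-\alpha}.
$$
Applying $\Lambda^c(E_\alpha)$ then yields a multiple of $A_\alpha^{0,1}$. Symmetrically, $\Lambda^c(E_{-\alpha})\Lambda^c(E_\alpha)A$ gives a multiple of $(A_{-\alpha})^{1,0}$. Using $A_{-\alpha}=-A_\alpha$ and $x_{-\alpha}=x_\alpha$, this gives the first formula, with the two sign changes cancelling in each term.

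\emph{On $E_\alpha$.} We have $\Lambda^c(E_{-\alpha})E_\alpha=A_\alpha^{1,0}$, and since $\alpha\in\Delta_\qg^+$,
$$
\Lambda^c(E_\alpha)A_\alpha^{1,0}=\tfrac{1}{x_\alpha}g_\ag\bigl((A_\alpha^{1,0})^{1,0},A_\alpha\bigr)E_\alpha=\tfrac{1}{x_\alpha}g_\ag(A_\alpha^{1,0},A_\alpha)E_\alpha.
$$
The other order vanishes, because $\Lambda^c(E_\alpha)E_\alpha=0$ ($2\alpha\notin\Delta$). With the sign coming from the commutator, this gives $-\tfrac{g_\ag(A_\alpha^{1,0},A_\alpha)}{x_\alpha}E_\alpha$.

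\emph{On $E_\beta$ with $\beta\ne\alpha$, $\beta\in\Delta_\qg^+$.} This is the main bookkeeping obstacle. I would split into cases according to whether $\alpha+\beta$ and $\beta-\alpha$ lie in $\Delta_\qg$.

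The path $E_\beta\to E_{\alpha+\beta}\to E_\beta$ uses the "same sign" rule for $\Lambda^c(E_\alpha)E_\beta$, namely the coefficient $N_{\alpha,\beta}x_\beta/x_{\alpha+\beta}$. The return step $\Lambda^c(E_{-\alpha})E_{\alpha+\beta}$ is the mixed-sign rule, which produces $N_{-\alpha,\alpha+\beta}(E_\beta)^{1,0}=N_{-\alpha,\alpha+\beta}E_\beta$. Using $N_{-\alpha,\alpha+\beta}N_{\alpha,\beta}=-N_{\alpha,\beta}^2$ (see \S\ref{roots}), this path gives $-\tfrac{N_{\alpha,\beta}^2x_\beta}{x_{\alpha+\beta}}E_\beta$ in $\Lambda^c(E_{-\alpha})\Lambda^c(E_\alpha)E_\beta$, which enters with a minus sign in the commutator.

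The path through $\beta-\alpha$ is handled the same way. If $\beta-\alpha\in\Delta_\qg^+$, then $-\alpha,\beta$ have mixed signs and $\beta-\alpha$ is positive, so the projection $(E_{\beta-\alpha})^{1,0}$ kills nothing. The subsequent step $\Lambda^c(E_\alpha)E_{\beta-\alpha}$ is then same-sign and gives a term proportional to $x_{\beta-\alpha}/x_\beta$. One must check whether these contributions cancel against each other or against the $(0,1)$-projection. If instead $\alpha-\beta\in\Delta_\qg^+$, the mixed-sign rule projects onto a $(1,0)$ or $(0,1)$ part of $E_{\beta-\alpha}$ with $\beta-\alpha$ negative, which yields zero. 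The statement says the term survives only when $\alpha-\beta\in\Delta_\qg^-$, i.e. $\beta-\alpha\in\Delta_\qg^+$, and I expect exactly one of the two orderings to vanish by the $(1,0)/(0,1)$ projection. The survivor should give $\tfrac{N_{\alpha,-\beta}^2x_{\alpha-\beta}}{x_\beta}E_\beta$ after using $N_{-\alpha,\beta}N_{\alpha,\beta-\alpha}=-N_{\alpha,-\beta}^2$. Finally, if $\beta\pm\alpha\notin\Delta_\qg$ (including the case where they lie in $\Delta_{\hg_f}$), the corresponding Nomizu entries are zero.

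Summing the commutator contributions with the $-\la\alpha,\beta\ra$ from the isotropy term gives the third formula. The only delicate points are the signs of the structure constants and tracking which $(1,0)/(0,1)$ projections vanish.
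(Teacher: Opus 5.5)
Your strategy is the paper's: expand $R^c(E_\alpha,E_{-\alpha})$ with the Nomizu table of Proposition \ref{NoCc}, and combine $\Lambda^c(A_\alpha)+\ad{(H_\alpha)_\kg}|_\pg=\ad{H_\alpha}|_\pg$ to get the $-\la\alpha,\beta\ra$ term. But the proposition is a signed formula, so the sign bookkeeping is essentially its whole content, and yours is wrong where it matters.

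\emph{The $E_\alpha$ case.} The table with $-\alpha\in\Delta_\qg^-$ gives $\Lambda^c(E_{-\alpha})E_\alpha=A_{-\alpha}^{1,0}=-A_\alpha^{1,0}$, not $A_\alpha^{1,0}$. The surviving composition $\Lambda^c(E_\alpha)\Lambda^c(E_{-\alpha})E_\alpha$ enters the commutator with a \emph{plus} sign. So the minus sign in the second formula comes from $A_{-\alpha}=-A_\alpha$, not from the commutator; your two errors cancel.

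\emph{The $E_\beta$ case.} The structure-constant identities you invoke have the wrong sign. \S\ref{roots} gives $N_{-\alpha,\alpha+\beta}=N_{\alpha,\beta}$ and $N_{\alpha,-\alpha+\beta}=N_{-\alpha,\beta}$, hence
\[
N_{-\alpha,\alpha+\beta}N_{\alpha,\beta}=N_{\alpha,\beta}^2, \qquad N_{-\alpha,\beta}N_{\alpha,\beta-\alpha}=N_{-\alpha,\beta}^2=N_{\alpha,-\beta}^2 .
\]
Both are nonnegative: they are the eigenvalues of $\ad{E_{\mp\alpha}}\ad{E_{\pm\alpha}}$ on $\ggo_\beta$, which are $\geq 0$ by $\slg_2$-theory. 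With your signs, the path through $\alpha+\beta$ contributes $+\tfrac{N_{\alpha,\beta}^2x_\beta}{x_{\alpha+\beta}}$ after the commutator sign. The path through $\beta-\alpha$, which enters with a plus sign, contributes $-\tfrac{N_{\alpha,-\beta}^2x_{\alpha-\beta}}{x_\beta}$. Both are opposite to the statement, so your argument as written does not reach the third formula.

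The correct bookkeeping is simpler than your case analysis suggests, because there is nothing to cancel. Take $\beta\in\Delta_\qg^+$ with $\beta\ne\alpha$.
\begin{itemize}
\item $\Lambda^c(E_{-\alpha})\Lambda^c(E_\alpha)E_\beta$ passes only through $E_{\alpha+\beta}$ and equals $\tfrac{N_{\alpha,\beta}^2x_\beta}{x_{\alpha+\beta}}E_\beta$ when $\alpha+\beta\in\Delta_\qg$.
\item $\Lambda^c(E_\alpha)\Lambda^c(E_{-\alpha})E_\beta$ passes only through $E_{\beta-\alpha}$. It vanishes unless $\beta-\alpha\in\Delta_\qg$, and then $\Lambda^c(E_{-\alpha})E_\beta=N_{-\alpha,\beta}(E_{\beta-\alpha})^{1,0}$.
\item That expression is zero when $\beta-\alpha\in\Delta_\qg^-$. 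When $\beta-\alpha\in\Delta_\qg^+$ it gives $\tfrac{N_{\alpha,-\beta}^2x_{\alpha-\beta}}{x_\beta}E_\beta$.
\end{itemize}
Subtracting the first from the second and adding $-\la\alpha,\beta\ra E_\beta$ yields the stated formula.

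A minor point on the first formula: only the second term involves two cancelling signs (the commutator minus and $A_{-\alpha}=-A_\alpha$). In the first term, the single sign from $A_{-\alpha}=-A_\alpha$ is what produces the minus.
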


\begin{proof}
For any $\alpha\in\Delta_\qg^+$, we have that 
\begin{align*}
R^c(E_\alpha,E_{-\alpha})A =& \Lambda^c(E_\alpha) \Lambda^c(E_{-\alpha})A - \Lambda^c(E_{-\alpha}) \Lambda^c(E_\alpha)A - \Lambda^c(A_\alpha)A -[(H_\alpha)_\kg,A]\\ 
=& -\tfrac{g(A^{0,1},A_\alpha)}{x_\alpha}\Lambda^c(E_\alpha) E_{-\alpha} - \tfrac{g(A^{1,0},A_\alpha)}{x_\alpha}\Lambda^c(E_{-\alpha}) E_\alpha \\ 
=& -\tfrac{g(A^{0,1},A_\alpha)}{x_\alpha}A_\alpha^{0,1} + \tfrac{g(A^{1,0},A_\alpha)}{x_\alpha}A_\alpha^{1,0},  
\end{align*}
and
\begin{align*}
R^c(E_\alpha,E_{-\alpha})E_\alpha =& -\Lambda^c(E_\alpha)A_\alpha^{1,0} 
- \Lambda^c(A_\alpha)E_\alpha - [(H_\alpha)_\kg,E_\alpha]\\  
=& -\tfrac{g(A_\alpha^{1,0},A_\alpha)}{x_\alpha}E_\alpha - [H_\alpha,E_\alpha].   
\end{align*}
If $\alpha,\beta\in\Delta_\qg^+$, then
\begin{align*}
R^c(E_\alpha,E_{-\alpha})E_\beta =& \Lambda^c(E_\alpha) \Lambda^c(E_{-\alpha})E_\beta - \Lambda^c(E_{-\alpha}) \Lambda^c(E_\alpha)E_\beta - \Lambda^c(A_\alpha)E_\beta - [(H_\alpha)_\kg,E_\beta]\\ 
=& \Lambda^c(E_\alpha) N_{-\alpha,\beta}(E_{-\alpha+\beta})^{1,0} - \Lambda^c(E_{-\alpha}) \tfrac{N_{\alpha,\beta}x_\beta}{x_{\alpha+\beta}}E_{\alpha+\beta} - [A_\alpha,E_\beta] - [(H_\alpha)_\kg,E_\beta]\\ 
=& N_{-\alpha,\beta}\tfrac{N_{\alpha,-\alpha+\beta}x_{-\alpha+\beta}}{x_\beta}E_\beta 
- \tfrac{N_{\alpha,\beta}x_\beta}{x_{\alpha+\beta}} N_{-\alpha,\alpha+\beta}(E_\beta)^{1,0} 
- [H_\alpha,E_\beta]\\ 
=& \tfrac{N_{\alpha,-\beta}^2x_{\alpha-\beta}}{x_\beta}E_\beta 
- \tfrac{N_{\alpha,\beta}^2x_\beta}{x_{\alpha+\beta}} (E_\beta)^{1,0} 
- \la\alpha,\beta\ra E_\beta,  
\end{align*}
concluding the proof.  
\end{proof}

We now compute the Chern Ricci form (see \eqref{rf-def1} and \eqref{rf-def2}).  

\begin{theorem}\label{CRF} 
The Chern Ricci form of any $G$-invariant Hermitian structure $(J,g)$ as in \eqref{Jg} on a C-space $M=G/K$ is given by  
$$
\rho^c(X,Y) = Q([X,Y],Z_{J_\qg}) = - \kil_{\ggo_f}([X,Y],Z_{J_\qg}), \qquad\forall X,Y\in\pg,  
$$
where $\kil_{\ggo_f}$ is the Killing form of $\ggo_f$ and $Z_{J_\qg}$ is the Koszul vector (see \eqref{koszul2}).  
\end{theorem}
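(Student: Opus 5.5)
The approach is to compute $\rho^c$ with the unitary-frame formula \eqref{rf-def2}, $\rho^c(X,Y)=\im\sum_i g(R^c(X,Y)E_i,\overline{E_i})$, using a $g$-unitary basis of $\pg^{1,0}=\ag^{1,0}\oplus\bigoplus_{\alpha\in\Delta_\qg^+}\CC E_\alpha$. Since $\rho^c$ is $G$-invariant, $\Ad(K)$-invariant and of type $(1,1)$, and $d\rho^c=0$, the structure of invariant $2$-forms in \eqref{s} reduces the work to two kinds of components. These are $\rho^c(E_\alpha,E_{-\alpha})$ for $\alpha\in\Delta_\qg^+$, together with the $\ag$-part $\rho^c|_{\ag\times\ag}$. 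By Lemma \ref{do} the latter is forced to vanish by closedness once we know $\rho^c$ is of the form $\sigma_\ag+\sigma_\qg$: $d\rho^c(A,E_\alpha,E_{-\alpha})=\rho^c_\ag(A,A_\alpha)$ and the $A_\alpha$ span $\ag_f^c$. For the $\zg(\ggo)$-directions we compute directly: $\Lambda^c(A)=\ad A|_\pg=0$ for $A\in\zg(\ggo)$, so $R^c(A,B)=0$. For $A,B\in\ag$ in general, $[A,B]=0$ and $\Lambda^c(A)=\ad A$ commute, so $R^c(A,B)=0$ and $\rho^c_\ag=0$.

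The target identity is then $\rho^c(E_\alpha,E_{-\alpha})=Q([E_\alpha,E_{-\alpha}],Z_{J_\qg})=Q(H_\alpha,Z_{J_\qg})$. Using $\im\rho_{J_\qg}(Z)=Q(Z,Z_{J_\qg})$ from \S\ref{fCc-sec}, this equals $-\im\la\alpha,\rho_{J_\qg}\ra=-\im\sum_{\beta\in\Delta_\qg^+}\la\alpha,\beta\ra$, up to the sign conventions of \S\ref{roots}. I would obtain the left-hand side by tracing Proposition \ref{Ccurv}. In the $\ag^{1,0}$-block, the first formula gives the trace of $R^c(E_\alpha,E_{-\alpha})$ on $\ag^{1,0}$, namely $\tfrac{1}{x_\alpha}\sum_j g_\ag(A_j,A_\alpha)\,g_\ag(A_\alpha^{1,0},\overline{A_j})=\tfrac{1}{x_\alpha}g_\ag(A_\alpha^{1,0},A_\alpha^{0,1})$ for a unitary basis $A_j$ of $\ag^{1,0}$. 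In the $\qg$-block, the normalized unitary vectors are $E_\beta/\sqrt{x_\beta}$ with $g(E_\beta,\overline{E_\beta})=-g(E_\beta,E_{-\beta})$ (note $\overline{E_\beta}$ is $\pm E_{-\beta}$ by \S\ref{roots}), so one reads off the diagonal coefficients in Proposition \ref{Ccurv}. The $\beta=\alpha$ term contributes $-g_\ag(A_\alpha^{1,0},A_\alpha)/x_\alpha-\la\alpha,\alpha\ra$. Its metric part cancels the $\ag^{1,0}$ trace, since $g_\ag(A_\alpha^{1,0},A_\alpha)=g_\ag(A^{1,0}_\alpha,A^{0,1}_\alpha)$ by $J$-compatibility. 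The terms with $\beta\neq\alpha$ contribute $\tfrac{N_{\alpha,-\beta}^2x_{\alpha-\beta}}{x_\beta}-\tfrac{N_{\alpha,\beta}^2x_\beta}{x_{\alpha+\beta}}-\la\alpha,\beta\ra$.

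The key step, and the main obstacle, is showing that the metric-dependent terms in the sum over $\beta\in\Delta_\qg^+$ cancel. Set $\gamma=\alpha+\beta$. The term $-N_{\alpha,\beta}^2x_\beta/x_{\alpha+\beta}$ indexed by $\beta$ should cancel the term $N_{\alpha,-\gamma}^2x_{\gamma-\alpha}/x_\gamma$ indexed by $\gamma$, which appears exactly when $\gamma-\alpha=\beta\in\Delta_\qg^+$, i.e. $\alpha-\gamma\in\Delta_\qg^-$. The cancellation uses $N_{\alpha,-\gamma}^2=N_{\alpha,\beta}^2$, from the standard identities $N_{\alpha,\beta}=N_{-\gamma,\alpha}$ and $N_{-\alpha,-\beta}=-N_{\alpha,\beta}$ in \S\ref{roots}. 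I must also check that when $\alpha+\beta\in\Delta_\qg$ with $\beta\in\Delta_\qg^+$, the root $\alpha+\beta$ lies in $\Delta_\qg^+$; this holds by the invariant ordering axiom. Boundary cases need care: $\alpha+\beta$ could be a root of $\hg_f$, but it cannot be, since $\alpha+\beta$ restricted to $\zg(\hg_f)$ is nonzero on the chamber. After this telescoping only $-\sum_{\beta\in\Delta_\qg^+}\la\alpha,\beta\ra=-\la\alpha,\rho_{J_\qg}\ra$ survives. Multiplying by $\im$ and by the frame normalization, I would then match constants with $Q(H_\alpha,Z_{J_\qg})$. The sign bookkeeping here, including the factor $1/x_\alpha$ from normalizing $E_\alpha$ in the arguments and the conventions $E_{-\alpha}$ versus $\overline{E_\alpha}$, is where I expect errors, and I would fix it by testing on $\CC P^1$.

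Finally, $Q|_{\ggo_f}=-\kil_{\ggo_f}$ gives the second expression, since $Z_{J_\qg}\in\ggo_f$.
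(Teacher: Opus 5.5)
Your proposal follows the paper's own proof. You evaluate $\rho^c(E_\alpha,E_{-\alpha})$ through \eqref{rf-def2} with unitary frames $\{A_j\}$ of $\ag^{1,0}$ and $\{E_\beta/\sqrt{x_\beta}\}$ of $\qg^{1,0}$, let the $\ag^{1,0}$-trace cancel the metric part of the $\beta=\alpha$ term of Proposition \ref{Ccurv}, and telescope the $N_{\alpha,\pm\beta}^2$ terms. This leaves $-\im\sum_{\beta\in\Delta_\qg^+}\la\alpha,\beta\ra$, which is exactly $Q(H_\alpha,Z_{J_\qg})=-\kil_{\ggo^c}(H_\alpha,\sum_\beta\im H_\beta)$ with no further normalization, since nothing is rescaled in the arguments $E_\alpha,E_{-\alpha}$.

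The one correction concerns your reduction step. \eqref{s} describes invariant $2$-forms only for C-spaces of fibration type, so in general you should instead use that $(J,g)$, and hence $\Lambda^c$, $R^c$ and $\rho^c$, is $\Ad(H_f)$-invariant. Alternatively, note that $\Lambda^c(E_\alpha)$ shifts $\tg_f$-weights by $\alpha$ while $\Lambda^c(\ag)$ preserves them. Either argument kills $\rho^c(A,E_\alpha)$ and $\rho^c(E_\alpha,E_\beta)$ for $\alpha+\beta\ne 0$, which are the components the paper dismisses as straightforward.
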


\begin{remark}
The only possibly nonzero components of the Chern Ricci form are therefore   
$$
\rho^c(E_\alpha,E_{-\alpha}) = - \kil_{\ggo^c}(H_\alpha,Z_{J_\qg}) 
=-\im\sum_{\beta\in\Delta_\qg^+}\la\alpha,\beta\ra, \qquad\forall\alpha\in\Delta_\qg^+, 
$$
or equivalently, 
$$
\rho^c(e_\alpha,f_{\alpha}) = - \kil_\ggo(Z_\alpha,Z_{J_\qg})
=\sum_{\beta\in\Delta_\qg^+}\la\alpha,\beta\ra, \qquad\forall\alpha\in\Delta_\qg^+.
$$
In particular, $\rho^c=0$ if and only if $\zg(\hg_f)=0$ (see \eqref{Jal}), i.e., $M$ is a torus.  
\end{remark}

\begin{remark}
Since $\rho^c$ does not depend on the metric, the solutions to the Chern Ricci flow $\dpar\omega(t)=-\rho^c(\omega(t))$ on a C-space are all given by $\omega(t)=\omega(0)-t\rho^c$, where $\rho^c=\rho^c(\omega(t))$ for all $t$.  
\end{remark}

\begin{remark}\label{C1}
If $\pg_0$ is as in Proposition \ref{isotdec}, (iii), then 
$$
c_1(M,J)=\left\{\rho^c+d\theta_A:A\in\pg_0\right\} = \left\{\sigma_{Z_{J_\qg}+A}:A\in\pg_0\right\}, 
$$
where $\sigma_Z:=Q([\cdot,\cdot],Z)$ (see \S\ref{dR-sec}).  
\end{remark}

\begin{proof}
In order to use \eqref{rf-def2}, we consider   
$\{ \tfrac{1}{\sqrt{x_\alpha}}E_\alpha:\alpha\in\Delta_\qg^+\}$, a $g$-unitary frame of $\qg$ (note that $\overline{E_\alpha}=-E_{-\alpha}$), and any $g$-unitary frame of $\ag$, say $\{ A_1,\dots,A_d\}$.  According to \eqref{rf-def2},  
$$
\rho^c(E_\alpha,E_{-\alpha}) = \im\sum_{i=1}^d g(R^c(E_\alpha,E_{-\alpha})A_i,\overline{A_i}) 
-\im\sum_{\beta\in\Delta_\qg^+} \tfrac{1}{x_\beta}g(R^c(E_\alpha,E_{-\alpha})E_\beta,E_{-\beta}), 
$$
for any $\alpha\in\Delta_\qg^+$.  It follows from Proposition \ref{Ccurv} that
\begin{align*}
g(R^c(E_\alpha,E_{-\alpha})A,\overline{A}) =& g(-\tfrac{g(A^{0,1},A_\alpha)}{x_\alpha}A_\alpha^{0,1} + \tfrac{g(A^{1,0},A_\alpha)}{x_\alpha}A_\alpha^{1,0},\overline{A})\\ 
=& \tfrac{1}{x_\alpha}g(A,A_\alpha)g(\overline{A},A_\alpha), \qquad\forall A\in\pg^{1,0},
\end{align*}
and if $A_\alpha^{1,0}=\sum\limits_{i=1}^d a_iA_i$, then $A_\alpha^{0,1}=\sum\limits_i^d b_i\overline{A_i}$ and so $g(A_\alpha,A_\alpha)=2\sum\limits_i^d a_ib_i$.  Thus  
$$
\im\sum_{i=1}^d g(R^c(E_\alpha,E_{-\alpha})A_i,\overline{A_i}) =\im\sum_{i=1}^d \tfrac{1}{x_\alpha}g(A_i,A_\alpha)g(\overline{A_i},A_\alpha) =\im\sum_{i=1}^d \tfrac{1}{x_\alpha}b_ia_i =\tfrac{\im g(A_\alpha,A_\alpha)}{2x_\alpha}.
$$
On the other hand, by Proposition \ref{Ccurv}, for any $\alpha\in\Delta_\qg^+$,
\begin{align*}
&-\im\sum_{\beta\in\Delta_\qg^+} \tfrac{1}{x_\beta}g(R^c(E_\alpha,E_{-\alpha})E_\beta,E_{-\beta}) \\
=& -\im \left(\tfrac{g(A_\alpha^{1,0},A_\alpha)}{x_\alpha} + \la\alpha,\alpha\ra\right) 
+\im\sum_{\beta\in\Delta_\qg^+,\beta\ne\alpha} \tfrac{(N_{\alpha,-\beta})^2x_{\alpha-\beta}}{x_\beta} 
- \tfrac{(N_{\alpha,\beta})^2x_\beta}{x_{\alpha+\beta}} 
- \la\alpha,\beta\ra \\ 
=&  -\tfrac{\im g(A_\alpha^{1,0},A_\alpha^{0,1})}{x_\alpha} -\im \la\alpha,\alpha\ra 
- \im\sum_{\beta\in\Delta_\qg^+,\beta\ne\alpha}\la\alpha,\beta\ra 
=  -\tfrac{\im g(A_\alpha^{1,0},A_\alpha^{0,1})}{x_\alpha}  
-\kil_{\ggo^c}(H_\alpha,Z_{J_\qg}).
\end{align*}
All this implies that 
$$
\rho^c(E_\alpha,E_{-\alpha}) = \tfrac{\im g(A_\alpha,A_\alpha)}{2x_\alpha} 
-\tfrac{\im g(A_\alpha^{1,0},A_\alpha^{0,1})}{x_\alpha}  
-\kil_{\ggo^c}(H_\alpha,Z_{J_\qg}) 
= - \kil_{\ggo^c}([E_\alpha,E_{-\alpha}],Z_{J_\qg}).
$$
It is straightforward to check that all the other components $\rho^c(A,B)$, $\rho^c(A,E_\alpha)$, $\rho^c(E_\alpha,E_\beta)$, $\alpha+\beta\ne 0$ vanish, concluding the proof.  
\end{proof}

\subsection{Bismut connection and curvature}\label{BiC-sec}
The Nomizu operator $\Lambda^b$ of the Bismut connection $\nabla^b$ of a homogeneous space endowed with an invariant Hermitian structure is given by 
$$
\Lambda^b(X)Y=\Lambda^g(X)Y+\unm T^b(X,Y), \qquad 
g(T^b(X,Y),Z)=-d^c\omega(X,Y,Z),
$$  
and the torsion by 
$$
T^b(X,Y)=\Lambda^b(X)Y-\Lambda^b(Y)X-[X,Y]_\pg, 
$$
see \eqref{cc-def} and \eqref{t-def}.  

We consider a Hermitian C-space $(M=G/K,J,g)$. 

\begin{proposition}\label{Btor}
For any $A,B\in\ag^c$ and $\alpha,\beta\in\Delta_\qg$, $\alpha+\beta\ne 0$, 
$$
T^b(A,B)=0, \qquad T^b(A,E_\alpha)=\tfrac{g_\ag(A,A_\alpha)}{x_\alpha} E_\alpha, \qquad 
T^b(E_\alpha,E_{-\alpha})=-A_\alpha, 
$$
$$
T^b(E_\alpha,E_\beta)=\left\{\begin{array}{cl}
\tfrac{-\epsilon_\alpha\epsilon_\beta\epsilon_{\alpha+\beta} N_{\alpha,\beta}(\epsilon_\alpha x_\alpha+\epsilon_\beta x_\beta-\epsilon_{\alpha+\beta} x_{\alpha+\beta})}{x_{\alpha+\beta}} E_{{\alpha+\beta}}, &\quad \alpha+\beta\in\Delta_\qg, \\ 
0,&\quad \alpha+\beta\notin\Delta_\qg.
\end{array}\right.
$$
In particular, 
$$
T^b(E_\alpha,E_\beta)
=\tfrac{N_{\alpha,\beta}(x_{\alpha+\beta}-x_\alpha-x_\beta)}{x_{\alpha+\beta}} E_{{\alpha+\beta}}, 
\qquad\forall\alpha,\beta,\alpha+\beta\in\Delta_\qg^{\pm}.  
$$
\end{proposition}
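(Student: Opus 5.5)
The most direct route is to compute the torsion from the definition $g(T^b(X,Y),Z)=-d^c\omega(X,Y,Z)$. Since $g$ is nondegenerate and pairs $\ag^c$ with itself and $E_\alpha$ only with $E_{-\alpha}$, each component can be read off by pairing with a suitable test vector. The input is Lemma \ref{dco}, which lists the only possibly nonzero components of $d^c\omega$:
$$
d^c\omega(E_\alpha,E_\beta,E_\gamma)=\epsilon_\alpha\epsilon_\beta\epsilon_\gamma N_{\alpha,\beta}(\epsilon_\alpha x_\alpha+\epsilon_\beta x_\beta+\epsilon_\gamma x_\gamma) \quad (\alpha+\beta+\gamma=0),
$$
$$
d^c\omega(A,E_\alpha,E_{-\alpha})=g_\ag(A,A_\alpha).
$$
Note that $d^c\omega$ vanishes on any triple containing two elements of $\ag^c$.

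For $T^b(A,B)$, every pairing $g(T^b(A,B),Z)$ is a component of $d^c\omega$ with two entries in $\ag^c$, so it is $0$.

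For $T^b(A,E_\alpha)$, the only nonzero pairing is with $Z=E_{-\alpha}$, giving $-g_\ag(A,A_\alpha)$. Since $g(E_\alpha,E_{-\alpha})=-x_\alpha$, this yields $T^b(A,E_\alpha)=\tfrac{g_\ag(A,A_\alpha)}{x_\alpha}E_\alpha$.

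For $T^b(E_\alpha,E_{-\alpha})$, pairing with $B\in\ag^c$ gives $-d^c\omega(E_\alpha,E_{-\alpha},B)=-g_\ag(B,A_\alpha)$, after permuting cyclically. Pairings with the $E_\beta$ vanish because no triple $\alpha,-\alpha,\beta$ sums to zero. Hence $T^b(E_\alpha,E_{-\alpha})=-A_\alpha$.

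For $T^b(E_\alpha,E_\beta)$ with $\alpha+\beta\neq0$, only $Z=E_{-(\alpha+\beta)}$ contributes, and only when $\alpha+\beta\in\Delta_\qg$. Take $\gamma=-(\alpha+\beta)$ and use $\epsilon_{-\delta}=-\epsilon_\delta$ and $x_{-\delta}=x_\delta$. The pairing becomes
$$
-(-\epsilon_\alpha\epsilon_\beta\epsilon_{\alpha+\beta})N_{\alpha,\beta}(\epsilon_\alpha x_\alpha+\epsilon_\beta x_\beta-\epsilon_{\alpha+\beta}x_{\alpha+\beta}).
$$
Dividing by $g(E_{\alpha+\beta},E_{-(\alpha+\beta)})=-x_{\alpha+\beta}$ gives the stated coefficient. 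If $\alpha+\beta\notin\Delta$, or if $\alpha+\beta\in\Delta_{\hg_f}$ so that $E_{\alpha+\beta}\notin\pg^c$, there is no pairing and the torsion is zero.

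The ``in particular'' clause follows by setting all $\epsilon$ equal to the same sign $\epsilon$. The prefactor is then $-\epsilon^3=-\epsilon$ and the bracket is $\epsilon(x_\alpha+x_\beta-x_{\alpha+\beta})$, so the coefficient is $-(x_\alpha+x_\beta-x_{\alpha+\beta})=x_{\alpha+\beta}-x_\alpha-x_\beta$, over $x_{\alpha+\beta}$. Here we use that $\Delta_\qg^\pm$ is closed under addition within $\Delta_\qg$.

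The only step needing care is the sign bookkeeping: the reorderings of arguments of $d^c\omega$, and the fact that $g(E_\alpha,E_{-\alpha})=-x_\alpha$ is negative. As a cross-check, one can instead compute $\Lambda^b=\Lambda^g+\tfrac12T^b$ from Corollary \ref{LC} and verify that $T^b(X,Y)=\Lambda^b(X)Y-\Lambda^b(Y)X-[X,Y]_\pg$ is consistent, for example $T^b(E_\alpha,E_{-\alpha})=\tfrac12A_\alpha+\tfrac12A_\alpha-A_\alpha-A_\alpha$.
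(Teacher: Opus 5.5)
Your proof is correct and is the same as the paper's. Both read off each component of $T^b$ by pairing $g(T^b(X,Y),Z)=-d^c\omega(X,Y,Z)$ against the appropriate test vector, using the components of $d^c\omega$ from Lemma \ref{dco} together with $g(E_\alpha,E_{-\alpha})=-x_\alpha$.

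One caveat: your closing ``cross-check'' via $\Lambda^b=\Lambda^g+\tfrac12T^b$ is vacuous. Since $\nabla^g$ is torsion-free, it returns any skew-symmetric $T^b$ unchanged, so it cannot catch a sign error. A genuine test would be to verify $[\Lambda^b(X),J]=0$, e.g.\ that $\Lambda^b(E_\alpha)E_{-\alpha}$ lies in $\ag^{0,1}$ for $\alpha\in\Delta_\qg^+$.
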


\begin{proof}
The only possibly nonzero components are 
$$
g(T^b(A,E_\alpha),E_{-\alpha})=-d^c\omega(A,E_\alpha,E_{-\alpha}) = -g_\ag(A,A_\alpha),  
$$
$$
g(T^b(E_\alpha,E_{-\alpha}),A)=-d^c\omega(E_\alpha,E_{-\alpha},A) = -g_\ag(A,A_\alpha),  
$$
and
\begin{align*}
g(T^b(E_\alpha,E_\beta),E_{-(\alpha+\beta)})=& -d^c\omega(E_\alpha,E_\beta,E_{-(\alpha+\beta)}) \\
=& \epsilon_\alpha\epsilon_\beta\epsilon_{\alpha+\beta} N_{\alpha,\beta}(\epsilon_\alpha x_\alpha+\epsilon_\beta x_\beta-\epsilon_{\alpha+\beta} x_\gamma),  
\end{align*}
concluding the proof.  
\end{proof}

Using the above proposition together with Corollary \ref{LC}, we obtain the following.  

\begin{corollary}\label{Bcon}
For any $A,B\in\ag^c$ and $\alpha,\beta\in\Delta_\qg$, $\alpha+\beta\ne 0$, 
$$
\Lambda^b(A)B=0, \qquad \Lambda^b(A)E_\alpha=\left(\alpha(A)+\tfrac{g_\ag(A,A_\alpha)}{x_\alpha}\right) E_\alpha, \qquad 
\Lambda^b(E_\alpha)A=0=\Lambda^b(E_\alpha)E_{-\alpha},
$$
$$
\Lambda^b(E_\alpha)E_\beta = \left\{\begin{array}{cl}
\tfrac{N_{\alpha,\beta} 
\left(x_{\alpha+\beta}+x_\beta-x_\alpha-\epsilon_\alpha\epsilon_\beta\epsilon_{\alpha+\beta} (\epsilon_\alpha x_\alpha+\epsilon_\beta x_\beta-\epsilon_{\alpha+\beta} x_{\alpha+\beta})\right)}{2x_{\alpha+\beta}} E_{{\alpha+\beta}}, &\quad \alpha+\beta\in\Delta_\qg, \\ 
0,&\quad \alpha+\beta\notin\Delta_\qg.
\end{array}\right.
$$
In particular, 
\begin{equation}\label{Lb}
\Lambda^b(E_\alpha)E_\beta = \left\{\begin{array}{cl} 
\tfrac{N_{\alpha,\beta} 
\left(x_{\alpha+\beta}-x_\alpha\right)}{x_{\alpha+\beta}} E_{{\alpha+\beta}}, &
\quad\alpha,\beta,\alpha+\beta\in\Delta_\qg^\pm, \\ 
\tfrac{N_{\alpha,\beta} 
\left(x_{\beta}-x_\alpha\right)}{x_{\alpha+\beta}} E_{{\alpha+\beta}}, &
\quad \alpha\in\Delta_\qg^\pm,  \; \beta\in\Delta_\qg^\mp, \; \alpha+\beta\in\Delta_\qg^\mp, \\
0 &\quad \alpha\in\Delta_\qg^\pm, \; \beta\in\Delta_\qg^\mp, \; \alpha+\beta\in\Delta_\qg^\pm.
\end{array}\right.
\end{equation}
\end{corollary}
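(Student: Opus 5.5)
The plan is to compute directly from $\Lambda^b(X)Y=\Lambda^g(X)Y+\unm T^b(X,Y)$, using Corollary \ref{LC} for $\Lambda^g$ and Proposition \ref{Btor} for $T^b$, and to go through the basis pairs of $\pg^c=\ag^c\oplus\qg^c$. The pairs are $(A,B)$, $(A,E_\alpha)$, $(E_\alpha,A)$, $(E_\alpha,E_{-\alpha})$ and $(E_\alpha,E_\beta)$ with $\alpha+\beta\ne0$. Since everything is $\CC$-bilinear, it is enough to check these.

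First I would handle the components involving $\ag$. We have $\Lambda^g(A)B=0$ and $T^b(A,B)=0$, which gives $\Lambda^b(A)B=0$. Next, $\Lambda^g(A)E_\alpha=(\alpha(A)+\tfrac{g_\ag(A,A_\alpha)}{2x_\alpha})E_\alpha$, and adding $\unm T^b(A,E_\alpha)=\tfrac{g_\ag(A,A_\alpha)}{2x_\alpha}E_\alpha$ produces the stated coefficient. For $\Lambda^b(E_\alpha)A$, skew-symmetry of $T^b$ gives $T^b(E_\alpha,A)=-\tfrac{g_\ag(A,A_\alpha)}{x_\alpha}E_\alpha$, and this cancels $\Lambda^g(E_\alpha)A$ exactly. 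In the same way, $\Lambda^g(E_\alpha)E_{-\alpha}=\unm A_\alpha$ is cancelled by $\unm T^b(E_\alpha,E_{-\alpha})=-\unm A_\alpha$.

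For $\alpha+\beta\in\Delta_\qg$, the general formula is obtained by adding $\tfrac{N_{\alpha,\beta}(x_{\alpha+\beta}+x_\beta-x_\alpha)}{2x_{\alpha+\beta}}$ to one half of the coefficient of $T^b(E_\alpha,E_\beta)$ in Proposition \ref{Btor}. When $\alpha+\beta\notin\Delta_\qg$, both terms vanish.

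It remains to specialize to \eqref{Lb}, splitting by signs with $\epsilon:=\epsilon_\alpha\epsilon_\beta\epsilon_{\alpha+\beta}$ and $S:=\epsilon_\alpha x_\alpha+\epsilon_\beta x_\beta-\epsilon_{\alpha+\beta}x_{\alpha+\beta}$. Replacing $J$ by its sign-flipped version turns $\epsilon$ and $S$ into $-\epsilon$ and $-S$, so $\epsilon S$ is unchanged; hence only the case where $\alpha$ is positive needs checking.

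\begin{itemize}
\item If all three roots have the same sign, then $\epsilon S=x_\alpha+x_\beta-x_{\alpha+\beta}$. The numerator becomes $2x_{\alpha+\beta}-2x_\alpha$.
\item If $\beta$ has the opposite sign and $\alpha+\beta$ has the sign of $\beta$, then $\epsilon S=x_\alpha-x_\beta+x_{\alpha+\beta}$. The numerator becomes $2x_\beta-2x_\alpha$.
\item If $\beta$ has the opposite sign and $\alpha+\beta$ has the sign of $\alpha$, then $\epsilon S=-(x_\alpha-x_\beta-x_{\alpha+\beta})$. The numerator becomes $0$.
\end{itemize}

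Dividing each numerator by $2x_{\alpha+\beta}$ gives the three lines of \eqref{Lb}.

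The only real obstacle is bookkeeping. The sign $-\epsilon S$ is taken from Proposition \ref{Btor}, and I would double-check it against the "in particular" formula stated there before relying on it.
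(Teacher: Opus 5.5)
Your proposal is correct and follows the paper's route exactly: write $\Lambda^b=\Lambda^g+\unm T^b$, combine Corollary \ref{LC} with Proposition \ref{Btor}, and then do the sign bookkeeping for \eqref{Lb}. Your three sign cases are exhaustive, because $\alpha,\beta$ of the same sign with $\alpha+\beta$ of the opposite sign cannot occur, $\Delta_\qg^+$ being an invariant ordering.
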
 

According to \cite[Theorem 1]{WngYngZhn}, any simply connected compact Hermitian manifold $(M,J,g)$ which is {\it Bismut flat} (i.e., $R^b=0$) is holomorphically isometric to a semisimple Lie group endowed with a left-invariant $J$ and a compatible bi-invariant metric $g$.  

\begin{remark}\label{Bflat-rem}
In the Lie group case $M=G$ (see Example \ref{LG}), we have that $\ag=\zg(\ggo)\oplus\overline{\tg}$, where $\overline{\tg}$ is the maximal torus of $[\ggo,\ggo]=\overline{\tg}\oplus\qg$.  If there is a biinvariant metric $g_b$ on $[\ggo,\ggo]$ such that $g_\qg=g_b|_{\qg}$, i.e., $x_\alpha=x_\beta$ for all $\alpha,\beta\in\Delta_i$, $i=1,\dots,s$  (see \eqref{decflags}), then by Corollary \ref{Bcon},
\begin{equation}\label{Bf1}
\Lambda^b(E_\alpha)=0, \qquad\forall \alpha\in\Delta=\Delta_\qg.  
\end{equation}
On the other hand, if in addition $g|_{\overline{\tg}}=g_b|_{\overline{\tg}}$, then for all $\alpha\in\Delta$,
$$
\Lambda^b(H_\alpha)E_\beta= \left(\beta(H_\alpha)+\tfrac{1}{x_\beta}g_\ag(H_\alpha,H_\beta)\right)E_\beta 
= \left(-Q(H_\alpha,H_\beta)+\tfrac{1}{x_\beta}g_b(H_\alpha,H_\beta)\right)E_\beta =0, 
$$
that is, 
\begin{equation}\label{Bf2} 
\Lambda^b(A)E_\alpha=0, \qquad\forall A\in\overline{\tg}, \quad \alpha\in\Delta.
\end{equation}  
This implies that $(J,g)$ is Bismut flat (i.e., $R^b=0$) if the metric $g$ is {\it almost-bi-invariant}, i.e., $g|_{[\ggo,\ggo]}=g_b$ for some biinvariant metric $g_b$ on $[G,G]$.  Indeed, recall from \S\ref{IC-sec} that
$$
R^b(X,Y)=[\Lambda^b(X),\Lambda^b(Y)] -\Lambda^b([X,Y]), \qquad\forall X,Y\in\ggo,       
$$
so the only potential nonzero components are $R^b(A,E_\alpha)$ and $R^b(E_\alpha,E_\beta)$, which are easily seen to vanish by using \eqref{Bf1} and \eqref{Bf2}.  We conclude that $(J,g)$ is Bismut flat as soon as $g|_{[\ggo,\ggo]}=g_b$, regardless of whether $g(\zg(\ggo),[\ggo,\ggo])=0$ or not, that is, the metric $g$ is {\it not} necessarily biinvariant on the whole group $G$.  This does not contradict \cite[Theorem 1]{WngYngZhn}; indeed, it is easy to check that if we consider instead the Lie bracket $-T^b$ on $\ggo$, then $J$ is still left-invariant and $g$ is now bi-invariant relative to the corresponding new Lie group structure on $M=G$.  
\end{remark}

Recall from \S\ref{IC-sec} the formula for $R^b$ in the general case:   
$$
R^b(X,Y)=[\Lambda^b(X),\Lambda^b(Y)] -\Lambda^b([X,Y]) -\ad{[X,Y]_\kg}|_\pg, \qquad\forall X,Y\in\pg.        
$$

\begin{proposition}\label{Bcurv}
For any $\alpha,\beta\in\Delta_\qg^+$ and $A\in\ag^c$,
\begin{equation}\label{Bcurv1}
R^b(A,E_\alpha)E_\beta = g_\ag\left(A,\tfrac{1}{x_{\alpha+\beta}}A_{\alpha+\beta}-\tfrac{1}{x_{\beta}}A_{\beta}\right) \tfrac{N_{\alpha,\beta} \left(x_{\alpha+\beta}-x_\alpha\right)}{x_{\alpha+\beta}}E_{\alpha+\beta}, \qquad  \alpha+\beta\in\Delta_\qg, 
\end{equation}
\begin{equation}\label{Bcurv2}
R^b(E_\alpha,E_{-\alpha})E_\beta  = -\left(\la\alpha,\beta\ra+\tfrac{g_\ag(A_\alpha,A_\beta)}{x_\beta}\right)E_\beta, \qquad  \alpha\pm\beta\notin\Delta_\qg, 
\end{equation}
and 
\begin{equation}\label{Bcurv3}
R^b(E_\alpha,E_{-\alpha})E_\beta  =
\left(\tfrac{N_{-\alpha,\beta}^2 \left(x_{\beta}-x_\alpha\right)^2}{x_{-\alpha+\beta}x_\beta} 
-\tfrac{N_{\alpha,\beta}^2 \left(x_{\alpha+\beta}-x_\alpha\right)^2}{x_{\alpha+\beta}x_\beta} -\la\alpha,\beta\ra-\tfrac{g_\ag(A_\alpha,A_\beta)}{x_\beta}\right)E_\beta, 
\end{equation}
where in the last line, the term containing $N_{-\alpha,\beta}^2$ appears only if $-\alpha+\beta\in\Delta_\qg^+$ and the term containing $N_{\alpha,\beta}^2$ appears only if $\alpha+\beta\in\Delta_\qg^+$.  

Furthermore, if $\alpha+\beta\in\Delta_\qg$ and $\alpha-\beta\notin\Delta_\qg$, then 
\begin{equation}\label{Bcurv4}
R^b(E_{-\alpha},E_\beta)E_\alpha 
= -\tfrac{N_{\alpha,\beta}^2 \left(x_{\alpha+\beta}-x_\beta\right)\left(x_{\alpha+\beta}-x_\alpha\right)}{x_{\alpha+\beta} x_{\beta}} E_\beta. 
\end{equation}
\end{proposition}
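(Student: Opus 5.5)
The plan is to compute each component of $R^b$ directly from the formula $R^b(X,Y)=[\Lambda^b(X),\Lambda^b(Y)]-\Lambda^b([X,Y]_\pg)-\ad{[X,Y]_\kg}|_\pg$. The inputs are the explicit Nomizu operator of Corollary \ref{Bcon}, in the simplified form \eqref{Lb}, together with $\Lambda^b(A)E_\gamma=(\gamma(A)+\tfrac{g_\ag(A,A_\gamma)}{x_\gamma})E_\gamma$ and $\Lambda^b(E_\gamma)A=0=\Lambda^b(E_\gamma)E_{-\gamma}$. Throughout I would use the bracket rules $[A,E_\gamma]=\gamma(A)E_\gamma$ and $[E_\alpha,E_{-\alpha}]=H_\alpha=A_\alpha+(H_\alpha)_\kg$, and the fact that $\ad{(H_\alpha)_\kg}+\ad{A_\alpha}=\ad{H_\alpha}$ acts on $E_\beta$ by $\la\alpha,\beta\ra$.

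For \eqref{Bcurv1}, with $\alpha,\beta,\alpha+\beta\in\Delta_\qg^+$:
\begin{enumerate}[{\rm (1)}]
\item Expand $R^b(A,E_\alpha)E_\beta=\Lambda^b(A)\Lambda^b(E_\alpha)E_\beta-\Lambda^b(E_\alpha)\Lambda^b(A)E_\beta-\alpha(A)\Lambda^b(E_\alpha)E_\beta$. There is no $\kg$-term, since $[A,E_\alpha]=\alpha(A)E_\alpha\in\pg$.
\item Both ends of $\Lambda^b(E_\alpha)$ are eigenvectors of $\Lambda^b(A)$. The eigenvalue difference is $\alpha(A)+\beta(A)+\tfrac{g_\ag(A,A_{\alpha+\beta})}{x_{\alpha+\beta}}-\beta(A)-\tfrac{g_\ag(A,A_\beta)}{x_\beta}-\alpha(A)$.
\item This is exactly $g_\ag(A,\tfrac{1}{x_{\alpha+\beta}}A_{\alpha+\beta}-\tfrac{1}{x_\beta}A_\beta)$. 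Multiplying by the coefficient from \eqref{Lb} gives the claimed formula.
\end{enumerate}

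For \eqref{Bcurv2} and \eqref{Bcurv3}, expand $R^b(E_\alpha,E_{-\alpha})E_\beta=\Lambda^b(E_\alpha)\Lambda^b(E_{-\alpha})E_\beta-\Lambda^b(E_{-\alpha})\Lambda^b(E_\alpha)E_\beta-\Lambda^b(A_\alpha)E_\beta-[(H_\alpha)_\kg,E_\beta]$.
\begin{enumerate}[{\rm (1)}]
\item The last two terms combine to $-(\la\alpha,\beta\ra+\tfrac{g_\ag(A_\alpha,A_\beta)}{x_\beta})E_\beta$. This already gives \eqref{Bcurv2}, because when $\alpha\pm\beta\notin\Delta_\qg$ both products of $\Lambda^b$'s vanish. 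If $\alpha\pm\beta$ lies in $\Delta_{\hg_f}$, it is not in $\pg$ and again contributes nothing.
\item For \eqref{Bcurv3} I would case-split on the signs using \eqref{Lb}.
\item If $\alpha+\beta\in\Delta_\qg^+$: $\Lambda^b(E_\alpha)E_\beta=\tfrac{N_{\alpha,\beta}(x_{\alpha+\beta}-x_\alpha)}{x_{\alpha+\beta}}E_{\alpha+\beta}$. Then $\Lambda^b(E_{-\alpha})E_{\alpha+\beta}$ falls in the case $-\alpha\in\Delta^-$, $\alpha+\beta\in\Delta^+$, with sum $\beta\in\Delta^+$, so the coefficient is $\tfrac{N_{-\alpha,\alpha+\beta}(x_{\alpha+\beta}-x_\alpha)}{x_\beta}$.
\item With $N_{-\alpha,\alpha+\beta}=N_{\alpha,\beta}$ (normalization in \S\ref{roots}), this gives $-\tfrac{N_{\alpha,\beta}^2(x_{\alpha+\beta}-x_\alpha)^2}{x_{\alpha+\beta}x_\beta}$.
\item If $-\alpha+\beta\in\Delta_\qg^+$: use the middle case of \eqref{Lb} for $\Lambda^b(E_{-\alpha})E_\beta$, and then $\Lambda^b(E_\alpha)E_{\beta-\alpha}$ with both in $\Delta^+$. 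This gives the $N_{-\alpha,\beta}^2$ term.
\item If $\beta-\alpha\in\Delta_\qg^-$: the product vanishes by the third case of \eqref{Lb}, which matches the stated convention.
\end{enumerate}

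For \eqref{Bcurv4}, note that $[E_{-\alpha},E_\beta]=0$ because $\beta-\alpha\notin\Delta$. So only $\Lambda^b(E_{-\alpha})\Lambda^b(E_\beta)E_\alpha-\Lambda^b(E_\beta)\Lambda^b(E_{-\alpha})E_\alpha$ survives.
\begin{enumerate}[{\rm (1)}]
\item The second term vanishes because $\Lambda^b(E_{-\alpha})E_\alpha=0$.
\item In the first term, $\Lambda^b(E_\beta)E_\alpha=\tfrac{N_{\beta,\alpha}(x_{\alpha+\beta}-x_\beta)}{x_{\alpha+\beta}}E_{\alpha+\beta}$.
\item Then $\Lambda^b(E_{-\alpha})E_{\alpha+\beta}=\tfrac{N_{-\alpha,\alpha+\beta}(x_{\alpha+\beta}-x_\alpha)}{x_\beta}E_\beta$.
\item Using $N_{\beta,\alpha}=-N_{\alpha,\beta}$ and $N_{-\alpha,\alpha+\beta}=N_{\alpha,\beta}$ yields the claimed expression.
\end{enumerate}

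The main obstacle I expect is bookkeeping rather than anything conceptual. The structure constant identities ($N_{-\alpha,\alpha+\beta}=N_{\alpha,\beta}$, $N_{\alpha,-\beta}$ versus $N_{-\alpha,\beta}$, etc.) must be used with the paper's normalization from \S\ref{roots}. The right case of \eqref{Lb} must also be selected according to the signs of $\alpha$, $\beta$ and their sum. I would check each sign against the Chern analogue in Proposition \ref{Ccurv}.
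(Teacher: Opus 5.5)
Your computation is the paper's proof essentially line by line. It uses the same expansion of $R^b$ through the Nomizu operators of Corollary \ref{Bcon}, the same case selection in \eqref{Lb}, and the same identities $N_{-\alpha,\alpha+\beta}=N_{\alpha,\beta}$ and $N_{\alpha,-\alpha+\beta}=N_{-\alpha,\beta}$. Your arguments for \eqref{Bcurv1}--\eqref{Bcurv3} are fine.

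The step in \eqref{Bcurv4} that does not follow is your claim that $[E_{-\alpha},E_\beta]=0$ ``because $\beta-\alpha\notin\Delta$''. The hypothesis only gives $\alpha-\beta\notin\Delta_\qg$, so $\beta-\alpha\in\Delta_{\hg_f}$ is not excluded. This case does occur, e.g.\ for $\alpha=\gamma_1+\delta$, $\beta=\gamma_1+2\delta$ on the $\Gg_2$ flag of Example \ref{b21-BTP-2}.

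In that case $[E_{-\alpha},E_\beta]=N_{-\alpha,\beta}E_{\beta-\alpha}\in\kg^c$. The term $\Lambda^b([E_{-\alpha},E_\beta]_\pg)$ still vanishes, but the isotropy term does not:
$$
-\ad{[E_{-\alpha},E_\beta]_\kg}|_\pg E_\alpha=N_{-\alpha,\beta}^2E_\beta\neq 0.
$$
For the normal metric, where $\Lambda^b(\pg)=0$ (Remark \ref{gQhol}), this term is all of $R^b(E_{-\alpha},E_\beta)E_\alpha$. The right-hand side of \eqref{Bcurv4} is zero there, since all $x_\gamma$ are equal.

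So your argument proves \eqref{Bcurv4} only under the hypothesis $\alpha-\beta\notin\Delta$, which is what the computation actually needs. If $\beta-\alpha\in\Delta_{\hg_f}$, the term $N_{-\alpha,\beta}^2E_\beta$ must be added. The paper's own proof (``$-0-0-0$'') makes exactly the same tacit assumption, so this is a defect in the stated hypothesis rather than in your method.
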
 

%\begin{remark}
%Using \eqref{Bcurv2}, one obtains that $A_\alpha\ne 0$ for any $\alpha\in\Delta_\qg$ if $R^b=0$.  
%\end{remark} 

\begin{proof}
It follows from Corollary \ref{Bcon} that   
\begin{align*}
R^b(A,E_\alpha)E_\beta =& \Lambda^b(A) \Lambda^b(E_\alpha)E_\beta - \Lambda^b(E_\alpha)\Lambda^b(A) E_\beta - \alpha(A)\Lambda^b(E_\alpha)E_\beta\\ 
=& \left(\alpha(A)+\beta(A)+\tfrac{g_\ag(A,A_{\alpha+\beta})}{x_{\alpha+\beta}}\right)\Lambda^b(E_\alpha)E_\beta \\ 
&- \left(\beta(A)+\tfrac{g_\ag(A,A_{\beta})}{x_{\beta}}\right)\Lambda^b(E_\alpha)E_\beta - \alpha(A)\Lambda^b(E_\alpha)E_\beta\\ 
=& g_\ag\left(A,\tfrac{1}{x_{\alpha+\beta}}A_{\alpha+\beta}-\tfrac{1}{x_{\beta}}A_{\beta}\right) \tfrac{N_{\alpha,\beta} \left(x_{\alpha+\beta}-x_\alpha\right)}{x_{\alpha+\beta}}E_{\alpha+\beta}.
\end{align*}
On the other hand, if $\alpha+\beta\notin\Delta_\qg$, then
$$
R^b(E_\alpha,E_{-\alpha})E_\beta = - \Lambda^b(A_\alpha)E_\beta - \beta((H_\alpha)_\kg)E_\beta 
=\left(-\la\alpha,\beta\ra-\tfrac{g_\ag(A_\alpha,A_\beta)}{x_\beta}\right)E_\beta,  
$$
and if $\alpha+\beta\in\Delta_\qg$ and $-\alpha+\beta\in\Delta_\qg^+$, then 
\begin{align*}
R^b(E_\alpha,E_{-\alpha})E_\beta 
=& \Lambda^b(E_\alpha) \Lambda^b(E_{-\alpha})E_\beta - \Lambda^b(E_{-\alpha}) \Lambda^b(E_\alpha)E_\beta - \Lambda^b(A_\alpha)E_\beta - \beta((H_\alpha)_\kg)E_\beta \\ 
=&\tfrac{N_{-\alpha,\beta} \left(x_{\beta}-x_\alpha\right)}{x_{-\alpha+\beta}} \Lambda^b(E_{\alpha}) E_{-\alpha+\beta} 
-\tfrac{N_{\alpha,\beta} \left(x_{\alpha+\beta}-x_\alpha\right)}{x_{\alpha+\beta}} \Lambda^b(E_{-\alpha}) E_{\alpha+\beta} \\
& -\left(\beta(A_\alpha)+\tfrac{g_\ag(A_\alpha,A_\beta)}{x_\beta}\right)E_\beta - \beta((H_\alpha)_\kg)E_\beta\\ 
=&  \tfrac{N_{\alpha,-\alpha+\beta} \left(x_{\beta}-x_\alpha\right)}{x_{\beta}}  \tfrac{N_{-\alpha,\beta} \left(x_{\beta}-x_\alpha\right)}{x_{-\alpha+\beta}}E_\beta 
-\tfrac{N_{\alpha,\beta} \left(x_{\alpha+\beta}-x_\alpha\right)}{x_{\alpha+\beta}} \tfrac{N_{-\alpha,\alpha+\beta} \left(x_{\alpha+\beta}-x_\alpha\right)}{x_{\beta}} E_{\beta} \\
&-\left(\beta(H_\alpha)+\tfrac{g_\ag(A_\alpha,A_\beta)}{x_\beta}\right)E_\beta  \\ 
=&\left(\tfrac{N_{-\alpha,\beta}^2 \left(x_{\beta}-x_\alpha\right)^2}{x_{-\alpha+\beta}x_\beta} 
-\tfrac{N_{\alpha,\beta}^2 \left(x_{\alpha+\beta}-x_\alpha\right)^2}{x_{\alpha+\beta}x_\beta} -\la\alpha,\beta\ra-\tfrac{g_\ag(A_\alpha,A_\beta)}{x_\beta}\right)E_\beta.   
\end{align*}
Note that if $-\alpha+\beta\notin\Delta_\qg^+$ then the term with $N_{-\alpha,\beta}^2$ does not appear.  

Finally, assume that $\alpha+\beta\in\Delta_\qg$ and $\alpha-\beta\notin\Delta_\qg$, hence
\begin{align*}
R^b(E_{-\alpha},E_\beta)E_\alpha 
=& \Lambda^b(E_{-\alpha}) \Lambda^b(E_\beta)E_\alpha - 0 - 0 - 0 
=\tfrac{N_{\beta,\alpha} \left(x_{\alpha+\beta}-x_\beta\right)}{x_{\alpha+\beta}} \Lambda^b(E_{-\alpha}) E_{\alpha+\beta} \\
=&  \tfrac{N_{\beta,\alpha} \left(x_{\alpha+\beta}-x_\beta\right)}{x_{\alpha+\beta}}  \tfrac{N_{-\alpha,\alpha+\beta} \left(x_{\alpha+\beta}-x_\alpha\right)}{x_{\beta}}E_\beta 
= -\tfrac{N_{\alpha,\beta}^2 \left(x_{\alpha+\beta}-x_\beta\right)\left(x_{\alpha+\beta}-x_\alpha\right)}{x_{\alpha+\beta} x_{\beta}} E_\beta, 
\end{align*}
concluding the proof.  
\end{proof} 

As an application of Proposition \ref{Bcurv}, we classify Bismut flat Hermitian structures in the compact homogeneous case. 

\begin{theorem}\label{Bflat}
A $G$-invariant Hermitian structure $(J,g)$ as in \eqref{Jg} on a C-space $M=G/K$ 
is Bismut flat if and only if $M=G$ and $g$ is almost-bi-invariant (see Remark \ref{Bflat-rem}).   
\end{theorem}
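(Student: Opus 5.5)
The sufficiency direction is already contained in Remark \ref{Bflat-rem}. If $M=G$ and $g|_{[\ggo,\ggo]}=g_b$, then \eqref{Bf1} and \eqref{Bf2} hold, and $R^b=0$ follows from $R^b(X,Y)=[\Lambda^b(X),\Lambda^b(Y)]-\Lambda^b([X,Y])$. So the work is in the necessity direction. There I would assume $R^b=0$ and extract the constraints from the explicit components in Proposition \ref{Bcurv}, always testing against $\alpha,\beta\in\Delta_\qg^+$.

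\emph{Step 1: the $x_\alpha$ are constant on each simple factor.} Take $\alpha,\beta$ with $\alpha+\beta\in\Delta_\qg$ and $\alpha-\beta\notin\Delta_\qg$. Then \eqref{Bcurv4} gives $(x_{\alpha+\beta}-x_\beta)(x_{\alpha+\beta}-x_\alpha)=0$. To get more, I would use \eqref{Bcurv3} with $\beta=\alpha$, which reads $\la\alpha,\alpha\ra+g_\ag(A_\alpha,A_\alpha)/x_\alpha=0$ (no $N$-terms occur, since $2\alpha$ and $0$ are not in $\Delta_\qg$). This is impossible, since the left side is strictly positive. Hence $R^b=0$ can never hold unless $\Delta_\qg$ is empty, which cannot be right because Lie groups are examples. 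I therefore expect I have mis-specialized \eqref{Bcurv2}, which is stated only for $\alpha\ne\beta$, and the diagonal case must be recomputed directly. On the diagonal, the $\Lambda^b(A_\alpha)E_\alpha$ term contributes $\alpha(A_\alpha)+g_\ag(A_\alpha,A_\alpha)/x_\alpha$, while $\alpha((H_\alpha)_\kg)$ contributes the remainder of $\la\alpha,\alpha\ra$. Using the conventions $H_\alpha\in\tg^c$ with $\im H_\alpha$ real and $\alpha(A)=\kil(A,H_\alpha)$, the relevant sign is $\la\alpha,\alpha\ra=-Q(\im H_\alpha,\im H_\alpha)\cdot(-1)$. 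I would carefully redo this with the real vectors $\im A_\alpha$. The expected outcome is the identity
$$
Q((\im H_\alpha)_\kg,(\im H_\alpha)_\kg)+Q(\im A_\alpha,\im A_\alpha)=\tfrac{1}{x_\alpha}g_\ag(\im A_\alpha,\im A_\alpha).
$$
This identity is the key constraint.

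\emph{Step 2: $\kg=0$.} I would combine \eqref{Bcurv1}, namely $g_\ag(A,\tfrac{1}{x_{\alpha+\beta}}A_{\alpha+\beta}-\tfrac{1}{x_\beta}A_\beta)(x_{\alpha+\beta}-x_\alpha)=0$ for all $A$, with \eqref{Bcurv3} for pairs $\alpha\neq\beta$. Running induction over root strings inside each simple factor should force $x_\alpha=x_\beta$ on each $\Delta_{\qg_i}$, say $x_\alpha=z_i$, and $A_{\alpha}/z_i$ to be additive in $\alpha$. Additivity is automatic for the projection, so \eqref{Bcurv3} with all $x$ equal reduces to
$$
g_\ag(\im A_\alpha,\im A_\beta)=z_i\,Q(\im H_\alpha,\im H_\beta)
$$
for $\alpha,\beta$ in the same factor. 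For $\alpha,\beta$ in different factors, the same equation gives $g_\ag(A_\alpha,A_\beta)=0$. Taking $\alpha=\beta$, this says $|\im A_\alpha|_{g}^2=z_i|\im H_\alpha|_Q^2$. Comparing with the identity of Step 1, and using that $P_\ag$ cannot increase the relevant norm beyond what the identity allows, I expect to conclude $(H_\alpha)_\kg=0$ for all $\alpha\in\Delta_\qg$. Since the $H_\alpha$, $\alpha\in\Delta_\qg$, span $\tg_f$ (the complementary roots span the dual), this gives $\tg_f\cap\kg=0$. Hence $\zg(\kg)=0$ and $[\hg_f,\hg_f]=0$, so $\kg=0$ and $M=G$ by Remark \ref{ext}, (ii).

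\emph{Step 3: conclusion.} With $\kg=0$ we have $\ag=\zg(\ggo)\oplus\overline{\tg}$ and $A_\alpha=H_\alpha$, so Step 2 says $g_\ag|_{\overline{\tg}}=\sum z_iQ|_{\tg_i}$ and $g_\qg=\sum z_iQ|_{\qg_i}$, that is, $g|_{[\ggo,\ggo]}=g_b$ is almost-bi-invariant. The main obstacle I anticipate is Step 1/2. Deriving $x_\alpha$ constant on each factor from the mixed equations \eqref{Bcurv1}, \eqref{Bcurv3}, \eqref{Bcurv4} needs a root-string case analysis (including $G_2$-type strings), analogous to the argument in Lemma \ref{h11A-lem}. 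I would first treat $A_1$-type connected pairs of simple roots and then propagate along the Dynkin diagram by height.
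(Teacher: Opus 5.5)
Your sufficiency argument is fine, and so is the diagonal identity you end up with. There was no mis-specialization: \eqref{Bcurv2} holds for $\beta=\alpha$, and the paper uses it there. Your ``contradiction'' is a sign slip: $A_\alpha\in\im\ag$, so $g_\ag(A_\alpha,A_\alpha)=-g_\ag(\im A_\alpha,\im A_\alpha)<0$.

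The genuine gap is the derivation of $\kg=0$ in Step 2. Since $\kg\perp\ag$ for $Q$, your Step 1 identity is just $g_\ag(\im A_\alpha,\im A_\alpha)=x_\alpha Q(\im H_\alpha,\im H_\alpha)$. Once $x_\alpha=z_i$, this is exactly the $\alpha=\beta$ case of your Step 2 identity. Comparing the two gives nothing, and there is no norm inequality for $P_\ag$ to exploit, because $g_\ag$ is arbitrary.

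What detects $\kg$ is an \emph{off-diagonal} comparison between roots whose $\ag$-projections agree but whose Killing products differ. The paper does this first. Take $\alpha\in\Pi_\qg$ and $\beta\in\Pi_{\hg_f}$ with $\la\alpha,\beta\ra<0$. Then $A_{\alpha+\beta}=A_\alpha$ (as $H_\beta\in\kg^c$) and $x_{\alpha+\beta}=x_\alpha$. So \eqref{Bcurv2} for the pair $(\alpha,\alpha+\beta)$, compared with the diagonal, forces $\la\alpha,\beta\ra=0$ when $2\alpha+\beta\notin\Delta_\qg$. The longer strings are excluded using \eqref{Bcurv1}, \eqref{Bcurv3} and \eqref{Bcurv4}, giving $[\hg_f,\hg_f]=0$. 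Then $\zg(\kg)=0$ follows because the Gram matrix $g_\ag(\im A_\alpha,\im A_\beta)=z_iQ(\im H_\alpha,\im H_\beta)$, $\alpha,\beta\in\Pi_\qg$, is nonsingular; it is block diagonal across simple factors by \eqref{Bcurv2}. Hence the $A_\alpha$, $\alpha\in\Pi_\qg$, are linearly independent and $\ag_f=\hg_f$.

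Within your own scheme, the repair is to use the \emph{polarized} identity for all pairs in $\Delta_\qg$, together with the cross-factor vanishing from \eqref{Bcurv2}. This says the $Q$-projection $\tg_f\to\ag$ is injective, since the complementary roots span. Hence $\tg_\kg=0$, so $\kg$ has rank zero and $\kg=0$.

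That repair, however, presupposes $x_\alpha=z_i$ on each $\Delta_{\qg_i}$, which you only assert, and this is where the work lies. Your induction over connected pairs of simple roots followed by height only makes sense when $\Pi=\Pi_\qg$, i.e.\ $[\hg_f,\hg_f]=0$, which is part of what must be proved. When $\Pi_{\hg_f}\neq\emptyset$, Bismut flatness is actually impossible, so any correct argument must produce the contradiction above at some point.

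The paper's ordering makes the rest clean. Once $\Pi=\Pi_\qg$, adjacent $\alpha,\beta\in\Pi_\qg$ satisfy $\alpha-\beta\notin\Delta$, so \eqref{Bcurv4} gives $x_{\alpha+\beta}\in\{x_\alpha,x_\beta\}$. Then \eqref{Bcurv3}, with $N_{\alpha,\beta}^2=-\la\alpha,\beta\ra$, forces $x_\alpha=x_\beta$ and $g_\ag(A_\alpha,A_\beta)=-x_\alpha\la\alpha,\beta\ra$. Induction on height via \eqref{Bcurv3} gives $x_\alpha=z_i$ on all of $\Delta_i$.

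The missing idea is thus the comparison exploiting $A_{\alpha+\beta}=A_\alpha$ for $\beta\in\Delta_{\hg_f}$, and it must come before the constancy argument, not after it.
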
 

\begin{proof}
Assume that $R^b=0$ and consider $\alpha\in\Pi_\qg$ and $\beta\in\Pi_{\hg_f}$ such that $\alpha+\beta\in\Delta_\qg$ (i.e. $\la\alpha,\beta\ra<0$).  If $2\alpha+\beta\notin\Delta_\qg$, then it follows from \eqref{Bcurv2} that
$$
- x_\alpha\la\alpha,\alpha\ra = g_\ag(A_\alpha,A_\alpha) 
= g_\ag(A_\alpha,A_{\alpha+\beta}) = -x_\alpha\la\alpha, \alpha+\beta\ra,
$$
so $\la\alpha,\beta\ra=0$, which is a contradiction.  Thus $2\alpha+\beta\in\Delta_\qg$, and if $3\alpha+\beta\in\Delta_\qg$, then since $4\alpha+\beta, 2\alpha+2\beta\notin\Delta_\qg$, by \eqref{Bcurv2}, 
$$
- 3x_\alpha\la\alpha,\alpha\ra = 3g_\ag(A_\alpha,A_\alpha) = g_\ag(A_\alpha,A_{3\alpha+2\beta}) 
 = -x_\alpha\la\alpha, 3\alpha+2\beta\ra,
$$
again a contradiction.  We therefore obtain that $\alpha+\beta, 2\alpha+\beta\in\Delta_\qg$ and $3\alpha+\beta\notin\Delta_\qg$.  According to \eqref{Bcurv1}, $x_{2\alpha+\beta}=2x_{\alpha+\beta}$ or $x_{2\alpha+\beta}=x_{\alpha}$, and by \eqref{Bcurv4}, $x_{2\alpha+\beta}=x_{\alpha+\beta}$ or $x_{2\alpha+\beta}=x_{\alpha}$, hence $x_{2\alpha+\beta}=x_{\alpha}$.  It now follows from \eqref{Bcurv3} that  
$$
- 2x_\alpha\la\alpha,\alpha\ra = 2g_\ag(A_\alpha,A_\alpha) = g_\ag(A_\alpha,A_{2\alpha+\beta}) 
 = -x_\alpha\la\alpha, 2\alpha+\beta\ra,
$$
a contradiction.  We conclude that $\Pi_{\hg_f}=\emptyset$, that is, $[\hg_f,\hg_f]=[\kg,\kg]=0$ and $\hg_f=\kg\oplus\ag_f$ is the maximal torus of $\ggo_f$ (see \eqref{tdec}).  

On the other hand, if $\alpha,\beta\in\Pi_\qg$ and $\alpha+\beta\in\Delta_\qg$, then it follows from \eqref{Bcurv4} (as $\alpha-\beta\notin\Delta_\qg$) that $x_{\alpha+\beta}=x_\alpha$ or $x_{\alpha+\beta}=x_\beta$, which implies that $g_\ag(A_\alpha,A_\beta) = -x_\alpha\la\alpha,\beta\ra$ and $x_\alpha=x_\beta$ by \eqref{Bcurv3}.  Since $g_\ag(A_\alpha,A_\beta)=0$ if $\alpha+\beta\notin\Delta_\qg$ (i.e. $\la\alpha,\beta\ra=0$) by \eqref{Bcurv2}, we obtain that for each $i=1,\dots,s$ there exists $z_i>0$ such that $x_\alpha=x_\beta=z_i$ for all $\alpha,\beta\in\Pi_{\qg_i}$ (see \eqref{decflags}) and 
$$
g_\ag(\im A_\alpha,\im A_\beta)=z_iQ(\im H_\alpha,\im H_\beta), \qquad\forall \alpha,\beta\in\Pi_{\qg_i}. 
$$  
Since $\{ \im H_\alpha:\alpha\in\Pi_{\qg_i}\}$ is linearly independent (indeed, $\{ Z_\alpha=(\im H_\alpha)_{\hg_i}:\alpha\in\Pi_{\qg_i}\}$ is a basis of $\hg_i$, see \eqref{cone}), we obtain that the set $\{ A_\alpha:\alpha\in\Pi_{\qg_i}\}$ is linearly independent for all $i=1,\dots,s$.  Thus $\dim{\ag_f}=|\Pi_\qg|=\dim{\hg_f}$ and so $\ag_f=\hg_f$ and $\zg(\kg)=0$; in particular, $A_\alpha=H_\alpha$ for any $\alpha\in\Delta$.  This implies that $\kg=0$, $M=G$ is a Lie group, $[\ggo,\ggo]=\ag_f\oplus\qg$, $\ag_f$ is the maximal torus of $[\ggo,\ggo]$ and $g_\ag|_{\ag_f}=g_b|_{\ag_f}$ for the biinvariant metric on $[\ggo,\ggo]$ given by $g_b:=z_1Q|_{\ggo_1}+\dots+z_sQ|_{\ggo_s}$.    

Using that by \eqref{Bcurv3}, for any $\alpha,\beta,\alpha+\beta\in\Delta_\qg^+$, 
$$
\tfrac{N_{-\alpha,\beta}^2 \left(x_{\beta}-x_\alpha\right)^2}{x_{-\alpha+\beta}x_\beta} 
-\tfrac{N_{\alpha,\beta}^2 \left(x_{\alpha+\beta}-x_\alpha\right)^2}{x_{\alpha+\beta}x_\beta} -\la\alpha,\beta\ra-\tfrac{g_\ag(A_\alpha,A_\beta)}{x_\beta}=0, 
$$
it follows by induction in the height of the roots that $x_\alpha=z_i$ for all $\alpha\in\Delta_i$, $i=1,\dots,s$, that is, $g_\qg=g_b|_\qg$.  Thus $g|_{\ag_f\oplus\qg}=g_b|_{\ag_f\oplus\qg}$.   

Conversely, if $M=G$ is a Lie group and $g$ is an almost-bi-invariant metric on $G$, say $g|_{\ggo_i}=z_i(-\kil_{\ggo_i})=z_iQ$, $i=1,\dots,s$, then by Corollary \ref{Bcon}, 
\begin{align*}
\Lambda^b(E_\alpha)E_\beta =& \unm N_{\alpha,\beta} z_i 
\left(1-\epsilon_\alpha\epsilon_\beta\epsilon_{\alpha+\beta} (\epsilon_\alpha +\epsilon_\beta -\epsilon_{\alpha+\beta} )\right) E_{\alpha+\beta} \\
=& \unm N_{\alpha,\beta} z_i
\left(1-\epsilon_\alpha\epsilon_\beta+\epsilon_{\alpha+\beta} (\epsilon_\alpha +\epsilon_\beta )\right) E_{\alpha+\beta} =0, \qquad\forall \alpha,\beta\in\Delta_{\qg_i},   
\end{align*}
then for any $\alpha\in\Delta_i$ and $A\in\ag_f^c=\la A_\beta=H_\beta:\beta\in\Delta\ra_{\CC}$, the maximal torus of $[\ggo^c,\ggo^c]$,  
\begin{align*}
\Lambda^b(A)E_\alpha= \left(\alpha(A)+\tfrac{1}{z_i}g_\ag(A,H_\alpha)\right) E_\alpha 
= \left(-Q(A,H_\alpha)+\tfrac{1}{z_i}g_\ag(A,H_\alpha)\right) E_\alpha = 0.   
\end{align*}
This implies that $R^b=0$ (see Remark \ref{Bflat-rem}), concluding the proof.  
\end{proof}

\subsection{Bismut holonomy}\label{BiH-sec}
According to \S\ref{IC-sec}, the holonomy algebra $\holg(\nabla^b)$ is the Lie algebra generated by the subspace $\ad{\kg}|_\pg+\Lambda^b(\pg)\subset\ug(n)$.  For a Hermitian C-space, it follows from Corollary \ref{Bcon} that $\holg(\nabla^b)$ vanishes on $\ag$ and each $\qg_i$ is $\holg(\nabla^b)$-invariant, i.e., the Bismut holonomy always reduces to
\begin{equation}\label{holred}
\holg(\nabla^b)\subset\left[\begin{matrix} 
0&&&\\ &\ug(m_1)&& \\ &&\ddots&\\ &&&\ug(m_s) 
\end{matrix}\right], \qquad\dim{\ag}=2d, \qquad\dim{\qg_i}=2m_i, 
\end{equation}
relative to the decomposition $\pg=\ag\oplus\qg_1\oplus\dots\oplus\qg_s$ (see \eqref{decflags}).  This was proved in \cite[Section 4]{Stc} for a compact semisimple Lie group $M=G$.  For an additional reduction to a subalgebra of $\sug(n)$, see Remark \ref{Bsun} below.  

It follows from Corollary \ref{Bcon} that 
\begin{align}
\Lambda(A)E_\alpha=& \left(\alpha(A)+\tfrac{1}{x_\alpha}g_\ag(A,A_\alpha)\right)E_\alpha 
=\left(-Q(A,A_\alpha)+\tfrac{1}{x_\alpha}Q(A,P_\ag A_\alpha)\right)E_\alpha \label{LA} \\
=& Q\left(A,(\tfrac{1}{x_\alpha}P_\ag-I) A_\alpha\right)E_\alpha, \qquad\forall A\in\ag,\quad \alpha\in\Delta_\qg.\notag
\end{align}

\begin{remark}\label{gQhol}
For the normal metric $g=Q|_\pg$ (i.e., $x_\alpha=1$ for all $\alpha\in\Delta_\qg$ and $P_\ag=I$), we have that $\Lambda(\pg)=0$ by \eqref{Lb} and \eqref{LA}, so the holonomy is simply given by $\holg(\nabla^b)=\ad{\kg}|_\pg$.  We note that it is sufficient to have $g_\qg=g_b|_\qg$ for some bi-invariant metric $g_b$ on $G$ (i.e., $x_\alpha=x_\beta$ for all $\alpha,\beta\in\Delta_{\qg_i}$, $i=1,\dots,s$) to obtain that $\Lambda(\qg)=0$, and since $\Lambda(\ag)$ always leaves the $\qg_\alpha$ invariant, the irreducible components of $\holg(\nabla^b)$ coincide with those of the isotropy representation $\ad{\kg}|_\pg$.   
\end{remark}

According to \eqref{holred}, the complexification $\holg(\nabla^b)^c:=\holg(\nabla^b)\otimes\CC$ of the holonomy algebra can be embedded as follows, 
$$
\holg(\nabla^b)^c\subset \glg_{2m_1}(\CC)\oplus\dots\oplus\glg_{2m_s}(\CC) \subset \glg_{2(n-d)}(\CC):=\glg(\qg^c), 
$$ 
and since $\qg^{1,0}$ and $\qg^{0,1}$ are both $\holg(\nabla^b)^c$-invariant and $\holg(\nabla^b)^c|_{\qg^{1,0}}\simeq\holg(\nabla^b)^c|_{\qg^{0,1}}$, the Bismut holonomy $\holg(\nabla^b)$ is fully determined by 
$$
\holg(\nabla^b)^c|_{\qg^{1,0}}\subset \glg_{m_1}(\CC)\oplus\dots\oplus\glg_{m_s}(\CC)\subset \glg_{n-d}(\CC):=\glg(\qg^{1,0}).
$$
It follows from \eqref{Lb} that relative to a basis $\{ E_\beta:\beta\in\Delta_\qg^+\}$ of $\qg^{1,0}$ ordered in such a way that the length of the roots is increasing, the matrix of $\Lambda^b(E_\alpha)$ is strict lower (resp. upper) triangular for any $\alpha\in\Delta_\qg^+$ (resp. $\alpha\in\Delta_\qg^-$).  

\begin{proposition}\label{holirr}
Let $(M=G/K,J,g)$ be a Hermitian C-space and assume that $M=G/K$ is a C-space of fibration type (see Definition \ref{fibtype-def}).  If for some $i\in\{1,\dots,s\}$, 
$x_\alpha\ne x_\gamma$ for any $\alpha=\sum\limits_{\gamma\in\Pi_i}n_\gamma\gamma\in\Delta_{\qg_i}^+$ of length $\geq 2$ and any $\gamma\in\Pi_i$ such that $n_\gamma>0$ and $\alpha|_{\zg(\hg_i)}\ne\gamma|_{\zg(\hg_i)}$ (see \eqref{decflags} and \S\ref{isot-sec}), 
then $\qg_i$ is $\holg(\nabla^b)$-irreducible.  
\end{proposition}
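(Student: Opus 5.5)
Write $\hg:=\holg(\nabla^b)$. The plan is to work with its complexification acting on $\qg_i^{1,0}=\bigoplus_{\alpha\in\Delta_{\qg_i}^+}\CC E_\alpha$. Since $\qg_i^{0,1}$ is the conjugate of $\qg_i^{1,0}$ and both are $\hg^c$-invariant, $\qg_i$ is $\hg$-irreducible exactly when $\qg_i^{1,0}$ is $\hg^c$-irreducible. So let $0\ne W\subset\qg_i^{1,0}$ be an $\hg^c$-invariant subspace; the goal is $W=\qg_i^{1,0}$. The operators available in $\hg^c$ are $\ad{\kg^c}|_\pg$, $\Lambda^b(A)$ for $A\in\ag^c$, and $\Lambda^b(E_\gamma)$ for $\gamma\in\Delta_\qg$.

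\textbf{Step 1: $W$ is a sum of the $\Ad(H_f)$-isotypic pieces.} The operators $\ad{\tg_\kg}$ and $\Lambda^b(A)$, $A\in\ag$, act diagonally on the $E_\alpha$. By \eqref{LA}, $\Lambda^b(A)E_\alpha=Q(A,(\tfrac1{x_\alpha}P_\ag-I)A_\alpha)E_\alpha$. I do not want to rely on these weights separating the root spaces. Instead I will use the fibration type hypothesis (Proposition \ref{isotdec}): the $\mg_j\subset\qg_i$ are $\Ad(K)$-irreducible and pairwise inequivalent. Hence $W$, being $\ad{\kg}$-invariant, is a direct sum of some of the $\mg_j^{1,0}$. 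Equivalently, $W$ is spanned by the full sets $\{E_\beta:\beta|_{\zg(\hg_i)}=\alpha_j|_{\zg(\hg_i)}\}$ for some central roots $\alpha_j$.

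\textbf{Step 2: ladder operators.} For $\gamma\in\Pi_i\cap\Delta_\qg$ and $\beta,\beta+\gamma\in\Delta_{\qg_i}^+$, \eqref{Lb} gives
$$\Lambda^b(E_\gamma)E_\beta=\tfrac{N_{\gamma,\beta}(x_{\beta+\gamma}-x_\gamma)}{x_{\beta+\gamma}}E_{\beta+\gamma}.$$
Similarly, $\Lambda^b(E_{-\gamma})E_{\beta+\gamma}$ is a nonzero multiple of $E_\beta$ whenever $x_\beta\ne x_\gamma$ (the middle case of \eqref{Lb}). Raising is therefore nonzero as soon as $x_{\beta+\gamma}\ne x_\gamma$. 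The root $\beta+\gamma$ has length $\ge2$ and contains $\gamma$. Moreover $(\beta+\gamma)|_{\zg(\hg_i)}\ne\gamma|_{\zg(\hg_i)}$ because $\beta\in\Delta_\qg$ has nonzero restriction. The hypothesis of the proposition thus applies and guarantees $x_{\beta+\gamma}\ne x_\gamma$.

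\textbf{Step 3: raising reaches the highest root.} Pick any $E_\beta\in W$. Every positive root of $\ggo_i$ below the highest root $\theta_i$ can be raised by some simple root. If that simple root lies in $\Pi_{\hg_i}$, the raising is done by $\ad{\kg^c}$, since $[\hg_f,\hg_f]\subset\kg$. If it lies in $\Pi_{\qg_i}$, it is done by $\Lambda^b(E_\gamma)$, which is nonzero by Step 2. Iterating, $E_{\theta_i}\in W$, so the top summand lies in every nonzero invariant subspace.

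\textbf{Step 4 (the hardest part): lowering back to everything.} Starting from $E_{\theta_i}$, I need to reach every $E_\beta$, $\beta\in\Delta_{\qg_i}^+$. Lowering by $\Pi_{\hg_i}$ is again handled by $\ad{\kg^c}$. Lowering by $\gamma\in\Pi_{\qg_i}$ is $\Lambda^b(E_{-\gamma})$ applied to $E_{\beta+\gamma}$, which needs $x_\beta\ne x_\gamma$. The hypothesis covers this whenever $\beta$ has length $\ge2$, contains $\gamma$, and restricts differently from $\gamma$.

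The delicate lowering steps are those landing on $\beta$ simple, or on $\beta$ not containing $\gamma$. To handle them, I will choose the descending path cleverly using Step 1. It suffices to reach a single $E_\beta$ in each summand $\mg_j$, and then $\ad{\kg^c}$ fills out all of $\mg_j$. Each summand contains a root $\beta$ from which one can go down along a chain by simple roots $\gamma$ that occur in $\beta$.

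For the final step onto a simple root $\gamma'\in\Pi_{\qg_i}$, I will instead use $\Lambda^b(E_{-\gamma})E_{\gamma+\gamma'}$ with $\gamma\ne\gamma'$. By \eqref{Lb} this is a multiple of $x_{\gamma'}-x_\gamma$. I would try to avoid needing this factor to be nonzero by reaching $\gamma'$ along a different route, possibly through roots of the form $\gamma'+\sum k_\delta\delta$ with $\delta\in\Pi_{\hg_i}$, which lie in the same $\mg_j$ as $\gamma'$. This is the main obstacle, and I will check carefully that the hypothesis is exactly what makes some lowering route nonzero.
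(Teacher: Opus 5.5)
Your Steps 1--3 are fine. The gap is Step 4, and it comes from misreading \eqref{Lb}. In the middle case of \eqref{Lb} the factor is $x_\beta-x_\alpha$, where $\beta$ is the root of the vector \emph{acted on}, not of the output. For $\Lambda^b(E_{-\gamma})E_{\beta+\gamma}$ the roots are $-\gamma\in\Delta_\qg^-$, $\beta+\gamma\in\Delta_\qg^+$ and $\beta\in\Delta_\qg^+$, so
\[
\Lambda^b(E_{-\gamma})E_{\beta+\gamma}=\tfrac{N_{-\gamma,\beta+\gamma}\,(x_{\beta+\gamma}-x_\gamma)}{x_\beta}\,E_\beta .
\]
Lowering therefore requires $x_{\beta+\gamma}\ne x_\gamma$, which is exactly the condition for raising. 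This is forced by metric compatibility. $\Lambda^b(X)$ is skew-Hermitian for real $X$, and $\overline{E_\gamma}=-E_{-\gamma}$. Hence on $\qg^{1,0}$, with the Hermitian form $g(\cdot,\overline{\cdot})$, the operator $\Lambda^b(E_{-\gamma})$ is the adjoint of $\Lambda^b(E_\gamma)$.

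Consequently no lowering step is delicate. Every step $E_{\beta+\gamma}\mapsto E_\beta$ with $\gamma\in\Pi_{\qg_i}$ is nonzero by your own Step 2 argument: $\beta+\gamma$ has height $\ge2$, contains $\gamma$, and restricts differently from $\gamma$ on $\zg(\hg_i)$ because $\beta\in\Delta_\qg$. The detour you sketch for landing on a simple root $\gamma'$ is unnecessary. There the coefficient is a multiple of $x_{\gamma+\gamma'}-x_\gamma\ne0$, not of $x_{\gamma'}-x_\gamma$. As written, that detour does not amount to a proof.

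Once corrected, your Step 4 is exactly the paper's proof. You descend from the maximal root of $\Delta_i^+$ to any $\beta\in\Delta_{\qg_i}^+$ along simple roots. Steps by $\gamma\in\Pi_{\hg_i}$ use $\ad{\kg^c}$, and steps by $\gamma\in\Pi_{\qg_i}$ use $\Lambda^b(E_{-\gamma})$. At each step the hypothesis is applied to the larger root $\delta_k$ and $\gamma_{k+1}$.

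The paper does only this descent. It concludes through the decomposition of $\qg_i^{1,0}$ into $\holg(\nabla^b)^c$-irreducible components, i.e.\ complete reducibility together with the fibration type hypothesis. Your raising Step 3 replaces that part of the argument.

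In fact Steps 1--3 alone already suffice. Since $\holg(\nabla^b)\subset\ug(n)$, the Hermitian orthogonal complement of an invariant $W\subset\qg_i^{1,0}$ is also invariant. If that complement were nonzero, Steps 1 and 3 would put the top root vector both in $W$ and in its complement, which is impossible.
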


\begin{remark} 
In particular, all the $\qg_i$ are $\holg(\nabla^b)$-irreducible for any invariant Hermitian metric $g=(x_1,\dots,x_r)$ on a complex C-space of fibration type such that $x_i\ne x_j$ for all $i\ne j$.  
\end{remark}

\begin{proof}
Let $\alpha_{max}\in\Delta^+_i$ denote the maximal root (see \eqref{decflags}).  Thus $\alpha_{max}\in\Delta_{\qg_i}^+$ and for any $\beta\in\Delta_{\qg_i}^+$ there exist $\gamma_1,\dots,\gamma_t\in\Pi_i$ such that $\delta_1,\dots,\delta_t\in\Delta_{\qg_i}^+$, where
$$
\delta_1:=\alpha_{max}-\gamma_1, \quad \delta_2:=\alpha_{max}-\gamma_1-\gamma_2, \quad \dots,\quad 
\delta_t:=\alpha_{max}-\gamma_1-\dots-\gamma_t=\beta.  
$$
On each step, 
\begin{enumerate}[{\small $\bullet$}] 
\item either $\delta_{k+1}|_{\zg(\hg_f)}= \delta_{k}|_{\zg(\hg_f)}$ (i.e., $\gamma_{k+1}\in\Pi_{\hg_i}$) and $E_{\delta_{k+1}}$, $E_{\delta_{k}}$ therefore belong to the same $\mg_j$, hence to the same $\holg(\nabla^b)^c$-irreducible component, 

\item or $\delta_{k+1}|_{\zg(\hg_f)}\ne \delta_{k}|_{\zg(\hg_i)}$, which implies that $\gamma_{k+1}\in\Pi_{\qg_i}$ and $\delta_{k}|_{\zg(\hg_i)}\ne \gamma_{k+1}|_{\zg(\hg_i)}$, hence by \eqref{Lb}, 
$$
\Lambda(E_{-\gamma_{k+1}})E_{\delta_{k}} = \tfrac{N_{-\gamma_{k+1},\delta_{k}} (x_{\delta_{k}}-x_{\gamma_{k+1}})}{x_{\delta_{k+1}}} E_{\delta_{k+1}}, 
$$ 
where $x_{\delta_{k}}-x_{\gamma_{k+1}}\ne 0$ by the hypothesis.  Thus $E_{\delta_{k+1}}$ and $E_{\delta_{k}}$ belong to the same $\holg(\nabla^b)^c$-irreducible component.  
\end{enumerate}
We therefore obtain that $E_{\alpha_{max}}$ and $E_{\beta}$ belong to the same $\holg(\nabla^b)^c$-irreducible component for any $\beta\in\Delta_{\qg_i}^+$, that is, $\qg^{1,0}$ is $\holg(\nabla^b)^c$-irreducible, concluding the proof.  
\end{proof}

\begin{corollary}\label{holgK}
If $(M=G/K,J.g)$ is a Hermitian C-space of fibration type such that $G_f$ is simple and $g_\qg|_{\qg_i}$ is K\"ahler on the flag $F_i=G_i/H_i$ (i.e., $x_{\alpha+\beta}=x_\alpha+x_\beta$ for all $\alpha,\beta,\alpha+\beta\in\Delta_{\qg_i}^+$, see \S\ref{K}), then $\qg_i$ is $\holg(\nabla^b)$-irreducible.  
\end{corollary}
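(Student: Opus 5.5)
The plan is to verify the hypothesis of Proposition \ref{holirr} for the index $i$, after which $\qg_i$ is $\holg(\nabla^b)$-irreducible. We are assuming the C-space is of fibration type, so what remains is the following. Let $\alpha=\sum_{\gamma\in\Pi_i}n_\gamma\gamma\in\Delta_{\qg_i}^+$ have length $\geq 2$, and let $\gamma\in\Pi_i$ with $n_\gamma>0$ and $\alpha|_{\zg(\hg_i)}\ne\gamma|_{\zg(\hg_i)}$. We must show $x_\alpha\ne x_\gamma$.

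First note that $\gamma\in\Pi_{\qg_i}$. If $\gamma$ were in $\Pi_{\hg_i}$, then $x_\gamma$ would not be defined as a metric parameter. The proof of Proposition \ref{holirr} only uses the condition for $\gamma\in\Pi_{\qg_i}$, since the other steps stay inside one $\mg_j$. Hence $x_\gamma>0$.

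For the Kähler metric we use the description of \S\ref{K}, (c): there is $Z$ in the chamber $C$ of $J_\qg|_{\qg_i}$ with $x_\beta=\beta(Z)$ for all $\beta\in\Delta_{\qg_i}^+$. Extending linearly, $\beta(Z)=0$ for $\beta\in\Delta_{\hg_i}$ because $Z\in\im\zg(\hg_i)$. Therefore
$$
x_\alpha=\sum_{\delta\in\Pi_{\qg_i}}n_\delta\,\delta(Z)=\sum_{\delta\in\Pi_{\qg_i}}n_\delta x_\delta .
$$
Positivity holds because $\Pi_{\qg_i}\subset\Delta_{\qg_i}^+$ gives $\delta(Z)>0$. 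Here we use characterization c)/d) of \S\ref{comp}, which provides a basis $\Pi$ with $\Delta_\qg^+=\la\Pi\ra_{\NN_0}\setminus\la\Pi_\hg\ra_{\NN_0}$, so all the $n_\delta$ are nonnegative.

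Now write $x_\alpha-x_\gamma=(n_\gamma-1)x_\gamma+\sum_{\delta\ne\gamma}n_\delta x_\delta$. Every term is $\geq 0$. Equality $x_\alpha=x_\gamma$ would force $n_\gamma=1$ and $n_\delta=0$ for all $\delta\in\Pi_{\qg_i}\setminus\{\gamma\}$. In that case $\alpha=\gamma+\sum_{\beta\in\Pi_{\hg_i}}k_\beta\beta$, so $\alpha|_{\zg(\hg_i)}=\gamma|_{\zg(\hg_i)}$, which contradicts the assumption. Hence $x_\alpha>x_\gamma$, the hypothesis of Proposition \ref{holirr} holds, and the corollary follows.

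The main point needing care is matching the indexing in Proposition \ref{holirr}: the ordering $\Delta^+$ used for the expansion $\alpha=\sum n_\gamma\gamma$ must be one compatible with $J_\qg$, so that the simple roots in $\Pi_{\qg_i}$ lie in $\Delta_{\qg_i}^+$. I would fix $\Pi$ via \S\ref{comp}, (d) at the start to ensure this.
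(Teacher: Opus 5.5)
Your proposal is correct and matches the paper, which states the corollary without a separate proof as a direct consequence of Proposition \ref{holirr}. The point is exactly yours: the K\"ahler relation gives $x_\alpha=\sum_{\delta\in\Pi_{\qg_i}}n_\delta x_\delta$, so $x_\alpha>x_\gamma$ whenever $\alpha|_{\zg(\hg_i)}\ne\gamma|_{\zg(\hg_i)}$. You are also right on both points of care: the proof of Proposition \ref{holirr} only uses the hypothesis for $\gamma\in\Pi_{\qg_i}$, and $\Pi$ must be chosen compatible with $J_\qg$.
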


This in particular holds for any LCK Hermitian structure (see \S\ref{LCK-sec}).   

\begin{example}\label{b21-Bcon}
For generalized Calabi-Eckmann manifolds (see Examples \ref{b21-comp} and \ref{b21-herm}), $\holg(\nabla^b)$ leaves the $Q$-orthogonal decomposition $\pg=\ag\oplus\qg_1\oplus\qg_2$ invariant, providing the reduction 
$$
\holg(\nabla^b)\subset\left[\begin{matrix} 
0&0&0\\ 0&\ug(\tfrac{\dim{\qg_1}}{2})&0\\ 0&0&\ug(\tfrac{\dim{\qg_2}}{2})
\end{matrix}\right].    
$$
For a metric $g$ such that $g|_{\qg_1}=(x_1,\dots,x_{r_1})$ and $1\leq j<k\leq r_1$, we obtain from Corollary \ref{Bcon} that  
\begin{enumerate}[{\small $\bullet$}] 
\item either $x_{j+k}=x_j$ or $E_{k\gamma_i}$ and $E_{(j+k)\gamma_i}$ belong to the same $\holg(\nabla^b)^c$-irreducible component, 

\item either $x_k=x_j$ or $E_{k\gamma_i}$ and $E_{(k-j)\gamma_i}$ belong to the same $\holg(\nabla^b)^c$-irreducible component.
\end{enumerate} 
Using these properties, it is not hard to prove that $\qg_1$ is always $\holg(\nabla^b)$-irreducible, unless $x_1=\dots=x_{r_1}$ (see Remark \ref{gQhol}).  Analogously, the same holds for $\qg_2$.     
\end{example}

\subsection{Ricci forms}\label{CC-sec}
It follows from \eqref{cc-def} that, for any $t\in\RR$, the Nomizu operator $\Lambda^t$ of the canonical connection $\nabla^t$ is given by 
$$
\Lambda^t = \tfrac{1+t}{2}\Lambda^c+\tfrac{1-t}{2}\Lambda^b.
$$ 
A formula for the connection $\Lambda^t$ of any Hermitian C-space $(M=G/K,J,g)$ therefore follows from the formulas given for $\Lambda^c$ in Proposition \ref{NoCc} and $\Lambda^b$ in Corollary \ref{Bcon}.  Concerning the corresponding Ricci forms, it follows from \eqref{rhot} and Lemma \ref{dto} that 
\begin{equation}\label{rhot2}
\rho^t(X,Y)=  Q\left([X,Y],Z_{J_\qg} +\tfrac{t-1}{2} P_\ag(Z_{J_\qg,g_\qg})_\ag\right), \qquad\forall X,Y\in\pg. 
\end{equation}
Indeed, 
\begin{align*}
\rho^t=& \rho^c+\tfrac{t-1}{2}dd_g^*\omega = Q([\cdot,\cdot],Z_{J_\qg}) +\tfrac{t-1}{2} g([\cdot,\cdot]_\pg,(Z_{J_\qg,g_\qg})_\ag)\\  
=&  Q([\cdot,\cdot],Z_{J_\qg}) +\tfrac{t-1}{2} Q([\cdot,\cdot]_\pg,P_\ag(Z_{J_\qg,g_\qg})_\ag) \\
=&  Q([\cdot,\cdot],Z_{J_\qg}) +\tfrac{t-1}{2}  Q([\cdot,\cdot],P_\ag(Z_{J_\qg,g_\qg})_\ag).   
\end{align*}
In particular, it follows from Remark \ref{C1} that if $t\ne 1$, then for any $\sigma\in c_1(M,J)$, there exists a compatible metric $g=g_\ag+g_\qg$ such that $\rho^t(g)=\sigma$.  Also, a $G$-invariant Hermitian structure $(J,g)$ as in \eqref{Jg} has $\rho^t=0$ if and only if $Z_{J_\qg}\in\ag_f$ (i.e., $c_1(M,J)=0$) and 
\begin{equation}\label{rhot0}
\left(P_\ag(Z_{J_\qg,g_\qg})_\ag\right)_{\ag_f}=\tfrac{1-t}{2}Z_{J_\qg}. 
\end{equation} 

\begin{corollary}\label{nc-rft0}
Let $M=G/K$ be a complex C-space.  
\begin{enumerate}[{\rm (i)}] 
\item If a $G$-invariant Hermitian metric $g$ as in \eqref{g} has $\rho^t=0$, then $t<1$.   

\item There exists a $G$-invariant Hermitian metric $g$ as in \eqref{g} such that $\rho^{t_0}=0$ for some $t_0<1$ if and only if for every $t<1$, there is a metric such that $\rho^t=0$.  
\end{enumerate} 
\end{corollary}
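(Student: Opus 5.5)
The plan is to read everything off the characterization \eqref{rhot0}. By \eqref{rhot2}, $\rho^t=0$ holds exactly when $Z_{J_\qg}\in\ag_f$ and $\left(P_\ag(Z_{J_\qg,g_\qg})_\ag\right)_{\ag_f}=\tfrac{1-t}{2}Z_{J_\qg}$. Both parts will come from pairing this identity with suitable vectors and using the positivity of $P_\ag$ and of the cone $C(J_\qg)$.

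For part (i), I would take the $Q$-inner product of \eqref{rhot0} with $W:=(Z_{J_\qg,g_\qg})_\ag\in\ag_f$. Since $W\in\ag_f$, the projection onto $\ag_f$ can be dropped on the left, which gives $Q(P_\ag W,W)=g_\ag(W,W)>0$. This requires $W\neq 0$, and I expect proving that to be the main point. The right-hand side equals $\tfrac{1-t}{2}Q(Z_{J_\qg},W)=\tfrac{1-t}{2}Q(Z_{J_\qg},Z_{J_\qg,g_\qg})$, because $Z_{J_\qg}\in\ag_f$ and $Z_{J_\qg,g_\qg}-W\in\zg(\kg)\perp\ag_f$. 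To see $W\neq 0$, suppose $W=0$; then the left side of \eqref{rhot0} vanishes, so $(1-t)Z_{J_\qg}=0$. If $t\neq 1$ this forces $Z_{J_\qg}=0$, impossible unless $M$ is a torus (the trivial case, which should be excluded or treated separately). If $t=1$, then $\rho^c=0$, which the Remark after Theorem \ref{CRF} shows happens only for tori. So $W\neq 0$ and the left side is positive.

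Next I would show $Q(Z_{J_\qg},Z_{J_\qg,g_\qg})>0$. Write $Z_{J_\qg,g_\qg}=\sum_{\alpha\in\Delta_\qg^+}x_\alpha^{-1}Z_\alpha$. By \eqref{Jal}, $Q(Z_{J_\qg},Z_\alpha)=-\kil(Z_{J_\qg},\im H_\alpha)=\la\rho_{J_\qg},\alpha\ra>0$ for every $\alpha\in\Delta_\qg^+$, with signs fixed as in the Remark after Theorem \ref{CRF}. Hence the right side is $\tfrac{1-t}{2}$ times a positive number, and equating it with the positive left side gives $t<1$.

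For part (ii), one direction is trivial. For the other, assume $g=g_\ag+g_\qg$ satisfies \eqref{rhot0} with some $t_0<1$, and fix $t<1$. Set $\lambda:=\tfrac{1-t}{1-t_0}>0$ and put $g'_\ag:=\lambda g_\ag$, so that $P'_\ag=\lambda P_\ag$, while keeping $g_\qg$. This metric is still $J_\ag$-compatible, and $Z_{J_\qg,g_\qg}$ does not change. The left side of \eqref{rhot0} is then multiplied by $\lambda$, which turns $\tfrac{1-t_0}{2}Z_{J_\qg}$ into $\tfrac{1-t}{2}Z_{J_\qg}$. The condition $c_1=0$ is unaffected, so $\rho^t(g')=0$. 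The only delicate point is confirming that rescaling $g_\ag$ alone leaves the metric Koszul vector unchanged, which is immediate from \eqref{koszul-g}.
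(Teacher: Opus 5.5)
Your proof is correct and follows the paper's argument: for (i) you pair \eqref{rhot0} with the metric Koszul vector and use \eqref{Jal}, and for (ii) you rescale $g_\ag$ alone by a positive constant. Your check that $W\neq 0$, together with your remark that tori must be excluded, supplies the strict inequality $g_\ag(W,W)>0$ that the paper asserts without comment. One caveat you inherit from the paper: unwinding \eqref{rhot2} actually gives $\tfrac{1-t}{2}\bigl(P_\ag(Z_{J_\qg,g_\qg})_\ag\bigr)_{\ag_f}=Z_{J_\qg}$, which coincides with \eqref{rhot0} as printed when $t=-1$ (the CYT case) but not for general $t$; so the rescaling factor in (ii) should be $\tfrac{1-t_0}{1-t}$, and in this corrected form the positivity in (i) holds without needing $W\neq 0$ separately.
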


\begin{remark}
We refer to \cite{BrdStn} for related results in the general context.  
\end{remark}

\begin{proof}
Since \eqref{rhot0} implies that
$$
\tfrac{1-t}{2} Q(Z_{J_\qg},Z_{J_\qg,g_\qg}) = Q(P_\ag(Z_{J_\qg,g_\qg})_\ag,Z_{J_\qg,g_\qg}) = g_\ag((Z_{J_\qg,g_\qg})_\ag,(Z_{J_\qg,g_\qg})_\ag)>0
$$
and $Q(Z_{J_\qg},Z_{J_\qg,g_\qg})>0$ by \eqref{Jal}, part (i) follows.  

For part (ii), we note that if $\rho^{t_0}(g)=0$ for $g=g_\ag+g_\qg$, then by \eqref{rhot0}, $\rho^{t}(g')=0$ for the metric given by $g':=\tfrac{1-t}{1-t_0}g_\ag+g_\qg$, concluding the proof.  
\end{proof} 

\begin{remark}\label{Bsun}
The vanishing of the Ricci form in the Bismut case $t=-1$ is called Calabi-Yau with torsion and will be studied in \S\ref{CYT-sec}.  Since $\tr{\Lambda^b(E_\alpha)}=0$ for all $\alpha\in\Delta_\qg$, we obtain that the Bismut holonomy reduces to $\sug(n)$ if and only if   
$$
-\im\sum_{\alpha\in\Delta_\qg^+} -Q(A,\im H_\alpha))+\tfrac{g_\ag(A,\im A_\alpha)}{x_\alpha} = \sum_{\alpha\in\Delta_\qg}\alpha(A)+\tfrac{g_\ag(A,A_\alpha)}{x_\alpha} =0, \qquad\forall A\in\ag.
$$
This is equivalent to $\left(P_\ag(Z_{J_\qg,g_\qg})_\ag\right)_{\ag_f}=Z_{J_\qg}$, that is, the Bismut Ricci form vanishes (see \eqref{rhot0}) or $(M^{2n}=G/K,J,g)$ is Calabi-Yau with torsion (see \S\ref{CYT-sec}).  
\end{remark}

% 9/9/2026

\section{Balanced}\label{bal-sec} 

\begin{definition}\label{bal-def} 
A Hermitian structure $(J,g)$ is called {\it balanced} if $d_g^*\omega=0$ (or equivalently, $d\omega^{n-1}=0$).  
\end{definition}

In other words, the Lee form vanishes (see \eqref{lee}).  Note that if $(J,g)$ is balanced, then all the Ricci forms of canonical connections coincide, i.e., $\rho^t=\rho^c$ for all $t\in\RR$ (see \eqref{rhot}).  The following characterizations directly follow from Lemma \ref{dto} and \eqref{lee2}.  

\begin{proposition}\label{bal}
Let $M=G/K=G_f/K\times T^z$ be a C-space.     
\begin{enumerate}[{\rm (i)}]
\item \cite[Theorem 9, (i)]{Pds} A $G$-invariant Hermitian structure $(J,g)$ as in \eqref{Jg} is balanced if and only if 
$$
Z_{J_\qg,g_\qg}\in\zg(\kg),
$$ 
where $Z_{J_\qg,g_\qg}$ is the metric Koszul vector (see \eqref{koszul-g}).  

\item A complex C-space $(M=G/K,J)$ admits a $G$-invariant balanced metric as in \eqref{g} if and only if 
$$
C(J_\qg)\cap\zg(\kg)\ne\emptyset,
$$
where $C(J_\qg)\subset\zg(\hg_f)$ is the cone defined in \eqref{cone}.   
\end{enumerate}
\end{proposition}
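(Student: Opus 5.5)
For part (i), I would read the result off Lemma \ref{dto}. By Definition \ref{bal-def}, $(J,g)$ is balanced if and only if $d_g^*\omega=0$. Lemma \ref{dto} gives $d_g^*\omega=-g(\cdot,(Z_{J_\qg,g_\qg})_\ag)$, and $g$ is positive definite on $\ag$, so this $1$-form vanishes exactly when the projection $(Z_{J_\qg,g_\qg})_\ag$ is zero. This is the content of \eqref{lee2}.

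I then need to turn this into the stated membership condition. The metric Koszul vector lies in $\zg(\hg_f)$, which splits $Q$-orthogonally as $\zg(\hg_f)=\zg(\kg)\oplus\ag_f$, with $\ag_f\subset\ag$. Since $\zg(\kg)\subset\kg$ is $Q$-orthogonal to $\pg\supset\ag$, the $Q$-projection of $Z_{J_\qg,g_\qg}$ onto $\ag$ equals its $\ag_f$-component. That component vanishes exactly when $Z_{J_\qg,g_\qg}\in\zg(\kg)$, which proves (i).

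For part (ii), I would combine (i) with the description of the cone recorded after \eqref{koszul-g}:
$$
C(J_\qg)=\{Z_{J_\qg,g_\qg}: g_\qg \mbox{ is a $G_f$-invariant metric on } F\}.
$$
Suppose a balanced $g=g_\ag+g_\qg$ exists. By (i), $Z_{J_\qg,g_\qg}\in\zg(\kg)$, and this vector also lies in $C(J_\qg)$, so the intersection is nonempty.

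Conversely, suppose $Z\in C(J_\qg)\cap\zg(\kg)$. I would choose $g_\qg$ with $Z_{J_\qg,g_\qg}=Z$, take any $J_\ag$-compatible inner product $g_\ag$ on $\ag$, and set $g=g_\ag+g_\qg$. Then $g$ is Hermitian and, by (i), balanced.

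The only step I expect to need care is the surjectivity of $g_\qg\mapsto Z_{J_\qg,g_\qg}$ onto the open cone. I would justify it from the formula $Z_{J_\qg,g_\qg}=\sum_j \tfrac{\dim\mg_j}{2x_j}Z_{\alpha_j}$. Each coefficient $\tfrac{\dim\mg_j}{2x_j}$ can be any positive number, and the $Z_{\alpha_j}$ positively span $C(J_\qg)$ by \eqref{cone}, since $\{Z_\alpha:\alpha\in\Pi_\qg\}$ is among the $Z_{\alpha_j}$. Hence every point of the cone is attained as some $Z_{J_\qg,g_\qg}$.
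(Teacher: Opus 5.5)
Your proposal is correct and follows the paper's route. Part (i) is read off from Lemma~\ref{dto} via \eqref{lee2}, using that the $Q$-projection of $Z_{J_\qg,g_\qg}$ onto $\ag$ is its $\ag_f$-component. Part (ii) combines (i) with the description $C(J_\qg)=\{Z_{J_\qg,g_\qg}\}$ recorded after \eqref{koszul-g}. Your justification of that description is sound: in $Z_{J_\qg,g_\qg}=\sum_j\tfrac{\dim\mg_j}{2x_j}Z_{\alpha_j}$ the $x_j>0$ can be chosen independently.
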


\begin{remark}\label{bal-rem} \hspace{1cm} 
\begin{enumerate}[{\small $\bullet$}] 
\item The conditions in the proposition do not depend on $(J_\ag,g_\ag)$.  Note that the balanced condition in part (i) is equivalent to $\sum\limits_{\alpha\in\Delta_\qg^+} \tfrac{1}{x_\alpha}\im A_\alpha=0$.  

\item If $(J,g)$ is balanced, then $c_1(M,J)\ne 0$.  Indeed, by \eqref{Jal},
$$
Q(Z_{J_\qg},Z_{J_\qg,g_\qg})= -\kil_{\ggo_f}(Z_{J_\qg},Z_{J_\qg,g_\qg})=\sum\limits_{\alpha\in\Delta_\qg^+} \tfrac{1}{x_\alpha}\im\alpha(Z_{J_\qg}) 
=\sum\limits_{\alpha\in\Delta_\qg^+} \tfrac{1}{x_\alpha}\la\alpha,\rho_{J_\qg}\ra>0,
$$  
thus $Z_{J_\qg}$ can never belong to $\ag$ if $Z_{J_\qg,g_\qg}\in\zg(\kg)$ (see Lemma \ref{c10}).  

\item The condition $\zg(\kg)\ne 0$ is necessary for the existence of a $G$-invariant balanced Hermitian structure on a C-space $M=G/K$.  In particular, $M$ can not be a Lie group and $K$ can not be semisimple.  Remarkably, condition $\zg(\kg)\ne 0$ turns out to be sufficient for the existence in several cases (see Theorem \ref{bal2} below). 

\item Any invariant Hermitian structure on a K\"ahler C-space (i.e., $\zg(\kg)=\zg(\hg_f)$) is balanced.  
\end{enumerate}
\end{remark}

According to Proposition \ref{bal}, (ii), the space $\cca^G_{bal}$ of all $G$-invariant complex structures on a C-space $M=G/K$ admitting a balanced compatible metric depends on $2d^2$ parameters if nonempty.  More precisely,   
\begin{equation}\label{Cbal}
\cca^G_{bal}=\cca^G_{i_1}\sqcup\dots\sqcup\cca^G_{i_m} =\bigsqcup_{C(J_i)\cap\zg(\kg)\ne\emptyset}\cca^G_i, \qquad \dim{\cca^G_{i_j}}=2d^2;  
\end{equation} 
cf.\ \eqref{Ccal} and \eqref{Cc10}.  On the other hand, the space $\hca^G_{bal,J}$ of all $G$-invariant balanced Hermitian structures (see \S\ref{dim-sec}) depends on $r'+d^2$ parameters if nonempty, where $0<r'\leq r$ is the dimension of 
$$
\{ g_\qg=(x_1,\dots,x_r):Z_{J_\qg,g_\qg}\in\zg(\kg)\}.  
$$  

\begin{example}\label{su5-22} 
The C-space $M^{20}=\SU(5)/K$ given in Example \ref{su5-5} has $\zg(\hg_f)=\RR Z_\kg\oplus\ag$, where $\kg=\RR Z_\kg$ and $\dim{\ag}=2$.  Any vector $(a,b,c,d,d)\in\zg(\hg_f)$, $d=-\tfrac{a+b+c}{2}$ will be identified from now on with $(a,b,c)$.  The usual invariant ordering $\Delta^+=\{\alpha_{ij}:i<j\}$ determines a complex structure $J_1$ with $\Pi_{\hg_f}=\alpha_{45}$, $\Pi_\qg=\{\alpha_{12},\alpha_{23},\alpha_{34}\}$, and $Z_{ij}:=Z_{\alpha_{ij}}$ given by 
$$
\begin{array}{c}
Z_{12}=(1,-1,0), \quad Z_{23}=(0,1,-1), \quad Z_{34}=Z_{35}=(0,0,1), \\ 
Z_{13}=(1,0,-1), \quad Z_{24}=Z_{25}=(0,1,0), \quad Z_{14}=Z_{15}=(1,0,0), 
\end{array}
$$
so $Z_{J_1}=(4,2,0)$ and 
$$
C(J_1)=\{(a,b,c)\in\zg(\hg):a,a+b,a+b+c>0\}.  
$$
By Proposition \ref{bal}, (ii) $(M=G/K,J)$, $J=(J_\ag,J_1)$, where $Z_\kg=(a,b,c)$, admits a balanced metric if and only if $a,a+b,a+b+c>0$ (or all $<0$).  If $g_\qg=(\tfrac{1}{x_1},\dots,\tfrac{1}{x_6})$, then 
$$
Z_{J_1,g_\qg}=(x_1+x_4+2x_6, -x_1+x_2+2x_5, -x_2+2x_3-x_4),
$$
so by Proposition \ref{bal}, (i), the metric $g=g_\ag+g_\qg$ is balanced if and only if 
$$
x_1+x_4+2x_6=\lambda a, \quad -x_1+x_2+2x_5=\lambda b, \quad -x_2+2x_3-x_4=\lambda c, 
$$
for some $\lambda>0$.  Thus $\dim{\hca^G_{bal,J_1}}=3+1=4$.  

On the other hand, the invariant ordering defined by 
$$
\Pi_{\hg_f}=\{-\alpha_{45}\}, \qquad \Pi_\qg=\{-\alpha_{23},-\alpha_{12},\alpha_{15}\}
$$ 
determines a complex structure $J_2$ with 
$$
\begin{array}{c}
Z_{-23}=(0,-1,1), \quad Z_{-12}=(-1,1,0), \quad Z_{15}=Z_{14}=(1,0,0), \\ 
Z_{-13}=(-1,0,1), \quad Z_{25}=Z_{24}=(0,1,0), \quad Z_{35}=Z_{34}=(0,0,1),
\end{array}
$$
giving rise to $Z_{J_2}=(0,2,4)$ and   
$$
C(J_2)=\{(a,b,c,d,d)\in\zg(\hg):c,b+c,a+b+c>0\}.  
$$
Thus $(M=G/K,J)$, $J=(J_\ag,J_2)$, where $Z_\kg=(a,b,c)$, admits a balanced metric if and only if $c,b+c,a+b+c>0$.  Note that on the family of homogeneous spaces defined by $Z_\kg=(1,b,1)$, $-1<b<\infty$, both complex structures $(J_\ag,J_1)$ and $(J_\ag,J_2)$ admit balanced metrics. Since 
$$
Z_{J_2,g_\qg}=(-x_2+2x_3-x_4, -x_1+x_2+2x_5, x_1+x_4+2x_6),
$$
the metric $g=g_\ag+g_\qg$ is balanced  if and only if 
$$
-x_2+2x_3-x_4=\lambda a, \quad -x_1+x_2+2x_5=\lambda b, \quad x_1+x_4+2x_6=\lambda c, 
$$
for some $\lambda>0$, so $\dim{\hca^G_{bal,J_1}}=3+1=4$.   
\end{example}

Recall from \eqref{decflags} the decomposition $\ggo_f=\ggo_1\oplus\dots\oplus\ggo_s$ in simple factors.  

\begin{theorem}\label{bal2}
A C-space $M=G/K$ such that $G_f$ is simple and $\rank(\ggo_f)\geq 2$ admits a balanced $G$-invariant Hermitian structure $(J,g)$ as in \eqref{Jg} if and only if $\zg(\kg)\ne 0$.     
\end{theorem}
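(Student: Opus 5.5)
By Proposition \ref{bal}, (ii), the existence of a balanced structure $(J,g)$ as in \eqref{Jg} reduces to finding an invariant ordering $\Delta_\qg^+$, i.e.\ a complex structure $J_\qg$ on the flag $F=G_f/H_f$, whose open cone $C(J_\qg)$ meets $\zg(\kg)$. One implication is immediate from Remark \ref{bal-rem}: if $\zg(\kg)=0$ the intersection is empty. So it remains to show that when $\zg(\kg)\ne 0$, $G_f$ is simple and $\rank(\ggo_f)\geq 2$, some chamber of \eqref{ch} yields a cone $C(J_\qg)$ containing a nonzero vector of $\zg(\kg)$. It suffices to find a single nonzero $Z\in\zg(\kg)$ and some $J_\qg$ with $Z\in C(J_\qg)$.

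The plan is to describe the union of all cones over the chambers. Recall $C(J_\qg)=\la Z_\alpha:\alpha\in\Pi_\qg\ra_{\RR_{>0}}$, an open simplicial cone in $\zg(\hg_f)$. The claim to aim for is the following. If $\ggo_f$ is simple of rank $\geq 2$ and $\zg(\hg_f)\ne 0$, then the union of the closures of the cones $C(J_i)$, $i=1,\dots,k$, is all of $\zg(\hg_f)$. Moreover, every nonzero $Z$ lies in the open cone $C(J_i)$ for some $i$, or can be perturbed inside $\zg(\kg)$ into one.

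Since the closures cover $\zg(\hg_f)$ and $\zg(\kg)$ is a nonzero subspace, $\zg(\kg)$ must meet the interior of some cone whenever it is not contained in a union of cone walls. The walls are spanned by proper subsets of the $Z_\alpha$. The degenerate case, where $\zg(\kg)$ lies in walls, has to be handled by varying $J_\qg$: a vector on a wall of one cone should be interior to a neighbouring cone.

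The main obstacle is proving the covering. The covering should be dual to the chamber decomposition \eqref{ch}. A vector $Z$ lies in $\overline{C(J_\qg)}$ iff $Z=\sum_{\alpha\in\Delta_\qg^+}c_\alpha Z_\alpha$ with $c_\alpha\ge0$. The natural route is through the inner product: $C(J_\qg)$ is the set of metric Koszul vectors $Z_{J_\qg,g_\qg}$, and the map $g_\qg\mapsto Z_{J_\qg,g_\qg}$ composed with pairing against the chamber should be exploited.

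First try: for $Z\ne0$, consider $\la Z,\cdot\ra$ on $\im\zg(\hg_f)$ and pick a chamber $C$ in which it is "most positive". Then show that $Z$ has nonnegative coordinates in the basis $\{Z_\alpha:\alpha\in\Pi_\qg\}$, using that the Gram matrix of simple roots is a Cartan-type matrix with nonpositive off-diagonal entries. This is where simplicity and rank $\geq 2$ enter. In rank one, $\zg(\hg_f)$ is a line and $Z_{J}$, $-Z_J$ already cover it. For a full flag $\zg(\hg_f)=\tg$, and the cones $C(J)$ are the images of the positive-root cone under the Weyl group, whose union is $\tg$; the general flag should follow by projecting this picture to $\zg(\hg_f)$, since $Z_\alpha$ is the projection of $\im H_\alpha$.

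Indeed, projecting the covering $\tg=\bigcup_w w\,\overline{\la\im H_\alpha:\alpha\in\Pi\ra_{\RR_{\ge0}}}$ onto $\zg(\hg_f)$ gives cones $\la Z_\alpha:\alpha\in w\Pi\ra_{\RR_{\ge0}}$. The roots in $w\Pi\cap\Delta_{\hg_f}$ project to $0$, so these are exactly closed cones of the form $\overline{C(J_\qg)}$ via characterization c) in \S\ref{comp}. This yields the covering.

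The remaining step is the passage from closed to open cones, which is where rank $\ge 2$ and simplicity are needed. Choose $Z\in\zg(\kg)$ generic within $\zg(\kg)$. If it lies on a wall, the wall is shared by two cones, and I would use the simplicity of $\ggo_f$ to show that no nonzero subspace of $\zg(\hg_f)$ lies entirely in the union of walls except in situations excluded by the rank hypothesis. This step I expect to be the delicate one; if it fails, I would instead construct $J_\qg$ explicitly by choosing $\Pi$ adapted to a fixed nonzero $Z\in\zg(\kg)$, using characterization b) with a chamber whose closure contains the dual of $Z$.
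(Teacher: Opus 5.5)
There is a genuine gap, and it sits in the step you yourself call delicate. Your reduction to Proposition \ref{bal}, (ii) and the necessity direction are fine. What the sufficiency direction needs, however, is that the \emph{open} cones cover, i.e.\ that every $0\ne Z\in\zg(\hg_f)$ lies in some $C(J_i)$. A covering by the closed cones $\overline{C(J_i)}$ does not give this, and your proposed repair does not work. If $\dim\zg(\kg)=1$ there is nothing to perturb. Moreover, a line such as $\RR Z_\alpha$, $\alpha\in\Pi_\qg$, lies entirely in the union of the cone boundaries: $Z_\alpha$ is on an edge of $\overline{C(J_\qg)}$ and $-Z_\alpha$ on an edge of $\overline{C(-J_\qg)}$. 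So the claim that no nonzero subspace lies in the union of walls cannot hold in the form you need.

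The missing ingredient is a standard fact about simple Lie algebras. Since the Dynkin diagram of $\ggo_f$ is connected, the inverse Cartan matrix has strictly positive entries. Hence every nonzero element of a closed Weyl chamber is a \emph{strictly} positive combination of simple coroots, that is, $\overline{Ch(\Delta^+)}\setminus\{0\}\subset\la\im H_\alpha:\alpha\in\Pi\ra_{\RR_{>0}}$. Your remark about the Gram matrix having nonpositive off-diagonal entries points in this direction, but you only extract nonnegativity from it.

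With this fact no closed-to-open passage is needed:
\begin{enumerate}
\item Take $0\ne Z\in\zg(\kg)$ and a chamber of \eqref{ch} whose closure contains $Z$.
\item Choose a compatible ordering $\Delta^+$, so that $\Pi_{\hg_f}\subset\Pi$ and $\overline{Ch(J_\qg)}=\overline{Ch(\Delta^+)}\cap\zg(\hg_f)$.
\item Then $Z=\sum_{\alpha\in\Pi}c_\alpha\im H_\alpha$ with all $c_\alpha>0$.
\item Project onto $\zg(\hg_f)$, where $Z_\beta=0$ for $\beta\in\Pi_{\hg_f}$. This gives $Z=\sum_{\alpha\in\Pi_\qg}c_\alpha Z_\alpha\in C(J_\qg)$.
\end{enumerate}
This is the paper's argument. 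Your final fallback (a chamber whose closure contains $Z$) heads toward it, but it does not conclude without the positivity fact.

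Your derivation of the closed covering is also flawed. Projecting $\tg=\bigcup_w w\,\overline{\la\im H_\alpha:\alpha\in\Pi\ra_{\RR_{\ge0}}}$ onto $\zg(\hg_f)$ does not produce cones of the form $\overline{C(J_i)}$ for general $w$. Characterization c) in \S\ref{comp} requires $w\Pi\cap\Delta_{\hg_f}$ to be a basis of $\Delta_{\hg_f}$, which fails in general. For example, take $F=\CC P^2=\SU(3)/\Se(\U(1)\times\U(2))$ and $w\Pi=\{\alpha_{31},\alpha_{12}\}$. Then $Z_{\alpha_{31}}=-Z_{\alpha_{12}}$, so the projected cone is all of $\zg(\hg_f)$ rather than a ray. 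You would have to restrict to those $w$ for which $w\Pi\cap\Delta_{\hg_f}$ is a basis of $\Delta_{\hg_f}$ and show these still cover, which is again the chamber argument above.

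Note also that what drives the positivity is the connectedness of $\Pi$, i.e.\ the simplicity of $\ggo_f$. For instance, for $\ggo_f=\sug(2)\oplus\sug(2)$ the open cones are the open quadrants and miss the axes.
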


\begin{proof}
If $J_1,\dots,J_{k}$ are all the $G$-invariant complex structures on the flag $F=G_f/H_f$, then we consider
$$
C(F):=\bigcup_{i=1}^{k}C(J_i)\subset\zg(\hg_f). 
$$
We will show that $C(F)=\zg(\hg_f)\setminus\{ 0\}$, from which the theorem follows using Proposition \ref{bal}.  For each $J_\qg$, which is attached to the invariant ordering $\Delta_\qg^+=\Delta_\qg\cap\Delta^+$, or equivalently, to the chamber $Ch(J_\qg)\subset\im\zg(\hg_f)\setminus\{ 0\}$ (see \eqref{ch}), we consider the corresponding chamber 
$$
Ch(\Delta^+)=\{ X\in\tg_f:-\kil_{\ggo_f}(X,\im H_\alpha)>0,\;\alpha\in\Pi\}\subset\tg_f, 
\quad\mbox{i.e.}, \quad Ch(J_\qg)=Ch(\Delta^+)\cap\zg(\hg_f), 
$$ 
and define the open convex cone
$$
C(\Delta^+):=\la \im H_\alpha:\alpha\in\Pi_\qg\ra_{\RR_{>0}} \subset\tg_f, \quad\mbox{i.e.}, \quad C(\Delta^+)\cap\zg(\hg_f) = C(J_\qg).  
$$
Since $\Pi$ is connected and $|\Pi|\geq 2$, we obtain that $\overline{Ch(\Delta^+)}\setminus\{ 0\}\subset C(\Delta^+)$.  Thus 
$$
\overline{Ch(J_\qg)}\setminus\{ 0\} = \overline{Ch(\Delta^+)}\cap\zg(\hg_f)\setminus\{ 0\} \subset C(\Delta^+)\cap\zg(\hg_f) = C(J_\qg).
$$
Note that the first and last equalities follow from the fact that $Z_\alpha=0$ if and only if $\alpha\in\Pi_{\hg_f}$.  Thus 
$$
\zg(\hg_f)\setminus\{ 0\}
= \bigcup_{i=1}^k \overline{Ch(J_i)}\setminus\{ 0\}\subset C(F),  
$$
which implies that $\zg(\hg_f)\setminus\{ 0\}=C(F)$.  
\end{proof}

The following stronger existence result follows from the above theorem and \eqref{Cbal}. 

\begin{corollary}\label{bal-cor}
On any C-space $M=G/K$ as in Theorem \ref{bal2} such that $\zg(\kg)\ne 0$ and $\dim{\ag}=2d$, there exists a $2d^2$-parametric space of $G$-invariant complex structures each one admitting an $(r'+d^2)$-parametric space of balanced metrics ($r'>0$).  
\end{corollary}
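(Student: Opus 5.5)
The corollary should follow by combining Theorem \ref{bal2} with the description \eqref{Cbal} of $\cca^G_{bal}$ and the parameter count for $\hca^G_{bal,J}$ given just before Example \ref{su5-22}.

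First, I will extract from the proof of Theorem \ref{bal2} the stronger fact actually established there: $C(F)=\bigcup_{i=1}^k C(J_i)=\zg(\hg_f)\setminus\{0\}$. Since $\zg(\kg)\ne 0$, pick any $0\ne Z\in\zg(\kg)$. Then $Z\in C(J_i)$ for some index $i$, so $C(J_i)\cap\zg(\kg)\ne\emptyset$. By Proposition \ref{bal}, (ii), every complex structure $J=(J_\ag,J_i)$ in the component $\cca^G_i$ then admits a balanced $G$-invariant metric $g=g_\ag+g_\qg$. Proposition \ref{bal}, (ii) does not depend on $J_\ag$ (Remark \ref{bal-rem}). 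Hence all of $\cca^G_i$ lies in $\cca^G_{bal}$. By \eqref{Ccal}, $\cca^G_i\simeq\Gl_{2d}(\RR)/\Gl_d(\CC)$ has dimension $2d^2$, which gives the first assertion via \eqref{Cbal}.

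For the second assertion, fix such a $J$ and describe the balanced metrics. By Proposition \ref{bal}, (i), $g=g_\ag+g_\qg$ is balanced if and only if $Z_{J_\qg,g_\qg}\in\zg(\kg)$, and this condition is independent of $g_\ag$. So $g_\ag$ ranges freely over $J_\ag$-compatible inner products, a space $\Gl_d(\CC)/\U(d)$ of dimension $d^2$. Meanwhile $g_\qg=(x_1,\dots,x_r)$ ranges over
\[
\{x\in\RR^r_{>0} : Z_{J_\qg,g_\qg}\in\zg(\kg)\}.
\]
Writing $Z_{J_\qg,g_\qg}=\sum_j \tfrac{\dim\mg_j}{2x_j}Z_{\alpha_j}$ and substituting $y_j=1/x_j$, this set is the intersection of the open positive orthant with the preimage of $\zg(\kg)$ under the linear map $y\mapsto\sum_j\tfrac{\dim\mg_j}{2}y_jZ_{\alpha_j}$. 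It is therefore a relatively open convex cone in a linear subspace, and it is nonempty by the choice of $i$. Its dimension $r'$ is then well defined, giving an $(r'+d^2)$-parametric family.

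It remains to check $r'>0$: the cone is nonempty and stable under positive scaling, so it contains a ray and has dimension at least $1$.

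The only potentially delicate point is making sure the proof of Theorem \ref{bal2} really yields $C(F)=\zg(\hg_f)\setminus\{0\}$ and not just the existence statement. It does, since that equality is exactly what is shown there. Everything else is dimension counting from \S\ref{dim-sec}.
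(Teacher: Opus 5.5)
Your proposal is correct and follows the paper's own route: the corollary is deduced from the equality $C(F)=\zg(\hg_f)\setminus\{0\}$ established in the proof of Theorem \ref{bal2}, combined with \eqref{Cbal}, Proposition \ref{bal} and the parameter count for $\hca^G_{bal,J}$ given just before Example \ref{su5-22}. Your relatively-open-cone argument (after the substitution $y_j=1/x_j$) only makes explicit why $r'$ is well defined and positive, which the paper states without comment.
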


It follows from Remark \ref{bal-rem} that the spaces $\cca^G_0$ (see \eqref{Cc10}) and $\cca^G_{bal}$ (see \eqref{Cbal}) are disjoint, and one may ask how far is $\cca^G_0\sqcup\cca^G_{bal}$ from exhausting $\cca^G$.  

\begin{example}\label{c10-bal}
We consider the C-space defined by $Z_\kg=(0,1,1)$ in Example \ref{su5-22} endowed with $J=(J_\ag,J_1)$.  Since $Q(Z_{J_1},Z_\kg)\ne 0$, $c_1(M,J)\ne 0$ (see Lemma \ref{c10}), and since $\pm Z_\kg\notin C(J_1)$, $(M,J)$ does not admit balanced metrics by Proposition \ref{bal}, (ii).  This implies that $\cca^G_0\sqcup\cca^G_{bal}\ne\cca^G$.        
\end{example}

The following example shows that a balanced metric can also be generalized Einstein.  

\begin{example}\label{BRF-bal-2}
For the spaces given in Example \ref{BRF-bal}, it is proved in \cite{BRF} that if
$$
g=g_\ag+g_\qg, \qquad g_\ag=\tfrac{c_1}{c_1-1}\gk|_\ag, \qquad g_\qg=\gk|_{\qg_1}+\tfrac{1}{c_1-1}\gk|_{\qg_2},  
$$
then the generalized metric $(g,H_0)$ is {\it generalized Einstein} (or Bismut Ricci flat): $\Ricci(g)=\tfrac{1}{4} (H_0)_g^2$ and $d_g^*H_0=0$, where $H_0=\vp_R$ is the closed $3$-form defined in Example \ref{BRF-bal} (see \cite{GrcStr} for further information).  We note that $g$ is compatible with a $d(d-1)$-parametric set of complex structures if $\dim{\hg_f}=2d$.  Moreover, if $c_1=2$ (e.g., $G_1=G_2$ or $p=q$), then $g$ is always balanced for all these complex structures.  
\end{example}

\subsection{Locally conformally balanced (LCB)}\label{LCB-sec}
A Hermitian structure $(J,g)$ is called {\it locally conformally balanced} (LCB for short) whenever the Lee form is closed (see \cite{Prd}).  Recall that $(J,g)$ is balanced if and only if the Lee form vanishes (see Definition \ref{bal-def}).  

On a C-space $M=G/K=G_f/K\times T^z)$, where $\pg=\ag\oplus\qg$ and $\ag=\zg(\ggo)\oplus\ag_f$, since the Lee form of a Hermitian structure $(J,g)$ as in \eqref{Jg} is given by 
$$
\theta_L=-g(\cdot,J_\ag(Z_{J_\qg,g_\qg})_{\ag_f}), \qquad\mbox{where}\qquad  Z_{J_\qg,g_\qg}\in\zg(\hg_f)=\zg(\kg)\oplus\ag_f, 
$$
we obtain from \S\ref{dR1} that $(J,g)$ is LCB (i.e., $d\theta_L=0$) if and only if 
\begin{equation}\label{LCB}
P_\ag J_\ag(Z_{J_\qg,g_\qg})_{\ag_f}\in\zg(\ggo).  
\end{equation}
This implies that a complex C-space $(M=G/K,J)$ which is not balanced, i.e., $C(J_\qg)\cap\zg(\kg)=\emptyset$ (see Proposition \ref{bal}, (ii)), admits an LCB metric if and only if $\zg(\ggo)\ne 0$.   

\begin{remark}\label{CB-rem}
Since a $G$-invariant closed $1$-form on a compact homogeneous space $M=G/K$ is never exact, $(J,g)$ is never {\it conformally balanced} (see \cite{FinTms}).  
\end{remark}

% 14/8/26

\section{Pluriclosed (SKT)}\label{SKT-sec} 

The following condition generalizing K\"ahler (i.e., $d\omega=0$) stands out by also being linear in $\omega$.  

\begin{definition}\label{SKT-def} 
A Hermitian structure $(J,g)$ on a manifold $M$ is said to be {\it pluriclosed}, or {\it strong K\"ahler with torsion} (SKT for short), if $\partial\overline{\partial}\omega=0$, or equivalently, $dd^c\omega=0$.  
\end{definition}

In particular, $[\omega]\in H^{1,1}_A(M)$ and $[\partial\omega]\in H^{2,1}_{\overline{\partial}}(M)$ if $\omega$ is pluriclosed (see \S\ref{cohom-sec}).  Moreover, $[\omega]\ne 0\in H^{1,1}_A(M)$ since $\omega\ne\sigma_A$ for all $A\in\ag$ and $[\partial\omega]\ne 0\in H^{2,1}_{\overline{\partial}}(M)$ by Proposition \ref{h21bound}, (i) if $G$ is semisimple.  

We now show how the computation of $\Ker\partial\overline{\partial}|_{\Lambda^{1,1}}$ given in \S\ref{A-sec} provides the classification of C-spaces admitting a pluriclosed structure.    

\begin{theorem}\label{pluri-clasif}
A complex C-space admits a pluriclosed $G$-invariant metric if and only if it is the product of a flag manifold and a compact Lie group.  Moreover, the restriction of the metric on the flag is K\"ahler and the restriction of the Hermitian structure on the compact Lie group is pluriclosed, left-invariant and also invariant on the right for some maximal torus. 
\end{theorem}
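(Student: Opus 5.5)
We will deduce the classification from Lemma~\ref{h11A-lem}, since a $G$-invariant Hermitian structure $(J,g)$ as in \eqref{Jg} is pluriclosed exactly when its K\"ahler form $\omega=\omega_\ag+\omega_\qg$ lies in $\Ker\partial\overline{\partial}|_{\Lambda_T^{1,1}}$. Via the Tanr\'e isomorphism we apply Lemma~\ref{h11A-lem} to $\sigma=\omega$. In that case $h=h(\omega_\ag)$ is precisely $g_\ag$, which is positive definite, and the $x_\alpha$ are the metric coefficients of $g_\qg$. We first reduce to metrics of the form $g=g_\ag+g_\qg$. If the C-space is not of fibration type, we pass through the symmetrization over $H_f$ of Remark~\ref{aver}. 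That averaging preserves the pluriclosed condition, because $\partial\overline{\partial}$ commutes with right translations by $H_f\subset N_G(K)$, and it produces an $\Ad(H_f)$-invariant compatible metric.

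\textbf{Necessity.} Condition (i) of Lemma~\ref{h11A-lem} gives $g_\ag|_{\underline{\ag}_f\times\underline{\ag}_f}=0$. Since $g_\ag$ is positive definite, this forces $\underline{\ag}_f=0$. By \eqref{deca} we then have $\underline{\ag}=\zg(\ggo)$, so the strict factor $\underline{G}/\underline{K}$ of Proposition~\ref{decC} has no semisimple part with nontrivial fiber. More precisely, $\underline{\ag}_f=0$ means $\zg(\kg)\supset\zg(\hg_i)$ for $t<i\leq u$, which contradicts the definition of the indices $t+1,\dots,u$ in Remark~\ref{decC-rem2}, so $u=t$. Hence $M=\widetilde{G}/\widetilde{H}\times T^z\times\overline{G}$, a flag manifold times a compact Lie group. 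This is also a product of complex manifolds, because $J_\ag$ acts only on $\ag=\zg(\ggo)\oplus\overline{\ag}$ and the flag part $\widetilde{\pg}=\widetilde{\qg}$ splits off holomorphically.

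On the flag factors $i\leq t\leq u$, condition (iii) with $z_i=0$ gives $x_{\alpha+\beta}=x_\alpha+x_\beta$, which by \S\ref{K} says that $g|_{\widetilde{\qg}}$ is K\"ahler. The Lie group part carries $g_\ag+g|_{\overline{\qg}}$, which is left-invariant and $\Ad(T)$-invariant for the maximal torus with Lie algebra $\ag$. Its pluriclosedness is the restriction of conditions (ii) and (iii) to the indices $i>u$. This uses that these conditions decouple across the simple factors, as shown by \eqref{h11A-1}, where $h(\ag_i,\ag_j)=0$ for $i\ne j$.

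\textbf{Sufficiency.} We take the product of a K\"ahler metric on the flag with a Bismut flat almost-bi-invariant metric on the group (Theorem~\ref{Bflat}). Explicitly, choose $g_\ag$ compatible with $J_\ag$ and equal to $-\sum z_i\kil_{\ggo_i}$ on $\overline{\ag}$, and set $x_\alpha=z_i$ on $\Delta_i$. Then conditions (i)--(iii) of Lemma~\ref{h11A-lem} hold, so the metric is pluriclosed. The one delicate point is compatibility: we need a positive $g_\ag$ on $\zg(\ggo)\oplus\overline{\ag}$ that is compatible with an arbitrary $J_\ag$ and whose restriction to $\overline{\ag}$ is of the form in (ii). We expect this to be the main obstacle. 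It may fail for some $J_\ag$ (compare Example~\ref{LG-2}), so the "if" direction is stated for the product structure. We read "admits" as meaning that some complex structure of product type exists; alternatively, we can argue via the left-invariant pluriclosed classification recorded in the introduction, (a)--(b).
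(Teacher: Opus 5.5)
Your necessity argument is the paper's own: reduce to $g=g_\ag+g_\qg$ by $H_f$-symmetrization, use $h(\omega_\ag)=g_\ag>0$ in Lemma~\ref{h11A-lem}(i) to force $\underline{\ag}_f=0$ (hence $u=t$), and read off the K\"ahler flag factor from (iii) and the pluriclosed group factor from (ii)--(iii). The paper's proof does not treat the converse, and your caveat is right: for a fixed $J$ it can fail (the irreducible $J_\ag$ of Example~\ref{LG-2} with $a,b\neq 0$ has $S=0$, so by Theorem~\ref{h11A} and symmetrization no invariant pluriclosed metric exists), whereas for a suitable $J$ it holds by taking $J_\ag$ orthogonal for $Q|_\ag$, $g=Q$ on the group factor (conditions (ii)--(iii) with all $z_i=1$), and any K\"ahler metric on the flag factor.
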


\begin{proof}
Let $(M=G/K=G_f/K\times T^z,J)$ be a complex C-space that admits a pluriclosed $G$-invariant metric, which can be assumed to be of the form $g=g_\ag+g_\qg$ as in \eqref{g} by the symmetrization process given in Remark \ref{aver}.  The K\"ahler form is therefore given by $\omega=\omega_\ag+\omega_\qg$ (see \eqref{o}).  Since $\omega \in\Ker\partial\overline{\partial}|_{\Lambda^{1,1}}$ and $h(\omega_\ag)=g_\ag$ (see \eqref{h11A-h}), it follows from Lemma \ref{h11A-lem}, (i) that $\underline{\ag}_f=0$ (see \eqref{deca}).  This implies that 
$$
M=\widetilde{G}/\widetilde{H}\times T^z\times\overline{G}, \qquad G_f=\widetilde{G}\times\overline{G}, \qquad K=\widetilde{H},
$$
that is, $M$ is the product of a flag manifold and a compact Lie group (see Proposition \ref{decC}).  

On the other hand, we obtain from Lemma \ref{h11A-lem}, (iii) that the restriction of $g$ on the flag $\widetilde{G}/\widetilde{H}$ is a K\"ahler metric, and from Lemma \ref{h11A-lem}, (ii) and (iii) that 
the restriction of $(J,g)$ on the compact Lie group $T^z\times\overline{G}$ is a pluriclosed left-invariant Hermitian structure. 
\end{proof}

\subsection{Compact Lie groups}\label{skt-lg}
We focus in this section on the case when 
$$
M=G=T^z\times\overline{G}, \qquad \overline{\ggo}=[\ggo,\ggo]=\ggo_1\oplus\dots\oplus\ggo_s, \qquad z=\dim{\zg(\ggo)},
$$ 
i.e., the C-space $M$ is a compact Lie group.  We fix a left-invariant complex structure $J=J_\ag+J_\qg$ on $M$, where $\overline{\ggo}=\overline{\ag}\oplus\qg$, $\ag=\zg(\ggo)\oplus\overline{\ag}$ and $\overline{\ag}=\tg_1\oplus\dots\oplus\tg_s$ is a maximal torus of $\overline{\ggo}$ (see Examples \ref{LG}, \ref{LG-c} and \ref{LG-2}).   

According to Lemma \ref{h11A-lem}, (ii) and (iii), a left-invariant metric compatible with $J$ of the form $g=g_\ag+g_\qg$, $g_\qg=(x_\alpha)_{\alpha\in\Delta}$ (note that $g$ is in addition $\Ad(T)$-invariant, where $T$ is the maximal torus of $G$) is pluriclosed if and only if 
\begin{enumerate}[{\rm (a)}] 
\item $g_\ag|_{\overline{\ag}\times\overline{\ag}}=-(z_{1}\kil_{\ggo_{1}}+\dots+z_s\kil_{\ggo_s})|_{\overline{\ag}\times\overline{\ag}}$, for some $z_{1},\dots,z_s>0$, that is, relative to the decomposition $\ag=\zg(\ggo)\oplus\overline{\ag}$ and some $Q$-orthonormal basis,    
\begin{equation}\label{ga-skt}
g_\ag=\left[\begin{matrix} A&B\\ B^t&C \end{matrix}\right], \qquad A^t=A, \qquad  
C=\left[\begin{matrix} 
z_1I_{\tg_1}&&\\ 
&\ddots&\\ 
&& z_sI_{\tg_s}
\end{matrix}\right]. 
\end{equation}
Note that the matrices $A$ and $B$ must satisfy some closed and open conditions, respectively, in order to guarantee that $g_\ag$ is compatible with $J_\ag$ and positive definite.  

\item $x_{\alpha+\beta}=x_\alpha+x_\beta-z_i$, for all $\alpha,\beta,\alpha+\beta\in\Delta_{\qg_i}^+$ and $i=1,\dots,s$.  Equivalently, for each fixed $i=1,\dots,s$, 
\begin{equation}\label{gq-skt}
x_\alpha=z_i+\sum_{j=1}^m n_j(x_j-1), \qquad \forall \alpha=\sum_{j=1}^m n_j\alpha_j, \quad n_j\in\NN_0,  
\end{equation}
where $\Pi_i=\{\alpha_1,\dots,\alpha_m\}\subset\Delta_i^+$ and $x_j:=x_{\alpha_j}$ for $j=1,\dots,m$.  
\end{enumerate}
This was proved in \cite{SKT-LG} for compact semisimple Lie groups.  

\begin{remark}\label{xpos}
A sufficient condition to obtain that $x_\alpha>0$ for all $\alpha\in\Delta^+$ is given by 
$$
x_i\geq 1-\tfrac{1}{h(\alpha_{max})}, \qquad \forall i=1,\dots,n, 
$$
where $h(\alpha_{max})$ is the height of the maximal root $\alpha_{max}\in\Delta^+$.  
\end{remark}

\begin{remark}
A pluriclosed metric $g$ as above is Bismut flat if and only if $x_\alpha=z_i$ for all $\alpha\in\Delta_{\qg_i}^+$ and $i=1,\dots,s$ (see Theorem \ref{Bflat}).   
\end{remark}

Recall from Theorem \ref{h11A} that $h^{1,1}_A=\dim{S}$, where $S$ is the subspace of all symmetric bilinear forms $h:\ag\times\ag\rightarrow\RR$ such that $h$ is compatible with $J_\ag$ and $h|_{\overline{\ag}\times\overline{\ag}}$ coincides with the restriction of some biinvariant metric on $\overline{G}$.  

It follows from (a) and (b) that the space of all pluriclosed left-invariant metrics of the form $g=g_\ag+g_\qg$ on $M=G$ is either empty (i.e., $S=\{ 0\}$) or it depends on $\rank(\overline{\ggo})+h^{1,1}_A$ parameters.  Note that if $M=G=\overline{G}$ is semisimple, then $h^{1,1}_A$ is precisely the number of irreducible factors of $(G,J)$ as a complex manifold (see Definition \ref{Cirr} and Example \ref{LG-2}) and $g_\ag$ is necessarily the restriction of a biinvariant metric on $G$ (cf.\ \cite{SKT-LG}).  

The following examples show that in the case when $M=G$ is not semisimple, the pluriclosed geometry is much richer.   

\begin{example}\label{A3-skt}
We consider the left-invariant Hermitian structures $(J,g)$ given as follows:
$$
J_\ag=\left[\begin{matrix} J_1&0\\ 0&J_2 \end{matrix}\right], \qquad J_1:\zg(\ggo)\rightarrow\zg(\ggo), \qquad J_2:\overline{\ag}\rightarrow\overline{\ag}, 
$$
where $J_i$ is any $Q$-skew-symmetric map such that $J_i^2=-I$, $g_\ag$ is as in \eqref{ga-skt}, so the compatibility holds if and only if $AJ_1=J_1A$, $BJ_2=J_1B$ and $CJ_2=J_2C$, and $g_\qg=(x_\alpha)_{\alpha\in\Delta}$ is as in \eqref{gq-skt}.  This produces a family depending on  $s'+d_1^2+2d_1d_2+2d_2$ parameters of pluriclosed left-invariant metrics on $M=G$, where $s'$ is the number of $J_2$-invariant irreducible subspaces of $\overline{\ag}$ which are partial sums of the $\tg_i$ and $\dim{\zg(\ggo)}=2d_1$, $\dim{\overline{\ag}}=2d_2$.  One of these metrics is, on the maximal torus $\ag$ of $\ggo$, the restriction of a bi-invariant metric if and only if $B=0$.  Note that $J$ is reducible as soon as $\zg(\ggo)\ne 0$ or $s'\geq 2$.  
\end{example}

\begin{example}\label{A2-skt}
Assume that $\dim{\zg(\ggo)}=\dim{\overline{\ag}} = d$ as in Example \ref{A2} and set 
$$
J_\ag=\left[\begin{matrix} 0&-I\\ I&0 \end{matrix}\right], \qquad 
g_\ag=\left[\begin{matrix} C&B\\ -B&C \end{matrix}\right],  \qquad B^t=-B, 
$$
where $C$ is as in \eqref{ga-skt} and $g_\qg$ is as in \eqref{gq-skt}.  This provides a family of pluriclosed left-invariant metrics on $M=G$ depending on $s+\tbinom{d}{2}+2d$ parameters which, when $B\ne 0$, are not the restriction of any biinvariant metric on the maximal torus $\ag$ of $\ggo$.  Note that $J$ is always irreducible and that all these pluriclosed metrics are also LCB (see \S\ref{LCB-sec}).  
\end{example}

As a generalization of the above example, we can take 
\begin{equation}\label{A2-skt2}
J_\ag=\left[\begin{matrix} 0&-D^{-1}\\ D&0 \end{matrix}\right], \qquad 
g_\ag=\left[\begin{matrix} A&B\\ B^t&C \end{matrix}\right],  \qquad A=D^tCD, \quad (BD)^t=-BD, 
\end{equation}
which provides a much larger family of pluriclosed Hermitian structures.  Using \eqref{LCB}, it is easy to see that one of these pluriclosed metrics is LCB if and only if $BZ_{J_\qg,g_\qg}=0$.

\subsection{Pluriclosed flow}\label{PF-sec}
The {\it pluriclosed flow} for a one-parameter family of Hermitian metrics $\omega(t)=g(t)(J\cdot,\cdot)$ on a given complex manifold $(M,J)$ is defined by 
\begin{equation}\label{PF} 
\dpars\omega(t)=-(\rho^b)^{1,1}(t), \qquad \omega(0)=\omega_0,
\end{equation}
where $(\rho^b)^{1,1}(t)$ is the $(1,1)$-component of the Bismut Ricci form $\rho^b(t)$ of $\omega(t)$ (see \S\ref{CC-sec}).  

The pluriclosed flow in the case when $M=G$ is semisimple was studied in \cite[Section 5]{SKT-LG}, we study in what follows the general case of a compact Lie group $G$.  According to \eqref{rhot2}, the Bismut Ricci form $\rho^b$ of a left-invariant pluriclosed metric $g=g_\ag+g_\qg$ is given by 
$$
\rho^b(X,Y)=  Q\left([X,Y],Z_{J_\qg} - P_\ag Z_{J_\qg,g_\qg}\right), \qquad\forall X,Y\in\ggo, 
$$
where $Z_{J_\qg},Z_{J_\qg,g_\qg}\in\overline{\ag}$ are the Koszul (see \eqref{koszul-g}) and metric Koszul (see \eqref{koszul-g}) vectors, respectively.  Since the only nonzero components are $\rho^b(E_\beta,E_{-\beta})$ and 
$$
(P_\ag Z_{J_\qg,g_\qg})_{\overline{\ag}}= \sum_{i=1}^s\sum_{\alpha\in\Delta^+_i} \tfrac{z_i}{x_\alpha}\im H_\alpha,
$$
the pluriclosed flow evolution for $g(t)=g_\ag(t)+g_\qg(t)$, $g_\qg(t)=(x_\beta(t))_{\beta\in\Delta^+}$ is given by $g_\ag(t)\equiv g_\ag(0)$ and
\begin{equation}\label{PF} 
x_\beta'=-\sum_{\alpha\in\Delta^+_i} \left(1-\tfrac{z_i}{x_\alpha}\right) \la\alpha,\beta\ra, \qquad\forall\beta\in\Delta^+_i, \quad i=1,\dots,s.     
\end{equation}
These are precisely the equations obtained in \cite[Section 5]{SKT-LG} in the semisimple case, where it is proved that the solution is always immortal and $x_\beta(t)\to z_i$, as $t\to\infty$, for any $\beta\in\Delta^+_i$.  We therefore obtain the following from \cite[Theorem 5.1]{SKT-LG} and Theorem \ref{Bflat}.  

\begin{proposition}
The pluriclosed flow solution $g(t)$ starting at a left-invariant pluriclosed metric $g(0)=g_\ag+g_\qg$ as in (a)-(b) above, converges to the Bismut-flat metric $g_\infty$ given by $g_\infty|_{[\ggo,\ggo]}=g_b$ and $g_\infty|_{\ag}=g_a$, where $g_b:=-\left(z_1\kil_{\ggo_1}+\dots+z_s\kil_{\ggo_s}\right)$.  
\end{proposition}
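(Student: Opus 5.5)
The statement is that the pluriclosed flow starting at a left-invariant pluriclosed $g(0)=g_\ag+g_\qg$ converges to $g_\infty$, with $g_\infty|_{[\ggo,\ggo]}=g_b$ and $g_\infty|_\ag=g_a$. The plan is to reduce it to the semisimple case treated in \cite[Section 5]{SKT-LG}. The reduction rests on the explicit ODE \eqref{PF} and on the fact that $g_\ag$ stays fixed along the flow.

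\emph{Step 1: the flow preserves the ansatz.} First I would check that the ansatz $g=g_\ag+g_\qg$ with (a)--(b) is preserved, so that the flow is the ODE system just displayed. By \eqref{rhot2} with $t=-1$, $\rho^b=\sigma_{Z_{J_\qg}-P_\ag Z_{J_\qg,g_\qg}}$ where $\sigma_Z=Q([\cdot,\cdot],Z)$. Lemma \ref{do} (with $\sigma_\ag=0$) shows this is a $(1,1)$-form supported on $\qg$ with vanishing $\ag$-part. Hence $g_\ag(t)\equiv g_\ag(0)$, and in particular $z_1,\dots,z_s$ are constant.

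For the $\qg$-part, write $(P_\ag Z_{J_\qg,g_\qg})_{\overline{\ag}}$ using (a): $P_\ag$ acts as $z_i$ on $\tg_i$, and the $\zg(\ggo)$-component of $P_\ag Z_{J_\qg,g_\qg}$ does not pair with $[\qg,\qg]$. Evaluating on $E_\beta,E_{-\beta}$ then gives \eqref{PF}. By uniqueness of solutions to the pluriclosed flow on compact manifolds, the left-invariant solution built this way is the solution. Pluriclosedness is preserved, so (b) persists with the same $z_i$; this can also be checked directly since the right-hand side of \eqref{PF} is additive in $\beta$.

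\emph{Step 2: decoupling into simple factors.} The system \eqref{PF} splits into $s$ independent systems, one for each simple factor $\ggo_i$. Each is exactly the system studied in \cite[Section 5]{SKT-LG} for the semisimple group $G_i$ with its bi-invariant parameter $z_i$.

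\emph{Step 3: limit.} By \cite[Theorem 5.1]{SKT-LG}, each subsystem has an immortal solution with $x_\beta(t)\to z_i$ for every $\beta\in\Delta_i^+$. Therefore $g_\qg(t)\to g_b|_\qg$ with $g_b=-\sum_i z_i\kil_{\ggo_i}$. Since $g_\ag$ is constant and already satisfies $g_\ag|_{\overline{\ag}}=g_b|_{\overline{\ag}}$ by (a), the limit $g_\infty$ satisfies $g_\infty|_{[\ggo,\ggo]}=g_b$ and $g_\infty|_\ag=g_\ag$. So $g_\infty$ is almost-bi-invariant, and by Theorem \ref{Bflat} it is Bismut flat.

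\emph{Main obstacle.} The delicate point is Step 1: justifying that the $\zg(\ggo)$-directions and the off-diagonal block $B$ of $g_\ag$ do not enter the evolution. This amounts to showing $\rho^b(A,\cdot)=0$ for $A\in\ag$, which should follow from $[\ag,\pg]\subset\qg$ and $Q(\qg,\ag)=0$. It also requires identifying $(P_\ag Z_{J_\qg,g_\qg})_{\overline{\ag}}$ correctly when $g_\ag$ mixes $\zg(\ggo)$ and $\overline{\ag}$. The identity $Q([E_\beta,E_{-\beta}],W)=Q(H_\beta,W_{\overline{\ag}})$ handles the latter, since only the $\overline{\ag}$-component of $W=P_\ag Z_{J_\qg,g_\qg}$ contributes.
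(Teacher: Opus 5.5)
Your proposal is correct and follows essentially the paper's own route: use \eqref{rhot2} to see that $\rho^b$ is a $(1,1)$-form supported on $\qg$, so $g_\ag$ is frozen and the flow reduces to the decoupled ODE for the $x_\beta$; then conclude with \cite[Theorem 5.1]{SKT-LG} and Theorem \ref{Bflat}. Your extra remarks make explicit what the paper leaves implicit. These are uniqueness of the flow, which keeps the solution left-invariant, and the fact that only the $\overline{\ag}\times\overline{\ag}$ block of $P_\ag$ matters because $Q(\zg(\ggo),[\qg,\qg])=0$; this also corrects your literal claim that $P_\ag$ itself acts as $z_i$ on $\tg_i$.
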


In particular, Bismut-flat (or equivalently, almost-bi-invariant metrics) are globally stable for the pluriclosed flow among left-invariant pluriclosed metrics on $M=G$.  

In what follows, we prove the global stability of pluriclosed flow on the complex manifold $(M,J)$, where $M=G$ is any compact and connected Lie group, not necessarily semisimple, and $J$ is any left-invariant complex structure.  Remarkably, the flowing pluriclosed metrics are not assumed to be left-invariant.  The case when $G$ is semisimple was solved in \cite[Theorem 4.3]{Brb}.  In much the same way as in \cite{Brb}, our proof is based on the description of the $(1,1)$-degree Aeppli cohomology $H^{1,1}_A(M)$ given in \S\ref{A-sec}, the characterization of left-invariant Bismut-flat metrics given in Theorem \ref{Bflat} and the following strong general result.   

\begin{theorem}\label{tanque}\cite[Theorem 1.2]{GrcJrdStr}
Let $(M,J, \omega_{bf})$ be a compact Bismut-flat manifold.  Given $\omega_0$ a pluriclosed metric on $(M,J)$ so that $[\partial \omega_0] =[\partial \omega_{bf}] \in H^{2,1}_{\overline{\partial}}(M)$, the solution $\omega(t)$ to pluriclosed flow with initial data $\omega_0$ exists on $[0,\infty)$ and converges to a Bismut-flat metric $\omega_\infty$.
\end{theorem}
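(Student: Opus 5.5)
Since Theorem \ref{tanque} is imported from \cite{GrcJrdStr}, I only sketch how I would reconstruct its proof. The plan is to reduce pluriclosed flow, under the cohomological hypothesis, to a parabolic flow of a single $(1,0)$-form relative to the fixed Bismut-flat background $\omega_{bf}$. Then I would prove a priori estimates using the flatness of $\nabla^b_{bf}$, and finally get convergence from a monotone quantity.

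\emph{Reduction.} First I would check that pluriclosed flow preserves the class $[\partial\omega(t)]\in H^{2,1}_{\overline{\partial}}(M)$. This holds because $(\rho^b)^{1,1}=-\partial\partial^*\omega-\overline{\partial}\overline{\partial}^*\omega+(\text{a }\partial\overline{\partial}\text{-exact term})$, whose $\partial$-image is $\overline{\partial}$-exact. The hypothesis $[\partial\omega_0]=[\partial\omega_{bf}]$ then lets me write
$$
\omega(t)=\omega_{bf}+\partial\overline{\alpha(t)}+\overline{\partial}\alpha(t)
$$
modulo a closed $(1,1)$ term. This term I would absorb, following Streets' reduction, into a $(1,0)$-form $\alpha(t)$ solving a strictly parabolic equation of Monge--Amp\`ere type,
$$
\partial_t\alpha=\overline{\partial}^*_{\omega}\omega-\tfrac{\im}{2}\partial\log\frac{\omega^n}{\omega_{bf}^n}.
$$
Short-time existence is standard. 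Long-time existence reduces, by Streets' criterion, to a uniform bound on $\omega(t)$ and $\omega(t)^{-1}$ relative to $\omega_{bf}$.

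\emph{Estimates.} This is the main obstacle. I would exploit the gauge equivalence of pluriclosed flow with generalized Ricci flow $(g,H)$ with $H=d^c\omega$, where $H_{bf}$ is $\nabla^b$-parallel. The first step I would try is to compute the evolution of $\operatorname{tr}_{\omega_{bf}}\omega$ and $\operatorname{tr}_{\omega}\omega_{bf}$. In these evolutions, the curvature terms of the background vanish because $R^b_{bf}=0$, and the torsion terms come with a sign, since the torsion of $\omega_{bf}$ is parallel. A maximum principle argument should then give uniform $C^0$ bounds on the metric, possibly after coupling with $|\alpha|^2$ or with $\partial\alpha$. Higher regularity would then follow from the Evans--Krylov-type estimates for this reduced flow together with Shi-type derivative estimates.

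\emph{Convergence.} With uniform bounds for all time, I would use a Lyapunov functional, for example Perelman-type generalized entropy or the decreasing energy $\int|\partial^*_{\omega}\omega-\partial^*_{\omega_{bf}}\omega_{bf}|^2$. This shows that subsequential limits are static: they are pluriclosed with $(\rho^b)^{1,1}=0$ and in the same $[\partial\omega]$ class, hence Bismut-flat by a rigidity argument (Bismut-Ricci-flat plus this cohomological constraint). Uniqueness of the limit would come from a \L{}ojasiewicz inequality, or more directly from exponential decay of the linearized operator, which is nondegenerate transversally to the finite-dimensional family of Bismut-flat structures in the fixed class.
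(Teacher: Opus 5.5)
The paper does not prove this statement. It is quoted from \cite[Theorem 1.2]{GrcJrdStr} and used as a black box in Theorem~\ref{stab}, where the paper's only own input is that $[\omega_0]=[\omega]$ in $H^{1,1}_A(M)$ gives $[\partial\omega_0]=[\partial\omega]$ in $H^{2,1}_{\overline{\partial}}(M)$. Measured against the argument of \cite{GrcJrdStr}, your reduction step (preservation of $[\partial\omega]$, reduced flow for a $(1,0)$-form) is fine in spirit. However, the two steps that carry the content are not supplied.

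\textbf{The $C^0$ estimate.} The mechanism you suggest does not close. Along this Chern-Laplacian-type equation, the evolution of $\tr_{\omega_{bf}}\omega$ and $\tr_\omega\omega_{bf}$ involves two things your argument does not control:
\begin{itemize}
\item the \emph{Chern} curvature of $\omega_{bf}$, not its Bismut curvature, and this does not vanish for Bismut-flat metrics (on a compact semisimple group with a Samelson structure, $\rho^c\neq 0$ by Theorem~\ref{CRF});
\item a term quadratic in the torsion of the \emph{evolving} metric $\omega(t)$, about which parallelism of the torsion of $\omega_{bf}$ says nothing.
\end{itemize}
So the metric cannot be estimated on its own. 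The missing idea is the one in \cite{GrcJrdStr}. A pluriclosed metric in the class $[\partial\omega_{bf}]$ is equivalently a Hermitian metric $G$ on the holomorphic Courant algebroid determined by that class; $G$ packages $g$ together with the torsion potential $\partial\alpha$. Pluriclosed flow then becomes, up to gauge, a Hermitian--Yang--Mills-type heat flow for $G$. Bismut-flatness of $\omega_{bf}$ is what makes the curvature of $G_{bf}$ vanish, so its contraction against the evolving $\omega(t)$ drops out. As a result, $\tr(G_{bf}^{-1}G)$ and $\tr(G^{-1}G_{bf})$ are subsolutions of the heat equation. This bounds $g$, $g^{-1}$ and $\partial\alpha$ simultaneously, and the Calabi-type estimates of Jordan--Streets then give higher regularity. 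Your phrase ``possibly after coupling with $|\alpha|^2$ or $\partial\alpha$'' points in this direction but does not identify the structure that makes the maximum principle work.

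\textbf{The convergence step.} You deduce Bismut-flatness of a static limit from ``Bismut--Ricci flat plus the cohomological constraint''. In \cite{GrcJrdStr}, however, the classification of Bismut Hermitian--Einstein metrics on manifolds carrying a Bismut-flat metric is obtained as a \emph{consequence} of this convergence theorem, so using it here is circular. Your \L{}ojasiewicz/transversal-nondegeneracy claim for uniqueness of the limit is likewise unsupported. One way to handle both is with the same $C^0$ quantity. First, show a subsequential limit is Bismut-flat, for instance from the equality case of the strong maximum principle, which forces the limit of $G_{bf}^{-1}G$ to be parallel for the flat connection. That limit $G_\infty$ lies in the same class and can itself serve as the flat background. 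Then $\max_M\bigl(\tr(G_\infty^{-1}G)+\tr(G^{-1}G_\infty)-2r\bigr)$, with $r$ the rank of the bundle, is nonincreasing and tends to $0$ along the subsequence, hence along the whole flow.

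A minor point: a closed $(1,1)$ part of $\omega_0-\omega_{bf}$ cannot in general be absorbed into $\partial\overline{\alpha}+\overline{\partial}\alpha$. In the Courant-algebroid formulation this is harmless, since only $\partial\omega$ enters.
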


Let $S$ be the subspace of all symmetric bilinear forms $h:\ag\times\ag\rightarrow\RR$ such that $h$ is compatible with $J_\ag$ and $h|_{\overline{\ag}\times\overline{\ag}}$ coincides with the restriction of a biinvariant symmetric bilinear form on $\overline{G}$.  Each $h\in S$ defines a $(1,1)$-form $\sigma(h)=\sigma(h)_\ag+\sigma(h)_\qg$ given by 
$$
\sigma(h)_\ag(A,B):=\im h(A,B), \quad\forall A\in\ag^{1,0}, \; B\in\ag^{0,1},  \qquad \sigma(h)_\qg(E_\alpha,E_{-\alpha})=-\im z_i, \quad\forall\alpha\in\Delta_i^+,
$$
where $h|_{\overline{\ag}} =-(z_{1}\kil_{\ggo_{1}}+\dots+z_s\kil_{\ggo_s})|_{\overline{\ag}}$ for some $z_{1},\dots,z_s\in\RR$ (i.e., we set $x_\alpha=z_i$ for all $\alpha\in\Delta_i^+$ and $i=1,\dots,s$).  It follows from Lemma \ref{h11A-lem}, Theorem \ref{h11A} and their proofs that the map $h\mapsto [\sigma(h)]$ determines an isomorphism between $S$ and $H^{1,1}_A(M)$.  Moreover, if $h$ is positive definite, then $\sigma(h)$ is a Bismut-flat metric on $M=G$.  Indeed, $g_b:=-(z_{1}\kil_{\ggo_{1}}+\dots+z_s\kil_{\ggo_s})$ is a bi-invariant metric on $\overline{G}$ (i.e., $z_1,\dots,z_s>0$) and the metric $g=g_\ag+g_\qg$, where $g_\ag:=h$ and $g_\qg:=g_b|_\qg$ is almost-bi-invariant and hence it is Bismut-flat by Theorem \ref{Bflat}.  Note that $\omega=g(J\cdot,\cdot)=\sigma(h)$.   

The following result therefore follows.  

\begin{proposition}\label{Agen}
Let $M=G$ be a compact Lie group endowed with a left-invariant complex structure.  Then any positive definite class in $H^{1,1}_A(M)$ contains a unique Bismut-flat left-invariant metric.  
\end{proposition}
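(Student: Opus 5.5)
The plan is to combine the isomorphism $S\simeq H^{1,1}_A(M)$, $h\mapsto[\sigma(h)]$, described just before the statement, with Theorem \ref{Bflat}. By Theorem \ref{h11A}, (i), we have $h^{1,1}_A=\dim S$ for $M=G$, since $\zg(\kg)=0$. Lemma \ref{h11A-lem} and the proof of Theorem \ref{h11A} show that the classes in $H^{1,1}_A(M)$ are represented by elements of $\Ker\partial\overline{\partial}|_{\Lambda^{1,1}_T}$ modulo $d\Lambda^1=\{\sigma_A:A\in\ag^c\}$.

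Here $\ag_f=\overline{\ag}$ is the maximal torus of $\overline{\ggo}$. Adding $\sigma_A$ with $A\in\overline{\ag}^c$ shifts $x_\alpha$ by $\alpha$-linear amounts, and these can be used to kill the free parameters $x_\beta-z_i$, $\beta\in\Pi_i$, in Remark \ref{xalfa}. Adding $\sigma_A$ does not change $h$. So each class has a unique representative with $x_\alpha=z_i$ for all $\alpha\in\Delta_i^+$, namely $\sigma(h)$. A dimension count confirms this is a bijection: $\dim\Ker\partial\overline{\partial}=\dim\zg(\hg_f)+\dim S$ and $\dim d\Lambda^1=\dim\overline{\ag}=\dim\zg(\hg_f)$.

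\textbf{Positivity.} I would interpret ``positive definite class'' as a class containing some positive $(1,1)$-form. I expect to be able to take an invariant one: averaging a positive representative over the compact group $G$, and then over the right $T$-action as in Remark \ref{aver}, preserves positivity and the Aeppli class, because averaging commutes with $\partial,\overline{\partial}$ and $G$ acts trivially on invariant cohomology by \cite{Klm}. So the class contains a positive form $\omega=\omega_\ag+\omega_\qg$ of the shape \eqref{o}, whose $h$ equals $g_\ag>0$ and whose $x_\alpha$ are positive.

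Next I need $z_i>0$. Since $h|_{\overline{\ag}}=-(\sum z_j\kil_{\ggo_j})$ is positive definite, each $z_i>0$. Hence $\sigma(h)$ is the Kähler form of $g=h+g_b|_\qg$ with $g_b=-\sum z_i\kil_{\ggo_i}$. This metric is almost-bi-invariant, so it is Bismut flat by Theorem \ref{Bflat}, giving existence.

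\textbf{Uniqueness.} Let $\omega'$ be a left-invariant Bismut-flat metric in the class. I would like to use Theorem \ref{Bflat}, but it only applies to metrics of the form $g_\ag+g_\qg$, and $\omega'$ is not a priori $\Ad(T)$-invariant. This is the step I expect to be the main obstacle. My first attempt is to use the classification \cite[Theorem 1]{WngYngZhn}, or to average $\omega'$ over $T$, and then show that Bismut flatness forces $T$-invariance. Failing that, I would restrict the uniqueness claim to metrics of the form \eqref{g}, which is the natural class in this section.

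For such a metric, Theorem \ref{Bflat} gives $g'_\qg=g'_b|_\qg$ with $x'_\alpha=z'_i$ and $g'|_{\overline{\ag}}=g'_b$. Then $\omega'=\sigma(h')$ with $h'=g'_\ag$. Injectivity of $h\mapsto[\sigma(h)]$ now gives $h'=h$, hence $\omega'=\sigma(h)$.
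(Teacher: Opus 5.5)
Your proposal is correct and follows the paper's route: the isomorphism $S\to H^{1,1}_A(M)$, $h\mapsto[\sigma(h)]$, coming from Lemma \ref{h11A-lem} and Theorem \ref{h11A}, combined with Theorem \ref{Bflat} applied to the almost-bi-invariant metric $g=h+g_b|_\qg$. Your averaging step usefully makes explicit why a class containing a positive form has positive definite $h$, which the paper uses without comment in Theorem \ref{stab}; and your fallback of proving uniqueness only among metrics of the form \eqref{g} is exactly what the paper's argument gives too, since Theorem \ref{Bflat} is only established for structures as in \eqref{Jg}.
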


The global stability of Bismut-flat metrics under the pluriclosed flow on Bismut-flat manifolds is therefore guaranteed by the above proposition and Theorem \ref{tanque}.   

\begin{theorem}\label{stab}
Let $M=G$ be a compact Lie group endowed with a left-invariant complex structure.  Then for any pluriclosed metric $\omega_0$ (not necessarily left-invariant), the solution $\omega(t)$ to pluriclosed flow with initial data $\omega_0$ exists on $[0,\infty)$ and converges to a Bismut-flat metric $\omega_\infty$.
\end{theorem}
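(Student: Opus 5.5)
The plan is to feed the Bismut-flat case into Theorem \ref{tanque} via Proposition \ref{Agen}, after reducing the hypothesis on $[\partial\omega_0]$ to an Aeppli class condition. The argument runs in four steps.

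\textbf{Step 1 (averaging).} Given a pluriclosed metric $\omega_0$ on $(M=G,J)$, I first produce a left-invariant pluriclosed metric in the same Aeppli class. Since $G$ is compact and connected and $J$ is left-invariant, I average $\omega_0$ over left translations. This gives a left-invariant positive $(1,1)$-form $\hat\omega_0$, still pluriclosed because $\partial\overline{\partial}$ commutes with biholomorphisms.

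Moreover, $[\hat\omega_0]=[\omega_0]$ in $H^{1,1}_A(M)$. The reason is that $G$ is connected and acts by biholomorphisms, so each translate $L_a^*\omega_0$ is Aeppli-cohomologous to $\omega_0$. To justify this I use that the Aeppli cohomology can be computed on $G$-invariant forms (\cite[Theorem 3.3]{Klm}): the invariant-averaging projection induces the identity in cohomology.

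\textbf{Step 2 (Bismut-flat representative).} Next I symmetrize over the maximal torus $T$ acting on the right, as in Remark \ref{aver}. I expect this to keep the form pluriclosed and in the same class. To see the class is preserved, I would check that $T$ acts by biholomorphisms: since $J_\ag$ and $J_\qg$ are $\Ad(T)$-invariant, right translations by $T$ are holomorphic.

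This yields a metric $g=g_\ag+g_\qg$ whose class is positive definite. Indeed, $[\omega]$ corresponds to $h(\omega_\ag)=g_\ag$, and via the isomorphism $S\simeq H^{1,1}_A$ the associated $h$ is positive definite, with $z_i>0$ by (a). Proposition \ref{Agen} then gives a unique Bismut-flat left-invariant $\omega_{bf}$ with $[\omega_{bf}]=[\omega_0]$ in $H^{1,1}_A(M)$.

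\textbf{Step 3 (verifying the hypothesis of Theorem \ref{tanque}).} Applying the well-defined map $H^{1,1}_A\to H^{2,1}_{\overline\partial}$, $[\sigma]\mapsto[\partial\sigma]$, shown before Proposition \ref{h21bound}, gives $[\partial\omega_0]=[\partial\omega_{bf}]$.

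\textbf{Step 4 (conclusion).} Theorem \ref{tanque} applied on the compact Bismut-flat manifold $(M,J,\omega_{bf})$ then gives long-time existence and convergence to a Bismut-flat $\omega_\infty$.

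The main obstacle is Step 1: justifying that averaging a non-invariant pluriclosed metric preserves its Aeppli class. Rather than pass through homotopy along $G$, I would argue via the Klm-type isomorphism, namely that invariant forms compute $H_A$ and the averaging operator is a projection inducing the inverse isomorphism. A secondary check is that every class in $H^{1,1}_A$ containing a positive form is positive definite in the sense of Proposition \ref{Agen}.
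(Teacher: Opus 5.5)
Your route is the paper's. Proposition~\ref{Agen} gives a left-invariant Bismut-flat $\omega_{bf}$ with $[\omega_{bf}]=[\omega_0]$ in $H^{1,1}_A(M)$, the map $[\sigma]\mapsto[\partial\sigma]$ transfers this equality to $H^{2,1}_{\overline{\partial}}(M)$, and Theorem~\ref{tanque} concludes. The paper simply asserts that $[\omega_0]$ is a positive definite class. Your Steps~1--2 supply that missing justification, and they do so correctly: average to an invariant pluriclosed form $\omega_\ag+\omega_\qg$, then read off $h=g_\ag$ with $z_i>0$. One justification, however, needs repair.

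In Step~2 you argue that right $T$-averaging preserves the Aeppli class because $T$ is connected and acts by biholomorphisms. As a general principle this is false: a compact connected group of biholomorphisms can act nontrivially on Dolbeault and Aeppli cohomology. For example, on a Nakamura manifold $\Gamma\backslash(\CC\ltimes\CC^2)$, right translation by $(\im s,0)$ multiplies the holomorphic $1$-form $e^{-z}dw_1$ by $e^{-\im s}$.

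What actually makes your Step~1 work is the theorem of \cite{Klm}: for a compact group acting \emph{transitively}, invariant forms compute the cohomology. The right $T$-action alone is not transitive, so that theorem does not cover Step~2 as you wrote it.

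The fix is to do Steps~1 and~2 at once. The group $G\times T$ acts transitively by biholomorphisms via $x\mapsto axt^{-1}$, with isotropy $\{(t,t):t\in T\}$. Its invariant forms are exactly the $\Ad(T)$-invariant left-invariant forms, i.e.\ the complex $\Lambda_T^{\cdot,\cdot}$ of the Tanr\'e model. Your projection argument, applied to this transitive action, then shows that the $(G\times T)$-average of $\omega_0$ is a positive pluriclosed form of type $\omega_\ag+\omega_\qg$ lying in $[\omega_0]$. With that change the proof is complete.
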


\begin{proof}
Since $[\omega_0]\in H^{1,1}_A(M)$ is positive definite, by Proposition \ref{Agen}, there exists a Bismut-flat left-invariant metric $\omega$ such that $[\omega_0]=[\omega]\in H^{1,1}_A(M)$, which implies that $[\partial\widetilde{\omega}] =[\partial \omega] \in H^{2,1}_{\overline{\partial}}$.  It follows from Theorem \ref{tanque} that the solution $\omega(t)$ to pluriclosed flow with $\omega(0)=\widetilde{\omega}$ exists on $[0,\infty)$ and $\omega(t)\to\omega_\infty$, as $t\to\infty$, where $\omega_\infty$ is a Bismut-flat metric.
\end{proof}

\subsection{A formula for $dd^c\omega$}
It is well known that $dd^c\omega=2\im\partial\overline{\partial}\omega$, which follows from the facts that $d=\partial+\overline{\partial}$ and $d^c=-\im(\partial-\overline{\partial})$.  For completeness, we now compute $dd^c\omega$ on any Hermitian C-space.  It is easy to check that the formula obtained below is in accordance with the formulas for $\partial\overline{\partial}\sigma$ for any $(1,1)$-form $\sigma$ computed in the proof of Lemma \ref{h11A-lem} (see \eqref{eqA1}-\eqref{eqA4}).    

\begin{lemma}\label{ddco}
For any $G$-invariant Hermitian structure $(J,g)$ as in \eqref{Jg} on a C-space $M=G/K$, the only possibly nonzero components of the $4$-form $dd^c\omega$ are given by 
\begin{align*}
dd^c\omega(E_\alpha,E_{-\alpha},E_\beta,E_{-\beta})
=& -2g_\ag(A_\alpha,A_\beta) 
- 2N_{\alpha,\beta}^2(x_{\alpha+\beta}-x_{\alpha}-x_{\beta}) \\
& - 2\epsilon_{\alpha-\beta}N_{\alpha,-\beta}^2(\epsilon_{\alpha-\beta}x_{\alpha-\beta}-x_{\alpha}+x_\beta), \qquad \forall \alpha,\beta\in\Delta_\qg^+, \alpha\ne\beta,
\end{align*}
and if $\alpha+\beta+\gamma+\delta=0$ and all pairs add nonzero, then for all $\alpha,\beta\in\Delta_\qg^+$, $\gamma,\delta\in\Delta_\qg^-$,   
\begin{align*}
dd^c\omega(E_\alpha,E_\beta,E_\gamma,E_\delta) 
=& N_{\alpha,\beta} N_{\gamma,\delta} (x_\alpha+x_\beta+x_\gamma+x_\delta-2x_{\alpha+\beta}) \\ 
&- \epsilon_{\alpha+\gamma}N_{\alpha,\gamma} N_{\beta,\delta}(-x_\alpha+x_\beta+x_\gamma-x_\delta+2\epsilon_{\alpha+\gamma}x_{\alpha+\gamma}) \\
&+ \epsilon_{\alpha+\delta}N_{\alpha,\delta}  N_{\beta,\gamma} (-x_\alpha+x_\beta-x_\gamma+x_\delta+2\epsilon_{\alpha+\delta}x_{\alpha+\delta}).
\end{align*}
\end{lemma}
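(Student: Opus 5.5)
The plan is to compute $dd^c\omega$ directly, as $d$ applied to the explicit $3$-form $d^c\omega$ of Lemma \ref{dco}, using the standard formula for the differential of a $G$-invariant form on $\pg$:
$$
d\psi(X_0,\dots,X_3)=\sum_{i<j}(-1)^{i+j}\psi([X_i,X_j]_\pg,X_0,\dots,\widehat{X_i},\dots,\widehat{X_j},\dots,X_3),
$$
applied with $\psi=d^c\omega$. This is the same formula used in the proofs of Lemma \ref{do} and Lemma \ref{h11A-lem}.

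A first step is to exploit weight considerations, which tell us which components can be nonzero. Since $dd^c\omega$ is $\Ad(K)$-invariant and in particular $\tg$-invariant, a component on basis vectors $A$'s and $E$'s can be nonzero only if the roots involved sum to zero. By Lemma \ref{dco}, $d^c\omega$ vanishes whenever two entries lie in $\ag^c$. Moreover, $[\ag,\ag]=0$ and $[A,E_\alpha]=\alpha(A)E_\alpha$. Together these facts rule out every component containing two or more $A$'s. The components with exactly one $A$ have the form $(A,E_\alpha,E_\beta,E_\gamma)$ with $\alpha+\beta+\gamma=0$, and these should cancel. Expanding that case, the terms $\psi([A,E_\alpha],\dots)$ contribute $(\alpha+\beta+\gamma)(A)\,d^c\omega(E_\alpha,E_\beta,E_\gamma)=0$. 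The terms with $[E_\alpha,E_\beta]_\pg=N_{\alpha,\beta}E_{\alpha+\beta}$ produce $d^c\omega(A,E_{-\gamma},E_\gamma)$-type terms, namely $g_\ag(A,A_\gamma)$. These combine into $g_\ag(A,A_\alpha+A_\beta+A_\gamma)$ with the cyclic relations $N_{\alpha,\beta}=N_{\beta,\gamma}=N_{\gamma,\alpha}$, and $A_\alpha+A_\beta+A_\gamma=0$ by linearity of $\alpha\mapsto H_\alpha$. It remains to treat four root vectors $(E_\alpha,E_\beta,E_\gamma,E_\delta)$ with $\alpha+\beta+\gamma+\delta=0$, which gives the two families in the statement: the pairs $(\alpha,-\alpha,\beta,-\beta)$, and the generic quadruples with all pairwise sums nonzero.

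For $(E_\alpha,E_{-\alpha},E_\beta,E_{-\beta})$, the six bracket terms are as follows. The bracket $[E_\alpha,E_{-\alpha}]_\pg=A_\alpha$ gives $d^c\omega(A_\alpha,E_\beta,E_{-\beta})=g_\ag(A_\alpha,A_\beta)$, and symmetrically for $[E_\beta,E_{-\beta}]$; together these produce $-2g_\ag(A_\alpha,A_\beta)$. Here the $\kg$-component of the bracket drops out because only $[\cdot,\cdot]_\pg$ enters. The mixed brackets $[E_{\pm\alpha},E_{\pm\beta}]$ contribute $N$'s times $d^c\omega(E_{\alpha+\beta},E_{-\alpha},E_{-\beta})$ and $d^c\omega(E_{\alpha-\beta},E_{-\alpha},E_\beta)$. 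For these we use the formula $\epsilon\epsilon\epsilon N(\sum\epsilon x)$ from Lemma \ref{dco} and the $N$-identities of \S\ref{roots}, namely $N_{\alpha+\beta,-\alpha}=-N_{\alpha,\beta}$, $N_{-\alpha,-\beta}=-N_{\alpha,\beta}$ and their analogues. We then split according to the sign $\epsilon_{\alpha-\beta}$, exactly as in \eqref{eqA2}--\eqref{eqA3}, and these cases merge into the single expression with $\epsilon_{\alpha-\beta}$.

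For the generic quadruple, each of the six pair brackets is either zero or a root vector, and at most one nonzero term survives for each of the three pairings $\{\alpha\beta|\gamma\delta\}$, $\{\alpha\gamma|\beta\delta\}$ and $\{\alpha\delta|\beta\gamma\}$. With $\alpha,\beta>0$ and $\gamma,\delta<0$, the signs $\epsilon$ are known except for $\epsilon_{\alpha+\gamma}$ and $\epsilon_{\alpha+\delta}$, which then appear explicitly. The main obstacle is sign bookkeeping: normalizing each product $N\cdot N$ to the forms $N_{\alpha,\beta}N_{\gamma,\delta}$, $N_{\alpha,\gamma}N_{\beta,\delta}$ and $N_{\alpha,\delta}N_{\beta,\gamma}$ via the relations $N_{\alpha,\beta}=N_{\beta,\gamma}=N_{\gamma,\alpha}$ (for $\alpha+\beta+\gamma=0$) and $N_{-\alpha,-\beta}=-N_{\alpha,\beta}$. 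Getting the coefficients $\pm2x$ right requires pairing up the two terms of each pairing. As a final check, I would compare the result with $2\im\partial\overline{\partial}\omega$ using \eqref{eqA1}--\eqref{eqA4} with $y_\alpha=-\im\epsilon_\alpha x_\alpha$.
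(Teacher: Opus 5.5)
Your plan is essentially the paper's proof. You apply the invariant differential to the explicit $3$-form $d^c\omega$ of Lemma \ref{dco}. You kill the components with $\ag^c$-entries using $(\alpha+\beta+\gamma)(A)=0$, $A_\alpha+A_\beta+A_\gamma=0$ and the cyclic identity for the $N$'s. You then expand the four-root-vector components term by term.

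One case is missing. In the generic quadruple case you only treat the sign pattern $\alpha,\beta\in\Delta_\qg^+$, $\gamma,\delta\in\Delta_\qg^-$. The statement ("the only possibly nonzero components") also requires that components with three roots of the same sign vanish, e.g.\ $\alpha,\beta,\gamma\in\Delta_\qg^+$, $\delta\in\Delta_\qg^-$. The paper checks this in one of two ways:
\begin{enumerate}
\item Expand the six terms and use $N_{\alpha,\beta}N_{\gamma,\delta}+N_{\beta,\gamma}N_{\alpha,\delta}+N_{\gamma,\alpha}N_{\beta,\delta}=0$ from \S\ref{roots}.
\item More quickly, note that $dd^c\omega=2\im\partial\overline{\partial}\omega$ is of type $(2,2)$, so it vanishes on three vectors of $\pg^{1,0}$ and one of $\pg^{0,1}$.
\end{enumerate}
You mention $2\im\partial\overline{\partial}\omega$ only as a final check; it should be used here.

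A smaller point concerns your weight-zero restriction. It is not justified by $\Ad(K)$-invariance, since $K$ need not contain a maximal torus (e.g.\ $K$ is trivial when $M=G$). It follows instead from either of these:
\begin{enumerate}
\item Lemma \ref{dco} shows $d^c\omega$ only has nonzero components whose weights add up to zero, and $[\cdot,\cdot]_\pg$ is additive in weights.
\item $(J,g)$, hence $d^c\omega$, is $\Ad(H_f)$-invariant.
\end{enumerate}
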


\begin{remark}
These formulas are also in accordance with those given in \cite{AlkDvd} for full flag manifolds and in \cite{SKT-LG} for Lie groups.  
\end{remark}

\begin{remark}\label{LCSKT-rem}
As an application, we obtain that if $(J,g)$ is {\it twisted SKT}, i.e., $dd^c\omega=\theta\wedge d^c\omega$ for some closed $1$-form $\theta$ (see \cite{BfrFin}), then $(J,g)$ is necessarily SKT.  Indeed, $\theta=g(\cdot,Z)$ for some $Z\in\zg(\ggo)$ by \S\ref{dR1} and so $\theta(E_\alpha)=0$ for any $\alpha\in\Delta_\qg$.  
\end{remark}

\begin{proof}
We use the formula for $d^c\omega$ given in Lemma \ref{dco}.  If $\alpha+\beta+\gamma=0$, then for any $A\in\tg^c$, 
\begin{align*}
& dd^c\omega(A,E_\alpha,E_\beta,E_\gamma) \\ 
=& -\alpha(A)d^c\omega(E_\alpha,E_\beta,E_\gamma) 
+\beta(A)d^c\omega(E_\beta,E_\alpha,E_\gamma)
-\gamma(A)d^c\omega(E_\gamma,E_\alpha,E_\beta)\\ 
&-N_{\alpha,\beta}d^c\omega(E_{\alpha+\beta},A,E_\gamma) 
+ N_{\alpha,\gamma}d^c\omega(E_{\alpha+\gamma},A,E_\beta) 
- N_{\beta,\gamma}d^c\omega(E_{\beta+\gamma},A,E_\alpha) \\ 
=& -(\alpha+\beta+\gamma)(A)d^c\omega(E_\alpha,E_\beta,E_\gamma) \\  
&-N_{\alpha,\beta}d^c\omega(A,E_\gamma,E_{\alpha+\beta}) 
+ N_{\alpha,\gamma}d^c\omega(A,E_\beta,E_{\alpha+\gamma}) 
- N_{\beta,\gamma}d^c\omega(A,E_\alpha,E_{\beta+\gamma}) \\ 
=&-N_{\alpha,\beta}g_\ag(A,E_\gamma) 
+ N_{\alpha,\gamma}g_\ag(A,E_\beta) 
- N_{\beta,\gamma}g_\ag(A,E_\alpha) \\
=&-N_{\alpha,\beta}g_\ag(A,A_\alpha+A_\beta+A_\gamma) 
=-N_{\alpha,\beta}g_\ag(A,A_{\alpha+\beta+\gamma})=0. 
\end{align*}
For $A,B\in\tg^c$ and $\alpha+\beta=0$, 
\begin{align*}
dd^c\omega(A,B,E_\alpha,E_\beta)  
=& \alpha(A)d^c\omega(E_\alpha,B,E_\beta) 
-\beta(A)d^c\omega(E_\beta,B,E_\alpha)
-\alpha(B)d^c\omega(E_\alpha,A,E_\beta) \\ 
&+\beta(B)d^c\omega(E_\beta,A,E_\alpha)
-N_{\alpha,\beta}d^c\omega(E_{\alpha+\beta},A,B) \\ 
=&-(\alpha+\beta)(A)d^c\omega(B,E_\alpha,E_\beta) 
+(\alpha+\beta)(A)d^c\omega(A,E_\alpha,E_\beta) =0.
\end{align*}
If $\alpha+\beta+\gamma+\delta=0$ and all pairs add nonzero, then
\begin{align*}
& dd^c\omega(E_\alpha,E_\beta,E_\gamma,E_\delta) \\ 
=& -N_{\alpha,\beta}d^c\omega(E_{\alpha+\beta},E_\gamma,E_\delta) 
- N_{\gamma,\delta}d^c\omega(E_{\gamma+\delta},E_\alpha,E_\beta) \\ 
&+ N_{\alpha,\gamma}d^c\omega(E_{\alpha+\gamma},E_\beta,E_\delta) 
+ N_{\beta,\delta}d^c\omega(E_{\beta+\delta},E_\alpha,E_\gamma) \\
&- N_{\alpha,\delta}d^c\omega(E_{\alpha+\delta},E_\beta,E_\gamma) 
- N_{\beta,\gamma}d^c\omega(E_{\beta+\gamma},E_\alpha,E_\delta) \\
=& -N_{\alpha,\beta}\im \epsilon_{\alpha+\beta}\epsilon_\gamma\epsilon_\delta N_{\gamma,\delta} (y_{\alpha+\beta}+y_\gamma+y_\delta) 
- N_{\gamma,\delta}\im \epsilon_{\gamma+\delta}\epsilon_\alpha\epsilon_\beta N_{\alpha,\beta} (y_{\gamma+\delta}+y_\alpha+y_\beta) \\ 
&+ N_{\alpha,\gamma}\im \epsilon_{\alpha+\gamma}\epsilon_\beta\epsilon_\delta N_{\beta,\delta}(y_{\alpha+\gamma}+y_\beta+y_\delta)  
+ N_{\beta,\delta} \im\epsilon_{\beta+\delta}\epsilon_\alpha\epsilon_\gamma N_{\alpha,\gamma} (y_{\beta+\delta}+y_\alpha+y_\gamma) \\
&- N_{\alpha,\delta}\im \epsilon_{\alpha+\delta}\epsilon_\beta\epsilon_\gamma N_{\beta,\gamma} (y_{\alpha+\delta}+y_\beta+y_\gamma) 
- N_{\beta,\gamma}\im \epsilon_{\beta+\gamma}\epsilon_\alpha\epsilon_\delta N_{\alpha,\delta}(y_{\beta+\gamma}+y_\alpha+y_\delta). 
\end{align*}
In particular, for $\alpha,\beta,\gamma\in\Delta^+$, $\delta\in\Delta^-$, we obtain
\begin{align*}
& dd^c\omega(E_\alpha,E_\beta,E_\gamma,E_\delta) \\ 
=& N_{\alpha,\beta} N_{\gamma,\delta} (x_{\alpha+\beta}+x_\gamma-x_\delta) 
+N_{\gamma,\delta} N_{\alpha,\beta} (-x_{\gamma+\delta}+x_\alpha+x_\beta) \\ 
& - N_{\alpha,\gamma}  N_{\beta,\delta}(x_{\alpha+\gamma}+x_\beta-x_\delta)  
-N_{\beta,\delta}  N_{\alpha,\gamma} (-x_{\beta+\delta}+x_\alpha+x_\gamma) \\
&+N_{\alpha,\delta} N_{\beta,\gamma} (-x_{\alpha+\delta}+x_\beta+x_\gamma) 
+ N_{\beta,\gamma} N_{\alpha,\delta}(x_{\beta+\gamma}+x_\alpha-x_\delta). \\ 
=& (N_{\alpha,\beta} N_{\gamma,\delta} - N_{\alpha,\gamma}N_{\beta,\delta}+N_{\alpha,\delta} N_{\beta,\gamma})(x_\alpha+x_\beta+x_\gamma-x_\delta) =0.  
\end{align*}
Note that the above computation is not necessary if we use that $dd^c\omega=2\im\partial\overline{\partial}\omega$ is a $(2,2)$-form.  On the other hand, for $\alpha,\beta\in\Delta^+$, $\gamma,\delta\in\Delta^-$, we have that  
\begin{align*}
& dd^c\omega(E_\alpha,E_\beta,E_\gamma,E_\delta) \\ 
=& -N_{\alpha,\beta} N_{\gamma,\delta} (x_{\alpha+\beta}-x_\gamma-x_\delta) 
+ N_{\gamma,\delta} N_{\alpha,\beta} (-x_{\gamma+\delta}+x_\alpha+x_\beta) \\ 
&- \epsilon_{\alpha+\gamma}N_{\alpha,\gamma} N_{\beta,\delta}(\epsilon_{\alpha+\gamma}x_{\alpha+\gamma}+x_\beta-x_\delta)  
- \epsilon_{\beta+\delta}N_{\beta,\delta}  N_{\alpha,\gamma} (\epsilon_{\beta+\delta}x_{\beta+\delta}+x_\alpha-x_\gamma) \\
&+ \epsilon_{\alpha+\delta}N_{\alpha,\delta}  N_{\beta,\gamma} (\epsilon_{\alpha+\delta}x_{\alpha+\delta}+x_\beta-x_\gamma) 
+ \epsilon_{\beta+\gamma}N_{\beta,\gamma}  N_{\alpha,\delta}(\epsilon_{\beta+\gamma}x_{\beta+\gamma}+x_\alpha-x_\delta),   \\ 
=& N_{\alpha,\beta} N_{\gamma,\delta} (x_\alpha+x_\beta+x_\gamma+x_\delta-2x_{\alpha+\beta}) \\ 
&- \epsilon_{\alpha+\gamma}N_{\alpha,\gamma} N_{\beta,\delta}(-x_\alpha+x_\beta+x_\gamma-x_\delta+2\epsilon_{\alpha+\gamma}x_{\alpha+\gamma}) \\
&+ \epsilon_{\alpha+\delta}N_{\alpha,\delta}  N_{\beta,\gamma} (-x_\alpha+x_\beta-x_\gamma+x_\delta+2\epsilon_{\alpha+\delta}x_{\alpha+\delta}).  
\end{align*}
Finally, if $\alpha,\beta\in\Delta^+_\qg$, $\alpha\pm\beta\ne 0$ then
\begin{align*}
& dd^c\omega(E_\alpha,E_{-\alpha},E_\beta,E_{-\beta}) \\ 
=& -d^c\omega(A_\alpha,E_\beta,E_{-\beta}) 
+ N_{\alpha,\beta}d^c\omega(E_{\alpha+\beta},E_{-\alpha},E_{-\beta}) 
- N_{\alpha,-\beta}d^c\omega(E_{\alpha-\beta},E_{-\alpha},E_\beta) \\
& - N_{-\alpha,\beta}d^c\omega(E_{-\alpha+\beta},E_{\alpha},E_{-\beta})
+ N_{-\alpha,-\beta}d^c\omega(E_{-\alpha-\beta},E_\alpha,E_\beta)
-d^c\omega(A_\beta,E_\alpha,E_{-\alpha}) \\ 
=& -g_\ag(A_\alpha,A_\beta) 
+ N_{\alpha,\beta}\im\epsilon_{\alpha+\beta}\epsilon_{-\alpha}\epsilon_{-\beta} N_{\alpha+\beta,-\alpha}(y_{\alpha+\beta}+y_{-\alpha}+y_{-\beta}) \\
& - N_{\alpha,-\beta}\im\epsilon_{\alpha-\beta}\epsilon_{-\alpha}\epsilon_\beta N_{\alpha-\beta,-\alpha}(y_{\alpha-\beta}+y_{-\alpha}+y_\beta)\\ 
&- N_{-\alpha,\beta}\im\epsilon_{-\alpha+\beta}\epsilon_{\alpha}\epsilon_{-\beta} N_{-\alpha+\beta,\alpha}(y_{-\alpha+\beta}+y_{\alpha}+y_{-\beta})\\
&+ N_{-\alpha,-\beta}\im\epsilon_{-\alpha-\beta}\epsilon_\alpha\epsilon_\beta N_{-\alpha-\beta,\alpha}(y_{-\alpha-\beta}+y_\alpha+y_\beta)
-g_\ag(A_\beta,A_\alpha), \\
=& -2g_\ag(A_\alpha,A_\beta) 
+ 2N_{\alpha,\beta}\im\epsilon_{\alpha+\beta}\epsilon_{-\alpha}\epsilon_{-\beta} N_{\alpha+\beta,-\alpha}(y_{\alpha+\beta}+y_{-\alpha}+y_{-\beta}) \\
& - 2N_{\alpha,-\beta}\im\epsilon_{\alpha-\beta}\epsilon_{-\alpha}\epsilon_\beta N_{\alpha-\beta,-\alpha}(y_{\alpha-\beta}+y_{-\alpha}+y_\beta)\\
=& -2g_\ag(A_\alpha,A_\beta) 
- 2N_{\alpha,\beta}^2\im\epsilon_{\alpha+\beta}\epsilon_{-\alpha}\epsilon_{-\beta} (y_{\alpha+\beta}+y_{-\alpha}+y_{-\beta}) \\
& + 2N_{\alpha,-\beta}^2\im\epsilon_{\alpha-\beta}\epsilon_{-\alpha}\epsilon_\beta (y_{\alpha-\beta}+y_{-\alpha}+y_\beta),
\end{align*}
concluding the proof. 
\end{proof}

% 14/8/26

\section{Parallel Bismut torsion (BTP)}\label{BTP-sec}

\begin{definition}\label{BTP-def} 
A Hermitian structure $(J,g)$ is called {\it Bismut-torsion-parallel} (BTP for short) if it has parallel Bismut torsion, i.e., $\nabla^b T^b=0$.  
\end{definition}

In the context of C-spaces, formulas for $\nabla^b$ and $T^b$ were given in \S\ref{BiC-sec} (see also \S\ref{IC-sec}).  It follows from Proposition \ref{Btor} and Corollary \ref{Bcon} that for a $G$-invariant Hermitian structure $(J,g)$ as in \eqref{Jg} on a C-space $M=G/K$, the only possibly nonzero components of the Bismut connection and its torsion are given by 
\begin{equation}\label{BcT}
\left\{\begin{array}{l}
\Lambda^b(A,E_\alpha)=\left(\alpha(A)+\tfrac{g_\ag(A,A_\alpha)}{x_\alpha}\right) E_\alpha, \\ 
\Lambda^b(E_\alpha,E_\beta)= c_{\alpha,\beta}E_{\alpha+\beta},  
\end{array}\right. \qquad
\left\{\begin{array}{l}
T^b(A,E_\alpha)=\tfrac{g_\ag(A,A_\alpha)}{x_\alpha} E_\alpha, \\ 
T^b(E_\alpha,E_{-\alpha})=-A_\alpha, \\ 
T^b(E_\alpha,E_\beta)= t_{\alpha,\beta}E_{\alpha+\beta},
\end{array}\right.
\end{equation}
for all $\alpha,\beta\in\Delta_\qg$, $A\in\ag^c$, where $c_{\alpha,\beta}$ and $t_{\alpha,\beta}$ are some real numbers.  In particular, $(J,g)$ is BTP if and only if  
\begin{align}
(\Lambda^b(A)\cdot T^b)(E_{\alpha},E_{\pm\beta})=0, \qquad (\Lambda^b(E_{\pm\alpha})\cdot T^b)(A,E_{\pm\beta})=0, \label{BTP0} \\ (\Lambda^b(E_{\pm\gamma})\cdot T^b)(E_{\alpha},E_{\pm\beta})=0, \qquad 
\forall\alpha,\beta,\gamma\in\Delta_\qg^+, \quad A\in\ag^c.  \notag
\end{align}
Recall from \eqref{a1g} that $A_\alpha=(H_\alpha)_{\ag^c}$ for all $\alpha\in\Delta_\qg$. 

We first show that there are plenty of BTP metrics on any complex C-space.  

\begin{proposition}\label{BTP-exi}
Let $(M=G/K,J)$ be a complex C-space.  Then any metric of the form $g=g_\ag+g_\qg$, $g_\qg=(x_\alpha)_{\alpha\in\Delta_\qg}$ such that $g_\qg=g_b|_\qg$ for some bi-invariant metric $g_b$ on $[\ggo,\ggo]$ (i.e., $x_\alpha=x_\beta$ for all $\alpha,\beta\in\Delta_{\qg_i}$, $i=1,\dots,s$, see \eqref{decflags}) is BTP.   
\end{proposition}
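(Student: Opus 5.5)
We prove that $\nabla^bT^b=0$ by showing that, under the assumption $x_\alpha=z_i$ for all $\alpha\in\Delta_{\qg_i}$, the Bismut Nomizu operator $\Lambda^b$ acts on $\pg^c$ by derivations that annihilate $T^b$. By \eqref{BTP0} we must check three families of components, $(\Lambda^b(A)\cdot T^b)$, $(\Lambda^b(E_{\pm\gamma})\cdot T^b)$ on $(A,E_\beta)$, and $(\Lambda^b(E_{\pm\gamma})\cdot T^b)$ on $(E_\alpha,E_{\pm\beta})$. Here $(\Lambda\cdot T)(X,Y)=\Lambda T(X,Y)-T(\Lambda X,Y)-T(X,\Lambda Y)$.

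The first step is to show that $\Lambda^b(E_\gamma)=0$ for all $\gamma\in\Delta_\qg$. Corollary \ref{Bcon} already gives $\Lambda^b(E_\gamma)A=0$ and $\Lambda^b(E_\gamma)E_{-\gamma}=0$. Moreover $\Lambda^b(E_\gamma)E_\beta\neq0$ only when $\gamma+\beta\in\Delta_\qg$, which forces $\gamma,\beta$ to lie in the same simple factor $\ggo_i$. Then \eqref{Lb}, and more generally the formula in Corollary \ref{Bcon}, shows that for $x_\gamma=x_\beta=x_{\gamma+\beta}=z_i$ the coefficient is
\[
\tfrac12 N_{\gamma,\beta}\,\bigl(1-\epsilon_\gamma\epsilon_\beta+\epsilon_{\gamma+\beta}(\epsilon_\gamma+\epsilon_\beta)\bigr).
\]
This expression vanishes for every admissible sign pattern, exactly as in the converse part of the proof of Theorem \ref{Bflat} and in Remark \ref{gQhol}. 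The only sign pattern to exclude is $\epsilon_\gamma=\epsilon_\beta=-\epsilon_{\gamma+\beta}$, and it cannot occur because $\Delta_\qg^+$ is closed. Consequently the second and third families of components in \eqref{BTP0} vanish automatically, and only $\Lambda^b(A)\cdot T^b=0$ for $A\in\ag^c$ remains.

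By Corollary \ref{Bcon}, $\Lambda^b(A)$ kills $\ag^c$ and acts diagonally on $E_\alpha$ with eigenvalue
\[
\lambda_\alpha(A):=\alpha(A)+\tfrac{g_\ag(A,A_\alpha)}{x_\alpha}.
\]
The key observation is that $\lambda$ is additive and odd in the root, since $x_\alpha=z_i$ is constant on $\Delta_{\qg_i}$, $\alpha\mapsto A_\alpha$ is linear with $A_{-\alpha}=-A_\alpha$, and all roots involved in a nonzero bracket lie in one factor. So $\lambda_{\alpha+\beta}=\lambda_\alpha+\lambda_\beta$ and $\lambda_{-\alpha}=-\lambda_\alpha$; in other words $\Lambda^b(A)$ is a "weight" derivation on $\qg^c$ extended by zero on $\ag^c$. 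We now check the three torsion types in \eqref{BcT}:
\[
(\Lambda^b(A)\cdot T^b)(E_\alpha,E_\beta)=(\lambda_{\alpha+\beta}-\lambda_\alpha-\lambda_\beta)\,t_{\alpha,\beta}E_{\alpha+\beta}=0,
\]
\[
(\Lambda^b(A)\cdot T^b)(E_\alpha,E_{-\alpha})=-\Lambda^b(A)A_\alpha+(\lambda_\alpha+\lambda_{-\alpha})A_\alpha=0,
\]
and
\[
(\Lambda^b(A)\cdot T^b)(B,E_\alpha)=\tfrac{g_\ag(B,A_\alpha)}{x_\alpha}(\lambda_\alpha-\lambda_\alpha)E_\alpha=0.
\]
Since $\Lambda^b(A)B=0$, no other terms arise.

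The main obstacle is the sign bookkeeping in the first step, namely verifying that $\Lambda^b(E_\gamma)E_\beta$ vanishes when $\gamma,\beta$ have mixed signs. We handle it via the general formula of Corollary \ref{Bcon} with all $x$'s equal, using the closedness of $\Delta_\qg^+$ under addition to rule out the single bad sign pattern. A second point to double-check is that $\Lambda^b$ being a Nomizu operator (and not, for instance, acting on the $\kg$-part) means no isotropy correction enters $\nabla^bT^b$ at the origin, so \eqref{BTP0} indeed suffices.
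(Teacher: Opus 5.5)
Your argument is correct and follows the paper's proof. You use $\Lambda^b(E_\gamma)=0$ from \eqref{Lb}, and $\Lambda^b(A)\cdot T^b=0$ because $\alpha\mapsto\lambda_\alpha(A)$ is additive and odd on each $\Delta_{\qg_i}$. The paper writes out only the component \eqref{BTP1}, so your $(E_\alpha,E_{-\alpha})$ and $(B,E_\alpha)$ checks usefully complete it.

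There is one sign slip to fix. With all $x$'s equal, Corollary~\ref{Bcon} gives the coefficient $\tfrac12 N_{\gamma,\beta}\bigl(1+\epsilon_\gamma\epsilon_\beta-\epsilon_{\gamma+\beta}(\epsilon_\gamma+\epsilon_\beta)\bigr)=\tfrac12 N_{\gamma,\beta}(1-\epsilon_{\gamma+\beta}\epsilon_\gamma)(1-\epsilon_{\gamma+\beta}\epsilon_\beta)$. The expression you displayed repeats a sign error from the second line of the analogous computation in the proof of Theorem~\ref{Bflat}, and it equals $\pm2$ for every sign pattern. With the corrected coefficient, your claim that only $\epsilon_\gamma=\epsilon_\beta=-\epsilon_{\gamma+\beta}$ survives, and that closedness excludes it, is exactly right. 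Reading the coefficients $x_{\alpha+\beta}-x_\alpha$, $x_\beta-x_\alpha$, $0$ directly off \eqref{Lb} avoids this sign bookkeeping altogether.
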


\begin{proof}
Using \eqref{BcT}, we obtain that for all $A\in\ag^c$ and $\alpha,\beta,\alpha+\beta\in\Delta_\qg$, 
\begin{align}
(\Lambda^b(A)\cdot T^b)(E_\alpha,E_\beta) 
=& \Lambda^b(A)T^b(E_\alpha,E_\beta) -T^b(\Lambda^b(A)E_\alpha,E_\beta) -T^b(E_\alpha,\Lambda^b(A)E_\beta) \notag\\ 
=& 
\left(\tfrac{g_\ag(A,A_{\alpha+\beta})}{x_{\alpha+\beta}} - \tfrac{g_\ag(A,A_\alpha)}{x_\alpha} -\tfrac{g_\ag(A,A_\beta)}{x_\beta} \right) T^b(E_\alpha,E_\beta). \label{BTP1} 
\end{align}
Since $\Lambda(E_\alpha)=0$ for any $\alpha\in\Delta_\qg$ by \eqref{Lb} and \eqref{BTP1} vanishes, we obtain that $g$ is BTP (see \eqref{BTP0}).   
\end{proof}

The following technical necessary conditions for BTP will be very useful.    

\begin{proposition}\label{BTP} 
If a Hermitian C-space is BTP, then the following conditions hold for any $\alpha,\beta,\alpha+\beta\in\Delta_\qg^+$:
\begin{enumerate}[{\rm (i)}] 
\item $x_{\alpha+\beta}=x_\alpha$ or $\tfrac{1}{x_\beta}A_\beta = \tfrac{1}{x_{\alpha+\beta}}A_{\alpha+\beta}$.  

\item $x_{\alpha+\beta}=x_\alpha+x_\beta$ or 
$
\tfrac{1}{x_\alpha}A_\alpha+ \tfrac{1}{x_\beta}A_\beta=\tfrac{1}{x_{\alpha+\beta}} A_{\alpha+\beta}.
$   
\item If $\alpha-\beta\notin\Delta_\qg^-$, then either $x_{\alpha+\beta}=x_\alpha+x_\beta$ or $x_{\alpha+\beta}=x_\alpha$. 

\item If $\alpha-\beta\notin\Delta_\qg^+$, then either $x_{\alpha+\beta}=x_\alpha+x_\beta$ or $x_{\alpha+\beta}=x_\beta$. 

\item If $\gamma, \alpha+\beta+\gamma \in \Delta_\qg^+$ and $\alpha+\gamma, \beta+\gamma \notin \Delta_\qg$, then $x_{\alpha+\beta} =x_{\alpha} + x_\beta$ or $x_{\alpha+\beta+\gamma}=x_\gamma$.  

\item If $\gamma, \beta+\gamma, \alpha+\beta+\gamma \in \Delta_\qg^+$ and $\alpha+\gamma \notin \Delta_\qg$, then 
\begin{align*}
\tfrac{1}{x_{\beta+\gamma}} & N_{\beta,\gamma} N_{\alpha, \beta+\gamma} (x_{\beta+\gamma} -x_\beta - x_\gamma)(x_{\alpha+\beta+\gamma} - x_{\alpha})\\
=& \tfrac{1}{x_{\alpha+\beta}} N_{\alpha, \beta} N_{\alpha+\beta, \gamma}(x_{\alpha+\beta}- x_\alpha)(x_{\alpha+\beta+\gamma} - x_{\alpha+\beta} - x_\gamma).  
\end{align*}
\end{enumerate}
\end{proposition}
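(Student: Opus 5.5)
The plan is to evaluate the BTP equations \eqref{BTP0} on carefully chosen triples of basis vectors, using the explicit components \eqref{BcT} of $\Lambda^b$ and $T^b$ from Proposition \ref{Btor} and Corollary \ref{Bcon}, in particular \eqref{Lb} for $\Lambda^b(E_\alpha)E_\beta$. Since $\Lambda^b$ and $T^b$ map root vectors to root vectors (or to $\ag^c$), each equation $(\Lambda^b(X)\cdot T^b)(Y,Z)=0$ reduces to a scalar identity for a single root vector component. This identity has the form of a product of a factor like $(x_{\alpha+\beta}-x_\alpha)$ or $(x_{\alpha+\beta}-x_\alpha-x_\beta)$, coming from $\Lambda^b$ or $T^b$ on $\Delta_\qg^\pm$, times either a linear expression in the $A_\gamma/x_\gamma$ or another such factor. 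The dichotomies (i)--(v) are then just "a product vanishes, so one factor vanishes."

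For (ii), I would use \eqref{BTP1}: with $\alpha,\beta,\alpha+\beta\in\Delta_\qg^+$, we have $T^b(E_\alpha,E_\beta)=\tfrac{N_{\alpha,\beta}(x_{\alpha+\beta}-x_\alpha-x_\beta)}{x_{\alpha+\beta}}E_{\alpha+\beta}$ by Proposition \ref{Btor}. Vanishing of $(\Lambda^b(A)\cdot T^b)(E_\alpha,E_\beta)$ for all $A\in\ag^c$ gives either $x_{\alpha+\beta}=x_\alpha+x_\beta$ or $g_\ag(A,\tfrac{1}{x_{\alpha+\beta}}A_{\alpha+\beta}-\tfrac1{x_\alpha}A_\alpha-\tfrac1{x_\beta}A_\beta)=0$ for all $A$; nondegeneracy of $g_\ag$ on $\ag^c$ yields (ii).

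For (i), I would compute $(\Lambda^b(E_\alpha)\cdot T^b)(A,E_\beta)$. Here $\Lambda^b(E_\alpha)A=0$ and $\Lambda^b(E_\alpha)E_\beta=\tfrac{N_{\alpha,\beta}(x_{\alpha+\beta}-x_\alpha)}{x_{\alpha+\beta}}E_{\alpha+\beta}$, so the expression equals $\tfrac{N_{\alpha,\beta}(x_{\alpha+\beta}-x_\alpha)}{x_{\alpha+\beta}}\big(\tfrac{g_\ag(A,A_\beta)}{x_\beta}-\tfrac{g_\ag(A,A_{\alpha+\beta})}{x_{\alpha+\beta}}\big)E_{\alpha+\beta}$, which is essentially \eqref{Bcurv1}. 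Nondegeneracy again gives (i).

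For (iii) and (iv), I would evaluate $(\Lambda^b(E_{-\beta})\cdot T^b)(E_\alpha,E_\beta)$ and its symmetric counterpart. The $\ag$-component involves $T^b(E_\alpha,E_{-\alpha})=-A_\alpha$, while the $E_\alpha$-component collects $\Lambda^b(E_{-\beta})E_{\alpha+\beta}$ times $t_{\alpha,\beta}$, minus $T^b(\Lambda^b(E_{-\beta})E_\alpha,E_\beta)$, minus $T^b(E_\alpha,\Lambda^b(E_{-\beta})E_\beta)$. The last term is zero since $\Lambda^b(E_{-\beta})E_\beta=0$. The middle term vanishes when $\alpha-\beta\notin\Delta_\qg^-$: either $\alpha-\beta$ is not a root in $\Delta_\qg$, or \eqref{Lb} makes it zero when $\alpha-\beta\in\Delta_\qg^+$. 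This should leave only $c_{-\beta,\alpha+\beta}t_{\alpha,\beta}$, which by \eqref{Lb} is proportional to $(x_\alpha-x_{\alpha+\beta})(x_{\alpha+\beta}-x_\alpha-x_\beta)$, giving (iii). Item (iv) follows symmetrically, using $\Lambda^b(E_{-\alpha})$.

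Items (v) and (vi) use the remaining equation $(\Lambda^b(E_\gamma)\cdot T^b)(E_\alpha,E_\beta)$, projected onto $E_{\alpha+\beta+\gamma}$. The three terms are $c_{\gamma,\alpha+\beta}t_{\alpha,\beta}$, $c_{\gamma,\alpha}t_{\alpha+\gamma,\beta}$ and $c_{\gamma,\beta}t_{\alpha,\beta+\gamma}$. Under the hypotheses of (v), the last two vanish, since $\alpha+\gamma,\beta+\gamma\notin\Delta_\qg$, so the product $(x_{\alpha+\beta+\gamma}-x_\gamma)(x_{\alpha+\beta}-x_\alpha-x_\beta)$ vanishes. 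Under those of (vi), only the $\alpha+\gamma$ term drops out, and writing out the remaining two via \eqref{Lb} and Proposition \ref{Btor} gives the stated identity.

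The main obstacle is bookkeeping rather than ideas. I expect difficulty in tracking which case of \eqref{Lb} applies (the sign pattern of $\alpha,\beta,\alpha+\beta$), in handling the structure-constant identities $N_{\alpha,\beta}=N_{\beta,\gamma}=N_{\gamma,\alpha}$ when $\alpha+\beta+\gamma=0$, and in matching normalizations so that (vi) comes out exactly in the stated form. I would check the orientation conventions of $\Lambda^b$ versus the labeling of $\alpha$ and $\beta$ in (vi) first, possibly swapping the roles of $\alpha$ and $\beta$.
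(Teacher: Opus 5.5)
Your strategy is the paper's: (i) comes from $(\Lambda^b(E_\alpha)\cdot T^b)(A,E_\beta)$, (ii) from $(\Lambda^b(A)\cdot T^b)(E_\alpha,E_\beta)$, (iii)--(iv) from $(\Lambda^b(E_{-\alpha})\cdot T^b)(E_\alpha,E_\beta)$ and $(\Lambda^b(E_{-\beta})\cdot T^b)(E_\alpha,E_\beta)$, and (v)--(vi) from $(\Lambda^b(E_\gamma)\cdot T^b)(E_\alpha,E_\beta)$. Your treatment of (i), (ii) and (v) is correct. In (iii)/(iv), however, you have the two computations reversed, and two of your claims there are false.

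\begin{itemize}
\item By \eqref{Lb}, with first argument $-\beta\in\Delta_\qg^-$ and second argument $\alpha\in\Delta_\qg^+$, the vector $\Lambda^b(E_{-\beta})E_\alpha$ vanishes when $\alpha-\beta\in\Delta_\qg^-$, i.e.\ when the sum has the sign of the first argument. For $\alpha-\beta\in\Delta_\qg^+$ it equals $\tfrac{N_{-\beta,\alpha}(x_\alpha-x_\beta)}{x_{\alpha-\beta}}E_{\alpha-\beta}$, which is not zero in general.
\item $\Lambda^b(E_{-\beta})E_{\alpha+\beta}=\tfrac{N_{-\beta,\alpha+\beta}(x_{\alpha+\beta}-x_\beta)}{x_\alpha}E_\alpha$. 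So the surviving product is $(x_{\alpha+\beta}-x_\beta)(x_{\alpha+\beta}-x_\alpha-x_\beta)$, not one containing $x_{\alpha+\beta}-x_\alpha$.
\end{itemize}

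Under the hypothesis $\alpha-\beta\notin\Delta_\qg^-$ your $E_{-\beta}$ argument therefore fails, because the middle term survives when $\alpha-\beta\in\Delta_\qg^+$. What this computation actually proves is (iv), under $\alpha-\beta\notin\Delta_\qg^+$. Item (iii) comes from $\Lambda^b(E_{-\alpha})$. There $\Lambda^b(E_{-\alpha})E_\beta=0$ exactly when $\alpha-\beta\notin\Delta_\qg^-$, and $\Lambda^b(E_{-\alpha})E_{\alpha+\beta}=\tfrac{N_{-\alpha,\alpha+\beta}(x_{\alpha+\beta}-x_\alpha)}{x_\beta}E_\beta$. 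Since (iii) and (iv) are exchanged by $\alpha\leftrightarrow\beta$, the repair is simply to swap your two assignments. Also, no $\ag$-component and no $E_{-\alpha}$ occurs in these expressions.

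A relabeling is needed in (vi) as well. Dropping the $\alpha+\gamma$ term from $(\Lambda^b(E_\gamma)\cdot T^b)(E_\alpha,E_\beta)$ gives $c_{\gamma,\alpha+\beta}t_{\alpha,\beta}=c_{\gamma,\beta}t_{\alpha,\beta+\gamma}$. Its products are $(x_{\alpha+\beta}-x_\alpha-x_\beta)(x_{\alpha+\beta+\gamma}-x_\gamma)$ and $(x_{\beta+\gamma}-x_\gamma)(x_{\alpha+\beta+\gamma}-x_\alpha-x_{\beta+\gamma})$, not the ones in (vi). The stated identity is this relation applied to the triple $(\gamma,\beta,\alpha)$. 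That is legitimate because the hypotheses of (vi) are symmetric under $\alpha\leftrightarrow\gamma$, and the signs from $N_{\gamma,\beta}=-N_{\beta,\gamma}$ and $N_{\gamma,\alpha+\beta}=-N_{\alpha+\beta,\gamma}$ cancel. Equivalently, evaluate $(\Lambda^b(E_\alpha)\cdot T^b)(E_\beta,E_\gamma)$. There $\alpha+\gamma\notin\Delta_\qg$ kills $T^b(E_\beta,\Lambda^b(E_\alpha)E_\gamma)$, and you get $t_{\beta,\gamma}c_{\alpha,\beta+\gamma}=c_{\alpha,\beta}t_{\alpha+\beta,\gamma}$, which is exactly (vi).

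The swap $\alpha\leftrightarrow\beta$ you suggest would not help. It moves you to a different configuration, with $\alpha+\gamma\in\Delta_\qg^+$ and $\beta+\gamma\notin\Delta_\qg$. The paper's proof also leaves this relabeling implicit.
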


\begin{remark}
The above conditions do not depend on $g_\ag$, only $g_\qg=(x_\alpha)_{\alpha\in\Delta_\qg^+}$ is involved.   
\end{remark}

\begin{remark}
It follows from parts (iii) and (iv) that if $\alpha-\beta\notin\Delta_\qg$ (e.g., $\alpha,\beta\in\Pi_\qg$), then either $x_{\alpha+\beta}=x_\alpha+x_\beta$ or $x_{\alpha+\beta}=x_\alpha=x_\beta$.  
\end{remark}

\begin{proof}
Using \eqref{BcT}, we obtain that for all $A\in\ag^c$,
\begin{align}
(\Lambda^b(E_\alpha)\cdot T^b)(A,E_\beta)  
=& \Lambda^b(E_\alpha)T^b(A,E_\beta) -T^b(\Lambda^b(E_\alpha)A,E_\beta) -T^b(A,\Lambda^b(E_\alpha)E_\beta) \notag\\ 
=& c_{\alpha,\beta} \left(\tfrac{g_\ag(A,A_\beta)}{x_\beta} - \tfrac{g_\ag(A,A_{\alpha+\beta})}{x_{\alpha+\beta}}\right) E_{{\alpha+\beta}}, \qquad\forall \alpha,\beta,\alpha+\beta\in\Delta_\qg, A\in\ag^c, \label{BTP2}
\end{align}
which implies part (i).  On the other hand, part (ii) follows from \eqref{BTP1}.  

We also have that
\begin{align*}
0=& (\Lambda^b(E_{-\alpha})\cdot T^b)(E_\alpha,E_\beta) \\ 
=& \Lambda^b(E_{-\alpha})T^b(E_\alpha,E_\beta) -T^b(\Lambda^b(E_{-\alpha})E_\alpha,E_\beta) -T^b(E_\alpha,\Lambda^b(E_{-\alpha})E_\beta)\\ 
=&\Lambda^b(E_{-\alpha})\tfrac{N_{\alpha,\beta}(x_{\alpha+\beta}-x_\alpha-x_\beta)}{x_{\alpha+\beta}} E_{{\alpha+\beta}} -T^b(E_\alpha,\Lambda^b(E_{-\alpha})E_\beta)\\  
=&\tfrac{N_{\alpha,\beta}(x_{\alpha+\beta}-x_\alpha-x_\beta)}{x_{\alpha+\beta}} 
\tfrac{N_{-\alpha,\alpha+\beta}(x_{\alpha+\beta}-x_\alpha)}{x_{\beta}} E_{{\beta}} -T^b(E_\alpha,\Lambda^b(E_{-\alpha})E_\beta).  
\end{align*}
If $\alpha-\beta\notin\Delta_\qg^-$, then $\Lambda^b(E_{-\alpha})E_\beta=0$ and so part (iii) follows, and if 
$\alpha-\beta\notin\Delta_\qg^+$, then $\Lambda^b(E_{-\beta})E_\alpha=0$ and part (iv) also follows by using that $\Lambda^b(E_{-\beta})T^b(E_\alpha,E_\beta)=0$.  

Finally, we prove parts (v) and (vi): for all $\alpha, \beta, \gamma\in \Delta_\qg^+$, 
\begin{align*}
0 =& (\Lambda^b(E_{\gamma})T^b)(E_{\alpha},E_{\beta}) \\ 
=& \Lambda^b(E_{\gamma})T^b(E_{\alpha}, E_{\beta}) - T^b(\Lambda^b(E_{\gamma})E_{\alpha}, E_{\beta}) -T^b(E_{\alpha}, \Lambda^b(E_{\gamma})E_{\beta}) \\
=& \tfrac{N_{\alpha, \beta}(x_{\alpha+\beta}-x_\alpha-x_\beta)}{x_{\alpha+\beta}}\Lambda^b(E_\gamma)E_{\alpha+\beta} 
- \tfrac{N_{\gamma, \alpha}(x_{\gamma+\alpha} - x_\gamma)}{x_{\gamma+\alpha}}T^b(E_{\gamma+\alpha}, E_{\beta}) \\ 
& - \tfrac{N_{\gamma,\beta}(x_{\gamma+\beta} - x_\gamma)}{x_{\gamma+\beta}}T^b(E_{\alpha}, E_{\gamma+\beta}) \\
=&\tfrac{N_{\alpha,\beta} N_{\gamma, \alpha+\beta} (x_{\alpha+\beta} -x_\alpha - x_\beta)(x_{\gamma+\alpha+\beta} - x_{\gamma})}{x_{\alpha+\beta} x_{\gamma+\alpha+\beta}} E_{\gamma + \alpha + \beta} \\
&- \tfrac{N_{\gamma, \alpha} N_{\gamma+\alpha, \beta}(x_{\gamma+\alpha}- x_\gamma)(x_{\gamma+\alpha+\beta} - x_{\gamma+\alpha} - x_\beta)}{x_{\gamma+\alpha}x_{\gamma+\alpha+\beta}}  E_{\gamma+\alpha+\beta} \\ 
&- \tfrac{N_{\gamma, \beta} N_{\alpha, \gamma+\beta}(x_{\gamma+\beta}- x_\gamma)(x_{\gamma+\alpha+\beta} - x_{\alpha} - x_{\gamma+\beta})}{x_{\gamma+\beta}x_{\gamma+\alpha+\beta}}  E_{\gamma+\alpha+\beta},
\end{align*}
which concludes the proof.  
\end{proof}

In the case of a flag manifold $M=G/H$, the BTP condition is almost completely understood.  

\begin{theorem}\cite{Fr}\label{MF}
Any BTP metric on a flag manifold $M=G/H$ with $G$ simple and $G\ne \Eg_6, \Eg_7, \Eg_8$, is either the Killing metric up to scaling (i.e., $x_\alpha=x_\beta$ for all $\alpha,\beta\in\Delta_{\qg}$) or it is K\"ahler (i.e., $x_{\alpha+\beta}=x_\alpha+x_\beta$ for all $\alpha,\beta,\alpha+\beta\in\Delta_\qg^+$). 
\end{theorem}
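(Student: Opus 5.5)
Since $M=G/H$ is a flag manifold, we have $\ag=0$, so all terms involving $g_\ag$ and $A_\alpha$ in \eqref{BcT} disappear. The BTP condition then reduces to the vanishing of $(\Lambda^b(E_{\pm\gamma})\cdot T^b)(E_\alpha,E_{\pm\beta})$ for $\alpha,\beta,\gamma\in\Delta_\qg^+$. This involves only the numbers $x_\alpha$ and the structure constants $N_{\alpha,\beta}$. Parts (i) and (ii) of Proposition \ref{BTP} become vacuous, and the plan is to work purely with parts (iii)--(vi), supplemented by the remaining sign combinations of \eqref{BTP0} computed in the same way via \eqref{Lb}.

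\emph{Step 1: local dichotomy.} Fix an invariant ordering with simple roots $\Pi=\Pi_\hg\sqcup\Pi_\qg$. By the remark after Proposition \ref{BTP}, for every pair $\alpha,\beta$ with $\alpha-\beta\notin\Delta_\qg$ and $\alpha+\beta\in\Delta_\qg^+$ we get one of two alternatives:
\begin{enumerate}[(a)]
\item the \emph{K\"ahler-type} relation $x_{\alpha+\beta}=x_\alpha+x_\beta$;
\item the \emph{Killing-type} relation $x_{\alpha+\beta}=x_\alpha=x_\beta$.
\end{enumerate}
I would first record this for adjacent simple roots in $\Pi_\qg$. Then, using (iii) and (iv), I would extend it to pairs where one root has a component in $\Pi_\hg$. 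Here the relation $x_{\alpha+\beta}=x_\alpha$ for $\beta\in\Delta_\hg$ is automatic by $\Ad(H)$-invariance, so black nodes act as ``transparent'' connectors.

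\emph{Step 2: rigidity, the main step.} Show that the alternative cannot mix. Suppose some triple is of K\"ahler type and another of Killing type. By connectedness of the Dynkin diagram, one can find such triples sharing a root inside a rank-$2$ or rank-$3$ subsystem ($A_2$, $B_2$, $G_2$, $A_3$, $B_3$, $C_3$). Parts (v) and (vi), which are the genuinely cubic conditions, then give a linear relation in the $x$'s. Combined with positivity $x_\alpha>0$, this relation would force a contradiction, namely some $x_\alpha=0$ or the two types coinciding. I would carry this out case by case on these low-rank subsystems and their possible paintings, using explicit $N_{\alpha,\beta}$ values.

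\emph{Step 3: propagation to all roots.} Once all simple triples have the same type, propagate by induction on the height of roots, using (iii)--(vi) again.
\begin{itemize}
\item In the K\"ahler case, we get $x_{\alpha+\beta}=x_\alpha+x_\beta$ for all $\alpha,\beta,\alpha+\beta\in\Delta_\qg^+$; that is, $g$ is K\"ahler by \S\ref{K}.
\item In the Killing case, all $x_\alpha$ coincide.
\end{itemize}
For classical $G$ I would use the explicit roots $\pm e_i\pm e_j$, $\pm e_i$, $\pm 2e_i$ to make the induction uniform. For $\Gg_2$ and $\Fg_4$ I would do a finite check.

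\emph{Main obstacle.} I expect Step 2 to be the hardest: ruling out mixed configurations when several black nodes separate the white ones. There, Proposition \ref{BTP} may only give weak constraints, and one needs to chase longer root strings. This is presumably why $\Eg_6$, $\Eg_7$, $\Eg_8$ are excluded: there the number of paintings makes the case analysis unmanageable, and it has not been settled.
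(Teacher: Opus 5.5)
Your two-stage architecture matches the one the paper attributes to \cite{Fr}: a local dichotomy extracted from Proposition \ref{BTP}, then a case-by-case globalization. The problem is that your Step 1 does not deliver the local dichotomy for every triple, and your Step 2 (``the alternative cannot mix'') tacitly assumes it does.

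The remark after Proposition \ref{BTP} only covers triples $\alpha,\beta,\alpha+\beta\in\Delta_\qg^+$ with $\alpha-\beta\notin\Delta_\qg$. For simply laced $\ggo$ this is every triple, since $\alpha+\beta\in\Delta$ forces $\la\alpha,\beta\ra<0$ and hence $\alpha-\beta\notin\Delta$. In types $B$, $C$, $\Fg_4$, $\Gg_2$, however, there are triples with $\delta:=\alpha-\beta\in\Delta_\qg^+$. An example is $\alpha=e_1$, $\beta=e_2$ on the full flag of type $B_2$. For such triples only (iii) applies, giving $x_{\alpha+\beta}=x_\alpha+x_\beta$ or $x_{\alpha+\beta}=x_\alpha$. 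This leaves open a third type, $x_{\alpha+\beta}=x_\alpha\ne x_\beta$, which is neither your (a) nor your (b). Your proposed extension ``to pairs where one root has a component in $\Pi_\hg$'' aims at the wrong pairs: black nodes are harmless, since those pairs are covered as soon as $\alpha-\beta\notin\Delta_\qg$. The real obstruction is $\alpha-\beta\in\Delta_\qg$, and (iii), (iv) alone cannot remove it.

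The missing idea is to use the cubic condition (vi) already at the local stage; according to the paper, this is how \cite[Lemma 4.7]{Fr} gets the dichotomy for all triples. Apply (vi) to $(\beta,\delta,\beta)$, which is allowed since $2\beta\notin\Delta$, and cancel the nonzero factor $N_{\beta,\delta}N_{\alpha,\beta}$. This gives
\[
(x_\alpha-x_\delta-x_\beta)(x_{\alpha+\beta}-x_\beta)=(x_\alpha-x_\beta)(x_{\alpha+\beta}-x_\alpha-x_\beta),
\]
so in the branch $x_{\alpha+\beta}=x_\alpha$ one gets $(x_\alpha-x_\beta)(x_\alpha-x_\delta)=0$. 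The sub-branch $x_\alpha=x_\delta$ is then eliminated on the shorter triple $\beta,\delta,\beta+\delta=\alpha$. In types $B$, $C$, $\Fg_4$, root strings have length at most $3$, so $\beta-\delta\notin\Delta$. The remark then gives either $x_\alpha=x_\beta+x_\delta$, which is impossible since $x_\beta>0$, or $x_\alpha=x_\beta$. In $\Gg_2$ you may have to iterate this once more.

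Only with this local lemma in hand does the global case analysis start. The paper notes that (iii), (iv) and (vi) suffice for the whole proof, so (v) is not needed. Your reduction of the global step to rank-$2$ and rank-$3$ subsystems is asserted rather than justified. As you yourself note, that is exactly where chains of black nodes make things non-local, and the paper defers this part to the case-by-case analysis in \cite{Fr}. Finally, since $\Eg_6,\Eg_7,\Eg_8$ are simply laced, the local step is automatic there, so their exclusion concerns only the global step.
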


This was first shown in \cite{PdsZhn} for the flags with two isotropy summands and for $M=\SU(n+1)/T^n$.  It is also proved in \cite{PdsZhn} that the Killing metric is the only left and $\Ad(T)$-invariant BTP metric up to scaling on any compact simple Lie group.  

The rigidity behavior provided by Theorem \ref{MF} is a motivation to consider, on any complex C-space,  metrics which descends to a K\"ahler metric on the base of the Tits fibration.   

\begin{proposition}\label{BTP-K}
Let $(M=G/K,J)$ be a complex C-space endowed with a Hermitian metric of the form $g=g_\ag+g_\qg$ such that $g_\qg=(x_\alpha)_{\alpha\in\Delta_\qg}$ is K\"ahler on the flag $F=G_f/H_f$ (i.e., $x_{\alpha+\beta}=x_\alpha+x_\beta$ for all $\alpha,\beta,\alpha+\beta\in\Delta_\qg^+$).  Then the following conditions are equivalent: 
\begin{enumerate}[{\rm (i)}] 
\item $g$ is BTP. 

\item $\tfrac{1}{x_\alpha}A_\alpha= \tfrac{1}{x_\beta}A_\beta$ for all $\alpha,\beta,\alpha+\beta\in\Delta_\qg^+$.  

\item For each $j=1,\dots,s$, there exist $W_j\in\ag_f$ and $\lambda_j>0$ such that 
$$
\la\im A_\alpha:\alpha\in\Pi_{\qg_j}\ra_\RR =\RR W_j \qquad\mbox{and}\qquad x_\alpha=\lambda_j\alpha(\im W_j), \quad\forall \alpha\in\Delta_{\qg_j}^+,     
$$
i.e., $g|_{\qg_j}=\lambda_j g_{\im W_j}|_{\qg_j}$ (see \S\ref{K}).   
\end{enumerate}
\end{proposition}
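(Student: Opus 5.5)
The plan is to prove (i)$\Rightarrow$(ii)$\Rightarrow$(iii)$\Rightarrow$(ii)$\Rightarrow$(i), working throughout with the explicit components \eqref{BcT} of $\Lambda^b$ and $T^b$ and the Kähler relation $x_{\alpha+\beta}=x_\alpha+x_\beta$.

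\emph{(i)$\Rightarrow$(ii).} We apply Proposition \ref{BTP}, (i): for $\alpha,\beta,\alpha+\beta\in\Delta_\qg^+$ we have $x_{\alpha+\beta}=x_\alpha+x_\beta>x_\alpha$, so the first alternative fails. Hence $\tfrac{1}{x_\beta}A_\beta=\tfrac{1}{x_{\alpha+\beta}}A_{\alpha+\beta}$. Exchanging $\alpha$ and $\beta$ gives the same identity with $\alpha$ in place of $\beta$, and this yields (ii).

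\emph{(ii)$\Rightarrow$(i).} We check the three families in \eqref{BTP0}.
\begin{itemize}
\item In the Kähler case \eqref{Lb} gives $T^b(E_\alpha,E_\beta)=0$ whenever $\alpha,\beta,\alpha+\beta$ lie in the same $\Delta_\qg^{\pm}$. The mixed-sign components $t_{\alpha,\beta}$ come from Proposition \ref{Btor}, and a direct evaluation with the Kähler relation shows that they too are proportional to expressions $x_{\alpha+\beta}-x_\alpha-x_\beta$ after reordering the roots.
\item The $\Lambda^b(A)$-terms then vanish by \eqref{BTP1}. For the mixed-sign brackets, the coefficient $\tfrac{g_\ag(A,A_{\alpha+\beta})}{x_{\alpha+\beta}}-\tfrac{g_\ag(A,A_\alpha)}{x_\alpha}\mp\dots$ vanishes because (ii) makes $\tfrac{1}{x_\gamma}A_\gamma$ a common vector $V$ on each connected chain of roots. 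One has to be careful here with $A_{-\alpha}=-A_\alpha$.
\item The $\Lambda^b(E_\alpha)\cdot T^b$ terms on $(A,E_\beta)$ vanish by \eqref{BTP2} together with (ii).
\item The remaining family $(\Lambda^b(E_\gamma)\cdot T^b)(E_\alpha,E_\beta)$ is the main obstacle. Every term contains either a $T^b$ component between roots of the same sign, which is zero, or the factor $T^b(E_\alpha,E_{-\alpha})=-A_\alpha$ paired against $\Lambda^b$ coefficients $(x_\beta-x_\alpha)$. I would first try to show case by case, using \eqref{Lb} and the Jacobi-type identities for $N_{\alpha,\beta}$, that the $E_{\alpha+\beta+\gamma}$- and $\ag$-components cancel once $T^b|_{\qg\times\qg}$ reduces to $-A_\alpha$-type terms and the $(x_{\alpha+\beta}-x_\alpha-x_\beta)$ factors vanish.
\end{itemize}

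\emph{(ii)$\Leftrightarrow$(iii).} Fix $j$. For $\alpha,\beta\in\Delta_{\qg_j}^+$ we connect them through a chain of sums within the irreducible root system $\Delta_j$, as in the proof of Proposition \ref{holirr}. By (ii), $\tfrac{1}{x_\alpha}\im A_\alpha=W_j'$ is then independent of $\alpha$; here we use that every positive complementary root is reached from simple ones by adding roots and that (ii) propagates along additions. Take $W_j$ proportional to $W_j'$ and write $\im A_\alpha=x_\alpha W_j'$. Then Kähler-ness and $x_\alpha=\alpha(Z)$ for $Z$ in the chamber (\S\ref{K}) give $x_\alpha=\lambda_j\alpha(\im W_j)$. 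To see this, pair with $W_j'$: $Q(\im A_\alpha,W_j')=Q(\im H_\alpha,W_j')=x_\alpha|W_j'|^2$, and this defines $\lambda_j$. Conversely, (iii) gives $\im A_\alpha\in\RR W_j$. Linearity of $\alpha\mapsto A_\alpha$ and of $\alpha\mapsto x_\alpha$ on the root lattice (by the Kähler condition) then forces $\tfrac{1}{x_\alpha}A_\alpha$ to be constant: both sides are additive and agree on $\Pi_{\qg_j}$ up to one scalar, and the Koszul-type pairing above fixes that scalar.
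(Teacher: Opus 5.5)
Your overall route is the paper's. For (i)$\Rightarrow$(ii) you use Proposition \ref{BTP}, (i) together with $x_{\alpha+\beta}>x_\alpha$. For (ii)$\Rightarrow$(i) you use the fact that the Kähler relation kills every $t_{\alpha,\beta}$ in \eqref{BcT}, for all sign patterns: the bracket in Proposition \ref{Btor} is, up to a factor, the component $d\omega(E_\alpha,E_\beta,E_{-(\alpha+\beta)})$ of Lemma \ref{dco}. For (ii)$\Leftrightarrow$(iii) you use the pairing $Q(\im A_\alpha,W)=Q(\im H_\alpha,W)=-\alpha(\im W)$, $W\in\ag$, which makes the paper's ``straightforward'' explicit.

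However, (ii)$\Rightarrow$(i) is not finished, and one step in it is justified incorrectly. In your second bullet, the coefficient in \eqref{BTP1} does not vanish. Writing $\tfrac{1}{x_\delta}A_\delta=\epsilon_\delta V$, it equals $(\epsilon_{\alpha+\beta}-\epsilon_\alpha-\epsilon_\beta)\,g_\ag(A,V)$, an odd multiple of $g_\ag(A,V)$. Those terms vanish only because $t_{\alpha,\beta}=0$. The $(E_\alpha,E_{-\alpha})$ component, which \eqref{BTP1} does not cover, vanishes because $\Lambda^b(A)$ kills $\ag^c$ and acts on $E_{\pm\alpha}$ by opposite scalars.

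More importantly, you leave the family $(\Lambda^b(E_\gamma)\cdot T^b)(E_\alpha,E_\beta)$ as a plan. This is exactly where (ii) is needed a second time, so it must be carried out; it is short.
\begin{enumerate}[{\rm (a)}]
\item Since all $t_{\alpha,\beta}=0$ and $\Lambda^b(E_\gamma)\ag^c=0$, this component vanishes unless $\alpha+\beta+\gamma=0$. In that case it equals $c_{\gamma,\alpha}A_{\gamma+\alpha}+c_{\gamma,\beta}A_\alpha\in\ag^c$.
\item If $\alpha,\beta$ have the same sign, \eqref{Lb} gives $c_{\gamma,\alpha}=c_{\gamma,\beta}=0$.
\item If $\gamma,\alpha$ have the same sign, the Kähler relation and \eqref{Lb} give $c_{\gamma,\alpha}=N_{\gamma,\alpha}\tfrac{x_\alpha}{x_{\alpha+\gamma}}$ and $c_{\gamma,\beta}=N_{\gamma,\beta}=-N_{\gamma,\alpha}$. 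The component is then $N_{\gamma,\alpha}x_\alpha\bigl(\tfrac{1}{x_{\alpha+\gamma}}A_{\alpha+\gamma}-\tfrac{1}{x_\alpha}A_\alpha\bigr)$.
\item The case where $\gamma,\beta$ have the same sign is symmetric.
\end{enumerate}
So this family imposes precisely the condition already coming from \eqref{BTP2}. That is what the paper's one-line appeal to \eqref{BTP0} and \eqref{BTP2} tacitly uses.

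Finally, your pairing step in (ii)$\Rightarrow$(iii) needs $W_j'\neq 0$. For a flag factor ($\hg_j\subset\kg$, e.g.\ $M=F$) one has $A_\alpha=0$ on $\Delta_{\qg_j}$. There (i) and (ii) hold, while (iii) would force $x_\alpha=\lambda_j\alpha(\im 0)=0$. Condition (iii) should be read only for those $j$ with $\zg(\hg_j)\not\subset\zg(\kg)$. This is a defect of the statement itself, shared by the paper, not of your argument.
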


\begin{remark}
$\ag_f=\ag_1+\dots+\ag_s$, where $\ag_j:=\la\im A_\alpha:\alpha\in\Pi_{\qg_j}\ra_\RR$.  
\end{remark}

\begin{proof}
If $g$ is BTP, then condition (ii) follows from Proposition \ref{BTP}, (i).  Conversely, if part (ii) holds, since $T^b(E_\alpha,E_\beta)=0$ for any $\alpha,\beta\in\Delta_\qg$, then we obtain from \eqref{BTP0} and \eqref{BTP2} that $g$ is BTP.  The equivalence between parts (ii) and (iii) is straightforward.  
\end{proof}

The proof of Theorem \ref{MF} given in \cite{Fr} only uses parts (iii), (iv) and (vi) of Proposition \ref{BTP}, to first show that for any $\alpha,\beta\in\Delta_\qg^+$ such that $\alpha+\beta\in\Delta_\qg^+$, either $x_{\alpha+\beta}=x_\alpha=x_\beta$ or $x_{\alpha+\beta}=x_\alpha+x_\beta$ (see \cite[Lemma 4.7]{Fr}).  Secondly, a case by case analysis gives that the above local property globally holds, in the sense that one of the alternatives indeed holds for all $\alpha,\beta\in\Delta_\qg^+$ such that $\alpha+\beta\in\Delta_\qg^+$. 

We therefore obtain the following from \cite{Fr}.  

\begin{theorem}\label{BTP-thm}
Let $M=G/K$ be a complex C-space fibering over a flag $F=G_1/H_1\times\dots\times G_s/H_s$ and assume that $G_i\ne \Eg_6, \Eg_7, \Eg_8$ for all $i=1,\dots,s$.  A metric $g=g_\ag+g_\qg$ as in \eqref{g} is BTP if and only if $g$ is either as in Proposition \ref{BTP-exi} or as in Proposition \ref{BTP-K}. 
\end{theorem}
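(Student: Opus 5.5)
The plan is to prove the two implications separately. The converse direction is already done: metrics with $g_\qg=g_b|_\qg$ are BTP by Proposition \ref{BTP-exi}. Metrics as in Proposition \ref{BTP-K}, that is, with $g_\qg$ K\"ahler on $F$ and $\tfrac{1}{x_\alpha}A_\alpha=\tfrac{1}{x_\beta}A_\beta$, are BTP by the equivalence (i)$\Leftrightarrow$(ii) there. So all the work is in the forward direction: assume $g=g_\ag+g_\qg$ is BTP and show that $g_\qg$ falls into one of the two alternatives on each factor $\qg_i$.

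The key observation is that the necessary conditions of Proposition \ref{BTP}, (iii)--(vi), involve only the numbers $(x_\alpha)_{\alpha\in\Delta_\qg^+}$ and the structure constants $N_{\alpha,\beta}$. They do not involve $g_\ag$ or the projections $A_\alpha$. Moreover, each condition only relates roots $\alpha,\beta,\gamma$ whose sums lie in $\Delta_\qg$, so they stay inside a single simple factor $\Delta_{\qg_i}$. Hence for each $i$ the tuple $(x_\alpha)_{\alpha\in\Delta_{\qg_i}^+}$ satisfies exactly the combinatorial constraints that the BTP condition imposes on a metric of the flag $F_i=G_i/H_i$.

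I would then appeal to the argument of \cite{Fr} behind Theorem \ref{MF}, which, as noted above, uses only (iii), (iv) and (vi) of Proposition \ref{BTP}. Its first step (\cite[Lemma 4.7]{Fr}) gives, for $\alpha,\beta,\alpha+\beta\in\Delta_{\qg_i}^+$, that either $x_{\alpha+\beta}=x_\alpha=x_\beta$ or $x_{\alpha+\beta}=x_\alpha+x_\beta$. Its case-by-case analysis, valid for $G_i\ne\Eg_6,\Eg_7,\Eg_8$, then makes this dichotomy global on each factor. So for each $i$, either all $x_\alpha$, $\alpha\in\Delta_{\qg_i}$, coincide, or $g_\qg|_{\qg_i}$ is K\"ahler on $F_i$.

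The remaining issue, and the main obstacle, is mixing across factors. Some factors may be of Killing type and others of K\"ahler type, while the theorem's alternatives are global. A factor where both hold, for instance a rank-one flag with $\Delta_{\qg_i}^+$ of a single root, can be assigned to either alternative harmlessly. For a genuine mix, I would use Proposition \ref{BTP}, (i) and (ii) on the K\"ahler-type factors. On such a factor $x_{\alpha+\beta}\ne x_\alpha$, so (i) forces $\tfrac{1}{x_\beta}A_\beta=\tfrac{1}{x_{\alpha+\beta}}A_{\alpha+\beta}$, and this propagates to give condition (ii) of Proposition \ref{BTP-K} on that factor. Conditions there are factorwise, since part (iii) of Proposition \ref{BTP-K} is stated per $j$. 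Because the BTP equations \eqref{BTP0} decouple across factors (the $\Lambda^b(E_\alpha)$ act within a single $\qg_i$, and the $\ag$-components enter factorwise through \eqref{BTP1} and \eqref{BTP2}), I expect the statement is to be read factorwise, with alternatives (i)/(ii) chosen for each $\qg_i$. I would check this decoupling carefully and phrase the conclusion accordingly.
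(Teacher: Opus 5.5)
This is essentially the paper's proof. The paper also just imports the argument of \cite{Fr} for Theorem \ref{MF} through parts (iii), (iv) and (vi) of Proposition \ref{BTP}, which involve only the $x_\alpha$ and live inside each $\Delta_{\qg_i}$, and it gets the $A_\alpha$-condition on K\"ahler factors from Proposition \ref{BTP}, (i), as in Proposition \ref{BTP-K}.

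Your worry about mixing is justified, and the paper's one-line proof glosses over it. The BTP equations do decouple: cross-factor components of $\Lambda^b$ and $T^b$ vanish, $\Lambda^b(A)$ ($A\in\ag^c$) is diagonal in the $E_\alpha$, and $\Lambda^b(E_\alpha)=0$ on a factor with constant $x_\alpha$, where the coefficient in \eqref{BTP1} also vanishes since $A_{\alpha+\beta}=A_\alpha+A_\beta$. Hence a metric of Killing type on one $\qg_i$ and K\"ahler type satisfying the condition of Proposition \ref{BTP-K}, (ii), on another is BTP, yet it is neither as in Proposition \ref{BTP-exi} nor as in Proposition \ref{BTP-K}. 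So the dichotomy must indeed be read factor by factor, as you propose.
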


\subsection{Bismut Ambrose-Singer (BAS)}\label{BAS-sec}
As a stronger condition, it is natural to consider the following.  

\begin{definition}\label{BAS-def} 
A Hermitian structure $(J,g)$ is called {\it Bismut Ambrose-Singer} (BAS for short) if $\nabla^b T^b=0$ and $\nabla^b R^b=0$.  
\end{definition}

\begin{proposition}\label{BAS}
Any metric as in Proposition \ref{BTP-exi} is BAS. 
\end{proposition}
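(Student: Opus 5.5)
Since Proposition \ref{BTP-exi} already gives $\nabla^bT^b=0$, it remains to show $\nabla^bR^b=0$ for $g=g_\ag+g_\qg$ with $x_\alpha=z_i$ for all $\alpha\in\Delta_{\qg_i}$. I would work entirely with Nomizu operators: by \S\ref{IC-sec}, $\nabla^bR^b=0$ amounts to $\Lambda^b(X)\cdot R^b=0$ for all $X\in\pg$, where $\Lambda^b(X)$ acts as a derivation on the tensor $R^b$. Invariance under $\ad\kg|_\pg$ is automatic because $R^b$ is $G$-invariant.

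The first step is to record what the hypothesis does to $\Lambda^b$. By \eqref{Lb}, $\Lambda^b(E_\alpha)=0$ for every $\alpha\in\Delta_\qg$, exactly as in Remark \ref{gQhol} and the proof of Proposition \ref{BTP-exi}. By Corollary \ref{Bcon}, $\Lambda^b(\ag^c)$ acts diagonally on $\pg^c$: it is zero on $\ag^c$, and on $E_\alpha$ it acts by the scalar $\alpha(A)+g_\ag(A,A_\alpha)/z_i$. The condition $\Lambda^b(E_\alpha)\cdot R^b=0$ is then trivial, so only the derivations $D_A:=\Lambda^b(A)$, $A\in\ag^c$, remain to be checked.

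The second step is to compute $R^b$ from
$$R^b(X,Y)=[\Lambda^b(X),\Lambda^b(Y)]-\Lambda^b([X,Y]_\pg)-\ad{[X,Y]_\kg}|_\pg.$$
Since $\Lambda^b(E_\alpha)=0$ and the $\Lambda^b(A)$ commute, we get $R^b(A,B)=0$. Next, $R^b(A,E_\alpha)=-\alpha(A)\Lambda^b(E_\alpha)=0$. For $\alpha+\beta\neq0$, $R^b(E_\alpha,E_\beta)=-N_{\alpha,\beta}\ad{(E_{\alpha+\beta})_\kg}$ when $\alpha+\beta\in\Delta_{\hg_f}$, and it vanishes otherwise. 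Finally, $R^b(E_\alpha,E_{-\alpha})=-\Lambda^b(A_\alpha)-\ad{(H_\alpha)_\kg}|_\pg$. Thus every nonzero value of $R^b$ lies in the span of $\Lambda^b(\ag^c)$ and $\ad\kg|_\pg$.

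The third step, which I expect to be the main obstacle, is to verify $D_A\cdot R^b=0$. Write $\lambda_\alpha(A)$ for the eigenvalue of $D_A$ on $E_\alpha$. Then $D_A\cdot R^b$ evaluated on $(E_\alpha,E_\beta)$ is
$$[D_A,R^b(E_\alpha,E_\beta)]-(\lambda_\alpha+\lambda_\beta)(A)\,R^b(E_\alpha,E_\beta).$$
There are two cases.

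\emph{Case $\beta=-\alpha$.} The endomorphism $R^b(E_\alpha,E_{-\alpha})$ is diagonal in the weight basis, so it commutes with $D_A$. I would then need $\lambda_\alpha+\lambda_{-\alpha}=0$. This holds because $A_{-\alpha}=-A_\alpha$ and $x_{-\alpha}=x_\alpha$.

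\emph{Case $\alpha+\beta=\gamma\in\Delta_{\hg_f}$.} Here $R^b(E_\alpha,E_\beta)$ is a multiple of $\ad{E_\gamma}$, which maps $E_\delta$ to $E_{\delta+\gamma}$, with $\delta$ and $\delta+\gamma$ in the same simple factor $\qg_j$. The commutator $[D_A,\ad E_\gamma]$ multiplies this component by $\lambda_{\delta+\gamma}(A)-\lambda_\delta(A)$. Because $\gamma$ vanishes on $\zg(\hg_f)\supset\ag_f$, we have $A_{\delta+\gamma}=A_\delta$ and $(\delta+\gamma)(A)=\delta(A)$, so the commutator is zero. For the same reasons $\lambda_\beta=\lambda_{\gamma-\alpha}=-\lambda_\alpha$, since $\alpha$ and $\beta$ lie in the same $\qg_i$ with a common $z_i$.

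Hence all components of $\nabla^bR^b$ vanish, and together with Proposition \ref{BTP-exi} the structure is BAS. The delicate bookkeeping throughout is keeping track of the $\kg$-parts $(H_\alpha)_\kg$ and $(E_\gamma)_\kg$, and checking that $\ad\kg|_\pg$ commutes with $D_A$ on each weight space.
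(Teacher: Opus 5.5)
Your proposal is correct and follows the paper's own argument. You use $\Lambda^b(E_\alpha)=0$ and the diagonal action of $\Lambda^b(A)$, $A\in\ag^c$, to reduce $\nabla^bR^b=0$ to checking the derivations $\Lambda^b(A)\cdot R^b$ on the few nonzero curvature components. You are in fact more careful than the paper: its proof lists only $R^b(E_\alpha,E_{-\alpha})$ and overlooks the components $R^b(E_\alpha,E_\beta)=-N_{\alpha,\beta}\ad{E_{\alpha+\beta}}|_\pg$ with $\alpha+\beta\in\Delta_{\hg_f}$, which are nonzero when $K$ is non-abelian and which your second case handles correctly.
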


\begin{proof}
According to Corollary \ref{Bcon} and Proposition \ref{Bcurv}, the only possibly nonzero components of the Bismut connection and its curvature are
$$
\Lambda^b(A)E_\alpha=\left(\alpha(A)+g_\ag(A,A_\alpha)\right) E_\alpha, 
$$
$$
R^b(E_\alpha,E_{-\alpha})E_\beta=-(\beta(H_\alpha)+g_\ag(A_\alpha,A_\beta))E_\beta,
$$
which implies that 
$$
\Lambda(A)\cdot T^b = 0, \qquad 
(\Lambda(A)\cdot R^b)(E_\alpha,E_{-\alpha})E_\beta = 0,
$$
concluding the proof.  
\end{proof}

The results in \cite{PdsZhn} led the authors to propose the following.  

\begin{conjecture}\label{conjBTP}\cite[Conjecture 1.6]{PdsZhn}
On any C-space $M=G/K$ with $G$ semisimple, BTP implies BAS.
\end{conjecture}

\begin{example}\label{b21-BTP}
Let $M=G/K$ be the C-space given in Example \ref{b21-herm}.  We consider $G$-invariant Hermitian metrics $g=g_\ag+g_\qg$ such that the base of the fibration $(F,g_\qg)$ is K\"ahler, i.e.,  
$$
g_\qg=(x,2x,\dots,r_1x)+(y,2y,\dots,r_2y), \qquad x,y>0.
$$
According to Proposition \ref{BTP-K}, $(J,g)$ is BTP for all $a,b,c,x,y$.  Indeed, 
$$
\tfrac{1}{x_\alpha}A_\alpha = \tfrac{1}{x}Z_{\gamma_1}, \quad \forall\alpha\in\Delta_{\qg_1}^+, \quad 
\tfrac{1}{x_\alpha}A_\alpha = \tfrac{1}{y}Z_{\gamma_2}, \quad \forall\alpha\in\Delta_{\qg_2}^+.
$$
\end{example}

We now show that the above example is not in general BAS, which provides many counterexamples to Conjecture \ref{conjBTP}. 

\begin{lemma}\label{notBAS}
If $\alpha,\beta,\alpha+\beta\in\Delta_\qg^+$, $\alpha-\beta,2\alpha+\beta\notin\Delta_\qg$, then 
\begin{align*}
(\Lambda^b(E_\beta)\cdot R^b)(E_\alpha,E_{-\alpha})E_{-\alpha}  
=& c\left(\tfrac{1}{x_{\alpha+\beta}}g_\ag(A_\alpha,A_{\beta})-\left(\tfrac{1}{x_\alpha}-\tfrac{1}{x_{\alpha+\beta}}\right)g_\ag(A_\alpha,A_\alpha) 
 \right. \\
&\hspace{.4cm} \left. +\tfrac{N_{\alpha,\beta}^2(x_{\alpha+\beta}-x_\beta)}{x_{\alpha+\beta}}
 - \tfrac{N_{\alpha,\beta}^2(x_{\alpha+\beta}-x_\alpha)^2}{x_{\beta}x_{\alpha+\beta}}  
+\la\alpha,\beta\ra\right) E_{\alpha+\beta},
\end{align*}
where $c:=-\tfrac{N_{\alpha,\beta}(x_{\alpha+\beta}-x_\beta)}{x_{\alpha+\beta}}$.  
\end{lemma}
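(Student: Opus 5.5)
We expand the covariant derivative directly,
\[
(\Lambda^b(E_\beta)\cdot R^b)(E_\alpha,E_{-\alpha})E_{-\alpha}
=[\Lambda^b(E_\beta),R^b(E_\alpha,E_{-\alpha})]E_{-\alpha}
-R^b(\Lambda^b(E_\beta)E_\alpha,E_{-\alpha})E_{-\alpha}
-R^b(E_\alpha,\Lambda^b(E_\beta)E_{-\alpha})E_{-\alpha},
\]
and compute each of the four resulting terms with Corollary \ref{Bcon} (in the form \eqref{Lb}) and the curvature formula $R^b(X,Y)=[\Lambda^b(X),\Lambda^b(Y)]-\Lambda^b([X,Y]_\pg)-\ad{[X,Y]_\kg}|_\pg$. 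The hypotheses fix which brackets survive. Since $\alpha-\beta\notin\Delta_\qg$, we have $\beta-\alpha\notin\Delta_\qg$, so $\Lambda^b(E_\beta)E_{-\alpha}=0$ and the last term vanishes. Also $\Lambda^b(E_\beta)E_\alpha=c'E_{\alpha+\beta}$ with $c'=N_{\beta,\alpha}(x_{\alpha+\beta}-x_\beta)/x_{\alpha+\beta}$ by \eqref{Lb}, because $\alpha,\beta,\alpha+\beta\in\Delta_\qg^+$. Since $N_{\beta,\alpha}=-N_{\alpha,\beta}$, this gives $c'=c$, which explains the common factor $c$.

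\textbf{Step 1.} We compute $R^b(E_\alpha,E_{-\alpha})E_{-\alpha}$. This is the conjugate analogue of \eqref{Bcurv3} with $\beta=\alpha$, which in the present setting reduces to \eqref{Bcurv2}-type terms. Since $2\alpha\notin\Delta$, the quadratic pieces drop out, and we get $R^b(E_\alpha,E_{-\alpha})E_{-\alpha}=\mu E_{-\alpha}$, where $\mu=\la\alpha,\alpha\ra+g_\ag(A_\alpha,A_\alpha)/x_\alpha$ up to the sign fixed by \eqref{LA} applied to $E_{-\alpha}$. Applying $\Lambda^b(E_\beta)$ to $E_{-\alpha}$ gives zero. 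So the commutator term reduces to $-R^b(E_\alpha,E_{-\alpha})\Lambda^b(E_\beta)E_{-\alpha}+\Lambda^b(E_\beta)(\mu E_{-\alpha})$, and both pieces vanish. The commutator term therefore contributes nothing.

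\textbf{Step 2.} The main contribution is $-c\,R^b(E_{\alpha+\beta},E_{-\alpha})E_{-\alpha}$, and we expand it as
\[
\Lambda^b(E_{\alpha+\beta})\Lambda^b(E_{-\alpha})E_{-\alpha}-\Lambda^b(E_{-\alpha})\Lambda^b(E_{\alpha+\beta})E_{-\alpha}-\Lambda^b([E_{\alpha+\beta},E_{-\alpha}]_\pg)E_{-\alpha}-\ad{[E_{\alpha+\beta},E_{-\alpha}]_\kg}E_{-\alpha}.
\]
\begin{itemize}
\item The first product vanishes, since $-2\alpha\notin\Delta$.
\item For the second, $\Lambda^b(E_{\alpha+\beta})E_{-\alpha}=\tfrac{N_{\alpha+\beta,-\alpha}(x_\alpha-x_{\alpha+\beta})}{x_\beta}E_\beta$ by the middle case of \eqref{Lb}. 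Then $\Lambda^b(E_{-\alpha})E_\beta$ is computed by \eqref{Lb} again. It is nonzero only when $\beta-\alpha$ is a root of the right sign, and $\beta-\alpha\notin\Delta$ by hypothesis. So this term vanishes, or we must check carefully that $[E_\beta,E_{-\alpha}]=0$.
\item In the third and fourth terms, $[E_{\alpha+\beta},E_{-\alpha}]=N_{\alpha+\beta,-\alpha}E_\beta$ lies in $\qg$. We therefore get $-N_{\alpha+\beta,-\alpha}\Lambda^b(E_\beta)E_{-\alpha}=0$.
\end{itemize}
These vanishings leave no room for the stated terms. So the bookkeeping must be redone carefully, with the output vector $E_{\alpha+\beta}$ in mind. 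The target is $E_{\alpha+\beta}$, and the degree count $\beta+\alpha-\alpha-\alpha=\beta-\alpha$ shows the input should really be matched accordingly. I would therefore recheck each $\Lambda^b$ composition, tracking root weights, in the form $R^b(E_{\alpha+\beta},E_{-\alpha})$ applied to the vector $E_\alpha$ that actually produces $E_{\alpha+\beta}$.

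\textbf{Step 3.} We collect the surviving scalar terms. The $g_\ag$ terms come from $\Lambda^b(A_\alpha)$ and $\Lambda^b(A_{\alpha+\beta})$ acting via \eqref{LA}, and they combine into $\tfrac{1}{x_{\alpha+\beta}}g_\ag(A_\alpha,A_\beta)-(\tfrac1{x_\alpha}-\tfrac1{x_{\alpha+\beta}})g_\ag(A_\alpha,A_\alpha)$ using $A_{\alpha+\beta}=A_\alpha+A_\beta$. The $\la\alpha,\beta\ra$ term comes from the $\kg$-part of $[E_\alpha,E_{-\alpha}]$. The two $N_{\alpha,\beta}^2$ terms come from two-step compositions through $E_{\alpha+\beta}$ and $E_\beta$, via the identities $N_{-\alpha,\alpha+\beta}=N_{\alpha,\beta}$ and $N_{\alpha+\beta,-\alpha}=-N_{\alpha,\beta}$ (see \S\ref{roots}). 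The main obstacle is this sign-sensitive bookkeeping of which compositions survive under $\alpha-\beta,2\alpha+\beta\notin\Delta_\qg$, together with the structure-constant identities.
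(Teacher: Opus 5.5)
The gap is that you stop at the diagnosis and never carry out the computation that proves the formula.

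Your weight count in Step 2 is correct. As literally stated, with $E_{-\alpha}$ in the last slot, every term of the expansion is a vector of $\pg^c$ of weight $\beta-\alpha\notin\{0\}\cup\Delta_\qg$ (note $\alpha\ne\beta$, since $2\alpha\notin\Delta$). So the left-hand side vanishes identically, exactly as your Steps 1--2 find, while the right-hand side does not in general (compare Remark \ref{notBAS2}). The paper's proof computes $(\Lambda^b(E_\beta)\cdot R^b)(E_\alpha,E_{-\alpha})E_\alpha$; the final $E_{-\alpha}$ in the statement is a misprint.

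Step 3 only lists where terms ``come from'' and asserts that they combine. The formula is therefore not proved, including the signs you yourself flag as the main obstacle. Several of those attributions are also wrong:
\begin{itemize}
\item No operator $\Lambda^b(A_{\alpha+\beta})$ ever appears. The only bracket producing a torus element is $[E_\alpha,E_{-\alpha}]$, whereas $[E_{\alpha+\beta},E_{-\alpha}]=N_{\alpha+\beta,-\alpha}E_\beta\in\qg^c$.
\item The term $\la\alpha,\beta\ra$ does not come from the $\kg$-part of $H_\alpha$ alone. It comes from the $\ag$-part of $H_\alpha$ (via $\Lambda^b(A_\alpha)$) together with its $\kg$-part.
\item In Step 2 you place $\Lambda^b(E_{\alpha+\beta})E_{-\alpha}$ in the middle case of \eqref{Lb}. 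It belongs to the third case (positive, negative, positive sum), so it is zero.
\end{itemize}

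What is missing is the four-term expansion with last argument $E_\alpha$, which is the paper's proof:
\[
\Lambda^b(E_\beta)R^b(E_\alpha,E_{-\alpha})E_\alpha-c\,R^b(E_{\alpha+\beta},E_{-\alpha})E_\alpha-R^b(E_\alpha,\Lambda^b(E_\beta)E_{-\alpha})E_\alpha-c\,R^b(E_\alpha,E_{-\alpha})E_{\alpha+\beta}.
\]
\begin{itemize}
\item The third term vanishes because $\Lambda^b(E_\beta)E_{-\alpha}=0$.
\item In the first term, both products $\Lambda^b(E_{\pm\alpha})\Lambda^b(E_{\mp\alpha})E_\alpha$ vanish. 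This gives $R^b(E_\alpha,E_{-\alpha})E_\alpha=-\bigl(\la\alpha,\alpha\ra+\tfrac{1}{x_\alpha}g_\ag(A_\alpha,A_\alpha)\bigr)E_\alpha$.
\item In the second term, $2\alpha+\beta\notin\Delta_\qg$ kills $\Lambda^b(E_{\alpha+\beta})E_\alpha$, and $\Lambda^b(E_{-\alpha})E_\alpha=0$. Hence $R^b(E_{\alpha+\beta},E_{-\alpha})E_\alpha=-N_{\alpha+\beta,-\alpha}\Lambda^b(E_\beta)E_\alpha=cN_{\alpha,\beta}E_{\alpha+\beta}$. The resulting $-c^2N_{\alpha,\beta}$ is exactly the term $c\,N_{\alpha,\beta}^2(x_{\alpha+\beta}-x_\beta)/x_{\alpha+\beta}$.
\item In the fourth term, apply \eqref{Bcurv3} with $\beta$ replaced by $\alpha+\beta$. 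The $N_{-\alpha,\alpha+\beta}^2=N_{\alpha,\beta}^2$ term survives because $\beta\in\Delta_\qg^+$, and the other drops because $2\alpha+\beta\notin\Delta_\qg$.
\end{itemize}
Adding up and using $A_{\alpha+\beta}=A_\alpha+A_\beta$ and $\la\alpha,\alpha+\beta\ra-\la\alpha,\alpha\ra=\la\alpha,\beta\ra$ yields exactly the stated bracket.
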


\begin{remark}\label{notBAS2} 
In particular, if in addition $x_\alpha=x_\beta$, $x_{\alpha+\beta}=2x_\alpha$, $A_\alpha=A_\beta$, then 
$$
(\Lambda^b(E_\beta)\cdot R^b)(E_\alpha,E_{-\alpha})E_{-\alpha} 
= -\unm N_{\alpha,\beta}\la\alpha,\beta\ra E_{\alpha+\beta}.  
$$
\end{remark}

\begin{proof}
Since $\Lambda^b(E_\beta)E_\alpha =cE_{\alpha+\beta}$, it follows from Proposition \ref{Bcurv} that 
\begin{align*}
&(\Lambda^b(E_\beta)\cdot R^b)(E_\alpha,E_{-\alpha})E_\alpha \\
=& -\left(\la\alpha,\alpha\ra+\tfrac{g_\ag(A_\alpha,A_\alpha)}{x_\alpha}\right)\Lambda^b(E_\beta)E_\alpha
- R^b(\Lambda^b(E_\beta)E_\alpha,E_{-\alpha})E_\alpha 
- R^b(E_\alpha,E_{-\alpha})\Lambda^b(E_\beta)E_\alpha \\
=& -\left(\la\alpha,\alpha\ra+\tfrac{g_\ag(A_\alpha,A_\alpha)}{x_\alpha}\right)cE_{\alpha+\beta}
- cR^b(E_{\alpha+\beta},E_{-\alpha})E_\alpha 
- cR^b(E_\alpha,E_{-\alpha})E_{\alpha+\beta} \\
=& -\left(\la\alpha,\alpha\ra+\tfrac{g_\ag(A_\alpha,A_\alpha)}{x_\alpha}\right)cE_{\alpha+\beta}
- c\left(-N_{\alpha+\beta,-\alpha}\Lambda^b(E_\beta)E_\alpha\right) \\
& - c\left(\tfrac{N_{-\alpha,\alpha+\beta}^2(x_{\alpha+\beta}-x_\alpha)^2}{x_{\beta}x_{\alpha+\beta}}  
-\la\alpha,\alpha+\beta\ra-\tfrac{g_\ag(A_\alpha,A_{\alpha+\beta})}{x_{\alpha+\beta}}\right)E_{\alpha+\beta} \\ 
=& -\left(\la\alpha,\alpha\ra+\tfrac{g_\ag(A_\alpha,A_\alpha)}{x_\alpha}\right)cE_{\alpha+\beta}
- c^2N_{\alpha,\beta}E_{\alpha+\beta} \\
& - c\left(\tfrac{N_{\alpha,\beta}^2(x_{\alpha+\beta}-x_\alpha)^2}{x_{\beta}x_{\alpha+\beta}}  
-\la\alpha,\alpha+\beta\ra-\tfrac{g_\ag(A_\alpha,A_{\alpha+\beta})}{x_{\alpha+\beta}}\right) E_{\alpha+\beta}\\
=& -c\left(\la\alpha,\alpha\ra+\tfrac{g_\ag(A_\alpha,A_\alpha)}{x_\alpha}
+ cN_{\alpha,\beta} 
 + \tfrac{N_{\alpha,\beta}^2(x_{\alpha+\beta}-x_\alpha)^2}{x_{\beta}x_{\alpha+\beta}}  
-\la\alpha,\alpha+\beta\ra-\tfrac{g_\ag(A_\alpha,A_{\alpha+\beta})}{x_{\alpha+\beta}}\right) E_{\alpha+\beta}\\
=& -c\left(\left(\tfrac{1}{x_\alpha}-\tfrac{1}{x_{\alpha+\beta}}\right)g_\ag(A_\alpha,A_\alpha) 
-\tfrac{1}{x_{\alpha+\beta}}g_\ag(A_\alpha,A_{\beta}) \right. \\
&\hspace{1cm} \left. -\tfrac{N_{\alpha,\beta}^2(x_{\alpha+\beta}-x_\beta)}{x_{\alpha+\beta}}
 + \tfrac{N_{\alpha,\beta}^2(x_{\alpha+\beta}-x_\alpha)^2}{x_{\beta}x_{\alpha+\beta}}  
-\la\alpha,\beta\ra\right) E_{\alpha+\beta},
\end{align*}
concluding the proof.  
\end{proof}

\begin{example}\label{b21-BTP-2}
We consider a C-space $M=G/K$ as in Example \ref{b21-BTP} such that 
$$
F_1=\Gg_2/\U(2), \qquad \dynkin[scale=2] G{o*}, \qquad 
\Delta_{\qg_1}^+=\{\gamma_1, \gamma_1+\delta, \gamma_1+2\delta, \gamma_1+3\delta, 2\gamma_1+3\delta\}. 
$$
Note that $\alpha:=\gamma_1$ and $\beta:=\gamma_1+3\delta$ satisfy the hypothesis of Lemma \ref{notBAS}.  If $g$ is as in Remark \ref{notBAS2}, then the BTP Hermitian structure $(J,g)$ is never BAS since  
$$
\la\alpha,\beta\ra = \la\gamma_1,\gamma_1+3\delta\ra = \la\gamma_1,\gamma_1\ra +3\la\gamma_1,\delta\ra =-\unm  \la\gamma_1,\gamma_1\ra <0.
$$
The simplest examples of this kind are 
$$
M^{12}=\Gg_2/\SU(2)\times S^1, \qquad M^{14}=\Gg_2/\SU(2)\times \SU(2).
$$
\end{example}

\begin{remark}\label{BP}
It has recently been proved in \cite[Theorem B]{BrbPdc} that a metric on a complex C-space is BAS if and only if it is as in Proposition \ref{BTP-exi}.  This implies that all the metrics in Proposition \ref{BTP-K} provide counterexamples to Conjecture \ref{conjBTP} if $\rank(\ggo_i)\geq 2$ for some $i\in\{ 1,\dots,s\}$.   
\end{remark}

% 28/8/2026

\section{Calabi-Yau with torsion (CYT)}\label{CYT-sec} 

The Bismut Ricci form $\rho^b$ of a Hermitian manifold is defined in \eqref{rf-def1}.   

\begin{definition}\label{BTP-def} 
A Hermitian structure $(J,g)$ is called {\it Calabi-Yau with torsion} (CYT for short) if $\rho^b=0$. 
\end{definition}
  
On a C-space $M=G/K=G_f/K\times T^z$, according to \eqref{rhot2}, the Bismut Ricci form $\rho^b$ of a $G$-invariant Hermitian structure $(J,g)$ as in \eqref{Jg} is given by 
 \begin{equation}\label{rhob}
 \rho^b(X,Y)=  Q\left([X,Y],Z_{J_\qg} - P_\ag(Z_{J_\qg,g_\qg})_\ag\right), \qquad\forall X,Y\in\pg, 
 \end{equation}
 where $Z_{J_\qg,g_\qg}\in\zg(\hg_f)=\zg(\kg)\oplus\ag_f$ is the metric Koszul vector (see \eqref{koszul-g}).  The following characterization therefore follows.  Recall that $g_\ag=Q(P_\ag\cdot,\cdot)$.  

\begin{proposition}\label{CYT}
$(M=G/K,J,g)$ is CYT if and only if $Z_{J_\qg}\in\ag_f$ (i.e., $c_1(M,J)=0$) and $\left(P_\ag(Z_{J_\qg,g_\qg})_\ag\right)_{\ag_f}=Z_{J_\qg}$, or equivalently, 
\begin{equation}\label{CYT3}
g_\ag\left((Z_{J_\qg,g_\qg})_\ag,A\right)=Q(Z_{J_\qg},A), \qquad\forall A\in\ag_f.  
\end{equation}
\end{proposition}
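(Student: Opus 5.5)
Our starting point is the formula \eqref{rhob}, $\rho^b=\sigma_W$ with $W:=Z_{J_\qg}-P_\ag(Z_{J_\qg,g_\qg})_\ag$, where $\sigma_W=Q([\cdot,\cdot],W)$. So CYT means $\sigma_W=0$ as a $2$-form on $\pg$, and the task is to decide for which vectors $W\in\zg(\hg_f)\oplus\ag$ the form $\sigma_W$ vanishes. First, $Z_{J_\qg}\in\zg(\hg_f)$ and $P_\ag(Z_{J_\qg,g_\qg})_\ag\in\ag=\zg(\ggo)\oplus\ag_f$. Hence $W\in\zg(\hg_f)\oplus\zg(\ggo)=\tg$-part, and it commutes with $\ag$. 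Therefore $\sigma_W(\ag,\ag)=0$ and $\sigma_W(\ag,\qg)=0$, since $[\ag,\qg]\subset\qg\perp W$. The only components left are on $\qg$. There Lemma \ref{do} (or the remark after it) gives $\sigma_W(E_\alpha,E_{-\alpha})=Q(H_\alpha,W)$ up to a constant factor, and all other components vanish. Moreover the $\zg(\ggo)$-component of $W$ does not contribute, because $H_\alpha\in[\ggo,\ggo]$.

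So $\rho^b=0$ if and only if $\alpha(W_f)=0$ for all $\alpha\in\Delta_\qg$, where $W_f$ is the $Q$-projection of $W$ on $\zg(\hg_f)$. Since $\{Z_\alpha:\alpha\in\Pi_\qg\}$ is a basis of $\zg(\hg_f)$, equivalently the $Z_\alpha=(\im H_\alpha)_{\zg(\hg_f)}$ span $\zg(\hg_f)$, this condition is equivalent to $W_f=0$. Projecting onto $\zg(\hg_f)$ gives
$$
W_f=Z_{J_\qg}-\big(P_\ag(Z_{J_\qg,g_\qg})_\ag\big)_{\ag_f},
$$
because the projection of $\ag$ onto $\zg(\hg_f)$ is exactly the projection onto $\ag_f$ (recall $\ag=\zg(\ggo)\oplus\ag_f$ is $Q$-orthogonal). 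Thus CYT holds if and only if $Z_{J_\qg}=\big(P_\ag(Z_{J_\qg,g_\qg})_\ag\big)_{\ag_f}$.

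We now split this equality along $\zg(\hg_f)=\zg(\kg)\oplus\ag_f$. The right-hand side lies in $\ag_f$, so the equality forces $Z_{J_\qg}\in\ag_f$, which is $c_1(M,J)=0$ by Lemma \ref{c10}; the remaining condition is the stated projection identity. For the reformulation \eqref{CYT3}, pair both sides with an arbitrary $A\in\ag_f$ under $Q$. Since $P_\ag$ is $Q$-symmetric and $g_\ag=Q(P_\ag\cdot,\cdot)$, we get
$$
Q\big(P_\ag(Z_{J_\qg,g_\qg})_\ag,A\big)=g_\ag\big((Z_{J_\qg,g_\qg})_\ag,A\big).
$$
Also, projecting onto $\ag_f$ does not change the pairing with $A\in\ag_f$. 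Hence the projection identity is equivalent to \eqref{CYT3}.

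The one point that needs care is making sure that $\sigma_W|_{\qg\times\qg}=0$ really forces $W_f=0$, and not merely that $W_f$ vanishes on $\zg(\kg)$. This holds because $\sigma_W$ is a form on $\pg$ in which $W$ enters fully, not one taken modulo $d\theta_A$; and the vectors $Z_\alpha$ span all of $\zg(\hg_f)$. The $\zg(\kg)$-component of $Z_{J_\qg}$ is therefore genuinely detected, and that is exactly where the condition $c_1(M,J)=0$ comes from.
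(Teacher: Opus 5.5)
Your proof is correct and takes the same route as the paper. The paper reads the statement directly off \eqref{rhob} (the case $t=-1$ of \eqref{rhot0}): $\rho^b=\sigma_W$ vanishes exactly when the $\zg(\hg_f)$-component of $W$ is zero, and you verify this carefully via the basis $\{Z_\alpha:\alpha\in\Pi_\qg\}$. One thing to keep explicit: \eqref{CYT3} on its own only says $(Z_{J_\qg})_{\ag_f}=\bigl(P_\ag(Z_{J_\qg,g_\qg})_\ag\bigr)_{\ag_f}$, so it is equivalent to the projection identity only together with $Z_{J_\qg}\in\ag_f$, which is how you actually use it.
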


\begin{remark}\label{CYT-rem}
The CYT condition only depends on the restriction of the metric $g$ on $\ag_f\oplus\qg$, its behavior on $\zg(\ggo)\times\zg(\ggo)$ and $\zg(\ggo)\times\ag_f$ is not involved.   
\end{remark}

In particular, if $c_1(M,J)=0$, then the normal metric $g_Q:=Q|_\pg$ is CYT provided it is compatible with $J$.  Indeed, $P_\ag=I$ and $Z_{J_\qg,g_\qg}=Z_{J_\qg}$ in that case, so \eqref{CYT3} holds.  This was proved in \cite{Grn} in the case when $G$ is semisimple.  Remarkably, we now show that if $G$ is not semisimple and $\zg(\kg)=0$, then the CYT condition automatically holds for many other metrics which are not necessarily normal.   

\begin{definition}\label{an-def}
A metric $g=g_\ag+g_\qg$ as in \eqref{g} on a C-space $M=G/K$ is called {\it almost-normal} if there exists a bi-invariant metric $g_b$ on $[G,G]$ such that $g|_{\ag_f\oplus\qg}=g_b|_{\ag_f\oplus\qg}$.  
\end{definition}

Note that an almost-normal metric is normal if and only if $g(\zg(\ggo),\ag_f)=0$.  

\begin{corollary}\label{CYT-cor}
Let $(M=G/K,J)$ be a complex C-space. 
\begin{enumerate}[{\rm (i)}]
\item If $\zg(\kg)=0$, then any almost-normal metric $g$ compatible with $J$ is CYT.  

\item If $\zg(\kg)=0$ and $\dim{\ag_f}=1$, then any metric $g=g_\ag+g_\qg$ as in \eqref{g} is CYT.  
\end{enumerate}
\end{corollary}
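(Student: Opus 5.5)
Both parts will be derived from Proposition~\ref{CYT}. Since $\zg(\kg)=0$, we have $\zg(\hg_f)=\ag_f$, so the Koszul vector $Z_{J_\qg}$ lies in $\ag_f$ automatically and $c_1(M,J)=0$ by Lemma~\ref{c10}. Likewise the metric Koszul vector $Z_{J_\qg,g_\qg}$ lies in $\zg(\hg_f)=\ag_f$, so $(Z_{J_\qg,g_\qg})_\ag=Z_{J_\qg,g_\qg}$. It therefore suffices to verify condition \eqref{CYT3}:
$$
g_\ag(Z_{J_\qg,g_\qg},A)=Q(Z_{J_\qg},A), \qquad \forall A\in\ag_f.
$$

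For part (i), let $g$ be almost-normal, so that $g|_{\ag_f\oplus\qg}=g_b|_{\ag_f\oplus\qg}$ with $g_b=z_1Q|_{\ggo_1}+\dots+z_sQ|_{\ggo_s}$ (where $Q|_{\ggo_i}=-\kil_{\ggo_i}$). Then $x_\alpha=z_i$ for every $\alpha\in\Delta_{\qg_i}$, and hence
$$
Z_{J_\qg,g_\qg}=\sum_i\tfrac{1}{z_i}\sum_{\alpha\in\Delta_{\qg_i}^+}\im H_\alpha=\sum_i\tfrac1{z_i}Z_{J_{\qg_i}},
$$
where $Z_{J_{\qg_i}}\in\ggo_i$ is the $i$-th component of $Z_{J_\qg}$. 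Because $g_\ag$ agrees with $g_b$ on $\ag_f\times\ag_f$ and the factors $\ggo_i$ are $Q$-orthogonal, for $A\in\ag_f$ we obtain
$$
g_\ag(Z_{J_\qg,g_\qg},A)=\sum_i z_i\,Q\bigl(\tfrac1{z_i}Z_{J_{\qg_i}},A\bigr)=Q(Z_{J_\qg},A),
$$
which is \eqref{CYT3}. The one point needing care is that $g_b$ is evaluated on $\ag_f\subset\ggo_f$ and not on $\zg(\ggo)$. This is harmless: both arguments lie in $\ag_f$, and by Remark~\ref{CYT-rem} the mixed components $g(\zg(\ggo),\ag_f)$ play no role.

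For part (ii), take $\ag_f=\RR W$. Then $Z_{J_\qg,g_\qg}=\mu W$ and $Z_{J_\qg}=\nu W$ for some scalars $\mu,\nu$, both positive with respect to the cone orientation, since both vectors lie in $C(J_\qg)$. Condition \eqref{CYT3} reduces to the single scalar equation $\mu\,g_\ag(W,W)=\nu\,Q(W,W)$. This is where I expect the main obstacle: for an arbitrary $g$ this equation does not obviously hold, so part (ii) cannot rest on the scalar identity alone and must use the structure of the situation more carefully.

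My first attempt would use \eqref{rhob} directly. On $\qg$ we have $\rho^b(E_\alpha,E_{-\alpha})=Q(A_\alpha,Z_{J_\qg}-P_\ag Z_{J_\qg,g_\qg})$, which involves only the $\ag_f$-component of $Z_{J_\qg}-P_\ag Z_{J_\qg,g_\qg}$; every other component vanishes. I would check whether, with $\dim\ag_f=1$ and $\zg(\kg)=0$, the flag is forced to be a product of flags with $b_2=1$, all $A_\alpha$ being proportional to $W$ with positive factors. If so, I would try to show via Remark~\ref{CYT-rem} that the $\ag_f$-component can be arranged to vanish, which amounts to the scalar equation above. I am not confident this closes, and it may turn out that part (ii) requires an additional normalization of $g_\ag$.
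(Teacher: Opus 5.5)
Your part (i) is correct and is the paper's argument. Since $\zg(\kg)=0$, both Koszul vectors lie in $\ag_f=\zg(\hg_1)\oplus\dots\oplus\zg(\hg_s)$, where $g_\ag=z_iQ$ on each summand. Then $Z_{J_\qg,g_\qg}=\sum_i\tfrac{1}{z_i}Z_i$, and \eqref{CYT3} follows summand by summand.

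For part (ii) you give no proof, but the obstacle you found is real: part (ii) is false as stated, so no argument can close it. Your route via \eqref{rhob} cannot help either. The $\ag_f$-component of $Z_{J_\qg}-P_\ag Z_{J_\qg,g_\qg}$ is $\bigl(\nu-\mu\,g_\ag(W,W)/Q(W,W)\bigr)W$, which is just your scalar equation again. To see that (ii) fails, note that if $g=g_\ag+g_\qg$ is compatible with $J$, so is $cg_\ag+g_\qg$ for every $c>0$. Its metric Koszul vector is still $\mu W$, because $Z_{J_\qg,g_\qg}$ depends only on $g_\qg$. So \eqref{CYT3} for the rescaled metric reads $c\,\mu\,g_\ag(W,W)=\nu\,Q(W,W)$, which holds for exactly one $c$. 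Concretely, on the Hopf surface $\SU(2)\times S^1$ with $g_\qg=xQ|_\qg$, the metric is CYT if and only if $g_\ag(W,W)=xQ(W,W)$.

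The paper obtains (ii) from (i) by asserting that every such $g$ is almost-normal, and that is the faulty step. As Remark \ref{b21-cyt} indicates, $F$ is a single flag with $b_2(F)=1$, so $G_f$ is simple. (It is not a product of several such flags, as you suggested.) Hence almost-normal means $x_\alpha=z$ for all $\alpha$ and $g_\ag|_{\ag_f}=zQ|_{\ag_f}$, which generic metrics violate. The paper is also internally inconsistent here. Taking $\zg(\kg)=0$ in the LCK construction puts those metrics under the hypotheses of (ii). Yet Proposition \ref{lck-prop2}(iii) makes CYT a nontrivial condition on $t/c$.

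What is true is exactly the normalization of $g_\ag$ you suspected. The numbers $\mu$ and $\nu$ have the same sign, since both Koszul vectors lie in the open ray $C(J_\qg)$. So for every $J$, every $g_\qg$ and every compatible $g_\ag$, the rescaling $c=\nu\,Q(W,W)/\bigl(\mu\,g_\ag(W,W)\bigr)>0$ is the unique one making $cg_\ag+g_\qg$ CYT. This gives $\cca_{cyt}=\cca$, which is the form in which (ii) is actually used in \S\ref{geo-sec}. You should state and prove this corrected version rather than try to force the original claim.
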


\begin{remark}\label{b21-cyt}
Any C-space such that $\zg(\kg)=0$ and $\dim{\ag_f}=1$ is of the form $M=G/[H,H]\times T^{2m+1}$, where $G/H$ is a flag manifold with $\dim{\zg(\hg)}=1$ (see Example \ref{b21}).  
\end{remark}

\begin{proof}
Using the notation given in \eqref{decflags}, we have that 
$$
Z_{J_\qg}=Z_1+\dots+Z_s\in\ag_f=\zg(\hg_f)=\zg(\hg_1)\oplus\dots\oplus\zg(\hg_s), 
$$
and if $g_b=-(z_1\kil_{\ggo_1}+\dots+z_s\kil_{\ggo_s})$, then $g_\ag|_{\zg(\hg_i)}=g_b|_{\zg(\hg_i)}=z_iQ|_{\zg(\hg_i)}$ and
$$
(Z_{J_\qg,g_\qg})_\ag = Z_{J_\qg,g_\qg}=\tfrac{1}{z_1}Z_1+\dots+\tfrac{1}{z_s}Z_s.  
$$ 
This implies that condition \eqref{CYT3} holds, as it does for all $A\in\zg(\hg_i)$, so part (i) follows.  

Part (ii) follows from oart (i) and the fact any metric $g=g_\ag+g_\qg$ as in \eqref{g} is almost-normal, concluding the proof.  
\end{proof}

We assume from now on that $c_1(M,J)=0$, i.e., $Z_{J_\qg}\in\ag_f$.  The following technical preliminaries are needed to study the existence of CYT metrics on complex C-spaces.  

Consider the $J_\ag$-invariant $2$-dimensional subspace 
$$
\ag_J:=\la Z_{J_\qg},J_\ag Z_{J_\qg}\ra_\RR =\la Z_{J_\qg},Z_2\ra_\RR\subset\ag, \qquad Q(Z_{J_\qg},Z_2)=0, 
\quad Q(Z_2,Z_2)=Q(Z_{J_\qg},Z_{J_\qg}),
$$
and the vector 
\begin{equation}\label{Wdef}
W_J:= Z_{J_\qg} +\tfrac{Q(Z_{J_\qg},J_\ag Z_{J_\qg})}{Q(Z_{J_\qg},Z_{J_\qg})} J_\ag Z_{J_\qg}\in\ag_J.
\end{equation}
If $J_\ag Z_{J_\qg}=aZ_{J_\qg}+bZ_2$, $b\ne 0$, i.e., in terms of the basis $\{ Z_{J_\qg},Z_2\}$, 
\begin{equation}\label{Jaaj}
[J_\ag|_{\ag_J}] = 
\left[\begin{matrix} 
a&-\tfrac{a^2+1}{b}\\ b&-a 
\end{matrix}\right], \qquad \mbox{then} \quad W_J=(a^2+1)Z_{J_\qg}+abZ_2.  
\end{equation}
We note that if we normalize by $g_\ag(Z_{J_\qg},Z_{J_\qg})=1$, then the compatibility between $J_\ag$ and the metric $g_\ag$ implies that in terms of the basis $\{ Z_{J_\qg},Z_2\}$, 
\begin{equation}\label{gaaj}
[g_\ag|_{\ag_J}] = 
\left[\begin{matrix} 
1&-\tfrac{a}{b}\\ -\tfrac{a}{b}&\tfrac{a^2+1}{b^2} 
\end{matrix}\right].  
\end{equation}
We are now ready to state and prove an obstruction for the existence of CYT. 
 
\begin{theorem}\label{CYT2}
Let $(M=G/K,J)$ be a complex C-space such that $Z_{J_\qg}\in\ag_f$ (i.e. $c_1(M,J)=0$) and $J_\ag Z_{J_\qg}\in\ag_f$ (in particular, $\dim{\ag_f}\geq 2$).  If $(M=G/K,J)$ admits a $G$-invariant CYT metric, then
$$
W_J\in C(J_\qg)_{\ag_J}, 
$$ 
where $C(J_\qg)_{\ag_J}$ is the $Q$-orthogonal projection on $\ag_J$ of the cone $C(J_\qg)$ defined in \eqref{cone}.  Furthermore, the converse assertion holds if $G$ is semisimple and $\dim{\ag}=2$.  
\end{theorem}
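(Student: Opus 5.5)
The plan is to turn the CYT condition \eqref{CYT3} of Proposition \ref{CYT} into a statement inside the $2$-plane $\ag_J$. Set $V:=(Z_{J_\qg,g_\qg})_\ag$. Since $Z_{J_\qg,g_\qg}\in\zg(\hg_f)=\zg(\kg)\oplus\ag_f$, we have $V=(Z_{J_\qg,g_\qg})_{\ag_f}$. Because $Z_{J_\qg}$ and $J_\ag Z_{J_\qg}$ both lie in $\ag_f$, we may evaluate \eqref{CYT3} at $A=Z_{J_\qg}$ and at $A=J_\ag Z_{J_\qg}$, which gives
$$
g_\ag(V,Z_{J_\qg})=Q(Z_{J_\qg},Z_{J_\qg}), \qquad g_\ag(V,J_\ag Z_{J_\qg})=Q(Z_{J_\qg},J_\ag Z_{J_\qg}).
$$
Using compatibility, $g_\ag(V,J_\ag Z_{J_\qg})=-g_\ag(J_\ag V,Z_{J_\qg})$. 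So these two equations say exactly that the $g_\ag$-pairing of $Z_{J_\qg}$ with the $J_\ag$-complex line spanned by $V$ is prescribed by $Q$.

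Next we eliminate the unknown metric $g_\ag$. Decompose $\ag=\ag_J\oplus\ag_J^{\perp_{g}}$, where the complement is $g_\ag$-orthogonal and $J_\ag$-invariant because $\ag_J$ is $J_\ag$-invariant and $g_\ag$ is compatible. The two equations then involve only the $g_\ag$-orthogonal projection $V'$ of $V$ onto $\ag_J$. Normalize $g_\ag(Z_{J_\qg},Z_{J_\qg})=1$, which is allowed after rescaling $g_\ag$ together with $g_\qg$ (this scales $V$ compatibly). Then $g_\ag|_{\ag_J}$ is given by \eqref{gaaj} in the basis $\{Z_{J_\qg},Z_2\}$ with $J_\ag|_{\ag_J}$ as in \eqref{Jaaj}, and the two linear equations determine $V'$ uniquely. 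A direct $2\times2$ computation should give $V'=\lambda W_J$ for some $\lambda>0$, the positivity coming from $Q(Z_{J_\qg},Z_{J_\qg})>0$. Remaining obstacle here: I expect this to require checking that the answer matches \eqref{Wdef}, and I would carry it out with the explicit matrices.

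Now relate $V'$ to the $Q$-projection. Here is the step I expect to be hardest: the $g_\ag$-projection of $V$ onto $\ag_J$ is not its $Q$-projection, while the cone statement concerns $Q$-projections. My first attempt is to expect that $g_\ag$-projection onto the $J$-invariant plane $\ag_J$, composed with $Q$-projection, still carries $C(J_\qg)_{\ag}$ into the appropriate cone. The cleaner route, if available, is to show directly that $\ag_J$ is $P_\ag$-invariant in the relevant sense. Failing that, I would rewrite \eqref{CYT3} using $Q(P_\ag V,A)$: the conditions say that $(P_\ag V)_{\ag_J}$, the $Q$-projection, equals a specific vector, and one then has to relate this to $V_{\ag_J}$ through $P_\ag$ restricted to $\ag_J$. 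At this point I would use the explicit form \eqref{gaaj}, which fixes the $2\times2$ block of $P_\ag$ on $\ag_J$ up to the data $a,b$, and check that inverting it sends $Z_{J_\qg}$ to a positive multiple of $W_J$. Since $V=Z_{J_\qg,g_\qg}-(Z_{J_\qg,g_\qg})_{\zg(\kg)}$ and $\ag_J\subset\ag_f\perp\zg(\kg)$, we get $V_{\ag_J}=(Z_{J_\qg,g_\qg})_{\ag_J}\in C(J_\qg)_{\ag_J}$. Because the cone is invariant under positive scaling, $W_J\in C(J_\qg)_{\ag_J}$ follows.

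For the converse, assume $G$ semisimple and $\dim\ag=2$. Then $\ag=\ag_f=\ag_J$, and \eqref{CYT3} reduces to exactly the two equations above, with no off-plane contributions. Given $W_J\in C(J_\qg)_{\ag_J}$, choose $g_\qg$ with $(Z_{J_\qg,g_\qg})_\ag=\lambda W_J$, which is possible since $C(J_\qg)=\{Z_{J_\qg,g_\qg}\}$. Then take $g_\ag$ to be the compatible metric \eqref{gaaj}, rescaled so that the equations hold. Reversing the computation yields \eqref{CYT3}, so the metric is CYT by Proposition \ref{CYT}.
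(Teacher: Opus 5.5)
Your route is the paper's: evaluate \eqref{CYT3} at $Z_{J_\qg}$ and $J_\ag Z_{J_\qg}$, normalize $g_\ag(Z_{J_\qg},Z_{J_\qg})=1$, and expand in the $g_\ag$-orthonormal basis $\{Z_{J_\qg},J_\ag Z_{J_\qg}\}$ of $\ag_J$ to get $V'=Q(Z_{J_\qg},Z_{J_\qg})W_J$. For the converse, pick $g_\qg$ via $C(J_\qg)=\{Z_{J_\qg,g_\qg}\}$ and $g_\ag$ via \eqref{gaaj}. The converse is fine, since there $\ag_J=\ag$. You should also invoke the symmetrization of Remark \ref{aver} before applying Proposition \ref{CYT}, because the hypothesis allows an arbitrary $G$-invariant CYT metric.

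The step you flagged, however, is a genuine gap, and none of your fallbacks closes it. The identity \eqref{CYT3} determines only the $g_\ag$-orthogonal projection $V'$, whereas $C(J_\qg)_{\ag_J}$ consists of $Q$-orthogonal projections. One has $V_{\ag_J}=V'$ only if $V-V_{\ag_J}$ is $g_\ag$-orthogonal to $\ag_J$. Inverting ``the $2\times 2$ block of $P_\ag$'' just recomputes $V'$, because $(P_\ag V)_{\ag_J}$ also involves the off-diagonal block of $P_\ag$. Nor need $\ag_J$ be $P_\ag$-invariant: this is impossible for every compatible $g_\ag$ once $J_\ag$ fails to preserve the $Q$-orthogonal complement of $\ag_J$. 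The paper's proof makes exactly this identification without comment. It substitutes \eqref{CYT3}, which concerns $(Z_{J_\qg,g_\qg})_\ag$, into $g_\ag((Z_{J_\qg,g_\qg})_{\ag_J},\cdot)$.

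In fact the gap cannot be filled, because the forward implication fails when $\ag_J\subsetneq\ag$. Take $M=\SU(2)^4$, so $\ag=\tg$ and $Z:=Z_{J_\qg}=(1,1,1,1)$ in the basis $e_i=\im H_{\alpha_i}$. These $e_i$ are $Q$-orthogonal of equal length, and $C(J_\qg)$ is the open positive orthant. With $Y=(1,1,-1,-1)$, $U_1=(1,-1,0,0)$, $U_2=(0,0,1,-1)$, set
\[
J_\ag Z=Z+10Y,\quad J_\ag Y=-\tfrac15 Z-Y,\quad J_\ag U_1=U_2+10Z,\quad J_\ag U_2=-U_1-10Z-100Y,
\]
which satisfies $J_\ag^2=-I$. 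Then $\ag_J=\la Z,Y\ra_\RR$ and $W_J=2Z+10Y$. On the other hand $C(J_\qg)_{\ag_J}=\{\lambda Z+\mu Y:|\mu|<\lambda\}$, so $W_J\notin C(J_\qg)_{\ag_J}$.

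Nevertheless a CYT metric exists. The vector $V:=Z-\tfrac1{10}U_1=(0.9,1.1,1,1)$ lies in the orthant, and $J_\ag V=10Y-\tfrac1{10}U_2$, so $Q(Z,J_\ag V)=0<Q(Z,V)$. Define $g_\ag$ by three requirements:
\begin{itemize}
\item $V$ and $J_\ag V$ are $g_\ag$-orthogonal, with $g_\ag(V,V)=g_\ag(J_\ag V,J_\ag V)=Q(Z,V)$;
\item both are $g_\ag$-orthogonal to the $J_\ag$-invariant complement $\{A:Q(Z,A)=Q(Z,J_\ag A)=0\}$;
\item $g_\ag$ is any compatible metric on that complement.
\end{itemize}
This $g_\ag$ is compatible with $J_\ag$ and satisfies $g_\ag(V,\cdot)=Q(Z,\cdot)$. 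Choosing $x_i:=1/V_i$ gives $Z_{J_\qg,g_\qg}=V$, so \eqref{CYT3} holds and $g$ is CYT.

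In this example $V_{\ag_J}=Z$, which is not a multiple of $W_J$, so your $V'$ and $V_{\ag_J}$ genuinely differ. The argument, yours and the paper's, therefore proves the forward direction only under an extra hypothesis. It suffices that $\ag_J=\ag$ (the $\dim\ag=2$ case), or more generally that the $Q$-orthogonal complement of $\ag_J$ in $\ag_f$ be $g_\ag$-orthogonal to $\ag_J$.
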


\begin{remark}
If $G$ is semisimple, then $J_\ag Z_{J_\qg}\in\ag_f$ always holds as $\ag_f=\ag$.  We do not know whether this is actually an existence characterization for CYT in the case when $G$ is semisimple, that is, whether the converse assertion holds for $G$ semisimple and $\ag$ of arbitrary dimension.   
\end{remark}

\begin{proof}
According to the symmetrization process given in Remark \ref{aver}, the CYT $G$-invariant metric can be assumed to be of the form $g=g_\ag+g_\qg$ as in \eqref{g}, so if $g_\ag(Z_{J_\qg},Z_{J_\qg})=1$, then it follows from \eqref{CYT3} and \eqref{Wdef} that 
\begin{align*}
(Z_{J_\qg,g_\qg})_{\ag_J} =& g_\ag((Z_{J_\qg,g_\qg})_{\ag_J},Z_{J_\qg})Z_{J_\qg} + g_\ag((Z_{J_\qg,g_\qg})_{\ag_J},J_\ag Z_{J_\qg})J_\ag Z_{J_\qg} \\
=& Q(Z_{J_\qg},Z_{J_\qg})Z_{J_\qg} + Q(Z_{J_\qg},J_\ag Z_{J_\qg})J_\ag Z_{J_\qg} =Q(Z_{J_\qg},Z_{J_\qg})W_J, 
\end{align*}
which implies that $W_J\in C(J_\qg)_{\ag_J}$.  

Conversely, assume that $W_J\in C(J_\qg)_{\ag_J}$.  By \eqref{Jaaj}, there exists a metric $g_\qg$ such that 
$$
(Z_{J_\qg,g_\qg})_{\ag_J}=Q(Z_{J_\qg},Z_{J_\qg})\left((a^2+1)Z_{J_\qg}+abZ_2\right), 
$$
which implies that 
$$
g_\ag((Z_{J_\qg,g_\qg})_{\ag_J},Z_{J_\qg}) =Q(Z_{J_\qg},Z_{J_\qg})\left(a^2+1-\tfrac{a}{b}ab\right) =Q(Z_{J_\qg},Z_{J_\qg}), 
$$
$$
g_\ag((Z_{J_\qg,g_\qg})_{\ag_J},Z_2) =Q(Z_{J_\qg},Z_{J_\qg})\left((a^2+1)(-\tfrac{a}{b})+ab\tfrac{a^2+1}{b^2}\right) = 0 =Q(Z_{J_\qg},Z_2).  
$$
Thus \eqref{CYT3} holds for $Z_{J_\qg}$ and $Z_2$, so it holds for any $A\in\ag_J$ and any metric $g_\ag$ by \eqref{gaaj}.  Since $\ag_J=\ag$ if $\dim{\ag}=2$, condition $W_J\in C(J_\qg)_{\ag_J}$ is also sufficient for \eqref{CYT3} to hold, concluding the proof.    
\end{proof}

The existence region can therefore be described as follows (see Figure \ref{CYT-fig}).  

\begin{corollary}\label{CYT4}
Under the hypothesis of Theorem \ref{CYT2}, there exist $u<0<v$ such that for any complex structure $J=(J_\ag,J_\qg)$ with $c_1(M,J)=0$ and $J_\ag Z_{J_\qg}=aZ_{J_\qg}+bZ_2$, $b\ne 0$ admitting a CYT metric as in \eqref{g}, the following inequalities hold:
$$
u<\tfrac{ab}{a^2+1}<v.  
$$
If $G$ is semisimple, $\dim{\ag}=2$ and these inequalities hold, then the corresponding complex C-space $(M=G/K,J)$ admits a CYT G-invariant metric as in \eqref{g}.  
\end{corollary}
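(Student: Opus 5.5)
The plan is to reduce Corollary \ref{CYT4} to Theorem \ref{CYT2} by turning the membership $W_J\in C(J_\qg)_{\ag_J}$ into a condition on one real parameter. Work in the fixed $Q$-orthogonal basis $\{Z_{J_\qg},Z_2\}$ of $\ag_J$. By \eqref{Jaaj}, $W_J=(a^2+1)Z_{J_\qg}+abZ_2$. The cone is invariant under positive scaling, so $W_J$ lies in it if and only if $Z_{J_\qg}+\tfrac{ab}{a^2+1}Z_2$ does. Two facts are needed. First, the projection $C(J_\qg)_{\ag_J}$ is an open convex cone in the plane $\ag_J$, since it is the image of the open convex cone $C(J_\qg)$ under a linear surjection. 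Second, it contains $Z_{J_\qg}$: indeed $Z_{J_\qg}\in C(J_\qg)\cap\ag_f$, and $Z_{J_\qg}\in\ag_J$, so it equals its own projection.

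Next I will show that this cone is pointed, i.e.\ it contains no line and is strictly smaller than a half-plane. For any $Z\in C(J_\qg)$, pairing with $Z_{J_\qg}$ gives
\[
Q(Z_{\ag_J},Z_{J_\qg})=Q(Z,Z_{J_\qg})=\sum_{\alpha\in\Pi_\qg}c_\alpha Q(Z_\alpha,Z_{J_\qg})>0,
\]
where $Z=\sum c_\alpha Z_\alpha$ with all $c_\alpha>0$. The last inequality follows from \eqref{Jal}, because $Q(Z_\alpha,Z_{J_\qg})=\la\alpha,\rho_{J_\qg}\ra>0$. Hence $C(J_\qg)_{\ag_J}$ lies in the open half-plane $\{\xi Z_{J_\qg}+\eta Z_2:\xi>0\}$.

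With this, the intersection of the cone with the affine line $\{Z_{J_\qg}+\tau Z_2:\tau\in\RR\}$ is an open interval $(u,v)$ containing $\tau=0$. Here $u$ and $v$ are the slopes $\eta/\xi$ of the two boundary rays. The main obstacle is proving these endpoints are finite, i.e.\ $u\ne-\infty$ and $v\ne+\infty$. The cone is a finitely generated open cone spanned by the projections $(Z_\alpha)_{\ag_J}$, $\alpha\in\Pi_\qg$. Each generator has positive $Z_{J_\qg}$-coordinate by the pairing computation above, so each has a finite slope. Therefore $u$ is the minimum of these finitely many slopes and $v$ is the maximum. Openness gives strict inequalities, and $0\in(u,v)$ gives $u<0<v$ as long as the generators do not all have the same slope. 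If they did, the projected cone would be a single ray, and $(u,v)$ would have to be replaced by the degenerate set $\{0\}$. I expect to rule this out, or at least treat it separately, using the hypothesis $J_\ag Z_{J_\qg}\in\ag_f$ with $b\ne0$. Concretely, this requires $Z_2$ to lie in $\ag_f$. Since the $(Z_\alpha)_{\ag_f}$ span $\ag_f$ (see \eqref{a1g}), some generator has a nonzero $Z_2$-component. Comparing that generator with $Z_{J_\qg}$, which has slope $0$ and is a positive combination of the generators, produces slopes of both signs.

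Finally, combine with Theorem \ref{CYT2}. If a CYT metric exists, then $\tfrac{ab}{a^2+1}\in(u,v)$. Conversely, if $G$ is semisimple and $\dim\ag=2$, these inequalities give $W_J\in C(J_\qg)_{\ag_J}$, and the converse part of Theorem \ref{CYT2} then provides the CYT metric.
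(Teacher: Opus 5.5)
Your proposal is correct and follows the paper's own argument. By Theorem~\ref{CYT2} and positive scaling, CYT forces $Z_{J_\qg}+\tfrac{ab}{a^2+1}Z_2\in C(J_\qg)_{\ag_J}$, and $(u,v)$ is simply the intersection of this projected cone with the affine line $Z_{J_\qg}+\RR Z_2$; the converse for $G$ semisimple and $\dim\ag=2$ is the converse half of Theorem~\ref{CYT2}. The paper states this in one line, so your two checks usefully supply what it leaves implicit: boundedness of the interval (each generator pairs positively with $Z_{J_\qg}$ by \eqref{Jal}, and the cone is finitely generated), and that $0$ lies in its interior. Note also that the degenerate single-ray case you treat separately is already excluded by the openness of the projected cone, which you established.
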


\begin{remark}\label{Ccyt}
In particular, the subset $\cca^G_{cyt}\subset\cca^G_0$ (see \eqref{Cc10}) of all $G$-invariant complex structures on a C-space $M=G/K$ admitting a CYT metric is always proper if $\dim{\ag_f}\geq 2$ and it is open if $G$ is semisimple and $\dim{\ag}=2$.  We also obtain that the existence of CYT metrics on a given complex C-space is not stable under small deformations.  Recall that $\cca^G_{cyt}$ is always nonempty; indeed, $\cca^G_{g_Q}\subset\cca^G_{cyt}$, where $\cca^G_{g_Q}$ is the $d(d-1)$-dimensional space of all $G$-invariant complex structures compatible with the normal metric $g_Q$.  Moreover, if $\zg(\kg)=0$, then $\cca^G_{g}\subset\cca^G_{cyt}$ for any almost-normal metric $g$ by Corollary \ref{CYT-cor}, (i).   
\end{remark}

\begin{proof}
By Theorem \ref{CYT2}, if $J$ admits a compatible CYT metric, then 
$
Z_{J_\qg}+\tfrac{ab}{a^2+1}Z_2 \in C(J_\qg)_{\ag_J}, 
$
so the numbers $u,v$ are defined by 
$$
(Z_{J_\qg}+\RR Z_2)\cap C(J_\qg)_{\ag_J}=\{ Z_{J_\qg}+x Z_2: u<x<v\}, 
$$ 
concluding the proof.  
\end{proof}

\begin{figure}
\begin{tikzpicture}[scale=.8]
\draw[->,thick] (-4, 0) -- (4, 0) node[right] {$a$};
\draw[->,thick] (0, -3) -- (0, 3) node[above] {$b$};
\draw[ultra thin,color=gray] (-4.5,-3.5) grid (4.5,3.5); 

\draw (0,0.5) node[left] {{\tiny $2v$}};
\draw (0,0.5) node {-};
\draw (0,-4/3) node[right] {{\tiny $2u$}};
\draw (0,-4/3) node {-};
\draw (0,0) node[below left] {{\tiny $0$}};

\draw (1,0) node[below] {{\tiny $1$}};
\draw (1,0) node {{\tiny l}};
\draw (-1,0) node[below] {{\tiny $-1$}};
\draw (-1,0) node {{\tiny l}};
\draw (0,-0.5) node[right] {{\tiny $-2v$}};
\draw (0,-0.5) node {-};
\draw (0,4/3) node[left] {{\tiny $-2u$}};
\draw (0,4/3) node {-};

\draw[very thick, color= red, fill=orange, opacity=0.8, domain=0.075:4]  plot(\x, {1/4*(\x+1/\x)});
\draw[very thick, color=red, fill=orange,  opacity=0.8, domain=0.2:4]  plot(\x, {2/3*(-\x-1/\x)});
\draw[very thick, color=red, fill=orange,  opacity=0.8, domain=-0.075:-4]  plot(\x, {1/4*(\x+1/\x)});
\draw[very thick, color=red, fill=orange,  opacity=0.8, domain=-0.2:-4]  plot(\x, {2/3*(-\x-1/\x)});
    
\draw (1.5,1.5) node {$\nexists$};
\draw (2.5,-0.5) node {$\exists$};
\end{tikzpicture}
\caption{Existence region for CYT}\label{CYT-fig}
\end{figure}

\begin{example}\label{b21-CYT}
Consider the BTP Hermitian structures given in Example \ref{b21-BTP} ($F_2\ne S^1$), which satisfy that $\dim{\ag}=2$.  Since $\ag=\zg(\hg_f)$, we have that $c_1(M,J)=0$ for all $a,b$; furthermore, the Koszul and metric Koszul vectors are respectively given by 
$$
Z_{J_\qg}=d_1Z_{\gamma_1}+d_2Z_{\gamma_2}, \qquad Z_{J_\qg,g_\qg}=\tfrac{d_1}{x}Z_{\gamma_1}+\tfrac{d_2}{y}Z_{\gamma_2},  
$$
for certain positive numbers $d_1,d_2$ (see Examples \ref{b21-comp} and \ref{b21-herm}).  
It follows from \eqref{CYT3} that $(J,g)$ is CYT if and only if 
$$
g_\ag((\tfrac{d_1}{x},\tfrac{d_2}{y}),(1,0))= d_1Q(Z_{\gamma_1},Z_{\gamma_1}), \qquad 
g_\ag((\tfrac{d_1}{x},\tfrac{d_2}{y}),(0,1))= d_2Q(Z_{\gamma_2},Z_{\gamma_2}), 
$$
which is easily seen to be equivalent to 
$$
\tfrac{1}{x}=\tfrac{(a^2+1)d_1+abd_2}{b^2cd_1Q(Z_{\gamma_1},Z_{\gamma_1})}, \qquad \tfrac{1}{y}=\tfrac{abd_1+b^2d_2}{b^2cd_2Q(Z_{\gamma_2},Z_{\gamma_2})}.  
$$
Thus $(M=G/K,J)$ admits a CYT metric of this type precisely when the two terms on the right are positive. The existence region obtained is different from the one given in Figure \ref{CYT-fig} since we are using different basis of $\ag$ to write the matrix of $J_\ag$.  
\end{example}

Hermitian structures which are pluriclosed and CYT are called {\it Bismut Hermitian Einstein} (BHE for short) and are very rare beyond the Bismut flat context.   

\begin{proposition}\label{skt-cyt}
For a pluriclosed Hermitian structure $(J,g)$ on a compact Lie group $M=G$ as in \S\ref{skt-lg}, the following conditions are equivalent: 
\begin{enumerate}[{\rm (i)}] 
\item $(J,g)$ is CYT.  

\item There exists a bi-invariant metric $g_b$ on $[G,G]$ such that $g|_{[\ggo,\ggo]}=g_b$.  

\item $(J,g)$ is Bismut flat. 
\end{enumerate}
\end{proposition}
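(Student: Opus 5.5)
We prove the cycle (ii)$\Rightarrow$(iii)$\Rightarrow$(i)$\Rightarrow$(ii). Throughout, the pluriclosed metric is $g=g_\ag+g_\qg$ with conditions (a) and (b) of \S\ref{skt-lg}. By (a), $g_\ag|_{\overline{\ag}\times\overline{\ag}}=-(z_1\kil_{\ggo_1}+\dots+z_s\kil_{\ggo_s})|_{\overline{\ag}\times\overline{\ag}}$ with $z_i>0$. By (b), $x_{\alpha+\beta}=x_\alpha+x_\beta-z_i$ inside each $\Delta_i^+$.

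The implication (ii)$\Rightarrow$(iii) is Remark \ref{Bflat-rem} (or the converse direction of Theorem \ref{Bflat}). For (iii)$\Rightarrow$(i), note that Bismut flatness gives $R^b=0$, hence $\rho^b=0$ by \eqref{rf-def1}.

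The substantive step is (i)$\Rightarrow$(ii). Here $\ag_f=\overline{\ag}=\tg_1\oplus\dots\oplus\tg_s$ and $Z_{J_\qg}=\sum_i Z_i$, where $Z_i:=\sum_{\alpha\in\Delta_i^+}\im H_\alpha\in\tg_i$. The vector $Z_{J_\qg}$ is automatically in $\ag_f$, so $c_1=0$. Since $(Z_{J_\qg,g_\qg})_\ag=Z_{J_\qg,g_\qg}\in\overline{\ag}$ and $g_\ag$ restricted to $\overline{\ag}$ equals $\sum z_iQ|_{\tg_i}$, the CYT condition \eqref{CYT3} (tested on $A\in\overline{\ag}$) reduces, factor by factor, to
$$
z_i\sum_{\alpha\in\Delta_i^+}\tfrac{1}{x_\alpha}\im H_\alpha=\sum_{\alpha\in\Delta_i^+}\im H_\alpha,\qquad i=1,\dots,s.
$$
Equivalently, $\sum_{\alpha\in\Delta_i^+}(1-\tfrac{z_i}{x_\alpha})\alpha=0$. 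This is exactly the statement that the right-hand side of the flow equation \eqref{PF} vanishes, i.e. that $g$ is a fixed point of pluriclosed flow within this family.

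It remains to show that this fixed-point equation, together with the linear relations (b), forces $x_\alpha=z_i$ for all $\alpha\in\Delta_i^+$. We can then set $g_b:=-\sum z_i\kil_{\ggo_i}$. Then $g_\qg=g_b|_\qg$ by construction, and $g|_{\overline{\ag}}=g_b|_{\overline{\ag}}$ by (a), which is (ii).

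To do this, write $x_\alpha-z_i=\sum_j n_j(x_j-1)=:\ell(\alpha)$, where $\ell$ is the linear functional on the root lattice with $\ell(\alpha_j)=x_j-1$. Pair the equation $\sum_\alpha \tfrac{\ell(\alpha)}{x_\alpha}\alpha=0$ with the vector $L\in\im\tg_i$ dual to $\ell$, i.e. $\la\alpha,L\ra=\ell(\alpha)$. This gives $\sum_{\alpha\in\Delta_i^+}\tfrac{\ell(\alpha)^2}{x_\alpha}=0$, and since every $x_\alpha>0$, we get $\ell\equiv0$, i.e. $x_\alpha=z_i$.

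This pairing trick is the key point, and I expect it to be the main thing to verify carefully: one must check that $\ell$ (defined on simple roots) really corresponds to a vector of $\im\tg_i$ pairing as stated. This holds because the simple roots form a basis. Alternatively, one can invoke the convergence result of \cite[Section 5]{SKT-LG}: the flow \eqref{PF} converges to $x_\beta=z_i$, so any stationary point must be that limit.
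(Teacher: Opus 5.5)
Your proof is correct, but it is organized differently from the paper's. The paper's proof is two lines. It observes that (ii) amounts to $x_\alpha=z_i$ for all $\alpha\in\Delta_i$, quotes \cite[Proposition 4.1]{SKT-LG} (stated there for semisimple $G$) for (i)$\Leftrightarrow$(ii), and invokes both directions of Theorem \ref{Bflat} for (ii)$\Leftrightarrow$(iii).

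Your cycle differs in three ways:
\begin{enumerate}
\item It needs only the easy direction of Theorem \ref{Bflat}, namely the computation in Remark \ref{Bflat-rem}.
\item It gets (iii)$\Rightarrow$(i) for free, since $\rho^b$ is a trace of $R^b$.
\item It replaces the citation by a self-contained proof of (i)$\Rightarrow$(ii). Your reduction of \eqref{CYT3} to $\sum_{\alpha\in\Delta_i^+}(1-\tfrac{z_i}{x_\alpha})\alpha=0$ for each $i$ is right. Applying $\ell$ to it gives $\sum_{\alpha\in\Delta_i^+}\ell(\alpha)^2/x_\alpha=0$, which forces $x_\alpha=z_i$.
\end{enumerate}

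What your route buys is transparency and completeness. The rigidity comes from a one-line positivity argument rather than from an external result or from long-time convergence of the flow. It treats a nontrivial center directly, since \eqref{CYT3} only tests $A\in\ag_f$. As a by-product, the chain (iii)$\Rightarrow$(i)$\Rightarrow$(ii) reproves, for pluriclosed metrics on $G$, the implication from Bismut flat to almost-bi-invariant without the curvature identities of Proposition \ref{Bcurv}.

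One correction: $\ell$ must be the functional with $\ell(\alpha_j)=x_{\alpha_j}-z_i$, not $x_j-1$. As printed, \eqref{gq-skt} applied to $\alpha=\alpha_j$ would force $z_i=1$. This does not affect your argument. All you use is that $\alpha\mapsto x_\alpha-z_i$ is additive on $\Delta_i^+$ by (b). By induction on height, and since $\Pi_i$ is a basis of $(\im\tg_i)^*$, it is therefore the restriction of a linear functional.
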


\begin{proof}
For a pluriclosed metric $g$ as in \S\ref{skt-lg}, condition (ii) is equivalent to $x_\alpha=z_i$ for all $\alpha\in\Delta_i$, so the equivalence between (i) and (ii) follows from \cite[Proposition 4.1]{SKT-LG}.  On the other hand, the equivalence between (ii) and (iii) is the content of Theorem \ref{Bflat}.   
\end{proof}

The above result was obtained in \cite{SKT-LG} for $M=G$ semisimple.

% 14/8/26

\section{Locally conformally K\"ahler (LCK)}\label{LCK-sec}

A non-K\"ahler complex manifold may admit Hermitian metrics which are very close to be K\"ahler in the following sense.   

\begin{definition}\label{LCK-def} 
A Hermitian structure $(J,g)$ is called {\it locally conformally K\"ahler} (LCK for short) if  $d\omega=\theta\wedge\omega$ for some $1$-form $\theta$ such that $d\theta=0$.  If in addition, $\nabla^g\theta=0$, then $(J,g)$ is said to be {\it Vaisman}.   
\end{definition}

We refer to the recent book \cite{OrnVrb} and the references therein for a deep study of LCK manifolds.  The $1$-form $\theta$ in the above definition is precisely $\tfrac{1}{n-1}\theta_L$, where $\theta_L$ is the Lee form of $(J,g)$ (see \eqref{lee}).  In particular, an LCK structure is K\"ahler if and only if $\theta=0$.  

We will show below that any compact homogeneous Hermitian manifold which is LCK can be constructed in the following way (see Theorem \ref{LCK}).  

Any flag manifold $F=G_f/H_f$ and a {\it regular} element $W\in\zg(\hg_f)$ (i.e., $\alpha(\im W)\ne 0$ for any $\alpha\in\Delta_\qg$) determine an LCK Hermitian C-space as follows: 

\begin{enumerate}[{\small $\bullet$}]
\item The C-space is given by 
\begin{equation}\label{LCK-M}
M=G/K=G_f/K\times S^1, 
\end{equation}
where $G_f/K$ is the C-space defined by 
$$
\zg(\hg_f)=\zg(\kg)\oplus\RR W, \qquad \kg=[\hg_f,\hg_f]\oplus\zg(\kg), 
$$
provided that $K\subset H_f$ is closed (see \eqref{slope}).  In particular, $G=G_f\times S^1$ and $\zg(\ggo)=\RR Z_0$ for some $Z_0\in\zg(\ggo)$.  We can assume that $Q(Z_0,Z_0)=1$ and $Q(W,W)=-\kil_{\ggo_f}(W,W)=1$ for simplicity, and according to \eqref{reddec} and \eqref{reddec2},
$$
\ggo=\ggo_f\oplus\zg(\ggo), \qquad \pg=\ag\oplus\qg, \qquad\ag=\RR Z_0\oplus\RR W, \qquad \ag_f=\RR W, 
$$
where $\ggo_f=\hg_f\oplus\qg$.  Note that $\left\{Z_0,W\right\}$ is a $Q$-orthonormal basis of $\ag$. 

\item Consider the complex structure $J=(J_\ag,J_\qg)$ (see \eqref{J}) such that $J_\qg$ is attached to $\Delta_\qg^+:=\{\alpha\in\Delta_\qg:\alpha(\im W)>0\}$ and $J_\ag$ is given by $J_\ag Z_0:=aZ_0+bW$, i.e.,  
\begin{equation}\label{LCK-J}
[J_\ag]_{\left\{Z_0,W\right\}} = 
\left[\begin{matrix} a&-\tfrac{a^2+1}{b} \\ b& -a\end{matrix}\right], \qquad b\ne 0.  
\end{equation} 

\item As a compatible metric $g=g_\ag+g_\qg$ (see \eqref{g}), consider for any $t,c>0$,
\begin{equation}\label{LCK-g} 
\left\{
\begin{array}{l}
g_\qg:=t g_W, \quad g_W=(x_\alpha)_{\alpha\in\Delta_\qg^+}, \qquad x_\alpha:=\alpha(\im W)>0 
\qquad\mbox{(see \S\ref{K})}, \\ \\    

[g_{\ag}]_{\{ Z_0,W\}} = c\left[\begin{matrix} b^2&-ab \\ -ab& a^2+1\end{matrix}\right]. 
\end{array}\right.
\end{equation}
\end{enumerate}

It follows from Lemma \ref{dco} that the Hermitian structure $(J,g)$ (see \eqref{Jg}) satisfies that $d\omega(E_\alpha,E_\beta,E_\gamma)=0$ for all $\alpha,\beta,\gamma\in\Delta_\qg$, and by \eqref{LCK-g}, 
\begin{align*}
d\omega(Z_0,e_\alpha,f_\alpha) =& g_\ag(J_\ag Z_0, \im A_\alpha) = g_\ag(aZ_0+bW, -x_\alpha W) = -bc x_\alpha =-bc \omega(e_\alpha,f_\alpha),\\ 
d\omega(W,e_\alpha,f_\alpha) =& g_\ag(J_\ag W, \im A_\alpha) = g_\ag(-\tfrac{a^2+1}{b}Z_0-aW, -x_\alpha W) = 0.    
\end{align*}
This implies that $d\omega=\theta\wedge\omega$, where $\theta\in\Omega^1(M)^G$ is given by
$$
\theta(X):=-Q\left(X,bcZ_0\right), \qquad\forall X\in\pg,   
$$
and since $Z_0\in\zg(\ggo)$, $d\theta=0$ (see \S\ref{dR1} or \S\ref{form-sec}), so the Hermitian structure $(J,g)$ is LCK.  

The simplest examples of the construction comes from $F=\SU(n+1)/\U(n)=\CC P^n$, producing Hopf manifolds $M=S^{2n+1}\times S^1$.  

\begin{remark}\label{LCK-rem} 
The following observations on the above construction are in order:
\begin{enumerate}[{\rm (i)}] 
\item The C-space $M=G/K$ is determined by $F$ and $\RR W$, $J_\qg$ by the corresponding chamber $Ch(W)\subset\zg(\hg_f)\setminus\{ 0\}$ (see \eqref{ch}), $J_\ag$ by $W$ and $a,b\in\RR$, $b\ne 0$, $g_\qg$ by $W$ and $t>0$ and $g_\ag$ by $J_\ag$ and $c>0$.  Note that the above construction identically works for a regular element $W$ of any length by taking the basis $\{ Z_0,\tfrac{1}{|W|}W\}$ and setting $t:=|W|$.   

\item According to \S\ref{K}, $g_W$ is a K\"ahler metric on the complex flag manifold $(F,J_\qg)$.  

\item We have a $2$-torus holomorphic fibration over a K\"ahler manifold given by 
$$
T^2=H_f/K\times S^1 \longrightarrow M=G/K \longrightarrow F=G_f/H_f. 
$$
\item As well known, $G_f/K$ is a Sasakian manifold, and there is also an $S^1$ fibration over the flag $F$ given by
$$
S^1=H_f/K \longrightarrow G_f/K.  \longrightarrow F=G_f/H_f.
$$
\item If $M=G/K$ is of fibration type (see Definition \ref{fibtype-def}) and $G_f$ is simple, then $\qg$ is $\holg(\nabla^b)$-irreducible (see Corollary \ref{holgK}).  
\end{enumerate}
\end{remark}

\begin{example}\label{AW}
For the flag $F=\SU(3)/T^2$,  $\dynkin[scale=2] A{oo}$, if we consider $W_{p,q}\in\tg$, a regular element orthogonal to 
$$
(p,q,-(p+q))\in\tg=\hg_f=\zg(\hg_f), \qquad p,q\in\NN \quad \mbox{coprime numbers},
$$ 
then $G_f/S^1_{p,q}$ is an Aloff-Wallach space, where $S^1_{p,q}\subset\SU(3)$ is the Lie subgroup with Lie algebra $\RR (p,q,-(p+q))$.  The topology of the LCK manifold 
$$
M^8_{p,q}=G/K=G_f/S^1_{p,q}\times S^1
$$ 
is therefore very sensitive to the rational slope $p/q$.  Indeed, $H^4(G_f/S^1_{p,q},\ZZ)=\ZZ_{p^2+pq+q^2}$ and there are infinitely many homotopy classes among them (see \cite{Krg}).  Moreover, there are different diffeomorphic classes on the same topological space (see \cite{KrcStl}).  
\end{example}

The following classification was obtained in \cite{HsgKms1} (see also \cite{HsgKms2, AlkCrtHsgKms, Gn, AlkHsgKms}).     

\begin{theorem}\label{HK}\cite[Theorem 1, pp.\ 692]{HsgKms1} 
Any compact homogeneous LCK manifold $M$ is, up to biholomorphism, isomorphic to a holomorphic principal fiber bundle over a flag manifold $F$ with fiber a $1$-dimensional complex torus.  To be more precise, M can be written as a homogeneous space form $G/H$, where $G$ is a compact connected Lie group of holomorphic automorphisms on $M$ which is of the form $G=S^1\times S$, where $S$ is a compact simply connected semisimple Lie group, including the connected
component $H_0$ of $H$ which is a closed subgroup of $S$.  $M=G/H$ can be expressed as
$M=S^1\times_\Gamma S/H_0$, where $\Gamma=H/H_0$ is a finite abelian group acting holomorphically on the fiber $S^1=G/S$ of the fibration $G/H_0\rightarrow F$ on the right.
\end{theorem}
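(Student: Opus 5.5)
The plan is to derive the statement from the C-space machinery developed above rather than from Hasegawa--Kamishima's original argument. Let $M$ be compact homogeneous LCK, and let $G$ be a compact connected group of biholomorphisms acting transitively; we may also take $G$ to preserve the LCK metric, which averaging the metric over $G$ should allow. By Theorem \ref{C2} and Proposition \ref{refi}, up to a finite cover $M=G/K=G_f/K\times T^z$ is a complex C-space, with $J=(J_\ag,J_\qg)$ for some Tits fibration over a flag $F=G_f/H_f$.

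\textbf{Reduction to projectable metrics.} I would first replace $g$ by its $H_f$-symmetrization as in Remark \ref{aver}. The LCK condition is not linear in $\omega$, so this needs an argument.
\begin{enumerate}[(i)]
\item The Lee form $\theta$ is a $G$-invariant closed $1$-form. By \S\ref{dR1} it equals $Q(\cdot,Z_0)$ with $Z_0\in\zg(\ggo)$, so $\theta$ is invariant under the right $H_f$-action $R_h$.
\item $J$ is $R_h$-invariant, because $\Ad(H_f)$ preserves $J_\qg$ and acts trivially on $\ag$.
\item Averaging $d\omega=\theta\wedge\omega$ over $H_f$ therefore gives $d\bar\omega=\theta\wedge\bar\omega$.
\end{enumerate}
So we may assume $g=g_\ag+g_\qg$ as in \eqref{g}, and $\theta\ne0$ since $M$ is non-K\"ahler. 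In particular $\zg(\ggo)\ne0$.

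\textbf{The LCK equation component by component.} Since $\theta$ vanishes on $\qg$, comparing $d\omega=\theta\wedge\omega$ with Lemma \ref{dco} gives two conditions.
\begin{enumerate}[(i)]
\item On triples $(E_\alpha,E_\beta,E_\gamma)$ we need $\epsilon_\alpha x_\alpha+\epsilon_\beta x_\beta+\epsilon_\gamma x_\gamma=0$. This means $g_\qg$ is K\"ahler on $(F,J_\qg)$, so by \S\ref{K} we have $x_\alpha=\alpha(\im W)$ for some $W$ in the chamber of $J_\qg$.
\item On triples $(A,e_\alpha,f_\alpha)$ we need $g_\ag(J_\ag A,\im A_\alpha)=-Q(A,Z_0)\,\alpha(\im W)$ for all $A\in\ag$ and $\alpha\in\Delta_\qg^+$.
\end{enumerate}
Taking $A\in\ker\theta\cap\ag=Z_0^{\perp}$ in (ii) gives $g_\ag(J_\ag Z_0^\perp,\ag_f)=0$, using that the $\im A_\alpha$ span $\ag_f$ by \eqref{a1g}. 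Now $J_\ag Z_0^\perp$ has codimension one in $\ag$, and its $g_\ag$-orthogonal complement must contain $\ag_f\neq0$. Hence $\dim\ag_f=1$.

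Since $\dim\ag$ is even and $\zg(\ggo)\subset\ag$ with $\dim\ag_f=1$, there is also a dimension restriction on $\zg(\ggo)$. I expect this step to be the main obstacle. The sketch is to feed $\dim\ag_f=1$ back into (ii) for $A=W$ to force $\zg(\ggo)=\RR Z_0$. The difficulty is ruling out extra central directions in $\ker\theta$ that are $g_\ag$-orthogonal and $J_\ag$-paired among themselves. That would give a product with a torus factor, which is still compatible with the bundle description but with a higher-dimensional fiber. Failing that, I would use that the restriction to such a factor would be a K\"ahler torus, and absorb it so that the fiber is a complex $1$-torus modulo a torus factor that is then excluded. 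This point needs care.

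\textbf{Identifying $W$ and reading off the fibration.} Next I would show that (ii) with $\im A_\alpha$ proportional to $W$ identifies $W$ (up to scale) with the generator of $\ag_f$. It also forces $g_\ag$ and $J_\ag$ into the forms \eqref{LCK-J} and \eqref{LCK-g}, which matches the recipe above. Consequently $\zg(\hg_f)=\zg(\kg)\oplus\RR W$ and $M=G_f/K\times S^1$. The holomorphic Tits fibration $T^2=H_f/K\times S^1\to M\to F$ then has fiber a complex $1$-torus, i.e.\ an elliptic curve. Finally, passing to the simply connected cover $S$ of $G_f$ and writing $G=S^1\times S$, the finite covering used in Proposition \ref{refi} becomes the finite abelian group $\Gamma=H/H_0$. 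This yields $M=S^1\times_\Gamma S/H_0$.
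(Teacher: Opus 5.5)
Your plan is, in substance, the paper's proof of Theorem \ref{LCK}. The paper does not prove Theorem \ref{HK} itself: it quotes it from \cite{HsgKms1} and proves only this C-space version, for $G$-invariant structures as in \eqref{Jg}, with connected isotropy and up to finite covers. Several of your steps are sound and even add detail. The $H_f$-symmetrization works because $\theta=Q(\cdot,Z_0)$ and $J$ are invariant under the right $H_f$-action, so $d\omega=\theta\wedge\omega$ is linear in $\omega$ once $\theta$ is fixed. The same device, after a conformal change making the Lee form $G$-invariant, is what justifies your initial averaging over $G$. Your use of the $(A,e_\alpha,f_\alpha)$-components with $A\in\ker\theta$ also correctly gives $\dim\ag_f=1$.

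The genuine gap is the step you flag as the main obstacle. You compare $d\omega=\theta\wedge\omega$ only on $\qg^3$ and on $\ag\times\qg\times\qg$, and never on $\ag\times\ag\times\ag$, which is the component the paper uses. Since $[\ag,\ag]=0$, Lemma \ref{dco} gives $d\omega(A,B,C)=0$ for $A,B,C\in\ag$. Hence $\theta(Z_0)\,\omega_\ag(B,C)=0$ for all $B,C\in\ker\theta\cap\ag$. The hyperplane $\ker\theta\cap\ag$ is therefore isotropic for the nondegenerate form $\omega_\ag$. This forces $\dim\ag-1\leq\unm\dim\ag$, i.e.\ $\dim\ag=2$, so $\zg(\ggo)=\RR Z_0$ and $\dim\ag_f=1$ immediately.

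Your conditions (i) and (ii) alone cannot yield this. Take $\ag=\RR Z_0\oplus\RR W\oplus\bg$, where $\bg\subset\zg(\ggo)$ is a plane $Q$-orthogonal to $Z_0$ with $J_\ag\bg=\bg$ and $g_\ag(\bg,\RR Z_0\oplus\RR W)=0$. Define $(J_\ag,g_\ag,g_\qg)$ on the rest by the recipe \eqref{LCK-J}, \eqref{LCK-g}. Then (i) and (ii) hold, with the Lee form of the recipe, which vanishes on $\bg$. But for $B,C$ spanning $\bg$ one has $d\omega(Z_0,B,C)=0\neq\theta(Z_0)\,\omega(B,C)$. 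So feeding $\dim\ag_f=1$ back into (ii) cannot force $\zg(\ggo)=\RR Z_0$. Nor are extra central directions ``still compatible with the bundle description'': such a K\"ahler torus factor is exactly what the $\ag^3$-component of the LCK equation rules out.

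Finally, your last step asserts without argument that the finite cover of Proposition \ref{refi} ``becomes'' a finite abelian $\Gamma=H/H_0$ acting on the fiber $S^1$. The paper does not supply this either; it only says Theorem \ref{LCK} is essentially equivalent to Theorem \ref{HK}. That refinement still rests on \cite{HsgKms1}.
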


In the context of C-spaces, the above theorem is essentially equivalent to the following structural result.  

\begin{theorem}\label{LCK}
Any Hermitian C-space which is LCK is of the form given by the construction \eqref{LCK-M}-\eqref{LCK-g} for some flag manifold $F=G_f/H_f$, a regular unitary element $W\in\zg(\hg_f)$ and real numbers $a,b,c,t$, where $b\ne 0$ and $c,t>0$.  
\end{theorem}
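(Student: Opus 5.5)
Let $(M=G/K,J,g)$ be a Hermitian C-space as in \eqref{Jg} with $d\omega=\theta\wedge\omega$, $d\theta=0$. The plan is to read off the structure directly from the component formulas of Lemma \ref{dco}.

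First I pin down $\theta$. By \S\ref{dR1}, a closed invariant $1$-form is $\theta=Q(\cdot,Z)$ with $Z\in\zg(\ggo)$. Hence $\theta$ vanishes on $\ag_f\oplus\qg$. I then evaluate $d\omega=\theta\wedge\omega$ on the two types of nonzero components in Lemma \ref{dco}.
\begin{itemize}
\item On triples $E_\alpha,E_\beta,E_\gamma$ with $\alpha+\beta+\gamma=0$, the right-hand side vanishes, since $\theta|_\qg=0$ and $\omega$ pairs only $E_\alpha$ with $E_{-\alpha}$. So $\epsilon_\alpha x_\alpha+\epsilon_\beta x_\beta+\epsilon_\gamma x_\gamma=0$, which means $g_\qg$ is K\"ahler on $(F,J_\qg)$. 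By \S\ref{K}, $x_\alpha=\alpha(\im W')$ for some $W'$ in the chamber of $J_\qg$.
\item On $(A,e_\alpha,f_\alpha)$ with $A\in\ag$, Lemma \ref{dco} gives $g_\ag(J_\ag A,\im A_\alpha)=\theta(A)\,x_\alpha=Q(A,Z)\,\alpha(\im W')$.
\end{itemize}
Since the $\im A_\alpha$ span $\ag_f$ by \eqref{a1g}, and $\alpha(\im W')=-Q(\im A_\alpha,W')$ up to the projection of $W'$ onto $\ag_f$, the second identity becomes the linear relation
\begin{equation*}
g_\ag(J_\ag A,B)=-Q(A,Z)\,Q(B,W'_{\ag_f})\qquad \text{for all } A\in\ag,\ B\in\ag_f .
\end{equation*}

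Next I extract the dimensions. If $\ag_f=0$, the structure is K\"ahler and $\theta=0$, which is the degenerate case I exclude. Otherwise the left-hand side has the form $\omega_\ag(A,B)$, and $\omega_\ag$ is nondegenerate on $\ag$. The right-hand side, as a bilinear form on $\ag\times\ag_f$, has rank at most one. So the map $\ag_f\to\ag^*$, $B\mapsto\omega_\ag(\cdot,B)$, is injective with image of dimension at most $1$. This forces $\dim\ag_f=1$, say $\ag_f=\RR W$ with $W$ unitary, and $Z\neq0$.

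Then $\zg(\hg_f)=\zg(\kg)\oplus\RR W$, and every $A_\alpha$ is proportional to $W$. Since $\theta$ and $\omega_\ag$ are nonzero, $x_\alpha=\alpha(\im W')$ must equal $t\,\alpha(\im W)$ for some $t>0$; replacing $W$ by $-W$ if necessary makes this positive. In particular $W$ is regular and $\Delta_\qg^+=\{\alpha:\alpha(\im W)>0\}$.

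For $\zg(\ggo)$, the relation $\omega_\ag(\cdot,W)=\lambda\,Q(\cdot,Z)$ shows that $J_\ag$ cannot preserve $\RR W$. I then want $\ag=\RR Z\oplus\RR W$, i.e.\ $\dim\zg(\ggo)=1$. My plan is to exploit that $\omega_\ag$ annihilates $\zg(\ggo)\ominus\RR Z$ against $W$, and that on $\zg(\ggo)$ the identity $d\omega=\theta\wedge\omega$ also requires $\theta\wedge\omega_\ag=0$ on $\Lambda^3\ag$, since $d\omega$ has no $\ag\ag\ag$ component. When $\dim\ag\geq4$, the form $\theta\wedge\omega_\ag$ with $\omega_\ag$ nondegenerate and $\theta\neq0$ is nonzero. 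Hence $\dim\ag=2$, so $\zg(\ggo)=\RR Z_0$ with $Z_0$ proportional to $Z$.

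Finally I match the normal form. Writing $J_\ag Z_0=aZ_0+bW$, the condition $J_\ag^2=-I$ gives matrix \eqref{LCK-J}, and $b\neq0$ because $J_\ag$ does not preserve $\RR Z_0$. Compatibility then fixes $g_\ag$ up to a $2\times2$ positive matrix. Plugging $B=W$ into the displayed relation determines $g_\ag(J_\ag Z_0,W)$ and $g_\ag(J_\ag W,W)=0$, which gives the matrix in \eqref{LCK-g} with some $c>0$. Closedness of $K$ is automatic since $K$ is given.

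The main obstacle I expect is the dimension step: making rigorous that $\ag_f$ is one-dimensional and $\zg(\ggo)$ is one-dimensional. This relies on the $\Lambda^3\ag$ component and on the rank argument together; if the rank argument has gaps, I would fall back on a direct computation in an adapted $g_\ag$-unitary basis of $\ag$.
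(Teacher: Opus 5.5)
Your overall route is the paper's. The $\qg\qg\qg$ components of $d\omega=\theta\wedge\omega$ make $g_\qg$ K\"ahler on $F$, with $x_\alpha=\alpha(\im W')$. The vanishing of $d\omega$ on $\Lambda^3\ag$ forces $\omega_\ag$ to vanish on $\ker\theta|_\ag$, hence $\dim\ag=2$. The mixed components $(A,e_\alpha,f_\alpha)$ tie the fiber to the base.

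Your rank argument for $\dim\ag_f\le 1$ is correct. It only needs three facts: $\omega_\ag(\cdot,\im A_\alpha)=x_\alpha\,Q(\cdot,Z)|_\ag$ for each $\alpha$, the $\im A_\alpha$ span $\ag_f$, and $\omega_\ag$ is nondegenerate. It is not needed, though. Once $\dim\ag=2$, $0\neq Z\in\zg(\ggo)$ and $\ag_f\neq 0$ (non-K\"ahler), you get $\dim\zg(\ggo)=\dim\ag_f=1$ directly, which is how the paper proceeds. So the dimension count is not the real obstacle.

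The delicate step is showing that the K\"ahler vector $W'$ of $g_\qg$ lies in $\ag_f$, i.e.\ is $Q$-orthogonal to $\zg(\kg)$. This is what identifies $\zg(\kg)$ with the orthogonal complement of $W$ in $\zg(\hg_f)$, as the construction requires. Your justification at this point is wrong.

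\begin{itemize}
\item One has $\alpha(\im W')=-Q(Z_\alpha,W')$, whereas $-Q(\im A_\alpha,W')=-Q(Z_\alpha,W'_{\ag_f})$.
\item Since the $Z_\alpha$ span $\zg(\hg_f)$, the identity ``$\alpha(\im W')=-Q(\im A_\alpha,W')$ up to projection'' holds for all $\alpha$ only if $W'\perp\zg(\kg)$. That is the very thing to be proved.
\item Your displayed relation only involves $W'_{\ag_f}$, so it has already discarded the needed information.
\end{itemize}

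The conclusion $x_\alpha=t\,\alpha(\im W)$ in your next paragraph is nevertheless true, but it must be read off the mixed identity $g_\ag(J_\ag A,\im A_\alpha)=Q(A,Z)\,x_\alpha$ itself. With $\ag_f=\RR W$ and $Q(W,W)=1$, one has $\im A_\alpha=Q(\im H_\alpha,W)W=-\alpha(\im W)W$. Taking $A=Z$ then gives $x_\alpha=-\tfrac{g_\ag(J_\ag Z,W)}{Q(Z,Z)}\,\alpha(\im W)$ for every $\alpha\in\Delta_\qg^+$.

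The paper argues equivalently: $Q(P_\ag J_\ag Z,Z_\alpha)=Q(Z,Z)x_\alpha=-Q(Z,Z)Q(W',Z_\alpha)$. Since the $Z_\alpha$ span all of $\zg(\hg_f)$, not just $\ag_f$, this gives $(P_\ag J_\ag Z)_{\ag_f}=-Q(Z,Z)W'$, and hence $W'\in\ag_f$.

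Finally, on the $2$-dimensional $\ag$, compatibility with $J_\ag$ alone determines $g_\ag$ up to the scalar $c>0$. No further input from your relation is needed to reach the matrix form in the construction.
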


\begin{remark}\label{LCK-rem2} \hspace{1cm}
\begin{enumerate}[{\rm (i)}] 
\item In particular, any LCK complex C-space has $b_1(M)=1$ by \S\ref{dR1} (see \cite[Claim 42.11]{OrnVrb} for a geometric alternative proof).  On the other hand, $b_2(M)=\dim{\zg(\kg)}=\dim{\zg(\hg_f)}-1$ (see \S\ref{dR2}).  

\item Any LCK Hermitian structure as in \eqref{Jg} is therefore {\it Vaisman} (i.e., $\nabla^g\theta=0$).  Indeed, 
$$
(\nabla^g_X\theta)Y = -\theta(\nabla^g_XY) = Q\left(\nabla^g_XY,bcZ_0\right) =0, \qquad\forall X,Y\in\pg,
$$
since $\nabla^g_XY\in\RR W\oplus\qg$ (see  Corollary \ref{LC}).  We refer to \cite{GdcMrnOrn} and \cite[Theorem 42.7]{OrnVrb} for geometric alternative proofs. 

\item An LCK Hermitian structure as in \eqref{Jg} is always BTP by Proposition \ref{BTP-K}; indeed, 
$$
A_\alpha=Q(H_\alpha,W)W = -\alpha(W)W = \im\alpha(\im W)W = \im x_\alpha W, \qquad\forall\alpha\in\Delta_\qg^+.  
$$  
This was proved to hold for any Vaisman Hermitian manifold in \cite[Corollary 3.9]{AndVll}.  

\item An LCK complex C-space admits an invariant pluriclosed metric if and only if it is the Hopf surface $M=\SU(2)\times S^1$.  

\item An LCK complex C-space can never be balanced (see Proposition \ref{lck-prop2} below).  
\end{enumerate}
\end{remark}

\begin{proof}
Let $(J,g)$ be an LCK $G$-invariant structure as in \eqref{Jg} on a C-space $M=G/K=G_f/K\times T^{\dim{\zg(\ggo)}}$ which fibers on the flag $F=G_f/H_f$.  Thus there exists $Z\in\zg(\ggo)$ such that 
\begin{equation}\label{LCK-1}
d\omega=\theta_Z\wedge\omega, \qquad\mbox{where} \quad \theta_Z:=Q(\cdot,Z), 
\end{equation}
is a $G$-invariant closed $1$-form (see \S\ref{dR-sec} or \S\ref{form-sec}).  This implies that $d\omega(E_\alpha,E_\beta,E_\gamma)=0$ for all $\alpha,\beta,\gamma\in\Delta_\qg$, so it follows from the formula for $d\omega$ given in Lemma \ref{dco} that the metric $g_\qg$ is K\"ahler on $F=G_f/H_f$, that is, there exists $W\in\zg(\hg_f)$ such that $g_\qg=g_W$, i.e., $x_\alpha=\alpha(\im W)>0$ for all $\alpha\in\Delta_\qg^+$ (see \S\ref{K}).  

On the other hand, since the non-degenerate $2$-form $\omega_\ag$ restricted to the $g$-orthogonal complement of $Z$ in $\ag$ vanishes by \eqref{LCK-1}, we obtain that $\dim{\ag}=2$, that is, 
$$
\ag=\RR Z\oplus\RR J_\ag Z, \qquad \zg(\ggo)=\RR Z, \qquad\dim{\ag_f}=1.   
$$ 
Finally, using \eqref{LCK-1} and Lemma \ref{dco}, for any $\alpha\in\Delta_\qg^+$, 
\begin{align*}
Q(P_\ag J_\ag Z, Z_\alpha) =& g_\ag(J_\ag Z, Z_\alpha) = g_\ag(J_\ag Z, \im A_\alpha) =d\omega(Z,e_\alpha,f_\alpha) = \theta_Z(Z)\omega(e_\alpha,f_\alpha) \\ 
=& Q(Z,Z)x_\alpha = -Q(Z,Z)Q(W,Z_\alpha) = Q(-Q(Z,Z)W,Z_\alpha), 
\end{align*}     
which implies that $(P_\ag J_\ag Z)_{\ag_f}=-Q(Z,Z)W$ since $\{ Z_\alpha:\alpha\in\Delta_\qg^+\}$ generates $\zg(\hg_f)$.  Thus $W\in\ag_f$ and so $\ag=\RR Z\oplus\RR W$ and $\ag_f=\RR W$.  We therefore obtain that $(M,J,g)$ corresponds to the construction given in \eqref{LCK-c} for $Z_0:=-Q(W,W) Z$ (i.e., $J_\ag Z_0=W$) and $c:=g_\ag(Z_0,Z_0)$, concluding the proof.  
\end{proof}

We note that given a flag $F=G_f/H_f$, each regular $W\in\zg(\hg_f)$ determines a C-space $M=G/K$ admitting an LCK structure.  

\begin{proposition}\label{eqdif}
For two C-spaces $M$ and $M'$ defined by $F=G_f/H_f$ and $\im W,\im W'\in Ch(J_\qg)$, where $Ch(J_\qg)\subset\im\zg(\hg_f)\setminus\{ 0\}$ is a chamber as in \eqref{ch}, the following conditions are equivalent: 
\begin{enumerate}[{\rm (i)}]
\item $M$ and $M'$ are equivariantly diffeomorphic (see \S\ref{gauge}).  

\item $W=t W'$ for some $t>0$.  

\item $M=M'$. 
\end{enumerate}
\end{proposition}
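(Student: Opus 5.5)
The plan is to prove (ii)$\Rightarrow$(iii), (iii)$\Rightarrow$(i), and (i)$\Rightarrow$(ii); only the last requires work.

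For (ii)$\Rightarrow$(iii), recall that the C-space in \eqref{LCK-M} is determined by $F$ and the line $\RR W$, through the $Q$-orthogonal decomposition $\zg(\hg_f)=\zg(\kg)\oplus\RR W$ (Remark \ref{LCK-rem}, (i)). If $W=tW'$ with $t>0$, then $\RR W=\RR W'$. Hence $\zg(\kg)=\zg(\kg')$, so $K=K'$ and $M=M'$. The implication (iii)$\Rightarrow$(i) is trivial, since the identity map is an equivariant diffeomorphism.

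For (i)$\Rightarrow$(ii), I use the description of $G$-equivariant diffeomorphisms of $G/K$ from \S\ref{gauge}. Up to the outer automorphism part, such a map is induced by an automorphism $\psi$ of $G$ with $\psi(K)=K'$. Since $G=G_f\times S^1$, $\psi$ preserves $G_f=[G,G]$, so it induces an automorphism of $\ggo_f$ sending $\kg$ to $\kg'$. It therefore sends $[\hg_f,\hg_f]=[\kg,\kg]$ to itself and $\zg(\kg)$ to $\zg(\kg')$.

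I then normalize $\psi$. Since $N_\ggo(\kg)=\kg\oplus\pg_0\supset\hg_f$, composing with an inner automorphism lets me assume $\psi$ preserves the maximal torus $\tg_f$. Then $\psi$ acts on $\tg_f$ by an element of $\Aut(\Delta)$ that preserves $\Delta_{\hg_f}$ and hence $\zg(\hg_f)$. Because $\psi$ is $Q$-orthogonal, it also maps the orthogonal complement $\RR W$ of $\zg(\kg)$ onto $\RR W'$. So $\psi W=\pm\lambda W'$ for some $\lambda>0$, and $\lambda=1$ when both are unit vectors as in Theorem \ref{LCK}.

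The main obstacle is to pass from ``$W$ and $W'$ are related by some $\psi\in\Aut(\Delta)$ preserving $\Delta_{\hg_f}$'' to the literal equality $W=tW'$. This can only hold if, as subspaces of $\tg_f$, the spaces $M$ and $M'$ are required to agree without any identification. I would read ``equivariantly diffeomorphic'' in the sense of \S\ref{gauge} as covering only maps that fix the fixed Tits fibration data, that is, $\psi|_{\zg(\hg_f)}=\id$ or $\psi$ in the Weyl group of $H_f$.

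Under that reading, $\psi$ acts trivially on $\zg(\hg_f)$, so $W=\pm tW'$. The sign is then fixed by the chamber condition: $\im W$ and $\im W'$ lie in the same open chamber $Ch(J_\qg)$, which contains no pair of opposite vectors. Hence $t>0$.

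If \S\ref{gauge} allows more general automorphisms, I would first show that any $\psi\in\Aut(\Delta)$ preserving both $\Delta_{\hg_f}$ and the chamber $Ch(J_\qg)$ acts trivially on $\zg(\hg_f)$, or else restrict the statement to that case. This uses that $\psi$ permutes $\Pi_\qg$, by \S\ref{comp}, (iii)--(vi), and that the $Z_\alpha$ with $\alpha\in\Pi_\qg$ form a basis of $\zg(\hg_f)$. Checking this step is where I expect the difficulty to lie.
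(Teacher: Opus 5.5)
Your reduction holds up to the point where you obtain $\psi$ preserving $\tg_f$ and $\Delta_{\hg_f}$ with $\psi(\RR W)=\RR W'$, and it can be sharpened. A $G$-equivariant diffeomorphism $G/K\to G/K'$ has the form $aK\mapsto amK'$ with $m^{-1}Km=K'$, so $\psi$ is inner and, after your normalization, is induced by an element of $N_{G_f}(H_f)$; there is no ``outer part''. This matters because the claim you plan to check is false on all of $\Aut(\Delta)$: $-w_0$ for the full flag of $\SU(3)$ preserves the positive chamber but is not the identity. For inner $\psi$ the claim is true. If $\psi$ preserves $Ch(J_\qg)$, averaging a $\psi$-orbit in this convex cone gives a $\psi$-fixed regular $Z_0$. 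Its Weyl stabilizer is generated by the reflections $s_\alpha$, $\alpha\in\Delta_{\hg_f}$, so $\psi|_{\zg(\hg_f)}=\id$. This is exactly the paper's step ``since these vectors belong to the same chamber''. Your other option, reading (i) so that $\psi|_{\zg(\hg_f)}=\id$, simply assumes (iii).

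The genuine gap is the sign, and it cannot be closed. Since $W$ and $-W$ define the same $\zg(\kg)$, you only know $\psi W\in\RR W'$. If $\psi W\in\RR_{<0}W'$, then $\psi$ carries $Ch(J_\qg)$ onto $-Ch(J_\qg)$, and the chamber gives no information. This case occurs. In the coordinates of Example \ref{AW}, take $W=(5,-4,-1)$ and $W'=(4,-5,1)$. Both are regular with $a_1>a_3>a_2$, so $\im W,\im W'$ lie in the same chamber of \eqref{ch}, and they are not proportional. But $\zg(\kg)=\RR(1,2,-3)$ and $\zg(\kg')=\RR(2,1,-3)$, i.e., $K=S^1_{1,2}$ and $K'=S^1_{2,1}$. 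These are conjugate by an element $\sigma\in N_{\SU(3)}(T^2)$ swapping the first two coordinates, with $\Ad(\sigma)W=-W'$; this is the classical symmetry $N_{k,l}\cong N_{l,k}$ of Aloff--Wallach spaces. So $aK\mapsto a\sigma^{-1}K'$ is a $G$-equivariant diffeomorphism $M\to M'$, while (ii) and (iii) fail. Thus (i)$\Rightarrow$(ii) is false as stated.

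The paper's proof has the same flaw. It phrases (i) as $\vp W=tW'$ with $t>0$, whereas the correct condition is $t\neq 0$. What your argument does prove is (ii)$\Leftrightarrow$(iii)$\Rightarrow$(i), together with the following: (i) holds if and only if either (ii) holds or $W'\in\RR_{>0}(-\psi_0 W)$. Here $\psi_0$ is the element of $N_{G_f}(H_f)$ sending $Ch(J_\qg)$ onto $-Ch(J_\qg)$, if one exists; its action on $\zg(\hg_f)$ is unique. Hence the proposition holds exactly when no such $\psi_0$ exists, or when $\psi_0|_{\zg(\hg_f)}=-\id$.
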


\begin{proof}
The homogeneous spaces $M$ and $M'$ defined by $W$ and $W'$ are equivariantly diffeomorphic if and only if 
there exist $\vp\in N_{G_f}(H_f)$ such that $\vp W=tW'$ for some $t>0$.  Since these vectors belong to the same chamber, this is equivalent to $W=tW'$ (see \S\ref{comp}), that is, $\kg=\kg'$ and so $M=M'$.  
\end{proof}

\begin{example}
Arguing as in \ref{AW}, if we consider $F=\SU(2)/S^1\times\SU(2)/S^1$,  $\dynkin[scale=2] A{o}\times \dynkin[scale=2] A{o}$, and $W\perp pZ_1+qZ_2\in\tg$, then $G_f/S^1_{p,q}=S^3\times S^2$ as a manifold for all $p,q\in\NN$.  We therefore obtain a single LCK manifold up to diffeomorphism, 
$$
M^6_{p,q}:=G_f/S^1_{p,q}\times S^1= S^3\times S^2\times S^1, \qquad\forall p,q\in\NN,   
$$   
but as homogeneous spaces, it follows from Proposition \ref{eqdif} that $M^6_{p,q}$ is equivariantly diffeomorphic to $M^6_{p',q'}$ if and only if $p/q=p'/q'$.
\end{example}

The following intriguing characterization follows from \S\ref{K} and Lemma \ref{c10}. 

\begin{proposition}\label{lck-prop1}
Let $(M=G_f/K\times S^1,J)$ be a complex C-space admitting an LCK metric defined as in \eqref{LCK-g}.  Then the following conditions are equivalent: 
\begin{enumerate}[{\rm (i)}] 
\item $c_1(M,J)=0$, i.e., the first Chern class vanishes.    

\item  The K\"ahler metric $g_W$ on $F$ is Einstein (i.e., $g_W$ is the unique K\"ahler-Einstein metric on $(F,J_\qg)$ up to scaling, see \S\ref{K}).    

\item $W=-tZ_{J_\qg}$ for some $t>0$.  
\end{enumerate}
\end{proposition}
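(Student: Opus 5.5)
The approach is to compare the two conditions directly inside $\zg(\hg_f)$, using that in the LCK construction $\zg(\hg_f)=\zg(\kg)\oplus\RR W$ with $\ag_f=\RR W$ and $Q(W,W)=1$. By Lemma \ref{c10}, $c_1(M,J)=0$ if and only if $Z_{J_\qg}\in\ag_f=\RR W$, that is, $Z_{J_\qg}=sW$ for some $s\in\RR$. So the equivalence (i)$\Leftrightarrow$(iii) reduces to a sign question: show that $s<0$ automatically, so that $W=-tZ_{J_\qg}$ with $t=-1/s>0$.

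For the sign, use that $J_\qg$ is attached to $\Delta_\qg^+=\{\alpha:\alpha(\im W)>0\}$. By \eqref{Jal}, $\rho_{J_\qg}=-\im Z_{J_\qg}$ lies in the same chamber as $\im W$, i.e. $\alpha(-\im Z_{J_\qg})>0$ for every $\alpha\in\Delta_\qg^+$. Now pair with $W$:
$$
Q(Z_{J_\qg},W)=\sum_{\alpha\in\Delta_\qg^+}Q(\im H_\alpha,W)=-\sum_{\alpha\in\Delta_\qg^+}x_\alpha<0 .
$$
This uses $Q(\im H_\alpha,W)=-\alpha(\im W)$, the identity appearing in the proof of Theorem \ref{LCK} (there written $x_\alpha=-Q(W,Z_\alpha)$). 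Hence $s<0$, which gives (i)$\Rightarrow$(iii). Conversely, (iii) gives $Z_{J_\qg}\in\RR W=\ag_f$, so $c_1=0$ by Lemma \ref{c10}.

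For (ii)$\Leftrightarrow$(iii), recall from \S\ref{K} that $g_Z$ with $x_\alpha=\alpha(Z)$ is the K\"ahler metric attached to $Z$ in the chamber, and that the K\"ahler--Einstein metric is $g_{-\im Z_J}$, unique up to scaling. The map $Z\mapsto g_Z$ is linear and injective on $\im\zg(\hg_f)$, because the $\alpha|_{\zg(\hg_f)}$ for $\alpha\in\Pi_\qg$ form a basis of the dual. Therefore $g_{\im W}$ is a positive multiple of $g_{-\im Z_{J_\qg}}$ if and only if $\im W=-t\,\im Z_{J_\qg}$ with $t>0$, which is (iii).

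The only delicate point is sign bookkeeping: making sure the identification $\rho_J\equiv-\im Z_J$ and the convention $x_\alpha=\alpha(\im W)$ lead to $-tZ_{J_\qg}$ rather than $+tZ_{J_\qg}$. I would fix this by checking the case $F=\CC P^1$ directly.
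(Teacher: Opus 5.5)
Your proof is correct and follows the paper's route: Lemma \ref{c10} with $\ag_f=\RR W$ reduces (i) to $Z_{J_\qg}\in\RR W$, and the uniqueness up to scaling of the K\"ahler--Einstein metric $g_{-\im Z_{J_\qg}}$ from \S\ref{K} gives (ii)$\Leftrightarrow$(iii). Your pairing $Q(Z_{J_\qg},W)=-\sum_{\alpha\in\Delta_\qg^+}x_\alpha<0$ already settles the sign, so the proposed $\CC P^1$ check is unnecessary.
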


We do not know whether the above characterization of LCK (or Vaisman, see Remark \ref{LCK-rem2}, (ii)) manifolds with zero first Chern class holds in some sense beyond the homogeneous case.  

\begin{proposition}\label{lck-prop2}
Let $(M=G_f/K\times S^1,J,g)$ be an LCK Hermitian C-space defined as in \eqref{LCK-M}-\eqref{LCK-g} by $F=G_f/H_f$, a regular $W\in\zg(\hg_f)$ and $a,b,c,t\in\RR$.  Then the  following holds: 
\begin{enumerate}[{\rm (i)}]
\item $(J,g)$ is never balanced and it is LCB if and only if $a=0$.   
 
\item $(J,g)$ is always BTP and it is BAS if and only if $M=\SU(2)\times S^1$, the Hopf surface.  
  
\item If $c_1(M,J)=0$, then the metric $g$ is CYT if and only if  
$$
\tfrac{t}{c} = (a^2+1)|\Delta_\qg^+| |Z_{J_\qg}|,
$$  
where $|\Delta_\qg^+|$ denotes the number of positive complementary roots of the flag $F=G_f/H_f$.  In particular, $(M=G_f/K\times S^1,J)$ is always a CYT manifold.     
\end{enumerate}
\end{proposition}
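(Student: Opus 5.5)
The three parts can be handled one at a time by plugging the explicit data \eqref{LCK-M}--\eqref{LCK-g} into the earlier characterizations. Throughout we use $\ag=\RR Z_0\oplus\RR W$, $\ag_f=\RR W$ and $\zg(\kg)=W^\perp\cap\zg(\hg_f)$. We also use $x_\alpha=t\alpha(\im W)$, so $Z_\alpha=(\im H_\alpha)_{\zg(\hg_f)}$ satisfies $Q(Z_\alpha,W)=-\alpha(\im W)$ up to the sign convention fixed in the proof of Theorem \ref{LCK}.

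\emph{Part (i).} We compute $(Z_{J_\qg,g_\qg})_\ag=Q(Z_{J_\qg,g_\qg},W)W$. Its coefficient is
$$
\sum_{\alpha\in\Delta_\qg^+}\tfrac{1}{x_\alpha}Q(Z_\alpha,W)=-\tfrac{1}{t}|\Delta_\qg^+|,
$$
which is nonzero. Proposition \ref{bal} then shows that the structure is never balanced. For LCB we apply \eqref{LCB}, which requires $P_\ag J_\ag W\in\RR Z_0$. From \eqref{LCK-J} we have $J_\ag W=-\tfrac{a^2+1}{b}Z_0-aW$. Using \eqref{LCK-g} we find $g_\ag(J_\ag W,W)=c\bigl((a^2+1)a-a(a^2+1)\bigr)$, and we check whether $Q(P_\ag J_\ag W,W)=g_\ag(J_\ag W,W)$ vanishes. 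A direct $2\times2$ computation gives $g_\ag(J_\ag W,W)=ac\,(\dots)$; this is a routine check that the quantity is proportional to $a$, so LCB holds iff $a=0$.

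\emph{Part (ii).} BTP is Remark \ref{LCK-rem2}(iii), obtained via Proposition \ref{BTP-K}. For BAS, Remark \ref{BP} (that is, \cite[Theorem B]{BrbPdc}) says BAS holds iff $g_\qg$ is the restriction of a bi-invariant metric, i.e. $x_\alpha$ is constant on each $\Delta_{\qg_i}$. But $g_\qg$ is K\"ahler, so $x_{\alpha+\beta}=x_\alpha+x_\beta$. Both conditions can hold only if no two complementary roots sum to a complementary root, which also rules out $\hg_f$ being nonabelian via the adjacency arguments used in Theorem \ref{Bflat}. The main obstacle here is to pass from ``$\Delta_\qg^+$ contains no sums'' to $F=\CC P^1$ with $\dim\zg(\hg_f)=1$. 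Since $\dim\ag_f=1$, the flag satisfies $b_2(F)=1$ or else $\zg(\kg)\ne0$. If $\zg(\kg)\neq0$, then $F$ has at least two simple complementary roots in some simple factor, and these add up to a complementary root unless the factors are distinct; distinct factors force $x$ proportional to $\alpha(\im W)$ on each factor. The task is then to check that $\SU(2)$ with $K$ trivial is the only remaining possibility, and I expect this case analysis, using that $G_f/K$ fibers with a $1$-dimensional fiber, to be the delicate step. Conversely, for $\SU(2)\times S^1$ all $x_\alpha$ are equal and BAS follows from Proposition \ref{BAS}.

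\emph{Part (iii).} By Proposition \ref{lck-prop1} we may take $W=-Z_{J_\qg}/|Z_{J_\qg}|$. Since $\ag_f=\RR W$, the CYT condition \eqref{CYT3} reduces to the single test $A=W$:
$$
g_\ag\bigl((Z_{J_\qg,g_\qg})_\ag,W\bigr)=Q(Z_{J_\qg},W)=-|Z_{J_\qg}|.
$$
Part (i) gives $(Z_{J_\qg,g_\qg})_\ag=-\tfrac{|\Delta_\qg^+|}{t}W$, while \eqref{LCK-g} gives $g_\ag(W,W)=c(a^2+1)$. The equation therefore becomes $\tfrac{|\Delta_\qg^+|}{t}c(a^2+1)=|Z_{J_\qg}|$; the precise form of this relation should be reconciled with the statement by tracking the normalization $|W|=1$ and the sign conventions. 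Finally, since $t,c>0$ are free, this relation can always be achieved, so $(M,J)$ is always CYT.
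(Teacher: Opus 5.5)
Your route is the paper's own. Its proof consists of computing $(Z_{J_\qg,g_\qg})_\ag=-\tfrac{|\Delta_\qg^+|}{t}W$ and citing Proposition \ref{bal}, \eqref{LCB}, Proposition \ref{CYT}, Remark \ref{LCK-rem2}(iii) and \cite{BrbPdc}, and your non-balanced and BTP arguments are fine. The trouble is that the steps you call ``routine'' and ``to be reconciled'' do not go through, and the corresponding claims appear to be false as stated.

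\emph{LCB.} The quantity you display, $c\bigl((a^2+1)a-a(a^2+1)\bigr)$, is identically zero. It must be, since $g_\ag(J_\ag X,X)=0$ for any compatible pair. Concretely, $P_\ag J_\ag W=-bc\,Z_0\in\zg(\ggo)$ for every $a$, so \eqref{LCB} holds unconditionally. This is just the general fact that an LCK structure satisfies $d\omega^{n-1}=(n-1)\theta\wedge\omega^{n-1}$, so its Lee form $(n-1)\theta$ is closed. The condition $a=0$ only appears if one drops $P_\ag$ and asks $J_\ag W\in\RR Z_0$. So the ``only if $a=0$'' direction cannot be proved.

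\emph{CYT formula in (iii).} There is nothing left to reconcile, because $Q(W,W)=1$ is already built in. Your computation gives $\tfrac{t}{c}=\tfrac{(a^2+1)|\Delta_\qg^+|}{|Z_{J_\qg}|}$, and by my check this is the correct relation. It agrees with the displayed one only when $|Z_{J_\qg}|=1$. The consequence that $(M,J)$ is always CYT is unaffected.

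\emph{BAS in (ii).} Reducing via \cite{BrbPdc} to ``$x_\alpha$ constant on each $\Delta_{\qg_i}$'' is right, but the passage to $F=\CC P^1$ fails. The adjacency argument of Theorem \ref{Bflat} is not available here, since it uses $R^b=0$ through \eqref{Bcurv2}--\eqref{Bcurv4}, not merely BAS. Since $x_\alpha=t\alpha(\im W)$ with $W\in\zg(\hg_f)$, compare a simple root $\gamma\in\Pi_{\qg_i}$ with the highest root. This shows constancy on $\Delta_{\qg_i}^+$ holds exactly when $\Pi_{\qg_i}=\{\gamma\}$ and $\gamma$ has coefficient $1$ in the highest root, i.e., when $G_i/H_i$ is an irreducible Hermitian symmetric space. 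In that case no two positive complementary roots add up to a root, the K\"ahler condition is vacuous, and $\hg_i$ need not be abelian. For example, $F=\CC P^n$ with $n\geq 2$ gives the Hopf manifolds $S^{2n+1}\times S^1=\SU(n+1)/\SU(n)\times S^1$. There $\Delta_\qg^+=\{\alpha_{12},\dots,\alpha_{1,n+1}\}$ and all $x_\alpha$ coincide, so $g$ is as in Proposition \ref{BTP-exi} and hence BAS by Proposition \ref{BAS}. The same happens for $F=\CC P^1\times\CC P^1$. So the most your approach can yield is ``BAS if and only if every factor $G_i/H_i$ of $F$ is an irreducible Hermitian symmetric space'', not ``if and only if $M=\SU(2)\times S^1$''.
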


\begin{proof}
Using that $Q(\im H_\alpha,W)=-x_\alpha$ for any $\alpha\in\Delta_\qg^+$, it is easy to see that the projection on $\ag$ of the metric Koszul vector (see \eqref{koszul-g}) is given by 
$$
(Z_{J_\qg,g_\qg})_\ag = (Z_{J_\qg,g_\qg})_{\ag_f} = Q(Z_{J_\qg,g_\qg},W)W = \sum_{\alpha\in\Delta_\qg^+}\tfrac{1}{t\alpha(\im W)} Q(\im H_\alpha,W)
=-\tfrac{|\Delta_\qg^+|}{t} W.  
$$
In particular, $(J,g)$ can never be balanced by Proposition \ref{bal}, (i) and the LCB condition follows from \eqref{LCB}.  On the other hand, the CYT characterization follows from Proposition \ref{CYT} and the BTP condition from Remark \ref{LCK-rem2}, (iii).  The BAS characterization follows from \cite{BrbPdc}.  
\end{proof}

\begin{example}\label{LCK-c}
For any C-space admitting an LCK structure, which are classified in Theorem \ref{LCK}, $d=1$, $\dim{\zg(\ggo)}=1$, $\dim{\zg(\hg_f)}=\dim{\zg(\kg)}+1$ and $\dim{S}=1$, so we obtain
$$
h^{1,0}=0, \quad h^{0,1}=1, \qquad b_1=1, 
$$
$$
h^{2,0}=0, \quad h^{1,1}=\dim{\zg(\kg)}, \quad h^{0,2}=0, \qquad b_2=\dim{\zg(\kg)}, 
$$
$$
h^{3,0}=0, \quad h^{2,1}=D\geq 1, 
\quad
h^{1,2}=\dim{\zg(\kg)}+1,  \quad h^{0,3}=0, \qquad b_3=b_3(G_f/K)+\dim{\zg(\kg)},
$$
$$
h^{1,1}_{BC}= \dim{\zg(\kg)}+1,
\qquad h^{2,1}_{BC}=0,\qquad 
h^{1,1}_A=\dim{\zg(\kg)}. 
$$
\end{example}

% 14/8/26

\section{Geography of complex structures}\label{geo-sec}

Given a C-space $M=G/K$ and a Tits fibration $M\rightarrow F$ over a flag manifold $F$, let $\cca$ denote the space of all $G$-invariant complex structures as in \eqref{J}, i.e., those such that the Tits fibration is holomorphic.  It follows from Theorem \ref{C2} and Proposition \ref{bihol-h} that any $G$-invariant complex structure on $M=G/K$ is biholomorphic to one in $\cca$.  Recall from \eqref{Cc10}, \eqref{Cbal} and Remark \ref{Ccyt} the subspaces $\cca_0, \cca_{bal}, \cca_{cyt}\subset\cca$ of structures having zero first Chern class, admitting a balanced metric and admitting a CYT metric, respectively.  

According to \eqref{Ccal}, if $\{J_1,\dots,J_k\}$ is the set of all $G_f$-invariant complex structures on the flag manifold $F=G_f/H_f$, then $\cca=\cca_1\sqcup\dots\sqcup\cca_{k/2}$, where 
$$
\cca_i:=\left\{ J=(J_\ag,\pm J_i): J_\ag^2=-I\right\},  \qquad \dim{\cca_i}=2d^2, 
$$ 
has four connected components.  After reordering, we obtain from \eqref{Cc10} and \eqref{Cbal} that 
$$
\cca=\underbrace{\cca_1\sqcup\dots\sqcup\cca_l}_{\cca_0} \sqcup \underbrace{\cca_{l+1}\sqcup\dots\sqcup\cca_{l+m}}_{\cca_{bal}} \sqcup\; \cca_?, 
$$
where $\cca_?$ is nonempty in general if $\zg(\kg)\ne 0$ (see Example \ref{c10-bal}).  If $\zg(\kg)=0$ (i.e., $K$ is either semisimple or trivial), then $\cca_0=\cca$.  However, in the case when $0\ne\zg(\kg)$, $\cca_0=\emptyset$ generically among the set of all C-spaces fibering over a fixed flag $F$.  On the other hand, $\cca_{bal}=\emptyset$ if $\zg(\kg)=0$ and $\cca_{bal}\ne\emptyset$ if $\zg(\kg)\ne 0$ and $\rank(\ggo_i)\geq 2$ for all $i=1,\dots,s$ (see Theorem \ref{bal2}).  On the other hand, $\cca_{lcb}=\cca$ if $\zg(\ggo)\ne 0$ and $\cca_{lcb}=\cca_{bal}$ if $\zg(\ggo)=0$.   

We also consider the subspace
$$
\cca_{an}:=\left\{ J\in\cca: J\;\mbox{is compatible with some almost-normal $g$}\right\},  
$$
which decomposes as $\cca_{an}=\cca_{1,an}\sqcup\dots\sqcup\cca_{k,an}$, where 
$$ 
\cca_{i,an}:=\left\{ J=(J_\ag,J_i): J_\ag\;\mbox{is compatible with $g|_{\ag}$ for some almost-normal $g$}\right\}.  
$$  
Note that $\cca_{g_Q}\subset\cca_{an}$.

The subspace $\cca_{cyt}\subset\cca_0$ decomposes as
$$
\cca_{cyt}=\cca_{1,cyt}\sqcup\dots\sqcup\cca_{l,cyt}, \qquad \cca_{i,cyt}:=\cca_{cyt}\cap\cca_i, \quad\forall i=1,\dots,l,   
$$
and satisfy the following properties: 
\begin{enumerate}[{\small $\bullet$}]
\item $\cca_{i,cyt}\ne\emptyset$ for all $i$ (see Remark \ref{Ccyt}).  If $\dim{\zg(\kg)}=0$, then $\cca_{i,an}\subset\cca_{i,cyt}$ for all $i$ (see Corollary \ref{CYT-cor}, (i)).  

\item $\cca_{cyt}=\cca_0=\cca$ if $\zg(\kg)=0$ and $\dim{\ag_f}=1$ (see Corollary \ref{CYT-cor}, (ii)).  

\item $\cca_{i,cyt}\subsetneq\cca_i$ for all $i$ if $\dim{\ag_f}\geq 2$ (see Remark \ref{Ccyt}). 

\item $\cca_{i,cyt}$ is a proper and nonempty open subset of $\cca_i$ for all $i$ if $G$ is semisimple and $\dim{\ag}=2$ (see Corollary \ref{CYT4} and Figure \ref{CYT-fig}).  
\end{enumerate}

On an LCK C-space as in \S\ref{LCK-sec},  
$$
\cca=\cca_{lck}\sqcup\cca_{bal}\sqcup\cca_{?}, 
$$
where $\cca_{lck}=\cca_1$ if the corresponding regular element satisfies that $\im W\in Ch(J_1)$.  Moreover, $\cca_0=\cca_{lck}$ if $W\in\RR_{<0}Z_{J_1}$ and $\cca_0=\emptyset$ otherwise (see Proposition \ref{lck-prop1}).  The subspace $\cca_{bal}$ consists of the union of all $\cca_i$ such that $W$ is orthogonal to some element in the cone $C(J_i)$ (see Proposition \ref{bal}, (ii)).   

For a Lie group $M=G$, the space $\cca=\cca_0$ consists of all left-invariant complex structures which are also $T$-right-invariant for some fixed maximal torus $T$ of $G$.  We have that 
$$
\cca=\cca_{cyt}\sqcup\cca_?, \qquad \cca_{skt}=\cca_{ab}=\cca_{bf}\subsetneq\cca_{cyt} \subsetneq\cca,
$$
where $\cca_{skt}$ and $\cca_{bf}$ are, respectively, the spaces of all $J\in\cca$ admitting a pluriclosed left-invariant metric and admitting a Bismut flat left-invariant metric, and  
$$
\cca_{ab}:=\left\{ J\in\cca: J\;\mbox{is compatible with some almost-biinvariant $g$}\right\}.    
$$
As a unique exception, $\cca_{cyt}=\cca$ for the Hopf surface $M=\SU(2)\times S^1$.

% 18/7/2026

\section{Appendix A: Roots}\label{roots}

We give in this section some preliminaries and basic properties of root systems of complex semisimple Lie algebras, see e.g.\ \cite[III.\S4,5, X.\S3]{Hlg} or \cite[II.\S5, App C]{Knp} for more complete treatments.  

Let $G$ be a connected and compact semisimple Lie group.  We fix a maximal torus $T\subset G$ with Lie algebra $\tg$, so $\tg$ is a maximal abelian subalgebra of $\ggo$ and its complexification $\tg^c$ is a Cartan subalgebra of the complex semisimple Lie algebra $\ggo^c:=\ggo\otimes\CC=\ggo+\im\ggo$.  The adjoint action of $\tg^c$ on $\ggo^c$ determines a root space decomposition given by
$$
\ggo^c=\tg^c\oplus\bigoplus_{\alpha\in\Delta}\ggo_\alpha, 
$$
where $\Delta\subset(\tg^c)^*$ is called the root system attached to $G$, i.e., for each root $\alpha\in\Delta$, $[H,E]=\alpha(H)E$ for all $H\in\tg^c$ and $E\in\ggo_\alpha$.  This implies that $\alpha\in\Delta$ if and only if $-\alpha\in\Delta$, and 
$$
[\ggo_\alpha,\ggo_\beta]\left\{
\begin{array}{lcl} 
=\ggo_{\alpha+\beta}, && \alpha+\beta\in\Delta, \\ 
\subset\tg^c, && \alpha+\beta=0, \\ 
=0, && 0\ne \alpha+\beta\not\in\Delta. 
\end{array}\right.
$$  
The Killing form $\kil_{\ggo^c}$ of $\ggo^c$, being non-degenerate on $\tg^c$, determines an identification $\tg^c\equiv (\tg^c)^*$ and in particular we obtain a vector $H_\alpha\in\tg^c$ defined by $\alpha=\kil_{\ggo^c}(\cdot,H_\alpha)$ for each $\alpha\in(\tg^c)^*$.  For any {\it basis} $\Pi$ of $\Delta$ (i.e., a basis of $(\tg^c)^*$ such that every $\beta\in\Delta$ can be written as $\beta=\sum\limits_{\alpha\in\Pi} n_\alpha\alpha$, where the $n_\alpha$ are integers of the same sign, possibly zero), the set  
$
\{H_\alpha:\alpha\in\Pi\},   
$  
is a basis of $\tg^c$.  There exist vectors $E_\alpha$, $\alpha\in\Delta$ such that (see \cite[Chapter VI, Theorem 6.6]{Knp}) 
$$
\ggo_\alpha=\CC E_\alpha, \qquad \kil_{\ggo^c}(E_\alpha,E_{-\alpha})=1, \qquad H_\alpha=[E_\alpha,E_{-\alpha}], 
$$
$$
[E_\alpha,E_\beta]=N_{\alpha,\beta}E_{\alpha+\beta}, \qquad N_{-\alpha,-\beta}=-N_{\alpha,\beta},
$$
and so the structural constants $N_{\alpha,\beta}$ also satisfy the following properties: 
\begin{enumerate}[{\rm (i)}]
\item $N_{\beta,\alpha}=-N_{\alpha,\beta}$ for all $\alpha,\beta\in\Delta$. 

\item $N_{\alpha,\beta}=N_{\beta,\gamma}=N_{\gamma,\alpha}$ if $\alpha+\beta+\gamma=0$.  

\item If $\alpha+\beta+\gamma+\delta=0$ and the sum of any pair is nonzero, then  
$$
N_{\alpha,\beta}N_{\gamma,\delta}+ N_{\beta,\gamma}N_{\alpha,\delta}+ N_{\gamma,\alpha}N_{\beta,\delta}=0.
$$  
\end{enumerate}
Note that $\kil_{\ggo^c}(E_\alpha,E_\alpha)=0$ and $\overline{E_\alpha}=-E_{-\alpha}$.  The set  
$$
\{H_\alpha:\alpha\in\Pi\} \cup \{E_\alpha:\alpha\in\Delta\}
$$ 
is a basis of $\ggo^c$ called the {\it Chevalley} basis.  On the other hand, $\ip:=\kil_{\ggo^c}|_{\im\tg}$ is positive definite and also defines an inner product on $(\im\tg)^*$ by $\la\alpha,\beta\ra:=\la H_\alpha,H_\beta\ra$.  Since $\alpha:\im\tg\rightarrow\RR$ (i.e., $H_\alpha\in\im\tg$) for any $\alpha\in\Delta$, we can view $\Delta\subset(\im\tg)^*$ and we have that $\alpha|_{\im\tg}=\la\cdot, H_\alpha\ra$.  In particular, 
$$
\la\alpha,\beta\ra = \kil_{\ggo^c}(H_\alpha,H_\beta) = -\kil_\ggo(\im H_\alpha,\im H_\beta), \qquad\forall\alpha,\beta\in(\im\tg)^*,  
$$ 
where $\kil_\ggo$ is the Killing form of $\ggo$.  

In this way, $((\im\tg)^*,\Delta,\ip)$ is a {\it reduced root system}: 
\begin{enumerate}[{\small $\bullet$}]
\item $\Delta$ is a finite subset of nonzero elements in $(\im\tg)^*$ that generates the real vector space $(\im\tg)^*$.   

\item The only multiples of any $\alpha\in\Delta$ which are in $\Delta$ are $\pm\alpha$.  

\item $a_{\alpha,\beta}:=\tfrac{2\la\alpha,\beta\ra}{\la\alpha,\alpha\ra}\in\ZZ$ for all $\alpha,\beta\in\Delta$.  

\item $\beta-\tfrac{2\la\alpha,\beta\ra}{\la\alpha,\alpha\ra}\alpha\in\Delta$ for all $\alpha,\beta\in\Delta$.  
\end{enumerate}
Since 
$$
a_{\alpha,\beta}a_{\beta,\alpha}=\tfrac{4\la\alpha,\beta\ra^2}{\la\alpha,\alpha\ra\la\beta,\beta\ra}=0,1,2,3, \quad\mbox{i.e.}, \quad \tfrac{\la\alpha,\beta\ra}{|\alpha| |\beta|}=0,-\unm,-\tfrac{1}{\sqrt{2}},-\tfrac{\sqrt{3}}{2}, \qquad\forall \alpha,\beta\in\Pi, 
$$
the only possible angles between two simple roots $\alpha$ and $\beta$ are $0,\tfrac{2\pi}{3},\tfrac{3\pi}{4},\tfrac{5\pi}{6}$.   

The structural constants $N_{\alpha,\beta}$ can be computed as follows.  Given $\alpha,\beta\in\Delta$, there exist $p,q\in\ZZ$ such that $p\leq 0\leq q$, $p+q=-a_{\alpha,\beta}$, $\beta+n\alpha\in\Delta$ if and only if $p\leq n\leq q$ and   
$$
N_{\alpha,\beta}^2=\unm q(1-p)\la\alpha,\alpha\ra.   
$$
In particular, 
$$
N_{-\alpha,\alpha+\beta}=N_{\alpha,\beta}, \qquad N_{\alpha,-\alpha+\beta}=N_{-\alpha,\beta}, \qquad (N_{\alpha,-\beta})^2=(N_{\alpha,\beta})^2+\la\alpha,\beta\ra.  
$$ 
For any basis $\Pi$ of $\Delta$, the set $\{\im H_\alpha:\alpha\in\Pi\}$ is a basis of $\tg$ and 
$$
\{\im H_\alpha:\alpha\in\Pi\} \cup \{e_\alpha,f_\alpha :\alpha\in\Delta^+\}
$$
is a basis of $\ggo$ called the {\it Cartan-Weyl} basis, where 
$$
e_\alpha:=\tfrac{1}{\sqrt{2}}(E_\alpha-E_{-\alpha}), \qquad f_\alpha:=\tfrac{\im}{\sqrt{2}}(E_\alpha+E_{-\alpha}),
$$
and $\Delta^+$ is the set of all non-negative integer combinations of $\Pi$ which are in $\Delta$.  Thus $\Delta^+$ is an {\it ordering} of $\Delta$ (i.e., $\Delta=\Delta^+\cup -\Delta^+$ (disjoint union) and $(\Delta^++\Delta^+)\cap\Delta\subset\Delta^+$), and conversely, any basis $\Pi$ is the subset of {\it simple} roots (i.e., not the sum of two roots in $\Delta^+$) of some ordering $\Delta^+$.  We note that $\{ e_\alpha,f_\alpha:\alpha\in\Delta^+\}$ is a $-\kil_\ggo$-orthonormal subset, $[e_\alpha,f_\alpha]=\im H_\alpha$, $(\RR e_\alpha+\RR f_\alpha)^c=\ggo_\alpha\oplus\ggo_{-\alpha}$ and
$$
E_\alpha=\tfrac{1}{\sqrt{2}}(e_\alpha-\im f_\alpha), \qquad E_{-\alpha}=-\tfrac{1}{\sqrt{2}}(e_\alpha+\im f_\alpha). 
$$  
The following identities hold for any $\alpha,\beta\in\Delta^+$ and $\beta-\alpha\in\Delta^{\pm}$: 
$$
[e_\alpha,e_\beta]=\tfrac{N_{\alpha,\beta}}{\sqrt{2}}e_{\alpha+\beta} \pm\tfrac{N_{\alpha,-\beta}}{\sqrt{2}}e_{\pm(\beta-\alpha)}, \qquad 
[f_\alpha,f_\beta]=-\tfrac{N_{\alpha,\beta}}{\sqrt{2}}e_{\alpha+\beta} \mp\tfrac{N_{\alpha,-\beta}}{\sqrt{2}}e_{\pm(\beta-\alpha)}, 
$$
$$
[e_\alpha,f_\beta]=\tfrac{N_{\alpha,\beta}}{\sqrt{2}}f_{\alpha+\beta} \pm\tfrac{N_{\alpha,-\beta}}{\sqrt{2}}f_{\pm(\beta-\alpha)}, 
$$
which implies for $\beta-\alpha\in\Delta^+$ that
\begin{equation}\label{eE}
[e_\alpha,E_\beta]=\tfrac{N_{\alpha,\beta}}{\sqrt{2}}E_{\alpha+\beta} +\tfrac{N_{\alpha,-\beta}}{\sqrt{2}}E_{\beta-\alpha}, \qquad 
[f_\alpha,E_\beta]=\tfrac{\im N_{\alpha,\beta}}{\sqrt{2}}E_{\alpha+\beta} +\tfrac{\im N_{\alpha,-\beta}}{\sqrt{2}}E_{\beta-\alpha}.
\end{equation}

The {\it Weyl group} $W:=N_G(T)$ acts simply and transitively on the set of chambers (or connected components) of 
\begin{equation}\label{ch2}
\im\tg\setminus\bigcup_{\alpha\in\Delta}\Ker\alpha \cap {\im\tg},  
\end{equation}
and for each ordering $\Delta^+$ there exists a unique chamber $C$ such that 
$\Delta^+=\{\alpha\in\Delta:\alpha|_{C}>0\}$.  Furthermore, the group $\Aut(\Delta)$ of all invertible linear maps $\psi:(\im\tg)^*\rightarrow(\im\tg)^*$ such that $\psi\Delta=\Delta$ ($\psi$ is automatically an isometry) is the semidirect product $O\ltimes W$, where $O$ is the group of all automorphisms of the Dynkin diagram of $\ggo$ (i.e., outer automorphisms, nontrivial only for the types $A,D,E_6$).  $\Aut(\Delta)$ is isomorphic to the subgroup of $\Aut(\ggo)$ stabilizing $\tg$.     

The element $\rho\in(\im\tg)^*$ defined as half the sum of positive roots,  
\begin{equation}\label{weyl}
\rho = \unm\sum_{\alpha\in\Delta^+} \alpha \equiv \unm\sum_{\alpha\in\Delta^+} H_\alpha\in\im\tg,
\end{equation}
appears in many important formulas.  $\rho$ is also the sum of all {\it fundamental weights} $\omega_1,\dots,\omega_{\dim{\tg}}$, defined by $\la\omega_i,\tfrac{2}{\la\alpha_j,\alpha_j\ra}\alpha_j\ra= \delta_{ij}$ for all $j$, where $\Pi=\{\alpha_1,\dots,\alpha_{\dim{\tg}}\}$, and it is the center of a sphere containing all the simple roots:  $|\alpha-\rho|=|\rho|$ for all $\alpha\in\Pi$ (i.e., $\alpha-2\rho\perp\alpha$).   

The following are natural lattices (i.e., cocompact discrete subgroups) of $(\im\tg)^*$ and $\im\tg$ (see \cite[Chapter 6]{Spn} for further information): 
\begin{enumerate}[{\small $\bullet$}]
\item {\it root lattice}: 
$\quad R:=\la\alpha:\alpha\in\Delta\ra_\ZZ \subset(\im\tg)^*$, 

\item {\it weight lattice}: 
$\quad P:=\left\{\lambda\in(\im\tg)^*:\lambda\left(\tfrac{2\im }{\la\alpha,\alpha\ra}H_\alpha\right)\in\ZZ, \quad \forall\alpha\in\Delta\right\}\subset(\im\tg)^*$, 

\item 
{\it dual root lattice}: 
$\quad R^*:=\left\la\tfrac{2}{\la\alpha,\alpha\ra}H_\alpha:\alpha\in\Delta\right\ra_\ZZ \subset\im\tg$, 

\item 
{\it dual weight lattice}: 
$\quad P^*:=\{ H\in\im\tg:\alpha(H)\in\ZZ, \quad\forall \alpha\in\Delta\}\subset\im\tg$.  
\end{enumerate}
It turns out that the lattice $\Ker(\exp):=\{H\in\tg:\exp(H)=e\}\subset\tg$ satisfies that
$$
R^*\subset\tfrac{1}{2\pi\im}\Ker(\exp)\subset P^*,  \qquad \tfrac{1}{2\pi\im}\Ker\exp/R^*\simeq\Pi_1(G), \qquad P^*/\tfrac{1}{2\pi\im}\Ker(\exp)\simeq Z(G),
$$ 
so $R^*=\tfrac{1}{2\pi\im}\Ker(\exp)$ if and only if $G$ is simply connected and  $\tfrac{1}{2\pi\im}\Ker(\exp)=P^*$ if and only if the center $Z(G)$ of $G$ is trivial.

% 18/7/26

\section{Appendix B: Homogeneous spaces}\label{homsp-sec} 

Let $M$ be a connected differentiable manifold (not necessarily compact) and assume that $M$ is {\it homogeneous}, in the sense that there is a connected Lie group $G$ acting transitively on $M$.  Each of these transitive groups provides a presentation $M=G/K$ of $M$ as a homogeneous space, where $K\subset G$ is the isotropy subgroup at some origin point $o\in M$.  The action will always be assumed to be {\it almost-effective}, i.e., only a discrete subgroup of $G$ acts trivially.

We also assume that there is a {\it reductive decomposition} of $M=G/K$ (e.g., $K$ compact),
$$
\ggo=\kg\oplus\pg, \qquad T_oM\equiv\pg,
$$ 
i.e., $\Ad(K)\pg\subset\pg$, where $\ggo$ and $\kg$ are respectively the Lie algebras of $G$ and $K$.  This provides the following usual identification: each $X\in\ggo$ defines a vector field $X^*\in\chi(M)$ by $X^*_p:=\ddt|_0\exp{tX}\cdot p$ for all $p\in M$, and we identify $\pg\equiv T_oM$, $X\leftrightarrow X^*_o$, which is an isomorphism since $X^*_o=0$ if and only if $X\in\kg$.  Note that $[X^*,Y^*]=-[X,Y]^*$.  

In the case of a Lie group $M=G$, for each $X\in\pg=\ggo$, $X^*$ is precisely the right-invariant vector field on $G$ such that $X^*_e=X$.  

The isotropy representation $K\circlearrowleft T_oM$ is equivalent to the adjoint representation $\Ad(K) \circlearrowleft\pg$.       
Note that if $K$ is connected, then $G/K$ is almost-effective if and only if 
$$
\{Z\in\kg:\ad{Z}|_\pg=0\}=0. 
$$

\subsection{Invariant geometric structures}\label{igs} 
The spaces 
$$
\mca^G, \qquad \acca^G, \qquad \Omega^p(M)^G,
$$
of all $G$-invariant Riemannian metrics, almost complex structures and $p$-forms on $M=G/K$ are respectively identified with the set of all $\Ad(K)$-invariant inner products on $\pg$, the set of all $\Ad(K)$-invariant linear maps $J:\pg\rightarrow\pg$ such that $J^2=-I$ and the vector space $(\Lambda^p\pg^*)^K$ of all $\Ad(K)$-invariant $p$-forms on $\pg$.  Note that these spaces are all finite dimensional and could be empty.  For instance, $\mca^G\ne\emptyset$ if and only if $\overline{\Ad(K)}$ is compact (e.g., when $K$ is compact) and $\acca^G\ne\emptyset$ if and only if the number of irreducible factors on each isotypical component of real type of the isotropy representation is even (e.g., $S^6=\Gg_2/\SU(3)$ admits a $\Gg_2$-invariant almost-complex structure but $S^6=\SO(7)/\SO(6)$ does not admit an $\SO(7)$-invariant almost-complex structure).  Note that $|\acca^G|<\infty$ if and only if the isotropy representation is the sum of inequivalent irreducible representations of complex type.      

If $G$ is compact, then a metric $g\in\mca^G$ is called {\it normal} when it is determined by $Q|_{\pg\times\pg}$ for some bi-invariant metric $Q$ on $\ggo$, and if in addition $G$ is semisimple and $Q=-\kil_{\ggo}$, where $\kil_{\ggo}(X,Y):=\tr{\ad{X}\ad{Y}}$ for all $X,Y\in\ggo$ is the Killing form of $\ggo$, then $g$ is called {\it standard} and denoted by $g_{\kil}$.  

An almost-complex structure $J\in\acca^G$ is called {\it complex} whenever it is integrable, i.e., the Nijenhuis tensor vanishes:
$$
[X,Y]_\pg + J[JX,Y]_\pg + J[X,JY]_\pg - [JX,JY]_\pg=0, \qquad\forall X,Y\in\pg,   
$$ 
where the subscript $\pg$ denotes projection on $\pg$ relative to $\ggo=\kg\oplus\pg$.  Equivalently, the $\im$-eigenspace $\pg^{1,0}$ of the corresponding $\CC$-linear map, also denoted by $J$, satisfies that $[\pg^{1,0},\pg^{1,0}]_\pg\subset\pg^{1,0}$.  In that case, $(M,J)$ is a complex manifold, and a {\it compatible} pair $(J,g)$ (i.e., $g(JX,JY)=g(X,Y)$ for all $X,Y\in\pg$) is called a {\it Hermitian structure}.   
 
%The space of all $G$-invariant complex structures on $M$ is denoted by $\cca^G$ and that of Hermitian structures by $\hca^G$. 

\subsection{de Rham cohomology}\label{dRapp-sec} 
In this section, we follow the lines of \cite[Section 2]{H3}, where a more detailed treatment is given (see also \cite{BswChtMty}).  We assume here that $M$ and $G$ are compact, so that the real de Rham cohomology of $M=G/K$ can be computed within $G$-invariant forms.  Assume also that $K$ is connected, otherwise, we need to add invariance by the finite group $K/K_0$ everywhere.  We fix a bi-invariant metric $Q$ on $\ggo$ as a background metric and consider the $Q$-orthogonal reductive decomposition $\ggo=\kg\oplus\pg$.  

The differential $d:=d_M$ of $G$-invariant forms on the manifold $M$ is given by $d:\Lambda^p\pg^*\rightarrow\Lambda^{p+1}\pg^*$, 
$$
d\alpha(X_1,\dots,X_{p+1}) := \sum_{i<j}(-1)^{i+j}\alpha([X_i,X_j]_\pg,X_1,\dots,\hat{X_i},\dots,\hat{X_j}\dots,X_{p+1}),
$$
giving rise to the $p$th de Rham cohomology group $H^p(G/K)=\Ker d/\Ima d$ and the $p$th {\it Betti number} $b_p(G/K):=\dim{H^p(G/K)}$.  Alternatively, the isomorphism 
\begin{equation}\label{deltagk}
(\Lambda^p\pg^*)^K\longrightarrow \Lambda^p(\ggo,K):=\left\{ \beta\in(\Lambda^p\ggo^*)^K:\iota_\kg\beta=0\right\}, \qquad \alpha\mapsto \hat{\alpha}:=\pi^*\alpha,
\end{equation}
where $\pi:G\rightarrow G/K$ is the usual projection map and $\iota_Z\beta:=\beta(Z,\cdot,\dots,\cdot)$, can be used to compute $H^p(G/K)$ upstairs within left-invariant forms on the Lie group $G$;  indeed, $H^p(G/K)=\Ker \hat{d}/\Ima \hat{d}$, where $\hat{d}:\Lambda^p(\ggo,K)\rightarrow\Lambda^{p+1}(\ggo,K)$ is the differential of forms on the Lie group $G$.  

Any $1$-form on $\pg$ is given by $\theta_X:=Q(\cdot,X)$ for some $X\in\pg$, hence the the space of $G$-invariant $1$-forms on $M$ is given by 
$$
(\Omega^1M)^G=\{\theta_X:X\in\pg_0\}\simeq\pg_0:=\{Y\in\pg:[\kg,Y]=0\}, 
$$ 
and those which are in addition closed by 
$$
(\Omega^1_cM)^G=\{\theta_X:X\in\zg(\ggo)\cap\pg\}\simeq\zg(\ggo)\cap\pg\subset\pg_0, \qquad\mbox{so}\quad b_1(M)=\dim{\zg(\ggo)\cap\pg},
$$
where $\zg(\ggo)$ is the center of $\ggo$.  

We now compute $b_2(M)$, since in \cite[Section 2]{H3}, only the case when $G$ is semisimple was worked out.  Any closed $2$-form $\hat{\sigma}$ on $G$ is of the form 
$$
\hat{\sigma}= \hat{\sigma}_X +\hat{\gamma}, 
$$
where $\hat{\sigma}_X:= Q([\cdot,\cdot],X)$ for some $X\in\ggo$ and the $2$-form $\hat{\gamma}$ satisfies that $\hat{\gamma}([\ggo,\ggo],\cdot)=0$.  Note that we can view $\hat{\gamma}\in\Lambda^2\zg(\ggo)^*$ and that $\hat{\sigma}_X\in\Lambda^2\zg(\ggo)^*$ if and only if $X\in\zg(\ggo)$, if and only if $\hat{\sigma}_X=0$.  It is not hard to see that $\hat{\sigma}\in\Lambda^2(\ggo,K)$ if and only if $[\kg,X]=0$, i.e., $X\in\zg(\kg)\oplus\pg_0$, and $\hat{\gamma}([\ggo,\ggo]+\kg,\cdot)=0$.  Equivalently, $\hat{\gamma}\in\Lambda^2\zg(\ggo)^*$ satisfies that $\hat{\gamma}(\kg_{\zg(\ggo)},\cdot)=0$, where $\kg_{\zg(\ggo)}$ is the $Q$-orthogonal projection of $\kg$ on $\zg(\ggo)$, that is, 
$$
\hat{\gamma}\in \Lambda^2\left(\zg(\ggo)/\kg_{\zg(\ggo)}\right)^*\simeq \Lambda^2\left(\ggo/([\ggo,\ggo]+\kg)\right)^* . 
$$   
Using the $Q$-orthogonal decomposition $\zg(\ggo)=\kg_{\zg(\ggo)}\oplus(\zg(\ggo)\cap\pg)$, we obtain that 
$$
(\Omega^2_cM)^G = \left\{ \sigma_X+\gamma:X\in\zg(\kg)\oplus\pg_0, \; \gamma\in\Lambda^2\left(\zg(\ggo)\cap\pg\right)^*\right\}, \qquad  \sigma_X:=Q([\cdot,\cdot],X).  
$$
We note that $\zg(\ggo)_\pg\simeq\zg(\ggo)$ by almost-effectiveness (i.e., $\zg(\ggo)\cap\kg=0$).  Since $\sigma_X=-d\theta_X$ for any $X\in\pg_0$, we obtain that 
$$
H^2(M) \simeq \zg(\kg)\oplus \Lambda^2\left(\zg(\ggo)\cap\pg\right)^*, \qquad b_2(M)=\dim{\zg(\kg)} + 
\tbinom{\dim{\zg(\ggo)\cap\pg}}{2}.
$$
We are using here that $\zg(\kg)\cap\zg(\ggo)=0$ to obtain the direct sum.  

\begin{lemma}\label{oni}\label{gprime2}
%The Lie subgroup $G'\subset G$ with Lie algebra $\ggo':=[\ggo,\ggo]\oplus(\zg(\ggo)\cap\pg)$ acts transitively on $M=G/K$.  
Up to a finite cover, 
$$
M=G/K =G_{ss}/K\cap G_{ss} \times T^{\dim{\zg(\ggo)\cap\pg}},
$$ 
where $G_{ss}, T^{\dim{\zg(\ggo)\cap\pg}}\subset G$ are the Lie subgroups with Lie algebras $[\ggo,\ggo]$ and $\zg(\ggo)\cap\pg$, respectively.  
\end{lemma}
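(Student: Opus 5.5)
The plan is to split $\ggo$ into its semisimple part and its center, and then show that the two corresponding groups act with only a finite overlap. We write $\ggo=[\ggo,\ggo]\oplus\zg(\ggo)$ and use the $Q$-orthogonal decomposition $\zg(\ggo)=\kg_{\zg(\ggo)}\oplus(\zg(\ggo)\cap\pg)$. First we check that the connected subgroup $G'\subset G$ with Lie algebra $\ggo':=[\ggo,\ggo]\oplus(\zg(\ggo)\cap\pg)$ already acts transitively on $M$. Since $\kg\subset[\ggo,\ggo]\oplus\kg_{\zg(\ggo)}$, the projection of $\kg$ to $\zg(\ggo)$ along $[\ggo,\ggo]$ lands in $\kg_{\zg(\ggo)}$. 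Hence $\ggo'+\kg=\ggo$: any $Z\in\kg_{\zg(\ggo)}$ equals $Y-X$ for some $Y\in\kg$ and $X\in[\ggo,\ggo]$. The orbit $G'\cdot o$ is therefore open, and it is also closed because $G'$ is compact, being the product of the compact group $G_{ss}$ with the closure of a torus. Connectedness of $M$ then gives $M=G'/(K\cap G')$.

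Next we note that $T:=\exp(\zg(\ggo)\cap\pg)$ is central in $G'$ and commutes with $G_{ss}$. After replacing $T$ by its closure if necessary, it is a torus; indeed $\zg(\ggo)\cap\pg$ corresponds to a subtorus up to a finite quotient, because $Z(G)_0$ is a torus and the relevant subspace can be taken rational by almost-effectiveness. The map $G_{ss}\times T\to G'$ is a surjective homomorphism with finite kernel $G_{ss}\cap T\subset Z(G_{ss})$, which is finite. This yields a finite cover $G_{ss}\times T\to G'$.

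We then identify the isotropy. Its Lie algebra is $\kg\cap\ggo'$. For $Y\in\kg$, write $Y=X+Z$ with $X\in[\ggo,\ggo]$ and $Z\in\zg(\ggo)$. Then $Y\in\ggo'$ forces $Z\in\zg(\ggo)\cap\pg$, and $Z=(Y-X)_{\zg(\ggo)}\in\kg_{\zg(\ggo)}$, so $Z=0$. Thus $\kg\cap\ggo'=\kg\cap[\ggo,\ggo]$, which is the Lie algebra of $K\cap G_{ss}$. The isotropy group $K\cap G'$ therefore has identity component contained in $G_{ss}$. Lifting to $G_{ss}\times T$, the stabilizer is a finite extension of $(K\cap G_{ss})\times\{e\}$. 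Consequently $M$ is finitely covered by $G_{ss}/(K\cap G_{ss})_0\times T$, and this is the claim up to finite cover.

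The step I expect to be the main obstacle is the global bookkeeping rather than the Lie-algebra identities. This includes showing that $T$ can be taken closed, and that the finite ambiguities from $G_{ss}\cap T$ and from the components of $K\cap G'$ only contribute a finite cover. I would handle both by passing to $\tilde G=\tilde G_{ss}\times\RR^z$ and quotienting by the closed lattice. From there I would argue with dimensions and compactness: the closure of $\exp(\zg(\ggo)\cap\pg)$ meets $K$ only in a finite set, since $\kg\cap\zg(\ggo)=0$.
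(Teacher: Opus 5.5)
Your argument is correct and is essentially the paper's proof. Both rest on the splitting $\zg(\ggo)=\kg_{\zg(\ggo)}\oplus(\zg(\ggo)\cap\pg)$ and on an open-orbit transitivity argument. The difference is only in bookkeeping. The paper first writes $G=G_1\times T$ up to finite cover, with $\kg\subset\ggo_1=[\ggo,\ggo]\oplus\kg_{\zg(\ggo)}$, and quotes Onishchik for the transitivity of $G_{ss}$ on $G_1/K$. You instead run the orbit argument for $G_{ss}T$ on $M$ directly and read off the isotropy from $\kg\cap\ggo'=\kg\cap[\ggo,\ggo]$, which makes your version self-contained.

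One side remark of yours needs correcting. It concerns something the statement, and the paper's proof, tacitly assume: that $\exp(\zg(\ggo)\cap\pg)$ is a torus.

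\begin{itemize}
\item \emph{Almost-effectiveness does not give rationality.} The subspace $\kg_{\zg(\ggo)}$ is rational, being the Lie algebra of the closed image of $K$ in the torus $G/G_{ss}$. Its $Q$-orthogonal complement need not be rational, because $Q|_{\zg(\ggo)}$ is an arbitrary inner product.
\item \emph{Passing to the closure does not help.} If $T$ is not closed, the Lie algebra $\overline{\mathfrak t}$ of $\overline{T}$ satisfies $\overline{\mathfrak t}\cap\kg_{\zg(\ggo)}\neq 0$. The isotropy algebra inside $[\ggo,\ggo]\oplus\overline{\mathfrak t}$ then strictly contains $\kg\cap[\ggo,\ggo]$, and the dimension count fails.
\item \emph{Your finiteness observation is beside the point.} It is true that $\overline{T}\cap K$ is finite, but what matters is the intersection of $\overline{\mathfrak t}$ with the projection $\kg_{\zg(\ggo)}$ of $\kg$, not with $\kg$ itself.
\end{itemize}

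To remove the tacit assumption, run your argument with a rational complement of $\kg_{\zg(\ggo)}$ in $\zg(\ggo)$ in place of $\zg(\ggo)\cap\pg$. Every step then goes through, with complementarity replacing orthogonality in the isotropy computation.
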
 

\begin{remark}
Since $b_1(G_{ss}/K\cap G_{ss})=0$ and $\dim{\zg(\kg)\cap[\ggo,\ggo]}=\dim{\zg(\kg)}$ by almost-effectiveness, it follows from the K\"unneth formula that 
$$
b_2(M) = b_2(G_{ss}/K\cap G_{ss}) + b_2(T^{\dim{\zg(\ggo)\cap\pg}}) =\dim{\zg(\kg)} + 
\tbinom{\dim{\zg(\ggo)\cap\pg}}{2},
$$
as shown above.
\end{remark}

\begin{proof}
Using the orthogonal decomposition $\zg(\ggo)=\kg_{\zg(\ggo)}\oplus (\zg(\ggo)\cap\pg)$, we obtain that 
$$
\kg\subset\ggo_1:=[\ggo,\ggo]\oplus\kg_{\zg(\ggo)}, \qquad \ggo=\ggo_1\oplus\zg(\ggo)\cap\pg. 
$$  
It follows from \cite[Chapter 1, Corollary 5, pp. 86]{Ons} that $G_{ss}$ acts transitively on $G_1/K$, yielding $G_1/K=G_{ss}/K\cap G_{ss}$ (alternatively, we can apply \cite[Chapter 1, Proposition 9, pp.\, 94]{Ons} and the fact that $b_1(G_1/K)=0$ and so $G_1/K$ has finite fundamental group).  On the other hand, since, up to a finite cover, $G_1=G_{ss}\times T^{\dim{\zg(\ggo)\cap\pg}}$, we have that $M=G/K=G_1/K\times T^{\dim{\zg(\ggo)\cap\pg}}$, up to a finite cover, concluding the proof.   
\end{proof}

%In particular, $b_2(G/K)=\dim{\zg(\kg)}$ if and only if 
%$$
%\dim{\zg(\ggo)}-\dim{\kg_{\zg(\ggo)}}= \dim{\ggo}-\dim{([\ggo,\ggo]+\kg)}\leq 1,   
%$$   
%e.g., when $G$ is semisimple.  

The third cohomology is computed in \cite[Section 4]{H3} for $G$ semisimple and formulas for $b_3(G/K)$ and $b_4(G/K)$ are given in \cite{BswChtMty} in the general compact case.  Any bi-invariant symmetric bilinear form $R$ on $\ggo$ such that $R|_{\kg\times\kg}=0$ naturally defines a closed $G$-invariant $3$-form $\vp_R$ on $M$ by 
\begin{align}
\vp_R(X,Y,Z) :=& 4R([X,Y],Z) - R([X,Y]_\pg,Z) + R([X,Z]_\pg,Y) - R([Y,Z]_\pg,X) \label{phiR-def} \\ 
=& R([X,Y],Z) + R([X,Y]_\kg,Z) - R([X,Z]_\kg,Y) + R([Y,Z]_\kg,X), \notag
\end{align}
for all $X,Y,Z\in\pg$.  Note that $\vp_Q=Q([\cdot,\cdot],\cdot)\in\Lambda^3\pg^*$, but $\vp_Q$ is not closed in general since $Q|_{\kg\times\kg}\ne 0$ if $\kg\ne 0$.  Moreover, 
$$
H^3(M)=\{[\vp_R]:R|_{\kg\times\kg}=0\}  \qquad \mbox{and}\qquad
b_3(M)=\dim\{R\in\sym^2(\ggo)^G:R|_{\kg\times\kg}=0\}.
$$ 
In particular, $b_3(M)=0$ as soon as $G$ is simple and $\kg\ne 0$, and $b_3(M)\leq s$, where equality holds if and only if $\kg=0$, i.e., $M=G$.  Here $s$ is the number of simple factors of $G$.   

Consider the decompositions $\ggo=\ggo_1\oplus\dots\oplus\ggo_s$ and $\kg=\kg_0\oplus\kg_1\oplus\dots\oplus\kg_v$ in simple factors, where $\kg_0:=\zg(\kg)$.  If $\kil_{\pi_i(\kg_j)}=a_{ij}\kil_{\ggo_i}|_{\pi_i(\kg_j)}$, $a_{ij}\in\RR$, where $\pi_i:\ggo\rightarrow\ggo_i$ is the usual projection, then it is proved in \cite[Proposition 4.3]{H3} that $R=z_1\kil_{\ggo_1}+\dots+z_s\kil_{\ggo_s}$ satisfies that $R|_{\kg\times\kg}=0$ if and only if the vector $(z_1,\dots,z_s)\in\RR^s$ is orthogonal to the $v$ vectors 
$$
(\tfrac{1}{a_{11}},\dots,\tfrac{1}{a_{s1}}), \quad\dots\quad, (\tfrac{1}{a_{1v}},\dots,\tfrac{1}{a_{sv}}),
$$
where we set $\tfrac{1}{a_{ij}}:=0$ if $a_{ij}=0$ (i.e., $\pi_i(\kg_j)=0$), and to the $\binom{m+1}{2}$ vectors
$$
(\kil_{\ggo_1}(Z^j_1,Z^k_1),\dots,\kil_{\ggo_s}(Z^j_t,Z^k_t)), \qquad 1\leq j\leq k\leq m,
$$
where $\{ Z^1,\dots,Z^m\}$ is a basis of $\zg(\kg)$.  In this way, 
\begin{equation}\label{b3-app}
b_3(M)=s-\dim{S_\kg},
\end{equation} 
where $S_\kg$ is the subspace of $\RR^s$ generated by the above $v+\binom{m+1}{2}$ vectors.

\subsection{Hodge theory}\label{Ht}
Assume that $M=G/K$ is compact.  Any $g\in\mca^G$ determines an inner product on each $\Lambda^p\pg^*$ given by
$$
g(\alpha,\beta) := \tfrac{1}{p!}
%& \sum_{i_1<\dots<i_k}\alpha(X_{i_1},\dots,X_{i_k})\beta(X_{i_1},\dots,X_{i_k}) \\ =
\sum_{i_1,\dots,i_p}\alpha(X_{i_1},\dots,X_{i_p})\beta(X_{i_1},\dots,X_{i_p}), 
$$
where $\{ X_i\}$ is any $g$-orthonormal basis of $\pg$.  Note that $\{ X^{i_1}\wedge\dots\wedge X^{i_p}\}$ is therefore a $g$-orthonormal basis of $\Lambda^p\pg^*$, where $\{ X^i\}$ is the basis of $\pg^*$ $g$-dual to $\{ X_i\}$.  If 
$$
d_g^*:(\Lambda^{p+1}\pg^*)^K\longrightarrow(\Lambda^p\pg^*)^K
$$ 
is the adjoint of $d$ with respect to $g$ (i.e., $g(d_g^*\cdot,\cdot)=g(\cdot,d\cdot)$), then a $p$-form $\alpha$ is closed and {\it coclosed} (i.e., $d_g^*\alpha=0$) if and only if $\alpha$ is in the kernel of the Hodge Laplacian 
$$
\Delta_g:=dd_g^*+d_g^*d:(\Lambda^p\pg^*)^K\longrightarrow(\Lambda^p\pg^*)^K,
$$ 
and it is called $g$-{\it harmonic} in that case.  Since 
$$
(\Lambda^p\pg^*)^K = \rlap{$\underbrace{\phantom{\Ima d \oplus \Ker\Delta_g}}_{\Ker d}$} \Ima d \oplus\overbrace{\Ker\Delta_g \oplus \Ima d_g^*}^{\Ker d_g^*},
$$ 
we have that $H^p(G/K) \simeq \Ker \Delta_g$, that is, each class has a unique $g$-harmonic representative.  

We refer to \cite{H3} for a study of invariant harmonic $p$-forms on compact homogeneous spaces for $p=1,2,3$.

\subsection{Invariant connections, parallelism and holonomy}\label{IC-sec}
Any $G$-invariant connection $\nabla$ on $M=G/K$ is determined by its {\it Nomizu operator} (see \cite[Chapter X]{KbyNmz}), 
$$
\Lambda:\pg\rightarrow\glg(\pg), \qquad \Lambda(X)Y:=(\nabla_{X^*}Y^*)_o+[X,Y]_\pg, 
$$ 
or equivalently, 
$$
\Lambda(X)Y:=(\nabla_{X^*}Y^*-[X^*,Y^*])_o, \qquad\forall X,Y\in\pg.
$$
See for example \S\ref{LC-sec} for the case of the Levi-Civita connection $\nabla^g$ of a $G$-invariant metric $g$ on $M=G/K$.  In this case, $X^*$ is a Killing vector field for any $X\in\pg$, but the vector field $\nabla^g_{X^*}Y^*$ is not  Killing in general.  

%\begin{example}
%The Nomizu operator $\Lambda^g$ of the Levi-Civita connection $\nabla^g$ of any $g\in\mca^G$ is given by
%$$
%\Lambda^g(X)Y = \unm[X,Y]_\pg+U(X,Y), \qquad \forall X,Y\in\pg, 
%$$
%where $U:\pg\times\pg\rightarrow\pg$ is the symmetric bilinear map defined by 
%$$
%2g(U(X,Y),Z):=g([Z,X]_\pg,Y)+g(X,[Z,Y]_\pg), \qquad \forall X,Y,Z\in\pg.
%$$
%\end{example}

The torsion $T(X,Y):=\nabla_XY-\nabla_YX-[X,Y]$ and the curvature $R(X,Y):=\nabla_X\nabla_Y-\nabla_Y\nabla_X-\nabla_{[X,Y]}$ of the connection $\nabla$ are respectively given by 
$$
T(X,Y):=T(X^*,Y^*)_o=\Lambda(X)Y-\Lambda(Y)X-[X,Y]_\pg, \qquad\forall X,Y\in\pg, 
$$
$$
R(X,Y):=R(X^*,Y^*)_o=[\Lambda(X),\Lambda(Y)]-\Lambda([X,Y]_\pg)-\ad{[X,Y]_\kg}|_\pg, \qquad\forall X,Y\in\pg.
$$
It is well known that for any $G$-invariant  tensor or form $\psi$ (see e.g.\ \cite[Section 2.3]{AgrHfmLwn}), 
$$
(\nabla_{X^*}\psi)_o = \Lambda(X)\cdot\psi_o, \qquad\forall X\in\pg,
$$
where $\cdot$ denotes the usual action on tensors.  For instance, such action is given as follows in the following particular cases:
$$
(\Lambda(X)\cdot g)(Y,Z):=-g(\Lambda(X)Y,Z)-g(Y,\Lambda(X)Z), \quad \Lambda(X)\cdot J:= [\Lambda(X),J], 
$$
$$
(\Lambda(X)\cdot T)(Y,Z):=\Lambda(X)T(Y,Z)-T(\Lambda(X)Y,Z)-T(Y,\Lambda(X)Z), 
$$
\begin{align*}
(\Lambda(X)\cdot R)(Y,Z)W:=& \Lambda(X)R(Y,Z)W-R(\Lambda(X)Y,Z)W \\ 
&-R(Y,\Lambda(X)Z)W -R(Y,Z)\Lambda(X)W.   
\end{align*}
According to \cite[Chapter X,\S 4]{KbyNmz}, the {\it holonomy algebra} $\holg(\nabla)$ of a $G$-invariant connection $\nabla$ on a compact homogeneous space $M=G/K$ is the smallest Lie subalgebra of $\glg(\pg)$ containing 
$$
\ad{\kg}|_\pg=\{\ad{Z}|_\pg:Z\in\kg\} \qquad\mbox{and} \qquad \Lambda(\pg)=\{\Lambda(X):X\in\pg\}.  
$$  
Note that $\ad{\kg}|_\pg\simeq\kg$ by almost-effectiveness.  

Given a $G$-invariant connection $\nabla$ on $M=G/K$ and a metric $g\in\mca^G$, we have that $\nabla g=0$ if and only if $\Lambda(X)^t=-\Lambda(X)$ for all $X\in\pg$, where the transpose is taken with respect to the metric $g$.  On the other hand, for a $G$-invariant complex structure $J\in\cca^G$, $\nabla J=0$ if and only if $[\Lambda(X),J]=0$ for all $X\in\pg$.  In other words, the connection $\nabla$ is {\it Hermitian} (i.e., $\nabla g=0$ and $\nabla J=0$) with respect to a compatible pair $(J,g)$ if and only if $\Lambda(X)$ is skew-Hermitian for all $X\in\pg$, i.e., $\Lambda(X)$ belongs to  
$$
\ug(n):=\ug(\pg,J,g)=\{ U\in\glg(\pg): U^t=-U,\; [U,J]=0\},  
$$
where $\dim{M}=\dim{\pg}=2n$.  In particular, 
$
\holg(\nabla)\subset\ug(n)
$ 
for any Hermitian connection.

\subsection{Automorphisms}\label{gauge}
The Lie group $\Aut(G/K)\subset\Diff(M)$ of all Lie group automorphisms of $G$ taking $K$ onto $K$ acts by pullback on $G$-invariant tensors of any kind, playing the role of the natural `gauge group' in the $G$-invariant setting, in the sense that two $G$-invariant geometric structures are considered {\it equivalent} if and only if they belong to the same $\Aut(G/K)$-orbit.  Note that equivalent implies isometric or biholomorphic, but the converse may not hold.    

A distinguished subgroup of $\Aut(G/K)$ is given by the normalizer $N_G(K)$, which acts on $M$ by $n\cdot(a\cdot o)=I_n(a\cdot o):=nan^{-1}\cdot o$ and on $T_oM\equiv \ggo/\kg$ by $n\cdot X:=\Ad(n)X$.  Alternatively, the Lie group $N:=N_G(K)/K$ acts on $M$ by {\it $G$-equivariant diffeomorphisms} (i.e., $\psi(a\cdot p)=a\cdot \psi(p)$ for all $a\in G$, $p\in M$) in the following way: $n\cdot (a\cdot o)=R_n(a\cdot o):=an\cdot o$ (note that $R_n^*g=I_{n^{-1}}^*g$ for any $n\in N$).  In particular, pullback by a $G$-equivariant diffeomorphism produces an equivalent $G$-invariant structure.  

In general, the moduli spaces $\mca^G/\Aut(G/K)$ and $\cca^G/\Aut(G/K)$ are really hard to describe or understand.  

In the case when $G$ is compact, the Lie groups $\Aut(G/K)$ and $N_G(K)$ have the same Lie algebra, given by $N_\ggo(\kg)=\kg\oplus\pg_0$.  For any $g\in\mca^G$, we consider the Lie algebra of the group $\Aut(G/K)\cap\Iso(M,g)$ of all isometric automorphisms, where $\Iso(M,g)$ is the isometry group, given by 
\begin{equation}\label{iaut}
\iautg(g):=\left\{ Z\in N_\ggo(\kg):g(\ad{Z}|_\pg\cdot,\cdot)=-g(\cdot,\ad{Z}|_\pg\cdot)\right\}, 
\end{equation}  
and for $J\in\cca^G$, the Lie algebra of the group $\Aut(G/K)\cap\Bihol(M,J)$ of all biholomorphic automorphisms, where $\Bihol(M,J)$ is the group of all biholomorphisms of the complex manifold $(M,J)$, which is given by  
\begin{equation}\label{baut}
\bautg(J):=\left\{ Z\in N_\ggo(\kg):[\ad{Z}|_\pg,J]=0\right\}.  
\end{equation}  
We note that $\kg$ is always contained in both $\iautg(g)$ and $\bautg(J)$ and that their Lie algebra isomorphism classes are invariant for the action of $\Aut(G/K)$ on $\mca^G$ and $\cca^G$, respectively.  

If $(J,g)$ is compatible, then $\hiautg(J,g):=\iautg(g)\cap\bautg(J)$ is the Lie algebra of the intersection of $\Aut(G/K)$ with the group of all holomorphic isometries of the Hermitian manifold $(M,J,g)$.

\end{document}